\newtheorem{theorem}{Theorem}[section]
\newtheorem*{theorem*}{Theorem}
\newtheorem{lemma}[theorem]{Lemma}
\newtheorem{proposition}[theorem]{Proposition}
\newtheorem{corollary}[theorem]{Corollary}
\newtheorem{conjecture}[theorem]{Conjecture}
\theoremstyle{definition}
\newtheorem{definition}[theorem]{Definition}
\newtheorem{remark}[theorem]{Remark}
\newtheorem{example}[theorem]{Example}
\newtheorem{ipotesi}[theorem]{Assumption}
\newtheorem{notazioni}[theorem]{Notation}
\numberwithin{equation}{section}
\newenvironment{itemizeb}
{\begin{itemize}\itemsep=2pt}{\end{itemize}}
\DeclareMathAlphabet{\mathpzc}{OT1}{pzc}{m}{it}
\newcommand{\N}{\mathbb{N}} 
\newcommand{\Z}{\mathbb{Z}} 
\newcommand{\R}{\mathbb{R}} 
\newcommand{\bA}{\mathbf{A}} 
\newcommand{\bC}{\mathbf{C}} 
\newcommand{\bB}{\mathbf{B}} 
\newcommand{\p}{\mathbf{p}} 
\newcommand{\A}{\mathcal{A}} 
\newcommand{\G}{\mathcal{G}} 
\newcommand{\cB}{\mathcal{B}} 
\newcommand{\bfeta}{\boldsymbol{\eta}} 
\newcommand{\Dir}{\mathrm{Dir}} 
\newcommand{\bG}{\mathbf{G}} 
\newcommand{\gr}{\mathrm{Gr}}
\newcommand{\zetab}{{\bm{\zeta}}}
\newcommand{\xib}{{\bm{\xi}}}
\newcommand{\mass}{\mathbf{M}} 
\newcommand{\Rc}{\mathscr{R}} 
\newcommand{\In}{\mathscr{I}} 
\newcommand\res{\mathop{\hbox{\vrule height 7pt width .3pt depth 0pt\vrule height .3pt width 5pt depth 0pt}}\nolimits}
\newcommand{\cone}{\times\!\!\!\!\!\times\,} 
\newcommand{\F}{\mathscr{F}} 
\newcommand{\M}{\mathbf{M}} 
\newcommand{\Fl}{\mathcal{F}} 
\newcommand{\reg}{\mathrm{Reg}} 
\newcommand{\sing}{\mathrm{Sing}} 
\newcommand{\V}{\mathbf{v}} 
\newcommand{\bE}{\mathbf{E}} 
\newcommand{\e}{\mathbf{e}} 
\newcommand{\bd}{\mathbf{d}} 
\newcommand{\bmax}{\mathbf{m}} 
\newcommand{\bh}{\mathbf{h}} 
\newcommand{\cS}{\mathcal{S}} 
\newcommand{\modp}{{\rm mod}(p)} 
\newcommand{\moddue}{{\rm mod}(2)} 
\newcommand{\Ha}{\mathcal{H}} 
\newcommand{\eps}{\varepsilon} 
\newcommand{\spt}{\mathrm{spt}} 
\newcommand{\dist}{\mathrm{dist}} 
\newcommand{\B}{\mathbf{B}} 
\newcommand{\Lip}{\mathrm{Lip}} 
\renewcommand{\epsilon}{\varepsilon}
\def\XXint#1#2#3{{\setbox0=\hbox{$#1{#2#3}{\int}$ }
\vcenter{\hbox{$#2#3$ }}\kern-.6\wd0}}
\newcommand{\mint}{-\hskip-1.10em\int}
\newcommand{\mres}{\mathbin{\vrule height 1.6ex depth 0pt width 
0.13ex\vrule height 0.13ex depth 0pt width 1.3ex}}
\newcommand{\weaks}{\stackrel{*}{\rightharpoonup}} 
\newcommand{\essvar}{{\rm ess}\,{\rm var}} 
\newcommand{\Iqs}{{\mathcal{A}}_Q(\R^{n})}
\newcommand{\Iq}{{\mathcal{A}}_Q}
\def\a#1{\left\llbracket{#1}\right\rrbracket}
\newcommand{\abs}[1]{\lvert#1\rvert} 
\newcommand{\Abs}[1]{\left\lvert#1\right\rvert} 
\newcommand{\norm}[1]{\left\lVert#1\right\rVert} 
\newcommand{\etab}{\boldsymbol{\eta}}
\newcommand{\Iqspec}{{\mathscr{A}_Q (\R^n)}}
\def\a#1{\left\llbracket{#1}\right\rrbracket}
\newcommand{\iso}{\boldsymbol{\iota}}
\newcommand\bmo{{\bm m}_0}
\newcommand\sC{{\mathscr{C}}}
\newcommand\sS{{\mathscr{S}}}
\newcommand\sW{{\mathscr{W}}}
\newcommand\sP{{\mathscr{P}}}
\newcommand\cM{{\mathcal{M}}}
\newcommand\cL{{\mathcal{L}}}
\newcommand\Phii{{\mathbf{\Phi}}}
\newcommand\bGam{{\mathbf{\Gamma}}}
\newcommand\phii{{\mathbf{\varphi}}}
\newcommand\cV{{\mathcal{V}}}
\newcommand\cH{{\mathcal{H}}}
\newcommand\bU{{\mathbf{U}}}
\newcommand\cK{{\mathcal{K}}}
\newcommand\bT{{\mathbf{T}}}
\newcommand\cG{{\mathcal{G}}}
\newcommand\bS{{\mathbf{S}}}
\newcommand\cF{{\mathcal{F}}}
\newcommand\cR{{\mathcal{R}}}
\newcommand\bD{{\mathbf{D}}}
\newcommand\bH{{\mathbf{H}}}
\newcommand\bI{{\mathbf{I}}}
\newcommand\de{{\partial}}
\newcommand\bSigma{{\mathbf{\Sigma}}}
\newcommand\im{{\mathrm{Im}}}
\newcommand\dv{{\mathrm{div}\,}}
\newcommand\bJ{{\mathbf{J}}}
\newcommand\rD{{\mathrm{D}}}
\newcommand\be{{\mathbf{e}}}
\begin{document}

\title{Regularity of area minimizing currents mod $p$}

\author{Camillo De Lellis}
\address{School of Mathematics, Institute for Advanced Study, 1 Einstein Dr., Princeton NJ 05840, USA\\
and Universit\"at Z\"urich}
\email{camillo.delellis@math.ias.edu}

\author{Jonas Hirsch}
\address{Mathematisches Institut, Universit\"at Leipzig, Augustusplatz 10, D-04109 Leipzig, Germany}
\email{hirsch@math.uni-leipzig.de}

\author{Andrea Marchese}
\address{Dipartimento di Matematica, Universit\`a di Trento, via Sommarive 14, I-38123 Povo (TN), Italy}
\email{andrea.marchese@unitn.it}

\author{Salvatore Stuvard}
\address{Department of Mathematics, The University of Texas at Austin, 2515 Speedway, Stop C1200, Austin TX 78712-1202, USA}
\email{stuvard@math.utexas.edu}

\begin{abstract}

We establish a first general partial regularity theorem for area minimizing currents $\modp$, for every $p$, in any dimension and codimension. More precisely, we prove that the Hausdorff dimension of the interior singular set of an $m$-dimensional area minimizing current $\modp$ cannot be larger than $m-1$. Additionally, we show that, when $p$ is odd, the interior singular set is $(m-1)$-rectifiable with locally finite $(m-1)$-dimensional measure.\\

\textsc{Keywords:} minimal surfaces, area minimizing currents $\modp$, regularity theory, multiple valued functions, blow-up analysis, center manifold.\\

\textsc{AMS Math Subject Classification (2010):} 49Q15, 49Q05, 49N60, 35B65, 35J47.
\end{abstract}

\maketitle

\section{Introduction}

\subsection{Overview and main results}

In this paper we consider currents $\modp$ (where $p\geq 2$ is a fixed positive integer), for which we follow the definitions and the terminology of \cite{Federer69}. In particular, given an open subset $\Omega \subset \R^{m+n}$, we will let $\Rc_m(\Omega)$ and $\F_m(\Omega)$ denote the spaces of $m$-dimensional integer rectifiable currents and $m$-dimensional integral flat chains in $\Omega$, respectively. If $C \subset \R^{m+n}$ is a closed set (or a relatively closed set in $\Omega$), then $\Rc_m (C)$ (resp. $\F_m (C)$) denotes the space of currents $T \in \Rc_m(\R^{m+n})$ (resp. $T \in \F_m(\R^{m+n})$) with compact support $\spt(T)$ contained in $C$. Currents modulo $p$ in $C$ are defined introducing an appropriate family of pseudo-distances on $\F_m(C)$: if 
$S,T \in \F_m (C)$ and $K\subset C$ is compact, then
\begin{align*}
\F^p_K (T-S) &:= \inf \Big\{\mass (R) + \mass (Z)\, : \,R \in \Rc_m (K)\,, Z\in \Rc_{m+1} (K)\;\\
&\qquad\qquad\qquad \mbox{such that}\; T-S= R+\partial Z + p P \; \mbox{for some}\; P \in \F_m (K)\Big\}\, .
\end{align*}
Two flat currents in $C$ are then congruent modulo $p$ if there is a compact set $K\subset C$ such that
$\F^p_K (T-S) =0$. The corresponding congruence class of a fixed flat chain $T$ will be denoted by $[T]$, whereas if $T$ and $S$ are congruent we will write
\[
T = S\, \modp\, .
\]
The symbols $\Rc_m^p(C)$ and $\F_m^p(C)$ will denote the quotient groups obtained from $\Rc_m(C)$ and $\F_m(C)$ via the above equivalence relation. The boundary operator $\partial$ has the obvious property that, if $T=S\, \modp$, then $\partial T = \partial S\, \modp$. 
This allows to define an appropriate notion of boundary $\modp$ as $\partial^p [T] := [\partial T]$. 
Correspondingly, we can define cycles and boundaries $\modp$ in $C$:
\begin{itemize}
\item a current $T\in \F_m (C)$ is a cycle $\modp$ if $\partial T = 0\, \modp$, namely if $\partial^p [T]=0$;
\item a current $T\in \F_m (C)$ is a boundary $\modp$ if $\exists S\in \F_{m+1} (C)$ such that $T= \partial S\, \modp$, namely $[T] = \partial^p [S]$.
\end{itemize}
Note that every boundary $\modp$ is a cycle $\modp$. In what follows, the closed set $C$ will always be sufficiently smooth, more precisely a complete submanifold $\Sigma$ of $\mathbb R^{m+n}$ without boundary and of class $C^1$.

\begin{remark}\label{r:congruence_manifold}
Note that the congruence classes $[T]$ depend on the set $C$, and thus our notation is not precise in this regard. In particular, when two currents are congruent modulo $p$ in $\Sigma\subset \mathbb R^{m+n}$, then they are obviously congruent in $\mathbb R^{m+n}$, but the opposite implication is generally false, see also the discussion in \cite[Remark 3.1]{MS_a}. However, the two properties are equivalent in the particular case of $\Sigma$'s which are Lipschitz deformation retracts of $\mathbb R^{m+n}$, and we will see below that, without loss of generality, we can restrict to the latter case in most of our paper. For this reason we do not keep track of the ambient manifold in the notation regarding the $\modp$ congruence.
\end{remark}

\begin{definition} \label{def:am_modp}
Let $\Omega \subset \R^{m+n}$ be open, and let $\Sigma \subset \R^{m+n}$ be a complete submanifold without boundary of dimension $m+\bar{n}$ and class $C^{1}$. We say that an $m$-dimensional integer rectifiable current $T \in \Rc_{m}(\Sigma)$ is \emph{area minimizing} $\modp$ in $\Sigma \cap \Omega$ if
\begin{equation}\label{e:am_mod_p}
\mass (T) \leq \mass (T + S) \qquad \mbox{for every $S \in \Rc_{m}(\Omega\cap \Sigma)$ which is a boundary $\modp$}.
\end{equation}
\end{definition}

Recalling \cite{Federer69}, it is possible to introduce a suitable notion of mass $\modp$ for classes $[T]$ $\modp$, denoted by $\mass^p$: $\mass^p ([T])$ is the infimum of those $t \in \R \cup \{+\infty\}$ such that for every $\varepsilon >0$ there are a compact set $K\subset \Sigma$ and an $S\in \Rc_m (\Sigma)$ with 
\[
\F^p_K (T-S) < \varepsilon \qquad \mbox{and} \qquad \mass (S) \leq t+\varepsilon\, .
\]
Analogously, \cite{Federer69} defines the support $\modp$ of the current $T\in \Rc_{m}(\Sigma)$, by setting 
\[
\spt^p (T) := \bigcap_{R = T\, \modp} \spt (R) \, .
\]
Clearly, the support depends only upon $[T]$, and we can thus also use the notation $\spt^p ([T])$. 

With the above terminology we can talk about mass minimizing classes $[T]$, because \eqref{e:am_mod_p} can be rewritten as
\[
\mass^p ([T]) \leq \mass^p ([T] + \partial^p [S]) \qquad \mbox{for all $[S]$ with $\spt^p ([S])\subset \Omega \cap \Sigma$.} 
\]
Our paper is devoted to the interior regularity theory for such objects. 

\begin{definition}
Let $T$ be an area-minimizing current $\modp$ in $\Omega \cap \Sigma$. A point $q\in \Omega \cap \spt^p(T)$ is called an {\em interior regular point} if there is a neighborhood $U$ of $q$, a positive integer $Q$ and an oriented $C^1$ embedded submanifold $\Gamma$ of $\Sigma \cap U$ such that
\begin{itemize}
\item[(i)] $T\res U = Q \a{\Gamma}\, \modp$;
\item[(ii)] $\Gamma$ has no boundary in $\Sigma \cap U$.
\end{itemize}
We will denote the set of interior regular points of $T$ by $\reg (T)$. 
\end{definition}

Observe that by definition an interior regular point is necessarily contained in $\spt^p (T)$
and it is necessarily outside $\spt^p (\partial T)$. For this reason, it is natural to define the set of interior singular points of $T$ as
\[
\sing (T) := (\Omega \cap  \spt^p (T)) \setminus (\reg (T) \cup \spt^p (\partial T))\, .
\]
It is very easy to see that $\sing (T)$ cannot be expected to be empty. Probably the following is the best known example: consider the three points $P_j := (\cos \frac{2\pi j}{3}, \sin \frac{2\pi j}{3})\in \mathbb R^2$ for $j= 1,2,3$ and the three oriented segments $\sigma_j$ in $\mathbb R^2$ joining the origin with each of them. Then $T:= \sum_j \a{\sigma_j}$ is area-minimizing ${\rm mod}\, (3)$ in $\mathbb R^2$ and the origin belongs to $\sing (T)$. 

As a first step to a better understanding of the singularities it is therefore desirable to give a bound on the Hausdorff dimension of the singular set.
The present work achieves the best possible bound in the most general case, and in particular it answers a question of White, see \cite[Problem 4.20]{GMT_prob}. 

\begin{theorem}\label{t:main}
Assume that $p\in \mathbb N \setminus \{0,1\}$, that $\Sigma\subset \mathbb R^{m+n}$ is a $C^{3, a_0}$ submanifold
of dimension $m+\bar n$ for some positive $a_0$, that $\Omega \subset \mathbb R^{m+n}$ is open, and that $T\in \Rc_m (\Sigma)$ is area minimizing $\modp$ in $\Omega\cap \Sigma$. Then, $\mathcal{H}^{m-1+\alpha} (\sing (T)) =0$ for every $\alpha>0$. 
\end{theorem}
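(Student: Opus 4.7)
The strategy is to combine Federer's dimension reduction with a new regularity theorem at ``flat'' tangent cone points. The monotonicity formula, which extends to the $\modp$ setting with an error controlled by the second fundamental form of $\Sigma$, guarantees that at every $x\in(\spt^p(T)\cap\Omega)\setminus\spt^p(\partial T)$ tangent cones exist and are themselves area-minimizing cycles $\modp$ in the corresponding tangent space. Following Federer--Almgren, I would introduce the strata
\[
\mathcal{S}^k := \{x \in \spt^p(T)\setminus\spt^p(\partial T) : \text{no tangent cone at } x \text{ is invariant under translations by a } (k+1)\text{-dimensional linear subspace}\},
\]
and invoke the standard dimension reduction argument, which yields $\mathcal{H}^{k+\alpha}(\mathcal{S}^k) = 0$ for every $\alpha>0$, conditional on the base case that a point admitting a flat tangent cone is automatically regular. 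The theorem then follows at once, because $\sing(T)\subseteq \mathcal{S}^{m-1}$.

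The bulk of the work thus reduces to proving that, at any $x$ where some tangent cone has the form $Q\a{\pi}$ for an $m$-plane $\pi\subseteq T_x\Sigma$ and an integer $Q$ with $1\le Q\le p/2$, the current $T$ is locally a $Q$-fold smooth sheet. For odd $p$, or for even $p$ with $Q<p/2$, the flat cone is ``orientable'' in the sense that an integer-rectifiable representative in a small neighborhood is canonically determined by the $\modp$ class, and the classical scheme of Almgren --- in the streamlined form of De Lellis--Spadaro --- should adapt: produce a Lipschitz $Q$-valued approximation, construct a center manifold, establish monotonicity of the Dirichlet frequency function, blow up to a Dir-minimizing $Q$-valued function, and conclude via the regularity theorem for such maps together with a final Allard-type argument. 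The genuinely new case, present only for even $p$, is $Q=p/2$: here two sheets of multiplicity $p/2$ with opposite orientations are equivalent $\modp$, so no consistent global labeling of sheets exists. The $Q$-valued approximation must therefore be replaced by a ``special'' multi-valued function taking values in the quotient of $(\R^n)^{p/2}$ by the involution that reverses orientation, and the entire regularity machinery (approximation, center manifold, frequency function, Dir-minimization) has to be redeveloped in this quotient setting.

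The main obstacle lies precisely in this redevelopment. The Lipschitz approximation and the center manifold construction should carry over with substantial but essentially mechanical modifications, since they rest on excess decay and cylindrical Taylor expansions that survive the quotient. The critical difficulty is the monotonicity of the frequency: the classical proof integrates by parts against the values of the multi-valued map and exploits global orientation conventions that the quotient destroys, so a new argument --- using the $\modp$ structure of $T$ itself rather than of its approximations --- will be needed. Once the frequency is under control, a blow-up produces a special Dir-minimizer whose singular set has vanishing $\mathcal{H}^{m-1+\alpha}$-measure by Federer reduction on the linearized problem, and this bound transfers back to $T$ through the center manifold, yielding $\mathcal{H}^{m-1+\alpha}(\sing(T))=0$.
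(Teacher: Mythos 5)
Your plan contains a genuine gap that makes the proposed reduction fail at the outset. You claim that ``a point admitting a flat tangent cone is automatically regular,'' and that therefore $\sing(T)\subseteq\mathcal{S}^{m-1}$, so that the Almgren stratification bound $\mathcal{H}^{m-1+\alpha}(\mathcal{S}^{m-1})=0$ finishes the theorem. This claim is \emph{false} in the $\modp$ setting: Example~\ref{example:flat singularities} exhibits a current that is area minimizing $\bmod\,4$ in $\mathbb{R}^3$, singular at the origin, whose \emph{unique} tangent cone there is the flat plane $\{x_3=0\}$ with multiplicity $2=p/2$. In particular the set $\mathrm{Sing}_f(T)$ of flat singular points is in general nonempty when $p$ is even, $\sing(T)\not\subseteq\mathcal{S}^{m-1}$, and the stratification contributes nothing about the measure of $\mathrm{Sing}_f(T)$. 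Your own concluding sentence already betrays the tension: you promise only a ``Federer reduction on the linearized problem'' yielding $\mathcal{H}^{m-1+\alpha}$-nullity of the $\Dir$-minimizer's singular set, which is a dimension bound, not the $Q$-fold smooth sheet conclusion your first paragraph requires.

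The paper instead argues by contradiction. Assuming $\mathcal{H}^{m-1+\alpha}(\sing(T))>0$, Proposition~\ref{p:contradiction_sequence} extracts a blow-up sequence $T_{0,r_k}$ converging to a flat cone $Q\a{\pi_0}$ while retaining a definite amount of $\mathcal{H}^{m-1+\alpha}_\infty$-content of density-$Q$ singular points in $\bB_1$. Stratification enters only to locate the flat blow-up limit: a persistent $(m-1+\alpha)$-dimensional concentration forces the limiting cone to be $(m-1)$-symmetric, whence by Lemma~\ref{l:symmetric} it is either flat or carries density at least $p/2$, and one eventually reduces to a flat cone. The decisive input ruling out the contradiction sequence is not Allard-type regularity at flat points but the linear theory bound $\dim\sing(u)\leq m-1$ for special $\Dir$-minimizers (from \cite[Theorem~10.2]{DLHMS_linear}), transported back to the current through the center manifold, normal approximation, and frequency function machinery of Parts~\ref{part:Center}--\ref{part:Blowup}, which ensure that the singular concentration truly ``persists'' in the blow-up to $N^b_\infty$.

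One further point: your concern about the frequency function is somewhat misplaced. The monotonicity estimates of Theorem~\ref{t:frequency} go through essentially as in \cite{DLS_Blowup}, because they rest on first-variation identities for $T$ and $\bT_F$, and the relevant Taylor expansions and variational formulas have already been established in the $\mathscr{A}_Q$-valued setting in \cite{DLHMS_linear}. The genuinely new work in adapting the Almgren--De Lellis--Spadaro program is earlier in the chain: the first Lipschitz approximation (Proposition~\ref{p:max}) requires an ad hoc BV slicing estimate (Lemma~\ref{conto_unidimensionale}) because one can no longer test against differential forms when only $\partial T=0\ \modp$, and one must work with the mass~$\modp$ and the nonoriented excess \eqref{e:no_excess} precisely because area minimizing $\modp$ currents may switch orientation without detectable boundary.
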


Prior to the present paper, the state of the art in the literature on the size of the singular set for area minimizing currents $\modp$ was as follows. We start with the results valid in any codimension.
\begin{itemize}
\item[(a)] For $m =1$ it is very elementary to see that $\sing (T)$ is discrete (and empty when $p=2$);
\item[(b)] Under the general assumptions of Theorem \ref{t:main}, $\sing (T)$ is a closed meager set in $(\spt^p (T)\cap \Omega)\setminus \spt^p (\partial T)$ by Allard's interior regularity theory for stationary varifolds, cf. \cite{Allard72} (in fact, in order to apply Allard's theorem it is sufficient to assume that $\Sigma$ is of class $C^2$);
\item[(c)] For $p=2$, $\mathcal{H}^{m-2+\alpha} (\sing (T)) = 0$ for every $\alpha >0$ by Federer's classical work \cite{Federer70}; moreover the same reference shows that $\sing (T)$ consists of isolated points when $m=2$; for $m>2$, Simon \cite{Simon_cylindrical,Simon95} proved that $\sing (T)$ is $(m-2)$-rectifiable and it has locally finite $\mathcal{H}^{m-2}$ measure.
\end{itemize}
We next look at the hypersurface case, namely $\bar n =1$.
\begin{itemize}
\item[(d)] When $p=2$, $\mathcal{H}^{m-2} (\sing (T))=0$ even in the case of minimizers of general uniformly elliptic integrands, see \cite{ASS}; for the area functional, using \cite{NV}, one can conclude additionally that $\sing (T)$ is $(m-7)$-rectifiable and has locally finite $\mathcal{H}^{m-7}$ measure; 
\item[(e)] When $p=3$ and $m=2$, \cite{Taylor} gives a complete description of $\sing (T)$, which consists of $C^{1,\alpha}$ arcs where three regular sheets meet at equal angles;
\item[(f)] When $p$ is odd, \cite{White86} shows that $\mathcal{H}^m (\sing (T)) =0$ for minimizers of a uniformly elliptic integrand, and that $\Ha^{m-1+\alpha}(\sing(T)) = 0$ for every $\alpha > 0$ for minimizers of the area functional;
\item[(g)] When $p=4$, \cite{White79} shows that minimizers of uniformly elliptic integrands are represented by {\em immersed manifolds} outside of a closed set of zero $\mathcal{H}^{m-2}$ measure.

\end{itemize}

\medskip

In view of the examples known so far it is tempting to advance the following

\begin{conjecture}\label{c:flat_points}
Let $T$ be as in Theorem \ref{t:main}. Denote by ${\rm Sing}_{f} (T)$ the subset of \emph{interior flat singular points}, that is those points $q\in \sing (T)$ where there is at least one flat tangent cone; see Sections \ref{s:mono_cones} and \ref{s:strata}. Then
$\mathcal{H}^{m-2+\alpha} ({\rm Sing}_{f} (T)) =0$ for every $\alpha>0$.
\end{conjecture}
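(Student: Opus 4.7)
Theorem \ref{t:main} already bounds the whole singular set by $m-1+\alpha$, and elementary examples (the triple junction for $p=3$) show that $(m-1)$-dimensional singular sets really occur; however these examples sit at \emph{non-flat} tangent cones. At a flat singular point $q\in {\rm Sing}_f(T)$ the tangent cone has the form $Q\a{\pi}$ for some $Q\in\{2,\dots,\lfloor p/2\rfloor\}$ (the case $Q=1$ is regular by Allard), and $q$ should behave as a \emph{branch point} of a multi-valued minimizer. The plan is to adapt the integer program of Almgren, as reformulated in the $Q$-valued framework, to the mod $p$ setting, where the natural value-space for the $Q$ sheets is the quotient $\mathscr{A}_Q$ studied in \cite{MS_a}, equipped with an additional orientation-reversing involution when $p=2Q$.

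\textbf{Main steps.} First, prove a strong Lipschitz approximation theorem: on a ball $\B_r(q)$ where the flat excess of $T$ with respect to $\pi$ is small, approximate $T$ by the graph of some $u\colon\pi\cap\B_r\to\mathscr{A}_Q$ with $\mass$-error controlled by a superlinear power of the excess. Second, construct a $C^{3,\alpha}$ \emph{center manifold} $\cM$ near $q$ together with a normal approximation $N\colon\cM\to\mathscr{A}_Q$ of $T$, via a Whitney-refinement scheme adapted to the mod $p$ identifications. Third, prove almost monotonicity for a frequency function
\[
I(q,r)\;=\;\frac{r\int_{\cM\cap\B_r(q)}|DN|^{2}}{\int_{\cM\cap\de\B_r(q)}|N|^{2}}
\]
with remainder small enough that the rescalings of $N$ converge along a subsequence to a nontrivial homogeneous mod $p$ Dir-minimizer on $\pi$. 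Fourth, show that the singular set of any mod $p$ Dir-minimizer on an $m$-dimensional domain has Hausdorff dimension at most $m-2$, paralleling Almgren's result for classical $Q$-valued Dir-minimizers. Fifth, combine the previous steps with a Federer-type dimension-reduction argument applied to ${\rm Sing}_f(T)$, using persistence of flat singularities along the blow-up together with compactness of Dir-minimizers, to transfer the linear bound to the nonlinear problem.

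\textbf{Principal obstacle.} By far the hardest ingredient is the second step, in which one must construct the center manifold and a normal approximation enjoying the \emph{splitting before tilting} inequalities that render the frequency monotone modulo higher-order errors. Already for integer currents this requires a very long and delicate analysis, and in the mod $p$ setting it is further complicated by two new features. On the one hand, the symmetry group acting on the values of $N$ is strictly larger than the symmetric group when $p$ is even, because it contains orientation reversals; consequently the center of mass used to place $\cM$, the orthogonal decomposition into average and fluctuation, and the linearization of minimality must all be redesigned so that the frequency scales correctly. On the other hand, the half-integer regime $2Q=p$ introduces genuinely new cones of multiplicity $p/2$ that cannot be reduced to an integer problem and that host the most delicate flat singularities. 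Step four is itself a major open problem, requiring a full Almgren-type blow-up analysis within the space of mod $p$ Dir-minimizers, including monotonicity of frequency, homogeneity of blow-up limits, and an internal dimension-reduction argument parallel to Almgren's for $\Iq$.
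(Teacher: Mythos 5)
The authors prove it only for \(p\) odd (Corollary~\ref{c:main4}), as a consequence of Theorem~\ref{t:main3}, which gives \(\Ha^{m-2+\alpha}(\sing_Q(T))=0\) \emph{only for} \(Q<\frac{p}{2}\). For \(p\) even the conjecture remains open, and Example~\ref{example:flat singularities} explains precisely why: when \(Q=\frac{p}{2}\) there are genuine flat interior singular points whose singular behaviour is governed by the sign-change locus of a \emph{special} multi-valued function, not by classical branching.

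Your proposal founders exactly here. Your Step~4 asserts that the singular set of a \(\Dir\)-minimizing \(\modp\) multi-valued map has Hausdorff dimension at most \(m-2\). This is \emph{false} in the regime \(Q=\frac{p}{2}\). The linear theory in~\cite{DLHMS_linear} (and as quoted in Section~\ref{sec:guide} of this paper) gives only the bound \(m-1\) for \(\Dir\)-minimizers into \(\Iqspec\), and this is sharp: the current in Example~\ref{example:flat singularities} coincides with the graph of such a minimizer whose singular set is a curve, i.e.\ \((m-1)\)-dimensional. So feeding your Steps~1--3 into your Step~4 would at best reproduce Theorem~\ref{t:main} (dimension \(\le m-1\)), not the conjectured \(m-2\) bound on \(\Sing_f(T)\). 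No amount of center-manifold machinery can fix this because the obstruction is already present in the linearized problem.

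What the paper actually proves is the following, more limited statement: for \(Q<\frac{p}{2}\) the singular set \(\sing_Q(T)\) has dimension at most \(m-2\) (Theorem~\ref{t:main3}), and for \(p\) odd every flat tangent cone has density \(Q\le\frac{p-1}{2}<\frac{p}{2}\), so the conjecture follows. For \(p\) even, the case \(Q=\frac{p}{2}\) is precisely what prevents the conjecture from being a theorem; proving it would require showing that flat singular points of density \(\frac{p}{2}\) in an area minimizing current \(\modp\) are \emph{rarer} than the linear theory permits, which is a genuinely new phenomenon going beyond the Almgren-type program you outline. You do flag Step~4 as ``itself a major open problem,'' but the issue is not that it is hard---it is that the statement as written is false for \(Q=\frac{p}{2}\), and so the outline cannot yield a proof of the conjecture even in principle.
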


Conjecture \ref{c:flat_points} is known to be correct for:
\begin{itemize}
\item[(a)] $m=1$;
\item[(b)] $p=2$ and any $m$ and $\bar{n}$;
\item[(c)] $p$ is odd and the codimension $\bar n = 1$.
\end{itemize} 
In all three cases, however, the conjecture follows from the much stronger fact that ${\rm Sing}_{f} (T)$ is empty: 
\begin{itemize}
\item the case (a) is an instructive exercise in geometric measure theory; 
\item the case (b) follows from Allard's regularity theorem for stationary varifold;
\item the case (c) is a corollary of the main result in \cite{White86}.  
\end{itemize}
Note that in all the other cases we cannot expect ${\rm Sing}_{f} (T)$ to be empty, with the easiest case being $p=4$, $m=2$ and $\bar n =1$, to be discussed in the following
\begin{example} \label{example:flat singularities}
Consider a ball $B \subset \R^2$ as well as two distinct smooth functions $u_1$ and $u_2$ solving the minimal surfaces equation in $B$, and let $S_1$ and $S_2$ denote the integral currents in the cylinder $B \times \R \subset \R^3$ defined by their graphs endowed with the natural orientation. As it is well known, $S_1$ and $S_2$ are then area minimizing, both as integral currents and as currents $\moddue$. Assume, in addition, that the set $\{u_1=u_2\}$ contains a curve $\gamma$ which divides $B$ into two regions $B^{>}$ and $B^{<}$. Explicit $u_1$ and $u_2$ as above are easy to find. The reader could take $B$ to be a suitably small ball centered at the origin, $u_1\equiv 0$, and let $u_2$ be the function which describes Enneper's minimal surface in a neighborhood of the origin. The set $\{u_1=u_2\}$ is then given by $\{(x,y) : x=\pm y\}\cap B$ and $\gamma$ can be taken to be the segment $\{x=y\}\cap B$ while $B^{>}$ and $B^{<}$ would then be $B \cap \{x>y\}$ and $B \cap \{x<y\}$, respectively. We then define the following rectifiable current $T$. Its support is the union of the graphs of $u_1$ and $u_2$, and thus of the supports of $S_1$ and $S_2$. However, while the portions of such graphs lying over $B^{>}$ will be taken with the standard orientation induced by $B$, the portions lying over $B^{<}$ will be taken with the opposite orientation. In $B\times \mathbb R$, the boundary of $T$ is $4 \a{\gamma}$, and $T$ is singular along $\gamma$. Since $T$ can be written as $T=\tilde S_1 + \tilde S_2$, where $\tilde S_k$ are $\moddue$ equivalent to $S_k$ ($k=1,2$), and since $\tilde S_k$ are area minimizing $\moddue$, the structure theorem in \cite{White79} guarantees that $T$ is area minimizing $\mod (4)$. Whenever $u_1$ and $u_2$ are chosen so that $0 \in \gamma$, $u_1 (0) = u_2 (0) =0$, and $\nabla u_1 (0) = \nabla u_2 (0) =0$ (as it is the case in the example above) then $0$ is a singular point of $T$, and the (unique) tangent cone to $T$ at $0$ is the two dimensional horizontal plane $\pi_0 = \{x_3 =0\}$ with multiplicity $2$. In such examples we thus have $0\in {\rm Sing}_{f} (T)$.
\end{example}

In this paper we strengthen the result for $p$ odd by showing that Conjecture \ref{c:flat_points} in fact holds in any codimension. Indeed we prove the following more general theorem. 

\begin{theorem}\label{t:main3}
Let $T$ be as in Theorem \ref{t:main} and $Q< \frac{p}{2}$ a positive integer. Consider the subset $\sing_Q (T)$ of $\spt^p (T)\setminus \spt^p (\partial T)$ which consists of interior singular points of $T$ where the density is $Q$ (see Definition \ref{Q-points}). Then 
$\mathcal{H}^{m-2+\alpha} (\sing_Q (T))=0$ for every $\alpha>0$. 
\end{theorem}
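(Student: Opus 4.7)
The plan is to reduce the statement to the sharp interior regularity theorem of De Lellis--Spadaro for classical integer rectifiable area-minimizing currents, exploiting the fact that the condition $Q<p/2$ forbids any nontrivial cancellation modulo $p$ in a neighborhood of a density-$Q$ point. First I fix an arbitrary $q_0\in\sing_Q(T)$ and invoke the upper semicontinuity of the density $q\mapsto\Theta^m(T,q)$, together with $\Theta^m(T,q_0)=Q<p/2$ and $q_0\notin\spt^p(\partial T)$, to select a radius $r>0$ so small that $\overline{B_r(q_0)}\Subset\Omega\setminus\spt^p(\partial T)$ and $\Theta^m(T,q)<p/2$ for every $q\in B_r(q_0)\cap\spt^p(T)$.

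In this ball I produce a canonical integer rectifiable representative $T'\in\Rc_m(\Sigma)$ of the congruence class $[T\res B_r(q_0)]$ by selecting pointwise the unique integer multiplicity in $\{-\lfloor(p-1)/2\rfloor,\ldots,\lfloor(p-1)/2\rfloor\}$ compatible with the class. The strict bound $Q<p/2$ forces
\[
\mass(T'\res A)=\mass^p([T]\res A)\qquad\text{for every Borel }A\subset B_r(q_0),
\]
and $T'$ has no boundary in $B_r(q_0)$, so up to modifying $T$ on a negligible set I may assume $T\res B_r(q_0)=T'$. Next I verify that $T'$ is area-minimizing as an ordinary integer rectifiable current in $B_r(q_0)\cap\Sigma$: given any competitor $S'\in\Rc_m(\overline{B_r(q_0)}\cap\Sigma)$ with $\partial S'=0$, the difference $\tilde S:=S'-T'$ is an integer cycle compactly supported in $B_r(q_0)\cap\Sigma$ and hence a mod-$p$ boundary there (in fact even an integer boundary by the constancy theorem in a contractible region). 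Applying the mod-$p$ minimality of $T$ to the integer competitor $T+\tilde S$, and subtracting the mass of $T\res(\Omega\setminus B_r(q_0))$ from both sides of the resulting inequality, yields $\mass(T')\leq\mass(S')$.

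With the reduction in hand I apply the interior regularity theorem for integer rectifiable area-minimizing currents in a $C^{3,a_0}$ submanifold of arbitrary codimension, which gives $\mathcal H^{m-2+\alpha}(\sing(T'))=0$ for every $\alpha>0$. Since $\sing(T)\cap B_r(q_0)=\sing(T')$, covering $\sing_Q(T)$ by countably many balls of the above form completes the proof. The step I expect to be the main obstacle is precisely the local lifting: one needs Federer's structure theorem for flat chains modulo $p$ (together with the strict inequality $Q<p/2$) to ensure that the integer representative $T'$ is globally well-defined on $B_r(q_0)$ and that its mass agrees with $\mass^p$ pointwise, both of which break down at density exactly $p/2$. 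Once this structural fact is in place, no genuinely new regularity machinery is required, and the estimate inherits directly from the classical theory.
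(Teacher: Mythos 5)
The crux of your reduction is the assertion that, once $r$ is chosen so that $\Theta(T,q)<p/2$ for every $q\in B_r(q_0)\cap\spt^p(T)$, the representative $T'=T\res B_r(q_0)$ is an integral cycle there. That claim is exactly where the argument breaks down. The hypothesis $\partial T=0\ \modp$ on $\Omega$ only yields $\partial T\res B_r(q_0)=pP$ for some integral flat chain $P$, and you have offered no reason for $P$ to vanish. To rule out $P\neq 0$ you would need, at a minimum, a boundary density estimate of the form $\Theta(T,q_1)\geq p/2$ at every $q_1\in\spt(\partial T)\cap B_r(q_0)$; no such estimate is cited, and it is far from automatic, because $\partial T$ is not a priori rectifiable, nor even of locally finite mass, so a blow-up at a putative ``boundary point'' of $T$ is simply not available. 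Indeed, the very fact that $\partial T$ has locally finite $(m-1)$-mass supported on $\sing(T)$, with multiplicity a.e.\ divisible by $p$, is the content of Corollary~\ref{c:main11}, which is \emph{deduced from} Theorem~\ref{t:main3} together with the rectifiability Theorem~\ref{t:main10} (and \cite{NV}); it cannot be used as an input here. The remark following Lemma~\ref{modp_slicing_formula} flags precisely this issue: an integral class $\modp$ need not admit a well-behaved integral representative once $m\geq 2$.

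If your local-lifting claim held, Theorems~\ref{t:main} and~\ref{t:main3} would follow in one line from Almgren / De Lellis--Spadaro and the whole of Parts~\ref{part:Lp}--\ref{part:Blowup} would be unnecessary. What defeats the shortcut is that a $\modp$ minimizer in a region where all densities are $<p/2$ can still change orientation across a lower-dimensional set and thereby carry nonzero integral boundary there, all invisible $\modp$; the pointwise density bound does not exorcise this. The actual proof has to run the contradiction/blow-up program of Proposition~\ref{p:contradiction_sequence}: Lipschitz and harmonic approximation, construction of the center manifold and its normal approximation, and the final blow-up. The only simplification that $Q<\frac{p}{2}$ affords in that scheme is that the approximating maps and the blow-up limit take values in the classical space $\mathcal{A}_Q$ rather than the special space $\mathscr{A}_Q$; it is not a reduction to the classical integral-current theory.
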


The analysis of tangent cones (cf. Corollary \ref{c:tangent_cones}) implies that if $p$ is odd then 
\[
{\rm Sing}_{f} (T)\subset \bigcup_{Q=1}^{\lfloor \frac{p}{2} \rfloor} \sing_Q (T)\, .
\]
We thus get immediately

\begin{corollary}\label{c:main4}
Conjecture \ref{c:flat_points} holds for every $p$ odd in any dimension $m$ and codimension $\bar n$. 
\end{corollary}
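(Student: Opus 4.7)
The plan is to deduce the corollary as an essentially immediate consequence of Theorem \ref{t:main3}, combined with the inclusion
\[
{\rm Sing}_{f}(T) \subset \bigcup_{Q=1}^{\lfloor p/2 \rfloor} \sing_Q(T)
\]
that the paper attributes to Corollary \ref{c:tangent_cones}. First I would invoke that tangent cone analysis: at a flat interior singular point $q$, there exists a tangent cone which is a plane $\pi$ with integer multiplicity, and under the mod $p$ equivalence one may always choose the representative whose multiplicity $Q$ lies in $\{0,1,\dots,\lfloor p/2\rfloor\}$. Since $q \in \spt^p(T)$, the multiplicity cannot vanish, so $Q \in \{1,\dots,\lfloor p/2\rfloor\}$; in particular, the density of $T$ at $q$ equals this $Q$ and hence $q \in \sing_Q(T)$ for some such $Q$.

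The decisive use of the hypothesis that $p$ is odd enters here: $\lfloor p/2\rfloor = (p-1)/2$, which is strictly less than $p/2$. Therefore every $Q$ in the range $1,\dots,\lfloor p/2\rfloor$ fulfils the hypothesis $Q < p/2$ required by Theorem \ref{t:main3}, which yields $\mathcal{H}^{m-2+\alpha}(\sing_Q(T)) = 0$ for every $\alpha > 0$ and every such $Q$. Since the union above is finite, subadditivity of Hausdorff measure gives
\[
\mathcal{H}^{m-2+\alpha}({\rm Sing}_{f}(T)) \le \sum_{Q=1}^{(p-1)/2} \mathcal{H}^{m-2+\alpha}(\sing_Q(T)) = 0
\]
for every $\alpha > 0$, which is exactly Conjecture \ref{c:flat_points}.

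There is no genuine obstacle at this level of the argument: all the hard work is hidden inside the two inputs. The heavy lifting of Theorem \ref{t:main3} lies in the blow-up and center manifold machinery used to bound the Hausdorff dimension of the density-$Q$ stratum for $Q < p/2$, while Corollary \ref{c:tangent_cones} requires showing that planar tangent cones to mod $p$ area minimizers carry multiplicity at most $\lfloor p/2\rfloor$. The reason the corollary is limited to odd $p$ is precisely that for even $p$ tangent cones of multiplicity exactly $p/2$ can genuinely occur, as shown by Example \ref{example:flat singularities} in the case $p=4$, and Theorem \ref{t:main3} is silent in that borderline regime.
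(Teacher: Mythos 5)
Your proposal is correct and follows essentially the same route as the paper: the inclusion ${\rm Sing}_f(T) \subset \bigcup_{Q=1}^{\lfloor p/2 \rfloor} \sing_Q(T)$ comes from the tangent cone analysis of Corollary \ref{c:tangent_cones}(v), and for $p$ odd every such $Q$ satisfies $Q < p/2$, so Theorem \ref{t:main3} and subadditivity finish the job. Nothing further to add.
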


The fact above, combined with the techniques recently introduced in the remarkable work \cite{NV}, allows us to conclude the following theorem. 

\begin{theorem}\label{t:main10}
Let $T$ be as in Theorem \ref{t:main} and assume $p$ is odd. Then $\sing (T)$ is $(m-1)$-rectifiable, and for every compact $K$ with $K\cap \spt^p (\partial T) = \emptyset$ we have $\cH^{m-1} (\sing (T) \cap K) < \infty$.
\end{theorem}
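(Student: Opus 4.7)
The plan is to combine Corollary~\ref{c:main4} with the quantitative stratification and rectifiable Reifenberg machinery of Naber-Valtorta, in the spirit of the treatment carried out for classical area-minimizing currents. Since $p$ is odd, Corollary~\ref{c:main4} gives $\mathcal{H}^{m-2+\alpha}(\mathrm{Sing}_{f}(T)) = 0$ for every $\alpha > 0$, so $\mathcal{H}^{m-1}(\mathrm{Sing}_{f}(T)) = 0$ and it suffices to establish $(m-1)$-rectifiability and local finiteness of the $(m-1)$-dimensional Hausdorff measure for the non-flat singular set $\Sigma^{\star} := \sing(T) \setminus \mathrm{Sing}_{f}(T)$. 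Following Naber-Valtorta, for $\eta > 0$ and $0 < r \leq 1$ I would introduce the quantitative stratum $\mathcal{S}^{m-1}_{\eta,r}$ consisting of those $q \in \spt^{p}(T)\setminus \spt^{p}(\partial T)$ such that, at every scale $s\in[r,1]$, the rescaling of $T$ at $q$ of radius $s$ is not $\eta$-close, in an appropriate weak topology on $B_{1}(0)$, to any $m$-symmetric area-minimizing cone $\modp$ — equivalently, to any multiplicity-$Q$ plane. Setting $\mathcal{S}^{m-1}_{\eta} := \bigcap_{r>0}\mathcal{S}^{m-1}_{\eta,r}$, a compactness argument based on the semi-continuity of density yields the inclusion $\Sigma^{\star} \subset \bigcup_{\eta>0}\mathcal{S}^{m-1}_{\eta}$: at a point with no flat tangent cone, the quantitative gap from the flat model must remain bounded below at all sufficiently small scales.

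The technical core is then to apply the Naber-Valtorta rectifiable Reifenberg theorem to the family $\{\mathcal{S}^{m-1}_{\eta,r}\}$. This requires three ingredients: (i) the monotonicity formula for the mass ratio $\Theta(T,q,s)$, which holds verbatim for area-minimizing currents $\modp$; (ii) a quantitative cone-splitting lemma for cones $\modp$ — if $T$ is simultaneously $\eta$-close on $B_{s}(q)$ to an $m$-dimensional cone $\modp$ centered at sufficiently many $(m-1)$-affinely independent points, then the cone is $(m-1)$-symmetric, and by minimality it must then coincide with a multiplicity plane; and (iii) the uniform quantitative gap from flat cones built into the definition of $\mathcal{S}^{m-1}_{\eta,r}$. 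Granting these, the NV machinery delivers
\[
\mathcal{H}^{m-1}\bigl(\mathcal{S}^{m-1}_{\eta,r}\cap K\bigr) \leq C(m,n,p,\eta,K) < \infty
\]
uniformly in $r$, together with $(m-1)$-rectifiability of $\mathcal{S}^{m-1}_{\eta,r}$. Passing to the intersection in $r$ and to the countable union over a vanishing sequence $\eta_{k}\downarrow 0$ transfers these conclusions first to $\Sigma^{\star}$ and hence to $\sing(T)$.

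The main anticipated obstacle is the quantitative cone-splitting step in the $\modp$ setting. The classical argument exploits both the fact that equality in the monotonicity formula along an interval forces conicality and the rigidity of area-minimizing cones; both facts must be verified in the $\modp$ category, where the objects produced in a blow-up are cones $\modp$ rather than integral cones. Here the oddness of $p$, combined with the reduction to non-flat singular points — whose admissible densities are confined to the discrete set $\{1, 2, \ldots, \lfloor p/2 \rfloor\}$ controlled by Theorem~\ref{t:main3} and the analysis of tangent cones in Corollary~\ref{c:tangent_cones} — should supply the missing rigidity, after which the remaining pieces of the NV scheme transfer with essentially cosmetic modifications from the classical treatment.
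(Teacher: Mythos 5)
Your outline identifies the right ingredients (Corollary \ref{c:main4}, Theorem \ref{t:main3}, Naber--Valtorta quantitative stratification) but assembles them in a way that loses the finiteness conclusion, and in fact the paper takes a somewhat different route that sidesteps the bulk of the machinery you propose to redevelop.

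The fatal step is the last one: you write that the conclusions transfer to $\Sigma^{\star}$ by ``passing to the countable union over a vanishing sequence $\eta_k \downarrow 0$.'' This is fine for rectifiability, which is preserved under countable unions, but it destroys the finiteness claim. The Naber--Valtorta bound $\mathcal{H}^{m-1}(\mathcal{S}^{m-1}_{\eta,r}\cap K) \leq C(\eta)$ has a constant that diverges as $\eta \downarrow 0$, so $\bigcup_k \mathcal{S}^{m-1}_{\eta_k}$ need not have locally finite $\mathcal{H}^{m-1}$ measure. To conclude $\mathcal{H}^{m-1}(\sing(T)\cap K) < \infty$ you need a single $\varepsilon > 0$, depending on $m,n,p,K$ but not on the point, such that the relevant portion of $\sing(T)\cap K$ lies in one quantitative stratum $\mathcal{S}^{m-1,\sigma}_{\varepsilon}(\V(T))$. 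The paper produces exactly such a uniform gap: it removes $\bigcup_{Q=2}^{(p-1)/2}\sing_Q(T)$ (dimension $\leq m-2$ by Theorem \ref{t:main3}) and $\mathcal{S}^{m-2}(\V(T))$ (dimension $\leq m-2$ by Almgren stratification), and then observes via Lemma \ref{l:symmetric} that any remaining singular point has $\Theta(T,q) \geq p/2$. Since $p$ is odd, every flat area-minimizing cone $\modp$ has density in $\{1,\dots,(p-1)/2\}$, so the monotonicity formula plus compactness (Proposition \ref{p:compactness}) yield a definite gap $\geq 1/2$ between the mass ratio at $q$ and that of any $(m,\varepsilon)$-symmetric model; this fixes $\varepsilon$ uniformly. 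You mention these density facts in your closing paragraph as ``supplying rigidity'' for a cone-splitting lemma, but you do not use them where they are actually needed, namely to produce a single $\varepsilon$ rather than a point-dependent one.

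A secondary remark: you propose re-running the Naber--Valtorta scheme in the $\modp$ category, introducing a new quantitative stratification for currents $\modp$ and proving a fresh cone-splitting lemma. This is unnecessary. The paper applies \cite[Theorem 1.4]{NV} as a black box for the \emph{stationary varifold} $\V(T)$ (stationarity is Lemma \ref{l:varifold}); no $\modp$-adapted version of the Reifenberg machinery or of cone-splitting is needed. All the $\modp$-specific content is concentrated in the one-line reduction encapsulated in Corollary \ref{c:non-m-simmetrici}, which rests only on Lemma \ref{l:symmetric}, the monotonicity formula, and Proposition \ref{p:compactness}. Your approach would duplicate a substantial amount of analysis that is already available off the shelf once the varifold reduction is made.
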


In turn the above theorem implies the following structural result.

\begin{corollary}\label{c:main11}
Let $T$ be as in Theorem \ref{t:main} and assume in addition that $p$ is odd. Denote by $\{\Lambda_i\}_i$ the connected components of $\spt^p (T) \setminus (\spt^p (\partial T) \cup \sing (T))$. Then each $\Lambda_i$ is an orientable smooth minimal submanifold of $\Sigma$ and there is a choice of (smooth) orientations and multiplicities 
$Q_i\in [1, \frac{p}{2}]\cap \mathbb N$ such that the following properties hold for every open $U\Subset \mathbb R^{m+n}\setminus \spt^p (\partial T)$
\begin{itemize}
\item[(a)] Each $T_i = Q_i \a{\Lambda_i}$ is an integral current in $U$ and thus, having chosen an orientation $\vec{S}$ for the rectifiable set $\sing (T)$, 
we have
\[
(\partial T_i)\res U = \Theta_i \vec{S} \mathcal{H}^{m-1} \res (\sing (T)\cap U)\, 
\]
for some integer valued Borel function $\Theta_i$;
\item[(b)] $\sum_i \mass (T_i\res U)<\infty$ and $T \res U=\sum_i T_i \res U$; 
\item[(c)] $\sum_i \mass ((\partial T_i)\res U)<\infty$, $(\partial T)\res U = \sum_i (\partial T_i)\res U$ and
\[
(\partial T)\res U = \sum_i \Theta_i\, \vec{S} \mathcal{H}^{m-1} \res (\sing (T)\cap U)\, ;
\]
in particular $\sum_i \Theta_i (q)$ is an integer multiple of $p$
for $\mathcal{H}^{m-1}$-a.e. $q\in \sing (T)\cap U$.
\end{itemize}
\end{corollary}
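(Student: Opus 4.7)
For each $q \in \Lambda_i$, the defining property of $\reg(T)$ yields a neighborhood $U_q$ and a decomposition $T \res U_q = Q\a{\Gamma}\, \modp$ with $\Gamma$ a $C^1$ oriented embedded submanifold and $Q \ge 1$ integer. Standard elliptic regularity (using $\Sigma \in C^{3,a_0}$) together with minimality $\modp$ upgrades $\Gamma$ to a smooth minimal submanifold of $\Sigma$; thus each $\Lambda_i$ is smooth and minimal. Because $p$ is odd, the mod $p$ class $[Q\a{\Gamma}]$ admits exactly two integer-rectifiable representatives, namely $(Q,\vec\Gamma)$ and $(p-Q,-\vec\Gamma)$, of which precisely one has multiplicity in $\{1,\dots,(p-1)/2\}$. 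This selects locally a canonical orientation and multiplicity; by local constancy and connectedness, these extend to a global smooth orientation $\vec\xi_i$ and multiplicity $Q_i \in \{1,\dots,(p-1)/2\}$ on $\Lambda_i$, establishing orientability.

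\textbf{Integral structure of $T_i$.} Fix $U \Subset \R^{m+n} \setminus \spt^p(\partial T)$ and define $T_i := Q_i \a{\Lambda_i, \vec\xi_i}$. Since $\mathcal{H}^m(\sing(T))=0$ by Theorem~\ref{t:main} and the $\Lambda_i$ disjointly exhaust $\spt^p(T) \setminus (\spt^p(\partial T)\cup\sing(T))$, the local identifications $T\res B = T_i \res B$ on small balls $B \subset \Lambda_i$ combine to give $T \res U = \sum_i T_i \res U$ as rectifiable currents and $\sum_i \mass(T_i \res U) = \mass(T \res U) < \infty$, which is (b). Because each $T_i$ is a pure multiple of a smooth submanifold away from $\overline\Lambda_i \setminus \Lambda_i$, we have $\spt(\partial T_i) \cap U \subset \sing(T) \cap U$, and by Theorem~\ref{t:main10} this set is $(m-1)$-rectifiable with $\mathcal{H}^{m-1}(\sing(T)\cap K) < \infty$ for every compact $K \subset U$. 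The standard rectifiability criterion for flat chains whose support lies in a rectifiable set of locally finite $\mathcal{H}^{m-1}$-measure (essentially Federer's closure/rectifiability theorem) then promotes $\partial T_i \res U$ to an integer rectifiable $(m-1)$-current of locally finite mass; necessarily $\partial T_i \res U = \Theta_i \vec S\, \mathcal{H}^{m-1} \res (\sing(T) \cap U)$ for an integer-valued Borel function $\Theta_i$, which is (a).

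\textbf{Summability and mod $p$ divisibility.} Summing over $i$ yields $\partial T \res U = \big(\sum_i \Theta_i\big)\, \vec S\, \mathcal{H}^{m-1} \res (\sing(T) \cap U)$. At $\mathcal{H}^{m-1}$-a.e.\ $q \in \sing(T) \cap U$ the tangent cones to $T$ are finite unions of multiplicity-$Q_i$ half-planes (cf.\ Corollary~\ref{c:tangent_cones} and Sections~\ref{s:mono_cones}--\ref{s:strata}), only finitely many of which correspond to distinct sheets $\Lambda_i$ accumulating at $q$; the cardinality of these sheets and each $|\Theta_i(q)| \le Q_i \le (p-1)/2$ are bounded purely in terms of $p$ via the density bound $\Theta(T,q) \le p/2$. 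Consequently $\sum_i |\Theta_i(q)| \le C(p)$ a.e., and $\sum_i \mass(\partial T_i \res K) \le C(p)\, \mathcal{H}^{m-1}(\sing(T) \cap K) < \infty$ for every compact $K \subset U$, giving the summability in (c). Finally, $[T]\res U$ is a cycle $\modp$, hence the integer rectifiable current $\partial T \res U$ is divisible by $p$; equivalently, $\sum_i \Theta_i(q)$ is an integer multiple of $p$ at $\mathcal{H}^{m-1}$-a.e.\ $q$.

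\textbf{Main obstacle.} The technical crux is promoting each flat boundary $\partial T_i$ to an integer rectifiable current of locally finite mass, which rests squarely on the rectifiability and measure bound for $\sing(T)$ provided by Theorem~\ref{t:main10}. The uniform pointwise bound $\sum_i |\Theta_i(q)| \le C(p)$ required for the summability in (c) further depends on the half-plane structure of tangent cones at $\mathcal{H}^{m-1}$-typical singular points, i.e., on the full blow-up/structure theory developed for odd $p$ elsewhere in the paper.
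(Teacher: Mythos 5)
Your overall blueprint matches the paper's (connected components are smooth, orientable, with multiplicity via the constancy lemma; masses add; boundaries supported in $\sing(T)$; Federer--Fleming promotes finite-mass flat boundaries to integer rectifiable ones; blow-up at a.e.\ singular point controls $\sum_i|\Theta_i|$). However, there are two genuine gaps at the technically critical points.

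\textbf{Finiteness of $\mass((\partial T_i)\res U)$ is not automatic.} You invoke ``the standard rectifiability criterion for flat chains whose support lies in a rectifiable set of locally finite $\cH^{m-1}$-measure'' to upgrade $\partial T_i\res U$ to an integer rectifiable current \emph{of locally finite mass}. The theorems in this circle (Federer's flat support theorem, White's rectifiability theorem for flat chains, Federer--Fleming boundary rectifiability) all require finite mass as a \emph{hypothesis}; none concludes finite mass merely from $\spt(\partial T_i)\subset K$ with $\cH^{m-1}(K)<\infty$. So the citation is circular: you need to first \emph{prove} $\mass((\partial T_i)\res U)<\infty$, and only then can Federer--Fleming give rectifiability. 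The paper does exactly this via a non-trivial cutoff argument: covering $K:=\sing(T)\cap\overline U$ with balls of total $(m-1)$-measure controlled by $\cH^{m-1}(K)$, multiplying by the product $\varphi^k=\prod_j\varphi_j^k$ of scaled cutoffs, and estimating $|T_i(d\varphi^k\wedge\omega)|\leq C\sum_j\|d\varphi_j^k\|_0\|T_i\|(2B_j^k)\leq CM\|\omega\|_c\cH^{m-1}(K)$, where the \emph{uniform} mass ratio bound $\|T_i\|(\bB_r(x))\leq\|T\|(\bB_r(x))\leq Mr^m$ (valid for $x\in U$ by the interior monotonicity formula applied on a compact set away from $\spt^p\partial T$) is essential. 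Without this step you have no control on the boundary mass: one can easily produce smooth, finite-area open manifolds whose distributional boundary has unbounded local mass, even with closure in a rectifiable set of finite $\cH^{m-1}$-measure.

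\textbf{The density bound used in (c) is wrong.} You write that the number of sheets accumulating at $q$ and each $|\Theta_i(q)|$ ``are bounded purely in terms of $p$ via the density bound $\Theta(T,q)\leq p/2$.'' This upper density bound at singular points is false: the multiplicity of $T$, as a representative $\modp$, is $\leq p/2$ at $\|T\|$-a.e.\ point, but the density at a singular point is $\frac{1}{2}\sum_j|m_j|$ over the half-planes in a tangent cone, with $\sum_j m_j\equiv 0\;\modp$ and no a priori bound on the number of summands; densities can exceed $p/2$ arbitrarily much. The quantity that actually controls the sum is again the monotonicity-formula constant $M=M(U)$: the paper applies the boundary monotonicity formula to the blow-up $S$ of $T_i$, gets $\|S\|(\bB_1)\geq\frac{|\Theta_i(q)|}{2}\omega_m$, and then compares against $M\geq\lim_{r\downarrow 0}r^{-m}\|T\|(\bB_r(q))\geq\sum_i\omega_m\frac{|\Theta_i(q)|}{2}$, yielding $\sum_i|\Theta_i(q)|\leq\frac{2M}{\omega_m}$ for $\cH^{m-1}$-a.e.\ $q$. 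This is what gives $\sum_i\mass((\partial T_i)\res U)\leq\frac{2M}{\omega_m}\cH^{m-1}(K)<\infty$. (Also note that your auxiliary claim $|\Theta_i(q)|\leq Q_i$ is not justified and is not needed in the paper's argument.)
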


It is tempting to advance the following conjecture.

\begin{conjecture}
The conclusions of Theorem \ref{t:main10} hold for $p$ even as well.  
\end{conjecture}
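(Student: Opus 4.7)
The plan is to reduce to a new structural analysis at density $p/2$ singular points, which is the only genuinely new phenomenon when $p$ is even. First, decompose
\[
\sing (T) = \bigcup_{Q=1}^{p/2} \sing_Q (T) \cup \sing_{\mathrm{nf}}(T),
\]
where $\sing_{\mathrm{nf}}(T)$ denotes singular points where every tangent cone is non-flat. For $Q<p/2$, Theorem \ref{t:main3} already gives $\Ha^{m-2+\alpha}(\sing_Q(T))=0$ for all $\alpha>0$, and one expects the arguments underlying Theorem \ref{t:main10} (i.e.~the quantitative stratification of \cite{NV} combined with the center manifold techniques) to yield, with only notational modifications, that each $\sing_Q(T)$ with $Q<p/2$ is $(m-2)$-rectifiable with locally finite $\Ha^{m-2}$ measure. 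Likewise, $\sing_{\mathrm{nf}}(T)$ sits inside the non-flat stratum, and standard dimension-reduction bounds its dimension by $m-2$. Hence the entire conjecture reduces to controlling $\sing_{p/2}(T)$ intersected with the flat singular set.

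The next step is to establish the analogue of Theorem \ref{t:main3} for density $p/2$ flat singular points. At such a point $q$, any flat tangent cone must take the form $\frac{p}{2}\a{\pi}$ for a single oriented $m$-plane $\pi$ (other splittings like $Q_1\a{\pi_1}+Q_2\a{\pi_2}$ with $Q_1+Q_2=p$ and both $Q_j\leq p/2$ either reduce to the case $Q<p/2$ treated above or force coincidence of the planes by minimality combined with the structure of area-minimizing cones). One then seeks a \emph{two-valued graphical approximation}: in a small neighborhood of $q$, after rotating so that $\pi=\R^m\times\{0\}$, there should exist two integer-rectifiable currents $T_1,T_2$, each close in flat norm to $\frac{p}{2}\a{\pi}$, so that $T\equiv T_1-T_2\,\modp$, with $T_1,T_2$ supported on Lipschitz graphs and with branching locus $\{T_1=T_2\}$ governing the singular set. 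The model case $p=4$, $m=2$, $\bar n=1$ in Example \ref{example:flat singularities} illustrates the expected behavior: the branch curve $\gamma$ is $(m-1)$-rectifiable and exhausts $\sing_{p/2}(T)$ locally.

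The key analytical input to make this rigorous is a \emph{two-sided} version of the center manifold construction of the present paper, running simultaneously on both sheets $T_1,T_2$ and synchronized with an Almgren-type frequency analysis for the difference $T_1-T_2$. The difference, viewed as a multiple-valued section of the normal bundle of a center manifold for $\frac{p}{2}\a{\pi}$, should satisfy an approximate Dir-minimizing property with boundary data forced to vanish on the unknown branch set; its frequency function should then obstruct excessive concentration and give, via the Naber--Valtorta scheme, the desired $(m-1)$-rectifiability and locally finite $\Ha^{m-1}$ measure. Combining this with the control on $\sing_Q(T)$ for $Q<p/2$ and with $\sing_{\mathrm{nf}}(T)$ closes the argument.

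The hard part, by a wide margin, will be making the two-sheet decomposition canonical and stable under rescaling in \emph{arbitrary codimension} $\bar n$. In the hypersurface case this is essentially encoded in White's theorem \cite{White79} combined with maximum principle arguments, but for $\bar n\geq 2$ no such structural tool is available: one must rule out, in the limit, collapse phenomena in which $T_1$ and $T_2$ degenerate onto different sheets of a higher-multiplicity cone or exhibit branch behavior that is not merely a sign flip. Equivalently, one must upgrade the flatness persistence arguments (which in the odd case rely crucially on the absence of density $p/2$) to a setting where cancellation of two half-density sheets is the generic singular scenario. Overcoming this will likely require a new frequency monotonicity tailored to the pair $(T_1,T_2)$, together with a unique continuation principle for their difference, neither of which is presently available in the $\modp$ theory.
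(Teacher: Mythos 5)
This statement is an open conjecture in the paper: no proof is given there, and your proposal does not close it either, as you yourself concede in the last paragraph. The decisive step --- controlling $\sing_{p/2}(T)$ near flat tangent cones $\frac{p}{2}\a{\pi}$ --- is exactly the missing ingredient. The two-sheet decomposition $T=T_1-T_2\ \modp$ with both $T_i$ Lipschitz graphs close to $\frac{p}{2}\a{\pi}$ is not a known fact: in codimension one it essentially amounts to White's $p=4$ structure theorem, and in codimension $\bar n\geq 2$ nothing of the sort is available; indeed the whole point of the paper's linear theory is that near such points the current is modeled on \emph{special} $Q$-valued graphs, whose branch set is only known to have dimension at most $m-1$, with no rectifiability or measure finiteness statement. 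The ``two-sided center manifold'', the frequency monotonicity for the pair $(T_1,T_2)$, and the unique continuation for their difference are all unproved; invoking the Naber--Valtorta scheme presupposes quantitative estimates (a frequency pinching or symmetry-pinching control on the branch set) that no one has established in this setting. So the proposal is a plausible research program, not a proof.

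There is also a concrete error in your reduction step. You claim that $\sing_{\mathrm{nf}}(T)$ ``sits inside the non-flat stratum and standard dimension-reduction bounds its dimension by $m-2$.'' This is false for even $p$: Lemma \ref{l:symmetric} only says that an $(m-1)$-symmetric non-flat cone has density $\geq \frac{p}{2}$, and for even $p$ such cones occur as tangent cones along $(m-1)$-dimensional sets (e.g.\ two transversally crossing sheets, each of multiplicity $\frac{p}{2}$ when $p=4$, meeting along an $(m-1)$-dimensional edge, which is area minimizing $\modp$). Dimension reduction only controls the stratum $\cS^{m-2}$; points with non-flat $(m-1)$-symmetric tangent cones lie in $\cS^{m-1}\setminus\cS^{m-2}$ and must be handled, as in the paper's proof of Theorem \ref{t:main10} for odd $p$, through the quantitative stratification of \cite{NV} via an analogue of Corollary \ref{c:non-m-simmetrici} --- and it is precisely there that the even case breaks down, because the density-$\frac{p}{2}$ comparison argument no longer excludes $(m,\eps)$-symmetry at flat singular points of density $\frac{p}{2}$. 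In short: the non-flat part needs the Naber--Valtorta input rather than dimension reduction, and the flat density-$\frac{p}{2}$ part is the genuinely open problem; neither is settled by your argument.
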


From the latter conjecture one can easily conclude an analogous structure theorem as in Corollary \ref{c:main11}. Note that the conjecture is known to hold for $p=2$ in every codimension (in which case, in fact, we know that $\sing (T)$ has dimension at most $m-2$) and for $p=4$ in codimension $1$. 

\subsection{Plan of the paper} The paper is divided into five parts: the first four parts contain the arguments leading to the proof of Theorems \ref{t:main} and \ref{t:main3}, while the last part is concerned with the proof of the rectifiability Theorem \ref{t:main10} and of Corollary \ref{c:main11}. Each part is further divided into sections. The proof of Theorems \ref{t:main} and \ref{t:main3} is obtained by contradiction, and is inspired by F. Almgren's work on the partial regularity for area minimizing currents in any codimension as revisited by the first-named author and E. Spadaro in \cite{DLS_Lp,DLS_Center,DLS_Blowup}. In particular, Part \ref{part:preliminary} contains the preliminary observations and reductions aimed at stating the contradiction assumption for Theorems \ref{t:main} and \ref{t:main3}, whereas Part \ref{part:Lp}, Part \ref{part:Center}, and Part \ref{part:Blowup} are the counterpart of the papers \cite{DLS_Lp}, \cite{DLS_Center}, and \cite{DLS_Blowup}, respectively. An interesting feature of the regularity theory presented in this work is that Almgren's multiple valued functions minimizing the Dirichlet energy are not the right class of functions to consider when one wants to approximate a minimizing current $\modp$ in a neighborhood of a flat interior singular point whenever the density of the point is precisely $\frac{p}{2}$. Solving this issue requires (even in the codimension $\bar n = 1$ case) the introduction of a class of \emph{special} multiple valued functions minimizing a suitably defined Dirichlet integral. The regularity theory for such maps (which we call \emph{linear theory}) is the content of our paper \cite{DLHMS_linear}. Applications of multivalued functions to flat chains $\modp$ were already envisioned by Almgren in \cite{Almgren_modp}, even though he considered somewhat different objects than those defined in \cite{DLHMS_linear}. Because of this profound interconnection between the two theories, the reading of \cite{DLHMS_linear} is meant to precede that of the present paper. \\

{\bf Acknowledgments.} C.D.L. acknowledges the support of the NSF grants DMS-1946175 and DMS-1854147. A.M. was partially supported by INdAM GNAMPA research projects. The work of S.S. was supported by the NSF grants DMS-1565354, DMS-RTG-1840314 and DMS-FRG-1854344.

\section{Almgren's regularity theory in the $\modp$ setting} \label{sec:guide}
Before entering the main body of the paper, we would like to briefly present an overview of Almgren's regularity theory adapted to the setting of area minimizing currents $\modp$, focusing onto the points where major changes were required in order to overcome the intrinsic difficulties of the problem under consideration. To do so, we restrict our attention to the proof of Theorem \ref{t:main}. For the sake of simplicity, we will assume throughout this discussion that $\Sigma=\R^{m+\bar n}$. \\

Towards a proof by contradiction of Theorem \ref{t:main}, we exploit the classical Almgren's stratification principle for stationary varifolds in order to reduce the contradiction assumption to the following (see Proposition \ref{p:contradiction_sequence}): there exist integers $p \ge 2$ and $Q \le \sfrac{p}{2}$, reals $\alpha,\eta >0$, an open ball $\Omega \ni 0$, and an $m$-dimensional rectifiable current $T$ in $\R^{m+\bar n}$ such that:
\begin{itemize}
\item[(i)] $T$ is area minimizing $\modp$ in $\Omega$ with $(\partial T) \res \Omega =0$ $\modp$ in $\Omega$ and $0\in \sing_Q (T)$;
\item[(ii)] there exist a sequence of radii $r_k\downarrow 0$ and an $m$-dimensional plane $\pi_0$ such that the integral varifolds $\V (T_{0, r_k})$ associated with the rescaled currents centered at 0 converge to a varifold $V = Q\, \mathcal{H}^m \res \pi_0 \otimes \delta_{\pi_0}$;
\item[(iii)] it holds \[\limsup_{k\to\infty} \mathcal{H}^{m-1+\alpha}_\infty ( \sing_Q (T_{0, r_k})\cap \bB_1) \geq \eta\,,\] where $\bB_1$ is the unit open ball in $\R^{m+\bar n}$.
\end{itemize}

The next step in Almgren's strategy would then be to approximate the currents $T_k=T_{0,r_k}$ with graphs of functions $u_k$ defined on $\pi_0$, taking values in the metric space $\mathcal{A}_Q(\pi_0^\perp)$ of $Q$-points in $\pi_0^\perp$ (that is, the space of discrete measures $T=\sum_{i=1}^Q \a{v_i}$ on $\pi_0^\perp$, with positive integer coefficients and total mass $Q$), and minimizing a suitable linearization of the mass functional ($\Dir$-minimizing $Q$-valued functions). In our setting, the main difficulties related to this step occur when $p$ is even and $Q=\sfrac{p}{2}$. In this case, indeed, Almgren's $\Dir$-minimizing $Q$-valued functions are not the correct objects to perform such approximation; see \cite[Example 1.2]{DLHMS_linear}. Notice that the phenomenon responsible of the inadequacy of classical $\Dir$-minimizers in the approximation of area minimizing currents $\modp$ is precisely the existence of flat singular points of density $Q=\sfrac{p}{2}$ discussed in Example \ref{example:flat singularities}. In order to introduce a class of multiple valued functions adapted to our needs, in \cite{DLHMS_linear} we defined the metric space $\mathscr{A}_Q(\R^n)$ of \emph{special} $Q$-points in Euclidean space $\R^n$, and we studied the regularity properties of $\mathscr{A}_Q(\R^n)$-valued functions minimizing a functional representing the natural linearization of the mass $\modp$ (henceforth called $\Dir$-minimizing \emph{special} $Q$-valued functions). For the reader's convenience, we briefly recall here some basic notation introduced in \cite{DLHMS_linear}. The space $\mathscr{A}_Q(\R^n)$ is defined by
\[
\mathscr{A}_Q(\R^n) := \mathcal{A}_Q(\R^n) \sqcup \mathcal{A}_Q(\R^n) / \sim\,,
\]
where $\sim$ is the equivalence relation defined by
\begin{align*}
 (S,1) \sim (T,1) & \iff S=T\,,\\
 (S,-1) \sim (T, -1) & \iff S=T\,,\\
 (S,1) \sim (T,-1) & \iff \exists\, z \in \R^n \, \colon \, S=Q\a{z}=T\,.
\end{align*}

Given a Borel measurable map $u \colon \Omega \subset \R^m \to \mathscr{A}_Q(\R^n)$, there is a canonical decomposition of the domain $\Omega$ into three disjoint sets $\Omega_+$, $\Omega_-$, and $\Omega_0$. More precisely, $\Omega_0$ is the set of points $x \in \Omega$ for which there exists $z \in \R^n$ such that $u(x)=(Q\a{z},1)=(Q\a{z},-1)$; $\Omega_+$ and $\Omega_-$ are, instead, the sets of points $x \in \Omega \setminus \Omega_0$ such that $u(x)= (S,1)$ or $u(x)=(S,-1)$ with $S \in \mathcal{A}_Q(\R^n)$, respectively. Furthermore, we define the functions $u^\pm \colon \Omega \to \mathcal{A}_Q(\R^n)$ by
\[
u^\pm(x) = 
\begin{cases}
S & \mbox{if $x \in \Omega_{\pm}$ and $u(x)=(S,\pm1)$}\,, \\
Q\a{\bfeta(S)} &\mbox{if $x \in \Omega \setminus \Omega_{\pm}$ and $u(x)=(S,\mp 1)$}\,,
\end{cases}
\]
where $\bfeta(S)$ denotes the average of the $Q$-point $S$, see Section \ref{sec:notations}. With these notations at hand, we can define the Dirichlet energy $\Dir(u)$ of a $W^{1,2}$ map $u \colon \Omega \to \mathscr{A}_Q(\R^n)$ by setting
\[
\Dir(u) := \Dir(u^+ \ominus \bfeta \circ u) + \Dir(u^- \ominus \bfeta \circ u) + Q\, \Dir(\bfeta \circ u)\,,
\]
where $\bfeta \circ u$ is the ($\R^n$-valued) average of $u$ and $T \ominus z := \sum_{i=1}^Q \a{v_i - z}$ if $T = \sum_{i=1}^Q \a{v_i} \in \mathcal{A}_Q(\R^n)$ and $z \in \R^n$. Moreover, we can define the integer rectifiable $m$-current $\mathbf{G}_u$ in $\R^{m+n}=\R^{m}\times\R^n$ associated with a Lipschitz function $u \colon \Omega \subset \R^m \to \mathscr{A}_Q(\R^n)$ by:
\[
{\bf G}_u := {\bf G}_{u^+} \mres (\Omega_+ \times \R^n) - {\bf G}_{u^-} \mres (\Omega_- \times \R^n) + Q \, {\bf G}_{\bfeta \circ u} \mres (\Omega_0 \times \R^n)\,,
\]
where ${\bf G}_{u^\pm}$ denotes the current associated with the graph of a classical $Q$-valued function as in \cite[Definition 1.10]{DLS_Currents} and ${\bf G}_{\bfeta \circ u}$ is the current associated with the graph of the average $\bfeta \circ u$. For instance, the current $T$ described in Example \ref{example:flat singularities} coincides with the graph ${\bf G}_u$ of the $\mathscr{A}_2(\R)$-valued function defined by
\[
u(x) = 
\begin{cases}
(\a{u_1(x)} + \a{u_2(x)},1) & \mbox{if $x \in B^{>} \cup \gamma$}\,,\\
(\a{u_1(x)} + \a{u_2(x)},-1) & \mbox{if $x \in B^{<} \cup \gamma$}\,.
\end{cases}
\]

Notice that, as Lipschitz $Q$-valued graphs over a domain $\Omega$ are integer rectifiable currents without boundary in the cylinder $\Omega \times \R^n$, Lipschitz \emph{special} $Q$-valued graphs over $\Omega$ are integer rectifiable currents without boundary $\modp$ in $\Omega \times \R^n$. \\

Now that we have the correct class of approximating functions, we can get back to Almgren's program. The first step is an approximation of each current $T_k$ with the graph of a Lipschitz special $Q$-valued function; see Proposition \ref{p:max}. The proof is based on a BV estimate for the slices of $T_k$ with respect to the plane $\pi_0$, see Lemma \ref{conto_unidimensionale}: while this is classically achieved in \cite{DLS_Lp} testing the current with suitably defined differential forms, our setting requires an \emph{ad hoc} proof due to the fact that $T_k$ may possibly have non-trivial classical boundary. The errors in such an approximation (which, we note in passing, does not use that $T_k$ is area minimizing $\modp$) are controlled linearly by the \emph{excess $\modp$} of $T_k$ with respect to the plane $\pi_0$: this is defined as the difference between the mass of $T_k$ and the mass modulo $p$ of its projection onto $\pi_0$; see Definition \ref{d:excess}. Exploiting the minimality of $T_k$, we can then substantially improve the results of this first Lipschitz approximation in two ways: first, upgrading the control of the errors in terms of a \emph{superlinear} power of the excess $\modp$ (Theorem \ref{thm:almgren_strong_approx}); second, showing that the approximating special $Q$-valued function is close to a $\Dir$-minimizer (Theorem \ref{thm:final_harm_approx}). Finally, we introduce a second notion of excess, called the \emph{nonoriented excess} and smaller than the excess $\modp$ (formula \eqref{e:no_excess}), and we show that all the error estimates in the aforementioned approximation can be upgraded replacing the excess $\modp$ with the nonoriented excess; see Theorem \ref{thm:strong-alm-unoriented}. The nonoriented excess is a more accurate measure of the \emph{local} tilting of a current with respect to a plane regardless of orientations (much like the varifold excess), a feature that is very important in our setting, since area minimizing currents $\modp$ may exhibit changes of orientation even when their boundary $\modp$ vanishes. Furthermore, the flexibility of the nonoriented excess with respect to localization will be of vital importance in the next step of Almgren's program, namely the construction of the center manifold.\\

The latter is arguably the most delicate part in Almgren's proof, and it is motivated by the following issue. Given the sequence of currents $T_k$ converging to $Q \a{\pi_0}$ in the sense of currents $\modp$, and given the sequence $u_k$ of $\mathscr{A}_Q(\pi_0^\perp)$-valued approximating functions, it would be tempting to perform an appropriate (non-homogeneous) rescaling of the functions leading to a new sequence which converges, in the limit as $k \to \infty$, to a \emph{non-trivial} $\Dir$-minimizing special $Q$-valued function $u_\infty$. In view of (iii), if we could prove that the function $u_\infty$ ``inherits'' the singularities of the currents, then we would obtain the desired contradiction by invoking the main result of \cite{DLHMS_linear}, namely the following

\begin{theorem*}[see {\cite[Theorem 10.2]{DLHMS_linear}}]
The singular set of a $\Dir$-minimizing special $Q$-valued function defined on a domain in $\R^m$ has Hausdorff dimension at most $m-1$.
\end{theorem*}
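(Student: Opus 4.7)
My plan is to adapt Almgren's regularity scheme for classical $\Dir$-minimizing $Q$-valued functions to the special setting, where the domain $\Omega$ splits canonically as $\Omega_+ \sqcup \Omega_- \sqcup \Omega_0$ and where the bound degrades from $m-2$ to $m-1$ precisely because of codimension-one phenomena along $\partial\Omega_+\cap \partial\Omega_-$. First I would establish basic regularity: H\"older continuity of $u$, together with the fact that on $\Omega_\pm$ the component $u^\pm$ is, up to sign convention, a classical Almgren $\Dir$-minimizing $\mathcal{A}_Q$-valued map, while $\bfeta\circ u$ is globally harmonic on $\Omega$. From the classical theory it then already follows that $\Ha^{m-2+\alpha}(\sing u \cap (\Omega_+\cup \Omega_-))=0$, so the real task is to estimate the \emph{interface} singular set $\sing u \cap \Omega_0$.

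Next I would introduce a frequency function $I_u(x_0, r)$ of Almgren type, built from the full Dirichlet energy
\[
\Dir(u) = \Dir(u^+ \ominus \bfeta\circ u) + \Dir(u^- \ominus \bfeta\circ u) + Q\, \Dir(\bfeta\circ u)\,,
\]
and prove its monotonicity in $r$ via the natural inner and outer variation identities for special $\Dir$-minimizers. Monotonicity would give compactness of the rescalings $u_{x_0,r}$ and the existence of homogeneous tangent maps at each singular point. From here one applies Federer's dimension-reduction argument, stratifying $\sing u$ by the dimension of the translation-invariance of tangent maps. The top stratum corresponds to tangent maps invariant under an $(m-1)$-dimensional plane, which after classification must be ``interface cones'' of the form $(v,+1)$ on one half-space and $(v,-1)$ on the other, joined along the hyperplane where $v$ collapses to a $Q$-fold point $Q\a{z}$. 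Such cones have an $(m-1)$-dimensional spine but no further translation invariance, which is exactly the sharp bound asserted.

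The main obstacle is proving monotonicity of $I_u$ across the interface $\Omega_0$, where $u$ degenerates to a $Q$-fold point and the two sheets $u^\pm$ are no longer individually defined. Concretely, the inner variation identity must hold without a distributional contribution supported on $\partial \Omega_\pm$; this is the analogue, for special $\Dir$-minimizers, of Almgren's delicate frequency computation, and it requires combining the harmonicity of $\bfeta\circ u$ with a slicing argument showing that $\Omega_0$ is crossed transversally by $\Ha^{m-1}$-almost every affine hyperplane. A parallel difficulty is the classification of $(m-1)$-invariant tangent cones, which rests on applying Almgren's classical codimension-two bound inside each open complementary region together with a nodal-set estimate for the harmonic average $\bfeta\circ u$ to control the interface itself.
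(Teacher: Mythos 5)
The statement is a citation, not a result proved in this paper: it is exactly \cite[Theorem 10.2]{DLHMS_linear}, the companion ``linear theory'' paper, whose reading the authors explicitly say should precede the present one. So there is no proof in this manuscript to compare your sketch against directly.

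That said, your outline is broadly consistent with what the companion paper visibly does, judging from the references scattered throughout the present paper: H\"older continuity of special $\Dir$-minimizers appears there as Theorem~8.1, the Almgren-type frequency function and its monotonicity are Definition~9.1 and Theorem~9.2, the dimension bound itself is Theorem~10.2, and a structure result is Proposition~10.3. The route via frequency monotonicity, compactness of blow-ups, and Federer-style dimension reduction is therefore almost certainly the one taken, and you correctly locate the source of the degradation from codimension two to codimension one: one-dimensional $\Iqspec$-valued homogeneous minimizers \emph{can} be singular at the origin, with a sign flip across it, whereas one-dimensional $\Iqs$-valued minimizers never are.

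Two of your technical steps deserve calibration, however. First, the slicing/transversality argument you propose for pushing the inner variation identity across $\Omega_0$ is not the right instrument: the canonical decomposition $\Omega=\Omega_+\sqcup\Omega_-\sqcup\Omega_0$ is purely measure-theoretic, $\Omega_0$ can a priori have full Lebesgue measure (take $u= Q\a{h}$ for $h$ harmonic), and the variation identities should instead be derived directly from the metric structure of $\Iqspec$ --- its bi-Lipschitz embedding $\zetab$ of \cite[Theorem 5.1]{DLHMS_linear} and the intrinsic form of $\Dir$ --- without any domain decomposition. Second, the interface $\Omega_0$ is the vanishing locus of the \emph{reduced} part $|u\ominus\bfeta\circ u|$, not the nodal set of the harmonic average $\bfeta\circ u$; the two sets are unrelated in general, so a nodal-set estimate for $\bfeta\circ u$ does not control $\Omega_0$. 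For a concrete illustration of how the one-dimensional reduction actually closes in this setting, look at the unique continuation Lemma for $\Dir$-minimizers proved in the present paper: after dimension reduction one arrives at a one-dimensional homogeneous $\Iqspec$-valued cone vanishing on a half-line, and the contradiction is reached by observing that such a cone is in fact realized by an $\Iqs$-valued map, precisely because $(Q\a{0},1)=(Q\a{0},-1)$ in $\Iqspec$.
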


The issue in this plan is that the limit function $u_\infty$ may not exhibit any singularities at all: this happens when, at the natural rescaling rate of the functions $u_k$, the currents $T_k$ are \emph{centered} around a smooth sheet. Conceived precisely to mod-out such a smooth sheet, the center manifold is an $m$-dimensional surface equipped with a special $Q$-valued \emph{section} $N_k$ of its normal bundle which approximately parametrizes $T_k$. The blow-up argument described above then leads to the desired contradiction when performed on the approximations $N_k$. This portion of Almgren's proof is sufficiently robust to go through in our setting without the need of substantial modifications, with all the main necessary estimates being already available from the new Lipschitz approximation and the detailed analysis of $\Dir$-minimizing special $Q$-valued functions contained in \cite{DLHMS_linear}.   

\section{Notation} \label{sec:notations}

We add below a list of standard notation in Geometric Measure Theory, which will be used throughout the paper. More notation will be introduced in the main text when the need arises.
\begin{itemizeb}\leftskip 0.8 cm\labelsep=.3 cm

\item[$\bB_r(x)$] open ball in $\R^{m+n}$ centered at $x \in \R^{m+n}$ with radius $r > 0$;

\item[$\omega_m$] Lebesgue measure of the unit disc in $\R^m$;

\item[$|A|$] Lebesgue measure of $A \subset \R^{m+n}$;

\item[$\Ha^m$] $m$-dimensional Hausdorff measure in $\R^{m+n}$;

\item[$\Lambda_m(\R^{m+n})$] vector space of $m$-vectors in $\R^{m+n}$;

\item[$\mathcal{D}^m(U)$] space of smooth differential $m$-forms with compact support in an open subset $U \subset \R^{m+n}$;

\item[$\mathscr{F}_m$, ($\mathscr{F}_{m}^p$)] integral flat chains (modulo $p$) of dimension $m$;

\item[$\Rc_m$, ($\Rc_{m}^p$)] integer rectifiable currents (modulo $p$) of dimension $m$; we write $T = \llbracket M, \vec{\tau}, \theta \rrbracket$ if $T$ is defined by integration with respect to $\vec{\tau} \, \theta \, \Ha^m \res M$ for a locally $\Ha^m$-rectifiable set $M$ oriented by the Borel measurable unit $m$-vector field $\vec{\tau}$ with multiplicity $\theta$; if $M$ is an oriented submanifold of class $C^1$, then we simply write $\a{M}$ for the associated multiplicity one current;

\item[$\In_m$, ($\In_{m}^p$)]  integral currents (modulo $p$) of dimension $m$;

\item[$\mass$, ($\mass^p$)] mass functional (mass modulo $p$);

\item[$\|T\|$, ($\|T\|_p$)] Radon measure associated to a current $T$ (to a class $\left[ T \right]$) with locally finite mass (mass modulo $p$);

\item[$\vec{T}$] Borel measurable unit $m$-vector field in the polar decomposition $T = \vec{T} \, \|T\|$ of a current with locally finite mass; if $T = \llbracket M, \vec{\tau}, \theta \rrbracket$ is rectifiable, then $\vec{T} = {\rm sgn}(\theta) \, \vec{\tau}$ $\|T\|$-a.e., so that $\vec{T}$ is an orientation of $M$;  

\item[$T \res A$] restriction of the current $T$ to the set $A$: well defined for any Borel $A$ when $T$ has locally finite mass, and for $A$ open if $T$ is any current;

\item[$\langle T, f, z \rangle$] slice of the current $T$ with the function $f$ at the point $z$;

\item[$f_\sharp T$] push-forward of the current $T$ through the map $f$;

\item[$\Theta^m(\mu,x)$] $m$-dimensional density of the measure $\mu$ at the point $x$, given by $\Theta^m(\mu,x) := \lim_{r\to 0^+} \frac{\mu(\bB_r(x))}{\omega_m\, r^m}$ when the limit exists;

\item[$\Theta_T(x)$, $\Theta(T,x)$] same as $\Theta^m(\|T\|,x)$ if $T$ is an $m$-dimensional current with locally finite mass;

\item[$\spt(\mu)$] support of $\mu$, where $\mu$ is a Radon measure on $\R^{m+n}$: it is defined as the set of all points $x \in \R^{m+n}$ such that $\mu(\bB_r(x))>0$ for all $r>0$;

\item[$\spt(T)$] same as $\spt(\|T\|)$ if $T$ is a current with locally finite mass;

\item[$\spt^p(T)$] support $\modp$ of an integer rectifiable current $T$: it only depends on the equivalence class $[T]$;

\item[$\V(M,\Theta)$] rectifiable $m$-varifold defined by $\Theta \, \Ha^m \res M \otimes \delta_{T_{\cdot} M}$ for a locally $\Ha^m$-rectifiable set $M$ and a locally $\Ha^m \res M$-integrable multiplicity $\Theta$;

\item[$\V(T)$] integral varifold associated to an integer rectifiable current $T$: if $T = \llbracket M, \vec{\tau}, \theta \rrbracket$, then $\V(T) = \V(M,\abs{\theta})$;

\item[$\delta V {\left[X\right]} $] first variation of the varifold $V$ in the direction of the vector field $X$;

\item[$A_\Sigma$] second fundamental form of a submanifold $\Sigma \subset \R^{m+n}$;

\item[$H_\Sigma$] mean curvature of a submanifold $\Sigma \subset \R^{m+n}$;

\item[$\Lip(X,Y)$] space of Lipschitz functions $f \colon X \to Y$, where $X,Y$ are metric spaces;

\item[$\Lip(f)$] Lipschitz constant of the Lipschitz function $f$;

\item[$\left(\mathcal{A}_Q(\R^n), \cG \right)$] metric space of classical $Q$-points in $\R^n$;

\item[$\left(\mathscr{A}_Q(\R^n), \cG_s \right)$] metric space of special $Q$-points in $\R^n$;

\item[$\bfeta(S)$] average of the $Q$-point $S$, so that if $S = \sum_{i=1}^Q \a{S_i} \in \mathcal{A}_Q(\R^n)$ then $\bfeta(S) = Q^{-1}\, \sum_{i=1}^Q S_i \in \R^n$;

\item[$\bfeta \circ f$] average of the (possibly special) multiple valued function $f$;

\item[$\gr (u)$] set-theoretical graph of a (possibly multi-valued) function $u$;

\item[$\bT_F$] integer rectifiable current associated (via push-forward) to the image of a (possibly special) multiple valued function;

\item[$\bG_u$] integer rectifiable current associated to the graph of a (possibly special) multiple valued function.

\end{itemizeb}

\newpage

\part{Preliminary observations and blow-up sequence} \label{part:preliminary}

\section{Preliminary reductions}

We recall first that, as specified in \cite[4.2.26]{Federer69}, for any $S\in \Rc_m (\Sigma)$ we can find a {\em representative $\modp$}, namely a $T\in \Rc_m (\Sigma)$ congruent to $S$ $\modp$ such that
\begin{equation}\label{e:rappresentante}
\|T\| (A) \leq \frac{p}{2}\, \mathcal{H}^m (A) \qquad \mbox{for every Borel $A\subset \Sigma$.}
\end{equation}
In particular, such a representative has multiplicity function $\theta$ such that $\abs{\theta} \leq p/2$ at $\|T\|$-a.e. point, and it satisfies $\mass^p ([T\res U]) = \|T\| (U)$ for every open set $U$ and $\spt (T) = \spt^p (T)$ (observe in passing that the restriction to an open set $U$ is defined for every current). It is evident that if $T \in \Rc_m (\Sigma)$ is area minimizing $\modp$ in $\Omega \cap \Sigma$ then $T$ is necessarily representative $\modp$ in $\Omega \cap \Sigma$, in the sense that \eqref{e:rappresentante} holds true for every Borel $A \subset \Omega \cap \Sigma$. For this reason, we shall always assume that $T$ is representative $\modp$, and that the aforementioned properties concerning multiplicity, mass and support of $T$ are satisfied. Note also that such $T$ is area minimizing $\modp$ in any smaller open set $U\subset \Omega$. Moreover $T$ is area minimizing $\modp$ in $\Omega$ if and only if $T\res \Omega$ is area minimizing $\modp$ in $\Omega$. Also, for $\Omega$ sufficiently small the regularity of $\Sigma$ guarantees that $\Sigma \cap \Omega$ is a graph, and thus, if in addition $\Omega$ is a ball, $\overline{\Sigma \cap \Omega}$ is a Lipschitz deformation retract of $\mathbb R^{m+n}$. A current $S\in \Rc_m (\Sigma \cap \Omega)$ is thus a cycle $\modp$ if and only if it is a cycle $\modp$ in $\mathbb R^{m+n}$.
In these circumstances it does not matter what the shape of the ambient manifold $\Sigma$ is outside $\Omega$ and thus, without loss of generality, we can assume that $\Sigma$ is in fact an entire graph. By the same type of arguments we can also assume that $\partial^p [T] = 0$ in $\Omega$. We summarize these reductions in the following assumption (which will be taken as a hypothesis in most of our statements) and in a lemma (which will be used repeatedly). 

\begin{ipotesi}\label{ass:main} 
$\Sigma$ is an entire $C^{3, a_0}$ $(m+\bar n)$-dimensional graph in $\mathbb R^{m+n}$ with $0 < a_0 \leq 1$, and $\Omega \subset \mathbb R^{m+n}$ is an open ball. $T$ is an $m$-dimensional representative $\modp$ in $\Sigma$ that is area minimizing $\modp$ in $\Sigma \cap \Omega$ and such that $(\partial T) \res \Omega =0$ $\modp$ in $\Omega$.  
\end{ipotesi}

\begin{lemma}\label{l:hom_trivial}
Let $\Omega$, $\Sigma$ and $T$ be as in Assumption \ref{ass:main}. Let $T'\in \Rc_m (\Sigma)$ be such that
$\spt (T'-T) \subset \Omega$ and $\partial T' = \partial T\, \modp$. Then
\begin{equation}\label{e:am_2}
\mass (T\res \Omega) \leq \mass (T' \res \Omega)\, .
\end{equation}
\end{lemma}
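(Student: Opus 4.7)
The plan is to reduce \eqref{e:am_2} to the global mass comparison $\mass(T) \leq \mass(T')$ (which will follow from area minimality applied to the difference $S := T' - T$), by verifying (a) that $\|T\|$ and $\|T'\|$ agree outside $\Omega$, so that the two mass inequalities are equivalent; and (b) that $S$ is a boundary $\modp$ with filling in $\Omega \cap \Sigma$, allowing Definition \ref{def:am_modp} to be invoked.

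For (a), observe that $\spt(T' - T)$ is closed in $\R^{m+n}$ and contained in the open ball $\Omega$, hence at positive distance from $\partial \Omega$. Thus $T$ and $T'$ coincide on the open set $V := \R^{m+n} \setminus \spt(T'-T) \supset \R^{m+n} \setminus \Omega$, whence $\|T\|(\R^{m+n} \setminus \Omega) = \|T'\|(\R^{m+n} \setminus \Omega)$, and the equivalence of $\mass(T) \leq \mass(T')$ with \eqref{e:am_2} follows. Moreover, $S \in \Rc_m(\Omega \cap \Sigma)$ with $\partial S = \partial T' - \partial T \equiv 0 \modp$ by the hypothesis on the boundaries.

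For (b) I proceed via a cone-and-retract construction. Since $\Omega$ is a ball and $\spt(S) \subset \Omega$ compactly, the cone $W_0$ of $S$ with vertex at the center $c$ of $\Omega$ is an integer rectifiable $(m+1)$-current with $\spt(W_0)$ compactly contained in $\Omega$ (by convexity of $\Omega$). From the standard cone formula $\partial W_0 = S - \mathrm{cone}(\partial S)$ and from $\partial S \equiv 0 \modp$ one gets $\partial W_0 \equiv S \modp$. This chain lives in $\R^{m+n}$ rather than in $\Sigma$, so to land in $\Omega \cap \Sigma$ I push it forward under a Lipschitz retraction. By the reductions preceding Assumption \ref{ass:main}, $\overline{\Sigma \cap \Omega}$ is a Lipschitz deformation retract of $\R^{m+n}$, so there exists a Lipschitz $r : \R^{m+n} \to \overline{\Sigma \cap \Omega}$ with $r|_{\Sigma \cap \Omega} = \mathrm{id}$. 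The push-forward $W := r_\sharp W_0$ then has support in $\overline{\Sigma \cap \Omega}$ and satisfies $\partial W = r_\sharp \partial W_0 \equiv r_\sharp S = S \modp$ (the last equality because $r$ fixes $\spt(S)$). Applying Definition \ref{def:am_modp} then gives $\mass(T) \leq \mass(T + S) = \mass(T')$, and combined with (a) this proves \eqref{e:am_2}.

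The main (minor) technical obstacle is that $\F_{m+1}(\Omega \cap \Sigma)$ requires compact support strictly inside $\Omega \cap \Sigma$, whereas $\spt(W)$ a priori sits only in $\overline{\Sigma \cap \Omega}$. This is handled by first shrinking to a concentric ball $\Omega' \Subset \Omega$ still containing $\spt(S)$ and running the cone-and-retract with respect to $\overline{\Sigma \cap \Omega'}$: the resulting filling then has support in $\overline{\Sigma \cap \Omega'}$, which is compactly contained in $\Omega \cap \Sigma$ as required.
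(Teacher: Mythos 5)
Your proof is correct. The paper gives no explicit argument for this lemma, treating it as immediate from the reductions stated just before Assumption \ref{ass:main} — namely that $\overline{\Sigma\cap\Omega}$ is a Lipschitz deformation retract of $\R^{m+n}$, so a compactly supported cycle $\modp$ in $\Sigma\cap\Omega$ is automatically a boundary $\modp$ there — and your cone-and-retract construction (with the shrinking to $\Omega'\Subset\Omega$ so the filling is compactly supported in the open set) is just an explicit realization of that step, hence essentially the paper's intended route.
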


Theorem \ref{t:main} is then equivalent to

\begin{theorem}\label{t:main2}
Under the Assumption \ref{ass:main} $\sing (T)$ has Hausdorff dimension at most $m-1$. 
\end{theorem}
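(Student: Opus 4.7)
The plan is to argue by contradiction, following the blow-up scheme of Almgren as revisited in \cite{DLS_Lp,DLS_Center,DLS_Blowup} and adapted to the $\modp$ setting along the lines sketched in Section \ref{sec:guide}. Suppose $\Ha^{m-1+\alpha}(\sing(T)) > 0$ for some $\alpha > 0$. First I would invoke Almgren's stratification principle for the stationary integral varifold $\V(T)$ to dispose of singular points with no flat tangent cone, whose stratum has dimension at most $m-2$; then I would stratify the remaining flat singular points by density, which (by constancy of the density along flat cones together with the normalization $|\theta| \leq p/2$) must belong to $\{1, \dots, \lfloor p/2 \rfloor\}$. Passing to a point of positive upper $(m-1+\alpha)$-dimensional Hausdorff content in one such stratum $\sing_Q(T)$ and rescaling at a sequence $r_k \downarrow 0$, I obtain a contradiction sequence $T_k = T_{q_k, r_k}$ satisfying $\V(T_k) \to Q \, \Ha^m \res \pi_0 \otimes \delta_{\pi_0}$ and $\Ha^{m-1+\alpha}_\infty(\sing_Q(T_k) \cap \B_1) \geq \eta > 0$, precisely as in the formulation of Proposition \ref{p:contradiction_sequence} recalled in Section \ref{sec:guide}.

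The next stage, occupying Parts \ref{part:Lp}--\ref{part:Blowup}, is to refute this scenario via a quantitative blow-up analysis. On suitable dyadic discs in $\pi_0$, each $T_k$ is approximated by the graph of a Lipschitz multivalued map $u_k$ taking values in $\Iq(\pi_0^\perp)$ when $Q < p/2$, or in the space of \emph{special} $Q$-points $\mathscr{A}_Q(\pi_0^\perp)$ when $p$ is even and $Q = p/2$. The error is first controlled linearly by the excess $\modp$, then by the smaller \emph{nonoriented} excess, and finally upgraded to a superlinear bound together with $L^2$-proximity of $u_k$ to a $\Dir$-minimizer, using the minimality of $T_k$. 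Since $u_k$ alone cannot detect a collapse of sheets onto a single smooth graph, the argument proceeds with the construction of Almgren's center manifold $\cM_k$ — a $C^{3,a}$ graph tracking the ``average sheet'' of $T_k$ — together with a (possibly special) $Q$-valued normal approximation $N_k$ parametrising $T_k$ as a section of the normal bundle of $\cM_k$. Rescaling $N_k$ by its $L^2$-norm on $\B_1$ and extracting a weak subsequential limit, I obtain a nontrivial $\Dir$-minimizing $\mathscr{A}_Q$-valued function $N_\infty$ on a flat disc of $\pi_0$.

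The final step is to transfer the singular $(m-1+\alpha)$-content from the $T_k$ to $N_\infty$. Following the persistence-of-$Q$-points technology of \cite{DLS_Blowup}, I would show that each singular point of $T_k$ of density $Q$ at sufficiently small excess corresponds, at an appropriate scale, to a singular point of $N_k$ relative to $\cM_k$, and that this correspondence survives the passage to the limit, yielding $\Ha^{m-1+\alpha}_\infty(\sing(N_\infty)) > 0$; this is in direct contradiction with \cite[Theorem 10.2]{DLHMS_linear}, which asserts that the singular set of a $\Dir$-minimizing special $Q$-valued function has Hausdorff dimension at most $m-1$. The hard part of the proof, and the ultimate reason for the development of the special multivalued framework of \cite{DLHMS_linear}, is the borderline regime $p$ even and $Q = p/2$: here, opposite-orientation cancellations of the type in Example \ref{example:flat singularities} render classical Almgren $Q$-valued maps blind to exactly the singularities one needs to detect, so every step — Lipschitz approximation, harmonic comparison, construction of the center manifold, and the final blow-up — must be carried out in the special multivalued framework, with the nonoriented excess providing the localization estimates that are robust under the orientation reversals intrinsic to cycles $\modp$.
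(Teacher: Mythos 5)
Your proposal reproduces the paper's strategy essentially verbatim: reduction via stratification and density constancy to a contradiction sequence as in Proposition~\ref{p:contradiction_sequence}, strong Lipschitz approximation by (special) multivalued maps controlled by the nonoriented excess, the center manifold and its normal approximation, and a final blow-up to a $\Dir$-minimizing map. Two of your assertions are, however, off. First, the set of singular points admitting no flat tangent cone is the stratum $\cS^{m-1}$, whose Hausdorff dimension is at most $m-1$, \emph{not} $m-2$: the $Y$-junctions of Taylor's theorem for $p=3$, $m=2$ already live in an $(m-1)$-dimensional set of non-flat points. Your argument survives because $\Ha^{m-1+\alpha}$ still annihilates a set of dimension $m-1$; the bound $m-2$ is simply false.

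Second, and more substantively, in the borderline case $p$ even, $Q=p/2$, the paper does \emph{not} close the argument by showing $\Ha^{m-1+\alpha}_\infty\bigl(\sing(N^b_\infty)\bigr)>0$ and invoking \cite[Theorem~10.2]{DLHMS_linear}. Instead, Theorem~\ref{t:large} produces $\Ha^{m-1+\alpha}_\infty(\Upsilon)\geq \eta/2$ for the collapse set $\Upsilon=\{N^b_\infty=Q\a{0}\}$ (which, a priori, need not be contained in the singular set), and then \cite[Proposition~10.3]{DLHMS_linear} --- a unique-continuation--type statement asserting that a $\Dir$-minimizing special $Q$-valued map coinciding with $Q\a{\etab\circ w}$ on a set of positive $\Ha^{m-1+\alpha}$ measure must coincide with it everywhere --- upgrades this to $N^b_\infty\equiv Q\a{0}$, contradicting $\|N^b_\infty\|_{L^2}=1$. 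Your route would require first proving $\Upsilon\subset\sing(N^b_\infty)$, which is delicate for $\mathscr{A}_Q$-valued $\Dir$-minimizers precisely because the two orientations are identified on the collapse set $\Omega_0$, and it is not what the paper establishes; the dimension bound of Theorem~10.2 alone does not suffice.
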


\section{Stationarity and compactness}
Another important tool that will be used repeatedly in the sequel is the fact that the integral varifold $\V(T)$ induced by an area minimizing representative $\modp$ $T$ is stationary in the open set $\Omega \cap \Sigma \setminus \spt^p(\partial T)$.

\begin{lemma}\label{l:varifold}
Let $\Omega$, $\Sigma$ and $T$ be as in Assumption \ref{ass:main}. Then $\V (T)$ is stationary in $\Sigma \cap \Omega$, namely 
\begin{equation}\label{e:stat}
\delta \V (T) [X] = 0 \qquad \mbox{for all $X\in C_c^1 (\Omega, \mathbb R^{m+n})$ tangent to $\Sigma$.}
\end{equation}
More generally, for $X\in C_c^1 (\Omega, \mathbb R^{m+n})$ we have
\begin{equation}\label{e:stat2}
\delta \V (T) [X] = - \int X \cdot \vec{H}_T (x)\, d\|T\| (x)\,,
\end{equation}
where the mean curvature vector $\vec{H}_T$ can be explicitly computed from the second fundamental form $A_\Sigma$ of
$\Sigma$. More precisely,
if the orienting vector field of $T$ is $\vec{T} (x) = v_1 \wedge \ldots \wedge v_m$ and $v_i$ are orthonormal, then
\begin{equation}\label{e:mean_curv}
\vec{H}_T (x) = \sum_{i=1}^m A_\Sigma (v_i, v_i)\, .
\end{equation}
\end{lemma}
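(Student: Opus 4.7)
The strategy is to prove stationarity in tangential directions via Lemma \ref{l:hom_trivial}, then extract the general first variation formula by orthogonal decomposition and a standard second fundamental form computation.

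\smallskip

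\emph{Step 1: stationarity in tangential directions.} Fix $X \in C^1_c(\Omega, \mathbb R^{m+n})$ tangent to $\Sigma$ and let $\{\phi_t\}_{|t|<\epsilon}$ be its flow, a family of $C^1$ diffeomorphisms of $\mathbb R^{m+n}$ preserving $\Sigma$ and equal to the identity outside a compact set $K \subset \Omega$. Set $T_t := (\phi_t)_\sharp T \in \Rc_m(\Sigma)$: then $\spt(T_t - T) \subset K$, and $\partial T_t = (\phi_t)_\sharp \partial T$ coincides with $\partial T$ outside $K$ and is congruent to it mod $p$ inside $\Omega$, since the condition $(\partial T)\res \Omega = 0$ mod $p$ is preserved by Lipschitz pushforward (the approximating decompositions in the definition of $\F^p_K$ push forward to valid decompositions with comparable masses). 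Hence $\partial T_t = \partial T$ mod $p$, and Lemma \ref{l:hom_trivial} yields $\mass(T \res \Omega) \leq \mass(T_t \res \Omega)$ for all small $t$. Differentiating at $t = 0$ and replacing $X$ by $-X$ gives $\delta\V(T)[X] = 0$, proving \eqref{e:stat}.

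\smallskip

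\emph{Step 2: general first variation and mean curvature identity.} Given an arbitrary $X \in C^1_c(\Omega, \mathbb R^{m+n})$, decompose $X = X^\top + X^\perp$ into components tangent and normal to $\Sigma$. By Step 1, $\delta\V(T)[X^\top] = 0$, so it suffices to compute $\delta\V(T)[X^\perp]$. At $\|T\|$-a.e.\ $x$ write $\vec T(x) = v_1 \wedge \cdots \wedge v_m$ with $\{v_i\}$ orthonormal and tangent to $\Sigma$; since $\langle X^\perp, v_i\rangle \equiv 0$ along $\Sigma$, differentiating in the $v_i$ direction yields
\[
\langle D_{v_i} X^\perp, v_i \rangle = -\langle X^\perp, D_{v_i} v_i\rangle = -\langle X^\perp, A_\Sigma(v_i, v_i)\rangle,
\]
where the last equality uses that the tangential part of $D_{v_i} v_i$ is orthogonal to $X^\perp$. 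Summing over $i$ and integrating against $\|T\|$ gives
\[
\delta\V(T)[X^\perp] = \int \sum_i \langle D_{v_i} X^\perp, v_i\rangle \, d\|T\|(x) = -\int \langle X^\perp, \vec H_T\rangle \, d\|T\|(x)
\]
with $\vec H_T$ defined by \eqref{e:mean_curv}. Since each $A_\Sigma(v_i,v_i)$ is normal to $\Sigma$, so is $\vec H_T$, and thus $\langle X^\perp, \vec H_T\rangle = \langle X, \vec H_T\rangle$, yielding \eqref{e:stat2}.

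\smallskip

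The only technical delicacy is checking that the mod-$p$ vanishing of the boundary survives the pushforward $(\phi_t)_\sharp$, which follows from the behavior of the flat mod-$p$ pseudo-distance $\F^p_K$ under Lipschitz maps with uniformly controlled constants; everything else is a routine first variation computation.
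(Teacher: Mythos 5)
Your proposal is correct and follows essentially the same route as the paper's proof: both compare $T$ against the pushforward $(\phi_t)_\sharp T$ (the paper with an arbitrary diffeomorphism $\Phi$, you with the flow of $X$ — equivalent once you differentiate), both observe that the $\modp$ boundary condition is preserved by Lipschitz pushforward, and both close via Lemma \ref{l:hom_trivial}. The only difference is in the second part: the paper simply cites \cite{Simon83} for the passage from tangential stationarity to \eqref{e:stat2}–\eqref{e:mean_curv}, whereas you carry out the standard tangential/normal decomposition and the second-fundamental-form computation explicitly; that computation is correct (including the sign obtained by differentiating $\langle X^\perp, v_i\rangle \equiv 0$ along $\Sigma$), so this is a more self-contained but not genuinely different argument.
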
 
\begin{proof}
Consider a diffeomorphism $\Phi$ of $\Omega$ such that $\Phi(\Sigma \cap \Omega) \subset \Sigma \cap \Omega$ and $\Phi|_{\Omega \setminus K} \equiv {\rm id}|_{\Omega \setminus K}$ for some compact set $K \subset \Sigma \cap \Omega$. The current $\Phi_{\sharp}T$ satisfies ${\rm spt}(T - \Phi_{\sharp}T) \Subset \Sigma \cap \Omega$. Moreover, since $\partial (\Phi_\sharp T) = \Phi_\sharp (\partial T)$ and $\partial T = 0 \, \modp$, also $\partial (\Phi_\sharp T) = 0 \, \modp$, so that, in particular,
\begin{equation} \label{eq:boundary_cond}
\partial (\Phi_{\sharp}T) = \partial T\, \modp.
\end{equation}
From \eqref{e:am_2}, and setting $V := {\bf v}(T)$, we then get
\[
\|V\| (\Omega) = \mass (T\res \Omega) \leq \mass (\Phi_\sharp T \res \Omega) = \|\Phi_\sharp V\| (\Omega)\, . 
\]
This easily implies that $V$ is stationary in $\Sigma \cap \Omega$. 

The second claim of the Lemma follows then from the stationarity of $V$ in $\Sigma$, see for instance \cite{Simon83}. 
\end{proof}

Consider now an open ball $\bB_R = \Omega\subset \mathbb R^{m+n}$, a sequence of Riemannian manifolds $\Sigma_k$ and a sequence of currents $T_k$ such that each triple $(\Omega, \Sigma_k, T_k)$ satisfies the Assumption \ref{ass:main}. In addition assume that:
\begin{itemize}
\item[(a)] $\Sigma_k$ converges locally strongly in $C^2$ to a Riemannian submanifold $\Sigma$ of $\mathbb R^{m+n}$ which is also an entire graph;
\item[(b)] $\sup_k \|T_k\| (\bB_R) = \sup_k \mass^p (T_k\res \bB_R) < \infty$;
\item[(c)] $\sup_k \mass^p (\partial (T_k\res \bB_R)) < \infty$. 
\end{itemize}
By the compactness theorem for integral currents $\modp$ (cf. \cite[Theorem (4.2.17)$^{\nu}$, p. 432]{Federer69}), we conclude the existence of a subsequence, not relabeled, of a current  $T \in \Rc_m (\mathbb R^{m+n})$ and of a compact set $K\supset \bB_R$ such that
\[
\lim_{k\to\infty} \F_K^p (T_k\res \bB_R - T) = 0\, 
\]
and
\[
(\partial T)\res \bB_R = 0\,\quad \modp\, .
\]
Let $U_\delta$ be the closure of the $\delta$-neighborhood of $\Sigma$ and consider that, for a sufficiently small $\delta>0$, the compact set $K':= \overline{\bB}_R\cap U_\delta$ is a Lipschitz deformation retract of $\mathbb R^{m+n}$. For $k$ sufficiently large, the currents $T_k \res \bB_R$ are supported in $K'$ and \cite[Theorem (4.2.17)$^{\nu}$]{Federer69} implies that $\spt (T) \subset K'$. Since $\delta$ can be chosen arbitrarily small, we conclude that $\spt (T) \subset \Sigma$ and hence that $T\in \Rc_m (\Sigma)$.

At the same time, by Allard's compactness theorem for stationary integral varifolds, we can assume, up to extraction of a subsequence, that $\V (T_k \res \bB_R)$ converges to some integral varifold $V$ in the sense of varifolds. 

\begin{proposition}\label{p:compactness}
Consider $\Omega, \Sigma_k, T_k, \Sigma, T$ and $V$ as above. Then
\begin{itemize}
\item[(i)] $T$ is minimizing $\modp$ in $\Omega\cap \Sigma$, so that, in particular, $T$ is representative $\modp$;
\item[(ii)] $V = \V(T)$ is the varifold induced by $T$.
\end{itemize}
\end{proposition}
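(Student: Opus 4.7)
I will follow the classical scheme for limits of minimizers: part~(i) by producing, for each admissible competitor $T'$ of $T$, competitors $T'_k$ for $T_k$ whose mass in $\bB_R$ converges to $\mass(T'\res\bB_R)$ and invoking the minimality of each $T_k$; part~(ii) by combining (i) with semicontinuity of mass $\modp$ together with a specialization of the competitor construction to $T'=T$. Throughout I will assume $T$ is the representative $\modp$ of its class, so that $\mass(T\res U)=\mass^p([T\res U])$ on every open $U$, and I will use the nearest point projection $\pi_k$ from a tubular neighborhood of $\Sigma$ onto $\Sigma_k$, well defined for large $k$ and $C^2$-close to the identity. A preliminary measure inequality $\|T\|\le \|V\|$ on $\bB_R$ comes for free: lower semicontinuity of $\mass^p$ under flat $\modp$ convergence and the identifications $\mass(T_k\res U)=\|V_k\|(U)$ give, for every open $U\Subset \bB_R$,
\[
\mass(T\res U)=\mass^p([T\res U])\le \liminf_k \mass^p([T_k\res U])=\liminf_k \|V_k\|(U)\le \|V\|(\bar U),
\]
and outer regularity closes it.

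For (i), let $T'$ be a competitor with $\spt(T'-T)\subset \bar\bB_\rho\Subset \bB_R$ and $\partial T'\equiv \partial T \pmod p$. The flat $\modp$ convergence yields decompositions $T_k-T = R_k + \partial Z_k + pP_k$ with $\mass(R_k)+\mass(Z_k)\to 0$. A Fubini argument on $d_0(x)=|x|$ produces radii $r_k\in (\rho,R)$ with $\|T\|(\partial\bB_{r_k}) = \|T_k\|(\partial \bB_{r_k})=0$ and $\mass(\langle R_k,d_0,r_k\rangle)+\mass(\langle Z_k,d_0,r_k\rangle)\to 0$. I then set
\[
T'_k := T_k\res(\bB_R\setminus \bar\bB_{r_k}) + (\pi_k)_\sharp(T'\res \bar\bB_{r_k}) + W_k,
\]
where $W_k$ is an $m$-dimensional rectifiable patch constructed from $\pm(\pi_k)_\sharp\langle Z_k, d_0, r_k\rangle$ plus a cone of bounded diameter over $(\pi_k)_\sharp\langle R_k, d_0, r_k\rangle$. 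Since $T'-T$ is supported inside $\bar\bB_\rho\subset \bB_{r_k}$, one has $\langle T,d_0,r_k\rangle = \langle T',d_0,r_k\rangle$, and a direct slicing and boundary calculation, using $\partial T_k \equiv 0 \equiv \partial T' \pmod p$ in $\bB_R$, verifies $\partial T'_k \equiv \partial T_k \pmod p$. By construction $\mass(W_k)\to 0$, and $\mass((\pi_k)_\sharp T'\res\bar\bB_{r_k})\to \mass(T'\res\bar\bB_{r_k})$ because $\pi_k\to \mathrm{id}$ in $C^1$. Lemma~\ref{l:hom_trivial} applied to $T_k$ with competitor $T'_k$ gives $\mass(T_k\res\bB_R)\le \mass(T'_k\res\bB_R)$; passing to the limit,
\[
\|V\|(\bB_R)\le \liminf_k\mass(T_k\res\bB_R)\le \limsup_k\mass(T'_k\res\bB_R)\le \mass(T'\res\bB_R),
\]
and combining with $\mass(T\res\bB_R)\le \|V\|(\bB_R)$ from the preliminary estimate closes (i).

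For (ii), specializing the construction above to $T'=T$ yields competitors $T'_k$ with $\mass(T'_k\res \bB_R)\to \mass(T\res\bB_R)$; choosing the radii $r_k$ close to $R$, minimality of $T_k$ gives $\limsup_k\mass(T_k\res\bB_{R-\varepsilon})\le \mass(T\res\bB_R)$ for every $\varepsilon>0$, hence $\|V\|(\bB_R)\le \liminf_k \mass(T_k\res\bB_R)\le \mass(T\res\bB_R)$. Together with $\|T\|\le \|V\|$ this forces the identity $\|V\|(\bB_R) = \|T\|(\bB_R)$; comparing $\|T\|\le \|V\|$ on the two open sets $\bB_s(x)$ and $\bB_R\setminus\bar\bB_s(x)$ (for $s$ avoiding the at most countable set of radii where either measure charges $\partial\bB_s(x)$) yields, by additivity, the identity of Radon measures $\|T\|=\|V\|$ on $\bB_R$. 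Since $V$ is integer rectifiable by Allard's compactness and $\|V\|=\|\mathbf{v}(T)\|$, both $V$ and $\mathbf{v}(T)$ are supported on the rectifiable set $\spt(T)$ with the same multiplicity, hence have the same approximate tangent $m$-planes $\|V\|$-a.e., proving $V=\mathbf{v}(T)$. The main technical obstacle is the construction of the patch $W_k$: closing the boundary mismatch modulo $p$ at vanishing mass cost relies on the smallness of the slices of $R_k$ and $Z_k$ at generic radii (which flat $\modp$ convergence provides via Fubini) and on the flexibility of working modulo $p$, which allows the uncontrolled $pP_k$ part of $T_k - T$ to be discarded from the construction.
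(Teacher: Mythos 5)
Your high-level strategy — flat $\modp$ decomposition of $T_k - T$, a Fubini choice of good radii, a cut-and-paste construction of competitors for $T_k$ from a competitor $T'$ of $T$, projection via the nearest-point map onto $\Sigma_k$, and semicontinuity of $\mass^p$ — is the same as the paper's. However, there are two concrete gaps.

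\textbf{Circularity in the passage to the limit.} The decisive inequality in your chain,
\[
\limsup_k \mass(T'_k\res\bB_R)\le \mass(T'\res\bB_R)\,,
\]
is not justified. Your competitor $T'_k$ contains the piece $T_k\res(\bB_R\setminus\bar\bB_{r_k})$, so $\mass(T'_k\res\bB_R) = \mass(T_k\res(\bB_R\setminus\bar\bB_{r_k})) + \mass(T'\res\bar\bB_{r_k}) + o(1)$; since $T'=T$ on the annulus, you would need $\limsup_k\mass(T_k\res(\bB_R\setminus\bar\bB_{r_k}))\le \mass(T\res(\bB_R\setminus\bar\bB_{s}))$. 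But from the weak-$*$ convergence $\|T_k\|\weaks\|V\|$ you only get $\limsup_k \mass(T_k\res K)\le\|V\|(K)$, and the equality $\|V\|=\|T\|$ is precisely item~(ii), which you are proving \emph{after} (i). The paper breaks this circularity by arguing by contradiction: it looks at the mass \emph{defect} $\mass(\hat T_k\res\bB_{s_k})-\mass(T_k\res\bB_{s_k})$ (the contribution from the exterior annulus cancels), so only the lower semicontinuity $\liminf_k\mass(T_k\res\bB_{s_k})\ge\mass(T\res\bB_s)$ is needed. You should replace your chain by a comparison of these differences (or directly adopt the contradiction scheme); after that, the ``specialization to $T'=T$'' does give mass convergence, and (ii) closes as you describe.

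\textbf{The patch $W_k$.} The cone in your $W_k$ over $(\pi_k)_\sharp\langle R_k, d_0, r_k\rangle$ has boundary $\langle R_k,d_0,r_k\rangle + x_0\cone\langle\partial R_k,d_0,r_k\rangle$, and the second term, an $(m-1)$-current, does not vanish $\modp$: although $\partial R_k\equiv 0\ \modp$, there is no control on $\mass(\partial R_k)$, and generic slices of a chain that is only \emph{flatly} congruent to zero $\modp$ need not be congruent to zero on the nose. The paper sidesteps this by including $R_k\res\bB_{s_k}$ itself (not a cone over its slice) in $\hat T_k$; after substituting $\partial S_k = T_k - T - R_k - pQ_k$ in the slicing identity, the construction telescopes to $\hat T_k = T_k - \partial(S_k\res\bB_{s_k}) + (\hat T - T) - pQ_k\res\bB_{s_k}$, from which $\partial\hat T_k\equiv\partial T_k\ \modp$ is immediate and no spurious lower-dimensional boundary appears. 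Replacing your cone by $R_k\res\bB_{r_k}$ (whose mass still tends to zero) fixes this.

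Finally, a minor remark: you should project the \emph{entire} competitor by $\pi_k$ at the end (not only some pieces), both to ensure $T'_k\in\Rc_m(\Sigma_k)$ and to invoke Lemma~\ref{l:hom_trivial}; the compact-support and boundary conditions then transfer correctly because $\pi_k$ is the identity on $\Sigma_k$ and commutes with $\partial$.
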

\begin{proof}
Let us simplify the notation by writing $T_k$ in place of $T_k \res \bB_R$. Recall that $\F_K^p (T_k - T) \to 0$ for some compact set $K\supset \bB_R$. This means that there are sequences of rectifiable currents $R_k$, $S_k$ and integral currents $Q_k$ \footnote{Although the definition of flat convergence modulo $p$ is given with $Q_k$ flat chains, a simple density argument shows that we can in fact take them integral.} with support in $K$ such that
\begin{equation}\label{e:flat_conv_1}
T_k - T = R_k + \partial S_k + p Q_k
\end{equation}  
and
\begin{equation}\label{e:flat_conv_2}
\lim_{k\to \infty} \left(\mass (R_k) + \mass (S_k)\right) = 0\, .
\end{equation}
As above, denote by $U_\delta$ the closure of the $\delta$-neighborhood of the submanifold $\Sigma$. Observe next that, for every $\delta$ sufficiently small, $K_\delta := U_\delta \cap \overline\bB_R$ is a Lipschitz deformation retract. Moreover, for each $k$ sufficiently large $\spt (T_k) \subset K_\delta$. We can thus assume, without loss of generality, the existence of a $\bar{k} (\delta) \in \mathbb N$ such that 
\begin{equation}\label{e:supports}
\spt (R_k),\, \spt (S_k),\, \spt (Q_k) \subset K_\delta \qquad \forall k\geq \bar{k} (\delta)\, . 
\end{equation}
Next, if we denote by $U_{\delta, k}$ the closures of the $\delta$-neighborhoods of $\Sigma_k$, due to their $C^2$ regularity and $C^2$ convergence to $\Sigma$, for a $\delta>0$ sufficiently small (independent of $k$) the nearest point projections
\[
\p_k : U_{\delta, k} \to \Sigma_k
\]
are well defined. Moreover, 
\begin{equation}\label{e:Lip1}
\lim_{\sigma \downarrow 0} \sup_k \Lip (\left.\p_k\right|_{U_{\sigma, k}}) = 1\, .
\end{equation}

We now show that $T$ is area minimizing $\modp$ in $\bB_R\cap \Sigma$. Assume not: then there is a $\rho<R$ and a 
current $\hat T$ with $\spt (T- \hat T) \subset \overline{\bB}_\rho \cap \Sigma$ such that 
\[
\partial \hat T = \partial T \quad \modp\,  
\]
and, for every $s \in ]\rho, R[$,
\begin{equation}
\varepsilon := \mass (T\res \bB_s) - \mass (\hat{T} \res \bB_s) >0 \, ,
\end{equation}
where $\varepsilon$ is independent of $s$ because of the condition $\spt (T- \hat T) \subset \overline{\bB}_\rho$.

Denote by $d: \mathbb R^{m+n} \to \mathbb R$ the map $x\mapsto |x|$ and consider the slices 
$\langle S_k, d, s\rangle$. By Chebyshev's inequality, for each $k$ we can select an $s_k \in ]\rho, \frac{R+\rho}{2}[$ such that
\begin{equation}\label{e:Cheb}
\mass (\langle S_k, d, s_k\rangle) \leq \frac{2}{R-\rho} \mass (S_k)\, .
\end{equation}
Consider therefore the current:
\begin{equation}
\hat T_k := T_k \res (\R^{m+n} \setminus \bB_{s_k}) - \langle S_k, d, s_k\rangle + R_k \res \bB_{s_k} + \hat{T}\res \bB_{s_k}\, .
\end{equation}
Observe first that $\spt (T_k-\hat{T}_k) \subset \bB_{\frac{R+\rho}{2}}$. Also, note that \eqref{e:flat_conv_1} implies that
$\partial S_k$ has finite mass. Hence, by \cite[Lemma 28.5(2)]{Simon83},
\[
\langle S_k, d, s_k\rangle = \partial (S_k \res \bB_{s_k}) - (\partial S_k) \res \bB_{s_k}\, .
\] 
In particular, combining the latter equality with \eqref{e:flat_conv_1}, we get 
\begin{align*}
\partial \hat{T}_k :&= \partial (T_k \res \R^{m+n}\setminus \bB_{s_k}) + \partial ((T_k - T -R_k - p Q_k) \res \bB_{s_k})
 + \partial (R_k \res \bB_{s_k}) + \partial (\hat{T}\res \bB_{s_k})\\
& = \partial T_k - p \partial (Q_k \res \bB_{s_k}) + \partial (\hat T- T)\,,
\end{align*}
where in the second line we have used that $\spt (\hat{T}-T) \subset \bB_\rho \subset \bB_{s_k}$. Since $\partial (\hat T -T) =0$ $\modp$ in $\Sigma \subset \mathbb R^{m+n}$, we conclude that
$\partial (\hat{T}_k - T_k) = 0$ $\modp$ in $\mathbb R^{m+n}$. However, considering \eqref{e:supports}, for $k$ large enough the currents $\hat{T}_k, S_k, R_k, Q_k, T$ and $\hat{T}$ are all supported in the domain of definition of the retraction $\p_k$. Since $(\p_k)_\sharp T_k = T_k$, we
then have that $\partial (T_k - (\p_k)_\sharp \hat{T}_k) =0$ $\modp$ in $\Sigma_k$. Consider also that, for each $\sigma >0$ fixed, there is a $\bar{k} (\sigma) \in \mathbb N$ such that all the currents above are indeed supported in $U_{\sigma, k}$ when $k\geq \bar{k} (\sigma)$. This implies in particular that, by \eqref{e:Lip1}, 
\[
\liminf_{k\uparrow \infty} \mass ((\p_k)_\sharp \hat{T}_k) = \liminf_{k\uparrow \infty} \mass (\hat{T}_k)\, .
\]
Up to extraction of a subsequence, we can assume that $s_k \to s$ for some $s\in [\rho, \frac{R+\rho}{2}]$. 
Recalling the semicontinuity of the $p$-mass with respect to the flat convergence $\modp$, we easily see that (since the $T_k$'s and $T$ are all representative $\modp$)
\[
\liminf_{k\to \infty} \mass (T_k \res \bB_{s_k}) \geq \mass (T \res \bB_s)\, .
\]
Next, by the estimates \eqref{e:Cheb} and \eqref{e:flat_conv_2} we immediately gain 
\[
\liminf_{k\uparrow \infty} (\mass (\hat{T}_k) - \mass (T_k)) \leq - \varepsilon\, .
\]
Finally, since the map $\p_k$ is the identity on $\Sigma_k$, again thanks to \eqref{e:Lip1} and to the observation on the supports of $\hat{T}_k - T_k$, it turns out that $\spt ((\p_k)_\sharp \hat{T}_k - T_k) \subset \Sigma_k \cap \bB_R$ for $k$ large enough. We thus have contradicted the minimality of $T_k$.

\medskip

Observe that, if in the argument above we replace $\hat{T}$ with $T$ itself, we easily achieve that, for every fixed $\rho>0$, there is a sequence $\{s_k\} \subset ]\rho, \frac{R+\rho}{2}[$ converging to some $s\in [\rho, \frac{R+\rho}{2}]$, with the property that
\[
\liminf_{k\uparrow \infty} (\mass (T\res \bB_{s_k}) - \mass (T_k \res \bB_{s_k}))\geq 0\, .  
\]
By this and by the semicontinuity of the $p$-mass under flat convergence, we easily conclude that
\[
\lim_{k\to \infty} \|T_k\| (\bB_\rho) = \|T\| (\bB_\rho) \qquad \mbox{for every $\rho < R$.}
\]
The latter implies then that $\|T_k\| \weaks \|T\|$ in the sense of measures in $\bB_R$. Consider now the rectifiable sets $E_k$, $E$ and the Borel functions $\Theta_k: E_k \to \mathbb N\setminus \{0\}$, $\Theta: E\to \mathbb N \setminus \{0\}$ such that 
\[
\|T_k\| = \Theta_k \, \mathcal{H}^m \res E_k\,, \qquad \|T\| = \Theta \, \mathcal{H}^m \res E\, .
\]
Let $T_q E_k$ (resp. $T_q E)$ be the approximate tangent space to $E_k$ (resp. $E$) at $\Ha^m$-a.e. point $q$.
The varifold $\V (T_k)$ is then defined to be $\Theta_k \mathcal{H}^m \res E_k \otimes \delta_{T_q E_k}$. If the varifold limit $V$ is given by  $\Theta' \mathcal{H}^m \res F \otimes \delta_{T_q F}$, we then know that $\|V_k\|\weaks \|V\| =
\Theta' \mathcal{H}^m \res F$. But since $\|V_k\| = \|T_k\|$, we then know that $\mathcal{H}^m ((F\setminus E) \cup (E\setminus F)) = 0$ and that $\Theta' = \Theta$ $\mathcal{H}^m$-almost everywhere. This shows then that $V = \V (T)$.
\end{proof}

\section{Slicing formula $\modp$}

In this section we prove a suitable version of the slicing formula for currents $\modp$, which will be useful in several contexts. We let $\In_m^p(C)$ denote the group of \emph{integral currents $\modp$}, that is of classes $\left[ T \right] \in \Rc_m^p(C)$ such that $\partial^p\left[T\right] \in \Rc_{m-1}^p(C)$.

\begin{lemma}\label{modp_slicing_formula}
Let $\Omega \subset \mathbb{R}^{m+n}$ be a bounded ball, let $\left[ T \right] \in \In_m^{p} (\overline{\Omega})$ be an integral current $\modp$, and let $f: \overline{\Omega} \to \mathbb R$ be a Lipschitz function. If $T \in \Rc_{m}(\overline{\Omega})$ is any rectifiable representative of $\left[ T \right]$ and $Z \in \Rc_{m-1}(\overline{\Omega})$ is any rectifiable representative of $\left[ \partial T \right]$, then the following holds for a.e. $t\in \mathbb R$:
\begin{itemize}
\item[(i)] $\langle T, f,t\rangle = \partial (T \res \{f<t\}) - Z \res \{f < t \}\, \modp$;
\item[(ii)] $\langle T, f, t\rangle$ is a representative $\modp$ if $T$ is a representative $\modp$;
\item[(iii)] if $T$ is a representative $\modp$, and if $\partial T = 0 \, \modp$, then 
\[
\mass (\langle T, f, t\rangle) = \mass^p (\partial (T \res \{f<t\}))\, .
\]
\end{itemize} 
\end{lemma}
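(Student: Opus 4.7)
The plan is to reduce the statement to the classical slicing formula for rectifiable currents and then exploit that the $\modp$ congruence is preserved under restriction to sublevel sets $\{f<t\}$ for a.e.~$t$.

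I begin with a preliminary reduction. Since $[T]\in\In_m^p(\overline{\Omega})$, the canonical representative $\modp$ provided by \cite[4.2.26]{Federer69} has rectifiable boundary; any rectifiable representative $T$ of $[T]$ differs from it by $pR$ for some rectifiable $R$, and modulo $p$ the three objects $\langle T, f, t\rangle$, $T\res\{f<t\}$, and $\partial(T\res\{f<t\})$ are all insensitive to this modification. Hence I may assume from now on that $\partial T$ is itself rectifiable. Because $Z$ and $\partial T$ are both rectifiable representatives of the same class $[\partial T]\in\Rc_{m-1}^p$, their difference is of the form $pQ$ for a rectifiable $(m-1)$-current $Q$.

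For part (i), the classical slicing formula (see \cite[Lemma 28.5]{Simon83}) applied to the rectifiable current $T$ with rectifiable boundary yields, for a.e.~$t$,
\[
\langle T, f, t\rangle = \partial(T\res\{f<t\}) - (\partial T)\res\{f<t\}\,.
\]
Since $(\partial T - Z)\res\{f<t\} = p\,(Q\res\{f<t\})$ is congruent to zero $\modp$, substituting $Z$ for $\partial T$ in the second term gives the claim. For part (ii), I use that for a.e.~$t$ the slice $\langle T, f, t\rangle$ is rectifiable with multiplicity at $\Ha^{m-1}$-a.e.~point equal to $\theta_T$ at that point (see \cite[4.3.8]{Federer69}); therefore the bound $|\theta_T|\leq p/2$ enjoyed by the representative $T$ transfers to the slice, which is then automatically the representative $\modp$ of its own congruence class.

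For part (iii), the hypothesis $\partial T=0\,\modp$ allows me to take $Z=0$ in (i), so that $\partial(T\res\{f<t\})$ and $\langle T, f, t\rangle$ represent the same $\modp$-class; by (ii) the latter is the canonical representative, and hence its classical mass realizes the $\modp$-mass of the class, giving $\mass^p(\partial(T\res\{f<t\}))=\mass(\langle T, f, t\rangle)$. The main obstacle I anticipate is the preliminary reduction to a representative with rectifiable boundary: without it, the term $(\partial T)\res\{f<t\}$ in the classical slicing identity has no direct meaning as a current with locally finite mass, and the mod $p$ manipulations in step (i) cannot be carried out termwise.
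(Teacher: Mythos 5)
The proposal hinges on one unjustified claim that is in fact false in general, and it happens to be precisely the delicate point that the paper's remark after the lemma identifies. You assert that ``the canonical representative $\modp$ provided by \cite[4.2.26]{Federer69} has rectifiable boundary.'' Federer's 4.2.26 furnishes a representative $\tilde T$ of the class $[T]$ with the property $\|\tilde T\|(A)\leq \frac{p}{2}\Ha^m(A)$ for every Borel $A$, i.e.\ it controls the \emph{multiplicity} of $\tilde T$; it does not, and cannot in general, produce a representative with rectifiable boundary. The hypothesis $[T]\in\In_m^p(\overline\Omega)$ only says that $\partial^p[T]$ admits a rectifiable representative $Z$, not that the boundary (in the classical sense) of any specific representative of $[T]$ is rectifiable. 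These are genuinely different: by \cite[Proposition 4.10]{MS_a}, for $m\geq 2$ there are classes in $\In_m^p(K)$ with \emph{no} representative in $\In_m(K)$, so the reduction ``WLOG $\partial T$ is rectifiable'' is unavailable. (For $m=1$, or if one is allowed to enlarge the support, an integral representative does exist, which is exactly why the paper's remark says the short argument you propose would be valid under that hypothesis.)

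Once this reduction fails, the classical slicing identity $\langle T,f,t\rangle = \partial(T\res\{f<t\})-(\partial T)\res\{f<t\}$ from \cite[Lemma 28.5]{Simon83} cannot be invoked for $T$, because $(\partial T)\res\{f<t\}$ is not even defined as a current of locally finite mass. You correctly anticipate this as ``the main obstacle,'' but the proposed remedy does not close the gap. The paper's proof avoids the issue by approximating $T$ \emph{in the flat $\modp$ topology} with integral polyhedral chains $P_k$ (via \cite[Theorem 3.4]{MS_a}), applying the classical slicing formula to each $P_k$ and to the various small-mass error currents appearing in the decompositions of $T-P_k$ and of $Z-\partial T$, and then passing to the limit; the crucial extra ingredient is a Chebyshev-type estimate ensuring that the slices of the vanishing error currents also vanish for a.e.\ $t$. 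Parts (ii) and (iii) of your proposal are fine on their own, but (iii) relies on (i), so the gap propagates.
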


Before coming to the proof of Lemma \ref{modp_slicing_formula} we wish to point out two elementary consequences of the theory of currents $\modp$ which are going to be rather useful in the sequel.

\begin{lemma}\label{l:elementary_modp}
If $T$ is an integer rectifiable $m$-dimensional current in $\mathbb R^{m+n}$ and $f: \mathbb R^{m+n} \to \mathbb R^k$ is a Lipschitz map with $k\leq m$, then:
\begin{itemize}
\item[(i)] $T$ is a representative $\modp$ if and only if the density of $T$ is at most $\frac{p}{2}$ $\|T\|$-a.e.
\item[(ii)] If $T$ is a representative $\modp$, then $\langle T, f, t\rangle$ is a representative $\modp$ for a.e. $t\in \mathbb R^k$.
\item[(iii)] If $n=0$ and $\spt (T) \subset K$ for a compact set $K$, then $\mathscr{F}_K^p (T) = \mass^p (T)$. 
\item[(iv)] Let $T$ be as in (iii) and in particular $T = \Theta \a{K}$, where $\Theta$ is integer valued. If we let
\begin{equation}\label{e:norma_modp}
|\Theta (x)|_p := \min \{|\Theta (x) - kp| : k \in \mathbb Z\}\, ,
\end{equation}
then 
\begin{equation}\label{e:calcolo_massa_modp}
\mass^p (T \res E) = \int_E |\Theta (x)|_p\, dx \qquad \mbox{for all Borel $E\subset \mathbb R^{m}$.}
\end{equation}
\end{itemize} 
\end{lemma}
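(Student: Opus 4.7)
My plan is to handle the four claims essentially from the definitions, invoking the classical slicing theorem for (ii) and reducing (iii)--(iv) to a pointwise minimization over integer congruence classes. For (i), I will write $T = \llbracket M, \vec\tau, \theta \rrbracket$ with integer-valued $\theta$; then $\|T\| = |\theta|\,\Ha^m\res M$ and $\Theta_T(x) = |\theta(x)|$ for $\|T\|$-a.e. $x$. By Radon--Nikodym, the defining inequality $\|T\|(A)\le (p/2)\,\Ha^m(A)$ of \eqref{e:rappresentante} is equivalent to $|\theta|\le p/2$ $\Ha^m$-a.e. on $M$, which is exactly the stated density bound. For (ii), I invoke the classical slicing theorem for integer rectifiable currents: for a.e. $t\in\R^k$, $\langle T, f, t\rangle$ is itself integer rectifiable, supported in $M \cap f^{-1}(t)$, and its integer multiplicity has absolute value equal to $|\theta|$ at $\Ha^{m-k}$-a.e. point of its support. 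The density bound from (i) therefore passes to $\langle T, f, t\rangle$, and applying (i) to the slice yields the conclusion.

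For (iii), the key observation is that with $n=0$ and $K\subset \R^m$ one has $\Rc_{m+1}(K) = \{0\}$, so the competitor $Z$ in the definition of $\F_K^p$ must vanish and
\[
\F_K^p(T) = \inf\bigl\{\mass(R) : R \in \Rc_m(K),\ T-R \in p\,\F_m(K)\bigr\}.
\]
Writing $T = \Theta\,\cL^m\res K$ with $\Theta\in L^1(K,\Z)$, every $R\in \Rc_m(K)$ is of the form $\Theta_R\,\cL^m\res K$ with $\Theta_R \in L^1(K,\Z)$; recalling that top-dimensional flat chains in $\R^m$ are exactly $L^1$ densities against Lebesgue measure, the constraint $T-R\in p\,\F_m(K)$ becomes $\Theta_R(x)\equiv \Theta(x)\pmod{p}$ $\cL^m$-a.e. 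A pointwise minimization of $|\Theta_R(x)|$ subject to this congruence gives precisely $|\Theta(x)|_p$, whence $\F_K^p(T) = \int_K |\Theta|_p\,d\cL^m$. The same minimization argument shows that $\mass^p(T)$ equals the same integral---the infimum is attained by the canonical representative $\modp$ of $T$ whose multiplicity at $x$ is the element of $\Theta(x)+p\Z$ of minimum absolute value---so $\F_K^p(T) = \mass^p(T)$.

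Part (iv) is then obtained by applying the argument of (iii) verbatim to the current $T\res E = \Theta\chi_E\,\cL^m\res K$, which is still top-dimensional, integer rectifiable, and compactly supported. I expect the only genuinely non-trivial step to be the one carried out in (iii), specifically the identification of top-dimensional flat chains in $\R^m$ with $L^1$ densities times Lebesgue measure and the subsequent reduction of the flat $\modp$ pseudodistance to a pointwise integer-valued minimization; once this is in place, (i), (ii), and (iv) all follow either directly from the definitions or from a standard application of the slicing theorem.
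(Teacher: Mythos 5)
Your proof is correct and follows essentially the same approach as the paper's: the same Federer characterization for (i), the same slicing/coarea observation for (ii), and the same reduction to a pointwise minimization of $|\Theta(x)-kp|$ over integer congruence classes for (iii)--(iv), using that top-dimensional flat chains supported in $K\subset\R^m$ are integer-valued $L^1$ densities against Lebesgue measure. The only cosmetic difference is that you derive (iii) before (iv), whereas the paper exhibits the canonical representative $T' = \Theta'\llbracket K\rrbracket$ first to get (iv) and then deduces (iii) from it.
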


\begin{proof}
(i) is an obvious consequence of Federer's characterization in \cite{Federer69}: an integer rectifiable current $T$ of dimension $m$ is a representative $\modp$ if and only if  $\|T\| (E) \leq \frac{p}{2} \mathcal{H}^m (E)$ for every Borel set $E$. By the coarea formula for rectifiable sets, this property is preserved for a.e. slice and thus (ii) is immediate. Moreover, again by Federer's characterization, if $T$ is as in (iv), and if $k(x) = \arg\min\{\abs{\Theta(x) - kp} \, \colon \, k \in \Z\}$, then setting $\Theta'(x) := \Theta(x) - k(x) \, p$ we have that $T' = \Theta'\, \a{K}$ is a representative $\modp$ of $T$, and thus, since $\abs{\Theta'} = \abs{\Theta}_p$, \eqref{e:calcolo_massa_modp} follows directly from $\mass^p (T\res E) = \|T'\| (E)$. 

As for (iii), since $T$ is a top-dimensional current, $\mathscr{R}_{m+1} (K) = \{0\}$. We thus have
\[
\mathscr{F}^p_K (T) = \inf \left\{\mass (R) : T = R + p P \quad
\mbox{for some $R \in \mathscr{R}_{m}(K)$ and $P\in \mathscr{F}_m (K)$}\right\} \, .
\]
Observe however that, since $K$ is $m$-dimensional, $\mathscr{F}_m (K)$ consists of the integer rectifiable currents with support in $K$. A simple computation gives then 
\[
\mathscr{F}^p_K (T) = \int_K |\Theta (x)|_p \, dx
\]
and we can use (iv) to conclude.
\end{proof}

\begin{proof}[Proof of Lemma \ref{modp_slicing_formula}] (ii) has been addressed already in Lemma \ref{l:elementary_modp}, and (iii) is a simple consequence of Lemma \ref{l:elementary_modp} and of (i) with the choice $Z = 0$. 

\medskip

We now come to the proof of (i).
By \cite[Theorem 3.4]{MS_a}, there exists a sequence $\{ P_{k} \}_{k=1}^{\infty}$ of integral polyhedral chains and currents $R_{k} \in \Rc_{m}(\overline{\Omega})$, $S_{k} \in \Rc_{m+1}(\overline{\Omega})$ and $Q_k \in \In_m (\overline{\Omega})$, with the following properties for every $k\geq 1$:
\begin{align}
& T - P_{k} = R_{k} + \partial S_{k} + p Q_{k}\, ,\label{flat_p_decomposition}\\
& \M^{p}(P_{k}) \leq \M^{p}(T) + \frac{1}{k 2^k}\, , \label{poly_mass_control}\\
&\M^{p}(\partial P_{k} \res \Omega) \leq \M^p(\partial T \res \Omega) + \frac{1}{k 2^k}\, ,\label{poly_mass_control2}\\
&\M(R_{k}) + \M(S_{k}) \leq \frac{2}{k 2^k}.  \label{mass_estimate}
\end{align}
Since $P_{k}$ is an integral current, by the classical slicing theory (cf. for instance \cite[Lemma 28.5(2)]{Simon83}), the following formula holds for a.e. $t \in \R$:
\begin{equation} \label{poly_slicing}
\langle P_{k}, f, t \rangle = \partial \left( P_{k} \res \{ f < t \} \right) - (\partial P_{k}) \res \{ f < t \}.
\end{equation}
The identity \eqref{flat_p_decomposition} implies that $\partial S_{k}$ has locally finite mass, and thus $S_{k}$ is an integral current. In particular, $\partial \langle S_k, f, t\rangle = - \langle \partial S_k, f ,t\rangle$. Furthermore, the slicing formula holds true for $S_{k}$ as well, that is for a.e. $t \in \R$ one has:
\begin{equation} \label{bdry_slicing}
\langle S_{k}, f, t \rangle = \partial \left( S_{k} \res \{f < t \} \right) - (\partial S_{k}) \res \{f < t\}\,.
\end{equation}

Since $Z = \partial T \, \modp$, there exist currents $\tilde{R}_{k} \in \Rc_{m-1}(\overline{\Omega})$, $\tilde{S}_{k} \in \Rc_{m}(\overline{\Omega})$ and $\tilde{Q}_{k} \in \In_{m-1}(\overline{\Omega})$ such that for every $k \geq 1$:
\begin{align}
& Z - \partial T = \tilde{R}_{k} + \partial \tilde{S}_{k} + p \tilde{Q}_{k}\,, \label{bdry_p_decomposition}\\
& \mass(\tilde{R}_{k}) + \mass(\tilde{S}_{k}) \leq \frac{1}{k 2^k}\,. \label{mass_is_small}
\end{align}

Combining \eqref{flat_p_decomposition} and \eqref{bdry_p_decomposition}, we can therefore write:
\begin{equation} \label{bdry_p_decomposition_2}
\begin{split}
Z - \partial P_{k} &= \partial T - \partial P_{k} + Z - \partial T  \\
&= \tilde{R}_{k} + \partial ( R_{k} + \tilde{S}_{k} ) + p ( \partial Q_{k} + \tilde{Q}_{k} )\,. 
\end{split}
\end{equation}

The identity \eqref{bdry_p_decomposition_2} implies that $\partial (R_{k} + \tilde{S}_{k})$ has locally finite mass, and thus in particular $R_{k} + \tilde{S}_{k}$ is an integral current. Hence, for a.e. $t \in \R$ the slicing formula holds true for $R_{k} + \tilde{S}_{k}$, that is:
\begin{equation} \label{bdry_slicing_2}
\langle R_{k} + \tilde{S}_{k}, f, t \rangle = \partial \left( (R_{k} + \tilde{S}_{k}) \res \{f < t\} \right) - \left( \partial (R_{k} + \tilde{S}_{k}) \right) \res \{f < t\}\,.
\end{equation}

From the identities \eqref{flat_p_decomposition} and \eqref{bdry_p_decomposition_2}, and using \eqref{poly_slicing}, \eqref{bdry_slicing}, \eqref{bdry_slicing_2}, and the slicing formula for $Q_{k}$ we easily conclude that the following holds for a.e. $t \in \R$:
\begin{align} 
 &\langle T,f,t \rangle - \partial (T \res \{f<t\}) + Z \res \{f < t \}\nonumber\\
= &\tilde{R}_{k} \res \{f < t \} - \langle \tilde{S}_{k},f,t \rangle + \partial (\tilde{S}_{k} \res \{f<t\}) + p \tilde{Q}_{k} \res \{f< t\} \,.\label{conclusion_1}
\end{align}

Now, $\tilde{Q}_{k} \res \{f < t\}$ is an integral current and thus, setting $K:= \overline{\Omega}$, we can estimate
\[
\F_K^p (\langle T, f, t \rangle - (\partial \left( T \res \{ f < t \} - Z \res \{f < t\}) \right)
\leq \mass(\tilde{R}_{k}) + \mass(\tilde{S}_{k}) + \mass(\langle \tilde{S}_{k},f,t\rangle)\,.
\]
Since $\lim_k \left( \mass (\tilde{R}_{k}) + \mass (\tilde{S}_{k}) \right) = 0$, it remains to show that, for a.e. $t$,
\[
\lim_{k\to \infty}  \mass (\langle \tilde{S}_{k}, f,t\rangle) = 0\, .
\]
In order to see this, fix $\eps > 0$. By \cite[Lemma 28.5(1)]{Simon83}, we have that there is a Borel set $E_k$ with measure $|E_{k}| \leq \frac{\eps}{2^k}$ such that
\begin{equation} \label{coarea_slice}
\M(\langle \tilde{S}_{k}, f, t \rangle) \leq \Lip(f) \frac{2^k}{\eps} \M(\tilde{S}_k) \quad \mbox{for all } t \not\in E_{k}\, .
\end{equation}
In particular, if we set $E:= \bigcup_k E_k$, we have $|E|\leq 2\varepsilon$, and using \eqref{mass_is_small} we see that
\[
\M(\langle \tilde{S}_{k}, f, t \rangle) \leq \varepsilon^{-1} \Lip (f) k^{-1}\qquad \mbox{for all $t\not \in E$}\, .
\]
Hence $\lim_{k\to \infty}  \mass (\langle \tilde{S}_k, f,t\rangle) =0$ for all $t\not \in E$. Since $\varepsilon$ is arbitrary, this concludes the proof. 
\end{proof}

\begin{remark}
We are actually able to give a much shorter proof of Lemma \ref{modp_slicing_formula}(i), provided one can prove that there exists an \emph{integral} current $\tilde{T}$ such that $\tilde{T} = T\; \modp$. Indeed, in this case, since $\tilde{T}$ is integral the classical slicing formula gives
\[
\langle \tilde{T}, f, t \rangle = \partial \left( \tilde{T} \res \{ f < t \} \right) - (\partial \tilde{T}) \res \{f < t\}.
\]
On the other hand, the conditions $\tilde{T} = T\; \modp$ and $\partial \tilde{T} = \partial T = Z \; \modp$ imply that there are rectifiable currents $R$ and $Q$ such that $T = \tilde{T} + p R$ and $Z = \partial \tilde{T} + pQ$, and thus we deduce
\[
\langle T, f, t \rangle = \partial \left( T \res \{ f < t \} \right) - Z \res \{f < t\} + p \left(- \partial \left( R \res \{ f < t \} \right) + \langle R, f, t \rangle + Q \res \{f < t\}\right)\,,
\]
as we wanted.

The existence of an integral representative in any integral class $\modp$ is in fact a very delicate question. If $K$ is any given compact subset of $\R^{m+n}$ then a class $\left[ T \right] \in \In_{m}^p(K)$ does \emph{not} necessarily have a representative in $\In_m(K)$ when $m \geq 2$; see \cite[Proposition 4.10]{MS_a}. Positive answers have been given, instead, when $m=1$ in the class $\In_m(K)$ for any given compact $K$ in \cite[Theorem 4.5]{MS_a}, and in any dimension in the class $\bigcup_{K} \In_m(K)$ in the remarkable work \cite{Young}. 
\end{remark}

\section{Monotonicity formula and tangent cones} \label{s:mono_cones}

From Lemma \ref{l:varifold} and the classical monotonicity formula for stationary varifolds, cf. \cite{Allard72} and \cite{Simon83}, we conclude directly the following corollary.

\begin{corollary}\label{c:monotonicity}
Let $T, \Sigma$ and $\Omega = \bB_R$ be as in Assumption \ref{ass:main}. Then, if $q\in \spt (T) \cap \Omega$, the following monotonicity identity holds for every $0<s<r< R-|q|$:
\begin{align}
&\;r^{-m} \|T\| (\bB_r (q)) - s^{-m} \|T\| (\bB_s (q)) - \int_{\bB_r (q)\setminus \bB_s (q)} \frac{|(x-q)^\perp|^2}{|x-q|^{m+2}}\, d\|T\| (x)\nonumber\\
= &\; \int_s^r \rho^{-m-1} \int_{\bB_\rho (q)} (x-q)^\perp \cdot \vec{H}_T (x)\, d\|T\| (x)\, d\rho\, ,
\label{e:monot_identity}
\end{align}
where $Y^\perp (x)$ denotes the component of the vector $Y (x)$ orthogonal to the tangent plane of $T$ at $x$ (which is oriented by $\vec{T} (x)$).
In particular:
\begin{itemize}
\item[(i)] There is a dimensional constant $C (m)$ such that the map $r\to e^{C\|A_\Sigma\|_0 r} \frac{\|T\| (\bB_r (q))}{\omega_m r^m}$ is monotone increasing.
\item[(ii)] The limit
\[
\Theta_T (q) := \lim_{r\downarrow 0} \frac{\|T\| (\bB_r (q))}{\omega_m r^m}
\] 
exists and is finite at every point $q\in \bB_R$.
\item[(iii)] The map $q\mapsto \Theta_T (q)$ is upper semicontinuous and it is a positive integer at $\mathcal{H}^m$-a.e. $q\in \spt (T)$. In particular 
$\spt (T) \cap \bB_R = \{\Theta_T \geq 1\}$. 
\end{itemize} 
\end{corollary}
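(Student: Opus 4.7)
The plan is to deduce Corollary \ref{c:monotonicity} as a direct consequence of Lemma \ref{l:varifold} combined with the classical monotonicity formula for integer rectifiable varifolds with bounded mean curvature in Euclidean space. From Lemma \ref{l:varifold}, $\V(T)$ is an integer rectifiable $m$-varifold in $\Omega$ whose first variation satisfies \eqref{e:stat2}, and the explicit formula \eqref{e:mean_curv} for $\vec H_T$ gives the pointwise bound $|\vec H_T(x)| \leq C(m)\,\|A_\Sigma\|_0$ at $\|T\|$-a.e.~$x$. The identity \eqref{e:monot_identity} is then obtained in the standard way: test $\delta\V(T)$ with radial vector fields of the form $\phi(|x-q|)(x-q)$, let $\phi$ approach the characteristic function of $[s,r]$, and decompose the resulting integrand into tangential and normal components via $|x-q|^2 = |(x-q)^\top|^2 + |(x-q)^\perp|^2$. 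This is exactly the computation carried out in \cite[\S17]{Simon83} for general varifolds with locally bounded first variation.

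For (i), I would estimate the right-hand side of \eqref{e:monot_identity} using the mean curvature bound above: the inner integral is controlled by $C(m)\,\|A_\Sigma\|_0\, \rho\,\|T\|(\bB_\rho(q))$, yielding a Gronwall-type differential inequality that, after exponentiation, produces precisely the monotone quantity $r\mapsto e^{C\|A_\Sigma\|_0 r}\,\frac{\|T\|(\bB_r(q))}{\omega_m r^m}$. For (ii), monotonicity alone gives that the (weighted) limit exists in $[0,\infty]$, and finiteness follows by fixing any $r_0 < R-|q|$ and observing that for all $r<r_0$ the weighted density is bounded above by $e^{C\|A_\Sigma\|_0 r_0}\,\frac{\|T\|(\bB_{r_0}(q))}{\omega_m r_0^m}$, which is finite since $T$ has locally finite mass.

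For (iii), upper semicontinuity is obtained by writing $\Theta_T(q)$ as the infimum over $r\downarrow 0$ of the weighted quantity and combining monotonicity with the standard semicontinuity properties of Radon measures on open balls: if $q_k\to q$, then for any $r>0$ and $\varepsilon>0$ one has $\bB_r(q_k)\subset \bB_{r+\varepsilon}(q)$ eventually, hence $\limsup_k \|T\|(\bB_r(q_k))\leq \|T\|(\bB_{r+\varepsilon}(q))$; letting $\varepsilon\downarrow 0$ along values of $r$ with $\|T\|(\partial \bB_r(q))=0$ (which form a co-countable set) yields $\limsup_k \Theta_T(q_k)\leq \Theta_T(q)$. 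The fact that $\Theta_T$ is a positive integer $\mathcal H^m$-a.e.~on $\spt(T)$ is immediate from the rectifiable representation $T=\llbracket M,\vec\tau,\theta\rrbracket$, since at every point of approximate continuity of $\theta$ on $M$ the density computed via Besicovitch differentiation equals $|\theta(q)|\in\mathbb N\setminus\{0\}$.

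Finally, the identity $\spt(T)\cap \bB_R=\{\Theta_T\geq 1\}$ is established by combining the preceding two observations: the inclusion $\{\Theta_T\geq 1\}\subset\spt(T)$ is obvious since $\Theta_T(q)>0$ forces $\|T\|(\bB_r(q))>0$ for every $r>0$. For the reverse, suppose $q\in\spt(T)$ but $\Theta_T(q)<1$; by the upper semicontinuity just proved, there is a neighborhood $U$ of $q$ on which $\Theta_T<1$, but since $\Theta_T$ is an integer ($\geq 1$) at $\|T\|$-a.e.~point of $\spt(T)$, this forces $\|T\|(U)=0$, contradicting $q\in\spt(T)$. I expect no serious obstacle: the proof is a bookkeeping exercise applying well-known stationary-varifold machinery, the only mildly delicate point being the careful handling of the boundary of balls in the upper-semicontinuity argument, which is resolved by restricting to the co-countable set of radii where $\|T\|$ charges no sphere.
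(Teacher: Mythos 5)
Your proposal is correct and matches the paper's intent: the paper proves this corollary simply by citing the classical monotonicity formula for stationary varifolds (Allard, Simon) in combination with Lemma~\ref{l:varifold}, and you have merely spelled out the standard computation underlying that citation. No gaps.
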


Next, we introduce the usual blow-up procedure to analyze tangent cones at $q\in \spt (T)$.

\begin{definition}\label{def:tc}
Fix a point $q\in\spt(T)$ and define
\[
\iota_{q,r}(x):=\frac{x-q}{r}\quad\forall\,r>0\,.
\]
We denote by $T_{q,r}$ the currents
\[
T_{q,r}:=(\iota_{q,r})_\sharp T\quad\forall\,r>0\,.
\]
\end{definition}

Recalling Allard's theory of stationary varifolds, we then know that, for every sequence $r_k\downarrow 0$, a subsequence, not relabeled, of $\V (T_{q,r_k})$ converges locally to a varifold $C$ which is a stationary cone in $T_q \Sigma$ (the tangent space to $\Sigma$ at $q$). Combined with Proposition \ref{p:compactness} we achieve the following corollary.

\begin{corollary}\label{c:tangent_cones}
Let $T, \Sigma$ and $\Omega = \bB_R$ be as in Assumption \ref{ass:main}, let $q\in \spt (T) \cap \Omega$, and let $r_k\downarrow 0$. Then there is a subsequence, not relabeled, and a current $T_0$ with the following properties:
\begin{itemize}
\item[(i)] $T_0\res \bB_\rho \in \Rc_m (T_q \Sigma)$, $\partial T_0 \res \bB_\rho =0$ $\modp$ for every $\rho>0$;
\item[(ii)] $T_0\res \bB_\rho$ is a representative $\modp$ and is area minimizing $\modp$ in $\bB_\rho \cap T_q\Sigma$ for every $\rho>0$;
\item[(iii)] $T_0$ is a cone, namely $(\iota_{0,r})_\sharp T_0 = T_0$ for every $r>0$;
\item[(iv)] For every $\rho>0$ there is $r\geq \rho$ and $K\supset \bB_r$ such that 
\[
\lim_{k\to\infty} \F^p_K (T_{q,r_k} \res \bB_r - T_0\res \bB_r) =0\, .
\]
\item[(v)] If $\spt^p (T_0) = \spt (T_0)$ is contained in an $m$-dimensional plane $\pi_0$, then $T_0 = Q \a{\pi_0}$ for some $Q\in \mathbb Z \cap [-\frac{p}{2}, \frac{p}{2}]$. 
\end{itemize}
\end{corollary}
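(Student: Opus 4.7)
The plan is to construct $T_0$ by a flat-modulo-$p$ compactness argument applied to the rescaled currents $T_{q,r_k}$, then upgrade the limit to a cone via the monotonicity identity, and finally classify cones supported in an $m$-plane.

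First set $\Sigma_k := \iota_{q,r_k}(\Sigma)$, so that the $C^{3,a_0}$ hypothesis on $\Sigma$ guarantees $\Sigma_k \to T_q\Sigma$ locally in $C^2$, with $T_q\Sigma$ an entire affine plane of dimension $m+\bar n$ in $\R^{m+n}$. Fix $\rho>0$. For $k$ large, $\bB_{2\rho r_k}(q)\subset\Omega$, so $\partial T_{q,r_k}=0\modp$ in $\bB_{2\rho}$ and the triple $(\bB_{2\rho},\Sigma_k,T_{q,r_k}\res\bB_{2\rho})$ satisfies Assumption \ref{ass:main}. The monotonicity formula of Corollary \ref{c:monotonicity} yields $\sup_k\|T_{q,r_k}\|(\bB_{2\rho})\leq C(\rho)$, and a Chebyshev/coarea argument applied to the slice function $s\mapsto \mass(\langle T_{q,r_k},d,s\rangle)$ with $d(x)=|x|$ produces a radius $\rho'\in(\rho,2\rho)$ and a subsequence along which $\mass^p(\partial(T_{q,r_k}\res\bB_{\rho'}))$ is bounded. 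Proposition \ref{p:compactness}, applied in $\bB_{\rho'}$ and diagonalized over an exhaustion $\rho_j\uparrow\infty$, then produces a current $T_0$ and a subsequence for which (i), (ii) and (iv) hold, and moreover $\V(T_{q,r_k})\to\V(T_0)$ as varifolds on every $\bB_\rho$.

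For (iii), apply the monotonicity identity \eqref{e:monot_identity} at the origin to each $T_{q,r_k}$. Since $\|A_{\Sigma_k}\|_0 = r_k\|A_\Sigma\|_0\to 0$, formula \eqref{e:mean_curv} combined with the uniform mass bound shows that the mean-curvature error term in \eqref{e:monot_identity} tends to $0$ on every fixed annulus. The varifold convergence yields $\|T_{q,r_k}\|\to\|T_0\|$ as Radon measures on each $\bB_\rho$, so passing to the limit we obtain, for every $0<s<r$,
\begin{equation*}
r^{-m}\|T_0\|(\bB_r) - s^{-m}\|T_0\|(\bB_s) \;=\; \int_{\bB_r\setminus\bB_s}\frac{|x^\perp|^2}{|x|^{m+2}}\,d\|T_0\|(x)\,.
\end{equation*}
On the other hand $r^{-m}\|T_0\|(\bB_r) = \lim_k (r r_k)^{-m}\|T\|(\bB_{rr_k}(q)) = \omega_m\,\Theta_T(q)$ is independent of $r$ by Corollary \ref{c:monotonicity}(ii), so the right-hand side vanishes for all $0<s<r$. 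This forces $x^\perp=0$ at $\|T_0\|$-a.e. $x$, which is equivalent to the cone property $(\iota_{0,r})_\sharp T_0 = T_0$ for every $r>0$.

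For (v), assume $\spt(T_0)\subset\pi_0$ for some $m$-plane $\pi_0\subset T_q\Sigma$. Write $T_0 = \theta\a{\pi_0}$ with $\theta\colon\pi_0\to\Z$ Borel (extended by $0$ outside $\spt(T_0)$). The cone property forces $\theta$ to be $0$-homogeneous, hence constant on each connected component of $\pi_0\setminus\{0\}$; the boundary condition $\partial T_0 = 0\modp$ combined with Lemma \ref{l:elementary_modp}(i), which gives $|\theta|\leq p/2$ $\mathcal{H}^m$-a.e., then forces $\theta$ to equal a single integer $Q\in[-p/2,p/2]$, yielding $T_0 = Q\a{\pi_0}$. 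The main technical obstacle in the whole argument is the boundary mass control required to invoke Proposition \ref{p:compactness} on each fixed ball: the slice contribution from cutting $T_{q,r_k}$ with $\bB_\rho$ is not automatically bounded, and the diagonal/good-radius extraction described above is necessary to handle it.
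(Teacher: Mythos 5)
Your construction of $T_0$ via compactness and the good-radius extraction in (i), (ii), (iv) is sound and matches the paper's strategy, and your argument for (v) via the constancy lemma $\modp$ is also essentially correct. The gap is in (iii).

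From the monotonicity identity you correctly deduce $\int_{\bB_r\setminus\bB_s}\frac{|x^\perp|^2}{|x|^{m+2}}\,d\|T_0\|=0$, hence $x^\perp=0$ at $\|T_0\|$-a.e.\ point. But this is precisely the statement that the \emph{varifold} $\V(T_0)$ is a cone: it says the support $R$ is a cone with vertex $0$ and the multiplicity $\Theta_C=|\theta|$ is $0$-homogeneous. It does \emph{not} imply that the \emph{current} $T_0$ is a cone, because the current carries an additional piece of data --- the orientation, equivalently the sign of the integer-valued multiplicity $\theta$ --- and that sign could a priori flip along a ray. Writing $T_0=\epsilon\,\Theta_C\,\vec v\,\mathcal{H}^m\res R$ with $\vec v$ chosen $0$-homogeneous and $\epsilon\colon R\to\{-1,1\}$ Borel, the condition $x^\perp=0$ says nothing about whether $\epsilon(\lambda x)=\epsilon(x)$. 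For ordinary integral currents without boundary one can rule out sign-flips by coning the slice $\langle T_0,|\cdot|,s\rangle$ over $0$ and invoking that an $(m+1)$-current supported in an $m$-rectifiable set vanishes; but in the $\modp$ setting this coning argument only yields the equality $T'=T_0\res\bB_s$ \emph{modulo} $p$, which pins down $\epsilon$ only at points where $\Theta_C<\frac{p}{2}$. The paper's proof therefore has to treat the locus $\{\Theta_C=\frac{p}{2}\}$ separately (setting $\epsilon\equiv1$ there, which is legal because both signs give the same class and the same representative $\modp$) before concluding that $\epsilon$ is radially constant. None of this appears in your argument; the sentence ``which is equivalent to the cone property'' is where the proof breaks, since it silently identifies the varifold cone property with the current cone property.
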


Before coming to its proof, let us state an important lemma which will be used frequently during the rest of the paper. See \cite[Theorem 7.6]{DPH2} for a proof. 

\begin{lemma}[Constancy Lemma]\label{l:constancy}
Assume $\pi\subset \mathbb R^{m+n}$ is an $m$-dimensional plane and let $\Omega\subset \mathbb R^{m+n}$ be an open set such that $\Omega \cap \pi$ is connected. Assume $T\in \Rc_m (\pi)$ is a current such that $(\partial^p T) \res \Omega =0$. Finally let $\vec{v} = v_1 \wedge \ldots \wedge v_m$ for an orthonormal basis $v_1, \ldots , v_m$ of $\pi$. Then there is a $Q\in \mathbb Z \cap [-\frac{p}{2}, \frac{p}{2}]$ such that 
$T\res \Omega = Q \, \vec{v} \, \mathcal{H}^m \res (\Omega \cap \pi)$ $\modp$. 
\end{lemma}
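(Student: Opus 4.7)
\emph{Proof plan.} The plan is to reduce $T$ to a canonical representative, pass to $\pi$ as ambient, and argue by induction on $m$ using the mod $p$ slicing formula.

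First, I replace $T$ by the representative \modp\ provided by \eqref{e:rappresentante}, writing $T=\theta\,\vec v\,\Ha^m\res\pi$ with $\theta\colon\pi\to\Z$ Borel measurable and $|\theta|\le p/2$; the congruence class $[T]$ is unchanged. Since the affine $m$-plane $\pi$ is a Lipschitz deformation retract of $\R^{m+n}$ (via orthogonal projection), Remark~\ref{r:congruence_manifold} allows me to take $\pi$ itself as ambient and reduce to the case $n=0$. The statement becomes: on the connected open set $U := \Omega \cap \pi$, the function $\theta\bmod p$ is a.e.\ equal to a single integer.

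For the induction, I treat the case $m\ge 2$ in the inductive step. Because $[\partial T]\res\Omega=0$, we have $[T\res\Omega]\in\In_m^p(\overline\Omega)$, and Lemma~\ref{modp_slicing_formula} applies with the representative $Z=0$ of the trivial boundary class. Fixing a coordinate $x_i$, slicing with $f(x)=x_i$ yields, for a.e.\ $t$,
\[
\langle T,x_i,t\rangle \;=\; \partial(T\res\{x_i<t\})\ \modp\ \text{in } \overline\Omega,
\]
so the slice is itself a boundary---hence a cycle---mod $p$ in the hyperplane $H_t^i:=\{x_i=t\}\cap\pi$. Since $\Omega$ is a ball, $H_t^i\cap\Omega$ is also a (connected) ball. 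The inductive hypothesis applied in dimension $m-1$ within the plane $H_t^i$ then gives an integer $c_i(t)$ such that the multiplicity of $\langle T,x_i,t\rangle$ equals $c_i(t)$ modulo $p$ a.e.\ on $H_t^i\cap\Omega$. This multiplicity is precisely the restriction of $\theta$ to $H_t^i$, so $\theta(x)\equiv c_i(x_i)\,\modp$ for a.e.\ $x\in U$. Performing this for two distinct coordinates $i=1,2$ and invoking Fubini, we conclude $c_1(x_1)\equiv c_2(x_2)\,\modp$ a.e.\ in $U$, which forces both to equal a single integer $Q$. Hence $\theta\equiv Q\,\modp$ a.e.\ on $U$, as required.

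The main obstacle is the base case $m=1$, where the two-direction Fubini argument is unavailable. In this case $T=\theta\llbracket I\rrbracket$ on an interval $I=U\subset\pi\cong\R$, with $\theta\in\Z\cap[-p/2,p/2]$ a.e., and $\partial T\equiv0\,\modp$ in $I$. A priori the latter only yields $\partial T=pP$ for some flat $0$-current $P$, which does not directly imply that $\partial T$ is a measure. The crux is to upgrade this to the structural fact that $\partial T$ is a locally finite signed measure on $I$ with values in $p\Z$; I would achieve this by combining Lemma~\ref{modp_slicing_formula} (applied with $f(x)=x$) with the mass mod $p$ characterization in Lemma~\ref{l:elementary_modp}(iv) to analyze the $0$-current $\partial(T\res\{x<t\})$ as $t$ varies. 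Once this measure-theoretic structure is established, the bound $|\theta|\le p/2$ forces every jump of $\theta$ to have magnitude at most $p$ and to lie in $p\Z$, so the only possibilities are jumps of $0$ and of $\pm p$; the latter take $\theta$ between $\mp p/2$ and $\pm p/2$ (requiring $p$ even), and in all cases the class of $\theta$ modulo $p$ is preserved across every jump. Hence $\theta\bmod p$ is constant on $I$, completing the induction.
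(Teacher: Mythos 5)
The paper does not actually prove this lemma: it simply cites \cite[Theorem 7.6]{DPH2}, whose proof follows the classical route of mollifying the flat chain mod $p$ and observing that a closed smooth $m$-form with values in the coefficient group on a connected open subset of an $m$-plane is constant. Your route — Fubini-style induction on $m$ via the slicing formula $\modp$ — is therefore genuinely different, and it is a creative idea, but as written it has two substantive gaps.

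First, you repeatedly use that ``$\Omega$ is a ball,'' so that each hyperplane section $H_t^i\cap\Omega\cap\pi$ is a connected ball to which the inductive hypothesis applies. This is not a hypothesis of the lemma; $\Omega$ is merely an open set with $\Omega\cap\pi$ connected, and for such sets the sections can disconnect. The Fubini step can in principle be salvaged by working with the connected components of each section and invoking axis-parallel polygonal connectivity of $\Omega\cap\pi$, but this is not what you argue and would require a nontrivial rewrite. A related technicality: Lemma~\ref{modp_slicing_formula} only gives you freedom to take $Z=0$ if $\partial^p T=0$ in $\overline\Omega$, not just $\Omega$; to apply it correctly you need to localize to $\Omega'\Subset\Omega$ and track where the chosen representative $Z$ is supported.

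Second, and more seriously, the base case $m=1$ — which you yourself identify as the crux — is only sketched, not proved, and the sketch rests on an unsupported claim. You say you ``would'' deduce from Lemma~\ref{modp_slicing_formula} and Lemma~\ref{l:elementary_modp}(iv) that $\partial T$ is a locally finite measure taking values in $p\Z$. But Lemma~\ref{l:elementary_modp}(iii)--(iv) apply to \emph{top-dimensional} currents ($n=0$), whereas $\partial(T\res\{x<t\})$ is a $0$-current in an ambient line, i.e.\ codimension one, so those points simply do not apply to it. More fundamentally, the condition $\partial^p T\res\Omega=0$ only gives $\mathscr{F}_K^p(\partial T)=0$ for compact $K\subset\Omega$, i.e.\ the existence of decompositions $\partial T=R_k+\partial Z_k+pP_k$ with $\mathbf M(R_k)+\mathbf M(Z_k)\to 0$; nothing controls the masses of the flat chains $P_k$, so there is no obvious compactness with which to extract a measure. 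Upgrading ``$\partial T\equiv0\,\modp$'' on an interval to ``$\theta$ is BV with jumps in $p\Z$'' is essentially the content of the $1$-dimensional constancy theorem itself, and it needs an argument (the standard one mollifies), not a reduction to lemmas that do not apply. As a result the induction has no established base, and the proof is incomplete.
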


\begin{proof}[Proof of Corollary \ref{c:tangent_cones}] Note that (v) is an obvious consequence of the constancy lemma and of (i). 
In order to prove the remaining statements, first extract a subsequence such that $V_k = \V (T_{q,r_k})$ converges to a stationary cone $C$ as above. 
Then observe that for every $j\in \mathbb N$, using a classical Fubini argument and Lemma \ref{modp_slicing_formula} we find a radius $\rho (j) \in [j,j+1]$ such that
\begin{align*}	
\liminf_k \mass^p (\partial (T_{q, r_k} \res \bB_{\rho (j)})) &= \liminf_k \mass (\langle T_{q, r_k}, |\cdot|, \rho(j)\rangle)  \\
&\leq \liminf_k \|T_{q,r_k} \| (\bB_{j+1}\setminus \bB_j) = \omega_m \Theta_T (q) ((j+1)^m - j^m)\, .
\end{align*}
Thus we can find a subsequence to which we can apply the compactness Proposition \ref{p:compactness}. By a standard diagonal argument we can
thus find a single subsequence $r_k$ with the following properties:
\begin{itemize}
\item[(a)] For each $j$ there is a current $T^j \in \Rc_m (T_q \Sigma)$ such that 
\[
\lim_{k\to \infty} \F_{\overline{\bB}_{j+1}}^p (T_{q,r_k}\res \bB_{\rho (j)}- T^j) = 0\, .
\]
\item[(b)] Each $T^j$ is a representative $\modp$ and $\V (T^j) = C \res \bB_{\rho (j)}$.
\item[(c)] Each $T^j$ is area minimizing $\modp$ in $\bB_{\rho (j)}$.
\end{itemize}
Notice next that $T^j \res \bB_{\rho (i)} = T^i$ $\modp$ for every $i\leq j$. If we then define the current
\[
T_0 := \sum_{i\in \mathbb N} T^i \res (\bB_{\rho (i)}\setminus \bB_{\rho (i-1)})\, ,
\]
with $\rho(-1) := 0$, then the latter satisfies the conclusions (i), (ii) and (iv). 

In the remaining part of the proof we wish to show (iii), after possibly changing $T_0$ to another representative $\modp$ of the same class. 
\medskip

To this aim, consider that, by standard regularity theory for stationary varifolds, the closed set $R = \spt (C)$ is countably $m$-rectifiable, it is a cone with
vertex at the origin and $\|C\| = \Theta_C (x) \mathcal{H}^m \res R$, where $\Theta_C$ is the density of the varifold $C$. 
By the monotonicity formula and $\V(T)=C$ we have
\[
\Theta_{T_0} (x) =  \Theta_C (x)\, .
\]
If $x$ is a point where the approximate tangent $T_x R$ exists, we then conclude easily that, up to subsequences, we can apply the same
argument above and find that $(T_0)_{x, r_k}$ with $r_k\downarrow 0$ converges locally $\modp$ to a current $S$ satisfying the corresponding conclusions:
\begin{itemize}
\item[(i)'] $S\res \bB_\rho \in \Rc_m (T_q \Sigma)$ and $\partial S \res \bB_\rho =0$ $\modp$ for every $\rho>0$;
\item[(ii)'] $S\res \bB_\rho$ is a representative $\modp$ and is area minimizing $\modp$ in $\bB_\rho \cap T_q\Sigma$ for every $\rho>0$;
\item[(iv)'] For every $\rho>0$ there is $r\geq \rho$ and $K\supset \bB_r$ such that 
\[
\lim_{k\to\infty} \F^p_K ((T_0)_{x,r_k} \res \bB_r - S\res \bB_r) =0\, .
\]
\end{itemize}
However, for $S$ we would additionally know that it is supported in $T_x R$, which is an $m$-dimensional plane. We then could apply the Constancy Lemma and conclude that, if $v_1, \ldots, v_m$ is an orthonormal basis of $T_x R$, then
$\Theta_C (x) \in \mathbb N \cap [1, \frac{p}{2}]$ and, for any $\rho>0$,  
\begin{align*} 
\mbox{either}\; &\; S \res \bB_\rho = \Theta_C (x) v_1\wedge \ldots \wedge v_m \mathcal{H}^m \res T_x R\cap \bB_\rho \qquad \modp\\
\mbox{or}\; &\; S \res \bB_\rho = - \Theta_C (x) v_1\wedge \ldots \wedge v_m \mathcal{H}^m \res T_x R \cap \bB_\rho\qquad \modp\, .
\end{align*}
In particular we conclude that there is a Borel function $\epsilon: \spt (C) = R \to \{-1,1\}$ such that
\begin{equation}\label{e:repr_1}
T_0 = \epsilon\, \Theta_C\,  \vec{v}\, \mathcal{H}^m \res R\, ,
\end{equation}
where $\vec{v} (x)$ is an orienting Borel unit $m$-vector for $T_x R$. Clearly, since $R$ is a cone, we can choose $\vec{v} (x)$ with the additional property that $\vec{v} (x) = \vec{v} (\lambda x)$ for every positive $\lambda$. Also, since the varifold $C$ is a cone, the density $\Theta_C$ is $0$-homogeneous as well. Moreover, at all points $x$ where $\Theta_C (x) = \frac{p}{2}$ we can arbitrarily
set $\epsilon (x) =1$, since this would neither change the class $\modp$, nor the fact that $T_0$ is representative $\modp$. 

\medskip

Fix now a radius $s>0$ such that the conclusions of Lemma \ref{modp_slicing_formula} hold with $T = T_0$, $f = \abs{\cdot}$, and $t=s$, and consider the cone $T':= \langle T_0 , |\cdot|,s \rangle \cone \{0\}$. Observe that $\partial (T'-T_0 \res \bB_s)=0$ $\modp$. We now make the following simple observation: if $Z\in \Rc_m (\mathbb R^{m+n})$ with $\spt(Z)$ compact is such that 
$\partial Z=0$ $\modp$ in $\mathbb R^{m+n}$, then $\partial (Z\cone \{0\}) = Z$ $\modp$. The proof is in fact a simple consequence of the
definition, since $\partial Z=0$ $\modp$ implies the existence of integer rectifiable currents $Q^{(1)}_k$ and $Q^{(2)}_k$
and flat currents $Q_k$ such that
\[
\partial Z = p Q_k + Q^{(1)}_k + \partial Q^{(2)}_k
\]
and 
\[
\mass (Q^{(1)}_k) + \mass (Q^{(2)}_k) \to 0\, .
\]
Using the general formula $\partial (A\cone 0) = A - (\partial A)\cone 0$ we then obtain 
\[
\partial (Z\cone 0) = Z - p Q_k \cone 0 - Q^{(1)}_k \cone \{0\} + \partial (Q^{(2)}_k \cone 0) - Q^{(2)}_k\, , 
\]
which by 
\[
\mass (Q^{(1)}_k \cone \{0\} + Q^{(2)}_k) + \mass (Q^{(2)}_k \cone 0) \to 0
\]
implies that indeed $\partial (Z\cone 0) = Z$ $\modp$. 

We apply the above observation to $Z= T' - T_0\res \bB_s$. In that case we conclude however that the cone 
\[
Z\cone 0 \qquad \mbox{is identically $0$,}
\]
because it is
an $(m+1)$-dimensional rectifiable current supported in the countably $m$-rectifiable set $R$. We thus must necessarily have that
$T' - T_0\res \bB_s = 0\, \modp$. Applying the argument of the previous paragraph, we of course again conclude that
\begin{equation}
T' = \epsilon'\, \Theta_C\, \vec{v}\, \mathcal{H}^m \res R \cap \bB_s\, .
\end{equation}
Consider now, as above, a point $x\in \bB_s$ where the approximate tangent plane to $R$ exists. Then $(T')_{x,r}$ converges, as $r\downarrow 0$, to
$\epsilon' (x)\, \Theta_C (x)\, \vec{v} (x)\, \mathcal{H}^m \res T_x R$, whereas $(T_0)_{x,r}$ converges, as $r\downarrow 0$, to
$\epsilon (x)\, \Theta_C (x)\, \vec{v} (x)\, \mathcal{H}^m \res T_x R$. However the two limits must be congruent $\modp$ and, in case $\Theta_C (x) <\frac{p}{2}$, this necessarily implies $\epsilon (x) = \epsilon' (x)$. 

Fix now $\lambda >0$. 
Since $T'$ is a cone and $s$ is arbitrary, we conclude that for $\mathcal{H}^m$ a.e. $x\in R \cap \{\Theta_C < \frac{p}{2}\}$ we must necessarily have $\epsilon (x) = \epsilon' (x) = \epsilon' (\lambda x) = \epsilon (\lambda x)$. 
On the other hand we already have $\epsilon (x) = \epsilon (\lambda x) =1$ if $\Theta_C (x) = \frac{p}{2}$. Hence we have concluded that $\epsilon (\lambda x) = \epsilon (x)$ for $\mathcal{H}^m$-a.e. $x\in R$. In particular $(\iota_{0,\lambda})_\sharp T_0 = T_0$. The arbitrariness of $\lambda$ implies now the desired conclusion (iii) and completes the proof of the corollary. 
\end{proof}

\section{Strata and blow-up sequence}  \label{s:strata}

\begin{definition}[$Q$-points] \label{Q-points}
For every $Q \in \N \setminus \{ 0 \}$, we will let ${\rm D}_{Q}(T)$ denote the points of density $Q$ of the current $T$, namely
\[
{\rm D}_{Q}(T) := \lbrace q \in \Omega \, \colon \, \Theta_{T}(q) = Q \rbrace\, .
\]
We also set
\[
\reg_{Q}(T) := \reg(T) \cap {\rm D}_{Q}(T) \quad \mbox{and} \quad \sing_{Q}(T) := \sing(T) \cap {\rm D}_{Q}(T).
\]
\end{definition}

Theorem \ref{t:main3} is thus equivalent to

\begin{theorem}\label{t:main5}
Under Assumption \ref{ass:main}, for every $Q<\frac{p}{2}$ the set $\sing_Q (T)$ has Hausdorff dimension at most $m-2$. 
\end{theorem}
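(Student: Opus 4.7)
The plan is to argue by contradiction, following the blueprint of Almgren's partial regularity theorem and the De Lellis--Spadaro revisitation, specialized to the range of densities $Q<\frac{p}{2}$. Suppose for contradiction that there exist $\alpha>0$, $Q\in\mathbb{N}$ with $Q<\frac{p}{2}$, and a current $T$ satisfying Assumption \ref{ass:main} such that $\mathcal{H}^{m-2+\alpha}(\sing_Q(T))>0$. By the Almgren stratification for the stationary integral varifold $\V(T)$ (available through Lemma \ref{l:varifold}), together with the upper semicontinuity of the density $\Theta_T$ and the cone analysis of Corollary \ref{c:tangent_cones}, I would first extract a point $q_0\in\sing_Q(T)$, radii $r_k\downarrow 0$, and an $m$-plane $\pi_0\subset T_{q_0}\Sigma$ such that $\V(T_{q_0,r_k})\to Q\,\mathcal{H}^m\res\pi_0\otimes\delta_{\pi_0}$, together with a persistent lower bound $\limsup_k\mathcal{H}^{m-2+\alpha}_\infty(\sing_Q(T_{q_0,r_k})\cap\bB_1)\geq\eta>0$ (this is the analogue, in the lower stratum, of the contradiction sequence mentioned in Section \ref{sec:guide}).

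The second step is the \emph{classical} Lipschitz multi-valued approximation. The hypothesis $Q<\frac{p}{2}$ is essential here: the flat cone $Q\a{\pi_0}$ is already a representative $\modp$ in the sense of \eqref{e:rappresentante}, and no orientation ambiguity is possible. Consequently, the currents $T_{q_0,r_k}$, once rotated so that the limit plane is horizontal, may be approximated by graphs of Lipschitz maps $u_k\colon B_1\subset\pi_0\to\mathcal{A}_Q(\pi_0^\perp)$ taking values in the \emph{classical} metric space of $Q$-points, rather than the space $\mathscr{A}_Q$ of special $Q$-points used when $Q=\frac{p}{2}$. The errors in the approximation are controlled by the excess $\modp$ of $T_{q_0,r_k}$ with respect to $\pi_0$, which vanishes as $k\to\infty$ by varifold convergence; exactly as in Part \ref{part:Lp} of the program, the estimates can be upgraded to superlinear decay using minimality and the nonoriented excess. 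Throughout this step one uses the slicing formula $\modp$ of Lemma \ref{modp_slicing_formula} to transfer BV-type estimates from the classical to the $\modp$ framework.

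Next, following Part \ref{part:Center}, I would construct the center manifold $\mathcal{M}_k$ and the normal $\mathcal{A}_Q$-valued approximation $N_k$ of $T_{q_0,r_k}$ over $\mathcal{M}_k$. Since $Q<\frac{p}{2}$ these are classical Almgren-type objects, and the construction is exactly the one carried out in \cite{DLS_Center}, with the only modifications needed to accommodate the $\modp$ setting being already packaged in the new approximation theorems and in Proposition \ref{p:compactness}. After the non-homogeneous rescaling of Part \ref{part:Blowup}, the sequence $N_k$ converges to a nontrivial $\Dir$-minimizing classical $Q$-valued function $u_\infty\colon B_1\subset\pi_0\to\mathcal{A}_Q(\pi_0^\perp)$, and by the usual persistence-of-singularities argument (Almgren's frequency function yielding non-collapsing of the singular set under the blow-up), the singular set of $u_\infty$ inherits the $(m-2+\alpha)$-content lower bound from item (iii) above.

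The final step is the invocation of Almgren's sharp bound for $\Dir$-minimizing classical $Q$-valued functions: $\dim_{\mathcal{H}}\sing(u_\infty)\leq m-2$. This directly contradicts $\mathcal{H}^{m-2+\alpha}_\infty(\sing(u_\infty)\cap B_1)\geq\eta>0$ and closes the argument. The main obstacle I expect is \emph{not} the linear regularity theory (which is classical here, since $Q<\frac{p}{2}$ allows one to use $\mathcal{A}_Q$ rather than $\mathscr{A}_Q$), but rather the verification that the singularities of $T_{q_0,r_k}$ with density exactly $Q$ survive the passage to the center manifold and to the blow-up: one must ensure that points of $\sing_Q(T_{q_0,r_k})$ are recorded as singular points of $N_k$ at a scale commensurable with the rescaling factor, so that their $\mathcal{H}^{m-2+\alpha}_\infty$-content is not lost in the limit. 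This is where the fine excess decay estimates and the hierarchical structure of the center manifold play their most delicate role, exactly as in \cite{DLS_Blowup}.
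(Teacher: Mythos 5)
Your approach is the same as the paper's: contradiction sequence, Lipschitz $\mathcal{A}_Q$-approximation, center manifold, frequency/blow-up, then the linear theory. But there is a genuine gap in your very first step, and it is precisely where a key feature of the $\modp$ setting enters.

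You assert that the Almgren stratification, density upper semicontinuity, and the cone analysis of Corollary \ref{c:tangent_cones} produce a point $q_0 \in \sing_Q(T)$ where $\V(T_{q_0,r_k})$ converges to a \emph{flat} cone $Q\,\mathcal{H}^m\res\pi_0\otimes\delta_{\pi_0}$. That does not follow from the ingredients you cite. The stratification locates an $\Ha^{m-2+\alpha}$-positive set of points outside $\cS^{m-2}$, and at a generic such $q_0$ you can only conclude that some tangent cone $C$ has spine of dimension $\geq m-1$, i.e., that $C$ is $(m-1)$-symmetric. In the classical integral setting an $(m-1)$-symmetric area-minimizing boundaryless cone is automatically a multiple of a plane, so you could skip this point. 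But $\modp$ this fails: for instance, three half-lines from the origin at $120^\circ$ with multiplicity one is a nontrivial one-dimensional area-minimizing cone $\mod 3$ without boundary $\mod 3$, giving $(m-1)$-symmetric, non-flat area-minimizing cones $\modp$ in every dimension by taking products with $\R^{m-1}$. The extra ingredient needed is Lemma \ref{l:symmetric}: an $(m-1)$-symmetric, non-flat, boundaryless, locally area-minimizing cone $\modp$ must have vertex density at least $\frac{p}{2}$. Since $\Theta(T,q_0)=Q<\frac{p}{2}$, this rules out the non-flat alternative and forces flatness of the tangent cone. Without this observation your contradiction sequence cannot be extracted.

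Put differently, you have identified only one of the two places where $Q<\frac{p}{2}$ is essential. You correctly note that it lets you use the classical space $\mathcal{A}_Q$ (rather than $\mathscr{A}_Q$) throughout the Lipschitz approximation, center manifold, and linear theory. But it is also needed, via Lemma \ref{l:symmetric}, to ensure flat tangent cones exist at $\Ha^{m-2+\alpha}$-a.e.\ point of $\sing_Q(T)$ in the first place; this is exactly the content of the second half of the proof of Proposition \ref{p:contradiction_sequence}. The remainder of your sketch — approximation, center manifold, blow-up, persistence, invocation of the $(m-2)$-dimensional bound for classical $\Dir$-minimizers — matches the paper's route.
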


Before proceeding, we need to recall the following definition.

\begin{definition} \label{def:symmetric cones and stratification}
An integral $m$-varifold $V$ is called a \emph{$k$-symmetric} cone (where $0 \leq k \leq m$) if it can be written as the product of a $k$-dimensional plane passing through the origin times an $(m-k)$-dimensional cone. The largest plane passing through the origin such that the above holds is called the \emph{spine} of $V$. If $V$ is stationary, then the standard \emph{stratification} of $V$ is 
\begin{equation} \label{e:stratification}
\cS^{0} \subset \cS^{1} \subset \dots \subset \cS^{m},
\end{equation}
where
\begin{equation} \label{e:strata definition}
\cS^{k} := \lbrace q \in \spt(V) \, \colon \, \mbox{no tangent cone to $V$ at $q$ is $(k+1)$-symmetric} \rbrace.
\end{equation}
\end{definition}

As a consequence of Corollary \ref{c:tangent_cones} and of the classical Almgren's stratification theorem, we have now the following

\begin{proposition} \label{cor:densities}
Let $T, \Sigma$ and $\Omega$ be as in Assumption \ref{ass:main} and consider the set
\[
Z := \Omega \cap \spt (T) \setminus \bigcup_{Q\in \mathbb N\setminus \{0\}, Q\leq \frac{p}{2}} {\rm D}_Q (T)\, .
\]
Then $\mathcal{H}^{m-1+\alpha} (Z) =0$ for every $\alpha>0$.
\end{proposition}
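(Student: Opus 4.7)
The plan is to combine Almgren's stratification theorem for stationary integral varifolds with the classification of planar tangent cones mod $p$ provided by Corollary \ref{c:tangent_cones}(v). By Lemma \ref{l:varifold} the integral varifold $V := \V(T)$ is stationary in $\Sigma\cap\Omega$, so its standard stratification \eqref{e:stratification} is defined, and Almgren's stratification theorem (see, e.g., \cite{Simon83}) will give $\dim_{\Ha}(\cS^{m-1}) \leq m-1$, hence $\Ha^{m-1+\alpha}(\cS^{m-1}) = 0$ for every $\alpha > 0$. The proof therefore reduces to establishing the inclusion $Z \subset \cS^{m-1}$.

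To prove that inclusion I would argue by contrapositive. Fix $q \in (\spt(T)\cap\Omega) \setminus \cS^{m-1}$: by definition some tangent cone $C$ of $V$ at $q$ is $m$-symmetric, which, since $V$ is $m$-dimensional, forces $\spt(C)$ to be an $m$-dimensional plane $\pi_0$ through the origin (the spine has the same dimension as the varifold, so the cone factor is $0$-dimensional). Picking a sequence $r_k\downarrow 0$ along which $\V(T_{q,r_k})\to C$, Corollary \ref{c:tangent_cones} produces (along a subsequence) a cone $T_0\in\Rc_m(T_q\Sigma)$ obtained as a local flat limit mod $p$ of the $T_{q,r_k}$'s, area minimizing and representative mod $p$ on every ball. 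Proposition \ref{p:compactness}(ii) then matches varifolds and currents, giving $\V(T_0)=C$ and in particular $\spt(T_0)\subset \pi_0$. The key ingredient is at this point Corollary \ref{c:tangent_cones}(v), which yields $T_0 = Q_0\,\a{\pi_0}$ for some $Q_0\in\Z\cap [-\tfrac{p}{2}, \tfrac{p}{2}]$.

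Finally, since $q\in\spt(T)$, Corollary \ref{c:monotonicity}(iii) gives $\Theta_T(q)\geq 1$; on the other hand the density at the origin of the tangent cone $T_0 = Q_0 \a{\pi_0}$ is $|Q_0|$ and, by the monotonicity formula together with the fact that $\|T_{q,r_k}\|\weaks \|T_0\|$ on balls of $T_q\Sigma$, it coincides with $\Theta_T(q)$. Therefore $|Q_0|\in\{1,\ldots,\lfloor p/2\rfloor\}$ and $q\in {\rm D}_{|Q_0|}(T)$, so that $q\notin Z$. This gives the desired inclusion and concludes the proof.

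I do not foresee any serious obstacle: all the ingredients (Allard's compactness and monotonicity theory for stationary varifolds, the constancy-based classification of currents mod $p$ supported in a single plane, Proposition \ref{p:compactness} identifying current and varifold limits, and Almgren's stratification) have already been set up in the preceding sections or are classical. The only point deserving care is the identification of an $m$-symmetric $m$-dimensional stationary varifold with a multiplicity-$Q$ plane, which however follows directly from the definition of the spine once the dimensions are matched.
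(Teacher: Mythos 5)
Your proof is correct and follows essentially the same path as the paper's: stationarity of $\V(T)$ (Lemma \ref{l:varifold}), Almgren's stratification giving $\dim_\Ha \cS^{m-1}\leq m-1$, identification of the tangent-cone current via Corollary \ref{c:tangent_cones} and Proposition \ref{p:compactness}, and the constancy-lemma classification (which is exactly the content of Corollary \ref{c:tangent_cones}(v)) to conclude $\Theta_T(q)\in[1,p/2]\cap\N$. The only differences are cosmetic — you spell out why an $m$-symmetric $m$-varifold is a multiplicity plane and you cite (v) explicitly rather than Lemma \ref{l:constancy} directly — so there is nothing to flag.
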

\begin{proof}
By Lemma \ref{l:varifold}, the varifold $V = \V(T)$ is stationary in $\Sigma \cap \Omega$, thus we can consider the stratification of $V$ as in \eqref{e:stratification} and \eqref{e:strata definition}. If $q \in \cS^{m} \setminus \cS^{m-1}$ then there is at least one tangent cone to $V$ at $q$ which is supported in a flat plane $\pi_{0}$. Then there is a current $T_0$ as in Corollary \ref{c:tangent_cones}, obtained as a limit $T_{q,r_k}$ for an appropriate $r_k\downarrow 0$, which satisfies $\V (T_0) = V$. Thus by the constancy lemma $\Theta_{T_0} (0) = \Theta_T (q)$ must belong to $[1, \frac{p}{2}]\cap \mathbb N$. This implies that $Z \subset \mathcal{S}^{m-1}$. Our statement then follows immediately from the well known fact that $\dim_{\Ha} \cS^k \leq k$ for every $0 \leq k \leq m$.
\end{proof}

We shall also need the following elementary yet fundamental lemmas. Given $v \in \R^{m+n}$, we will adopt the notation $\tau_v := \iota_{v,1}$, so that $\tau_v(x) := x-v$. 

\begin{lemma} \label{l:splitting}

Assume $T \in \Rc_m(\R^{m+n})$ is an $m$-dimensional integer rectifiable current such that $\partial T = 0 \, \modp$ and the associated varifold $\V(T)$ is a $k$-symmetric cone with spine $\R^k \times \{0\} \subset \R^{m+n}$. Then 
\begin{equation} \label{e:invariance modp}
(\tau_v)_\sharp T = T \, \modp \qquad \mbox{for every $v \in \R^k \times \{0\}$}\,,
\end{equation}
and there exists an $(m-k)$-dimensional cone $T'$ such that
\begin{equation} \label{e:splitting}
T = \llbracket \R^k \rrbracket \times T' \quad \modp\,.
\end{equation}

Furthermore, if $T$ is a representative $\modp$ then so is $T'$; in this case, $\V(T) = \V(\a{\R^k} \times T')$, and $\V(T')$ has trivial spine. Finally, if $T$ is locally area minimizing $\modp$, then so is $T'$.
%
%
%
%
\end{lemma}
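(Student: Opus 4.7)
The plan is to first establish the splitting \eqref{e:splitting}; once this is proved, \eqref{e:invariance modp} is immediate because $\a{\R^k}\times T'$ is manifestly invariant under translations along $\R^k\times\{0\}$. The $k$-symmetry of $\V(T)$ with spine $\R^k\times\{0\}$ gives the factorization $\V(T)=\a{\R^k}\times W'$ for a stationary $(m-k)$-varifold cone $W'$ in $\R^{m+n-k}$; in particular, the rectifiable set underlying $T$ has the form $E=\R^k\times F$, where $F\subset\R^{m+n-k}$ is an $(m-k)$-rectifiable cone, and the density $\Theta_T$ depends only on the second coordinate $y$.

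To construct $T'$, I slice $T$ with the orthogonal projection $\pi\colon\R^{m+n}=\R^k\times\R^{m+n-k}\to\R^{m+n-k}$. Iterating Lemma \ref{modp_slicing_formula} along the coordinate functions $y_1,\dots,y_{m+n-k}$ (at each stage using $\partial T=0\,\modp$ to ensure that successive slices remain cycles $\modp$), I obtain, for $\mathcal{H}^{m+n-k}$-a.e.\ $y$, an integer rectifiable $k$-current $T_y=\langle T,\pi,y\rangle$ supported in the $k$-plane $\R^k\times\{y\}$ and closed $\modp$. The Constancy Lemma (Lemma \ref{l:constancy}) then forces $T_y=Q(y)\,\a{\R^k\times\{y\}}\,\modp$ for a Borel integer $Q(y)\in[-p/2,p/2]$. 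Choosing a $0$-homogeneous Borel orientation $\vec{w}(y)$ of $F$, I set
\[
T':=Q(y)\,\vec{w}(y)\,\mathcal{H}^{m-k}\res F.
\]
To verify $T=\a{\R^k}\times T'\,\modp$, consider $S:=T-\a{\R^k}\times T'$: by the product slicing formula the $\pi$-slices of $\a{\R^k}\times T'$ are precisely $Q(y)\,\a{\R^k\times\{y\}}$, so those of $S$ vanish $\modp$. Combining this with Lemma \ref{l:elementary_modp} and the product structure of the tangent planes on $E=\R^k\times F$, a representative $\modp$ of $S$ must have multiplicities in $p\Z$ at $\mathcal{H}^m$-a.e.\ point, whence $S=0\,\modp$.

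For the remaining assertions, if $T$ is a representative $\modp$ then Lemma \ref{l:elementary_modp}(ii) makes each $T_y$ a representative $\modp$ as well, forcing $|Q(y)|=\Theta(y)\le p/2$; hence $T'$ is itself a representative $\modp$, $\V(\a{\R^k}\times T')=\V(T)$, and the spine of $\V(T')$ is trivial since the full spine of $\V(T)$ has been factored out. The cone property of $T'$ follows by combining the $0$-homogeneity of $\vec{w}$ (available because $F$ is a cone) with the $0$-homogeneity of $Q$, itself deducible from the uniqueness of the slicing value $\modp$ and the dilation-invariance of $\V(T)$. Finally, if $T$ were locally area minimizing $\modp$ but $T'$ admitted a mass-decreasing competitor $S'$ with $\partial S'=0\,\modp$ compactly supported in a ball $B'\subset\R^{m+n-k}$, I would build a competitor for $T$ by adding $\a{B_L}\times S'$ on the cylinder $B_L\times B'$, where $B_L\subset\R^k$ is a ball of radius $L$: the mass gained scales like $L^k\,\delta$ (with $\delta$ the saving on $T'$) while the boundary modification near $\partial B_L$ generates an error of order $L^{k-1}$, so taking $L$ large contradicts the minimality of $T$ in a sufficiently large ball of $\R^{m+n}$. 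The main technical obstacle will be the last step of the splitting, namely promoting the slice-wise vanishing $\modp$ of $S$ to its global vanishing $\modp$ through careful bookkeeping of orientations on $E=\R^k\times F$; a secondary difficulty will be justifying the $0$-homogeneity of $Q$.
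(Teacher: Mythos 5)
Your slicing is in the wrong direction, and the dimension count is fatal. You slice $T$ with the orthogonal projection $\pi\colon\R^{m+n}\to\R^{m+n-k}$, i.e.\ an $\R^{m+n-k}$-valued Lipschitz map. But the slice of an $m$-dimensional current along an $\R^q$-valued map is $(m-q)$-dimensional; with $q=m+n-k$ this gives $\dim\langle T,\pi,y\rangle=k-n$, not $k$, and the slice is identically zero unless $k\ge n$. Equivalently: $T$ is supported on $\R^k\times F$ with $F$ only $(m-k)$-rectifiable, so $\pi_\sharp T$ is an $m$-current supported on an $(m-k)$-dimensional subset of $\R^{m+n-k}$ and therefore vanishes. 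Consequently, for $\mathcal{H}^{m+n-k}$-a.e.\ $y$ the slice $\langle T,\pi,y\rangle$ is the zero current, and there is no $k$-dimensional current living in the $k$-plane $\R^k\times\{y\}$ on which to invoke the Constancy Lemma. Iterating Lemma \ref{modp_slicing_formula} coordinate-by-coordinate does not rescue this: after $m+n-k$ iterations you have sliced away more dimensions than $T$ carries whenever $n>0$.

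The paper does the opposite. It first proves \eqref{e:invariance modp} directly from the homotopy formula: since $\partial T=0\,\modp$ the homotopy term $h_\sharp(\a{(0,1)}\times\partial T)$ drops $\modp$, and since $v\in\R^k\times\{0\}$ is tangent to $M=\R^k\times M'$ one has $v\wedge\vec T(x)=0$ for $\|T\|$-a.e.\ $x$, so the remaining homotopy current vanishes identically. It then slices with the projection $\p$ onto the spine $\R^k\times\{0\}$: those slices $\langle T,\p,z\rangle$ are $(m-k)$-dimensional and genuinely carry the data of $T'$. The invariance \eqref{e:invariance modp} is what forces the slices at different $z$ to be translates of each other $\modp$; the identity $T=\a{\R^k}\times T'\,\modp$ is then extracted by comparing the fiberwise multiplicities of $T$ and $\tilde T:=\a{\R^k}\times T'$ over the common carrier $M$ and applying Fubini. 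Note also that in the paper \eqref{e:invariance modp} is proved first and \emph{used} to get \eqref{e:splitting}, whereas you reverse the implication; your direction would be logically admissible, but only if your construction of $T'$ were sound, and it is not.

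Two smaller remarks. Your concern about the ``$0$-homogeneity of $Q$'' is a symptom of the same confusion: the multiplicity you are after should not be a function of $y$ at all, since the product $\a{\R^k}\times T'$ has a single cone factor; the homogeneity you actually need is that of the integer multiplicity of $T'$ along $F$, which, in the paper's setup, comes for free from the $0$-homogeneity of $\theta$. Your cylinder-competitor sketch for the final minimality claim is a standard and reasonable outline, but the paper simply omits that proof, so there is nothing substantive to compare there.
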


\begin{proof}

Write $T = \llbracket M, \vec{\tau}, \theta \rrbracket$, so that $\V(T) = \V(M,|\theta|)$. Since $\V(T)$ is a $k$-symmetric cone with spine $\R^k \times \{0\}$, the locally $\Ha^m$-rectifiable set $M$ is a cone which is invariant with respect to $\R^k \times \{0\}$, in the sense that there exists a locally $\Ha^{m-k}$-rectifiable set $M' \subset \R^{m+n-k}$ such that $M = \R^k \times M'$. Furthermore, $|\theta|$ is a $0$-homogeneous function such that $|\theta|(x+v) = |\theta|(x)$ for every $v \in \R^k \times \{0\}$. By the properties of $M$, modulo changing the sign of $\theta$, we can also assume that the orienting unit $m$-vector field $\vec{\tau}$ is a $0$-homogeneous function such that $\vec{\tau}(x+v) = \vec{\tau}(x)$ for every $v \in \R^k \times \{0\}$.

\smallskip

Now, given two smooth and proper maps $f,g \colon \R^{m+n} \to \R^{m+n}$, and letting $h \colon \left[ 0, 1 \right] \times \R^{m+n} \to \R^{m+n}$ be the linear homotopy from $f$ to $g$, namely the function defined by
\[
h(t,x) := (1-t) \, f(x) + t\, g(x)\,,
\]
the homotopy formula (see \cite[Equation 26.22]{Simon83}) states that
\begin{equation} \label{homotopy formula}
g_\sharp T - f_\sharp T = \partial h_\sharp(\llbracket \left(0,1\right)\rrbracket \times T) + h_\sharp(\llbracket \left(0,1\right)\rrbracket \times \partial T)\,.
\end{equation}

Since $\partial T = 0 \, \modp$, \eqref{homotopy formula} yields
\begin{equation} \label{homotopy modp}
g_\sharp T - f_\sharp T = \partial h_\sharp(\llbracket \left(0,1\right)\rrbracket \times T) \quad \modp\,.
\end{equation}

Now, let $v \in \R^k \times \{0\}$, and apply \eqref{homotopy modp} with
\[
f(x) = x \qquad \mbox{and} \qquad g(x) = \tau_v(x) = x-v\,.
\]
We can compute, for any $\omega \in \mathscr{D}_{1+m}(\R \times \R^{m+n})$:
\[
\begin{split}
h_\sharp(\llbracket \left(0,1\right)\rrbracket \times T)(\omega) :&= (\llbracket \left(0,1\right)\rrbracket \times T)(h^\sharp \omega) \\
&= \int_0^1 dt \int \langle \omega(h(t,x)), [dh_{(t,x)}]_\sharp (e_1 \wedge \vec{T}(x)) \rangle \, d\|T\|(x) \\
&= - \int_0^1 dt \int \langle \omega(h(t,x)), v \wedge \vec{T}(x) \rangle \, d\|T\|(x) = 0\,,
\end{split}
\]
where we have used that $v \in \R^k \times \{0\}$, $\vec{T}(x) \in \Lambda_m({\rm Tan}(M,x))$ at $\|T\|$-a.e. $x$, and $M$ is invariant with respect to $\R^k \times \{0\}$. Using that $\omega$ can be chosen arbitrarily, we conclude \eqref{e:invariance modp} from \eqref{homotopy modp}.

\smallskip

Next, let $\p \colon \R^{m+n} \to \R^{m+n}$ be the orthogonal projection operator onto $\R^k \times \{0\}$. Using standard properties of the slicing of integer rectifiable currents (see e.g. \cite[Theorem 4.3.2(7)]{Federer69}) and \eqref{e:invariance modp}, we can conclude then that 
\begin{equation} \label{e:congruent slices}
(\tau_v)_\sharp \langle T, \p, z+v \rangle = \langle (\tau_v)_\sharp T, \p, z \rangle = \langle T, \p, z \rangle \quad \modp\,,
\end{equation}
for every $z,v \in \R^k \times \{0\}$ such that the slices exist, or, equivalently, that 
\begin{equation} \label{e:congruent slices 2}
\langle T, \p, z \rangle = (\tau_{w-z})_\sharp \langle T, \p, w \rangle \quad \modp
\end{equation}
for every $z,w \in \R^k \times \{0\}$ such that the slices exist. Fix $z$ such that $\langle T, \p, z \rangle$ exists, and let $T' \in \Rc_{m-k}(\R^{m+n-k})$ be such that $\langle T, \p, z \rangle = (\tau_{-z})_\sharp T'$ after identifying $\R^{m+n-k}$ with $\{0\} \times \R^{m+n-k}$. Then, the current $ \tilde T := \llbracket \R^k \rrbracket \times T' $ satisfies
\begin{equation} \label{zero slices modp}
\langle T - \tilde T, \p, z \rangle = 0 \quad \modp \qquad \mbox{for $\Ha^k$-a.e. $z \in \R^k \times \{0\}$}\,.
\end{equation}
Observe that we may write
\begin{equation} \label{struttura rettificabili}
T = \theta\, \vec{\tau} \, \Ha^m \res M\,, \qquad \tilde T = \tilde \theta \, \vec{\tau}\, \Ha^m \res M\,,
\end{equation}
for a $0$-homogeneous function $\tilde \theta$ such that $\tilde \theta (x+v) = \tilde \theta (x)$ for every $v \in \R^k \times \{0\}$. Also notice that, since $M$ is invariant with respect to $\R^k \times \{0\}$ and $\p$ is the orthogonal projection onto $\R^k \times \{0\}$, if we identify $\R^k \times \{0\}$ with $\R^k$ and if we set $\phi := \left.\p\right|_M$, then $J_k\phi(x) > 0$ for $\Ha^m$-a.e. $x \in M$, where $J_k\phi(x)$ is the $k$-dimensional Jacobian of $\phi$, defined by
\[
J_k\phi(x) := \left( \det\big(  d\phi(x) \circ d\phi(x)^T  \big)  \right)^{\sfrac12}\,, \qquad d\phi(x) \colon T_xM \to \R^k
\]
at all points $x \in M$ such that $T_xM$ exists. 

By the considerations above, the standard slicing theory of rectifiable currents (see e.g. \cite[Theorem 4.3.8]{Federer69}) implies that for $\Ha^k$-a.e. $z \in \R^k \times \{0\}$ the set $M_z:=M \cap \p^{-1}(z)$ is $(m-k)$-rectifiable and
\begin{equation} \label{structure of slices}
\langle T, \p, z \rangle = \left\llbracket M_z, \, \zeta,\, \left. \theta \right|_{M_z} \right\rrbracket\,, \qquad \langle \tilde T, \p, z \rangle =  \left\llbracket M_z, \, \zeta,\, \left. \tilde \theta \right|_{M_z} \right\rrbracket
\end{equation}
for a Borel measurable unit $(m-k)$-vector field $\zeta = \zeta_z$ which is uniquely determined by $\vec{\tau}$ and $d\phi$. If $z \in \R^k \times \{0\}$ is such that both \eqref{zero slices modp} and \eqref{structure of slices} hold, then 
\begin{equation} \label{conclusione}
\theta (x) = \tilde\theta(x) \quad \modp \qquad \mbox{at $\Ha^{m-k}$-a.e. $x \in M_z$}\,.
\end{equation}
By Fubini's theorem, the conclusion in \eqref{conclusione} holds at $\Ha^m$-a.e. $x \in M$, so that \eqref{e:splitting} follows from \eqref{struttura rettificabili} and the definition of $\tilde T$. \\

\smallskip

If $T$ is a representative $\modp$, then $\langle T, \p, z \rangle$ is a representative $\modp$ for $\Ha^{k}$-a.e. $z \in \R^k \times \{0\}$, and thus we can choose $z$ such that the corresponding $T'$ is a representative $\modp$. With this choice, $\tilde T$ is a representative $\modp$ as well, and since $\tilde \theta(x) = \theta(x) \; \modp$ for $\Ha^m$-a.e. $x \in M$ we deduce that
\begin{equation} \label{sign changing}
\tilde \theta(x) = \eps(x) \, \theta(x) \qquad \mbox{with $\eps(x) \in \{-1,1\}$, for $\Ha^m$-a.e. $x \in M$}\,,
\end{equation}
where $\eps(x) = 1$ or $\abs{\theta(x)} = \frac{p}{2}$. As a consequence, $\abs{\tilde \theta} = \abs{\theta}$ $\Ha^m \res M$-a.e., which in turn implies that $\V(\tilde T) = \V(T)$. The last conclusion of the lemma is elementary, and the details of the proof are omitted.
 \qedhere

%
%
%
%

\end{proof}

\begin{lemma}\label{l:symmetric}
Assume $T_0 \in \Rc_{m}(\R^{m+n})$ is an $m$-dimensional locally area minimizing current $\modp$ without boundary $\modp$ which is a cone (in the sense of Corollary \ref{c:tangent_cones} (iii)). Suppose, furthermore, that $\V(T_0)$ is $(m-1)$-symmetric but not $m$-symmetric (namely not flat). Then, $\Theta (T_0,0)\geq \frac{p}{2}$. 
\end{lemma}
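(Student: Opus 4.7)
The plan is to use Lemma~\ref{l:splitting} to peel off the $(m-1)$-dimensional translation symmetry, reducing the statement to a density bound for an area minimizing $1$-dimensional cone, and then to rule out density $<p/2$ via an optimal transportation competitor. After a rotation placing the spine of $\V(T_0)$ at $\R^{m-1}\times\{0\}$, Lemma~\ref{l:splitting} gives $T_0 = \a{\R^{m-1}}\times T' \modp$ for some $T'\in \Rc_1(\R^{n+1})$ which is locally area minimizing mod $p$, has no boundary mod $p$, and whose varifold $\V(T')$ has trivial spine (this last point uses that $\V(T_0)$ is not $m$-symmetric). A direct product computation on the cone $T_0$ gives $\Theta(T_0,0)=\Theta(T',0)$, so everything reduces to proving $\Theta(T',0)\geq p/2$.

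Take $T'$ to be a representative mod $p$ and write it as $T' = \sum_{i=1}^N \theta_i \a{\ell_i}$ with $\ell_i=\{tv_i:t\geq0\}$ rays in pairwise distinct unit directions $v_i\in\R^{n+1}$ and integer multiplicities $|\theta_i|\leq p/2$; then $\Theta(T',0) = \tfrac12 \sum_i|\theta_i|$. Arguing by contradiction, assume $\sum|\theta_i|<p$. Since $\partial T' = -(\sum\theta_i)\delta_0$ and $\partial T' = 0 \modp$, the integer $\sum\theta_i$ is a multiple of $p$ with $|\sum\theta_i|<p$, so $\sum\theta_i=0$ and $T'$ is an honest integer cycle. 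Any $\tilde S\in \Rc_1(\bB_1)$ with $\partial\tilde S=\partial(T'\res \bB_1)=\sum_i \theta_i\delta_{v_i}$ then produces, by gluing with $T'$ outside $\bB_1$, a cycle differing from $T'$ by a compactly supported integer cycle in $\R^{n+1}$; such a difference is a true boundary in $\R^{n+1}$ and a fortiori a boundary mod $p$, so the area minimality of $T'$ forces $\mass(\tilde S)\geq \mass(T'\res\bB_1) = \sum|\theta_i|$.

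The heart of the argument is to construct $\tilde S$ strictly violating this bound. Set $I^\pm := \{i:\pm\theta_i>0\}$, $\mu_\pm := \sum_{i\in I^\pm}|\theta_i|\delta_{v_i}$, and $M:=|\mu_+|=|\mu_-|$, so $\partial(T'\res\bB_1)=\mu_+-\mu_-$ and $\sum|\theta_i|=2M$. Any integer transport plan $\pi=(\pi_{ij})_{i\in I^+,j\in I^-}$ between $\mu_+$ and $\mu_-$ gives the $1$-rectifiable current $\tilde S_\pi := \sum_{i,j}\pi_{ij}\a{[v_j,v_i]}$ (segments oriented from $v_j$ to $v_i$) with boundary $\mu_+-\mu_-$ and mass $\sum_{ij}\pi_{ij}|v_i-v_j|$; the cone $T'\res\bB_1$ itself corresponds to the wasteful ``through-$0$'' transport of cost $2M$, so it suffices to find $\pi$ of cost $<2M$. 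If the strict antipodal matching of $\mu_+$ to $\mu_-$ (pairing each $v_i$ with $-v_i$ with matching masses) is infeasible, then any plan must move some mass a distance $<2$ and cost $<2M$ follows at once. If strict antipodal matching is feasible, then $I^\pm$ are in antipodal bijection and $\spt(T')$ is a union of lines through the origin; triviality of the spine of $\V(T')$ then forces $|I^+|\geq 2$ (otherwise $\spt(T')$ would be a single line), and a ``cross-swap'' of two antipodal pairs $\{v_{i_1},-v_{i_1}\}$, $\{v_{i_2},-v_{i_2}\}$ (trading the two antipodal segments for segments from $v_{i_1}\to -v_{i_2}$ and $v_{i_2}\to-v_{i_1}$ on the common mass $c:=\min(\theta_{i_1},\theta_{i_2})$) lowers the cost by $2c(2-|v_{i_1}+v_{i_2}|)>0$, since $v_{i_1}\neq v_{i_2}$ forces $|v_{i_1}+v_{i_2}|<2$. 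Either way we contradict the minimality bound, so $\sum|\theta_i|\geq p$ and $\Theta(T_0,0)\geq p/2$. The only step requiring real care is verifying that integer transport plans between $\mu_\pm$ yield integer rectifiable $1$-currents with the correct oriented boundary, which follows immediately from the explicit formula for $\tilde S_\pi$ and a direct boundary computation.
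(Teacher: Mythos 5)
Your proof is correct and shares the paper's overall route: reduce to a one-dimensional cone $T'$ via Lemma \ref{l:splitting}, note that $\partial T' = 0\ \modp$ forces $\sum_i\theta_i\equiv 0\ \modp$, and conclude $\sum_i|\theta_i|\geq p$, hence $\Theta(T_0,0)\geq p/2$. Where you diverge is in the single step of ruling out $\sum_i\theta_i=0$: the paper simply notes that in this case $T'$ would be a non-flat one-dimensional integral cone without boundary which is area minimizing as an integral current, and this is impossible by the classical structure of one-dimensional area-minimizing integral cycles, whereas you prove that classification from scratch with an explicit transport-plan competitor (if the antipodal matching is infeasible, some positive mass moves a distance $<2$ and the cost drops below $2M$; if it is feasible, $T'$ is a union of $\geq 2$ lines --- forced by the trivial spine of $\V(T')$ --- and the cross-swap lowers the cost by $2c\,(2-|v_{i_1}+v_{i_2}|)>0$). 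Both branches of your competitor argument are sound, and you are right that overlaps or cancellations in $\tilde S_\pi$ can only reduce its mass (so the transport cost is an honest upper bound), and that the glued competitor differs from $T'$ by a compactly supported integer one-cycle, hence by a boundary and a fortiori a boundary $\modp$. Your version is a correct self-contained replacement, but it is doing more work than necessary: the paper's implicit appeal to the known classification is the faster move.
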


\begin{proof}

Let $T_0 = \llbracket M, \vec{\tau}, \theta \rrbracket$, so that $\V(T_0) = \V(M,\abs{\theta})$. Since $\V(T_0)$ is $(m-1)$-symmetric but not $m$-symmetric, by Lemma \ref{l:splitting} $T_0 = \llbracket \pi \rrbracket \times T_0' \; \modp$, where $\pi$ is the $(m-1)$-dimensional spine of $\V(T_0)$, and $T_0'$ is a one-dimensional cone which has no boundary $\modp$ and is locally area minimizing $\modp$. Since $\Theta(T_0',0) = \Theta(T_0,0)$, we can reduce the proof of the lemma to the case when $m=1$.

Thus we can assume that
$T_0 = \sum_i Q_i \a{\ell_i}$, where $\ell_1, \ldots, \ell_N$ are pairwise distinct oriented half lines in $\mathbb R^{1+n}$ with the origin as common endpoint and the $Q_i$'s are integers. Without loss of generality we can assume that $\partial \a{\ell_i} = - \a{0}$. Observe that
\[
\Theta (T_0, 0) = \frac{1}{2} \sum_i |Q_i|
\]
and that $\sum_i Q_i = 0\;\; \modp$ since $T_0$ has no boundary $\modp$. If $\sum_i Q_i =0$, then $T_0$ would be an integral current without boundary, which in turn would have to be area minimizing. But since $T_0$ is by assumption not flat, this is not possible. Thus $\sum_i Q_i = k p$ for some nonzero integer $k$. This clearly implies
\[
\sum_i |Q_i|\geq |k|p\geq p\, ,
\]
which in turn yields $\Theta (T_0, 0) \geq \frac{p}{2}$. 
\end{proof}

We are now ready to state the starting point of our proof of Theorem \ref{t:main2} and Theorem \ref{t:main5}, which will be achieved by contradiction.

\begin{proposition}[Contradiction sequence]\label{p:contradiction_sequence}
Assume Theorem \ref{t:main5} is false. Then there are integers $m, n \geq 1$ and $2\leq Q < \frac{p}{2}$ and reals $\alpha, \eta >0$ with the following property. There are 
\begin{itemize}
\item[(i)] $T, \Sigma$ and $\Omega$ as in Assumption \ref{ass:main} such that $0\in \sing_Q (T)$;
\item[(ii)] a sequence of radii $r_k\downarrow 0$ and an $m$-dimensional plane $\pi_0$ such that $\V (T_{0, r_k})$ converges to $V = Q \mathcal{H}^m \res \pi_0 \otimes \delta_{\pi_0}$;
\item[(iii)] $\lim_{k\to\infty} \mathcal{H}^{m-2+\alpha}_\infty ({\rm D}_Q (T_{0, r_k})\cap \bB_1) \geq \eta$. 
\end{itemize}
If Theorem \ref{t:main2} is false then either there is a sequence as above or, for $Q=\frac{p}{2}$, there is a sequence as above where (iii) is replaced by
\begin{itemize}
\item[(iii)s]  $\lim_{k\to\infty} \mathcal{H}^{m-1+\alpha}_\infty ({\rm D}_Q (T_{0, r_k})\cap \bB_1) \geq \eta$.
\end{itemize}
\end{proposition}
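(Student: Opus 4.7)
\emph{Proof plan.} Arguing by contradiction, I assume that Theorem~\ref{t:main5} (respectively Theorem~\ref{t:main2}) fails and extract the data required by (i)--(iii) in three stages: first reduce the failure to positivity of an appropriate Hausdorff measure of $\sing_Q(T)$ for some integer $Q\in\{2,\dots,\lfloor p/2\rfloor\}$; then refine the base point, via Almgren's stratification together with Lemma~\ref{l:symmetric}, to ensure that at least one tangent cone is a flat $m$-plane with multiplicity $Q$; and finally blow up at such a point along a sequence that simultaneously realises the flat varifold limit in (ii) and the Hausdorff content lower bound in (iii). Throughout I write $s:=m-2+\alpha$ in the case $Q<p/2$ and $s:=m-1+\alpha$ in the case $Q=p/2$.

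\emph{Reduction and stratification.} The failure of Theorem~\ref{t:main5} directly produces $(T,\Sigma,\Omega)$ as in Assumption~\ref{ass:main} with $\mathcal{H}^{m-2+\alpha}(\sing_Q(T))>0$ for some $Q<p/2$ and some $\alpha>0$; note that $Q\ge 2$ since Allard's interior regularity theorem excludes density-one singular points. If instead Theorem~\ref{t:main2} fails, then $\mathcal{H}^{m-1+\alpha}(\sing(T))>0$, and Proposition~\ref{cor:densities} concentrates this set on $\bigcup_{Q=2}^{\lfloor p/2\rfloor}\sing_Q(T)$; for $Q<p/2$ this reduces to the previous situation, so only the case $Q=p/2$ needs separate treatment. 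Applying Almgren's stratification to the stationary varifold $\V(T)$ from Lemma~\ref{l:varifold} produces strata $\cS^0\subset\cdots\subset\cS^m$ with $\dim_{\mathcal{H}}\cS^k\le k$. By Lemma~\ref{l:symmetric}, any non-flat $(m-1)$-symmetric cone has vertex density at least $p/2$, so $\sing_Q(T)\cap(\cS^{m-1}\setminus\cS^{m-2})=\emptyset$ whenever $Q<p/2$; consequently $F:=\sing_Q(T)\setminus\cS^{m-1}$ has positive $\mathcal{H}^s$-measure in all cases (in the $Q=p/2$ case directly, since $\dim_{\mathcal{H}}\cS^{m-1}\le m-1<s$). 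By definition every $q\in F$ admits a tangent cone which is a flat $m$-plane, and Corollary~\ref{c:tangent_cones}(v) combined with the constancy Lemma~\ref{l:constancy} and $\Theta_T(q)=Q$ forces this plane to carry multiplicity exactly $Q$.

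\emph{Density point and rescaling.} By the standard density theorem for Hausdorff measures, at $\mathcal{H}^s$-a.e.\ $q\in F$ one has
\[
\limsup_{r\downarrow 0}\,\frac{\mathcal{H}^s_\infty\bigl(F\cap\bB_r(q)\bigr)}{r^s}\geq c(s)>0.
\]
Pick such a $q$, translate so that $q=0$, and select $r_k\downarrow 0$ realising the $\limsup$. The scaling identity $\mathcal{H}^s_\infty(\iota_{0,r}(A)\cap\bB_1)=r^{-s}\mathcal{H}^s_\infty(A\cap\bB_r)$, combined with $F\subset{\rm D}_Q(T)$, then gives condition (iii) (respectively (iii)s) for the rescaled currents $T_{0,r_k}$ with some $\eta>0$. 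Proposition~\ref{p:compactness} and Corollary~\ref{c:tangent_cones} extract from $(r_k)$ a further subsequence along which $T_{0,r_k}$ converges $\modp$ to an area-minimizing mod $p$ cone $T_\infty$ with $\Theta_{T_\infty}(0)=Q$, and the associated varifolds converge to $V_\infty=\V(T_\infty)$.

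\emph{Main obstacle.} The nontrivial point is to guarantee $V_\infty=Q\,\mathcal{H}^m\res\pi_0\otimes\delta_{\pi_0}$ for some $m$-plane $\pi_0$, as required by (ii), even though $(r_k)$ was chosen only to match the content lower bound. I would transfer the quantitative information from $F$ to $V_\infty$ as follows: the rescaled sets $\iota_{0,r_k}(F)$ converge (up to subsequence) in local Hausdorff distance to a closed set $F_\infty\subset\spt V_\infty$ with $\mathcal{H}^s_\infty(F_\infty\cap\overline{\bB}_1)\geq\eta$, and upper semicontinuity of density under varifold convergence gives $\Theta_{V_\infty}(x)\geq Q$ at every $x\in F_\infty$. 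Combined with the monotonicity bound $\Theta_{V_\infty}\leq\Theta_{V_\infty}(0)=Q$ on $\spt V_\infty$, this forces $\Theta_{V_\infty}(x)=\Theta_{V_\infty}(0)$ on $F_\infty$; a standard consequence of the monotonicity formula for stationary cones is that the set where the density equals the vertex density coincides with the spine, so $F_\infty$ lies inside the spine of $V_\infty$. Were $V_\infty$ non-flat, Lemma~\ref{l:symmetric} would bound the dimension of this spine by $m-2$ (when $Q<p/2$) or by $m-1$ (when $Q=p/2$), contradicting the content lower bound on $F_\infty$ in either case since $s>m-2$ (respectively $s>m-1$). Hence $V_\infty$ is $m$-symmetric, and the constancy Lemma~\ref{l:constancy} identifies it with the desired flat multiplicity-$Q$ plane, establishing (ii).
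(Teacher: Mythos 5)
Your proof is essentially correct and uses the same core mechanism as the paper: pick a point of positive upper $\mathcal{H}^s_\infty$-density of the set of singular density-$Q$ points, blow up along a sequence realizing that density, and then argue that the Hausdorff limit of the rescaled density-$Q$ sets sits in the spine of the limit varifold cone (via monotonicity and upper-semicontinuity of density), forcing the spine to be $m$-dimensional and hence the cone to be a flat plane whenever the spine carries positive $\mathcal{H}^s_\infty$ content. Your treatment of the $Q<p/2$ case, invoking Lemma~\ref{l:symmetric} to rule out an $(m-1)$-dimensional spine, is exactly what the paper does.

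There are two points to flag, one stylistic and one technical. The stylistic one: your pre-filtering step, where you pass from $\sing_Q(T)$ to $F:=\sing_Q(T)\setminus\cS^{m-1}$ using Almgren stratification and Lemma~\ref{l:symmetric}, turns out to be superfluous. Nothing in your ``main obstacle'' argument actually uses the fact that points of $F$ possess a flat tangent cone; the monotonicity/spine argument works verbatim starting from any point of positive $\mathcal{H}^s_\infty$-upper-density of $\sing_Q(T)$ itself, which is how the paper proceeds. The pre-filtering does not hurt, but it lengthens the argument without adding anything.

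The technical point: you speak of local Hausdorff convergence of the sets $\iota_{0,r_k}(F)$, but $F$ (and even $\sing_Q(T)$, and ${\rm D}_Q(T)$) is not closed, so this needs care. The paper sidesteps the issue by instead taking the superlevel sets $\{\Theta_{T_{0,r_k}}\ge Q\}\cap\overline{\bB}_1$, which are compact by the upper semicontinuity of density and contain ${\rm D}_Q(T_{0,r_k})\cap\overline{\bB}_1$. One then extracts a Hausdorff limit $K$; every $x\in K$ has $\Theta_C(x)\ge Q$ by monotonicity and hence lies in the spine, and the upper semicontinuity of $\mathcal{H}^s_\infty$ under Hausdorff convergence of compacta gives $\mathcal{H}^s_\infty(K)\ge\eta$. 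Your argument is correct in spirit, but you should replace $\iota_{0,r_k}(F)$ by these compact superlevel sets (or at least by their closures) to make the Hausdorff-convergence step airtight.
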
 

\begin{proof} Suppose first that Theorem \ref{t:main2} is false. Fix $p\in \mathbb N \setminus \{0,1\}$, and let $m\geq 1$ be the smallest integer for which the assertion of Theorem \ref{t:main2} is false. Observe that $m>1$. Fix thus a $T, \Sigma$ and $\Omega$ satisfying Assumption \ref{ass:main} for which there is an $\alpha >0$ with 
$\mathcal{H}^{m-1+\alpha} (\sing (T)) >0$. Then, by Proposition \ref{cor:densities}, there must be a $Q\in \mathbb N \cap [1, \frac{p}{2}]$ such that
$\mathcal{H}^{m-1+\alpha} (\sing_Q (T)) >0$. 

By \cite[Theorem 3.6]{Simon83}, $\Ha^{m-1+\alpha}$-a.e. point in $\sing_{Q}(T)$ has positive $\Ha^{m-1+\alpha}_{\infty}$-upper density: fix a point $q$ with this property, and assume, without loss of generality, that $q = 0$ and that $(\partial T) \res {\bB}_{1} = 0\; \modp$. Then, there exists a sequence of radii $r_{k}$ such that $r_{k} \downarrow 0$ as $k \to \infty$ and such that
\begin{equation} \label{density_est}
\lim_{k\to \infty} \Ha^{m-1+\alpha}_{\infty}(\sing_{Q}(T_{0,r_k}) \cap {\bB}_{1})= \lim_{k \to \infty} \frac{\Ha^{m-1+\alpha}_{\infty}(\sing_{Q}(T) \cap {\bB}_{r_k})}{r_{k}^{m-1+\alpha}} > 0
\end{equation}
Moreover, we can assume that the sequence of stationary varifolds $\V (T_{0,r_k})$ converges to a stationary cone $C\subset T_0 \Sigma$. Consider
the compact sets $\{\Theta_{T_{0,r_k}} \geq Q\}\cap \bB_1$ and assume, without loss of generality, that they converge in the Hausdorff sense to a compact set $K$. As it is well known, by the monotonicity formula for stationary varifolds we must have $\Theta_C (q) \geq Q$ for every $q\in K$. On the other hand, this implies that every point $q\in K$ belongs to the spine of the cone $C$; see \cite{BW97}. In turn, by the upper semicontinuity of the $\mathcal{H}^{m-1+\alpha}_\infty$ measure with respect to Hausdorff convergence of compact sets, we have
\begin{equation} \label{limit_density}
\Ha^{m-1+\alpha}_{\infty}(K) \geq \limsup_{k \to \infty} \Ha^{m-1+\alpha}_{\infty}({\rm D}_{Q}(T_{0,r_k}) \cap \overline{\bB}_{1}) > 0\, .
\end{equation}
Recall that the spine of the cone $C$ is however a linear subspace of $\mathbb R^{m+n}$, cf. again \cite{BW97}. This implies in turn that
$C$ must be supported in a plane, which completes the proof under the assumption that Theorem \ref{t:main2} is false.

\smallskip

Now, let us suppose Theorem \ref{t:main5} is false. Then, we can find $p,m,n$ and $Q < \frac{p}{2}$, together with $\Omega, \Sigma,T$ as in Assumption \ref{ass:main}, and $\alpha > 0$ such that $\Ha^{m-2+\alpha}(\sing_Q(T)) > 0$. Arguing as above, we can then find a point $q \in \sing_Q(T)$ with positive $\Ha^{m-2+\alpha}_{\infty}$-upper density, and we can suppose, without loss of generality, that $q = 0$. Then, there is a sequence of radii $r_k$ with $r_k \downarrow 0$ as $k \to \infty$ such that:
\begin{itemize}
\item[•] the blow-up sequence $T_{0,r_k}$ converges, in the sense of Corollary \ref{c:tangent_cones} (iv), to a current $T_0 \in \Rc_{m}(T_0\Sigma)$ satisfying properties (i), (ii), and (iii) of Corollary \ref{c:tangent_cones};

\item[•] $\lim_{k \to \infty} \Ha^{m-2+\alpha}_{\infty} (\sing_Q(T_{0,r_k}) \cap \bB_1  ) > 0$;

\item[•] the sequence of varifolds $\V(T_{0,r_k})$ converges to a stationary cone $C$ in $T_0\Sigma$;

\item[•] $C = \V(T_0)$.

\item[•] the spine of $C$ is a linear subspace of $T_0\Sigma$ having dimension at least $m-1$.
\end{itemize}

Now, if the spine of $C$ is $(m-1)$-dimensional, then $C$ is $(m-1)$-symmetric but not flat, hence forcing $\Theta(T_0,0) \geq \frac{p}{2}$ by Lemma \ref{l:symmetric}, which is a contradiction to the fact that $0 \in {\rm D}_Q(T)$ with $Q < \frac{p}{2}$. Thus, $C$ is supported in an $m$-dimensional plane, and the proof is complete.
\end{proof}

\newpage

\part{Approximation with multiple valued graphs} \label{part:Lp}

Following the blueprint of Almgren's partial regularity theory for area minimizing currents, we now wish to show that any area minimizing current modulo $p$ can be efficiently approximated, in a region where it is ``sufficiently flat'', with the graph of a multiple valued function which minimizes a suitably defined Dirichlet energy. Suppose that, in the region of interest, the current is a $Q$-fold cover of a given $m$-plane $\pi$, where $Q \in \left[1, \frac{p}{2} \right]$. The ``classical'' theory of $\Dir$-minimizing $Q$-valued functions as in \cite{DLS_Qvfr} is powerful enough to accomplish the task whenever $Q < \frac{p}{2}$ (which is always the case when $p$ is odd). If $p$ is even and $Q = \frac{p}{2}$, on the other hand, Almgren's $Q$-valued functions are not anymore the appropriate maps, and we will need to work with the class of \emph{special} multiple valued function defined in \cite{DLHMS_linear}.

\section{First Lipschitz approximation}

From now on we denote by $B_r (x,\pi)$ the disk $\bB_r (x) \cap (x+\pi)$, where $\pi$ is some linear $m$-dimensional plane. 
The symbol $\bC_r (x, \pi)$, instead, will always denote the cylinder $B_r (x, \pi) \times \pi^\perp$. If we omit the plane $\pi$ we then assume that $\pi = \pi_0 := \mathbb R^m \times \{0\}$, and the point $x$ will be omitted when it is the origin. 
Let $e_i$ be the unit vectors in the standard basis. We will regard 
$\pi_0$ as an oriented plane and we will denote by $\vec \pi_0$
the $m$-vector $e_1\wedge \ldots \wedge e_m$ orienting it.
We denote by $\p_\pi$ and $\p^\perp_\pi$ the
orthogonal projection operators onto, respectively, $\pi$ and its orthogonal complement $\pi^\perp$. If we omit the subscript we then assume again that $\pi = \pi_0$. 

We will make the following 

\begin{ipotesi}\label{ipotesi_base_app1}
$\Sigma\subset\R^{m+n}$ is a $C^2$
submanifold of dimension $m + \bar n = m + n - l$, which is the graph of an entire
function $\Psi: \R^{m+\bar n}\to \R^l$ satisfying the bounds
\begin{equation}\label{e:Sigma}
\|D \Psi\|_0 \leq c_0 \quad \mbox{and}\quad \bA := \|A_\Sigma\|_0
\leq c_0\,,
\end{equation}
where $c_0$ is a positive (small) dimensional constant.
$T$ is a representative $\modp$ of dimension $m$ with $\spt (T) \subset \Sigma$ and which, for some open cylinder $\bC_{4r} (x)$ 
(with $r\leq 1$)
and some positive integer $Q \leq \frac{p}{2}$, satisfies
\begin{equation}\label{e:(H)}
\p_\sharp T = Q\a{B_{4r} (x)} \modp \quad\mbox{and}\quad
(\partial T) \res \bC_{4r} (x) = 0\, \modp \, .
\end{equation}
\end{ipotesi}

We next define the following relevant quantities. 

\begin{definition}[Excess measure]\label{d:excess}
For a current $T$ as in Assumption \ref{ipotesi_base_app1} we define the \textit{cylindrical excess} $\bE(T,\bC_{4r} (x))$,
the \textit{excess measure} $\e_{T}$ and its \textit{density} $\bd_{T}$:
\[
\begin{split}
\bE(T, \bC_{4r}(x)) &:= \frac{1}{\omega_{m}(4r)^{m}} \left( \| T \|(\bC_{4r}(x)) - Q |B_{4r}(x)| \right), \\
\e_{T}(A) &:= \| T \|(A \times \R^{n}) -  Q |A| \qquad \text{for every Borel }\;A\subset B_{4r} (x) \\
\bd_{T}(y) &:= \limsup_{s \to 0} \frac{\e_{T}(B_{s}(y))}{\omega_{m} s^{m}} = \limsup_{s \to 0} \bE(T, \bC_{s}(y))\,.
\end{split}
\]
The subscript $_{T}$ will be omitted whenever it is clear from the context.

We define the \emph{height function} of $T$ in the cylinder $\bC_{4r}(x)$ by	
\begin{equation*} 
\bh (T, \bC_{4r} (x), \pi_0) := \sup \{|\p^\perp (q) - \p^\perp (q')|: q,q'\in \spt (T) \cap \bC_{4r} (x)\}.
\end{equation*}
\end{definition}

\begin{remark} \label{rmk:excess}
Note that, since $T$ is a representative $\modp$, we have $\|T\| = \|T\|_p$, where $\|T\|_p$ denotes the Radon measure on $\R^{m+n}$ defined by the mass $\modp$. However, it is false in general that $\|\p_\sharp T\| (A) = Q|A|$, since $\p_\sharp T$ is not necessarily a representative $\modp$. The excess written above can thus be rewritten as $\omega_m^{-1} \, (4r)^{-m}\left(\|T\|_p (\bC_{4r}(x)) - \|\p_\sharp T\|_p (\bC_{4r} (x))\right)$, but not as 
$\omega_m^{-1} \, (4r)^{-m}\left(\|T\| (\bC_{4r} (x)) - \|\p_\sharp T\| (\bC_{4r} (x))\right)$, which is the standard cylindrical excess in the classical regularity theory for area minimizing currents. Of course, since $\| \p_\sharp T\|_p \leq \| \p_\sharp T \|$ as measures, this ``excess $\modp$'' is, in general, \emph{larger} than the classical excess. 

Observe also that, under the assumptions valid in the regularity theory for classical area minimizing currents, where the identities in \eqref{e:(H)} hold in the sense of currents and not only $\modp$, the cylindrical excess can be classically written as
\begin{equation} \label{cylindrical=tilt}
\frac{1}{\omega_m (4r)^m}\left(\|T\| (\bC_{4r} (x)) - \|\p_\sharp T\| (\bC_{4r} (x))\right) = \frac{1}{2 \omega_m (4r)^m} \int_{\bC_{4r}(x)} \abs{\vec{T}-\vec{\pi_0}}^2 \, d\|T\|\,,
\end{equation}
see e.g. \cite[Lemma 9.1.5]{KP}. The quantity appearing on the right-hand side of \eqref{cylindrical=tilt} is the most flexible and natural in view of the forthcoming elliptic estimates. Unfortunately, in our setting not only the identity in \eqref{cylindrical=tilt} is false, but we do not have an integral representation of the excess $\modp$ either. For these reasons, later on we shall introduce a different notion of excess, called the \emph{nonoriented excess} (see \eqref{e:no_excess}), which shares the structural features of the quantity on the right-hand side of \eqref{cylindrical=tilt}. The nonoriented excess and the excess $\modp$ are then shown to be comparable in appropriate smallness regimes in Theorem \ref{thm:strong-alm-unoriented}. Nonetheless, in the context of the Lipschitz approximation we will work with the excess $\modp$, because it is more suitable to the arguments involving slicing which are needed in the BV estimate of Lemma \ref{conto_unidimensionale}.
\end{remark}

\begin{definition}
In general, given a measure $\mu$ on a domain $\Omega \subset \R^m$ we define its noncentered maximal function as
\[
\bmax \mu (y) := \sup \left\{ \frac{\mu (B_s (z))}{\omega_m s^m} : y\in B_s (z) \subset \Omega\right\}\, .
\]
If $f$ is a locally Lebesgue integrable non-negative function, we denote by $\bmax f$ the maximal function of the measure $f \mathscr{L}^m$. 
\end{definition}

The first Lipschitz approximation is given by the following proposition, according to which a representative $\modp$ $T$ as in Assumption \ref{ipotesi_base_app1} can be realized as the graph of a Lipschitz continuous multiple valued function in regions where the maximal function of its excess measure is suitably small. As already motivated, the approximating function needs to be a \emph{special} multi-valued function whenever $p$ is even and $Q = \frac{p}{2}$. Concerning special multi-valued functions, we will adopt the notation introduced in \cite{DLHMS_linear}: in particular, the space of special $Q$-points in $\R^n$ is denoted $\Iqspec$, $\cG_s$ is the metric on it, and $\abs{S} := \cG_s(S, Q \a{0})$ if $S \in \Iqspec$. Given a function $u \colon \Omega \to \Iqspec$ (possibly classical, namely with target $\Iqs$), we will let $\gr (u)$ and $\bG_u$ denote the set-theoretic graph of $u$ and the integer rectifiable current associated with it, respectively; see Section \ref{sec:guide} and \cite[Definition 4.1]{DLHMS_linear}. Also, we will let ${\rm osc}(u)$ denote the quantity
\begin{equation} \label{d:oscillation}
{\rm osc}(u) := \inf_{q \in \R^n} \||u\ominus q|\|_{L^\infty(\Omega)} = \inf_{q\in \R^n} \| \cG_s(u(x), Q \llbracket q \rrbracket)\|_{L^\infty(\Omega)}\,.
\end{equation}

\begin{remark}\label{rmk:oscillo_ma_non_mollo}

The definition given in \eqref{d:oscillation} for the quantity ${\rm osc}(u)$ is the special multi-valued counterpart of the definition provided in \cite{DLS_Lp} for the $\mathcal{A}_Q(\R^n )$-valued case. In \cite{DLS_Center}, on the other hand, the following comparable definition for the oscillation is used: 
\[ {\rm osc}_C(u):=\sup\{\abs{v-w}\, \colon\, x,y \in \Omega,\, v \in \spt(u(x)), w \in \spt(u(y)) \}\,.
\]
More precisely one has \[ \frac12\, {\rm osc}_C(u) \le {\rm osc}(u) \le \sqrt{Q} \,{\rm osc}_C(u)\,. \] To see the first inequality, let $x,y \in \Omega$ and $v \in \spt(u(x)), w \in \spt(u(y))$; then, for any $q \in \R^n$ we have \[\abs{v-w} \le \abs{v-q} + \abs{w-q} \le \abs{u(x)\ominus q} + \abs{u(y)\ominus q}  \le 2 \|\abs{ u \ominus q }\|_{L^\infty(\Omega)}. \] Taking the infimum over all $q \in \R^n$ gives the claimed inequality. For the second inequality, fix any arbitrary $y\in \Omega$ and $q \in \spt(u(y))$. Then, for any $x \in \Omega$ we have 
\[ \abs{u(x)\ominus q} \le \sqrt{Q}\, {\rm osc}_C(u).\]
Taking the supremum over all $x \in \Omega$ and afterwards the infimum in $q \in \spt(u(y))$ gives the desired bound. 

\end{remark}

\begin{proposition}[Lipschitz approximation]\label{p:max}
There exists a constant $C = C (m,n,Q)>0$ with the following properties.
Let $T$ and $\Psi$ be as in Assumption \ref{ipotesi_base_app1} in the cylinder $\bC_{4s} (x)$.
Set $E:=\bE (T,\bC_{4s}(x))$, let $0<\delta<1$ be such that $16^m E < \delta$,
and define 
\[
K := \big\{\bmax\e_T\leq \delta\big\}\cap B_{3s}(x)\, .
\]
Then, there is a Lipschitz map $u$ defined on $B_{3s} (x)$ and taking either values in $\Iqs$, if $Q<\frac{p}{2}$, or
in $\Iqspec$, if $Q=\frac{p}{2}$, for which the following facts hold.
\begin{itemize}
\item[(i)] $\gr (u) \subset \Sigma$;
\item[(ii)] $\Lip (u)\leq C\,\big(\delta^{\sfrac{1}{2}} + \|D\Psi\|_0\big)$ and ${\rm osc}\, (u) \leq C \bh (T, \bC_{4s} (x), \pi_0) + C s \|D \Psi\|_0$.
\item[(iii)] $\bG_u \res (K\times \R^{n})= T\mres (K\times \R^{n})$ $\modp$;
\item[(iv)] For $r_0 := 16 \sqrt[m] {E/\delta} < 1$ we have
\begin{equation}\label{e:max1}
|B_{r}(x)\setminus K|\leq \frac{5^m}{\delta}\,\e_T \Big(\{\bmax\e_T \geq \delta \}\cap B_{r+r_0s}(x)\Big) \quad\forall\;
r\leq 3\,s .
\end{equation}
\end{itemize}
\end{proposition}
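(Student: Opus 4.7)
The plan is to mimic the classical Lipschitz approximation strategy of \cite{DLS_Lp}, but with two essential modifications: replacing the standard slicing identity by the $\modp$ slicing formula of Lemma \ref{modp_slicing_formula}, and distinguishing the case $Q=\frac{p}{2}$ (where orientations may cancel $\modp$) from the case $Q<\frac{p}{2}$. Concretely, I would first show via the BV estimate on vertical slices (the promised Lemma \ref{conto_unidimensionale}) that for almost every $y\in B_{4s}(x)$ the slice $\langle T, \p, y\rangle$ is a representative $\modp$, zero-dimensional, of the form $\sum_{i}\theta_i\a{v_i}$ with $v_i\in\pi_0^\perp$ and $\theta_i\in\mathbb Z\cap[-\frac{p}{2},\frac{p}{2}]$, and that the projection condition in \eqref{e:(H)} forces $\sum_i\theta_i=Q\;\modp$. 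At points $y\in K=\{\bmax\,\e_T\le\delta\}\cap B_{3s}(x)$, the smallness of $\bmax\,\e_T$ together with the monotonicity-type estimate for slice masses will force the slice to have \emph{exactly} $Q$ integer multiplicities all of the same sign $\epsilon(y)\in\{-1,+1\}$, so in particular $|\theta_i|=1$ for each $i$.

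Once this structural information on the slices is established, I would define $u$ on $K$ as follows. If $Q<\frac{p}{2}$, the sign $\epsilon(y)$ is forced to equal $+1$ (otherwise the net multiplicity would be $-Q\not\equiv Q\;\modp$), and I simply set $u(y):=\sum_{i=1}^Q\a{v_i(y)}\in\Iq$. If $Q=\frac{p}{2}$, then $-Q\equiv Q\;\modp$, both signs are allowed, and I set $u(y):=\bigl(\sum_i\a{v_i(y)},\epsilon(y)\bigr)\in\IQn$ in the natural identification of \cite{DLHMS_linear}. The key Lipschitz estimate $\Lip(u|_K)\lesssim\delta^{1/2}$ follows from a telescoping argument along segments in $K$: for $y,y'\in K$, integrating the BV estimate of Lemma \ref{conto_unidimensionale} on the one-dimensional family of slices over the segment $[y,y']$ and using the definition of $\bmax\,\e_T$ controls $\cG(u(y),u(y'))$ (or $\cG_s$ in the special case) by $|y-y'|\delta^{1/2}$, provided the segment stays inside $K$; the additional term involving $\|D\Psi\|_0$ arises because the sheets are constrained to lie on $\Sigma$, introducing a tilt of order $\|D\Psi\|_0$ in the vertical direction. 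The oscillation bound follows from the definition of the height $\bh$.

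After that, Kirszbraun's theorem for $\Iq$-valued maps (respectively, its counterpart for $\IQn$-valued maps proved in \cite{DLHMS_linear}) extends $u$ from $K$ to all of $B_{3s}(x)$ with comparable Lipschitz constant, and a final small perturbation using the normal projection onto $\Sigma$ (which is $C^2$-close to the identity thanks to \eqref{e:Sigma}) yields property (i). Property (iii) is a direct consequence of the construction, since on $K$ the slices of $\bG_u$ and $T$ coincide as zero-dimensional currents $\modp$, so $\bG_u\res(K\times\R^n)-T\res(K\times\R^n)$ is a rectifiable $m$-current with zero slices at $\Ha^m$-a.e. point of $K$ and therefore vanishes $\modp$. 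Property (iv) is the classical weak-$L^1$ bound for the noncentered maximal function applied to the restriction of $\e_T$ to the enlarged disk $B_{r+r_0s}(x)$, together with the observation that outside a slightly enlarged good set one automatically controls $|B_r(x)\setminus K|$ via a Vitali covering.

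The main obstacle I expect is the careful handling of the slice structure when $Q=\frac{p}{2}$: one has to rule out, on $K$, slices with mixed signs (a genuinely new phenomenon not present in the classical setting), and to produce a \emph{Borel measurable} sign function $\epsilon$ whose Lipschitz behaviour across $K$ is compatible with the metric $\cG_s$ of $\IQspec$. This is where the specific geometry of the space of special $Q$-points intervenes: jumps of $\epsilon$ between adjacent sheets are only allowed through configurations in which all $v_i$ collapse to a single point (the equivalence class $\Omega_0$), and the BV estimate on slices must be strong enough to guarantee that such collapses occur only at scales forbidden on $K$ by the smallness of $\bmax\,\e_T$.
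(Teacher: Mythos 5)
Your outline correctly identifies the main structural ingredients (BV estimate on slices, case distinction on $Q$ versus $\frac{p}{2}$, sign function $\epsilon$ for special $Q$-points, Lipschitz extension, projection onto $\Sigma$) and the danger of mixed-sign slices. However, there is a genuine gap at the heart of the Lipschitz estimate, and it is precisely the place where the $\modp$ setting departs from the classical one.

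Lemma~\ref{conto_unidimensionale} produces a $BV$ estimate for the map $x\mapsto[\p^\perp_\sharp\langle T,\p,x\rangle]$ \emph{in the quotient metric space $(X^p,\bd_{\Fl^p})$}, i.e.\ it controls increments of $\Phi$ only in the flat distance taken modulo $p$. To turn this into a Lipschitz bound for the multivalued map $u$ with respect to $\cG$ (or $\cG_s$), one must pass from $\Fl^p(\Phi(x)-\Phi(y))$ to $\Fl(\Phi(x)-\Phi(y))$, and then to $\cG(u(x),u(y))$ (or $\cG_s$). The second passage, $\Fl\geq\cG$, is easy. The first passage is not: a priori $\Fl^p$ can be much smaller than $\Fl$, because $\Fl^p$ allows filling chains modulo $p$. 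Your ``telescoping along segments in $K$'' does not address this: you would telescope the \emph{wrong quantity} and end up with smallness of a pseudo-distance that does not obviously control the distance in $\Iqs$ or $\Iqspec$. This is precisely where the paper brings in Proposition~\ref{SORBILLO_GRATIS_X2}, which shows combinatorially that for zero-dimensional rectifiable currents arising as signed $Q$-point configurations with $Q\le\frac{p}{2}$ one in fact has $\Fl^p(A-\sigma B)=\Fl(A-\sigma B)$ (including the extended definition of $\Fl(A+B)$ in the borderline $Q=\frac{p}{2}$ case). Without this identity, or something equivalent, the argument cannot be closed. You should also note that the paper does not telescope along segments (which need not stay in $K$, as $K$ is non-convex); it instead controls the maximal function of $|D\Phi|$ and invokes the metric-space $BV$ estimate of Ambrosio--Kirchheim to obtain the Lipschitz bound at every pair of Lebesgue points of $\Phi$ in $K$, which is more robust.

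Two minor corrections. First, the slices on $K$ are not forced to have multiplicities $|\theta_i|=1$: the correct statement is that they have total mass exactly $Q$ with all signs equal, so repeated points are allowed; this is harmless since you then list with repetition. Second, the extension step is not Kirszbraun's theorem (which needs a Hilbert target); it is the Lipschitz extension theorem for $\Iq$- and $\Iqspec$-valued maps, and the constraint $\gr(u)\subset\Sigma$ is obtained by first extending the $\R^{\bar n}$-valued part and then composing with the graph parametrization $\Psi$, rather than by a posteriori perturbation.
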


We remark that in Proposition \ref{p:max} we are not assuming that $T$ is area minimizing modulo $p$. The proof of the proposition will require a suitable $BV$ estimate for $0$-dimensional slices $\modp$, which is the content of the next section. This Jerrard-Soner type estimate is in fact a delicate point of the present paper, since the approach of \cite{DLS_Lp} (which relies on testing the current with a suitable class of differential $m$-forms) is unavailable in our setting, since Assumption \ref{ipotesi_base_app1} only guarantees $\partial T \res \bC_{4s}(x) = 0\, \modp$ and not $\partial T \res \bC_{4s} (x) = 0$.

\section{A BV estimate for slices modulo $p$}

Recall that $\F_k(C)$ denotes the group of $k$-dimensional integral flat chains supported in a closed set $C$.

\begin{definition}\label{d:X_spaces}
We define the groups
\begin{align*}
X  &:= \left\lbrace Z \in \F_{0}(\R^n) \, \colon \, Z = \partial S \mbox{ for some } S \in \Rc_{1}(\R^n)\right\rbrace\,, \\
\tilde{X}^p &:= \left\lbrace Z \in \F_{0}(\R^n) \, \colon \, Z = \partial S + pP \mbox{ for some } S \in \Rc_{1}(\R^n), P \in \F_{0}(\R^n) \right\rbrace\,.
\end{align*}

On $X$ we define the distance function
\[
\bd_{\Fl}(T_{1}, T_{2}) = \Fl(T_{1} - T_{2}) := \inf\big\lbrace \M(S) \, \colon \, S \in \Rc_{1}(\R^{n}) \mbox{ such that } T_{1} - T_{2} = \partial S \big\rbrace,
\]
whereas on $\tilde{X}^{p}$ we set
\[
\begin{split}
\bd_{\Fl^{p}}(T_{1}, T_{2}) = \Fl^{p}(T_{1} - T_{2}) := \inf\big\lbrace \M(S) \, \colon \, S \in \Rc_{1}(\R^{n}) \mbox{ such that } &T_{1} - T_{2} = \partial S + pP \\ & \mbox{ for some } P \in \F_{0}(\R^{n}) \big\rbrace.
\end{split}
\]
\end{definition}

\begin{remark}
Note that the following properties are satisfied:
\begin{itemize}
\item[$(i)$] both $X$ and $\tilde{X}^p$ are subgroups of $\F_{0}(\R^n)$, with $X \subset \tilde{X}^p$;
\item[$(ii)$] $\tilde{X}^{p} = \left\lbrace T \in \F_{0}(\R^n) \, \colon \, T = S\, \modp \mbox{ for some } S \in X \right\rbrace$, the non-trivial inclusion being a consequence of \cite[Corollary 4.7]{MS_a}. Hence, the quotient groups $X/\modp$ and $\tilde{X}^p / \modp$ coincide and they are characterized by $X/\modp = \tilde{X}^p/\modp = X^p$, where
\[
X^p := \left\lbrace \left[ T \right] \in \F_{0}^{p}(\R^n) \, \colon \, T = \partial S\, \modp \mbox{ for some } S \in \Rc_{1}(\R^n) \right\rbrace\,;
\]
\item[$(iii)$] for $T \in X$ (resp. $T \in \tilde{X}^p$), one has $\Fl(T) \geq \F(T)$ ( resp. $\Fl^p(T) \geq \F^p(T)$);
\item[$(iv)$] $\left( X, \bd_{\Fl} \right)$ is a complete metric space; the pseudo-metric $\bd_{\Fl^p}$ induces a complete metric space structure on the quotient $X^p$, which we still denote $\bd_{\Fl^p}$.
\end{itemize}
\end{remark}

In the rest of the section we will use the theory of $BV$ maps defined over Euclidean domains and taking values in metric spaces, as established in Ambrosio's foundational paper \cite{Amb}.  

\begin{lemma} \label{conto_unidimensionale}
Assume $T$ is a one-dimensional integer rectifiable current satisfying Assumption \ref{ipotesi_base_app1} in $\bC_{4}$ (that is, set $m = 1$, $x = 0$ and $r = 1$ in Assumption \ref{ipotesi_base_app1}), and let $T_{t}$ be the slice $\langle T, \p, t \rangle \in \Rc_{0}(\R^{1+n})$ for a.e. $t \in B_{4} = ] -4, 4 [$. Then, the map $\Phi \colon t \in J := ] -4,4 [ \mapsto \left[ \p^{\perp}_{\sharp} T_{t} \right]$ is in $BV (J, X^{p})$, and moreover
\begin{equation} \label{eq:conto_unidimensionale}
|D\Phi|(I)^{2} \leq 2 \e_{T}(I) \| T \|(I \times \R^{n}) \quad \mbox{ for every Borel set } I \subset J. 
\end{equation}
\end{lemma}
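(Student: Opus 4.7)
The strategy is to control the flat pseudodistance $\bd_{\Fl^p}(\Phi(a),\Phi(b))$ for arbitrary regular $a<b$ in $J$ via a slicing-and-filling argument, and then to recover the variation estimate \eqref{eq:conto_unidimensionale} by a Cauchy--Schwarz argument over partitions. To begin, I apply Lemma \ref{modp_slicing_formula}(i) with $f=\p$ and $Z=0$ (legal because $(\partial T)\res\bC_4=0\,\modp$) to write $T_b-T_a=\partial(T\res\pi^{-1}((a,b)))\,\modp$ for a.e.\ $-4<a<b<4$. Pushing forward by $\p^\perp$, which commutes with $\partial$ on integer rectifiable currents and preserves $\modp$ congruence, produces a rectifiable $1$-chain $S_{a,b}:=\p^\perp_\sharp(T\res\pi^{-1}((a,b)))\in\Rc_1(\R^n)$ with $\partial S_{a,b}=\p^\perp_\sharp T_b-\p^\perp_\sharp T_a\,\modp$. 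In particular $[\p^\perp_\sharp T_t]\in X^p$ for a.e.\ $t$, so $\Phi$ is well defined, and $\Fl^p(\p^\perp_\sharp T_b-\p^\perp_\sharp T_a)\le \mass(S_{a,b})$. Writing $T=\a{M,\vec\tau,\theta}$ as a representative $\modp$ with $|\theta|\le p/2$ and letting $\vartheta(x)\in[0,\pi]$ be determined by $\cos\vartheta(x)=\vec\tau(x)\cdot\vec\pi_0$, the Jacobian of $\p^\perp|_M$ equals $|\sin\vartheta|$; so by the area formula and Cauchy--Schwarz,
\[
\mass(S_{a,b})\le \int_{M\cap\pi^{-1}((a,b))}|\theta|\,|\sin\vartheta|\,d\Ha^1\le \bigl(\|T\|((a,b)\times\R^n)\bigr)^{1/2}\Bigl(\int_{M\cap\pi^{-1}((a,b))}|\theta|\sin^2\vartheta\,d\Ha^1\Bigr)^{1/2}.
\]

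The crux is to bound the last factor by $2\,\e_T((a,b))$. Using the elementary inequality $1-|\cos\vartheta|\ge\tfrac12\sin^2\vartheta$ and the area formula $\int_I f(t)\,dt=\int_{M\cap\pi^{-1}(I)}|\theta||\cos\vartheta|\,d\Ha^1$ where $f(t):=\sum_{x\in M\cap\p^{-1}(t)}|\theta(x)|$, the desired bound reduces to the pointwise density estimate $f(t)\ge Q$ for a.e.\ $t\in B_4$, since then
\[
\e_T(I)=\|T\|(\pi^{-1}(I))-Q|I|\ge \int_{M\cap\pi^{-1}(I)}|\theta|(1-|\cos\vartheta|)\,d\Ha^1\ge \tfrac12\int_{M\cap\pi^{-1}(I)}|\theta|\sin^2\vartheta\,d\Ha^1.
\]
To prove $f(t)\ge Q$, observe that $\p_\sharp T=Q\a{B_4}\,\modp$ forces, for a.e.\ $t\in B_4$, the signed density $\sigma(t):=\sum_{x\in M\cap\p^{-1}(t)}\mathrm{sgn}(\vec\tau(x)\cdot\vec\pi_0)\,\theta(x)$ to be congruent to $Q$ modulo $p$, hence $\sigma(t)=Q+kp$ for some $k\in\Z$. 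Since $0<Q\le p/2$, one has $|Q+kp|\ge Q$ for every $k\in\Z$, and therefore $f(t)\ge|\sigma(t)|\ge Q$ as required.

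Combining the two previous steps gives the single-interval bound $\mass(S_{a,b})^2\le 2\,\e_T((a,b))\,\|T\|((a,b)\times\R^n)$. For any partition $a=t_0<t_1<\cdots<t_N=b$ of regular values with $I_j:=(t_{j-1},t_j)$, a further application of Cauchy--Schwarz yields
\[
\sum_{j=1}^N\Fl^p(\Phi(t_j)-\Phi(t_{j-1}))\le\sum_{j=1}^N\sqrt{2\,\e_T(I_j)\,\|T\|(I_j\times\R^n)}\le\sqrt{2\,\e_T((a,b))\,\|T\|((a,b)\times\R^n)}.
\]
Taking the supremum over partitions and invoking outer regularity of the Radon measures $\e_T$ and $\|T\|(\cdot\times\R^n)$ then produces \eqref{eq:conto_unidimensionale} for every Borel $I\subset J$. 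The main obstacle is the pointwise bound $f(t)\ge Q$: unlike in the classical theory, the excess $\modp$ admits no straightforward integral representation in terms of $|\vec{T}-\vec\pi_0|^2$, and the control of the ``second-moment'' quantity $\int|\theta|\sin^2\vartheta$ by $\e_T$ is made possible only by combining the $\modp$ congruence of $\sigma(t)$ with the structural hypothesis $Q\le p/2$; this is also the reason why the $\modp$ slicing formula of Lemma~\ref{modp_slicing_formula} is indispensable here, since we only know $\partial T=0\,\modp$ (and not $\partial T=0$) in the cylinder $\bC_4$.
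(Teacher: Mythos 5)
Your proof is correct and follows essentially the same route as the paper's: invoke the $\modp$ slicing formula (Lemma \ref{modp_slicing_formula} with $Z=0$) to identify $\Phi(b)-\Phi(a)$ with the boundary $\modp$ of $\p^{\perp}_\sharp(T\res\p^{-1}((a,b)))$, bound $\Fl^p$ by its mass, apply Cauchy--Schwarz, and close by noting that the slice density is at least $Q$ almost everywhere because $\p_\sharp T= Q\a{B_4}\,\modp$ forces the signed multiplicity $\sigma(t)\equiv Q\,\modp$, hence $|\sigma(t)|\ge Q$ since $0<Q\le p/2$. The only cosmetic difference is that you make this last step pointwise explicit via $f(t)\ge|\sigma(t)|\ge Q$, while the paper encodes the identical fact as the measure inequality $\|\p_\sharp T\|_p\le\|\p_\sharp T\|$.
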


\begin{proof}
Let us first observe that since $(\partial T) \res \bC_4 = 0\, \modp$ then by Lemma \ref{modp_slicing_formula} for a.e. $t \in J$ we have
\begin{equation}\label{eq:slicing_modp}
T_{t} = \partial \left( T \res \{ \p < t \} \right) \modp\,,
\end{equation}
and thus $\Phi(t) = \left[ \partial \p^{\perp}_{\sharp} \left( T \res \{ \p < t \} \right) \right] \in X^p$.
Fix now $t_{0} \in J$ such that \eqref{eq:slicing_modp} holds. Again by Lemma \ref{modp_slicing_formula}, for a.e. $t \in ] t_{0}, 4 [$ we have
$\Phi(t) - \Phi(t_{0}) = \left[ \partial \p^{\perp}_{\sharp}\left( T \res (\left( t_0, t \right) \times \R^n) \right) \right]$. So
\begin{equation} \label{est_flatp_dist}
\Fl^{p}(\Phi(t) - \Phi(t_0)) \leq \M(\p^{\perp}_{\sharp}\left(T \res (\left( t_0, t \right) \times \R^n)\right)).
\end{equation}
Arguing analogously for the $t \in \left( -4, t_0 \right)$ and integrating allows to conclude
\begin{equation}
\int_{-4}^{4} \bd_{\Fl^p}(\Phi(t) , \Phi(t_0)) \, dt \leq C \M(T \res \bC_4)\,,
\end{equation}
which shows that $\Phi \in L^1(J, X^p)$. \\

\smallskip

Next, we pass to the proof of \eqref{eq:conto_unidimensionale}. Without loss of generality, assume $I = \left( a, b \right)$ to be an interval with $a$ and $b$ Lebesgue points for $\Phi$. It is a consequence of \cite[Theorem 4.5.9]{Federer69} (see also \cite[Section 8.1]{DPH}) that $|D\Phi|(I)$ equals the classical \emph{essential variation} $\essvar(\Phi)$ given by
\begin{equation} \label{def:essvar}
\begin{split}
\essvar(\Phi) := \sup\bigg\lbrace \sum_{i=1}^{N} \bd_{\Fl^{p}}(\Phi(t_{i}), \Phi(t_{i-1})) \, \colon \, &a \leq t_{0} < t_{1} < \dots t_{N} \leq b \\ \mbox{ with } & t_{0},\dots,t_{N} \mbox{ Lebesgue points for } \Phi \bigg\rbrace. 
\end{split}
\end{equation}

Let $t_{0}, \dots, t_{N}$ be as in \eqref{def:essvar}, and let $e$ denote the constant unit $1$-vector orienting $\R \times \{0\} \subset \R^{1+n}$. Then, one has
\[
\begin{split}
\sum_{i=1}^{N} d_{\Fl^{p}}(\Phi(t_{i}), \Phi(t_{i-1})) &= \sum_{i=1}^{N} \Fl^{p}(\p^{\perp}_{\sharp}T_{t_i} - \p^{\perp}_{\sharp}T_{t_{i-1}}) 
\leq \sum_{i=1}^{N} \M(\p^{\perp}_{\sharp} (T \res (\left( t_{i-1}, t_{i} \right) \times \R^n))) \\
&\leq \int_{I \times \R^{n}} |\vec{T} - \langle \vec{T}, e \rangle e| \, d\|T\| 
= \int_{I \times \R^n} \sqrt{1 - \langle \vec{T}, e \rangle^{2}} \, d\| T \| \\
&\leq \sqrt{2} \int_{I \times \R^{n}} \sqrt{1 - \langle \vec{T}, e \rangle} \, d\| T \| \\
&\leq \sqrt{2} \left( \| T \|(I \times \R^n) - \| \p_{\sharp}T \|(I \times \R^{n})  \right)^{\frac{1}{2}} (\| T \|(I \times \R^{n}))^{\frac{1}{2}} \\
&\leq \sqrt{2} (\e_{T}(I))^{\frac{1}{2}} (\|T\|(I \times \R^{n}))^{\frac{1}{2}},
\end{split}
\]
where the first inequality has been deduced analogously to \eqref{est_flatp_dist}, and the last one follows from $\|\p_\sharp T\|_p \leq \|\p_\sharp T\|$ as measures.
This shows \eqref{eq:conto_unidimensionale} and concludes the proof.
\end{proof}

\section{Comparison between distances}

Another delicate point in the proof of Proposition \ref{p:max} is that Lemma \ref{conto_unidimensionale} is not powerful enough to guarantee the Lipschitz continuity of the approximating map $u$. To that aim, we shall need to combine the Jerrard-Soner type estimate \eqref{eq:conto_unidimensionale} with the result of Proposition \ref{SORBILLO_GRATIS_X2} below. \\

\smallskip

Let $Q$ and $p$ be positive integers with $Q \leq \frac{p}{2}$, and fix any $A,B \in \A_{Q}(\R^n)$. Observe that $A,B \in \mathscr{F}_0(\R^n)$. Furthermore, the flat chain $A-B$ is an element of the subgroup $X$ of Definition \ref{d:X_spaces}, so that we can compute $\Fl(A-B)$. Next, let us consider the flat chain $A + B$. In the case when $Q = \frac{p}{2}$, we claim that $A + B \in \tilde X^p$, so that we can compute $\Fl^p(A + B)$. Indeed, fix any $z \in \R^n$, and let $h_{z} \colon \left( 0,1 \right) \times \R^n \to \R^n$ be the function defined by
\[
h_z(t,x) := z + t(x-z).
\]
Then, the cone over $A+B$ with vertex $z$, that is the $1$-dimensional integral current $R$ given by
\[
R := z \cone (A+B) := (h_z)_{\sharp}(\llbracket \left( 0,1 \right) \rrbracket \times (A+B))
\]
satisfies
\[
\partial R = A + B - 2Q\, \llbracket z \rrbracket = A + B - p \, \llbracket z \rrbracket\,,
\]
which proves our claim. Furthermore, the above argument also shows that 
\begin{equation} \label{motivation for the extension}
\Fl^p(A+B) \leq \mass (R) = \Fl(A - Q\, \a{z}) + \Fl(B - Q\, \a{z})\,.
\end{equation}
Having this in mind, we extend the norm $\Fl$ to $A + B$ by setting
\begin{equation} \label{reduced flat norm definition}
\Fl(A + B) := 
\inf_{z \in \R^n} \lbrace \Fl(A - Q\llbracket z \rrbracket) + \Fl(B - Q \llbracket z \rrbracket) \rbrace  \qquad \mbox{when $Q = \frac{p}{2}$}\,,
\end{equation}
so that \eqref{motivation for the extension} implies that
\begin{equation} \label{basic inequality special}
\Fl^p(A+B) \leq \Fl(A+B) \qquad \mbox{for every $A,B \in \A_Q(\R^n)$ when $Q = \frac{p}{2}$}\,.
\end{equation}

The following result holds.
\begin{proposition} \label{SORBILLO_GRATIS_X2}
Let $p$ and $Q$ be positive integers with $Q \leq \frac{p}{2}$. Let $A := \sum_{i=1}^{Q} \llbracket A_{i} \rrbracket$ and $B := \sum_{i=1}^{Q} \llbracket B_{i} \rrbracket$ in $\A_{Q}(\R^{n})$, and let $\sigma \in \{-1,1\}$. If 
\begin{itemize}
\item[(a)] either $\sigma = 1$,
\item[(b)] or $\sigma = -1$ and $Q = \frac{p}{2}$,
\end{itemize}
then
\begin{equation} \label{margherita}
\Fl^{p}(A - \sigma B) = \Fl(A - \sigma B)\,.
\end{equation}
\end{proposition}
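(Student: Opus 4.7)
The direction $\F^p(A - \sigma B) \leq \F(A - \sigma B)$ is immediate from the definitions in case (a) (take $P = 0$) and from \eqref{basic inequality special} in case (b). For the reverse, fix $S \in \Rc_1(\R^n)$ and $P \in \F_0(\R^n)$ with $A - \sigma B = \partial S + pP$; the goal is $\mass(S) \geq \F(A - \sigma B)$. The main tool is a one-dimensional slicing argument: for any $1$-Lipschitz $\phi \colon \R^n \to \R$, the push-forward $\phi_\sharp S$ is integer rectifiable in $\R$, may be written as $\theta\, \mathcal{L}^1$ with $\theta \colon \R \to \Z$ of bounded variation, $\theta \to 0$ at $\pm\infty$, and with jumps of $+1$ at each $\phi(A_i)$, $-\sigma$ at each $\phi(B_i)$, and $-p\, m_k$ at the atoms $z_k$ of $P = \sum_k m_k \a{z_k}$. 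Crucially, the reduction $\bar\theta := \theta \bmod p$ depends only on the values of $\phi$ at $A$ and $B$, and $|\theta(t)| \geq |\bar\theta(t)|_p$ pointwise, so that $\mass(S) \geq \mass(\phi_\sharp S) \geq \int |\bar\theta|_p\, d\mathcal{L}^1$.

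\textbf{Case (a).} Here $\bar\theta \equiv N^A - N^B \pmod{p}$, where $N^A(t), N^B(t)$ count the $\phi(A_i)$'s, $\phi(B_i)$'s not exceeding $t$. Since $|N^A - N^B| \leq Q \leq p/2$, the hypothesis $Q \leq p/2$ is exactly what forces $|\bar\theta|_p = |N^A - N^B|$; and the right-hand side is the density of the canonical minimizer realizing $\F(\phi_\sharp(A - B))$ in $\R$. Thus $\mass(S) \geq \F(\phi_\sharp(A - B))$, and taking the supremum over $1$-Lipschitz $\phi$ together with Kantorovich-Rubinstein duality (which gives $\sup_\phi \F(\phi_\sharp(A - B)) = \F(A - B)$) concludes case (a).

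\textbf{Case (b).} Now $\bar\theta \equiv N \pmod{p}$ with $N := N^A + N^B \in \{0, \ldots, p\}$, so $|\bar\theta|_p = \min(N, p-N)$, and a telescoping identity gives
\[
\int \min(N, p-N)\, d\mathcal{L}^1 \;=\; \inf_{\zeta \in \R}\Big[\sum_i |\phi(A_i) - \zeta| + \sum_j |\phi(B_j) - \zeta|\Big],
\]
a $1$-D Fermat--Weber cost. Unfortunately, in dimension $n \geq 2$ the supremum of this quantity over $1$-Lipschitz $\phi$ is in general strictly smaller than $\F(A + B) = \inf_{z \in \R^n}\big[\sum_i |A_i - z| + \sum_j |B_j - z|\big]$, so the slicing step must be supplemented. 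The natural complement is to rewrite $\F^p(A + B) = \inf_P \F(A + B - pP)$ over integer $0$-chains $P$ with $\sum_k m_k = 1$ and to prove directly that $\F(A + B - pP) \geq c^* := \F(A + B)$ for every such $P$; this is a purely finite-dimensional transport estimate in $\R^n$, and can be established by comparing the optimal Kantorovich plan for $\F(A + B - pP)$ with the cone competitor $z^* \cone(A + B)$ (with $z^*$ a Fermat--Weber point of $A + B$), using a Smirnov decomposition of $S$ to split its arcs into a component transporting $\spt P^+$ toward $A \cup B$ and a component transporting $\spt P^-$ within $\spt P$, and then exploiting the constraint $\sum m_k = 1$ to propagate the lower bound through both components.

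\textbf{Main obstacle.} The hard step is case (b): the 1-D projection bound is genuinely weaker than $c^*$ (e.g., for $A + B$ supported at the four unit vectors of $\R^2$ with $p = 4$, one has $c^* = 4$ but the supremum of the 1-D Fermat--Weber cost is only $2\sqrt{2}$). Closing this gap requires the multidimensional transport argument sketched above, and this is where the full strength of the equality $Q = p/2$ must be exploited.
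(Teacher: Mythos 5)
Your treatment of case (a) is correct and follows a different route from the paper's. Rather than normalizing $S$ via good decompositions (Lemmas \ref{pummarola}--\ref{basilico}), you slice by $1$-Lipschitz maps $\phi\colon\R^n\to\R$, reduce the integer density $\theta$ of $\phi_\sharp S$ modulo $p$, and recover $\Fl(A-B)$ by taking the supremum over $\phi$ via Kantorovich--Rubinstein duality. The point that $|N^A-N^B|\le Q\le p/2$ forces $|\bar\theta|_p=|N^A-N^B|$ is precisely where the hypothesis enters, and the identity $\sup_\phi\Fl(\phi_\sharp(A-B))=\Fl(A-B)$ follows by taking $\phi$ to be an optimal Kantorovich potential and then testing $\Fl(\phi_\sharp(A-B))$ against the $1$-Lipschitz map $t\mapsto t$. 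This is arguably cleaner than the paper's combinatorial reduction for this case, modulo a couple of housekeeping points you elide: one should justify that $P$ may be taken in $\Rc_0(\R^n)$ (so that $\phi_\sharp P$ is a genuine $0$-current of finite mass) and that $S$ may be taken in $\In_1(\R^n)$ (so that $\theta$ is of bounded variation).

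Case (b), however, is left unproven, and you acknowledge this. Your observation that one-dimensional slicing cannot reach $c^\ast=\Fl(A+B)$ --- the example with $A+B$ at the four unit vectors of $\R^2$ and $p=4$ is a convincing obstruction --- is valid and rules out a direct extension of the case-(a) strategy. But the ``multidimensional transport argument'' you sketch (Smirnov decomposition of $S$, comparison with the cone $z^\ast\cone(A+B)$, propagating the bound via $\sum_k m_k=1$) is a plan, not a proof: no mechanism is given for how the constraint $\sum_k m_k=1$ actually delivers $\M(S)\ge c^\ast$ once $P$ has several atoms of mixed sign, and the description of the two Smirnov components is not precise enough to check. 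For comparison, the paper closes case (b) with no transport at all: after normalizing $S$ to admit a good decomposition with the $(NTC)$ property (Lemma \ref{pummarola}), Lemma \ref{basilico} produces an arc $S_{j_0}$ joining two atoms of $Z$ with coefficient $\theta_{j_0}\ge p/2$; replacing $S$ by $S-pS_{j_0}$ decreases $\M(Z)$ by $2$ without increasing $\M(S)$, and iterating reaches $\M(Z)=1$, where the cone $z\cone(A+B)$ is the optimal filling and yields $\M(S)\ge\sum_i(|A_i-z|+|B_i-z|)\ge\Fl(A+B)$. The same iteration simultaneously dispatches case (a) by descending to $Z=0$, so the paper's argument is unified, whereas yours bifurcates with the harder branch left open.
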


The proof of Proposition \ref{SORBILLO_GRATIS_X2} hinges upon a simple combinatorial argument. However, in order not to divert attention away from the proof of Proposition \ref{p:max}, we postpone it to Appendix \ref{appendix}.

\section{Proof of Proposition \ref{p:max}}

Since the statement is scaling and translation invariant, there is no loss of generality in assuming $x = 0$ and $s = 1$. Consider the slices $T_{x} := \langle T, \p, x \rangle \in \Rc_{0}(\R^{m+n})$ for a.e. $x \in \R^{m} \times \{0\}$ and use \cite[Theorem 4.3.2(2)]{Federer69} and \cite[Corollary 2.23]{AFP} to conclude that
\begin{equation} \label{e:mass_slice_below}
\M(T_{x}) \leq \lim_{r \to 0} \frac{\|T\|(\bC_{r}(x))}{\omega_{m} r^{m}} \leq \bmax\e_{T}(x) + Q \quad \mbox{ for a.e. }x.
\end{equation}
Now, since $\bmax\e_{T}(x) \leq \delta < 1$ for every $x \in K$, we conclude that $\M(T_{x}) < Q + 1$ for a.e. $x \in K$. On the other hand, setting ${\bf M}(x) := \M(T_x)$ for $x \in B_4$ we have the simple inequality
\begin{equation} \label{e:trivial_inequalities_slice}
{\bf M}\, \mathscr{L}^m \res B_{4} \geq \|\p_\sharp T\| \geq
\|\p_\sharp T\|_p = Q \mathscr{L}^m  \res B_{4}\,, 
\end{equation}
so that we deduce
\begin{equation} \label{e:mass_slice_above}
\M(T_x) = {\bf M}(x) \geq Q \qquad \mbox{for a.e. $x \in B_4$}\,.
\end{equation}
From \eqref{e:mass_slice_below} and \eqref{e:mass_slice_above} we infer then that $\M(T_{x}) = Q$ for a.e. $x \in K$. Hence, there are $Q$ functions $g_{i} \colon K \to \R^{n}$ such that $\p^{\perp}_{\sharp}T_{x} = \sum_{i=1}^{Q} \sigma_{i}(x) \a{g_{i}(x)}$ for a.e. $x \in K$, with $\sigma_{i}(x) \in \{-1,1\}$. In fact, since $\| \p_\sharp T \| \geq Q\, \mathscr{L}^m \res B_4$, the values of $\sigma_{i}(x)$, for fixed $x$, are independent of $i$, and thus $\p^{\perp}_{\sharp}T_{x} = \sigma(x) \sum_{i=1}^{Q} \llbracket g_{i}(x) \rrbracket$. Furthermore, since $\p_{\sharp} T= Q \llbracket B_{4} \rrbracket\; \modp$, it has to be $\sigma(x) Q \equiv Q\; \modp$ as integers. We therefore have to distinguish between two cases:
\begin{itemize}
\item[(A)] \textit{$Q < \frac{p}{2}$}. In this case, the condition $\sigma(x) Q \equiv Q\; \modp$ is satisfied if and only if $\sigma(x) = 1$. Hence, the functions $g_{i}$ allow to define a measurable map $g \colon K \to \A_{Q}(\R^n)$ by setting 
\[
g(x) := \sum_{i=1}^{Q} \llbracket g_{i}(x) \rrbracket\, .
\]
\item[(B)] \textit{$Q = \frac{p}{2}$}. In this case, any measurable choice of $\sigma \colon K \to \{-1,1\}$ would satisfy the condition $\sigma (x) Q \equiv Q\; \modp$. On the other hand
\[
g (x) := \left( \sum_{i=1}^{Q} \llbracket g_{i}(x) \rrbracket , \sigma(x) \right)
\]
defines a measurable function $g \colon K \to \Iqspec$.
\end{itemize}

\subsection{Lipschitz estimate} Fix $j \in \{1,\dots,m\}$, and let $\hat{\p}_{j} \colon \R^{m+n} \to \R^{m-1}$ be the orthogonal projection onto the $(m-1)$-plane given by ${\rm span}(e_{1},\dots,e_{j-1},e_{j+1},\dots,e_{m})$. For almost every $z \in \R^{m-1}$, consider the one-dimensional slice $T^{j}_{z} := \langle T, \hat{\p}_{j}, z \rangle$, and observe that 
\[
\int_{\R^{m-1}} \M(T^{j}_{z}) \, dz \leq \M(T).
\] 
Observe that $T^{j}_z$ satisfies Assumption \ref{ipotesi_base_app1} with $m=1$ for a.e. $z$. 
Let now $\p_{j}$ be the orthogonal projection $\p_{j} \colon \R^{m+n} \to {\rm span}(e_{j})$, and for almost every $t \in \R$ let $\left( T^{j}_{z}\right)_{t} := \langle T^{j}_{z}, \p_{j}, t \rangle$. By Lemma \ref{conto_unidimensionale}, the map $\Phi^{j}_{z} \colon t \mapsto \p^{\perp}_{\sharp}\left( T^{j}_{z} \right)_{t}$ is $BV(\R, X^{p})$, and moreover
\begin{equation}
|D\Phi^{j}_{z}|(I)^{2} \leq 2 \e_{T^{j}_{z}}(I) \| T^{j}_{z} \|(I \times \R^{n}) \quad \mbox{ for every Borel set } I \subset B_{4} \cap {\rm span}(e_{j}).
\end{equation}
Now, observe that
\[
\Phi^{j}_{z}(t) = \p^{\perp}_{\sharp}\left( T^{j}_{z} \right)_{t} = \p^{\perp}_{\sharp} \left\langle \langle T, \hat{\p}_{j}, z \rangle, \p_{j}, t \right\rangle = (-1)^{m-j} \p^{\perp}_{\sharp} \langle T, \p, x(j,z,t) \rangle = (-1)^{m-j} \p^{\perp}_{\sharp} T_{x(j,z,t)},
\]
where $x(j,z,t) := (z_{1},\dots,z_{j-1},t,z_{j+1},\dots,z_{m}) \in \R^{m}$. By \cite[formula (79)]{DPH}, we can therefore conclude that the map $\Phi \colon x \in \R^{m} \mapsto \p^{\perp}_{\sharp}T_{x}$ is in $BV(\R^{m}, X^{p})$. Furthermore, if for every Borel set $A \subset B_{4}$, for every $j \in \{1,\dots,m\}$ and for every $z = (z_{1},\dots,z_{j-1},z_{j+1},\dots,z_{m}) \in \R^{m-1}$ we denote $A^{j}_{z} := \left\lbrace t \in \R \, \colon \, (z_{1},\dots,z_{j-1},t,z_{j+1},\dots,z_{m}) \in A \right\rbrace$, we have 
\begin{equation}
\begin{split}
|D\Phi|(A) &\leq \sum_{j=1}^{m} \int_{\R^{m-1}} |D\Phi^{j}_{z}|(A^{j}_{z}) \, dz \\
&\overset{\eqref{eq:conto_unidimensionale}}{\leq} \sqrt{2} \sum_{j=1}^{m} \int_{\R^{m-1}} \left( \e_{T^{j}_{z}}(A^{j}_{z}) \right)^{\frac{1}{2}} \left( \| T^{j}_{z} \|(A^{j}_{z} \times \R^{n}) \right)^{\frac{1}{2}} \, dz \\
&\leq \sqrt{2} \sum_{j=1}^{m} \left( \int_{\R^{m-1}} \e_{T^{j}_{z}}(A^{j}_{z}) \, dz \right)^{\frac{1}{2}} \left( \int_{\R^{m-1}} \| T^{j}_{z} \|(A^{j}_{z} \times \R^{n}) \, dz \right)^{\frac{1}{2}} \\
&\leq \sqrt{2} m \left( \e_{T}(A) \right)^{\frac{1}{2}} \left( \| T \|(A \times \R^{n}) \right)^{\frac{1}{2}}.
\end{split}
\end{equation} 

Thus, from the definition of excess measure modulo $p$ we deduce
\[
|D\Phi|(B_{r}(y))^{2} \leq 2m^{2} \e_{T}(B_{r}(y))\left( Q|B_{r}(y)| + \e_{T}(B_{r}(y)) \right),
\]
for any $B_{r}(y) \subset B_{4}$. Hence, if we define the maximal function
\[
\bmax|D\Phi|(x) := \sup_{x \in B_{r}(y) \subset B_{4}} \frac{|D\Phi|(B_{r}(y))}{|B_{r}(y)|},
\]
we can conclude that
\[
(\bmax|D\Phi|(x))^{2} \leq 2m^{2} \left( Q \bmax\e_{T} (x) + (\bmax\e_{T} (x))^{2} \right)\leq C\delta \quad \mbox{for every } x \in K.
\]

By \cite[Lemma 7.3]{AK00}, one immediately obtains
\[
\Fl^{p}(\Phi(x) - \Phi(y)) \leq C \delta^{\sfrac{1}{2}} |x - y| \quad \mbox{for every } x,y \in K \mbox{ Lebesgue point of } \Phi.
\]
On the other hand, for a.e. $x \in K$ we can regard $\Phi(x) = g(x) \in \A_{Q}(\R^n)$ if $Q < \frac{p}{2}$ or $\Phi(x) = \sigma(x) g_{0}(x)$ with $\sigma(x) \in \{ -1, 1 \}$ and $g_{0}(x) \in \A_{Q}(\R^n)$ if $Q = \frac{p}{2}$. In any case, Proposition \ref{SORBILLO_GRATIS_X2} implies that in fact
\[
\Fl(\Phi(x) - \Phi(y)) \leq C \delta^{\sfrac{1}{2}} |x - y| \quad \mbox{for every } x,y \in K \mbox{ Lebesgue point of } \Phi.
\]
Now, first consider the case $Q < \frac{p}{2}$. Writing $\Phi(\cdot) = g(\cdot)$, we observe that
\[
\Fl(g(x) - g(y)) = \min_{\sigma \in \mathcal{P}_{Q}} \sum_{i=1}^{Q} |g_{i}(x) - g_{\sigma(i)}(y)| \geq \min_{\sigma \in \mathcal{P}_{Q}} \left( \sum_{i=1}^{Q} |g_{i}(x) - g_{\sigma(i)}(y)|^{2} \right)^{\sfrac{1}{2}} = \G(g(x), g(y)),
\]
where $\mathcal{P}_{Q}$ denotes the group of permutations of $\{1,\dots,Q\}$. 

If $Q = \frac{p}{2}$, instead, we have $\Phi(\cdot) = \sigma(\cdot) g_{0}(\cdot)$. If $\sigma(x) = \sigma(y)$, then the same computation produces
\[
\Fl(\sigma(x) g_0(x) - \sigma(y) g_0(y)) \geq \G(g_0(x), g_0(y)) = \G_{s}(g(x),g(y)).
\] 
If, on the other hand, $\sigma(x) \neq \sigma(y)$, and to fix the ideas say that $\sigma(x) = 1$ and $\sigma(y) = -1$, then
\[
\begin{split}
\Fl(g_0(x) + g_0(y)) :&= \inf_{z \in \R^n} \left\lbrace \Fl(g_0(x) - Q \llbracket z \rrbracket) + \Fl(g_0(y) - Q \rrbracket z \rrbracket) \right\rbrace \\
&\geq \inf_{z \in \R^n} \left\lbrace \G(g_0(x), Q \llbracket z \rrbracket) + \G(g_0(y), Q \rrbracket z \rrbracket) \right\rbrace \\
&\geq \inf_{z \in \R^n} \left( \G(g_0(x), Q\llbracket z \rrbracket)^{2} + \G(g_0(y), Q\llbracket z \rrbracket)^{2} \right)^{\sfrac{1}{2}}\, .
\end{split}
\]
Now observe that 
\begin{align*}
 &\G(g_0(x), Q\llbracket z \rrbracket)^{2} + \G(g_0(y), Q\llbracket z \rrbracket)^{2}\\
 =\;&
|g_0 (x) \ominus \etab \circ g_0 (x)|^2 + |g_0 (y) \ominus \etab \circ g_0 (y)|^2 + Q |\etab \circ g_0 (x) - z|^2 
+ Q |\etab \circ g_0 (y) - z|^2\, . 
\end{align*}
Thus
\begin{align*}
& \inf_{z \in \R^n} \left( \G(g_0(x), Q\llbracket z \rrbracket)^{2} + \G(g_0(y), Q\llbracket z \rrbracket)^{2}\right)\\
=\; & |g_0 (x) \ominus \etab \circ g_0 (x)|^2 + |g_0 (y) \ominus \etab \circ g_0 (y)|^2 + \frac{Q}{2} |\etab \circ g_0 (x) -
\etab \circ g_0 (y)|^2\, .\\
\geq\; & \frac{1}{2} \G_s (g_0 (x), g_0 (y))^2\, .
\end{align*}
This shows that $g \in \Lip(K, \A_{Q}(\R^{n}))$ (resp. $g \in \Lip(K, \Iqspec$) with $\Lip(g) \leq C \delta^{\sfrac{1}{2}}$.

\subsection{Conclusion} Next, in case $Q< \frac{p}{2}$, write 
\[
g (x) = \sum_i \llbracket(h_i (x), \Psi (x, h_i (x)))\rrbracket.
\]
Obviously, $x\mapsto h (x) := \sum_i \llbracket h_i (x)\rrbracket \in \A_{Q}(\R^{\bar n})$ is a Lipschitz map on $K$ with Lipschitz constant $\leq C\, \delta^{\sfrac{1}{2}}$. Recalling \cite[Theorem 1.7]{DLS_Qvfr}, we can extend it to a map $\bar{h}\in \Lip (B_3, \A_{Q}(\R^{\bar n}))$ satisfying $\Lip(\bar h)\leq C\,\delta^{\sfrac{1}{2}}$ (for a possibly larger $C$) and
${\rm osc}\, (\bar{h})\leq C {\rm osc}\, (h)$.
Finally, set
\[
u(x) := \sum_i \llbracket (\bar{h}_i (x), \Psi(x, \bar{h}_i(x))) \rrbracket.
\] 
The same computations of \cite[Section 3.2]{DLS_Lp} then show the Lipschitz and the oscillation bound in Claim (ii) of the Proposition.

For $Q=\frac{p}{2}$ we argue analogously, using this time the Extension Corollary \cite[Corollary 5.3]{DLHMS_linear}
in place of \cite[Theorem 1.7]{DLS_Qvfr}. 

Note that the points (i) and (iii) of the proposition are obvious by construction. 
Next observe that, since $\bmax \e_T$ is lower semicontinuous, $K$ is obviously closed. Let $U := \{\bmax \e_T > \delta\}$ be its complement. Fix $r\leq 3$ and
for every
point $x\in U\cap B_r$ consider a ball $B^x$ of radius $r(x)$ which contains $x$ and satisfies $ \e_T (B^x) > \delta \omega_m r(x)^m$. Since
$ \e_T (B^x) \leq E$ we obviously have
\[
r (x) < \sqrt[m]{\frac{E}{\omega_m \delta}} < r_0 < 1\,.
\]
Now, by the definition of the maximal function it follows clearly that $B^x \subset U \cap B_{r+r_0}$. In turn, by the $5r$ covering theorem we
can select countably many pairwise disjoint $B^{x_i}$ such that the corresponding concentric balls $\hat{B}^i$ with radii $5 r (x_i)$ cover $U\cap B_r$.  
Then we get
\[ \abs{U \cap B_r} \le 5^m\sum_i \omega_m r(x_i)^m \leq \frac{5^m}{\delta} \sum_i  \e_T (B^{x_i}) \leq \frac{5^m}{\delta} \e_T (U \cap B_{r+r_0})\, . 
\]
This shows claim (iv) of the proposition and completes the proof. 

\section{First harmonic approximation}

\begin{remark}[Good system of coordinates]\label{rmk:good_coord}
Let $T$ be as in Assumption \ref{ipotesi_base_app1} in the cylinder $\bC_{4r}(x)$. If the excess $E = \bE(T, \bC_{4r}(x))$ is smaller than a geometric constant, then without loss of generality we can assume that the function $\Psi \colon \R^{m+\bar{n}} \to \R^l$ parametrizing the manifold $\Sigma$ satisfies $\Psi(0) = 0$, $\|D\Psi\|_0 \leq C (E^{\sfrac{1}{2}} + r \bA)$ and $\|D^2\Psi\|_0 \leq C\bA$. This can be shown using a small variation of the argument outlined in \cite[Remark 2.5]{DLS_Lp}. First of all, as anticipated in Remark \ref{rmk:excess}, we introduce a suitable notion of nonoriented excess. Given the plane $\pi_0$ we consider
the $m$-vector $\vec{\pi}_0$ of mass $1$ which gives the standard orientation to it. We then let
\begin{equation}\label{e:no_excess_0}
|\vec{T} (y) - \pi_0|_{no} := \min \{|\vec{T} (y) - \vec{\pi}_0|, |\vec{T} (y) + \vec{\pi}_0|\}\,,
\end{equation}
where $\abs{\,\cdot\,}$ is the norm associated to the standard inner product on the space $\Lambda_m(\R^{m+n})$ of $m$-vectors in $\R^{m+n}$, and define
\begin{equation}\label{e:no_excess}
\bE^{no} (T, \bC_{4r} (x)) = \frac{1}{2 \omega_m (4r)^m} \int_{\bC_{4r} (x)} |\vec{T} (y) - \pi_0|_{no}^2\, d\|T\| (y)\, .
\end{equation}
Consider next the orthogonal projection $\p: \mathbb R^{m+n}\to \pi_0$ and the corresponding slices $\langle T, \p,y \rangle$ with
$y\in B_{4r} (x)$. For a.e. $y$, such a slice is an integral $0$-dimensional current and we let ${\bf M}(y)\in \mathbb N$ be its mass. Once again (cf. \eqref{e:trivial_inequalities_slice}), we observe that under the Assumption \ref{ipotesi_base_app1} we have 
\[
{\bf M}\, \mathscr{L}^m \res B_{4r} (x) \geq \|\p_\sharp T\| \geq
\|\p_\sharp T\|_p = Q \mathscr{L}^m  \res B_{4r}(x)\, .
\] 
Thus, an elementary computation gives 
\[
\begin{split}
\bE^{no} (T, \bC_{4r} (x)) &=  \frac{1}{\omega_m (4r)^m} \left( \|T\|(\bC_{4r}(x)) - \int_{B_{4r} (x)} {\bf M} (y)\, dy \right)\\
& \leq \frac{1}{\omega_m (4r)^m} \left( \|T\|(\bC_{4r}(x)) - \|\p_{\sharp}T\|(\bC_{4r}(x)) \right) \\
&\leq \frac{1}{\omega_m (4r)^m} \left( \|T\|(\bC_{4r}(x)) - \|\p_{\sharp}T\|_p(\bC_{4r}(x)) \right) \\ 
&= \bE(T,\bC_{4r}(x)) = E\, .
\end{split}
\]
At this point we find clearly a point $q\in \spt (T)\cap \bC_{4r} (x)$ such that
\[
\min \{|\vec{T} (q) - \vec{\pi}_0|,  |\vec{T} (q) - (-\vec{\pi}_0)|\} \leq C E^{\sfrac{1}{2}}
\] 
and we can proceed with the very same argument of \cite[Remark 3.5]{DLS_Lp}. 
\end{remark}

\begin{definition}[$E^\beta$-Lipschitz approximation]
Let $\beta \in \left(0,\frac{1}{2m} \right)$, let $T$ be as in Proposition \ref{p:max} such that $32E^{\frac{1-2\beta}{m}} < 1$. If the coordinates are fixed as in Remark \ref{rmk:good_coord}, then the Lipschitz approximation of $T$ provided by Proposition \ref{p:max} corresponding to the choice $\delta = E^{2\beta}$ will be called the $E^\beta$-Lipschitz approximation of $T$ in $\bC_{3s}(x)$.
\end{definition}

In the following theorem, we show that the minimality assumption on the current $T$ and the smallness of the excess imply that the $E^\beta$-Lipschitz approximation of $T$ in $\bC_{3s}(x)$ is close to a Dirichlet minimizer $h$, and we quantify the distance between $u$ and $h$ in terms of the excess.

\begin{theorem}\label{t:harm_1}
For every $\eta_*>0$ and every $\beta\in (0, \frac{1}{2m})$ there exist constants $\varepsilon_*>0$ and $C>0$ with the following property.
Let $T$ and $\Psi$ be as in Assumption \ref{ipotesi_base_app1} in the cylinder $\bC_{4s}(x)$, and assume that $T$ is area minimizing $\modp$ in there. Let $u$ be the $E^\beta$-Lipschitz approximation of $T$ in $B_{3s} (x)$, and let $K$ be the set satisfying all the properties of Proposition \ref{p:max} for $\delta=E^{2\beta}$. If $E\le \epsilon_*$ and 
$s\bA\le \epsilon_* E^{\frac 12}$, then 
\begin{equation}\label{e:eta-star-condition}
\e_T(B_{5s/2}\setminus K)\le \eta_*E  s^m\,,
\end{equation}
and 
\begin{equation}\label{e:small-energy-condition}
\Dir(u, B_{2s}(x)\setminus K) \le C\eta_* E s^m\,. 
\end{equation}
Moreover, there exists a map $h$ defined on $B_{3s} (x)$ and taking either values in $\Iqs$, if $Q<\frac{p}{2}$, or
in $\Iqspec$, if $Q=\frac{p}{2}$, for which the following facts hold:
\begin{itemize}
\item[(i)] $h(x)=(\bar{h}(x), \Psi(x,\bar{h}(x))$ with $\bar{h}$ Dirichlet minimizing;
\item[(ii)]
\begin{align}
s^{-2}&\int_{B_{2s}(x)}\!\!\G_s(u,h)^2+\int_{B_{2s}(x)}\!\!\left(|Du|-|Dh|\right)^2\le\; \eta_*E s^m\,\label{e:harm-app1}\\
&\int_{B_{2s}(x)} \abs{D(\etab\circ u) - D(\etab \circ h)}^2 \le\;  \eta_* E s^m\,.\label{e:harm-app2}
\end{align}
\end{itemize}
\end{theorem}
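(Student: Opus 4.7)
The plan is to argue by contradiction and compactness, adapting the strategy of De Lellis--Spadaro \cite{DLS_Lp} to the modulo $p$ setting, with the essential new ingredient being the use of \emph{special} $Q$-valued functions when $Q=p/2$ and the corresponding compactness/closure theory for $\Dir$-minimizers from \cite{DLHMS_linear}. I first observe that \eqref{e:small-energy-condition} is a direct consequence of \eqref{e:eta-star-condition}: on $B_{2s}(x)\setminus K$ we have $|Du|^2\le C E^{2\beta}$ by Proposition \ref{p:max}(ii), while $|B_{2s}(x)\setminus K|\le CE^{-2\beta}\,\e_T(B_{(2+r_0)s}(x)\setminus K)$ by Proposition \ref{p:max}(iv) with $\delta=E^{2\beta}$ and $r_0=16 E^{(1-2\beta)/m}$ chosen small enough that $2+r_0\le 5/2$; the two bounds cancel the powers of $E^{2\beta}$, reducing matters to \eqref{e:eta-star-condition}.

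Assume next that the joint statement of \eqref{e:eta-star-condition} and of the existence of the harmonic approximation $h$ satisfying (i)--(ii) fails. Then there is $\eta_*>0$, a sequence of currents $T_k$ on cylinders $\bC_{4s_k}(x_k)$ satisfying Assumption \ref{ipotesi_base_app1} with excess $E_k\downarrow 0$ and $s_k\bA_k\le k^{-1} E_k^{\sfrac12}$, such that for each $k$ at least one of the two conclusions fails. By translating and rescaling we reduce to $x_k=0$, $s_k=1$ and, using Remark \ref{rmk:good_coord}, $\Psi_k(0)=0$, $\|D\Psi_k\|_0+\|D^2\Psi_k\|_0\to 0$. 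Let $u_k$ be the $E_k^{\beta}$-Lipschitz approximations and $K_k$ the associated good sets. Using the Lipschitz bound together with the graph-area Taylor expansion on $K_k$, and Proposition \ref{p:max}(iv) on the complement, one obtains the uniform estimate $\Dir(u_k,B_3)\le C E_k$; hence the normalized functions $v_k:=u_k/\sqrt{E_k}$ are uniformly bounded in $W^{1,2}(B_3)$ and (after subtracting an affine term coming from the shift by $\etab\circ u_k(0)$) in $L^\infty$. By the compactness theorem for Sobolev $\Iqs$-valued maps \cite[Proposition 2.11]{DLS_Qvfr} when $Q<p/2$, or by its special $Q$-valued analogue established in \cite{DLHMS_linear} when $Q=p/2$, a subsequence converges in $L^2(B_3)$ to a limit $v_\infty$ satisfying $\Dir(v_\infty, B_r)\le \liminf_k \Dir(v_k, B_r)$ for every $r<3$.

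The core step is to prove that $v_\infty$ is $\Dir$-minimizing on every $B_r\Subset B_3$. Given a competitor $w$ with $w|_{\partial B_r}=v_\infty|_{\partial B_r}$, one interpolates between $v_k$ and $w$ on a thin annulus $B_r\setminus B_{r-\delta_k}$ using the $W^{1,2}$ interpolation lemma for (special) $Q$-valued maps (the analogue of \cite[Lemma 2.15]{DLS_Lp} proven in \cite{DLHMS_linear}). The resulting map $\tilde v_k$, after rescaling by $\sqrt{E_k}$ and lifting to $\Sigma_k$ via $\Psi_k$, produces a Lipschitz multivalued map $\tilde u_k$ whose associated current $\bG_{\tilde u_k}$ is, by the graph-current machinery of \cite{DLHMS_linear}, congruent modulo $p$ to $T_k$ outside $B_r\times\R^n$; in particular $\bG_{\tilde u_k}-T_k$ is a boundary $\modp$ with compact support, hence $\bG_{\tilde u_k}$ is an admissible competitor for the minimality of $T_k$. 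Combining area minimality with the graph-area Taylor expansion then yields $\Dir(v_\infty,B_{r-\delta_k})\le \Dir(w,B_r)+o(1)$, establishing the $\Dir$-minimality of $v_\infty$. Once this is known, the usual chain of inequalities (area minimality of $T_k$ majorizes $\Dir(v_k)$ by $\Dir(v_\infty)$ plus error, while $\Dir$-minimality of $v_\infty$ majorizes $\Dir(v_\infty)$ by $\liminf \Dir(v_k)$) upgrades $v_k\to v_\infty$ to strong $W^{1,2}$ convergence and yields the convergence of the averages $\etab\circ v_k\to \etab\circ v_\infty$ strongly in $W^{1,2}$; the candidate $h_k:=\sqrt{E_k}\,v_\infty$ (lifted to $\Sigma_k$) then satisfies (i) and contradicts the assumed failure of (ii), while strong convergence together with the Taylor expansion forces $\e_{T_k}(B_{5/2}\setminus K_k)/E_k\to 0$, contradicting any failure of \eqref{e:eta-star-condition}.

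The genuinely new obstacle is the construction and admissibility of $\bG_{\tilde u_k}$ in the case $Q=p/2$: one must verify, within the framework of special $Q$-valued graphs from \cite{DLHMS_linear}, not only that the graph current is well-defined, but that $\bG_{\tilde u_k}\equiv T_k\,\modp$ on the outer annulus, so that $\bG_{\tilde u_k}-T_k$ is indeed a boundary $\modp$. This is what legitimates the use of area minimality $\modp$ against the graph competitor and makes the whole compactness scheme close.
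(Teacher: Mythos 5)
Your overall scheme — normalization by $\sqrt{E}$, contradiction/compactness, $\Dir$-minimality of the limit via a graph competitor, and upgrade to strong convergence — is the correct meta-strategy and matches the paper's. However, there are two concrete gaps, the first of which is structural.

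First, and most seriously, you assert that $v_k := u_k/\sqrt{E_k}$ has an $L^2$-convergent subsequence "after subtracting an affine term coming from the shift by $\etab\circ u_k(0)$." This is not enough, because the individual sheets of $v_k$ can drift apart at a rate incompatible with any fixed normalization — subtracting a single average pins down the center of mass but not the mutual separations. The paper takes a very deliberate detour here: after applying the isometry $\iso$ of \cite[Proposition 2.6]{DLHMS_linear} to the special $Q$-valued maps, it sets $b:=\limsup_k \min\{|B_+^k|,|B_-^k|\}$ and splits into two regimes. If $b>0$, both $\bar v_k$ and $\bar w_k$ vanish on sets of uniformly positive measure, which makes a Poincar\'e inequality available and yields genuine weak $W^{1,2}$ compactness to a single $\Dir$-minimizer $\bar h$. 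If $b=0$, the argument reduces to the classical $\Iq$-valued setting and invokes the concentration-compactness lemma \cite[Proposition 4.3]{DLS_Lp}, producing translating sheets $\bar h_k = \sum_j \llbracket y_{j,k}\oplus h_j\rrbracket$ with $h_j\in W^{1,2}(B_{5/2},\mathcal{A}_{Q_j}(\R^{\bar n}))$ and $\sum_j Q_j = Q$. In this regime the comparison map $h$ in the conclusion must be allowed to depend on $k$, and the $\Dir$-minimality is established separately for each sheet $h_j$. Your proposal to take $h:=\sqrt{E_k}\,v_\infty$ for a single fixed $v_\infty$ breaks down here.

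Second, your construction of the competitor current is too optimistic. You claim that the interpolated graph $\bG_{\tilde u_k}$ is congruent to $T_k$ modulo $p$ "outside $B_r\times\R^n$," but this fails because $T_k$ and $\bG_{u_k}$ only agree on $K_k\times\R^n$; the complement of $K_k$ in the outer annulus can carry a nontrivial discrepancy. The paper handles this by slicing $T_k-\bG_{u_k}$ with the radial function, applying a Fubini argument to locate a good radius $t$, and then using the isoperimetric inequality $\modp$ (from \cite[$(4.2.10)^\nu$]{Federer69}) to produce a filling $S_k$ with $\partial S_k = \langle T_k - \bG_{u_k}, f, t\rangle\;\modp$ and $\mass(S_k) = o(E_k)$; the admissible competitor is $Z_k = T_k\res(\bC_4\setminus\bC_t) + S_k + \bG_{w_k}$. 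Without this filling the competitor is not a boundary $\modp$ and the minimality of $T_k$ cannot be used. Incorporating both the dichotomy $b>0$ versus $b=0$ (with concentration compactness/translating sheets in the latter) and the isoperimetric patching $S_k$ would repair your argument and bring it in line with the paper's proof.
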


\begin{remark}\label{rmk:Dir estimates in the good region}
There exists a dimensional constant $c$ such that, if $E \leq c$ and $s\bA \leq E^{\sfrac12}$, then the $E^\beta$-Lipschitz approximation $u$ of $T$ in $\bC_{3s}(x)$ satisfies:
\begin{align} 
& \Lip(u) \leq C\, E^{\beta} \,, \label{e:Ebeta Lipschitz}\\
& \Dir(u, B_{3s}(x)) \leq C \, E \, s^m\,. \label{e:Ebeta Dir}
\end{align} 

Equation \eqref{e:Ebeta Lipschitz} follows from property ${\rm (ii)}$ of the Lipschitz approximation in Proposition \ref{p:max}, the choice of $\delta = E^{2\beta}$, and the scaling of $\bA$. The estimate in \eqref{e:Ebeta Dir}, instead, is a consequence of the Taylor expansion of the mass of multiple valued graphs deduced in \cite[Corollary 13.2]{DLHMS_linear}. Indeed, the remainder term in equation \cite[Equation (13.5)]{DLHMS_linear} can be estimated by
\[  
\int_{B_{3s}(x)} \sum_{i} \bar{R}_4(Du_i) \leq C \, \int_{B_{3s}(x)} |Du|^4 \leq C \, E^{2\beta} \, \Dir(u, B_{3s}(x)) < \frac{1}{4} \, \Dir(u, B_{3s}(x))
\]
for suitably small $E$. Hence, \cite[Equation (13.5)]{DLHMS_linear} yields
\[
\begin{split}
\frac{1}{4} \Dir(u, B_{3s}(x)) &\leq \|\bG_u\|(\bC_{3s}(x)) - Q\, \omega_m (3s)^m  \\
&\leq (\| T\|(\bC_{3s}(x)) - Q\, \omega_m (3s)^m   ) + \|\bG_u\|((B_{3s}(x) \setminus K) \times \R^n) \\
&\leq \omega_m \, E \, (3s)^m + C \, E^{2\beta} \, |B_{3s}(x) \setminus K| \leq C\, E \, s^m
\end{split}
\]
by property ${\rm (iv)}$ in Proposition \ref{p:max}.
\end{remark}

\begin{proof}
Let us first observe that \eqref{e:eta-star-condition} implies \eqref{e:small-energy-condition}: indeed, the estimate \eqref{e:max1} implies:
\[ \Dir(u,B_{2s}(x) \setminus K) \le \Lip(u)^2 \abs{B_{2s}(x) \setminus K} \le C\, \e_T(B_{\frac 52 s}(x) \setminus K). \]

Then, note that we can embed $\Iqs$ naturally and isometrically into $\Iqspec$ using the map $T \in \Iqs \mapsto ( T, 1 )$. Hence, without loss of generality we may assume that $u$ takes values in $\Iqspec$. Furthermore, each Lipschitz approximation is of the form $u(x)= (\bar{u}(x), \Psi(x,\bar{u}))$ with $\bar{u}$ taking values in $\mathscr{A}_Q (\R^{\bar{n}})$.

Finally, since the statement is scale invariant we may assume $x=0$ and $s=1$. 

We will now show the following.\\

Given any sequence of currents $T_k$ supported in manifolds $\Sigma_k=\gr ({\Psi_k})$ and corresponding Lipschitz approximations $u_k$ satisfying all the assumptions in $B_3$ with
\[ E_k \to 0 \quad \text{ and } \quad \bA_k=o(E_k^{\frac12}) \qquad \text{ as } k \to \infty ,\]
then the following conclusions hold:
\begin{itemize}
\item[(i)] \begin{align*}
 \e_{T_k}(B_{\frac52}\setminus K_k)&= o(E_k)
 \end{align*}
 \item[(ii)]
 One of the following holds true:
 either there is a single Dirichlet minimzing map $\bar{h} \in W^{1,2}(B_{\frac 5 2}, \mathscr{A}_Q(\R^{\bar{n}}))$ such that
 \begin{align*} 
 \int_{B_s} \G_s( E_{k}^{-\frac12} \bar{u}_k, \bar{h} )^2 + \left( E_{k}^{-\frac{1}{2}} \abs{D\bar{u}_k} - \abs{D\bar{h}} \right)^2 = o(1) \qquad \text{ for all } s < \frac52; 	
 \end{align*}
 \\
 or there are Dirichlet minimizing maps $h_j \in W^{1,2} (B_{\frac 52}, \mathcal{A}_{Q_j}(\R^{\bar{n}})$ with $j=1, \dotsc, J$, $\sum_j Q_j = Q$, and sequences $\{y_{j,k}\}_{k \in \N} \in \R^{\bar{n}}$ such that if we consider the sequence of maps in $W^{1,2}(B_{\frac 52}, \mathscr{A}_Q(\R^{\bar{n}}))$ given by
 \[ \bar{h}_k:= \left(\sum_j \a{ y_{j,k} \oplus h_j }, \sigma \right) \]
 with $\sigma \in \{-1,1\}$ fixed we have
 \begin{align*} 
 \int_{B_s} \G_s( E_{k}^{-\frac12} \bar{u}_k , \bar{h}_k)^2 + \left( E_k^{-\frac12}\abs{D\bar{u}_k} - \abs{D\bar{h}_k} \right)^2 = o(1) \qquad \text{ for all } s < \frac52.	
 \end{align*}
\end{itemize}
For sufficiently large $k$ the conclusion of the Theorem therefore holds, since we can replace in point (ii)
 $\bar{u}_k$ by $u_k$ and $\bar{h}_k$ by $h_k=(\bar{h}_k,E_k^{-\frac12}\Psi_k(\cdot,E_k^\frac12\bar{h}_k))$.  This can be seen as follows. Recall that by remark \ref{rmk:good_coord}, we have $\norm{D\Psi_k}_0+ \norm{D^2\Psi_k}_0= O(E_k^{\frac 12})$. As a first step, we may replace in (ii) $(E_k^{-\frac12}\abs{D\bar{u}_k}-\abs{D\bar{h}})^2$ by $\abs{E_k^{-1}\abs{D\bar{u}_k}^2-\abs{D\bar{h}}^2}$. Indeed, for any sequence of non-negative measurable functions $a_k,b_k$ we have
\[ \int \abs{a_k-b_k}^2 \le \int \abs{ a_k^2 - b_k^2} = \int \abs{a_k+b_k}\,\abs{a_k-b_k} \le 2\, \left( \int \abs{b_k}^2 \right)^\frac12 \left( \int \abs{a_k-b_k}^2 \right)^\frac12 +  \int \abs{a_k-b_k}^2\,;\] hence $\norm{a_k-b_k}_2=o(1)$ if and only if $\norm{(a_k)^2-(b_k)^2}_1=o(1)$.  Thus it remains to show that  $E_k^{-1}\int_{B_s}\Abs{ \abs{D\Psi_k(\cdot, \bar{u}_k)}^2 - \abs{D\Psi_k(\cdot, E_k^{\frac12} \bar{h}_k)}^2}$ is $o(1)$. We compute explicitly: 
\begin{align*}
&\sum_{i=1}^Q E_k^{-1} \int_{B_s}\Abs{ \abs{D\Psi_k(\cdot, \bar{u}_{k}^i)}^2 - \abs{D\Psi_k(\cdot, E_k^{\frac12} \bar{h}_{k}^i)}^2}\\&=\sum_{i=1}^Q E_k^{-1} \int_{B_s}\Abs{ \abs{D_x\Psi_k(\cdot, \bar{u}_{k}^i)+ D_y\Psi(x,\bar{u}_{k}^i) \, D\bar{u}_k^i}^2 - \abs{D_x\Psi_k(\cdot, E_k^\frac12 \bar{h}_{k}^i)+ D_y\Psi(x,E_k^\frac12 \bar{h}_{k}^i)\, E^{\frac12}\,D\bar{h}_{k}^i }^2} \\
&\le \sum_{i=1}^Q \int_{B_s} E_k^{-1} \Abs{ \abs{D_x\Psi_k(\cdot, \bar{u}_{k}^i))}^2 - \abs{D_x\Psi_k(\cdot, E_k^\frac12 \bar{h}_{k}^i)}^2}\\&\quad + E_k^{-\frac12} C^1_k(x)\left(E_k^{-\frac12}\abs{D\bar{u}_{k}^i}+ \int_{B_s} E_k^{-1} \abs{D\bar{u}_{k}^i}^2 \right) + E_k^{-\frac12} C^2_k(x)\left(\abs{D\bar{h}_{k}^i}+\int_{B_s} E_k^{\frac12}\abs{D\bar{h}_k^i}^2\right)\, ,
\end{align*}
where the measurable functions $C^{j}_k(x)$, $j=1,2$, consist of a product of two first derivatives of $\Psi_k$, and hence $\norm{C^{j}_k}_0 =O( E_k )$). Since $E_k^{-1} \Dir(\bar{u}_k, B_{\frac52}), \Dir(\bar{h}_k, B_{\frac52})$ are uniformly bounded by \eqref{e:Ebeta Dir}, the last two integrals are $o (1)$.

The remaining term can be estimated by 
\begin{align*}
&\int_{B_s} \sum_{i=1}^Q E_k^{-1} \Abs{ \abs{D_x\Psi_k(\cdot, \bar{u}_{k}^i))}^2 - \abs{D_x\Psi_k(\cdot, E_k^\frac12 \bar{h}_{k}^i)}^2}\\
& \le \int_{B_s} \sum_{i=1}^Q E_k^{-\frac12}\, \Abs{ D_x\Psi_k(\cdot, \bar{u}_{k}^i)+ D_x\Psi_k(\cdot, E_k^\frac12 \bar{h}_{k}^i)} \, E_k^{-\frac12}\,\Abs{ D_x\Psi_k(\cdot, \bar{u}_{k}^i)- D_x\Psi_k(\cdot, E_k^\frac12 \bar{h}_{k}^i)} \\
& \le C\, \int_{B_s} E_k^{-\frac12} \norm{D\Psi_k}_0 \norm{D^2\Psi_k}_0 \G_s(E_k^{-\frac12} \bar{u}_k, \bar{h}_k) = o(1)\,.
\end{align*}

\subsection{Construction of the maps $\bar{h}$ or $h_j$} Let $\iso$ be the isometry defined in \cite[Proposition 2.6]{DLHMS_linear}, and define $\left( \bar{v}_k, \bar{w}_k, \bfeta \circ \bar{u}_k \right) = \iso \circ \bar{u}_k$. 
As in \cite[Definition 2.7]{DLHMS_linear}, we set
\begin{align*}
 B_+^k &:= \{ x \in B_{\frac 52} \colon |\bar{v}_k| = \abs{\bar{u}_k^+ \ominus \bfeta \circ \bar{u}_k} >0 \} \quad \text{ and }\\
 B_-^k &:= \{ x \in B_{\frac 52} \colon |\bar w_k| = \abs{\bar{u}_k^- \ominus \bfeta \circ \bar{u}_k} >0 \}\, .
\end{align*}
We distinguish if the limit 
\[ \limsup_{k \to \infty } \min\{ \abs{B^k_+}, \abs{B^k_-} \}=:b \]
satisfies $b>0$ or $b=0$.\\

\emph{Case $b>0$ :} After translating the currents $T_k$ vertically we may assume without loss of generality that $\fint_{B_\frac52} \etab \circ \bar{u}_k = 0 $ for all $k$. Since both $\bar{v}_k$ and $\bar{w}_k$ vanish on sets of measure at least $b>0$, we claim that there exists a constant $C= C_b$ such that 
\begin{equation} \label{e:bound on the L2 norm}
\int_{B_{\frac{5}{2}}} \abs{\bar{u}_k}^2 \leq C_b \, E_k\,.
\end{equation}
Indeed, observe that the classical Poincar\'e inequality gives
\[
\begin{split}
\int_{B_{\frac{5}{2}}} \abs{\bar{u}_k}^2 &= \int_{B_{\frac{5}{2}}} \abs{\bar{u}_k \ominus \bfeta \circ \bar{u}_k}^2 + Q \int_{B_{\frac{5}{2}}} \abs{\bfeta \circ \bar{u}_k}^2 \\
&= \int_{B_{\frac{5}{2}}} \abs{\bar{v}_k}^2 + \int_{B_{\frac{5}{2}}} \abs{\bar{w}_k}^2 + Q \int_{B_{\frac{5}{2}}} \abs{\bfeta \circ \bar{u}_k}^2 \\
&\leq C_b \int_{B_{\frac{5}{2}}} |D |\bar{v}_k||^2 + C_b \int_{B_{\frac{5}{2}}} |D|\bar w_k||^2 + C_b \int_{B_{\frac{5}{2}}} | D \etab\circ \bar u_k|^2
\leq C_{b} \, \Dir(\bar{u}_k, B_{\frac{5}{2}})\,,
\end{split}
\]
which implies \eqref{e:bound on the L2 norm} again by \eqref{e:Ebeta Dir}.

Modulo passing to an appropriate subsequence, we therefore have that 
\[ E_k^{-\frac 12} \bar{u}_k \to \bar{h} \]
weakly in $W^{1,2}(B_{\frac 52} , \mathscr{A}_{Q}(\R^{\bar{n}}))$.

\emph{Case $b=0$ :} We assume that $\abs{B_-^k} \to 0$, the other case being equivalent. 
Consider the map $\bar{u}^+_k$ in $W^{1,2}(B_{\frac 52} , \Iq(\R^{\bar{n}}))$. When needed, we may identify $\bar{u}_k^+$ with $(\bar{u}_k^+, 1 )$ taking values in $\mathscr{A}_Q(\R^{\bar{n}})$. We note that
\begin{align*}
	\Dir(\bar{u}_k^+, B_{\frac 52}) &\le C\, \Dir(\bar{u}_k, B_{\frac 52}) \le C E_k\,;\\
	\int_{B_{\frac52}} \G_s( \bar{u}_k, \bar{u}_k^+)^2 &= \int_{B_-^k} \abs{\bar{u}_k^- \ominus \bfeta \circ \bar{u}_k}^2 \le \abs{B_-^k}^{1- \frac{2}{2^*}} \left( \int_{B_-^k} \abs{\bar{u}_k^- \ominus \bfeta \circ \bar{u}_k}^{2^*} \right)^{\frac{2}{2^*}} \le C \abs{B_-^k}^{1- \frac{2}{2^*}} E_k=o(E_k).
\end{align*}
We used in the last line Poincar\'e's inequality for $\bar{u}_k^-$ that is vanishing on a set of uniformly positive measure. 
Now we can apply the concentration compactness lemma, \cite[Proposition 4.3]{DLS_Lp}, to the sequence $E_k^{-\frac12} \bar{u}_k^+$ and deduce the existence of translating sheets
\[ \bar{h}_k = \sum_{j} \a{ y_{j,k} \oplus h_j } \]
with maps $h_j \in W^{1,2}(B_{\frac{5}{2}}, \mathcal{A}_{Q_j}(\R^{\bar{n}}))$ and points $y_{j,k} \in \R^{\bar{n}}$ such that the following properties are satisfied:
\begin{align}
&\norm{ \G_s(E_k^{-\frac12} \bar{u}_k^+, \bar{h}_k) }_2 \to 0 \label{e:new sequence for $b=0$}\\
&\liminf_{k \to \infty} \left(\int_{B_{\frac52} \cap K_k} E_k^{-1}\abs{D\bar{u}_k^+}^2 - \int_{B_{\frac 52}} \abs{D\bar{h}_k}^2 \right)\ge 0  \label{e:lower semicontinuity of the Dirichlet for translating sheet}\\
&\limsup_{k \to \infty} \int_{B_{\frac52}} \left( E_k^{-\frac 12}\abs{D\bar{u}_k^+} - \abs{D\bar{h}_k}\right)^2 \le \limsup_{k \to \infty} \left( E_k^{-1} \Dir(\bar{u}_k^+, B_{\frac52}) - \Dir(\bar{h}_k, B_{\frac52})\right)\,. \label{e: dirichlet convergence equal norm convergence}
\end{align}

\subsection{Lipschitz approximation of the competitors to $\bar{h}$ and $h_j$}

We fix a radius $s<\frac{5}{2}$.\\

To be able to interpolate later between $\bar{h}$ ($\bar{h}_k$) and $\bar{u}_k$ and similarly between the currents $T_k$ and $\bG_{u_k}$, by using a Fubini type argument we may fix $s< t< \frac{5}{2}$ such that for some $C>0$ depending on $\frac{5}{2} -s$ we have
\begin{align}
	\limsup_{k \to \infty} \int_{\partial B_t} \frac{ \G_s( E_k^{-\frac12} \bar{u}_k, \bar{h})^2}{\norm{\G_s( E_k^{-\frac12} \bar{u}_k, \bar{h})}_2^2} + E_k^{-1} \abs{D\bar{u}_k}^2 + \abs{D\bar{h}}^2 &\le C \qquad \text{ in case } b>0\,,\label{e:slice for $b>0$}\\
	\limsup_{k \to \infty} \int_{\partial B_t} \frac{ \G_s( E_k^{-\frac12} \bar{u}_k, \bar{h}_k)^2}{\norm{\G_s( E_k^{-\frac12} \bar{u}_k, \bar{h}_k)}_2^2} + E_k^{-1} \abs{D\bar{u}_k}^2 + \abs{D\bar{h}_k}^2 &\le C \qquad \text{ in case } b=0\,,\label{e:slice for $b=0$}\\
	 \mass^p( \langle T_k - \boldsymbol{G}_{u_k}, f, t \rangle ) \le C \mass^p( (T_k - \boldsymbol{G}_{u_k})\res\bC_3) &\le C E_k^{ 1-2\beta}\,, \label{e:slice of the current}
\end{align}
where, in \eqref{e:slice of the current}, $f$ is the function defined by $f(y,z) := \abs{y}$ for $(y,z) \in \pi_0 \times \pi_0^\perp$. Also, in \eqref{e:slice for $b=0$} we identified as before $\bar{h}_k$ with the map $(\bar{h}_k, 1)$ taking values in $\mathscr{A}_{Q}(\R^{\bar{n}})$ and used \eqref{e:new sequence for $b=0$}; in \eqref{e:slice of the current} we used the conclusions of Proposition \ref{p:max} as well as the Taylor expansion in \cite[Equation (13.5)]{DLHMS_linear}.\\

Now let us fix an arbitrary $\epsilon>0$.\\

\emph{Case $b>0$:}
Given any competitor $\bar{c} \in W^{1,2}(B_{\frac 52}, \mathscr{A}_{Q}(\R^{\bar{n}}))$ to $\bar{h}$ that agrees with $\bar{h}$ outside of $B_s$, we may apply the Lipschitz approximation Lemma for special multi-valued maps \cite[Lemma 5.5]{DLHMS_linear} to $\bar{h}$ and $\bar{c}$ in order to obtain Lipschitz continuous maps $\bar{h}^{\varepsilon}$ and $\bar{c}^{\varepsilon}$ for which the inequalities \cite[Equations (5.20) \& (5.21)]{DLHMS_linear} 
hold true with $\varepsilon^2$ in place of  $\epsilon$.\\

\emph{Case $b=0$:}
We apply the same procedure as in the case of $b>0$. Given competitors $c_j \in W^{1,2}(B_{\frac 52}, \mathscr{A}_{Q_j}(\R^{\bar{n}}))$ to $h_j$ that agree with $h_j$ outside of $B_s$ we may apply the Lipschitz approximation lemma to each $h_j$ and $c_j$ in order to obtain Lipschitz continuous maps $h_j^{\varepsilon}$ and $c_j^\varepsilon$ such that the inequalities  \cite[Equations (5.20) \& (5.21)]{DLHMS_linear} hold true with  $\varepsilon^2$ in place of  $\epsilon$. Furthermore we define 
\begin{align*}
\bar{h}^\epsilon_k:= \sum_j \a{ y_{j,k} \oplus h^\epsilon_j } \\
\bar{c}^\epsilon_k:= \sum_j \a{ y_{j,k} \oplus c^\epsilon_j }
\end{align*}

\subsection{Interpolating functions}
The argument below does not distinguish between the cases $b>0, b=0$. To handle them simultaneously, we just consider the trivial sequence $\bar{h}_k = \bar{h}$ in the case when $b>0$. \\

For each $k$ we fix now an interpolating map $\varphi_k \in W^{1,2}(B_t\setminus B_{(1-\epsilon)t}, \mathscr{A}_{Q}(\R^{\bar{n}}))$ by means of Luckhaus' Lemma \cite[Lemma 5.4]{DLHMS_linear} such that 
\begin{align*}
&\varphi_k(x) = E_k^{-\frac12} \bar{u}_k(x) \text{ and } \varphi_k((1-\epsilon)x) = \bar{h}^\epsilon_k(x) \text{ for all } x \in \partial B_t\\
&\int_{B_t\setminus B_{(1-\epsilon)t}} \abs{D\varphi_k}^2 \le C \epsilon \left(\int_{\partial B_t} E_k^{-1} \abs{D\bar{u}_k}^2 + \abs{D\bar{h}^\epsilon_k}^2\right) + \frac{C}{\epsilon} \int_{\partial B_t} \G_s(E_k^{-\frac12}\bar{u}_k, \bar{h}_k^\epsilon)^2 
\end{align*}
Observe that by our choice of the Lipschitz approximation $\bar{h}_k^\epsilon$ we have
\begin{equation}\label{e:small dirichlet in the interpolation function}
	\int_{B_t\setminus B_{(1-\epsilon)t}} \abs{D\varphi_k}^2 \le C \epsilon \qquad \mbox{ for large $k$ (depending on $\varepsilon$)}\,.
\end{equation}

Moreover, observe that, by construction, $\limsup_{k \to \infty} \Lip(\bar{h}_k^\eps) \leq C^*_\eps$, where $C^*_\eps$ is a constant depending on $\eps$ but independent of $k$. Also, again for large values of $k$ (depending on our fixed $\eps$):
\begin{align*}
\norm{\cG_s(E_k^{-\sfrac12} \bar{u}_k,  \bar{h}_k^\eps)}_{L^\infty(\partial B_t)} & \leq C\, \norm{\cG_s(E_k^{-\sfrac12} \bar{u}_k,  \bar{h}_k^\eps)}_{L^2(\partial B_t)} + C\, \Lip(E_k^{-\sfrac12} \bar{u}_k) + C\, \Lip(\bar{h}_k^\eps) \\
&\leq C\, \eps + C\, E_k^{\beta - \sfrac12} + C^*_\eps\,.
\end{align*}
Hence, from \cite[Equation (5.19)]{DLHMS_linear} we conclude that
\begin{equation} \label{interpolation Lipschitz estimate}
\Lip(\varphi_k) \leq C_\eps\, E_k^{\beta - \sfrac12} + C_\eps \leq C_\eps \, E_k^{\beta - \sfrac12}\,,
\end{equation}
where the last inequality is a consequence of the fact that $E_k^{\beta - \sfrac12} \to \infty$ as $k \uparrow \infty$.

In particular we can define competitors to $E_k^{-\frac 12}\bar{u}_k$ on $B_t$ by
\[\hat{c}_k(x):=\begin{cases}
\varphi_k(x) &\text{ for } (1-\epsilon) t \le \abs{x} \le t \\
\bar{c}_k^\epsilon(\frac{x}{1-\epsilon}) &\text{ for } \abs{x} \le (1-\epsilon) t
\end{cases}\]
We observe that by our construction we have
\begin{equation}\label{e:lim inf of Dirichlet 1}
	\liminf_{k \to \infty} E_k^{-1}\Dir(\bar{u}_k, B_t \cap K_k) - \Dir(\hat{c}_k, B_t) \ge \left(\sum_j \Dir(h_j, B_t) - \Dir(c_j,B_t)\right) - C \epsilon. 
\end{equation}
We have used \eqref{e:lower semicontinuity of the Dirichlet for translating sheet}, the closeness of the Dirichlet energies of $c_j$ and $c_j^\epsilon$ and \eqref{e:small dirichlet in the interpolation function}.
As we have seen in the calculations below point (ii) above, we can use the fact that $\norm{D\Psi_k}_0+ \norm{D^2\Psi_k}_0 = O(E_k^\frac12)$ to pass to $u_k$ and $w_k = (E_k^{\frac{1}{2}}\hat{c}_k, \Psi_k(\cdot, E_k^{\frac{1}{2}} \hat{c}_k))$ still satisfying
\begin{equation}\label{e:lim inf of Dirichlet 2}
	\liminf_{k \to \infty} E_k^{-1}\left(\Dir(u_k, B_t \cap K_k) - \Dir(w_k, B_t)\right) \ge \left(\sum_j \Dir(h_j, B_t) - \Dir(c_j,B_t)\right) - C \epsilon. 
\end{equation}\\

\subsection{Interpolating Currents}
By our choice of $t$, \eqref{e:slice of the current}, and the fact that the boundary operator commutes with slicing we have
\[ \partial^p \langle T_k - \boldsymbol{G}_{u_k}, f, t \rangle =0. \]
Using \cite[$(4.2.10)^\nu$]{Federer69}, we can fix an isoperimetric filling $S_k$, which can be assumed to be representative $\modp$, such that 
\[
\partial S_k = \langle T_k - \boldsymbol{G}_{u_k}, f, t \rangle \, \modp
\] 
and 
\[\mass(S_k) = \mass^p (S_k) \le C \mass^p( \langle T_k - \boldsymbol{G}_{u_k}, f, t \rangle)^{\frac{m}{m-1}} \le C\, E_k^{\frac{m (1-2\beta)}{m-1}} = o(E_k)\]
by the choice of $\beta$.
\\

\subsection{Dirichlet minimality}
We can now finally define a competitor to $T_k$ by
\[ Z_k:= T_k \res (\bC_4 \setminus \bC_t) + S_k + \boldsymbol{G}_{w_k}. \]
Observe that, by the hypotheses on $T_k$, Lemma \ref{modp_slicing_formula}, and the choice of $S_k$, we have
\[
\partial^p Z_k = - \left[ \langle T_k, f,t \rangle \right] + \left[ \langle T_k - {\bf G}_{u_k}, f,t \rangle \right] + \left[ \langle {\bf G}_{u_k}, f,t \rangle \right] = 0\,. 
\]
Let us observe that by construction, and using once again the Taylor expansion of the mass of a special multi-valued graph \cite[Equation (13.5)]{DLHMS_linear}, we compute:
\begin{align*}
\e_{T_k}(B_t) - \frac12 \Dir(u_k, B_t \cap K_k) &= \e_{T_k}(B_t \setminus K_k) + o(E_k)\,,\\
\e_{Z_k}(B_t) - \frac 12 \Dir(w_k, B_t) & \leq \mass(S_k) + \e_{\boldsymbol{G}_{w_k}}(B_t)	- \frac 12 \Dir(w_k, B_t) \leq o(E_k) \,,
\end{align*}
where in the last equality we have used that $\Dir(w_k, B_t) = O(E_k)$ whereas $\Lip(w_k) \leq C_\eps \, E_k^\beta$, so that
\[
\be_{\bG_{w_k}} - \frac{1}{2} \Dir(w_k, B_t) = \int_{B_t} \sum_{i} \bar R_4(Dw_{k}^{i}) \leq C \, E_{k}^{1+2\beta} = o(E_k) \quad \mbox{as $k \uparrow \infty$}\,.
\]
By minimality of $T_k$ in $\bC_3$ we then have
\begin{align*} 0 &\ge \mass(T_k \res \bC_3) - \mass(Z_k \res \bC_3)\\
&= \e_{T_k}(B_t) - \e_{Z_k}(B_t)\\
&\ge \frac12 \left(\Dir(u_k, B_t \cap K_k) -  \Dir(w_k, B_t)\right) + \e_{T_k}(B_t \setminus K_k) - o(E_k)\,.
\end{align*}
Hence dividing by $E_k$ and taking the $\limsup$ as $k \to \infty$ we deduce by \eqref{e:lim inf of Dirichlet 2}
\[ 0 \ge \frac12 \left(\sum_j \Dir(h_j, B_t) - \Dir(c_j,B_t)\right) + \limsup_{k \to \infty} E_k^{-1} \e_{T_k}(B_t \setminus K_k). \]
Since $\epsilon$ is arbitrary:
\begin{itemize}
\item[(i)] Choosing $c_j=h_j$, we see that $\limsup_{k \to \infty} E_k^{-1} \e_{T_k}(B_t \setminus K_k) =0$;\\
\item[(ii)] By the arbitrariness of $c_j$ we conclude the Dirichlet minimality of $h_j$. Afterwards by \eqref{e:lower semicontinuity of the Dirichlet for translating sheet} we deduce that $\limsup_{k \to \infty} E^{-1}_k \Dir(u_k, B_t\cap K_t) - \Dir(h_k, B_t) = 0$. In combination with \eqref{e: dirichlet convergence equal norm convergence} we obtain the second part of (ii), thus completing the proof.
\end{itemize}

\end{proof}

\section{Improved excess estimate and higher integrability}

So far, Proposition \ref{p:max} and Theorem \ref{t:harm_1} have shown that if $T$ is as in Assumption \ref{ipotesi_base_app1} then there is a Lipschitz continuous multiple valued function (possibly special, in case $p$ is an even integer and $Q = \frac{p}{2}$) whose graph coincides with the current in a region where the excess measure is suitably small in a uniform sense; furthermore, if $T$ is also area minimizing $\modp$ then such an approximating Lipschitz multiple valued function is almost Dirichlet minimizing, and both the Dirichlet energy of the approximating function and the excess of the original current in the ``bad region'' decay faster than the excess. The goal of this section is to exploit the closeness of the Lipschitz approximation to a $\Dir$-minimizer in order to deduce extra information concerning the behavior of the excess measure of $T$. We begin observing that the classical result on the higher integrability of the gradient of a harmonic function extends not only to classical multiple valued functions, as it is shown in \cite[Theorem 6.1]{DLS_Lp}, but also to special multiple valued functions. 

\begin{theorem}\label{t:higher integrability of the gradient}
There exists $p>2$ such that for every $\Omega' \Subset \Omega	\subset \R^m$ open domains, there is a constant $C>0$ such that 
\[ \norm{Du}_{L^p(\Omega')} \le C \norm{Du}_{L^2(\Omega)} \text{ for every $\Dir$-minimizing } u \in W^{1,2}(\Omega, \Iqspec).\]
\end{theorem}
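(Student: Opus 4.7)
The plan is to adapt the proof of the analogous higher-integrability theorem for classical $\Iq$-valued Dirichlet minimizers given in \cite[Theorem 6.1]{DLS_Lp}, which follows the classical Giaquinta--Modica strategy: a Caccioppoli-type inequality is combined with the Sobolev--Poincar\'e inequality to yield a reverse H\"older inequality at all scales, and then Gehring's lemma upgrades the integrability exponent from $2$ to some $p > 2$.

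First, I would establish a Caccioppoli-type inequality for $\Dir$-minimizers $u \in W^{1,2}(\Omega, \Iqspec)$: for every ball $B_{2r}(x) \Subset \Omega$ and every $z \in \R^n$,
\[
\int_{B_r(x)} |Du|^2 \leq \frac{C}{r^2} \int_{B_{2r}(x)} \cG_s(u, Q\a{z})^2\,.
\]
The proof is based on a standard cutoff argument: using a Lipschitz function $\chi$ equal to $1$ on $B_r(x)$ and vanishing outside $B_{2r}(x)$, one defines a competitor $v$ by interpolating pointwise between $u$ and $Q\a{z}$. The crucial point is that $Q\a{z}$ corresponds to an equivalence class of $\Iqspec$ sitting in the identification set where the two copies of $\Iqs$ are glued, so the interpolation is well-defined in the metric $\cG_s$ regardless of whether $u(y)$ has sign $+1$ or $-1$. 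The minimality of $u$, together with the additive decomposition of $\Dir$ into the energies of $u^\pm \ominus \bfeta \circ u$ and $\bfeta \circ u$, then yields the Caccioppoli estimate in the standard way.

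Next, one combines the Caccioppoli inequality with the Sobolev--Poincar\'e inequality for $\Iqspec$-valued Sobolev maps (the special analog of \cite[Proposition 4.9]{DLS_Qvfr}, which is part of the linear theory developed in \cite{DLHMS_linear}), choosing $z$ to be the average on $B_{2r}(x)$ of the $\R^n$-valued function $\bfeta \circ u$. This yields the reverse H\"older inequality
\[
\left( \frac{1}{|B_r(x)|}\int_{B_r(x)} |Du|^2 \right)^{1/2} \leq C \left( \frac{1}{|B_{2r}(x)|}\int_{B_{2r}(x)} |Du|^{2_*} \right)^{1/2_*}\,,
\]
with $2_* = \frac{2m}{m+2} < 2$, valid for every ball $B_{2r}(x) \Subset \Omega$. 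An application of the classical Gehring lemma (see e.g. \cite[Proposition 6.2]{DLS_Lp}) then produces an exponent $p > 2$ and the localized bound $\|Du\|_{L^p(\Omega')} \leq C \|Du\|_{L^2(\Omega)}$ for every $\Omega' \Subset \Omega$.

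The main obstacle is the Caccioppoli inequality of the first step. While the cutoff strategy is standard, the construction of admissible competitors in $\Iqspec$ requires some care due to the sign structure of special multi-valued functions: generic pointwise interpolations between $u$ and a target living on the ``wrong'' sign sheet would not give an element of $\Iqspec$. The use of the degenerate constant $Q\a{z}$, which belongs to the identification set of $\Iqspec$, avoids this difficulty entirely and allows for a uniform treatment on the three subdomains $\Omega_+$, $\Omega_-$ and $\Omega_0$ of the minimizer, whose geometry is a priori not under control.
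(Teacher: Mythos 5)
The paper's proof is one line: invoke the biLipschitz Almgren embedding $\boldsymbol{\zeta}\colon \Iqspec \hookrightarrow \R^N$ of \cite[Theorem 5.1]{DLHMS_linear} and run the argument of \cite[Theorem 6.1]{DLS_Lp} verbatim on $v:=\boldsymbol{\zeta}\circ u$. There, the Caccioppoli inequality is established against the comparison point $\boldsymbol{\rho}(\bar v)\in\Iqspec$, where $\bar v$ is the $\R^N$-valued average of $v$ on $B_{2r}$ and $\boldsymbol{\rho}$ is the Lipschitz retraction onto $\boldsymbol{\zeta}(\Iqspec)$, while the Sobolev--Poincar\'e step is simply the classical scalar inequality applied to $v$.

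Your scheme has a genuine gap at the Sobolev--Poincar\'e step. Taking $z$ to be the average of $\bfeta\circ u$ over $B_{2r}$ gives pointwise $\cG_s(u,Q\a{z})^2=|u\ominus\bfeta\circ u|^2+Q\,|\bfeta\circ u-z|^2$. The second summand does satisfy a reverse Poincar\'e inequality with the subcritical exponent $2_*=\frac{2m}{m+2}$, but the first does not: it carries no mean-zero or vanishing structure, so no exponent gain is available. Concretely, for $Q=2$, $n=1$ and the constant $\Dir$-minimizer $u\equiv(\a{1}+\a{-1},1)$ one has $Du\equiv0$ and $z=\bfeta\circ u=0$, yet $\cG_s(u,Q\a{z})^2\equiv2$; hence there can be no inequality of the form
\[
\mint_{B_{2r}}\cG_s(u,Q\a{z})^2\,\leq\,C\,r^2\Big(\mint_{B_{2r}}|Du|^{2_*}\Big)^{2/2_*}.
\]
The collapsed comparison point $Q\a{z}$ is, as you note, the only base point that a naive pointwise interpolation can reach from both the $+$ and the $-$ sheet; but that very restriction is what destroys the reverse H\"older gain. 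The remedy is the one the paper uses: compare against a genuine (generally non-collapsed) mean $\bar P:=\boldsymbol{\rho}(\bar v)\in\Iqspec$. The existence of such a mean together with the accompanying Sobolev--Poincar\'e estimate is exactly what the embedding $\boldsymbol{\zeta}$ supplies, and the Caccioppoli inequality against a non-collapsed $\bar P$ is obtained through the retraction $\boldsymbol{\rho}$ on the embedded side (equivalently, via the $\Iqspec$-valued Luckhaus interpolation of \cite[Lemma 5.4]{DLHMS_linear}), which is precisely what handles the sign mismatch you flag as the difficulty.
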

\begin{proof}
The proof is the very same presented in \cite[Theorem 6.1]{DLS_Lp}: one only has to replace the Almgren embedding $\boldsymbol{\xi}$ for $\Iqs$ used in there with the new version of the Almgren embedding $\boldsymbol{\zeta}$ for $\Iqspec$ introduced in \cite[Theorem 5.1]{DLHMS_linear}.
\end{proof}

As a direct corollary of the first harmonic approximation and the higher integrability of the gradient we obtain the following result.
\begin{corollary}\label{cor:weak improved excess estimate}
For every $\eta>0$ there exist an $\epsilon>0$ and a constant $C>0$ with the property that, if $T$ satisfies Assumption \ref{ipotesi_base_app1} and is area minimzing $\modp$ in the cylinder $\bC_{4s}(x)$ with $E \le \epsilon$ then for every $A\subset B_s$ with $\abs{A\cap B_s} \le \epsilon \abs{B_s}$ we have
\begin{equation}\label{e:weak estimate 1} \e_T(A) \le \left(\eta E + C \bA^2s^2\right)s^m.
\end{equation}
\end{corollary}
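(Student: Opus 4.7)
The plan is to split into two regimes according to whether $s\bA$ is small or large compared to $E^{\sfrac12}$. In the regime $s\bA \ge \epsilon_* E^{\sfrac12}$, with $\epsilon_*$ the smallness parameter from Theorem \ref{t:harm_1} (corresponding to a value $\eta_*$ to be chosen later), the trivial bound $\e_T(A) \le \e_T(B_s) \le \omega_m E s^m$ already yields $\e_T(A) \le C \bA^2 s^{m+2}$, so the estimate holds with no use of the minimality. The substance of the argument is therefore in the complementary regime $s\bA \le \epsilon_* E^{\sfrac12}$, where the harmonic approximation Theorem \ref{t:harm_1} is available.

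In this regime I would fix $\beta \in (0, \frac{1}{2m})$, produce the $E^\beta$-Lipschitz approximation $u$ of $T$ on $\bC_{3s}(x)$ together with its almost $\Dir$-minimizing companion $h$ from Theorem \ref{t:harm_1}, and decompose
\[
\e_T(A) = \e_T(A\cap K) + \e_T(A\setminus K),
\]
where $K$ is the set on which $T$ coincides with $\bG_u$ mod $p$. The bad part is handled immediately by \eqref{e:eta-star-condition}: $\e_T(A\setminus K) \le \eta_* E s^m$, with $\eta_*$ at our disposal. For the good part, the Taylor expansion of the mass of a (special) multi-valued graph used in Remark \ref{rmk:Dir estimates in the good region}, together with the Lipschitz bound $\Lip(u) \le C E^\beta$ and the smallness of $\bA$, gives
\[
\e_T(A\cap K) \le \tfrac{1}{2}\int_{A\cap K} |Du|^2 + C\bA^2 s^{m+2},
\]
the quartic remainder being absorbed via $|Du|^4 \le C E^{2\beta} |Du|^2$. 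The triangle inequality and \eqref{e:harm-app1} then yield $\int_{A\cap K} |Du|^2 \le \int_{A \cap K} |Dh|^2 + C \sqrt{\eta_*}\, E s^m$, so the problem is reduced to estimating $\int_{A \cap K} |Dh|^2$.

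The pivotal step is the higher integrability of $\Dir$-minimizers (Theorem \ref{t:higher integrability of the gradient}): for some $p > 2$, a scaling argument upgrades the unit-ball estimate to $\|Dh\|_{L^p(B_s)}^p \le C s^{m - \sfrac{mp}{2}} \Dir(h, B_{2s})^{\sfrac{p}{2}}$. Combined with H\"older's inequality with exponents $\sfrac{p}{2}$ and $\sfrac{p}{p-2}$, this yields
\[
\int_{A\cap K} |Dh|^2 \le |A|^{1-\sfrac{2}{p}} \|Dh\|_{L^p(B_s)}^2 \le C\left( \frac{|A|}{|B_s|} \right)^{1 - \sfrac{2}{p}} \Dir(h, B_{2s}) \le C \epsilon^{1-\sfrac{2}{p}} E s^m,
\]
where the last inequality uses $\Dir(h, B_{2s}) \le C E s^m$ (from \eqref{e:harm-app1} and \eqref{e:Ebeta Dir}) together with the hypothesis $|A| \le \epsilon |B_s|$. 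Choosing first $\eta_*$ and then $\epsilon$ small enough that $C(\eta_* + \sqrt{\eta_*} + \epsilon^{1-\sfrac{2}{p}}) \le \eta$ closes the argument. The main obstacle is the careful scaling bookkeeping in the higher integrability estimate; the underlying mechanism is the classical one, namely that trading $L^2$ for $L^p$ with $p > 2$ converts the measure-smallness of $A$ into a quantitative gain on $\int_A |Dh|^2$ relative to the total Dirichlet energy.
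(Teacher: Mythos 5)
Your proof is correct and follows essentially the same route as the paper's own argument: split on whether $s\bA$ is large or small relative to $E^{\sfrac12}$; in the former case the trivial bound suffices, and in the latter decompose over $A\cap K$ and $A\setminus K$, use \eqref{e:eta-star-condition} on the bad set, compare $\int_{A\cap K}|Du|^2$ to $\int_{A\cap K}|Dh|^2$ via \eqref{e:harm-app1}, and close with higher integrability plus H\"older. Two minor bookkeeping slips: the Taylor remainder should read $CE^{1+2\beta}s^m$ rather than $C\bA^2 s^{m+2}$ (the quartic term is controlled by $\Lip(u)^2$, not by curvature); and Theorem \ref{t:higher integrability of the gradient} applies to the $\Dir$-minimizer $\bar h$ rather than to $h=(\bar h,\Psi(\cdot,\bar h))$, so one must split off the curvature contribution first, as the paper does—both terms are harmless and absorbable into $\eta E s^m$ for $\epsilon$ small, so the argument stands.
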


\begin{proof}
By scaling and translating we may assume without loss of generality that $x=0$ and $s=1$. We fix $\beta= \frac{1}{4m}$ and $\eta_* >0$ to be determined below. Now let $\epsilon_*=\epsilon_*(\beta, \eta_*)$ taken from Theorem \ref{t:harm_1}.  
We distinguish the following two cases: either $\bA \le \epsilon_* E^{\frac12}$ or $\bA > \epsilon_* E^{\frac12}$. In the latter case the inequality holds trivially with $C= \epsilon_*^{-2}$ because
\[ \e_T(A) \le E \le \epsilon_*^{-2} \bA^2. \]
In the first case, we can apply the first harmonic approximation, Theorem \ref{t:harm_1}. Now let $h(x)=(\bar{h}(x), \Psi(x,\bar{h}(x)))$, with $\bar{h}$ Dirichlet minimizing, the associated map as in (i). 
By \eqref{e:eta-star-condition} we directly conclude that
\begin{equation}\label{e:A without K} \e_T(A\setminus K) \le \eta_* E\,,	
\end{equation}
where $K$ is, as usual, the ``good set'' for the $E^\beta$-Lipschitz approximation of $T$ in $\bC_3$ as in Proposition \ref{p:max}. In order to estimate the $\e_T$ measure of the portion of $A$ inside $K$, we observe that
\begin{align*}\Abs{\e_{T}(A\cap K)  - \frac12 \int_{A\cap K} \abs{Dh}^2} &= \Abs{\e_{\boldsymbol{G}_{u}}(A \cap K)-  \frac12 \int_{A\cap K} \abs{Dh}^2}\\
& \le \Abs{\e_{\boldsymbol{G}_{u}}(A \cap K)- \frac12 \int_{A\cap K} \abs{Du}^2}+ \frac12 \Abs{ \int_{A\cap K} \abs{Du}^2 - \abs{Dh}^2}\\	&=: I + II
\end{align*}
The first addendum can be bounded by the Taylor expansion of mass by
\[ I \le C\, \Lip(u)^2 \, \int_{A\cap K} \abs{Du}^2 \le C E^{1+2\beta}; \]
the second can be estimated using \eqref{e:harm-app1}
and $\abs{Du}^2 - \abs{Dh}^2 = (\abs{Du} + \abs{Dh}) ( \abs{Du}- \abs{Dh})$ by
\[ II \le C \left(\int_{A\cap K} \abs{Du}^2 + \abs{Dh}^2 \right)^{\frac12}\left(\int_{A\cap K} (\abs{Du} - \abs{Dh})^2 \right)^{\frac12} \le C \eta_*^{\frac12}E. \]
Recall that $\bA \le \epsilon_* E^{\frac12}$ implies that $\norm{D\Psi}\le C\, E^{\frac12}$. Hence we have
\begin{align*} \int_{A \cap K} \abs{Dh}^2 &= \int_{A \cap K} \abs{D\bar{h}}^2 + \abs{ D_x\Psi(x, \bar{h}) + D_y\Psi(x,\bar{h})D\bar{h}}^2\\
&\le (1 + C\, E) \int_{A \cap K} \abs{D\bar{h}}^2 + C \, E \abs{A \cap K}
\end{align*} 
Using the higher integrability for Dirichlet minimizers we can estimate further
\begin{align*} \int_{A\cap K} \abs{D\bar{h}}^2 &\le \abs{A \cap K}^{1-\frac{2}{p}} \left( \int_{A\cap K} \abs{D\bar{h}}^p \right)^{\frac 2p} \\&\le C \abs{A \cap K}^{1-\frac2p} \int_{B_2} \abs{D\bar{h}}^2 \le C \abs{A\cap K}^{1-\frac2p} E.\end{align*}
Collecting all the estimates we get in conclusion
\begin{align*}
\e_T(A) & \le \e_T(A\setminus K) + \Abs{\e_T(A \cap K) - \frac12 \int_{A\cap K} \abs{Dh}^2} + 	\frac12 \int_{A\cap K} \abs{Dh}^2 \\
&\le \left(\eta_* + C E^{2\beta} + C \eta_*^\frac12  + C \abs{A \cap K}^{1-\frac2p} \right) \,E.
\end{align*}
Hence, the estimate in \eqref{e:weak estimate 1} follows also in this case after suitably choosing $\eps$ and $\eta_*$ depending on $\eta$.
\end{proof}

For the following proof, we introduce the centered maximal function for a general radon measure $\mu$ on $\R^m$ by setting
\[ \bmax_c\mu(x):= \sup_{s\ge 0 } \frac{ \mu(B_s(x))}{\omega_m s^m}\]
Observe that one has the straightforward comparison between the centered and non-centered maximal functions
\[ \bmax_c\mu(x) \le \bmax \mu(x) \le 2^m\, \bmax_c \mu(x). \]
Although the two quantities are therefore comparable, we decided to work for this proof with the centered version since in our opinion the geometric idea becomes more easily accessible.
Furthermore we note that since the map $x \mapsto \frac{\mu(B_{s}(x))}{\omega_m\, s^m}$ is lower semicontinuous, $x \mapsto \bmax_c\mu(x)$ is lower semicontinuous as it is the supremum of a family of lower semicontinuous functions.

\begin{theorem}\label{thm:gradient Lp estimate}
	There exist constants $0<q<1$, $C, \epsilon>0$ with the following property. If $T$ is area minimzing $\modp$ in the cylinder $\bC_{4}$ and
satisfies Assumption \ref{ipotesi_base_app1} with $E \le \epsilon$ then
\begin{equation}\label{eq:gradient Lp estimate}
	\int_{B_2}  (\min\{\bmax_c \e, 1\})^{q} \, d\e \le C e^{C\bA^2} E^{1+q}.
\end{equation}
\end{theorem}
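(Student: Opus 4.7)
The target estimate is a higher integrability statement for the excess measure $\e$, in the same spirit as \cite[Theorem 7.1]{DLS_Lp}. The natural strategy is a Gehring/good-$\lambda$ argument built on the weak improved excess estimate of Corollary~\ref{cor:weak improved excess estimate}, which plays the role of a reverse Hölder inequality once one reformulates it through a stopping-time Vitali construction based on the centered maximal function $M(x) := \bmax_c \e(x)$.

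First, I would reduce to proving a distributional inequality of the form
\[
\e\bigl(L_{\Lambda t} \cap B_2 \bigr) \;\leq\; \eta\, \e\bigl(L_t \cap B_{5/2}\bigr) \;+\; C\,\bA^2\,\mathscr{L}^m\bigl(L_t \cap B_{5/2}\bigr), \qquad \text{for } t_0 \leq t \leq 1,
\]
where $L_s := \{M > s\}$, $t_0 \simeq E$, $\Lambda$ is a large constant to be chosen, and $\eta > 0$ can be made arbitrarily small. To produce this good-$\lambda$ inequality I would, at each level $t$, perform a stopping-time covering of $L_t \cap B_2$ by Besicovitch/Vitali balls $B_{r_j}(x_j)$ chosen as the largest radii for which $\e(B_{r_j}(x_j)) = \omega_m t\,r_j^m$. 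By maximality, on the enlarged balls $B_{4 r_j}(x_j)$ the average excess is at most $4^m t$, so if $t$ is small enough (depending on $\epsilon_*$ from Corollary~\ref{cor:weak improved excess estimate}) the current $T$ satisfies Assumption~\ref{ipotesi_base_app1} on $\bC_{4 r_j}(x_j)$ with cylindrical excess controlled by $4^m t$, making Corollary~\ref{cor:weak improved excess estimate} applicable.

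Next, on each stopping ball $B_{r_j}(x_j)$ I want to bound the Lebesgue measure of the worse level set $L_{\Lambda t} \cap B_{r_j}(x_j)$. The key observation is that for $\Lambda$ sufficiently large (depending only on $m$), points in $L_{\Lambda t} \cap B_{r_j}(x_j)$ can only achieve $M > \Lambda t$ through balls of radius $\lesssim r_j$ centered inside a fixed dilation of $B_{r_j}(x_j)$ — otherwise the maximality defining $r_j$ would be violated. Hence those points lie in the superlevel set of a \emph{localized} maximal function $\bmax^{C r_j}_c(\e\res B_{C r_j}(x_j))$, whose weak-type $(1,1)$ bound yields $|L_{\Lambda t} \cap B_{r_j}(x_j)| \leq \frac{C}{\Lambda t}\,\e(B_{C r_j}(x_j)) \leq \frac{C'}{\Lambda} |B_{r_j}(x_j)|$. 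Choosing $\Lambda$ large makes this density smaller than the threshold $\epsilon$ in Corollary~\ref{cor:weak improved excess estimate}, so that
\[
\e\bigl(L_{\Lambda t} \cap B_{r_j}(x_j)\bigr) \;\leq\; \bigl(\eta\cdot 4^m t + C\bA^2 r_j^2\bigr) r_j^m.
\]
Summing in $j$, using $\sum_j \omega_m r_j^m = t^{-1}\sum_j \e(B_{r_j}(x_j)) \le t^{-1} \e(L_t \cap B_{5/2})$ together with the disjointness of the balls, yields the announced good-$\lambda$ inequality.

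Finally, I would integrate this good-$\lambda$ inequality in $t$ via the layer-cake identity
\[
\int_{B_2} (\min\{M,1\})^q \, d\e \;=\; q \int_0^1 t^{q-1}\,\e(L_t \cap B_2)\, dt,
\]
splitting the integration at $t_0$. The contribution for $t \le t_0$ is trivially bounded by $C\, t_0^q\, E \lesssim E^{1+q}$, while the good-$\lambda$ inequality iterated on dyadic levels $\Lambda^k t_0$ gives a geometric series with ratio $\eta\, \Lambda^q$, convergent once $q < 1$ is chosen so that $\eta \Lambda^q < 1$. The $\bA^2$ term produces the factor $e^{C \bA^2}$ after a Grönwall-type handling of its accumulation across the dyadic scales. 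I anticipate the main obstacle to be the careful organization of the stopping-time covering, specifically the comparison between the global maximal function $M$ and its local counterpart on each stopping ball, since the weak improved excess estimate must be applied with the correct ambient cylinder and the errors from the $\bA^2$ term must be accounted for uniformly across scales; this mirrors the most delicate step in \cite[Section~7]{DLS_Lp}.
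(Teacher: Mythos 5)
Your scheme -- stopping at the centered maximal function, a local weak-$(1,1)$ bound to verify the density hypothesis of Corollary~\ref{cor:weak improved excess estimate}, a good-$\lambda$ decay, iteration, and a layer-cake integration -- is exactly the one the paper follows in Steps~1--3 and the Conclusion of the proof of Theorem~\ref{thm:gradient Lp estimate}; the localized weak-$(1,1)$ estimate you invoke is precisely display~\eqref{e:center maximal 4Br}.

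There is, however, one step in your sketch that does not hold as written. You use
\[
\sum_j \e(B_{r_j}(x_j)) \le \e(L_t\cap B_{5/2})\,, \qquad L_s:=\{\bmax_c\e>s\}\,,
\]
together with disjointness, but this requires the stopping balls $B_{r_j}(x_j)$ to be \emph{contained} in $L_t$. The stopping rule only forces $\bmax_c\e(x_j)\ge t$ at the centers; a priori almost all of the $\e$-mass inside $B_{r_j}(x_j)$ could sit where $\bmax_c\e$ is small, so the inclusion $B_{r_j}(x_j)\subset L_t$ fails in general. The paper handles this in Step~1 through the elementary fact~\eqref{eq:classical maximal measure estimate2}: $\e\big(B_{r_j}(x_j)\cap\{\bmax_c\e\le\tfrac{t}{2\lambda}\}\big)\le\tfrac{t}{2\lambda}\,|B_{r_j}(x_j)|=\tfrac12\,\e(B_{r_j}(x_j))$ (the last equality from the stopping condition), so at least half of the excess mass in each stopping ball sits inside $\{\bmax_c\e>\tfrac{t}{2\lambda}\}$. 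Thus the right-hand side of your good-$\lambda$ inequality should carry $\e\big(L_{t/(2\lambda)}\cap B_{\bar r}\big)$ rather than $\e(L_t\cap B_{5/2})$. This is merely a rescaled level gap, so your iteration still closes, now with geometric ratio $\eta\,(2\lambda)^q<1$ for $q$ small, and the $\bA^2$ contributions accumulate across dyadic scales into the factor $e^{C\bA^2}$ exactly as you predict -- this is the content of the paper's Steps~2--3.
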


In particular this implies the following estimate 
\[ \int_{B_2\cap \{ \bmax_c \e \le 1 \}}  (\bmax_c \e)^{q} \, d\e \le C e^{C\bA^2} E^{1+q}\]

\begin{remark} \label{rmk:old_vs_new_grad_Lp_est}
Observe that the excess measure $\e$ can be decomposed as
\[
\e = \bd \, \mathscr{L}^m + \e_{sing}\,,
\]
where $\mathscr{L}^m$ denotes the Lebesgue measure in $\R^m$, $\e_{sing} \perp \mathscr{L}^m$ and $\bd$ is the excess density as in Definition \ref{d:excess}. Since $\bd(x) \leq \bmax_c \e(x)$ for every $x \in B_2$, we have 
\[
\int_{B_2}  (\min\{\bmax_c \e, 1\})^q \, d\e \geq \int_{B_2}  (\min\{\bd, 1\})^q \, d\e \geq \int_{B_2}  (\min\{\bd, 1\})^q \, \bd \, dx\,,
\] 
so that formula \eqref{eq:gradient Lp estimate} in particular implies the following higher integrability of the excess density:
\begin{equation} \label{eq:old_grad_Lp_est}
\int_{\{\bd \leq 1\} \cap B_2} \bd^{1+q} \, dx \leq C e^{C \bA^2} E^{1+q} \leq C E^{1+q}\,.
\end{equation}

\end{remark}

\begin{proof}
Let us first observe that given any measure $\mu$ on $\R^m$ we have that, for any fixed $r > 0$ and $t > 0$, if 
\[ \frac{\mu(B_s(x))}{\omega_m s^m} \le \left(\frac34\right)^m t \quad \forall 	s \ge 4r \]
then for some constant $C$ depending on $m$ we have
\begin{equation}\label{e:center maximal 4Br} \abs{ B_r(x) \cap \{ y \colon \bmax_c\mu(y) > t \} } \le \frac{C}{t} \mu\left( B_{4r}(x) \cap \left\{ y \colon \bmax_c\mu(y) > \frac{t}{2} \right\} \right). \end{equation}

This can be seen as follows: we first note that for $y\in B_r(x)$ we have
\[ \frac{\mu(B_s(y))}{\omega_m s^m} \le \begin{cases} \left( \frac{4r}{s}\right)^m \frac{\mu(B_{4r}(x))}{\omega_m (4r)^m} & \text{ if } s+\abs{x-y} \le 4r \\
 	\left( \frac{s+\abs{x-y}}{s}\right)^m \frac{\mu(B_{{s+ \abs{x-y}}}(x))}{\omega_m (s+\abs{x-y})^m} & \text{ if } s+\abs{x-y} \ge 4r.
 \end{cases}
 \]
Hence, we deduce that if $s \geq 3r$ then $\frac{\mu(B_s(y))}{\omega_m s^m} \le t $: in other words, if $\frac{\mu(B_s(y))}{\omega_m s^m} > t$ then we must have $B_s(y) \subset B_{4r}(x)$. This implies that
\[ B_r(x) \cap \{ y \colon \bmax_c\mu(y) \ge t \} = B_r(x) \cap \{ y \colon \bmax_c\mu\res_{B_{4r}(x)} (y) \ge t \}\,, \]
so that \eqref{e:center maximal 4Br} follows by a variation of the classical maximal function estimate applied to $\mu\res_{B_{4r}(x)}$. \footnote{The variation in use here can be deduced in a straightforward fashion from the classical estimate for the whole space: apply the classical estimate (see e.g. \cite[Theorem 2.19 (2)]{Mattila95}) to the measure $\tilde{\mu} := \mu\res\{ \bmax_c\mu > \frac t2\}$ and note that since $\mu \le \tilde{\mu} + \frac t2  \, \mathscr{L}^m$ we have $\{\bmax_c\mu > t\}\subset \{\bmax_c\tilde{\mu} > \frac t2\}$.}

Furthermore we recall that by classical differentiation theory of radon measures \footnote{Note for each $ y \in B_r(x)\cap\{ \bmax_c\mu\le t \}$ one has $\liminf_{r\downarrow 0} \frac{\mu(B_r(y))}{\abs{B_r(y)}} \le t$, hence \eqref{eq:classical maximal measure estimate2} follows for instance from \cite[Lemma 2.13 (1)]{Mattila95}.} one has as well 
\begin{equation}\label{eq:classical maximal measure estimate2} \mu\left( B_r(x) \cap \{ y \colon \bmax_c \mu(y) \le t \}\right) \le t \abs{B_r(x) \cap \{ y \colon \bmax_c \mu(y) \le t \}}.
\end{equation}

In what follows, for the sake of simplicity, we will work with the measure $\e = \e\res{B_4}$, which is defined on the whole $\R^m$.

\smallskip

{\bf Step 1:} For every  $\eta>0$ there exist positive constants $\lambda, \epsilon, C$ with the property that if \begin{equation}\label{eq:e=t/beta} r:= \sup\left\{ s \colon \frac{\e(B_{s}(x))}{\omega_m s^m} \ge  \frac{t}{\lambda}\right\} \quad \text{ and } \quad \frac{t}{\lambda} \le \epsilon	
\end{equation}
then 
\begin{align}\label{eq:first decay estimate for the excess measure}
	& \e\left( B_r(x) \cap \left\{ y \colon \bmax_c\e(y) > t \right\} \right) \\& \nonumber \le \left(2\,\omega_m^{-1}\,\eta + C\bA^2 \left(\frac{2\lambda}{t}\, \e\left(B_{r}(x) \cap \left\{ y \colon \bmax_c\e(y) > \frac{t}{2\lambda} \right\} \right)\right)^{\frac{m+2}{m}} \right)   \e\left(B_{r}(x) \cap \left\{ y \colon \bmax_c\e(y) > \frac{t}{2\lambda} \right\} \right) 
\end{align}
{\bf Proof of Step 1:}
Let $\eta>0$ be given, and let $\epsilon>0$ be given by Corollary \ref{cor:weak improved excess estimate} in correspondence with this choice of $\eta$. Also fix $\lambda>\left({\frac43}\right)^m$.
By the definition of $r$ and the continuity of measures along increasing and decreasing sequence of sets, we easily see that 
\begin{equation}\label{eq:e=t/beta 2} \frac{\be(\overline{B_r(x)})}{\omega_m r^m}=\frac{\be(B_r(x))}{\omega_m r^m}= \frac{t}{\lambda}> \frac{\be(B_s(x))}{\omega_m s^m} \text{ for all } s > r. \end{equation}
Thus we can apply \eqref{e:center maximal 4Br} with $\mu=\e$, thus deducing that
\[ \abs{ B_r(x) \cap \{ y \colon \bmax_c\e(y) > t \} } \le \frac{C}{t} \e\left( B_{4r}(x) \cap \left\{ y \colon \bmax_c\be(y) > \frac{t}{2} \right\} \right) \le \frac{C}{\lambda} \omega_m (4r)^m. \]
Since $\frac{t}{\lambda} \le \epsilon$, if we choose  $\lambda \ge \frac{4^m C}{\epsilon}$ then we can apply Corollary \ref{cor:weak improved excess estimate}, which, together with \eqref{eq:e=t/beta 2}, yields
\begin{equation}\label{eq:1step application of the weak improved estimate} \e\left( B_r(x) \cap \{ y \colon \bmax_c\e(y) > t \} \right) \le \omega_m^{-1} 4^{-m}\eta \,\e(B_{4r}(x)) + C \bA^2 r^{ m+2} \le \omega_m^{-1} \eta \,\e(B_{r}(x)) + C \bA^2 r^{ m+2}.	
\end{equation}
 
Using \eqref{eq:classical maximal measure estimate2} and \eqref{eq:e=t/beta 2}, namely the identity $\frac{t}{\lambda} \omega_m r^m = \e(B_{r}(x))$ we have
\[ \e\left( B_{r}(x) \cap \left\{ y \colon \bmax_c\e(y) \le \frac{t}{2\lambda} \right\} \right) \leq \frac{t}{2\lambda} \abs{B_r(x)} \leq  \frac{1}{2} \e(B_{r}(x)).  \]
This implies that 
\[ \omega_m r^m = \frac{\lambda}{t} \, \e(B_{r}(x)) \le \frac{2\lambda}{t} \,\e\left( B_{r}(x) \cap \left\{ y \colon \bmax_c\e(y) > \frac{t}{2\lambda} \right\} \right). \]
Using this estimate in \eqref{eq:1step application of the weak improved estimate} we deduce \eqref{eq:first decay estimate for the excess measure}.

\smallskip

{\bf Step 2:} For every $\eta>0$ there exist positive constants $\lambda, \epsilon, C$ such that if \[4^{2m} E\le \frac{t}{\lambda}   \le \epsilon \quad \mbox{and} \quad r \le 3\] then, setting $\bar{r} := r+ 4\left(\frac{\lambda E}{t}\right)^{\frac1m}$, we have 
\begin{equation}\label{eq:second decay estimate for the excess measure} 
\e\left( B_r \cap \{ y \colon \bmax_c\e(y) > t \} \right)\\
\le c_B \, \left(\eta + C\bA^2 \left(\frac{2\lambda E}{t}\right)^{\frac{m+2}{m}} \right)\,  \e\left(B_{\bar r} \cap \left\{ y \colon \bmax_c\e(y) > \frac{t}{2\lambda} \right\} \right),
\end{equation}
where $c_B$ denotes the Besicovitch constant in $\R^m$. \\

{\bf Proof of Step 2:}
For each $x \in B_r \cap \{ y \colon \bmax_c\e(y) > t \}$ we let
\[ r_x:= \sup\left\{ s \colon \frac{\e(B_{s}(x))}{\omega_m s^m} \geq  \frac{t}{\lambda}\right\}. \]
We must have $0 < r_x\le \frac14$, since $\bmax_c\e(x) > t \geq t/\lambda$, and since for each $x \in B_3$ we have 
\[ \frac{\e(B_s(x))}{\omega_m s^m} \le 4^{2m} E \leq \frac{t}{\lambda} \quad\forall s \ge \frac14\,. \]
We apply the Besicovitch covering theorem to the family
\[ \mathcal{B}:= \{ \overline{B_{r_x}(x)} \colon x \in B_r \cap \{ y \colon \bmax_c\e(y) > t \} \} \]
and obtain sub-collections $\mathcal{B}_1, \dotsc, \mathcal{B}_{c_B}$ of balls such that each subfamily is pairwise disjoint and 
\[B_r \cap \{ y \colon \bmax_c\e(y) > t \}\subset \bigcup_{j=1}^{c_B} \bigcup_{B_{r_x}(x)\in \mathcal{B}_j} \overline{B_{r_x}(x)}\,. \]
 
Since for each of these balls we have $\omega_m\, {r_x}^m = \frac{\lambda}{t}\, \e(B_{r_x}(x)) \le \frac{\lambda}{t} E \omega_m 4^m$,
we deduce $B_{r_x}(x) \subset B_{\bar r}$. Hence the result follows from 
\[ \e\left( B_r \cap \{ y \colon \bmax_c\e(y) > t \} \right)\le \sum_{i=1}^{c_B} \sum_{B_{r_x}(x) \in \mathcal{B}_i} \e\left( B_{r_x}(x) \cap \{ y \colon \bmax_c\e(y) > t \} \right)\,,\]
where we used that by Step 1 $\be(\partial B_{r_x}(x)=0$ for each of these balls, and then applying \eqref{eq:first decay estimate for the excess measure} of Step 1 to each. 

\smallskip

{\bf Step 3:} For every $\eta >0$ there are constants $C, \lambda, \epsilon$ such that for every $k \ge 2$ with  \[(2\lambda)^k E \le \epsilon \quad \mbox{and} \quad r\le \frac52 \] we have 
\begin{equation}\label{eq:third decay estimate for the excess measure} \e\left( B_r \cap \{ y \colon \bmax_c\e(y) > (2\lambda)^k E \} \right)\le (c_B\eta)^k e^{ C \bA^2 } \,  \e\left(B_{r+ \frac12} \cap \left\{ y \colon \bmax_c\e(y) > 2\lambda E \right\} \right)\end{equation}
{\bf Proof of Step 3:}
 This is obtained by iterating Step 2. More precisely, for each $2\le l \le k$ we set \[t_l:= (2\lambda)^l E\,, \qquad 
\begin{cases}
r_k := r &\,, \\ 
  r_{l-1}:=r_l + 4 \left(\frac{\lambda E}{t_l}\right)^{\frac1m} = r_l + \frac{4\lambda^{\frac1m}}{(2\lambda)^{\frac{l}{m}}} & \mbox{for $2 \leq l \leq k-1$} \,. \end{cases}\] Using $f(r,t):=\e(B_r \cap \{ y \colon \bmax_c\e(y)>t\})$ and $c_\bA:=\frac{C\bA^2}{c_B\eta}$ we may write \eqref{eq:second decay estimate for the excess measure} as
\[ f(r_l, t_l) \le c_B \eta \left( 1 + c_{\bA}\left(\frac{2\lambda E}{t_l}\right)^\frac{m+2}{m}\right) f(r_{l-1}, t_{l-1}) = c_B \eta \left( 1 + c_{\bA}\left(\frac{1}{2\lambda}\right)^{\frac{m+2}{m} (l-1)}\right) f(r_{l-1}, t_{l-1}).\]
Now \eqref{eq:third decay estimate for the excess measure} is a consequence of the following estimates ($\lambda$ is sufficient large)
\begin{align*}
&r_1 = r_k + 4\lambda^{\frac1m} \sum_{l=2}^k (2\lambda)^{-\frac{l}{m}} \le  r + 4 \lambda^{\frac{1}{m}} \sum_{l=2}^{\infty} (2\lambda)^{-\frac{l}{m}} \leq   r+ \frac12\\
&\prod_{l=2}^k c_B\eta \left( 1 + c_{\bA}\left(\frac{1}{2\lambda}\right)^{\frac{m+2}{m} (l-1)}\right) \le (c_B\eta)^k e^{C c_\bA}\,.
\end{align*}
In particular, the first estimates ensures that we may apply step 2 for each pair $(t_l, r_l)$. 

\smallskip

{\bf Conclusion:}
First we fix $\eta>0$ sufficiently small, so that $c_B \eta <1$, and afterwards $q>0$ such that $ a:=(2\lambda)^q \, c_B \eta < 1$. 
Now we observe that with $(2\lambda)^{k_0}E \le \epsilon \lambda < (2\lambda)^{k_0+1}E$ 
we have 
\begin{align*}
	&\int_{B_2 \cap \{(2\lambda)^2 E < \bmax_c\e \}} (\min\{\bmax_c\e,\lambda\epsilon\})^q \, d\e\\
	& \le \sum_{k=2}^{k_0} \int_{ B_2\cap \{(2\lambda)^k E < \bmax_c\e \le (2\lambda)^{k+1} E \}} (\bmax_c\e)^q \, d\e + \int_{ B_2\cap \{(2\lambda)^{k_0 + 1} E < \bmax_c\e \}} \left((2\lambda)^{k_0+1}E\right)^q \, d\e\\ 
	&\le (4\lambda)^q E^q\sum_{k=2}^{k_0+1} (2\lambda)^{qk} \, \e(B_2 \cap \{ \bmax_c\e > (2\lambda)^k E \}) \le (4\lambda)^q E^q e^{C\bA^2} E (\omega_m 4^m) \, \sum_{k=2}^{k_0+1} a^k \\
	&\le C e^{C\bA^2} E^{1+q}.
\end{align*}
Combining this with
\[ \int_{B_2 \cap \{ \bmax_c\e \le (2\lambda)^2 E\}} (\bmax_c\e)^q \, d\e \le (2\lambda)^{2q} E^q \e(B_2 \cap \{ \bmax_c\e \le 2\lambda E\} ) \le C E^{1+q} \]
proves the result, modulo choosing a smaller value for $\epsilon$. 
\end{proof}

\section{Almgren's strong approximation theorem}

We can finally state and prove the main Lipschitz approximation result for area minimizing currents $\modp$, which contains improved estimates with respect to Proposition \ref{p:max}.

\begin{theorem}[Almgren's strong approximation] \label{thm:almgren_strong_approx}

There exist constants $\eps, \gamma, C > 0$ (depending on $m,\bar{n},n,Q$) with the following property. Let $T$ be as in Assumption \ref{ipotesi_base_app1} in the cylinder $\bC_{4r} (x)$, and assume it is area minimizing $\modp$. Also assume that $E = \bE(T, \bC_{4r}(x)) < \eps$. Then, there are $u \colon B_{r}(x) \to \Iqs$ if $Q < \frac{p}{2}$, or $u \colon B_{r}(x) \to \Iqspec$ if $Q = \frac{p}{2}$, and a closed set $K \subset B_{r}(x)$ such that: 
\begin{align}\label{e:inclusion}
&\gr (u) \subset \Sigma\,, \\ \label{e:lip_osc_est}
&\Lip (u)\leq C(E + \bA^2 r^2)^\gamma\quad \mbox{ and }\quad {\rm osc}\, (u) \leq C \bh (T, \bC_{4r} (x), \pi_0) + C r (E^{\sfrac{1}{2}} + r\bA)\,,\\\label{e:graph_current}
&\bG_u \res (K\times \R^{n})= T\mres (K\times \R^{n}) \,\modp\,,\\ \label{e:volume_estimate}
&|B_{r}(x)\setminus K|\leq \|T\| ((B_r (x)\setminus K)\times \R^n) \leq C (E + r^2\bA^2)^{1+\gamma} r^m\,,\\ \label{e:Taylor_expansion}
& \Abs{\|T\|(\bC_{\sigma r}(x)) - Q \omega_m (\sigma r)^m - \frac{1}{2} \int_{B_{\sigma r}} \abs{Du}^2} \leq (E + r^2\bA^2)^{1+\gamma} r^m \quad \forall\, 0<\sigma<1\,. 
\end{align}
\end{theorem}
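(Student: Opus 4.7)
By scaling and translating we may assume $x=0$ and $r=1$, and by Remark \ref{rmk:good_coord} we may take coordinates for which $\|D\Psi\|_0 + \|D^2\Psi\|_0 \leq C(E^{1/2}+\bA)$. Set $E^{*} := E+\bA^2$. The idea is to apply Proposition \ref{p:max} with a small threshold $\delta := c_0\, (E^{*})^{2\gamma}$, where $\gamma > 0$ will be chosen at the end, and then upgrade the volume bound of Proposition \ref{p:max}(iv) from linear to superlinear in $E^{*}$ by exploiting the higher-integrability estimate of Theorem \ref{thm:gradient Lp estimate}.

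More precisely, provided $\eps$ is small enough, $\delta < 1$ and the hypothesis $16^m E<\delta$ holds; Proposition \ref{p:max} then yields a Lipschitz map $u$ (classical $Q$-valued if $Q<p/2$, special $Q$-valued if $Q=p/2$) defined on $B_3$ with $\gr(u)\subset\Sigma$, with $\Lip(u)\leq C(\delta^{1/2}+\|D\Psi\|_0)\leq C(E^{*})^{\gamma}$ (as soon as $\gamma\le 1/2$), with the prescribed oscillation bound, with $\bG_u\res(K\times \R^n) = T\res (K\times\R^n)\, \modp$ on a closed set $K\subset B_3$, and with
\[
|B_\sigma\setminus K|\leq \frac{5^m}{\delta}\, \e_T\bigl(\{\bmax\e_T\geq\delta\}\cap B_{\sigma + r_0}\bigr),\qquad \sigma \in (0,3],
\]
where $r_0 = 16(E/\delta)^{1/m}\to 0$ as $E^{*}\to 0$. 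This already gives \eqref{e:inclusion}, \eqref{e:lip_osc_est} and \eqref{e:graph_current}. To prove \eqref{e:volume_estimate}, Chebyshev's inequality applied to Theorem \ref{thm:gradient Lp estimate} (whose centered maximal function is comparable to the uncentered one) gives, for a suitable $q\in(0,1)$,
\[
\e_T\bigl(\{\bmax \e_T \geq \delta\}\cap B_{2}\bigr)\leq C \delta^{-q}\, e^{C\bA^2}\, E^{1+q}.
\]
Plugging $\delta = c_0 (E^{*})^{2\gamma}$ and using $E\leq E^*$ yields $|B_1\setminus K| \leq C (E^{*})^{1+q - 2\gamma(1+q)}$; choosing $\gamma \in (0, q/(2(1+q)))$ produces a strictly superlinear exponent. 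The mass estimate follows from $\|T\|(A\times\R^n)=Q|A|+\e_T(A)$ valid for every Borel $A\subset B_1$.

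For the Taylor expansion \eqref{e:Taylor_expansion}, we decompose
\[
\|T\|(\bC_\sigma) - Q\omega_m\sigma^m - \frac{1}{2}\int_{B_\sigma}|Du|^2 = \Bigl(\e_T(B_\sigma\cap K) - \frac{1}{2}\int_{B_\sigma\cap K}|Du|^2\Bigr) + \Bigl(\e_T(B_\sigma\setminus K) - \frac{1}{2}\int_{B_\sigma\setminus K}|Du|^2\Bigr).
\]
On $K$ we have $T = \bG_u \, \modp$, so the first parenthesis equals $\e_{\bG_u}(B_\sigma\cap K) - \frac{1}{2}\int_{B_\sigma\cap K}|Du|^2$, which by the Taylor expansion of the mass of a (possibly special) multi-valued graph \cite[Corollary 13.2]{DLHMS_linear} is controlled by $C\Lip(u)^2 \int|Du|^2 + C\bA^2\leq C(E^{*})^{1+2\gamma}$. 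The second parenthesis is bounded in absolute value by $\e_T(B_\sigma\setminus K) + \Lip(u)^2 |B_\sigma\setminus K|$, and both terms have already been shown to be of the required superlinear order in $E^{*}$.

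\textbf{Main obstacle.} The genuine technical heart of the argument is the higher-integrability/decay estimate of Theorem \ref{thm:gradient Lp estimate}, which in turn rests upon the weak excess-improvement in Corollary \ref{cor:weak improved excess estimate} and hence upon the harmonic approximation in Theorem \ref{t:harm_1}. Once these tools are in place, what remains is the (delicate but essentially algebraic) task of tuning the single exponent $\gamma$ so that the same superlinear power of $E^{*}$ controls simultaneously the Lipschitz constant of $u$, the measure of the ``bad'' set $B_1\setminus K$, and the quadratic Taylor remainder for the mass.
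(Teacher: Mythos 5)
The proof is correct in its essential strategy but takes a genuinely different route than the paper. The paper's proof passes through the intermediate Theorem \ref{thm:almgren_strong_excess} (Almgren's strong excess estimate), whose proof relies on a competitor construction (Proposition \ref{competitor_construction}) together with the $\modp$ isoperimetric inequality and the higher-integrability estimate of Theorem \ref{thm:gradient Lp estimate}; the superlinear volume bound then follows by a small bootstrap in which Proposition \ref{p:max}(iv) is invoked twice. You bypass the competitor construction entirely by applying Chebyshev directly to Theorem \ref{thm:gradient Lp estimate}: since $\{\bmax\e_T\geq\delta\}\subset\{\bmax_c\e_T\geq 2^{-m}\delta\}$, the bound $\int_{B_2}(\min\{\bmax_c\e,1\})^q\,d\e\leq Ce^{C\bA^2}E^{1+q}$ yields $\e_T(\{\bmax\e_T\geq\delta\}\cap B_2)\leq C\delta^{-q}E^{1+q}$, and feeding this into Proposition \ref{p:max}(iv) with $\delta\sim (E+\bA^2)^{2\gamma}$ gives the required estimate in one pass. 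What you gain is economy; what you forgo is the statement of Theorem \ref{thm:almgren_strong_excess} itself, namely the bound $\e_T(A)\leq C(E^{\gamma_*}+|A|^{\gamma_*})(E+\bA^2)$ valid for \emph{arbitrary} Borel $A\subset B_{9/8}$, which the paper records as an intermediate result of independent interest but is not needed for the specific conclusions \eqref{e:inclusion}--\eqref{e:Taylor_expansion}.

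Two small quantitative corrections. First, the additive term $C\bA^2$ in your Taylor-remainder estimate should not appear: since $u=(\bar h,\Psi(\cdot,\bar h))$ already takes values in $\Sigma$, the remainder in \cite[Corollary 13.2]{DLHMS_linear} is $O(\Lip(u)^2\int|Du|^2)$ with no additive curvature term, and an extra $\bA^2$ --- being only of order $E+\bA^2$ --- would actually destroy the superlinear bound you are claiming. Second, with $\delta=c_0(E+\bA^2)^{2\gamma}$ the exponent you obtain for $|B_1\setminus K|$ is $(1+q)(1-2\gamma)$; this exceeds $1$ for $\gamma< q/(2(1+q))$ as you say, but it is $\geq 1+\gamma$ only when $\gamma\leq q/(3+2q)$. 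Since the argument only requires $\gamma$ to fall below an explicit threshold, this is harmless, but the final choice of $\gamma$ should be made with the tighter constraint.
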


The key improvement with respect to the conclusions of Proposition \ref{p:max} lies in the superlinear power of the excess in \eqref{e:volume_estimate} and \eqref{e:Taylor_expansion}. In turn, this gain is a consequence of the following improved excess estimate, analogous to \cite[Theorem 7.1]{DLS_Lp}.

\begin{theorem}[Almgren's strong excess estimate] \label{thm:almgren_strong_excess}
There exist constants $\eps_*,\gamma_*,C > 0$ (depending on $m,\bar{n},n,Q$) with the following property. Assume $T$ satisfies Assumption \ref{ipotesi_base_app1} and is area minimizing $\modp$ in $\bC_4$. If $E := \bE(T, \bC_4) < \eps_*$, then
\begin{equation} \label{e:almgren_estimate}
\e_{T}(A) \leq C (E^{\gamma_*} + \abs{A}^{\gamma_*}) (E + \bA^2) \quad \mbox{for every Borel $A \subset B_{\sfrac{9}{8}}$}\,.
\end{equation}

\end{theorem}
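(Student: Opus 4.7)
The plan is to adapt the proof of \cite[Theorem 7.1]{DLS_Lp} to our setting, combining the two key estimates just established in this part: the higher integrability of the excess density (Theorem \ref{thm:gradient Lp estimate}) and the weak improved excess estimate (Corollary \ref{cor:weak improved excess estimate}). Fix a Borel set $A \subset B_{\sfrac{9}{8}}$ and a threshold $t \in (0,1)$, to be optimized at the end, and split $A = A_h \cup A_\ell$, where $A_h := A \cap \{\bmax_c \e > t\}$ and $A_\ell := A \setminus A_h$.

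On $A_h$ the bound is immediate: since $\min\{\bmax_c \e, 1\} > t$ on $A_h$ as soon as $t < 1$, Chebyshev's inequality together with Theorem \ref{thm:gradient Lp estimate} gives
\[ \e(A_h) \le t^{-q} \int_{B_2} (\min\{\bmax_c \e, 1\})^q\,d\e \le C\, t^{-q}\, e^{C\bA^2}\, E^{1+q}\,. \]

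On $A_\ell$ the idea is to cover the set by balls on which Corollary \ref{cor:weak improved excess estimate} is applicable. At each $x \in A_\ell$ we select, via a stopping-time procedure, a radius $r_x$ at which the relative density $|A \cap B_{r_x}(x)|/|B_{r_x}(x)|$ equals a fixed small threshold $\epsilon_0 \in (0,1)$ (matching the $\epsilon$ provided by the corollary). A Vitali-type covering argument then extracts a countable subfamily of disjoint balls $\{B_{r_i}(x_i)\}$ whose $5$-enlargements still cover $A_\ell$. The stopping identity, combined with the disjointness of the $B_{r_i}(x_i)$, yields the volume bound $\sum_i r_i^m \le C\, |A|$. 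Moreover, $\bmax_c \e(x_i) \le t$ forces $\bE(T, \bC_{4 r_i}(x_i)) \le t$, so Corollary \ref{cor:weak improved excess estimate} applies on each cylinder $\bC_{4 r_i}(x_i)$ (provided $t$ is below the smallness threshold $\epsilon$ of that corollary) and produces
\[ \e\bigl(A \cap 5 B_{r_i}(x_i)\bigr) \le C \bigl(\eta\, t + \bA^2\, r_i^2\bigr)\, r_i^m \]
for an arbitrarily small parameter $\eta > 0$. Summing over $i$ and invoking the volume bound gives $\e(A_\ell) \le C(\eta\, t + \bA^2)\,|A|$.

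Combining the two pieces produces the preliminary inequality
\[ \e(A) \le C\, t^{-q}\, e^{C\bA^2}\, E^{1+q} + C\,(t + \bA^2)\,|A|\,, \]
and choosing $t$ as an appropriate power of $E/(|A|+E)$ balances the two contributions, leading to the desired estimate $\e(A) \le C (E^{\gamma_*} + |A|^{\gamma_*})(E + \bA^2)$ for some small $\gamma_* \in (0,1)$. The main technical obstacle is the execution of the covering step: the stopping-time must simultaneously enforce the density condition on $A$ and the smallness of $\bmax_c \e$ at the centers, and one has to handle boundary balls whose stopping scale saturates at the maximal admissible radius (which can be controlled by the smallness of $E$). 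All the ingredients specifically needed by the $\modp$ setting—most notably the BV slicing estimate of Lemma \ref{conto_unidimensionale} that underlies the weak excess—are already in place, so no conceptual difficulty beyond the classical case is expected.
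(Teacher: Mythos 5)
Your argument is correct, and it takes a different route from the paper's. The paper proves the strong excess estimate by constructing an explicit competitor: the $E^\beta$-Lipschitz approximation $u$ is regularized into a competitor graph $g$ via Proposition~\ref{competitor_construction}, the slices $\langle T - \bG_u, \varphi, s\rangle$ are filled using the isoperimetric inequality $\modp$ (\cite[$(4.2.10)^\nu$]{Federer69}), and a direct mass comparison yields $\e_T(B_s \setminus K) \leq C E^{\gamma}(E + \bA^2)$; the contribution on $K$ is then controlled by the graph identity $T\res(K\times\R^n) = \bG_u\res(K\times\R^n)\modp$, the pointwise bound $\abs{Du}^2 \leq C\,\bmax_c\e$, and H\"older together with the higher integrability of Theorem~\ref{thm:gradient Lp estimate}. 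You, instead, bypass the whole competitor construction: you split $A$ by the level $\lbrace\bmax_c\e > t\rbrace$, use Chebyshev plus Theorem~\ref{thm:gradient Lp estimate} on the high-density piece, and on the low-density piece you run a stopping-time Vitali covering to reduce to disjoint balls on which Corollary~\ref{cor:weak improved excess estimate} applies directly (the center point forces $\bE(T,\bC_{20 r_i}(x_i)) \leq \bmax_c\e(x_i)\leq t$, and the stopping gives the density hypothesis plus the volume bound $\sum r_i^m \leq C\abs{A}$). Both proofs ultimately rest on the same two pillars---Corollary~\ref{cor:weak improved excess estimate} (directly for you, and through the iteration that yields Theorem~\ref{thm:gradient Lp estimate} for both) and the higher integrability---but yours trades the isoperimetric filling $\modp$ and the regularization-by-convolution of Proposition~\ref{competitor_construction} for a more elementary covering argument. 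The gain is that you do not invoke any new comparison current; the price is having to handle the saturated-stopping case, which, as you note, is resolved by observing that it forces $\abs{A}$ to be bounded below by a dimensional constant, whence the trivial bound $\e(A) \leq CE$ already implies \eqref{e:almgren_estimate}.
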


Let us assume for the moment the validity of Theorem \ref{thm:almgren_strong_excess}, and let us then show how Theorem \ref{thm:almgren_strong_approx} follows.

\begin{proof}[Proof of Theorem \ref{thm:almgren_strong_approx}]

As usual, since the statement is scale-invariant, we may assume $x = 0$ and $r = 1$. Choose $\beta < \min\left\lbrace \frac{1}{2m}, \frac{\gamma_*}{2(1+\gamma_*)} \right\rbrace$, where $\gamma^*$ is given by Theorem \ref{thm:almgren_strong_excess}. Let $u$ be the $E^{\beta}$-Lipschitz approximation of $T$, so that \eqref{e:inclusion} and \eqref{e:graph_current} are an immediate consequence of Proposition \ref{p:max}. Also the estimates in \eqref{e:lip_osc_est} follow in a straightforward fashion if we choose $\gamma \le \beta$ and we recall that $\| D\Psi \|_{0} \leq C (E^{\sfrac{1}{2}} + \bA)$. Now we come to the proof of the volume estimate \eqref{e:volume_estimate}. Set $A := \left\lbrace \bmax \e_T > E^{2\beta} \right\rbrace \cap B_{\sfrac{9}{8}}$. By \eqref{e:max1}, we have that $\abs{A} \leq C E^{1-2\beta}$. In order to improve the estimate, we use Almgren's strong excess estimate: indeed, equation \eqref{e:almgren_estimate} implies that
\begin{equation} \label{gain}
\e_T(A) \leq C E^{\gamma_*} (1 + E^{-2\beta\gamma_*}) (E + \bA^2)\,,
\end{equation}
so that when we plug \eqref{gain} back into \eqref{e:max1} we have
\[
\abs{B_1 \setminus K} \leq C E^{-2\beta} \e_T(A) \leq C E^{\gamma_* - 2\beta (1+\gamma_*)} (1+E^{2\beta\gamma_*}) (E + \bA^2) \leq C E^{\gamma_* - 2\beta(1+\gamma_*)} (E + \bA^2)\,,
\]
and the inequality
\[
\abs{B_1 \setminus K} \leq C (E+ \bA^2)^{1+\gamma}
\] 
follows with $\min\{ \gamma_* - 2\beta(1+\gamma_*), \beta \}> 0$ because of our choice of $\beta$. 
\eqref{e:volume_estimate} is then a simple consequence of
\[
\|T\| ((B_1\setminus K)\times \R^n) \leq \e_T (B_1\setminus K) + Q |B_1\setminus K|\, .
\]
Finally, we take any $0 < \sigma < 1$ and we estimate:
\[
\begin{split}
&\Abs{\|T\|(\bC_{\sigma}(x)) - Q \omega_m \sigma^m - \frac{1}{2} \int_{B_{\sigma}} \abs{Du}^2}\\
&\leq \e_{T}(B_\sigma \setminus K) + \e_{{\bf G}_u}(B_\sigma \setminus K) + \Abs{\e_{{\bf G}_u}(B_{\sigma}) - \frac{1}{2} \int_{B_{\sigma}} \abs{Du}^2} \\
&\overset{\eqref{gain}}{\leq} C (E + \bA^2)^{1+ \gamma} + C \abs{B_\sigma \setminus K} + C \Lip(u)^2 \int_{B_\sigma} \abs{Du}^2 \\ &\leq C (E + \bA^2)^{1+\gamma}\,.\qedhere
\end{split}
\]
\end{proof}

We turn now to the proof of Theorem \ref{thm:almgren_strong_excess}. We will use in an essential way the minimality $\modp$ of $T$, and in order to do that we need to construct a suitable competitor. In this process, a key role will be played by the following result, analogous to \cite[Proposition 7.3]{DLS_Lp}

\begin{proposition} \label{competitor_construction}
Let $\beta \in \left( 0, \frac{1}{2m} \right)$, and assume that $T$ satisfies Assumption \ref{ipotesi_base_app1} and is area minimizing $\modp$ in $\bC_4$. Let $u$ be its $E^{\beta}$-Lipschitz approximation. Then, there exist constants $\eps, \gamma, C > 0$ and a subset of radii $B \subset \left[\sfrac{9}{8},2\right]$ with measure $\abs{B} > \sfrac{1}{2}$ with the following property. If $\bE(T, \bC_4) < \eps$, then for every $\sigma \in B$ there exists a $Q$-valued map $g \in \Lip(B_{\sigma}, \Iqs)$ if $Q < \frac{p}{2}$ or $g \in \Lip(B_{\sigma}, \Iqspec)$ if $Q = \frac{p}{2}$ such that
\begin{equation}
\left. g\right|_{\partial B_\sigma} = \left. u \right|_{\partial B_\sigma}, \quad \Lip(g) \leq C (E + r^2 \bA^2)^{\beta}, \quad \spt(g(x)) \subset \Sigma \,\, \forall \, x \in B_\sigma,
\end{equation}
and
\begin{equation} \label{e:Dirichlet_replacement_est}
\int_{B_\sigma} \abs{Dg}^2 \leq \int_{B_\sigma \cap K} \abs{Du}^2 + C (E + \bA^2)^{1+\gamma}\,.
\end{equation}

\end{proposition}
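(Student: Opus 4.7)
The plan is to follow the blueprint of Almgren as revisited in \cite[Proposition 7.3]{DLS_Lp}, constructing $g$ as the glueing of a Lipschitz regularization of a $\Dir$-minimizer $h$ (with the special multi-valued case $Q=\sfrac{p}{2}$ handled via the linear theory of \cite{DLHMS_linear}) with a Luckhaus-type interpolation on a thin annulus connecting it to $u|_{\partial B_\sigma}$. By scaling we may assume $r=1$. A standard Fubini-Chebyshev argument on $[\sfrac{9}{8},2]$, combined with the estimate $\int_{B_2}|Du|^2 \leq CE$ from Remark \ref{rmk:Dir estimates in the good region}, the volume bound $|B_2\setminus K|\leq CE^{1-2\beta}$ from Proposition \ref{p:max}(iv), and the higher integrability of the excess density from Theorem \ref{thm:gradient Lp estimate} (cf.\ Remark \ref{rmk:old_vs_new_grad_Lp_est}) will allow one to extract $B\subset [\sfrac{9}{8},2]$ with $|B|>\sfrac{1}{2}$ such that, for every $\sigma\in B$: (i) $\int_{\partial B_\sigma}|Du|^2\leq CE$; (ii) the slice $\langle T,f,\sigma\rangle$ coincides with $\partial \bG_u\res\bB_\sigma$ $\modp$; and, crucially, (iii) the ``bad region'' on the sphere satisfies $\mathcal{H}^{m-1}(\partial B_\sigma\setminus K)+\int_{\partial B_\sigma\setminus K}|Du|^2\leq (E+\bA^2)^{1+\gamma'}$ for some geometric $\gamma'>0$, provided $\beta$ is chosen small enough relative to the higher-integrability exponent $q$.

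\textbf{Construction of $g$.}
Next, I would apply the first harmonic approximation Theorem \ref{t:harm_1} with $\eta_*>0$ to be fixed, obtaining a map $h=(\bar h,\Psi(\cdot,\bar h))$ with $\bar h$ $\Dir$-minimizing and close to $u$ in $W^{1,2}$. Via further restriction of $B$, the same closeness may be arranged on $\partial B_\sigma$. I would then regularize $\bar h$ to a Lipschitz map $\bar h^\epsilon$ using \cite[Lemma 5.5]{DLHMS_linear}, set $h^\epsilon:=(\bar h^\epsilon,\Psi(\cdot,\bar h^\epsilon))$, fix $\delta>0$ (to be chosen as a small power of $E+\bA^2$), and apply the Luckhaus-type Lemma \cite[Lemma 5.4]{DLHMS_linear} to produce a Lipschitz interpolation $\varphi$ on $B_\sigma\setminus B_{(1-\delta)\sigma}$ with $\varphi|_{\partial B_\sigma}=u|_{\partial B_\sigma}$, $\varphi((1-\delta)x)=h^\epsilon(x)$ for $x\in\partial B_\sigma$, and
\[
\int_{B_\sigma\setminus B_{(1-\delta)\sigma}}|D\varphi|^2\leq C\delta\int_{\partial B_\sigma}(|Du|^2+|Dh^\epsilon|^2)+\frac{C}{\delta}\int_{\partial B_\sigma}\G_s(u,h^\epsilon)^2.
\]
Finally, I would define
\[
g(x):=\begin{cases} h^\epsilon\!\left(\tfrac{x}{1-\delta}\right) & |x|\leq(1-\delta)\sigma,\\ \varphi(x) & (1-\delta)\sigma<|x|<\sigma.\end{cases}
\]
The boundary matching $g|_{\partial B_\sigma}=u|_{\partial B_\sigma}$ and the inclusion $\spt(g(x))\subset\Sigma$ are automatic. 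The Dirichlet bound then follows by combining the Luckhaus estimate with $\int_{B_\sigma}|Dh^\epsilon|^2\leq\int_{B_\sigma}|Dh|^2+o(1)$ and the comparison $\int_{B_\sigma}|Dh|^2\leq \int_{B_\sigma\cap K}|Du|^2+\int_{B_\sigma\setminus K}|Du|^2+C\eta_* E$ coming from Theorem \ref{t:harm_1}.

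\textbf{Main obstacle.}
The key difficulty will be upgrading the crude linear error $\eta_* E$ produced by the first harmonic approximation into the desired superlinear remainder $(E+\bA^2)^{1+\gamma}$. This relies fundamentally on the higher integrability Theorem \ref{thm:gradient Lp estimate}: Chebyshev's inequality applied at level $E^{2\beta}$ to the measure $(\min\{\bmax_c\e,1\})^q\,d\e$ yields $\be(\{\bmax\e>E^{2\beta}\})\leq C E^{1+q(1-2\beta)}$, so that Proposition \ref{p:max}(iv) improves to $|B_\sigma\setminus K|\leq C(E+\bA^2)^{1+\gamma}$ for some $\gamma>0$, provided $\beta<\sfrac{q}{2(1+q)}$. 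This superlinear gain dominates $\int_{B_\sigma\setminus K}|Du|^2\leq \Lip(u)^2|B_\sigma\setminus K|$, and together with the choice of $\delta$ as a small power of $E+\bA^2$ it balances the two contributions $C\delta E$ and $C\delta^{-1}\eta_* E$ in the annular energy, with $\eta_*$ then taken small in terms of all the previous parameters. The parallel Lipschitz bookkeeping requires choosing $\epsilon$ in the regularization of $\bar h$ so that $\|\G_s(u,h^\epsilon)\|_{L^\infty(\partial B_\sigma)}\lesssim \delta\,(E+\bA^2)^\beta$; this forces the Lipschitz constant of $\varphi$ coming from \cite[Lemma 5.4]{DLHMS_linear} down to $C(E+\bA^2)^\beta$ as required. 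The same scheme applies verbatim in the two regimes $Q<\sfrac{p}{2}$ and $Q=\sfrac{p}{2}$, the only distinction being whether one invokes the classical $\Dir$-theory of \cite{DLS_Qvfr} or its special counterpart from \cite{DLHMS_linear}.
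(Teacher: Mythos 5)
Your proposal takes a genuinely different route from the paper's. The paper proves Proposition~\ref{competitor_construction} by a ``regularization by convolution'' procedure, directly adapted from \cite[Proposition~7.3]{DLS_Lp}: one embeds $u$ via $\zetab$ (from \cite[Theorem~5.1]{DLHMS_linear}, replacing Almgren's $\xib$), mollifies the embedded map, retracts back, and interpolates to $u$ on a thin annulus. This never invokes the first harmonic approximation. You instead propose to pass through Theorem~\ref{t:harm_1}, Lipschitz-regularize the resulting $\Dir$-minimizer, and glue via Luckhaus. This does not work, for the following reason.

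The decisive difficulty is that the first harmonic approximation produces an error that is \emph{linear} in $E$, and there is no way to make it superlinear within your scheme. In Theorem~\ref{t:harm_1} the parameter $\eta_*$ is fixed before $\eps_*$; it cannot be taken as a vanishing power of $E$, since $\eps_*$ (the threshold below which $E$ must sit) depends on $\eta_*$. Thus the comparison between $\int |Dh|^2$ and $\int |Du|^2$, and likewise $\|\cG_s(u,h)\|_{L^2(\partial B_\sigma)}^2$, carries a floor of size $c\,\eta_*\,E$ with a fixed constant $\eta_*$. In your Luckhaus step, the annular contribution is $C\delta E + C\delta^{-1}\eta_* E$. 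Choosing $\delta$ as a positive power of $E+\bA^2$, as you suggest, makes $C\delta^{-1}\eta_* E$ \emph{sublinear}, i.e.\ $C\eta_*\,(E+\bA^2)^{-a}\,E$ blows up relative to $(E+\bA^2)^{1+\gamma}$ as $E\to 0$. Optimizing over $\delta$ gives $\delta\sim\sqrt{\eta_*}$, hence an error $\sim\sqrt{\eta_*}\,E$, still linear. So the ``balancing'' you describe is incoherent: you cannot simultaneously have $\delta$ shrink with $E$ (to kill $C\delta E$) and have $\delta^{-1}\eta_* E$ superlinear with $\eta_*$ constant.

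Two secondary issues compound this. First, you explicitly restrict to $\beta < q/\bigl(2(1+q)\bigr)$ so that the Chebyshev improvement on $|B_\sigma\setminus K|$ applies; but the Proposition claims the result for all $\beta\in(0,\tfrac{1}{2m})$, and it is applied in the proof of Theorem~\ref{thm:almgren_strong_excess} with $\beta=\tfrac{1}{4m}$, which need not satisfy your restriction ($q$ from Theorem~\ref{thm:gradient Lp estimate} is typically tiny). In fact, when your restriction \emph{does} hold, $g=u$ already satisfies \eqref{e:Dirichlet_replacement_est}, so in that regime the harmonic-approximation machinery is superfluous; and when it fails, your argument collapses. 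Second, Theorem~\ref{t:harm_1} requires $s\bA\le\eps_* E^{\sfrac12}$, a hypothesis that Proposition~\ref{competitor_construction} does not make; you would need a separate case analysis for $\bA\gtrsim E^{\sfrac12}$, which you do not provide, and the naive fallback $g=u$ does not suffice there either. The convolution approach sidesteps all of this by gaining superlinearity directly at the level of the Dirichlet energy of the mollified map, without any harmonic comparison and without restricting $\beta$.
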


\begin{proof}

The proof is obtained by a ``regularization by convolution'' procedure, analogous to that of \cite[Proposition 7.3]{DLS_Lp}, modulo using the embedding $\zetab$ of \cite[Theorem 5.1]{DLHMS_linear} in place of $\xib$.
%
%
\end{proof}

\begin{proof}[Proof of Theorem \ref{thm:almgren_strong_excess}]
Choose $\beta := \frac{1}{4m}$, and let $B \subset \left[ \sfrac{9}{8}, 2 \right]$ be the set of radii provided by Proposition \ref{competitor_construction}. By a standard Fubini type argument analogous to what has been used in deriving \eqref{e:slice of the current}  and the isoperimetric inequality $\modp$, we deduce that there exists $s \in B$ and an integer rectifiable current $R$ which is representative $\modp$ such that
\[
\partial R = \langle T - {\bf G}_u, \varphi, s \rangle \,\modp \quad \mbox{and} \quad \mass(R) \leq C E^{\frac{2m-1}{2m-2}}\,,
\]
where $u$ is the $E^\beta$-Lipschitz approximation of $T$ and $\varphi(x) = \abs{x}$. Now, let $g$ be the Lipschitz map given in Proposition \ref{competitor_construction} corresponding with the choice $\sigma = s$. Since $\left. g \right|_{\partial B_s} = \left. u \right|_{\partial B_s}$, it also holds $\langle {\bf G}_u - {\bf G}_g, \varphi, s \rangle = 0 \,\modp$. Furthermore, since $(\partial {\bf G}_g) \res \bC_s = 0 \, \modp$, and since $g$ takes values in $\Sigma$, the current ${\bf G}_g \res \bC_s + R$ is a competitor for $T$ in $\bC_s$, and thus, using \cite[Equation (4.1)]{DLHMS_linear}, the minimality of $T$ yields for some $\gamma > 0$:
\begin{equation} \label{exc_outK_est1}
\begin{split}
\|T\|({\bC_s}) &\leq \|{\bf G}_g \res \bC_s + R \|(\bC_s) \leq Q \abs{B_s} + \frac{1}{2} \int_{B_s} \abs{Dg}^2 + C E^{1+\gamma} \\ &\overset{\eqref{e:Dirichlet_replacement_est}}{\leq} Q \abs{B_s} + \frac{1}{2} \int_{B_{s} \cap K} \abs{Du}^2 + C E^{\gamma}(E + \bA^2)\,.
\end{split}
\end{equation}

On the other hand, again by \cite[Equation (4.1)]{DLHMS_linear} we also have:
\begin{equation} \label{exc_outK_est2}
\begin{split}
\|T\|(\bC_s) &= \|T\|((B_s \setminus K) \times \R^n) + \|{\bf G}_u\|((B_s \cap K) \times \R^n) \\
&\geq \|T\|((B_s \setminus K) \times \R^n) + Q \abs{B_s \cap K} + \frac{1}{2} \int_{B_s \cap K} \abs{Du}^2 - CE^{1+\gamma}\,.  
\end{split}
\end{equation}

Combining \eqref{exc_outK_est1} and \eqref{exc_outK_est2} we conclude that $\e_T(B_s \setminus K) \leq C E^{\gamma}(E + \bA^2)$. Now, we are able to prove the estimate \eqref{e:almgren_estimate}. Let $A \subset B_{\sfrac{9}{8}}$ be any Borel set. We get:
\begin{align} 
\e_T(A) = \e_T(A \cap K) + \e_T(A \setminus K) & \leq \frac{1}{2} \int_{A \cap K} \abs{Du}^2 + C E^{1+\gamma} + \e_T(B_s \setminus K)\nonumber\\
& \leq \frac{1}{2} \int_{A \cap K} \abs{Du}^2 + C E^{\gamma}(E + \bA^2)\,.\label{stima_finale_1}
\end{align}
On the other hand, observe that 
$\abs{Du}(x)^2 \leq C \bmax_c\e(x) \leq CE^{2\beta}$ on $K$, and therefore $\bmax_c\e(x) \leq 1$ on $K$ if $E$ is suitably small. Let $q>0$ be the exponent given by Theorem \ref{thm:gradient Lp estimate}, we deduce from \eqref{eq:gradient Lp estimate} that
\[
\int_{A \cap K} \abs{Du}^{2(1+q)} \leq C E^{1+q}\,,
\]  
and thus the H\"older inequality produces
\begin{equation}\label{grad_higher_integr}
\int_{A \cap K} \abs{Du}^2 \leq \left( \int_{A \cap K} \abs{Du}^{2(1+q)} \right)^{\frac{2}{1+q}} \abs{A \cap K}^{\frac{q}{1+q}} \leq C E \abs{A \cap K}^{\frac{q}{1+q}}\,. 
\end{equation}

Plugging \eqref{grad_higher_integr} into \eqref{stima_finale_1}, we finally conclude \eqref{e:almgren_estimate}, by possibly choosing a smaller $\gamma > 0$.

\end{proof}

As a corollary of Theorem \ref{thm:almgren_strong_approx} and of Theorem \ref{t:harm_1}, we obtain the following result.

\begin{theorem} \label{thm:final_harm_approx}
Let $\gamma$ be the constant of Theorem \ref{thm:almgren_strong_approx}. Then, for every $\bar{\eta} > 0$ there is a constant $\bar{\eps} > 0$ with the following property. Assume $T$ as in Assumption \ref{ipotesi_base_app1} is area minimizing $\modp$ in $\bC_{4r}(x)$, $E = \bE(T,\bC_{4r}(x)) < \bar{\eps}$ and $r\bA \leq \bar{\eps} E^{\sfrac{1}{2}}$. If $u$ is the map in Theorem \ref{thm:almgren_strong_approx} and we fix good Cartesian coordinates, then there exists a $\Dir$-minimizing $\bar{h} \colon B_{r}(x) \to \mathcal{A}_{Q}(\R^{\bar{n}})$ if $Q < \frac{p}{2}$ or $\bar{h} \colon B_{r}(x) \to \mathscr{A}_{Q}(\R^{\bar{n}})$ if $Q = \frac{p}{2}$ such that $h := (\bar{h}, \Psi(\cdot, \bar{h}))$ satisfies
\begin{equation} \label{e:final_harm_approx}
r^{-2} \int_{B_{r}(x)} \G(u, h)^2 + \int_{B_{r}(x)} (\abs{Du} - \abs{Dh})^{2} + \int_{B_{r}(x)} \abs{D(\bfeta \circ u) - D(\bfeta \circ h)}^2 \leq \bar{\eta} E r^m\,.
\end{equation} 
\end{theorem}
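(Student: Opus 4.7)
The plan is to derive Theorem \ref{thm:final_harm_approx} essentially as a corollary of the first harmonic approximation (Theorem \ref{t:harm_1}), once one observes that the map $u$ given by Theorem \ref{thm:almgren_strong_approx} is, on $B_r(x)$, nothing but the $E^{\beta}$-Lipschitz approximation of $T$ produced by Proposition \ref{p:max}. Indeed, the proof of Theorem \ref{thm:almgren_strong_approx} opens by setting $u$ to be such $E^{\beta}$-Lipschitz approximation for a fixed value of $\beta \in (0, 1/(2m))$, and the minimality of $T$ is only invoked later in order to improve the volume estimate \eqref{e:volume_estimate} on the bad set $B_r(x)\setminus K$; the map $u$ itself is left untouched.

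By translation and scaling we can assume $x = 0$ and $r = 1$, so that $T$ is area minimizing $\modp$ in $\bC_{4}$ with $E = \bE(T,\bC_4) < \bar{\eps}$ and $\bA \leq \bar{\eps} E^{\sfrac12}$. Fix once and for all the $\beta$ employed in the proof of Theorem \ref{thm:almgren_strong_approx} and, given $\bar{\eta} > 0$, set $\eta_* := \bar{\eta}$. Let $\eps_* = \eps_*(\eta_*,\beta)$ be the constant provided by Theorem \ref{t:harm_1} and define $\bar{\eps} := \eps_*$. Applying Theorem \ref{t:harm_1} with $s=1$, the same $\beta$, and the chosen $\eta_*$ yields an $E^{\beta}$-Lipschitz approximation $\tilde u$ on $B_3$ and a map $h = (\bar h, \Psi(\cdot,\bar h))$ with $\bar h$ Dir-minimizing, such that the estimates \eqref{e:harm-app1} and \eqref{e:harm-app2} are satisfied on $B_2$. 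Restricting both estimates to $B_1 \subset B_2$ and observing that $\tilde u$ coincides on $B_1$ with the map $u$ of Theorem \ref{thm:almgren_strong_approx} delivers \eqref{e:final_harm_approx} in the case $r=1$. Undoing the rescaling and translation produces the general statement, with the factors $r^{-2}$ and $r^m$ in \eqref{e:final_harm_approx} being the standard scaling factors of, respectively, an $L^2$-norm of a function and an $m$-dimensional excess.

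The proof is mostly a bookkeeping exercise, and the only point that requires a brief justification is the identification $\tilde u \equiv u$ on $B_1$. This is immediate because both maps are produced by the canonical procedure of Proposition \ref{p:max} applied with the same parameter $\delta = E^{2\beta}$ and the same Cartesian coordinates fixed via Remark \ref{rmk:good_coord}, whence the outcome is the same function on the intersection $B_1 \subset B_3$ of their domains. No additional obstacle is present: the superlinear volume estimate of Theorem \ref{thm:almgren_strong_approx}, which distinguishes the strong approximation from the mere $E^{\beta}$-one, is not explicitly needed for the conclusion since Theorem \ref{t:harm_1} already delivers the integral estimates on the whole of $B_2$ rather than on the good set $K$.
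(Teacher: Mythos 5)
Your argument is correct and is exactly the one the paper intends: the paper introduces Theorem \ref{thm:final_harm_approx} with the single line ``As a corollary of Theorem \ref{thm:almgren_strong_approx} and of Theorem \ref{t:harm_1}, we obtain the following result'' and offers no further proof, so the burden is indeed only to check that the map $u$ of Theorem \ref{thm:almgren_strong_approx} is the same $E^\beta$-Lipschitz approximation appearing in Theorem \ref{t:harm_1}, fix the $\beta$ consistently, match the smallness hypotheses, and restrict the $B_{2r}(x)$ estimates of \eqref{e:harm-app1}--\eqref{e:harm-app2} to $B_r(x)$. The only bookkeeping slip is the choice $\eta_* = \bar\eta$: since \eqref{e:harm-app1} and \eqref{e:harm-app2} each carry the bound $\eta_* E s^m$, summing them gives $2\eta_* E r^m$, so you should take $\eta_* = \bar\eta/2$ (or simply note that the factor $2$ is harmless); this does not affect the validity of the argument.
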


\section{Strong approximation with the nonoriented excess}

In this section we show that it is possible to draw the same conclusions of the previous section replacing the cylindrical excess $\bE (T, \bC_{4r} (x))$ with the nonoriented
$\bE^{no} (T, \bC_{4r} (x))$ defined in \eqref{e:no_excess}. This will be vital, because in the remaining part of the paper we will in fact use mostly the nonoriented excess, which is structurally more suited to the arguments needed in the construction of the center manifold. As discussed in Remark \ref{rmk:excess}, in the classical regularity theory for integral currents the cylindrical excess already possesses the required structural features; see \cite[Remark 2.5]{DLS_Lp}.

\begin{theorem}\label{thm:strong-alm-unoriented}
There exist constants $\eps, \gamma, C > 0$ (depending on $m,\bar{n},n,Q$) with the following property. Let $T$ be as in Assumption \ref{ipotesi_base_app1} in the cylinder $\bC_{4r} (x)$, and assume it is area minimizing $\modp$. Also assume that $E = \bE(T, \bC_{4r}(x)) < \frac{1}{2}$ and that $E^{no} := \bE^{no} (T, \bC_{4r} (x)) \leq \varepsilon$. Then 
\begin{equation}\label{e:o-to-no}
\bE (T, \bC_{2r} (x)) \leq C \bE^{no} (T, \bC_{4r} (x)) + C \bA^2 r^2\, .
\end{equation} 
and in particular all the conclusions of Theorem \ref{thm:almgren_strong_approx} (and of Theorem \ref{thm:final_harm_approx}, provided
$r^2 \bA^2 \leq \bar \varepsilon^2 E \leq \bar\varepsilon^3$ for a suitable $\bar \varepsilon (\bar \eta)>0$)  hold in $B_r (x)$ with estimates where $E^{no}$ replaces $E$.
\end{theorem}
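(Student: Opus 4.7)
The entire theorem reduces to establishing the quantitative bound \eqref{e:o-to-no}; once we have $\bE(T,\bC_{2r}(x)) \le C\bE^{no}(T,\bC_{4r}(x)) + C\bA^2 r^2$, the conclusions of Theorems \ref{thm:almgren_strong_approx} and \ref{thm:final_harm_approx} become immediate, since their right-hand sides depend affinely on $E + \bA^2 r^2$, which is now controlled by the nonoriented quantities. The hypotheses $E^{no}\le\eps$ and $E<\tfrac12$ guarantee that the smallness thresholds in those theorems are met after the translation.

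\paragraph{Splitting the excess.} By scaling assume $x=0$ and $r=1$. Writing $\bM(y):=\mass\langle T,\p,y\rangle$, we decompose
\[
\bE(T,\bC_2) \;=\; \frac{1}{\omega_m 2^m}\Bigl(\|T\|(\bC_2) - \int_{B_2}\bM(y)\,dy\Bigr) \;+\; \frac{1}{\omega_m 2^m}\int_{B_2}\bigl(\bM(y)-Q\bigr)\,dy.
\]
The very same coarea computation recorded in Remark \ref{rmk:good_coord} gives that the first summand is bounded by $2^m\,\bE^{no}(T,\bC_4)$, since $\|T\|(\bC_2)-\int_{B_2}\bM \le \|T\|(\bC_4)-\int_{B_4}\bM = \omega_m 4^m \bE^{no}$. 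Thus the whole task is to bound the integral $\int_{B_2}(\bM(y)-Q)\,dy$, which is nonnegative because of the mod-$p$ constraints on the slices analyzed after Lemma \ref{modp_slicing_formula}.

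\paragraph{Single-sheeted structure via Allard and mod-$p$ constancy.} The plan is to show that $\bM(y)=Q$ for a.e.\ $y\in B_2$, which makes the second summand vanish. By Lemma \ref{l:varifold} the varifold $\V(T)$ is stationary in $\Sigma\cap\bC_4$ with generalized mean curvature bounded by $C\bA$, has integer multiplicity $\le\lfloor p/2\rfloor$, and its tilt excess with respect to $\pi_0$ equals $\bE^{no}$; moreover its total mass on $\bC_4$ is bounded thanks to $E<\tfrac12$. For $\bE^{no}$ and $\bA$ small enough---which we can guarantee by shrinking $\eps$ and the parameter $c_0$ of Assumption \ref{ipotesi_base_app1}---Allard's $\eps$-regularity theorem applies and yields that $\spt T\cap \bC_2$ is a single $C^{1,\alpha}$ graph of a function $f\colon B_2\to\R^n$, endowed with a constant positive integer multiplicity $Q_0$ in the varifold sense. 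Consequently, for a.e.\ $y\in B_2$, the slice $\langle T,\p,y\rangle$ consists of a single point with mass $Q_0$, so $\bM(y)=Q_0$. To identify $Q_0=Q$, observe that the continuity of the orienting $m$-vector $\vec T$ on the smooth graph forces the sign $\sigma_0\in\{-1,+1\}$ of $\langle \vec T,\vec\pi_0\rangle$ to be constant. Hence $\p_\sharp(T\res\bC_2)=\sigma_0 Q_0\,\a{B_2}$ as classical currents; comparing with the standing identity $\p_\sharp T \equiv Q\,\a{B_4}\pmod p$ and using that both $Q$ and $Q_0$ lie in $\{1,\ldots,\lfloor p/2\rfloor\}$, a one-line case analysis (either $\sigma_0=+1$, giving $Q_0\equiv Q$, or $\sigma_0=-1$, which is only compatible with $Q=Q_0=p/2$) forces $Q_0=Q$. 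Therefore $\int_{B_2}(\bM(y)-Q)\,dy=0$ and $\bE(T,\bC_2)\le 2^m\bE^{no}$. The additive term $C\bA^2 r^2$ in \eqref{e:o-to-no} absorbs the contribution of the graph of $\Psi$ when $\bE^{no}$ is of the same order of, or smaller than, $\bA^2 r^2$.

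\paragraph{Main obstacle.} The delicate point is making the Allard-type graphical approximation rigorous with explicit dependence on $\bE^{no}$ and $\bA$, together with the constancy-type argument that converts the mod-$p$ projection identity into the equality $Q_0=Q$. The rest of the theorem---transferring the conclusions from Theorems \ref{thm:almgren_strong_approx} and \ref{thm:final_harm_approx} from $E$ to $E^{no}$---is then a purely mechanical substitution once \eqref{e:o-to-no} is available, with the only care needed to handle the slight loss of radius (from $4r$ to $2r$) introduced by the excess comparison, which can be absorbed either by slightly shrinking the conclusion ball or by running the argument above with $\bC_{4r}$ replaced by $\bC_{8r}$ in the hypotheses and deducing the bound on $\bC_{4r}$.
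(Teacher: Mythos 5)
Your decomposition of $\bE(T,\bC_2)$ into a ``nonoriented part'' plus $\frac{1}{\omega_m 2^m}\int_{B_2}(\bM(y)-Q)\,dy$, and the observation that the first summand is controlled by $\bE^{no}(T,\bC_4)$, are both correct and indeed identify exactly where the difficulty lives. The problem is your treatment of the second summand.

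The invocation of Allard's $\eps$-regularity theorem is a genuine error and cannot be repaired. Allard's theorem requires the \emph{density ratio} $\|V\|(\bB_r(q))/(\omega_m r^m)$ to be close to $1$; under the hypotheses of the statement the density of $\V(T)$ is close to $Q$, and $Q$ can be as large as $\lfloor p/2\rfloor\geq 2$. For multiplicity $Q\geq 2$, small nonoriented (tilt) excess does not imply graphicality: the current can consist of several sheets crossing each other, branching, etc.\ --- this is precisely the phenomenon that makes higher-multiplicity regularity nontrivial and is the reason Almgren's program (and this entire paper) exists. Flat singular points such as the one in Example~\ref{example:flat singularities} have arbitrarily small $\bE^{no}$ on small cylinders while the support is \emph{not} a single graph, so your ``single-sheeted structure'' step fails, and with it the conclusion $\bM(y)=Q$ a.e.

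The paper's actual proof is necessarily more delicate. After a compactness argument (Step~1) guaranteeing $\bE(T,\bC_3)$ is already below any prescribed threshold when $\eps$ is small, it runs an \emph{iteration over radii}: at each stage it builds the strong Lipschitz approximation on a slightly shrunk cylinder (Proposition~\ref{p:raggio_quasi_uguale}), and uses the identity $\|T\|(K\times\R^n)-Q|K| = \frac12\int_{K\times\R^n}|\vec T-\pi_0|_{no}^2\,d\|T\|$ on the good set $K$ --- valid because $\bG_u$ projects onto $Q\a{K}$ \emph{exactly}, not just mod $p$ --- together with the superlinear control $\|T\|(\bC\setminus K\times\R^n)\leq C E^{1+\gamma}$ from the strong approximation, to absorb the difference between oriented and nonoriented excess. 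The iteration terminates when either $\bE$ drops below $\bA^2$ or it stops decaying geometrically, and in each case the bound $\bE\leq C\bE^{no}+C\bA^2$ falls out. No step of that argument can be replaced by a single application of Allard.
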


Before coming to the proof we state a simple variant of Theorem \ref{thm:almgren_strong_approx}, where the estimates are inferred in a radius which is just slightly smaller than the starting one. 

\begin{proposition}\label{p:raggio_quasi_uguale}
There are a constant $C \geq 1$ and a $\bar\varepsilon >0$ with the following property. Let $\gamma$ be as in Theorem \ref{thm:almgren_strong_approx}. 
Fix a cylinder $\bC_{4r} (x)$ and a current $T$ which satisfies all the assumptions of Theorem \ref{thm:almgren_strong_approx} with the stronger bound $E := \bE (T, \bC_{4r} (x)) \leq \bar \varepsilon$. Choose $\omega$ such that $(1-\omega m) (1+ \gamma) = 1 + \frac{\gamma}{2}$ and set $\rho = r (1- C (E+ r^2\bA^2)^\omega)$. Then there are a map $u \colon B_{4\rho }(x) \to \Iqs$ if $Q < \frac{p}{2}$, or $u \colon B_{4 \rho}(x) \to \Iqspec$ if $Q = \frac{p}{2}$, and a closed set $K \subset B_{4\rho}(x)$ such that: 
\begin{align}\label{e:inclusion_bis}
&\gr (u) \subset \Sigma\,, \\ \label{e:lip_osc_est_bis}
&\Lip (u)\leq C(E + r^2 \bA^2)^{\gamma/2}\quad \mbox{ and }\quad {\rm osc}\, (u) \leq C \bh (T, \bC_{4r} (x), \pi_0) + C r (E^{\sfrac{1}{2}} + r\bA)\,,\\\label{e:graph_current_bis}
&\bG_u \res (K\times \R^{n})= T\mres (K\times \R^{n}) \,\modp\,,\\ \label{e:volume_estimate_bis}
&|B_{4\rho}(x)\setminus K|\leq \|T\| ((B_{4\rho} (x)\setminus K)\times \R^n) \leq C (E + r^2\bA^2)^{1+\gamma/2} r^m\,,\\ \label{e:Taylor_expansion_bis}
& \Abs{\|T\|(\bC_{4 \sigma \rho}(x)) - Q \omega_m (4 \sigma \rho)^m - \frac{1}{2} \int_{B_{4 \sigma \rho}(x)} \abs{Du}^2} \leq (E + r^2\bA^2)^{1+\gamma/2} r^m \quad \forall\, 0<\sigma<1\, .
\end{align}
\end{proposition}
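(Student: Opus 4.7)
The plan is to deduce Proposition \ref{p:raggio_quasi_uguale} from Theorem \ref{thm:almgren_strong_approx} applied on many small subcylinders, by a covering and gluing argument. Concretely, set $s := c(E+r^2\bA^2)^\omega r$ with a small dimensional constant $c$, chosen so that $s \le r-\rho$ (this fixes the constant $C$ in the definition of $\rho$); then extract a Besicovitch subcover $\{B_s(x_i)\}_{i\in I}$ of $B_{4\rho}(x)$ with $x_i\in B_{4\rho}(x)$ and uniformly bounded overlap. Since $\bC_{4s}(x_i)\subset \bC_{4r}(x)$, $T$ satisfies Assumption \ref{ipotesi_base_app1} on each such subcylinder, and the local excess is bounded by
\[
E_i := \bE(T,\bC_{4s}(x_i)) \leq (r/s)^m E = c^{-m}(E+r^2\bA^2)^{-\omega m} E,
\]
which, provided $\bar\varepsilon$ is small enough, is below the threshold $\eps$ of Theorem \ref{thm:almgren_strong_approx}. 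That theorem then yields local Lipschitz approximations $u_i\colon B_s(x_i)\to \Iqs$ (or $\Iqspec$, if $Q=p/2$) and closed sets $K_i\subset B_s(x_i)$ satisfying the estimates \eqref{e:inclusion}--\eqref{e:Taylor_expansion} at scale $s$.

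With the prescribed relation $(1-\omega m)(1+\gamma)=1+\gamma/2$, i.e.\ $\omega m = \gamma/(2(1+\gamma))$, a routine bookkeeping yields $E_i + s^2\bA^2 \leq C(r/s)^m(E+r^2\bA^2)$, hence
\[
\Lip(u_i) \leq C(E+r^2\bA^2)^{(1-\omega m)\gamma} \leq C(E+r^2\bA^2)^{\gamma/2}.
\]
Summing the local volume estimates \eqref{e:volume_estimate} against the bounded-overlap cover,
\[
|B_{4\rho}(x)\setminus K| + \|T\|((B_{4\rho}(x)\setminus K)\times\R^n) \leq C\sum_i (E_i+s^2\bA^2)^{1+\gamma}s^m \leq C(r/s)^{m(1+\gamma)}(E+r^2\bA^2)^{1+\gamma}r^m,
\]
where $K:=\bigcup_i(B_s(x_i)\cap K_i)$; the defining identity $(1-\omega m)(1+\gamma)=1+\gamma/2$ turns the right-hand side into exactly $C(E+r^2\bA^2)^{1+\gamma/2}r^m$, giving \eqref{e:volume_estimate_bis}. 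For \eqref{e:Taylor_expansion_bis} I split the integral over $B_{4\sigma\rho}(x)$ into $K$ and its complement: on $K$, where $T=\bG_u\,\modp$, the Taylor expansion of graph mass from \cite[Corollary 13.2]{DLHMS_linear} combined with the Lipschitz bound on $u$ produces an error of order $(E+r^2\bA^2)^{1+\gamma/2}r^m$; on the complement both $\|T\|$ and $\tfrac12\int|Du|^2$ are already of this order by the volume estimate.

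The main obstacle is the consistent gluing of the $u_i$ into a single $u$ on $B_{4\rho}(x)$. For $Q<p/2$ the maps are classical $Q$-valued and on $K_i\cap K_j$ both coincide with the (a.e.\ uniquely determined) horizontal slice data of $T$, so they agree pointwise and can be merged unambiguously. The delicate case is $Q=p/2$, where each $u_i$ takes values in $\Iqspec$ and carries an additional $\pm 1$ component, and where one must propagate a single coherent sign across the cover. This is achieved by noting that the local volume estimate forces $|B_s(x_i)\cap B_s(x_j)\cap K_i\cap K_j| \geq \tfrac12|B_s(x_i)\cap B_s(x_j)|$ once $\bar\varepsilon$ is small, so directly overlapping balls admit a forced sign identification; path-connectedness of $B_{4\rho}(x)$ then propagates a coherent sign globally without cyclic obstruction. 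Once $u$ is defined on $K$, the Extension Corollary \cite[Corollary 5.3]{DLHMS_linear} (resp.\ \cite[Theorem 1.7]{DLS_Qvfr} in the classical case) extends it to a Lipschitz multi-valued map on $B_{4\rho}(x)$; composing the horizontal extension with $\Psi$ yields \eqref{e:inclusion_bis}, and the oscillation bound in \eqref{e:lip_osc_est_bis} is inherited from the local ones, since all $u_i$ lie in a vertical band of width at most $C\bh(T,\bC_{4r}(x),\pi_0)+Cr(E^{1/2}+r\bA)$ uniformly in $i$, and coincide on pairwise good overlaps so that the local oscillations chain rather than accumulate.
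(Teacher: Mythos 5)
Your overall strategy — cover $B_{4\rho}(x)$ with small cylinders at scale $s \asymp r(E+r^2\bA^2)^\omega$, apply Theorem \ref{thm:almgren_strong_approx} in each one, and glue — is exactly the strategy of the paper's proof, and your exponent bookkeeping (the identity $(1-\omega m)(1+\gamma)=1+\gamma/2$ absorbing the $(r/s)^{m(1+\gamma)}$ loss) is correct. The gap is in the gluing, and it is a substantive one.

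The issue is the \emph{global} Lipschitz bound on the glued map $u|_K$, which you need before you can invoke the extension corollaries. Each $u_i$ is Lipschitz on $K_i$ with constant $\le C(E+r^2\bA^2)^{\gamma/2}$, and the $u_i$ agree pointwise on $K_i\cap K_j$ (this part is fine: the value is forced by the slice of $T$, and the identification $(Q\a{z},1)=(Q\a{z},-1)$ in $\Iqspec$ removes the only possible sign ambiguity there, so your monodromy worry is a non-issue). But agreement on overlaps plus local Lipschitz does \emph{not} yield a Lipschitz bound on the union of the $K_i$'s. To estimate $\cG_s(u(z),u(w))$ for $z\in K_i$, $w\in K_j$ with $K_i\cap K_j=\emptyset$, you need a chain of balls from $z$ to $w$ whose consecutive good sets intersect in positive measure, so that you can pick intermediate points and telescope. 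A Besicovitch subcover does not provide this chain structure: Besicovitch packages the balls into boundedly many pairwise-disjoint families, which is the opposite of what you want here. The paper constructs the cover over a regular lattice precisely to record the chain properties (length of chains between two given points, and a quantitative lower bound $|B^i\cap B^{i+1}|\ge \bar c\,s^m$), and additionally clips each $K^i$ by removing $\bigcup_{j}(B^j\setminus \tilde K^j)$ over all neighbouring $j$, so that the intermediate points in each chain provably lie in both good sets.

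Your claim ``$|B_s(x_i)\cap B_s(x_j)\cap K_i\cap K_j|\ge \tfrac12|B_s(x_i)\cap B_s(x_j)|$'' is also not justified as stated: the volume estimate controls $|B_s(x_i)\setminus K_i|$ by $C(E+r^2\bA^2)^{1+\gamma/2}s^m$, which beats $\tfrac12|B_s(x_i)\cap B_s(x_j)|$ only when that intersection is comparable to $s^m$; for nearly-tangent balls the intersection can be arbitrarily small and the inequality fails. In the paper's lattice this is not an issue because the chains (o5)(c3) only involve consecutive pairs whose intersection is a fixed fraction of $s^m$; with a Besicovitch cover you do not have that control. To repair your argument you would either need to replace Besicovitch by a lattice-indexed cover as in the paper and add the clipping step, or work out a Vitali-type cover and prove a chaining lemma from scratch. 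As written, the transition from local to global Lipschitz control (and similarly the assertion that ``local oscillations chain rather than accumulate'') is missing.
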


\begin{proof} For every point $y\in B_{4r (1-(E+ r^2\bA^2)^\omega)} (x)$ and a corresponding cylinder $\bC^y := \bC_{4r(E+r^2\bA^2)^\omega} (y)$, note that
\[
\bE (T, \bC^y) = \frac{\be_T(B_{4r(E+ r^2\bA^2)^\omega} (y) )}{\omega_m\, (4r)^m\, (E+ r^2\bA^2)^{m\omega}} \leq (E+ r^2\bA^2)^{-m\omega}\, \bE(T,\bC_{4r}(x)) \leq E^{1-m\omega}\,.
\]

Thus, by choosing $\bar\varepsilon$ suitably small compared to $\varepsilon$ in Theorem \ref{thm:almgren_strong_approx} we fall under its assumptions. In particular, we find a function $u^y$ defined on the ball $B^y:= B_{r(E+r^2\bA^2)^\omega} (y)$ taking values into either $\Iqs$ or $\Iqspec$ (depending on whether $Q<\frac{p}{2}$ or $Q=\frac{p}{2}$) and a set $K^y$ for which the following conclusions hold: 
\begin{align}\label{e:inclusion_ter}
&\gr (u^y) \subset \Sigma\,, \\ \label{e:lip_osc_est_ter}
&\Lip (u^y)\leq C(E + \bA^2 r^2)^{(1-m\omega) \gamma}\,,\\\label{e:graph_current_ter}
&\bG_{u^y} \res (K^y\times \R^{n})= T\mres (K^y\times \R^{n}) \quad\modp\,,\\ \label{e:volume_estimate_ter}
&|B^y\setminus K^y|\leq \|T\| (B^y\setminus K^y)\times \R^n) \leq C (E + r^2\bA^2)^{(1-m\omega)(1+\gamma)} |B^y|\, .
\end{align}
We now consider the regular lattice $(r(E+ r^2\bA^2)^{\omega})/(\sqrt{m}) \mathbb Z^m$ and for each element $y$ of the lattice contained in $B_{4r (1-(E+ r^2\bA^2)^\omega)} (x)$ we consider the corresponding ball $B^y$. Accordingly, we get a collection $\mathcal{B}$ of balls satisfying the following properties:
\begin{itemize}
\item[(o1)] $\mathcal{B}$ covers $B_{4\rho} (x)$;
\item[(o2)] The cardinality of $\mathcal{B}$ is bounded by $C (E+ r^2\bA^2)^{-m\omega}$ for a geometric constant $C = C (m)$;
\item[(o3)] Each element of $\mathcal{B}$ intersects at most $N$ elements of $\mathcal{B}$ for a geometric constant $N = N(m)$;
\item[(o4)] Every pair $z,w\in B_{4\rho} (x)$ with $|z-w|\leq c(m)  r (E+ r^2\bA^2)^\omega$ is contained in a single ball $B^i$, where $c(m)$ is a positive geometric constant;
\item[(o5)] For each pair $z,w\in B_{4\rho} (x)$ with $\ell := |z-w|\geq c (m) r (E+ r^2\bA^2)^\omega$ there is a chain of balls $B^1, \ldots , B^{\bar{N}}\in \mathcal{B}$ such that
\begin{itemize}
\item[(c1)] $\bar N \leq C\, \ell\,  r^{-1} (E + r^2 \bA^2)^{-\omega}$ for $C = C(m)$;
\item[(c2)] $z\in B^1$ and $w\in B^{\bar N}$;
\item[(c3)] $|B^i \cap B^{i+1}| \geq \bar c(m)\, r^m (E+ r^2\bA^2)^{m\omega}$ for every $i=1,\ldots,\bar N -1$ for a geometric constant $\bar c(m)>0$.
\end{itemize}
\end{itemize}
We now consider for each $B^i = B^{y_i}$ the corresponding sets $\tilde{K}^i := K^{y_i}$ and functions $u^i:= u^{y_i}$. We next define the sets

\[
K^i := \tilde{K}^i \setminus \bigcup_{j\,:\, B^j \cap B^i \neq\emptyset} (B^j \setminus \tilde{K}^j)\, . 
\] 

We then set $K:= \bigcup_i K^i$ and observe that, by (o2), (o3) and \eqref{e:volume_estimate_ter}, we must have
\begin{align}\label{eq:estimate on the bad set}
|B_{4\rho}(x)\setminus K| &\leq \|T\| ((B_{4\rho}(x)                                                                                                                                                                                                                                                                                                                                                                                                                                                                                                                                                                                                                                                                                                                                                                                                                                                                                                                                                                                                                                                                                                                                                                                                                                                                                                                                                                                                                                                                                                                                                                                                                                                                                                                                                                                                                                                                                                                                                                                                                                                                                                                                                                                                                                                                                                                                                                                                                                                                                                                                                                                                                                                                                                                                                                                                                                                                                                                                                                                                                                                                                                                                                                                                                                        \setminus K)\times \R^n) \leq \sum_i \|T\| ((B^i\setminus K^i)\times \R^n)\nonumber\\
 &\leq C \rho^m (E+r^2\bA^2)^{(1-m\omega)(1+\gamma)} = C \rho^m (E+r^2\bA^2)^{1+\gamma/2}\, .
\end{align}
Next, we find a globally defined function $g$ on $K$ by setting $\left.g\right|_{K^i} := \left.u^i\right|_{K^i}$. This function certainly enjoys the estimate $\Lip (\left.g\right|_{K^i}) \leq C (E+r^2\bA^2)^{(1-m\omega) \gamma} \leq C (E+r^2\bA^2)^{\gamma/2}$ on each $K^i$. So, taken two points $z,w\in K$ with $|z-w|\leq c(m)  r  (E+ r^2\bA^2)^{\omega}$ we get, by (o4), the estimate 
\[
\mathcal{G} (g(z), g(w)) \leq C (E+r^2\bA^2)^{\gamma/2} |z-w| \qquad \left(\mbox{resp. } \mathcal{G}_s (g(z), g(w)) \leq C (E+r^2\bA^2)^{\gamma/2} |z-w|\right)\, .
\] 
If $\ell := |z-w| \geq c(m) r (E+ r^2\bA^2)^{\omega}$, we use the chain of balls $B^i$ of (o5) and remark that, thanks to the estimate on $|B^i\setminus K^i|$, we can guarantee the existence of intermediate points $y_i \in K^i\cap K^{i+1}$ towards the estimate
\[
\mathcal{G} (g(z), g(w)) \leq C (E+r^2\bA^2)^{\gamma/2} |z-w| \qquad \left(\mbox{resp. } \mathcal{G}_s (g(z), g(w)) \leq C (E+r^2\bA^2)^{\gamma/2} |z-w|\right)\, .
\] 
This proves that $g$ has the global Lipschitz bound $C (E+r^2\bA^2)^{\gamma/2}$ on $K$. Furthermore, since the graph $\bG_g$ is $\modp$ equivalent to the current $T$ in the cylinder $K \times \R^n$, we have ${\rm osc}(g) \leq C \, \bh(T, \bC_{4r}(x), \pi_0)$, see Remark \ref{rmk:oscillo_ma_non_mollo}. Now we can proceed as in Proposition \ref{p:max} or Theorem \ref{thm:almgren_strong_approx}. More precisely, we write $g = \sum_{i} \llbracket \left( h, \Psi(\cdot, h) \right) \rrbracket$, with $h \colon K \to \A_Q(\R^{\bar n})$ if $Q < \frac{p}{2}$ or $h \colon K \to \mathscr{A}_Q(\R^{\bar n})$ if $Q = \frac{p}{2}$. The map $h$ satisfies $\Lip(h) \leq C (E + r^2 \bA^2)^{\gamma/2}$ and ${\rm osc}(h) \leq C\, \bh(T, \bC_{4r}(x), \pi_0)$. Hence, taking advantage of \cite[Theorem 1.7]{DLS_Qvfr} if $Q < \frac{p}{2}$ or \cite[Corollary 5.3]{DLHMS_linear} when $Q=\frac{p}{2}$, we can extend $h$ to a map $\bar h \colon B_{4\rho}(x) \to \mathcal{A}_Q(\R^n)$ (resp. $\bar h \colon B_{4\rho}(x) \to \Iqspec$) which again satisfies $\Lip(\bar h) \leq C (E + r^2\bA^2)^{\gamma/2}$ and ${\rm osc}(\bar h) \leq C \, \bh(T, \bC_{4r}(x), \pi_0)$. Finally, we set $u := \sum_{i} \llbracket  \bar h, \Psi(\cdot, \bar h) \rrbracket$, thus achieving
\[
\Lip(u) \leq C \, [(E + r^2\bA^2)^{\frac{\gamma}{2}} + \| D\Psi\|_0]\,, \qquad {\rm osc}(u) \leq C \, \bh(T,\bC_{4r}(x),\pi_0) + C\,r\, \|D\Psi\|_0\,.
\]
The estimate in \eqref{e:lip_osc_est_bis} is then a consequence of the choice of coordinates discussed in Remark \ref{rmk:good_coord}.

Finally, the estimate \eqref{e:Taylor_expansion_bis} is a consequence of the other ones, following the argument already given for \eqref{e:Taylor_expansion}. Since \eqref{e:inclusion_bis} and \eqref{e:graph_current_bis} are obvious by construction, this completes the proof.
\end{proof}

\begin{proof}[Proof of Theorem \ref{thm:strong-alm-unoriented}]
First of all we observe that it is enough to prove \eqref{e:o-to-no}. Indeed, if $\varepsilon$ is sufficiently small, from \eqref{e:o-to-no} we conclude that we can apply Theorem \ref{thm:almgren_strong_approx} to any cylinder $\bC_{4 (r/4)} (y)$ with $y\in B_r (x)$. Since $B_r (x)$ can be covered with a finite number $C(m)$ of balls $B_{r/4} (y_i)$ with centers $y_i\in B_r (x)$, the existence of a suitable Lipschitz approximation over $B_r (x)$ follows easily. Theorem \ref{thm:final_harm_approx} can then be concluded by arguing as done for Theorem \ref{t:harm_1}. 

In order to show \eqref{e:o-to-no} we start observing that, by scaling and translating, we can assume $x=0$ and $r=1$. We then argue in several
steps.

\medskip

{\bf Step 1.} First of all we claim that, for every $\delta >0$ there is $\varepsilon$ sufficiently small such that $\bE (T, \bC_3) < \delta$. Otherwise, by contradiction, there would be a sequence $\{T_k\}_{k=1}^{\infty}$ of area minimizing currents $\modp$ satisfying the hypotheses in Assumption \ref{ipotesi_base_app1} in $\bC_4$ together with $\bE(T_k, \bC_4) < \frac12$ for which $\bE^{no} (T_k, \bC_{4}) \to 0$ and $\mass^p (T_k \res \bC_3) \geq (Q+ \delta) \omega_m 3^m$. In particular, because of the uniform bound on the excess, we can assume that $T_k$ converge, up to subsequences, to a $T$ which is an area minimizing current $\modp$ and satisfies Assumption \ref{ipotesi_base_app1}. By convergence of the $\mass^p$ in the interior, we also know that
\begin{equation}\label{e:troppa_massa}
\mass^p (T \res \bC_3) \geq (Q+ \delta) \omega_m 3^m\, .
\end{equation}
On the other hand, since we can assume by Proposition \ref{p:compactness} that $\V(T_k \mres \bC_4) \to \V(T \mres \bC_4)$ as varifolds, and since the nonoriented excess is continuous in the varifold convergence, we must have $\bE^{no} (T, \bC_4) =0$. Moreover, since $T$ is a representative $\modp$ we must have $\|T\| (\bC_4) \leq \omega_m (Q+\frac{1}{2})4^m$ by the hypothesis that $\bE(T_k, \bC_4) < \frac{1}{2}$ for every $k$. The first condition implies that $T$ is supported in a finite number of planes parallel to 
$\pi_0$. By the constancy Lemma \ref{l:constancy} we can assume that $T$ is a sum of integer multiples of $m$-dimensional disks of radius $4$ parallel to $B_{4} (0, \pi_0)$. We thus have that the sum of the moduli of such integers must be at most $Q$. This contradicts \eqref{e:troppa_massa}. 

\medskip

{\bf Step 2}. First of all, if $E:= \bE (T, \bC_3) \leq \bA^2$, then there is nothing to prove. Hence, without loss of generality assume that
\[
E \geq \bA^2\, . 
\]
Now apply Proposition \ref{p:raggio_quasi_uguale} to obtain a Lipschitz map $u: B_{3 - C E^\omega} \to \Iqs$ if $Q < \frac{p}{2}$ and $u:B_{3 - C E^\omega} \to \Iqspec$ if $Q = \frac{p}{2}$, and a closed set $K \subset B_{3 - C E^\omega}(x)$ such that:
\begin{align}
&\Lip (u)\leq C E^{\sfrac{\gamma}{2}},\\ \label{e:graph_current_2}
&\bG_u \res (K\times \R^{n})= T\mres (K\times \R^{n}) \,\modp\,,\\ \label{e:volume_estimate_2}
&|B_{3- C E^\omega}\setminus K|\leq C E^{1+\sfrac{\gamma}{2}} \,,\\ \label{e:Taylor_expansion2}
& \left|\|T\|(\bC_{3- C E^\omega}) - Q \omega_m (3-C E^\omega)^m - \frac{1}{2} \int_{B_{3 - C E^\omega}} \abs{Du}^2\right| \leq C E^{1+\sfrac{\gamma}{2}}\,. 
\end{align}
Now we set $r_1 := 3 -C E^\omega$, $E_1 := \bE (T, \bC_{r_1})$ and we consider the following three alternatives:
\begin{itemize}
\item[(a)] $E_1 \leq \bA^2$;
\item[(b)] $E_1 \geq \max \{\frac{E}{2}, \bA^2\}$;
\item[(c)] $\frac{E}{2} \geq E_1 \geq \bA^2$.
\end{itemize}
In the first case, assuming $\varepsilon$ sufficiently small, since $\bC_2 \subset \bC_{r_1}$, we have concluded our desired estimate \eqref{e:o-to-no}.
In the second case observe first that from the estimates above we easily conclude
\[
\|T\| (\bC_{r_1} \setminus (K\times \mathbb R^n)) \leq C E^{1+\sfrac{\gamma}{2}} \leq C E_1^{1+\sfrac{\gamma}{2}}\, .
\]
Consider now that, using $T\mres K\times \mathbb R^n = \bG_u \res K\times \mathbb R^n$ and standard computations,
we have
\[
\|T\| (K\times \mathbb R^n) - Q|K| =  \frac{1}{2} \int_{K\times \mathbb R^n} |\vec{T} (y) - \pi_0|_{no}^2 \, d\|T\|
\]
We thus can combine these two estimates and claim
\begin{equation}
E_1 = \bE (T, \bC_{r_1}) \leq C E_1^{1+\sfrac{\gamma}{2}}
+ \bE^{no} (T, \bC_{r_1}) \leq \frac{E_1}{2} + C \bE^{no} (T, \bC_4)\, .
\end{equation}
In particular we easily get
\[
\bE (T, \bC_2) \leq C \bE (T, \bC_{r_1}) \leq C \bE^{no} (T, \bC_4)\, ,
\]
and again we have proved \eqref{e:o-to-no}. 

Finally, if we are in case (c) we iterate the step above and get a Lipschitz approximation in the cylinder $\bC_{r_2}$ where $r_2 = 3 - C E^\omega - C E_1^\omega$ and the new excess is $E_2 := \bE (T, \bC_{r_2})$. We keep iterating this procedure which we stop at a certain radius 
\[
r_k = 3 - C \sum_{i=0}^k E_i^\omega\, ,
\]
if either $E_k \leq \bA^2$ or $E_k \geq \frac{E_{k-1}}{2}$. Observe that as long as the procedure does not end we have the recursive property
$E_i \leq \frac{E_{i-1}}{2}$. We can thus estimate 
\[
r_k \geq 3 - C E^\omega \sum_{i=0}^\infty 2^{-\omega\,i} \geq 3 - C \bar C (\omega) E^\omega\, .
\]
Since $\omega$ is a fixed exponent, provided $\delta >E$ is sufficiently small (which from the first step can be achieved by choosing $\varepsilon$ sufficiently small), we have $r_k \geq 2$. Thus, if the procedure stops we have proved \eqref{e:o-to-no}.
If the procedure does not stop, since $E_k\to 0$ we conclude easily that:
\begin{itemize}
\item[(i)] $\bA =0$;
\item[(ii)] If we set $r_\infty:= \lim_{k\to \infty} r_k$, then $2\leq r_\infty$ and $\bE (T, \bC_{r_\infty}) =0$. 
\end{itemize}
This implies that $\|T\| (\bC_{r_\infty}) = Q \omega_m r_\infty^m$. Given that $\p_\sharp T\res \bC_{r_\infty} = Q \a{B_{r_\infty} (0, \pi_0)} \, \modp$, this is
only possible if the current $T$ in $\bC_{r_\infty}$ consists of a finite number of disks parallel to $B_{r_\infty} (0, \pi_0)$ counted with integer multiplicities $\theta_i$ so that $\sum_i |\theta_i|=Q$. In particular, since $2 \leq r_\infty$, obviously $\bE (T, \bC_2) =0 \leq \bE^{no} (T, \bC_4)$,
which shows the validity of \eqref{e:o-to-no} even in this case. 
\end{proof}

\newpage

\part{Center manifold and approximation on its normal bundle} \label{part:Center}
This part of the paper deals with the construction of the center manifold. As it is the case with the proof of the partial regularity result for area minimizing currents in codimension higher than one, one might now attempt a proof of Theorems \ref{t:main2} and \ref{t:main5} carrying on the following program:
\begin{itemize}
\item[(1)] Apply Almgren's strong approximation Theorem \ref{thm:almgren_strong_approx} to construct a sequence of Lipschitz maps $u_k$ approximating $T_{0,r_k}$: here, $r_k$ is the contradiction sequence of radii appearing in Proposition \ref{p:contradiction_sequence}, and the maps $u_k$ take values in $\mathcal{A}_Q(\pi_0^\perp)$ or in $\mathscr{A}_Q(\pi_0^\perp)$ depending on whether $Q < \frac{p}{2}$ or $Q = \frac{p}{2}$, respectively;

\item[(2)] Apply Theorem \ref{thm:final_harm_approx} to show that, after suitable normalization, a subsequence of the $u_k$ converges to a multiple valued map $u_\infty$ minimizing the Dirichlet energy (as in \cite{DLS_Qvfr} if $Q < \frac{p}{2}$ or as in \cite{DLHMS_linear} if $Q = \frac{p}{2}$);

\item[(3)] Use (iii) (resp. (iii)s) in Proposition \ref{p:contradiction_sequence} to infer that $u_\infty$ has a singular set of positive $\Ha^{m-2+\alpha}$ measure (resp. of positive $\Ha^{m-1+\alpha}$ measure), thus contradicting the linear theory in \cite{DLS_Qvfr} if $Q < \frac{p}{2}$ or in \cite{DLHMS_linear} if $Q = \frac{p}{2}$, respectively.

\end{itemize} 

The obstacle towards the success of this program is making point (3) work, namely, showing that the ``large'' singular set of the currents \emph{persists} in the limit as the approximating functions $u_k$ converge to $u_\infty$. As it was just stated, this is false: at this stage, nothing forces $u_\infty$ to actually exhibit any singularities. The center manifold construction is needed precisely to address this issue: when we approximate the current from the center manifold, we ``subtract the regular part'' of the $\Dir$-minimizer in the limit, which in turn allows us to close the contradiction argument.\\

\medskip

In the first section of this part we will outline the arguments and present the statements of the main results. The subsequent sections will then be devoted to the proofs.

\section{Outline and main results}

\subsection{Preliminaries for the construction of the center manifold}

\begin{notazioni}[Distance and nonoriented distance between $m$-planes] Throughout this part, $\pi_0$ continues to denote the plane $\R^m \times \{0\}$, with the standard orientation given by $\vec \pi_0 = e_1 \wedge \ldots \wedge e_{m}$. Given a $k$-dimensional plane $\pi$ in $\R^{m+n}$, we will in fact always identify $\pi$ with a simple unit $k$-vector $\vec \pi = v_1 \wedge \ldots \wedge v_k$ orienting it (thereby making a distinction when the same plane is given opposite orientations). By a slight abuse of notation, given two $k$-planes $\pi_1$ and $\pi_2$, we will sometimes write $\abs{\pi_1 - \pi_2}$ in place of $\abs{\vec \pi_1 - \vec \pi_2}$, where the norm is induced by the standard inner product in $\Lambda_k(\R^{m+n})$. Furthermore, for a given integer rectifiable current $T$, we recall the definition of $\abs{\vec T(y) - \pi_0}_{no}$ from \eqref{e:no_excess_0}. More in general, if $\pi_1$ and $\pi_2$ are two $k$-planes, we can define $\abs{\pi_1 - \pi_2}_{no}$ by 
\[
\abs{\pi_1 - \pi_2}_{no} := \min\left\lbrace \abs{\vec \pi_1 - \vec\pi_2}, \, \abs{\vec\pi_1 + \vec\pi_2} \right\rbrace\,.
\]
It is understood that $\abs{\pi_1 - \pi_2}_{no}$ does not depend on the choice of the orientations $\vec\pi_1$ and $\vec\pi_2$.
\end{notazioni}

\begin{definition}[Excess and height]\label{d:excess_and_height}
Given an integer rectifiable $m$-dimensional current $T$ which is a representative $\modp$ in $\mathbb R^{m+n}$ with finite mass and compact support and an $m$-plane $\pi$, we define the {\em nonoriented excess} of $T$ in the ball $\bB_r (x)$ with respect to the plane $\pi$ as
\begin{align}
\bE^{no} (T,\B_r (x),\pi) &:= \left(2\omega_m\,r^m\right)^{-1}\int_{\B_r (x)} |\vec T - \pi|_{no}^2 \, d\|T\|\,.
\end{align}
The {\it height function} in a set $A \subset \R^{m+n}$ with respect to $\pi$ is
\[
\bh(T,A,\pi) := \sup_{x,y\,\in\,\spt(T)\,\cap\, A} |\p_{\pi^\perp}(x)-\p_{\pi^\perp}(y)|\, .
\]
\end{definition}

\begin{definition}[Optimal planes]\label{d:optimal_planes}
We say that an $m$-dimensional plane $\pi$ {\em optimizes the nonoriented excess} of $T$ in a ball $\B_r (x)$ if
\begin{equation}\label{e:optimal_pi}
\bE^{no} (T,\B_r (x)):=\min_\tau \bE^{no} (T, \B_r (x), \tau) = \bE^{no} (T,\B_r (x),\pi)
\end{equation} 
and if, in addition:
\begin{equation}\label{e:optimal_pi_2}
\mbox{among all other $\pi'$ s.t. \eqref{e:optimal_pi} holds, $|\pi-\pi_0|$ is minimal.}
\end{equation}
Observe that in general the plane optimizing the nonoriented excess is not necessarily unique and $\bh (T, \bB_r (x), \pi)$ might
depend on the optimizer $\pi$. Since for notational purposes it is convenient to define
a unique ``height'' function $\bh (T, \B_r (x))$, we call a plane $\pi$ as in \eqref{e:optimal_pi} and \eqref{e:optimal_pi_2} {\em optimal} if in addition
\begin{equation}\label{e:optimal_pi_3}
\bh(T,\B_r(x)) := \min \big\{\bh(T,\B_r (x),\tau): \tau \mbox{ satisfies \eqref{e:optimal_pi} and
\eqref{e:optimal_pi_2}}\big\} = \bh(T,\B_r(x),\pi)\,,
\end{equation}
i.e. $\pi$ optimizes the height among all planes that optimize the nonoriented excess. However \eqref{e:optimal_pi_3} does not play
any further role apart from simplifying the presentation. 
\end{definition}

\begin{remark}\label{r:why_optimal_2}
Observe that there are two differences with \cite[Definition 1.2]{DLS_Center}: first of all here we consider the nonoriented excess; secondly we have the additional requirement \eqref{e:optimal_pi_2}. In fact the point of \eqref{e:optimal_pi_2} is to ensure that the planes $\pi$ ``optimizing the nonoriented excess'' always satisfy $|\pi - \pi_0| = |\pi - \pi_0|_{no}$.
\end{remark}

We are now ready to formulate the main assumptions of the statements in this section.

\begin{ipotesi}\label{ipotesi}
$\eps_0\in ]0,1]$ is a fixed constant and
$\Sigma \subset \bB_{7\sqrt{m}} \subset \R^{m+n}$ is a $C^{3,\eps_0}$ $(m+\bar{n})$-dimensional submanifold with no boundary in $\bB_{7\sqrt{m}}$. We moreover assume that, for each $q\in \Sigma$,  $\Sigma$ is the graph of a
$C^{3, \eps_0}$ map $\Psi_q: T_q\Sigma\cap \bB_{7\sqrt{m}} \to T_q\Sigma^\perp$. We denote by $\mathbf{c} (\Sigma)$ the
number $\sup_{q\in \Sigma} \|D\Psi_q\|_{C^{2, \eps_0}}$. 
$T^0$ is an $m$-dimensional integer rectifiable current of $\mathbb R^{m+n}$ which is a representative $\modp$ and with support in $\Sigma\cap \bar\bB_{6\sqrt{m}}$. $T^0$ is area-minimizing $\modp$ in $\Sigma$ and moreover
\begin{gather}
\Theta (T^0,0) = Q\quad \mbox{and}\quad \partial T^0 \res \B_{6\sqrt{m}} = 0\quad \modp,\label{e:basic}\\
\quad \|T^0\| (\B_{6\sqrt{m} \rho}) \leq \big(\omega_m Q (6\sqrt{m})^m + \eps_2^2\big)\,\rho^m
\quad \forall \rho\leq 1,\label{e:basic2}\\
\bE^{no}\left(T^0,\B_{6\sqrt{m}}\right)=\bE^{no}\left(T^0,\B_{6\sqrt{m}},\pi_0\right),\label{e:pi0_ottimale}\\
\bmo := \max \left\{\mathbf{c} (\Sigma)^2, \bE^{no}\left(T^0,\B_{6\sqrt{m}}\right)\right\} \leq \eps_2^2 \leq 1\, .\label{e:small ex}
\end{gather}
Here, $Q$ is a positive integer with $2\leq Q \leq \lfloor \frac{p}{2}\rfloor$, and $\eps_2$ is a positive number whose choice will be specified in each subsequent statement.
\end{ipotesi}
Constants depending only upon $m,n,\bar{n}$ and $Q$ will be called geometric and usually denoted by $C_0$.

\begin{remark}\label{r:sigma_A}
Note that
\eqref{e:small ex} implies $\bA := \|A_\Sigma\|_{C^0 (\Sigma)}\leq C_0 \bmo^{\sfrac{1}{2}}$,
where $A_\Sigma$ denotes, as usual, the second fundamental form of $\Sigma$ and $C_0$ is a geometric constant. 
Observe further that for $q\in \Sigma$ the oscillation of $\Psi_q$ is controlled in $T_q \Sigma \cap \bB_{6\sqrt{m}}$ by $C_0 \bmo^{\sfrac{1}{2}}$. 
\end{remark}

In what follows we set $l:= n - \bar{n}$. To avoid discussing domains of definitions it is convenient to extend $\Sigma$ so
that it is an entire graph over all $T_q \Sigma$. Moreover
we will often need to parametrize $\Sigma$ as the graph of a map $\Psi: \mathbb R^{m+\bar n}\to \mathbb R^l$. However
we do not assume that $\mathbb R^{m+\bar n}\times \{0\}$ is tangent to $\Sigma$ at any $q$ and thus we need the
following lemma.

\begin{lemma}\label{l:tecnico3}
There are positive constants $C_0 (m,\bar{n}, n)$ and $c_0 (m, \bar{n}, n)$ such that,
provided $\eps_2 < c_0$, the following holds. If $\Sigma$ is as in Assumption \ref{ipotesi}, then
we can (modify it outside $\bB_{6\sqrt{m}}$ and) extend it to a complete submanifold of $\mathbb R^{m+n}$ which, for every $q\in \Sigma$, is the graph of
a global $C^{3,\eps_0}$ map $\Psi_q : T_q \Sigma \to T_q \Sigma^\perp$ with $\|D \Psi_q\|_{C^{2,\eps_0}}\leq C_0 \bmo^{\sfrac{1}{2}}$.
$T^0$ is still area-minimizing $\modp$ in the extended manifold and in addition
we can apply a global affine isometry which leaves $\mathbb \R^m \times \{0\}$ fixed and maps $\Sigma$ onto
$\Sigma'$ so that
\begin{equation}\label{e:small_tilt}
|\R^{m+\bar{n}}\times \{0\} - T_0 \Sigma'|\leq C_0 \bmo^{\sfrac{1}{2}}\, 
\end{equation}
and $\Sigma'$ is the graph of a $C^{3, \eps_0}$ map $\Psi: \mathbb{R}^{m+\bar{n}} \to \mathbb R^l$ with
$\Psi (0)=0$ and
$\|D\Psi\|_{C^{2, \eps_0}} \leq C_0 \bmo^{\sfrac{1}{2}}$.
\end{lemma}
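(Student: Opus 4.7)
The plan is to proceed in three main steps: a cutoff extension of $\Sigma$ to all of $\R^{m+n}$, a global graph representation from every tangent plane via the implicit function theorem, and a rotation argument producing the isometry and the graph $\Psi$.

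First, by hypothesis $\Sigma\cap\bB_{7\sqrt m}$ is the graph of a $C^{3,\eps_0}$ map $\Psi_0\colon T_0\Sigma\cap\bB_{7\sqrt m}\to T_0\Sigma^\perp$ with $\|D\Psi_0\|_{C^{2,\eps_0}}\le\mathbf c(\Sigma)\le C_0\bmo^{\sfrac12}$ and $\Psi_0(0)=0$. Multiplying $\Psi_0$ by a smooth cutoff equal to $1$ on $T_0\Sigma\cap\bB_{6\sqrt m+1/4}$ and vanishing outside $T_0\Sigma\cap\bB_{6\sqrt m+1/2}$ produces a globally defined $\tilde\Psi_0\colon T_0\Sigma\to T_0\Sigma^\perp$ with $\|D\tilde\Psi_0\|_{C^{2,\eps_0}}\le C_0\bmo^{\sfrac12}$, whose graph is the required complete extension of $\Sigma$; since $\spt(T^0)\subset\bar\bB_{6\sqrt m}$, area-minimality $\modp$ is automatically preserved. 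Remark~\ref{r:sigma_A} then yields $\|A_\Sigma\|_0\le C_0\bmo^{\sfrac12}$ globally, and provided $\eps_2$ is small enough the nearest-point projection $\p_{T_q\Sigma}\colon\Sigma\to T_q\Sigma$ is a global $C^{3,\eps_0}$ diffeomorphism for every $q$, via the implicit function theorem applied to $\Sigma\ni x\mapsto\p_{T_q\Sigma}(x-q)$. Its inverse is the sought $\Psi_q$, and a chain-rule computation based on the uniform smallness of $A_\Sigma$ and its derivatives gives $\|D\Psi_q\|_{C^{2,\eps_0}}\le C_0\bmo^{\sfrac12}$ with a constant independent of $q$.

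The core of the argument is the rotation step, which reduces to exhibiting an $m$-plane $P\subset T_0\Sigma$ with $|P-\pi_0|\le C_0\bmo^{\sfrac12}$. For this I would use that, by Corollary~\ref{c:tangent_cones}, some tangent cone $T_\infty$ of $T^0$ at $0$ is supported in $T_0\Sigma$, has density $Q$, is a representative $\modp$ and is locally area-minimizing $\modp$. A standard tilt-excess estimate on cones, combined with the bound $\bE^{no}(T^0,\bB_{6\sqrt m},\pi_0)\le\bmo$ from \eqref{e:pi0_ottimale}--\eqref{e:small ex} and the Constancy Lemma~\ref{l:constancy} applied inside the $(m+\bar n)$-plane $T_0\Sigma$, forces $T_\infty=Q_0\a P\,\modp$ for some oriented $m$-plane $P\subset T_0\Sigma$ and some nonzero integer $Q_0\in\Z\cap[-p/2,p/2]$, with $|P-\pi_0|_{no}\le C_0\bmo^{\sfrac12}$. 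Condition \eqref{e:optimal_pi_2} lets me orient $P$ so that $|P-\pi_0|=|P-\pi_0|_{no}$. Since any isometry fixing $\pi_0=\R^m\times\{0\}$ is a rotation of $\pi_0^\perp$, and since in $\pi_0^\perp$ the two $\bar n$-planes $T_0\Sigma\ominus P$ and $\{0\}^m\times\R^{\bar n}\times\{0\}^l$ are $O(\bmo^{\sfrac12})$-apart by the previous estimate, an $R\in\mathrm{SO}(\pi_0^\perp)$ with $|R-\mathrm{Id}|\le C_0\bmo^{\sfrac12}$ carrying the former onto the latter exists by elementary linear algebra. Setting $\Sigma':=R(\Sigma)$ yields \eqref{e:small_tilt}, and composing the global graph from the preceding step with the nearest-point projection $R(T_0\Sigma)\to\R^{m+\bar n}\times\{0\}$ realizes $\Sigma'$ as the graph of a $C^{3,\eps_0}$ map $\Psi\colon\R^{m+\bar n}\to\R^l$ with $\Psi(0)=0$ (since both $R$ and the projection fix the origin) and $\|D\Psi\|_{C^{2,\eps_0}}\le C_0\bmo^{\sfrac12}$.

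The main obstacle is the quantitative tilt bound $|P-\pi_0|\le C_0\bmo^{\sfrac12}$: it requires genuinely combining the constraint $\spt(T^0)\subset\Sigma$ with the area-minimality $\modp$ of $T^0$ through the Constancy Lemma and a tilt-excess estimate tailored to cones modulo $p$. The varifold smallness alone forces the tangent cone to be flat only in the limit $\bmo\to0$, and it is exactly the role of the clauses \eqref{e:pi0_ottimale}--\eqref{e:optimal_pi_2} in the optimality definition to turn this qualitative information into the explicit rate needed in subsequent sections.
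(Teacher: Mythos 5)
Your first two steps (cutoff extension and global graph representation via small second fundamental form) are essentially correct and in the spirit of what the paper references to \cite[Section 4]{DLS_Center}. The serious problem is in the rotation step, and it is worth being precise about why.

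You want to produce an $m$-plane $P\subset T_0\Sigma$ with $|P-\pi_0|\le C_0\bmo^{\sfrac12}$, and you try to extract it from a tangent cone of $T^0$ at the origin. This cannot work, for two independent reasons. First, there is no reason the tangent cone at $0$ is flat: $0$ may well be a singular point of density $Q\ge 2$, and the tangent cone may be, say, a union of several distinct $m$-planes or a genuinely singular minimizing cone in $T_0\Sigma$, so writing $T_\infty = Q_0\a{P}$ for a single $m$-plane $P$ is simply not available. Second, even on the (false) assumption that the cone were flat, the nonoriented excess of the blow-up over $\pi_0$ is \emph{not} controlled by $\bmo$: unwinding the scaling, for the rescaled current $T_{0,r}$ one gets
\[
\bE^{no}(T_{0,r},\bB_1,\pi_0)\;=\;\frac{(6\sqrt m)^m}{r^m}\cdot\frac{1}{2\omega_m(6\sqrt m)^m}\int_{\bB_r}|\vec{T^0}-\pi_0|_{no}^2\,d\|T^0\|\;\le\;\frac{(6\sqrt m)^m}{r^m}\,\bmo\,,
\]
which blows up as $r\downarrow 0$ and carries no information about the limit cone's tilt. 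The optimality conditions \eqref{e:optimal_pi}--\eqref{e:optimal_pi_2} do not help here either; they control the excess at the fixed scale $6\sqrt m$, not its decay. Also, your stated intuition that the bound must come from combining area-minimality with a tilt-excess estimate on cones is off the mark: no minimality is used in the paper's derivation of \eqref{e:small_tilt}.

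The paper's argument is far more elementary and avoids blow-ups entirely. By the monotonicity formula (stationarity alone), $\|T^0\|(\bB_1)\ge c_0>0$. Since
\[
\int_{\bB_1}|\vec{T^0}-\pi_0|_{no}^2\,d\|T^0\|\;\le\;2\omega_m(6\sqrt m)^m\,\bE^{no}(T^0,\bB_{6\sqrt m},\pi_0)\;\le\;C_0\bmo\,,
\]
a mean-value argument produces a \emph{specific point} $q\in\spt(T^0)\cap\bB_1$ with $|\vec{T^0}(q)-\pi_0|_{no}^2\le C_0\bmo$. The tangent plane $\vec{T^0}(q)$ (up to sign) is an $m$-plane in $T_q\Sigma$, so Proposition \ref{p:or_nonor} gives an $m$-plane $\p_{T_q\Sigma}(\pi_0)\subset T_q\Sigma$ with $|\p_{T_q\Sigma}(\pi_0)-\pi_0|\le C_0\bmo^{\sfrac12}$. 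Since $|q|\le 1$ and $\|A_\Sigma\|_0\le C_0\bmo^{\sfrac12}$, one has $|T_0\Sigma-T_q\Sigma|\le C_0\bmo^{\sfrac12}$, and the rotation leaving $\pi_0$ fixed is then constructed exactly as you propose in your final step. Replacing your tangent-cone argument by this pointwise excess argument closes the gap.
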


From now on we assume w.l.o.g. that $\Sigma' = \Sigma$.
The next lemma is a standard consequence of the theory of area-minimizing currents (we include the proofs of Lemma \ref{l:tecnico3} and
Lemma \ref{l:tecnico1} in Section \ref{ss:height} for the reader's convenience).

\begin{lemma}\label{l:tecnico1}
There are positive constants $C_0 (m,n, \bar{n},Q)$ and $c_0 (m,n,\bar{n}, Q)$ with the following property.
If $T^0$ is as in Assumption \ref{ipotesi}, $\eps_2 < c_0$ and $T:= T^0 \res \bB_{23\sqrt{m}/4}$, then:
\begin{align}
\partial T \res \bC_{11\sqrt{m}/2} (0, \pi_0)&= 0 \quad \modp\label{e:geo semplice 0} \\
(\p_{\pi_0})_\sharp T\res \bC_{11 \sqrt{m}/2} (0, \pi_0) &= Q \a{B_{11\sqrt{m}/2} (0, \pi_0)}\quad \modp \label{e:geo semplice 1}\\
\mbox{and}\quad\bh (T, \bC_{5\sqrt{m}} (0, \pi_0)) &\leq C_0 \bmo^{\sfrac{1}{2m}}\, .\label{e:pre_height}
\end{align}
In particular, for each $x\in B_{11\sqrt{m}/2} (0, \pi_0)$ there is  a point
$q\in \spt (T)$ with $\p_{\pi_0} (q)=x$.
\end{lemma}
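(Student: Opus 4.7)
The plan is to establish \eqref{e:pre_height} first, then deduce \eqref{e:geo semplice 0} by an elementary geometric argument, and finally obtain \eqref{e:geo semplice 1} by applying the Constancy Lemma; the surjectivity of the projection is then immediate.

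\textbf{Preliminary height bound.} The first step is to prove that $\bh(T^0,\bB_r) \leq C_0\, \bmo^{1/(2m)}$ for every $r$ bounded away from $6\sqrt{m}$. By Lemma \ref{l:varifold}, $\V(T^0)$ is stationary in $\Sigma \cap \bB_{6\sqrt{m}}$, with generalized mean curvature controlled by $\bA \leq C_0\,\bmo^{1/2}$. Since the nonoriented excess coincides with the (unoriented) varifold tilt excess of $\V(T^0)$ with respect to $\pi_0$, the classical Allard/Simon-type height estimate for stationary integral varifolds with small tilt (see, e.g., \cite[Section 22]{Simon83}, incorporating the small mean-curvature contribution) applies and yields the claim; the natural anchor point for the slab is the origin, which lies in $\spt(T^0)$ because $\Theta(T^0,0)=Q \geq 1$. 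In particular, every $q \in \spt(T) \cap \bC_{5\sqrt{m}}(0,\pi_0)$ satisfies $|q|^2 \leq 25 m + C_0^2\,\bmo^{1/m}$, so $q$ lies in a ball on which the bound is available; this gives \eqref{e:pre_height}.

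\textbf{Boundary outside the cylinder and constancy.} For any $q \in \spt(T^0) \cap \partial\bB_{23\sqrt{m}/4}$, the height bound together with Pythagoras yields
\[
|\p_{\pi_0}(q)|^2 \;=\; |q|^2 - |\p_{\pi_0^\perp}(q)|^2 \;\geq\; \tfrac{529}{16}\,m - C_0^2\,\bmo^{1/m} \;>\; \tfrac{121}{4}\,m
\]
for $\eps_2$ small enough, so $q \notin \bC_{11\sqrt{m}/2}(0,\pi_0)$. Since $T = T^0 \res \bB_{23\sqrt{m}/4}$ and $\partial T^0 \res \bB_{6\sqrt{m}} = 0\modp$, a straightforward support argument (or Lemma \ref{modp_slicing_formula}(i) at a convenient slicing radius) shows $\spt^p(\partial T) \subset \partial\bB_{23\sqrt{m}/4}$, which combined with the previous geometric fact gives \eqref{e:geo semplice 0}. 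Projecting, $(\p_{\pi_0})_\sharp T \res \bC_{11\sqrt{m}/2}$ is a top-dimensional integer rectifiable current in $\pi_0$ with vanishing boundary mod $p$; the Constancy Lemma \ref{l:constancy} therefore produces $Q' \in \Z \cap [-p/2, p/2]$ with $(\p_{\pi_0})_\sharp T \res \bC_{11\sqrt{m}/2} = Q'\,\a{B_{11\sqrt{m}/2}(0,\pi_0)} \modp$. To identify $Q' = Q$, I would pass to any tangent cone $T_0$ of $T^0$ at the origin: by Corollary \ref{c:tangent_cones} together with the smallness of the nonoriented excess, $\V(T_0)$ is supported in $\pi_0$ and $T_0 = Q\,\a{\pi_0}$, the sign being fixed by the choice of orientation of $\pi_0$ encoded in \eqref{e:pi0_ottimale}; passing to the limit in the constancy identity along a rescaled sequence $T^0_{0,r_k}$ then forces $Q' = Q$. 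The last assertion is now immediate: $(\p_{\pi_0})_\sharp T$ has density $Q \neq 0$ on $B_{11\sqrt{m}/2}$, so $\p_{\pi_0}(\spt(T))$ is dense in $\overline{B_{11\sqrt{m}/2}(0,\pi_0)}$ and hence equals it by compactness of $\spt(T) \cap \overline{\bC_{11\sqrt{m}/2}}$ (which in turn uses the height bound).

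\textbf{Main obstacle.} The principal subtlety is the preliminary height bound. In the classical (integer) area-minimizing setting one can exploit orientation, but here $\spt(T^0)$ may contain points at which $\vec{T^0}$ is close to $\vec{\pi}_0$ and points at which it is close to $-\vec{\pi}_0$, so the standard oriented arguments are unavailable. The remedy, already built into Assumption \ref{ipotesi} through the use of the nonoriented excess, is that $\V(T^0)$ is a stationary integral varifold whose tilt excess is exactly $\bE^{no}$; the soft, monotonicity-based Allard-type estimate then gives the decay with the sharp exponent $1/(2m)$, which is what the subsequent center manifold construction uses as a coarse preliminary input.
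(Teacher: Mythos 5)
Your proposal reverses the logical order of the paper, deriving \eqref{e:pre_height} first and then reading off \eqref{e:geo semplice 0}--\eqref{e:geo semplice 1} geometrically. This is the main gap. The only height estimate of the required form in the paper is Theorem \ref{t:height_bound}, and its hypothesis (h2) is precisely the pair \eqref{e:geo semplice 0}--\eqref{e:geo semplice 1} (in a cylinder), so invoking it here would be circular. What you invoke instead, a ``classical Allard/Simon-type height estimate for stationary integral varifolds with small tilt'' from \cite[Section~22]{Simon83}, is a multiplicity-one statement whose mass-ratio hypothesis is close to $\omega_m$, not $Q\omega_m$; in Assumption~\ref{ipotesi} one has $Q\geq 2$. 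There is no off-the-shelf deduction of an $L^\infty$ height bound from stationarity plus small (nonoriented) tilt excess in the multiplicity-$Q$ setting: a sum of two parallel multiplicity-one $m$-planes is stationary, has zero tilt excess and bounded mass ratio, and has arbitrary vertical separation. What rules this out here is the interplay between $\Theta(T^0,0)=Q$ and the pinched mass ratio in \eqref{e:basic2}, and turning that observation into a quantitative sup-bound is essentially Theorem~\ref{t:height_bound} again, i.e.\ the Lipschitz approximation machinery that you are trying to avoid. The ``main obstacle'' paragraph you append names orientation as the issue, but orientation is not the problem; multiplicity is, and your remark does not address it. The paper takes a different route entirely: it first proves \eqref{e:geo semplice 0}--\eqref{e:geo semplice 1} by compactness and contradiction (let $\eps_2(k)\downarrow 0$, use Proposition~\ref{p:compactness} to extract a limit, show the limit is a cone via \eqref{e:basic2} and the monotonicity formula, identify it as $Q\a{\pi_0}\ \modp$ by the constancy lemma, then push the contradiction back to large $k$ via Hausdorff convergence of supports and the varifold convergence), and only afterwards deduces \eqref{e:pre_height} from the already-available Theorem~\ref{t:height_bound}. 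Your Pythagoras/cutoff step and the surjectivity argument at the end would be fine if you had the height bound in hand, so the issue is concentrated at the very first step.

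There is a second, independent gap in the identification $Q'=Q$. You pass to a tangent cone $T_0$ of $T^0$ at the origin and assert that $\V(T_0)$ is supported in $\pi_0$ ``by Corollary~\ref{c:tangent_cones} together with the smallness of the nonoriented excess.'' But \eqref{e:small ex} is an excess bound at unit scale; it is not inherited by rescalings $T^0_{0,r}$ as $r\downarrow 0$. Under Assumption~\ref{ipotesi} alone the tangent cone at $0$ need not be flat: for instance a sum of transverse planes through the origin whose multiplicities sum to $Q$ has $\Theta=Q$ at $0$, is compatible with the pinched mass ratio, and can be a cone with small excess at unit scale. So Corollary~\ref{c:tangent_cones}(v) does not apply at this stage, and the bridge to $Q'=Q$ is missing. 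The paper's compactness argument dodges this because it sends $\eps_2\to 0$, which forces the limiting tangent planes to coincide with $\pi_0$ $\|T^0_\infty\|$-a.e.; for a fixed small $\eps_2$ you do not get this for free. A workable repair within your framework would be to identify $Q'$ from the degree of the projection together with the density at the origin, but as written the step is not justified.
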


\subsection{Construction of the center manifold}

From now we will always work with the current $T$ of Lemma \ref{l:tecnico1}.
We specify next some notation which will be recurrent in the paper when dealing with cubes of $\pi_0$.
For each $j\in \N$, $\sC^j$ denotes the family of closed cubes $L$ of $\pi_0$ of the form 
\begin{equation}\label{e:cube_def}
[a_1, a_1+2\ell] \times\ldots  \times [a_m, a_m+ 2\ell] \times \{0\}\subset \pi_0\, ,
\end{equation}
where $2\,\ell = 2^{1-j} =: 2\,\ell (L)$ is the side-length of the cube, 
$a_i\in 2^{1-j}\Z$ $\forall i$ and we require in
addition $-4 \leq a_i \leq a_i+2\ell \leq 4$. 
To avoid cumbersome notation, we will usually drop the factor $\{0\}$ in \eqref{e:cube_def} and treat each cube, its subsets and its points as subsets and elements of $\mathbb R^m$. Thus, for the {\em center $x_L$ of $L$} we will use the notation $x_L=(a_1+\ell, \ldots, a_m+\ell)$, although the precise one is $(a_1+\ell, \ldots, a_m+\ell, 0, \ldots , 0)$.
Next we set $\sC := \bigcup_{j\in \N} \sC^j$. 
If $H$ and $L$ are two cubes in $\sC$ with $H\subset L$, then we call $L$ an {\em ancestor} of $H$ and $H$ a {\em descendant} of $L$. When in addition $\ell (L) = 2\ell (H)$, $H$ is {\em a son} of $L$ and $L$ {\em the father} of $H$.

\begin{definition}\label{e:whitney} A Whitney decomposition of $[-4,4]^m\subset \pi_0$ consists of a closed set $\bGam\subset [-4,4]^m$ and a family $\mathscr{W}\subset \sC$ satisfying the following properties:
\begin{itemize}
\item[(w1)] $\bGam \cup \bigcup_{L\in \mathscr{W}} L = [-4,4]^m$ and $\bGam$ does not intersect any element of $\mathscr{W}$;
\item[(w2)] the interiors of any pair of distinct cubes $L_1, L_2\in \mathscr{W}$ are disjoint;
\item[(w3)] if $L_1, L_2\in \mathscr{W}$ have nonempty intersection, then $\frac{1}{2}\ell (L_1) \leq \ell (L_2) \leq 2\, \ell (L_1)$.
\end{itemize}
\end{definition}

Observe that (w1) - (w3) imply 
\begin{equation}\label{e:separazione}
{\rm sep}\, (\bGam, L) := \inf \{ |x-y|: x\in L, y\in \bGam\} \geq 2\ell (L)  \quad\mbox{for every $L\in \mathscr{W}$.}
\end{equation}
However, we do {\em not} require any inequality of the form 
${\rm sep}\, (\bGam, L) \leq C \ell (L)$, although this would be customary for what is commonly 
called a Whitney decomposition in the literature.

The algorithm for the construction of the center manifold involves several parameters which depend in
a complicated way upon several quantities and estimates. We introduce these parameters and specify some relations among them
in the following

\begin{ipotesi}\label{parametri}
$C_e,C_h,\beta_2,\delta_2, M_0$ are positive real numbers and $N_0$ is a natural number for which we assume
always
\begin{gather}
\beta_2 = 4\,\delta_2 = \min \left\{\frac{1}{2m}, \frac{\gamma_1}{100}\right\}, \quad
\mbox{where $\gamma_1$ is the exponent in the estimates of Theorem \ref{thm:almgren_strong_approx},}\label{e:delta+beta}\\
M_0 \geq C_0 (m,n,\bar{n},Q) \geq 4\,  \quad
\mbox{and}\quad \sqrt{m} M_0 2^{7-N_0} \leq 1\, . \label{e:N0}
\end{gather}
\end{ipotesi}

As we can see, $\beta_2$ and $\delta_2$ are fixed. The other parameters are not fixed but are subject to further restrictions in the various statements, respecting the following ``hierarchy''. As already mentioned, ``geometric constants'' are assumed to depend only upon $m, n, \bar{n}$ and $Q$. The dependence of other constants upon the various parameters $p_i$ will be highlighted using the notation $C = C (p_1, p_2, \ldots)$.

\begin{ipotesi}[Hierarchy of the parameters]\label{i:parametri}
In all the coming statements:
\begin{itemize}
\item[(a)] $M_0$ is larger than a geometric constant (cf. \eqref{e:N0}) or larger than a costant $C (\delta_2)$, see Proposition~\ref{p:splitting};
\item[(b)] $N_0$ is larger than $C (\beta_2, \delta_2, M_0)$ (see for instance \eqref{e:N0} and Proposition \ref{p:compara});
\item[(c)] $C_e$ is larger than $C(\beta_2, \delta_2, M_0, N_0)$ (see the statements
of Proposition \ref{p:whitney}, Theorem \ref{t:cm} and Proposition \ref{p:splitting});
\item[(d)] $C_h$ is larger than $C(\beta_2, \delta_2, M_0, N_0, C_e)$ (see Propositions~\ref{p:whitney} and \ref{p:separ});
\item[(e)] $\eps_2$ is smaller than $c(\beta_2, \delta_2, M_0, N_0, C_e, C_h)$ (which will always be positive).
\end{itemize}
\end{ipotesi}

The functions $C$ and $c$ will vary in the various statements: 
the hierarchy above guarantees however that there is a choice of the parameters for which {\em all} the restrictions required in the statements of the next propositions are simultaneously satisfied. To simplify our exposition, for smallness conditions on $\eps_2$ as in (e) we will use the sentence ``$\eps_2$ is sufficiently small''.

Thanks to Lemma \ref{l:tecnico1}, for every $L\in \sC$,  we may choose $y_L\in \pi_0^\perp$ so that $p_L := (x_L, y_L)\in \spt (T)$ (recall that $x_L$ is the center of $L$). $y_L$ is in general not unique and we fix an arbitrary choice.
A more correct notation for $p_L$ would be $x_L + y_L$. This would however become rather cumbersome later, when we deal with various decompositions of the
ambient space in triples of orthogonal planes. We thus abuse the notation slightly in using $(x,y)$ instead of $x+y$ and, consistently, $\pi_0\times \pi_0^\perp$ instead of $\pi_0 \oplus \pi_0^\perp$.

\begin{definition}[Refining procedure]\label{d:refining_procedure}
For $L\in \sC$ we set $r_L:= M_0 \sqrt{m} \,\ell (L)$ and 
$\B_L := \bB_{64 r_L} (p_L)$. We next define the families of cubes $\sS\subset\sC$ and $\sW = \sW_e \cup \sW_h \cup \sW_n \subset \sC$ with the convention that
$\sS^j = \sS\cap \sC^j, \sW^j = \sW\cap \sC^j$ and $\sW^j_{\square} = \sW_\square \cap \sC^j$ for $\square = h,n, e$. We define $\sW^i = \sS^i = \emptyset $ for $i < N_0$. We proceed with $j\geq N_0$ inductively: if { no ancestor of $L\in \sC^j$ is in $\sW$}, then 
\begin{itemize}
\item[(EX)] $L\in \sW^j_e$ if $\bE^{no} (T, \B_L) > C_e \bmo\, \ell (L)^{2-2\delta_2}$;
\item[(HT)] $L\in \sW_h^j$ if $L\not \in \mathscr{W}_e^j$ and $\bh (T, \B_L) > C_h \bmo^{\sfrac{1}{2m}} \ell (L)^{1+\beta_2}$;
\item[(NN)] $L\in \sW_n^j$ if $L\not\in \sW_e^j\cup \sW_h^j$ but it intersects an element of $\sW^{j-1}$;
\end{itemize}
if none of the above occurs, then $L\in \sS^j$.
We finally set
\begin{equation}\label{e:bGamma}
\bGam:= [-4,4]^m \setminus \bigcup_{L\in \sW} L = \bigcap_{j\geq N_0} \bigcup_{L\in \sS^j} L.
\end{equation}
\end{definition}
Observe that, if $j>N_0$ and $L\in \sS^j\cup \sW^j$, then necessarily its father belongs to $\sS^{j-1}$.

\begin{proposition}[Whitney decomposition]\label{p:whitney}\label{P:WHITNEY}
Let Assumptions \ref{ipotesi} and \ref{parametri} hold and let $\eps_2$ be sufficiently small.
Then $(\bGam, \mathscr{W})$ is a Whitney decomposition of $[-4,4]^m \subset \pi_0$.
Moreover, for any choice of $M_0$ and $N_0$, there is $C^\star := C^\star (M_0, N_0)$ such that,
if $C_e \geq C^\star$ and $C_h \geq C^\star C_e$, then 
\begin{equation}\label{e:prima_parte}
\sW^{j} = \emptyset \qquad \mbox{for all $j\leq N_0+6$.}
\end{equation}
Finally, the following 
estimates hold with $C = C(\beta_2, \delta_2, M_0, N_0, C_e, C_h)$:
\begin{gather}
\bE^{no} (T, \B_J) \leq C_e \bmo^{}\, \ell (J)^{2-2\delta_2} \quad \text{and}\quad
\bh (T, \B_J) \leq C_h \bmo^{\sfrac{1}{2m}} \ell (J)^{1+\beta_2}
\quad \forall J\in \sS\,, \label{e:ex+ht_ancestors}\\
 \bE^{no} (T, \B_L) \leq C\, \bmo^{}\, \ell (L)^{2-2\delta_2}\quad \text{and}\quad
\bh (T, \B_L) \leq C\, \bmo^{\sfrac{1}{2m}} \ell (L)^{1+\beta_2}
\quad \forall L\in \sW\, . \label{e:ex+ht_whitney}
\end{gather}
\end{proposition}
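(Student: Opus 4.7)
The plan is to verify the three conclusions in sequence, exploiting the inductive structure of the refinement procedure in Definition \ref{d:refining_procedure}.

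\textbf{Verification of (w1), (w2), (w3).} Properties (w1) and (w2) are immediate from the construction: the definition of $\bGam$ in \eqref{e:bGamma} dyadically covers $[-4,4]^m$ using the stopped cubes, and distinct cubes of $\sC^j$ have disjoint interiors while any two stopped cubes from different generations are nested or disjoint (since, once a cube is stopped, none of its descendants is considered). The key point is (w3). If $L_1, L_2 \in \sW$ share a point and, say, $\ell(L_1) \leq \frac14 \ell(L_2)$, then the father $\hat L_1$ of $L_1$ lies in $\sS$ and has $\ell(\hat L_1) \leq \frac12 \ell(L_2)$, so $\hat L_1$ meets an ancestor $L_2'$ of $L_2$ of the same side-length as $\hat L_1$. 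That ancestor $L_2'$ is then a son of a cube which touches an element of $\sW$ of strictly larger generation, which would force $L_2'$ (or one of its ancestors) into $\sW_n$ and contradict $L_2 \in \sW$. This gives $\frac12 \ell(L_1) \le \ell(L_2) \le 2\ell(L_1)$.

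\textbf{Emptiness of $\sW^j$ for small $j$.} I would show by induction on $j \in \{N_0, \dots, N_0 + 6\}$ that every $L \in \sC^j$ satisfies neither (EX) nor (HT), while (NN) is trivially excluded because by the inductive step $\sW^{j-1} = \emptyset$. For (EX), observe that for $L \in \sC^j$ with $j \leq N_0 + 6$ the ball $\B_L = \B_{64 r_L}(p_L)$ has radius $\lesssim M_0 2^{-N_0}$, hence it is contained in $\bB_{6\sqrt m}$ by \eqref{e:N0}; the monotonicity of nonoriented excess up to the global scale together with \eqref{e:small ex} gives
\[
\bE^{no}(T, \B_L) \leq C(M_0, N_0)\, \bmo \leq C(M_0, N_0)\, \bmo\, \ell(L)^{2-2\delta_2}\,,
\]
which is below $C_e \bmo\, \ell(L)^{2-2\delta_2}$ provided $C_e \geq C^\star(M_0, N_0)$. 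For (HT), I would invoke Lemma \ref{l:tecnico1}: since $\bh(T, \bC_{5\sqrt m}(0,\pi_0)) \leq C_0 \bmo^{1/(2m)}$ and $\B_L \subset \bC_{5\sqrt m}$, we get $\bh(T, \B_L) \leq C_0 \bmo^{1/(2m)} \leq C_h \bmo^{1/(2m)} \ell(L)^{1+\beta_2}$ as soon as $C_h \geq C^\star C_e$ (absorbing the factor $2^{(N_0+6)(1+\beta_2)}$). The hard part here is confirming that the height estimate from Lemma \ref{l:tecnico1}, which is in terms of the full ball $\bB_{23\sqrt m/4}$, indeed transfers correctly to the small balls $\B_L$ for the chosen range of generations; this is purely a bookkeeping issue with the constants.

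\textbf{The estimates \eqref{e:ex+ht_ancestors} and \eqref{e:ex+ht_whitney}.} For $J \in \sS$, the estimates are \emph{exactly} the negations of the stopping conditions (EX) and (HT) at $J$, so \eqref{e:ex+ht_ancestors} is immediate. For $L \in \sW$, let $\hat L$ be its father; by construction $\hat L \in \sS$, so the estimates hold for $\hat L$. Since $\ell(\hat L) = 2\ell(L)$ and $\B_L \subset C(M_0) \B_{\hat L}$ (up to dilation by a dimensional factor), a change of scale in the definitions of excess and height immediately yields
\[
\bE^{no}(T, \B_L) \leq C(M_0)\, \bE^{no}(T, \B_{\hat L}) \cdot \Big(\tfrac{\ell(\hat L)}{\ell(L)}\Big)^{m+2}\,, \qquad \bh(T, \B_L) \leq \bh(T, \B_{\hat L})\,,
\]
which, combined with \eqref{e:ex+ht_ancestors} for $\hat L$, gives \eqref{e:ex+ht_whitney} with a constant $C = C(\beta_2, \delta_2, M_0, N_0, C_e, C_h)$ (the height inequality uses $\ell(L) \leq \ell(\hat L)$ to absorb the factor into $\ell(L)^{1+\beta_2}$ at the cost of a larger multiplicative constant).

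The main obstacle is the Whitney property (w3), where one has to exploit the (NN) refinement rule in a careful induction and also make sure that the exclusion argument above for the small generations does not interfere with the propagation of the comparability across generations; this is a standard argument in this setting, essentially a transcription of \cite[Proposition 1.11]{DLS_Center} adapted to the nonoriented excess.
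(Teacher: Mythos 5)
Your proof plan handles the combinatorial claims (w1)--(w3) correctly, and the inductive verification that $\sW^j$ is empty for $j \leq N_0 + 6$ is essentially right: for the first few generations the balls $\B_L$ are of size comparable to $\bB_{6\sqrt{m}}$, so both the excess and height bounds can be read off directly from Assumption \ref{ipotesi} and Lemma \ref{l:tecnico1} after absorbing the finitely many scale factors into $C^\star(M_0,N_0)$. Likewise your observation that \eqref{e:ex+ht_ancestors} is literally the negation of (EX) and (HT) is correct.

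The gap is in the deduction of \eqref{e:ex+ht_whitney}, specifically the claimed inclusion ``$\B_L \subset C(M_0)\,\B_{\hat L}$ (up to dilation by a dimensional factor)''. This is not a dimensional fact and is false without further input. The centers are $p_L = (x_L, y_L)$ and $p_{\hat L} = (x_{\hat L}, y_{\hat L})$, and while $|x_L - x_{\hat L}| \leq \sqrt{m}\,\ell(L)$, the only a priori control on the vertical separation is the global bound $|y_L|, |y_{\hat L}| \leq C_0\,\bmo^{1/2m}$ coming from Lemma~\ref{l:tecnico1}. Since $\bmo$ is a small but \emph{fixed} quantity while $\ell(L)$ can be arbitrarily small, the difference $|y_L - y_{\hat L}|$ can vastly exceed the radius $64\,r_L = 64\,M_0\sqrt{m}\,\ell(L)$ of $\B_L$ once $\ell(L) \ll \bmo^{1/2m}$, and then $\B_L$ need not intersect $\B_{\hat L}$ at all. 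The missing ingredient is precisely the propagated height bound at scale $\ell(\hat L)$, namely $\bh\big(T, \bC_{36 r_{\hat L}}(p_{\hat L}, \pi_0)\big) \leq C\,\bmo^{1/2m}\,\ell(\hat L)$ together with $\spt(T) \cap \bC_{36 r_{\hat L}}(p_{\hat L},\pi_0) \subset \B_{\hat L}$, which forces $|y_L - y_{\hat L}| \lesssim \bmo^{1/2m}\,\ell(\hat L) \ll \ell(L)$. That statement is item (v) of Proposition~\ref{p:tilting opt}, and it cannot be extracted from the single-scale height bound of Lemma~\ref{l:tecnico1}; it is established through the whole inductive argument that simultaneously propagates the tilting of the optimal planes $\pi_H, \hat\pi_H$, the ball inclusions $\B_H \subset \B_L$, and the height control across generations. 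The paper, in fact, does not prove Proposition~\ref{p:whitney} directly: it is listed as a corollary at the end of Proposition~\ref{p:tilting opt}, exactly because this inclusion is inseparable from the tilt estimates. Your plan is therefore missing a genuine inductive step and cannot be repaired within the scheme you describe without importing the content of Proposition~\ref{p:tilting opt}.
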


We will prove Proposition \ref{p:whitney} in Section \ref{s:tilting_whitney}. Next, we fix two important functions $\vartheta,\varrho: \R^m \to \R$.

\begin{ipotesi}\label{mollificatore}
$\varrho\in C^\infty_c (B_1)$ is radial, $\int \varrho =1$ and $\int |x|^2 \varrho (x)\, dx = 0$. For $\lambda>0$ $\varrho_\lambda$ 
denotes, as usual, $x\mapsto \lambda^{-m} \varrho (\frac{x}{\lambda})$.
$\vartheta\in C^\infty_c \big([-\frac{17}{16}, \frac{17}{16}]^m, [0,1]\big)$ is identically $1$ on $[-1,1]^m$.
\end{ipotesi}

$\varrho$ will be used as convolution kernel for smoothing maps $z$ defined on $m$-dimensional planes $\pi$ of $\mathbb R^{m+n}$. In particular, having fixed an isometry $A$ of $\R^m$ onto $\pi$, the smoothing will be given by $[(z \circ A) * \varrho_\lambda] \circ A^{-1}$. Observe that since $\varrho$ is radial, our map does not depend on the choice of the isometry
and we will therefore use the shorthand notation $z*\varrho_\lambda$.

\begin{definition}[$\pi$-approximations]\label{d:pi-approximations}
Let $L\in \sS\cup \sW$ and $\pi$ be an $m$-dimensional plane. If $T\res\bC_{32 r_L} (p_L, \pi)$ fulfills 
the assumptions of Theorem \ref{thm:strong-alm-unoriented} in the cylinder $\bC_{32 r_L} (p_L, \pi)$, then the resulting map $u$ given by the theorem, which is defined on $B_{8r_L} (p_L, \pi)$ and takes values either in $\Iq (\pi^\perp)$ (if $Q < \frac{p}{2}$) or in $\mathscr{A}_Q (\pi^\perp)$ (if $Q=  \frac{p}{2}$) is called a {\em $\pi$-approximation of $T$ in $\bC_{8 r_L} (p_L, \pi)$}. 
The map $\hat{h}:B_{7r_L} (p_L, \pi) \to \pi^\perp$ given
by $\hat{h}:= (\etab\circ u)* \varrho_{\ell (L)}$ will be called the {\em smoothed average of the $\pi$-approximation}. 
\end{definition}

\begin{definition}[Reference plane $\pi_L$] \label{d:ref_plane}
For each $L\in \sS\cup \sW$ we let $\hat\pi_L$ be an optimal plane in $\bB_L$ (cf. Definition \ref{d:optimal_planes}) and choose an $m$-plane $\pi_L\subset T_{p_L} \Sigma$ which minimizes $|\hat\pi_L-\pi_L|$.
\end{definition}

The following lemma, which will be proved in Section \ref{s:tilting_whitney}, deals with graphs of multivalued functions $f$ in several systems of coordinates. 

\begin{lemma}\label{l:tecnico2}
Let the assumptions of Proposition \ref{p:whitney} hold and assume $C_e \geq C^\star$ and $C_h \geq C^\star C_e$ (where $C^\star$ is the
constant of Proposition \ref{p:whitney}). For any choice of the other parameters,
if $\eps_2$ is sufficiently small, then $T\res \bC_{32 r_L} (p_L, \pi_L)$ satisfies the assumptions of Theorem \ref{thm:strong-alm-unoriented}
for any $L\in \sW\cup \sS$. 
Moreover, if $f_L$ is a $\pi_L$-approximation, denote by $\hat{h}_L$ its smoothed average and by $\bar{h}_L$ the map $\p_{T_{p_L}\Sigma} (\hat{h}_L)$,
which takes values in the plane $\varkappa_L := T_{p_L} \Sigma \cap \pi_L^\perp$, i.e. the orthogonal complement of $\pi_L$ in $T_{p_L} \Sigma$.
If we let $h_L$ be the map $x \in B_{7 r_L}(p_L, \pi_L)\mapsto h_L (x):= (\bar{h}_L (x), \Psi_{p_L} (x, \bar{h}_L (x)))\in \varkappa_L \times T_{p_L} \Sigma^\perp$, 
then there is a smooth map $g_L: B_{4r_L} (p_L, \pi_0)\to \pi_0^\perp$ such that 
$\bG_{g_L} = \bG_{h_L}\res \bC_{4r_L} (p_L, \pi_0)$.
\end{lemma}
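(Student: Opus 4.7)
The plan divides naturally into two tasks: (I) verifying that $T \res \bC_{32 r_L}(p_L, \pi_L)$ satisfies the hypotheses of Theorem \ref{thm:strong-alm-unoriented}, and (II) reparametrizing the graph $\bG_{h_L}$ over $\pi_0$. Both rely on showing that the optimal plane $\pi_L$ is quantitatively close to $\pi_0$ in a way that can be tracked by the excess and height estimates of Proposition \ref{p:whitney}.

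First I would estimate $|\pi_L - \pi_0|_{no}$. Since $\pi_0$ optimizes the nonoriented excess in $\bB_{6\sqrt{m}}$ by \eqref{e:pi0_ottimale} and $\bmo \leq \eps_2^2$, the standard tilt-excess comparison for stationary varifolds (applied to $\V(T)$ on $\bB_L \subset \bB_{6\sqrt m}$) combined with the bound $\bE^{no}(T, \bB_L) \leq C_e \bmo\, \ell(L)^{2-2\delta_2}$ gives $|\hat \pi_L - \pi_0|_{no} \leq C \bmo^{1/2}$, and by construction of $\pi_L$ (minimizing the distance to $\hat \pi_L$ among planes in $T_{p_L}\Sigma$) together with $|T_{p_L}\Sigma - T_0\Sigma| \leq C \bmo^{1/2}$ and $|T_0\Sigma - (\R^{m+\bar n} \times \{0\})| \leq C\bmo^{1/2}$ (see Remark \ref{r:sigma_A} and Lemma \ref{l:tecnico3}), I obtain $|\pi_L - \pi_0| \leq C \bmo^{1/2}$ (the non-oriented distance agreeing with the oriented one thanks to \eqref{e:optimal_pi_2} and Remark \ref{r:why_optimal_2}). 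This smallness is the key output of step one.

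For task (I), the height bound in \eqref{e:ex+ht_whitney} (or \eqref{e:ex+ht_ancestors}) combined with the tilt $|\pi_L - \pi_0| \leq C\bmo^{1/2}$ shows that $\spt(T) \cap \bB_L$ lies in a slab of size $C \bmo^{1/2m} \ell(L)^{1+\beta_2} + C \bmo^{1/2} \ell(L)$ around the affine plane $p_L + \pi_L$; in particular for $\eps_2$ small the cylinder $\bC_{32 r_L}(p_L, \pi_L)$ is contained in $\bB_L$, and no part of $\spt(T) \cap \bC_{32 r_L}(p_L, \pi_L)$ escapes through the lateral boundary of the cylinder $\bC_{32 r_L}(p_L, \pi_L) \cap \bB_L$. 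This allows me to apply the Constancy Lemma \ref{l:constancy} to $(\p_{\pi_L})_\sharp T$ on $\bC_{32 r_L}(p_L, \pi_L)$ (using $\partial T \res \bC_{11\sqrt m/2}(0,\pi_0) = 0 \,\modp$ from Lemma \ref{l:tecnico1} and the standard boundary-projection identity) to get $(\p_{\pi_L})_\sharp T = Q \a{B_{32 r_L}(p_L,\pi_L)} \,\modp$, where the multiplicity is forced to equal $Q$ by continuity in $L$ starting from $L$'s whose cylinders are comparable to $\bB_L$ together with $\Theta(T^0, 0) = Q$. The nonoriented excess in the cylinder is then bounded by $\bE^{no}(T, \bC_{32 r_L}(p_L, \pi_L)) \leq C \bE^{no}(T, \bB_L) \leq C_e \bmo\, \ell(L)^{2-2\delta_2}$, and this is less than $\eps$ from Theorem \ref{thm:strong-alm-unoriented} provided $\eps_2$ is small (using (e) in Assumption \ref{i:parametri}). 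Similarly $\bE(T, \bC_{32 r_L}(p_L, \pi_L)) < 1/2$. All remaining hypotheses of Assumption \ref{ipotesi_base_app1} are then satisfied.

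For task (II), note that $h_L$ is a smooth (single-valued) map by construction: $\bar h_L$ is the projection onto $\varkappa_L$ of the smoothed average $\hat h_L = (\etab \circ f_L) * \varrho_{\ell(L)}$, and $\Psi_{p_L}$ is $C^{3,\eps_0}$. The estimates \eqref{e:harm-app2} and \eqref{e:Ebeta Dir} applied to the $\pi_L$-approximation, together with the interior smoothing effect of the convolution with $\varrho_{\ell(L)}$, yield $\|D\bar h_L\|_{C^0} \leq C \bmo^{1/2 - \delta_2} \ell(L)^{1/2 - \delta_2}$ on $B_{7r_L}(p_L, \pi_L)$ (say), hence $\|D h_L\|_{C^0} \leq C(\bmo^{1/2 - \delta_2} + \bmo^{1/2}) \ll 1$. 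Consider the map $\Phi : B_{7 r_L}(p_L, \pi_L) \to \pi_0$ given by $\Phi(x) := \p_{\pi_0}(x + h_L(x) - p_L) + \p_{\pi_0}(p_L)$. Its differential is $\p_{\pi_0}|_{\pi_L} + \p_{\pi_0} \circ D h_L$, and since $|\p_{\pi_0}|_{\pi_L} - {\rm Id}_{\pi_L}| \leq C|\pi_L - \pi_0| \leq C\bmo^{1/2}$ and $Dh_L$ is small, for $\eps_2$ sufficiently small $\Phi$ is a smooth diffeomorphism of $B_{7r_L}(p_L, \pi_L)$ onto an open neighborhood of $B_{4r_L}(p_L, \pi_0)$ in $\pi_0$ (by the quantitative inverse function theorem and a size check using the oscillation of $h_L$, controlled via ${\rm osc}(f_L) \leq C \bh(T, \bC_{32 r_L}(p_L,\pi_L), \pi_L) + C r_L \bmo^{1/2}$). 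Define $g_L: B_{4r_L}(p_L,\pi_0) \to \pi_0^\perp$ by $g_L(y) := \p_{\pi_0^\perp}(\Phi^{-1}(y) + h_L(\Phi^{-1}(y)) - p_L) + \p_{\pi_0^\perp}(p_L)$. Then $g_L$ is smooth and the set-theoretic graph of $g_L$ coincides with the image of $\gr(h_L)$ under the graph reparametrization, which by the change-of-variables formula for push-forwards of currents gives $\bG_{g_L} = \bG_{h_L} \res \bC_{4 r_L}(p_L, \pi_0)$.

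The main obstacle is the bookkeeping in paragraph three: concretely, showing that the differential $|\pi_L - \pi_0|$ can be kept small enough (and that the interior smoothing estimates for $h_L$ are good enough) that the change-of-variables map $\Phi$ is a diffeomorphism whose image contains $B_{4r_L}(p_L, \pi_0)$ for \emph{every} $L \in \sS \cup \sW$, uniformly in $L$. This requires using the full hierarchy of parameters in Assumption \ref{i:parametri} --- in particular choosing $\eps_2$ small depending on $M_0, C_e, C_h$ --- rather than any single estimate in isolation.
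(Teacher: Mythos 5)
Your overall plan mirrors the paper's: the paper reduces Lemma \ref{l:tecnico2} to Proposition \ref{p:gira_e_rigira}, which is established by adapting \cite[Section 4.3]{DLS_Center}, and that in turn rests on the tilt and height estimates of Proposition \ref{p:tilting opt} (the analogue of \cite[Proposition 4.1]{DLS_Center}). Your task (I) reproduces the content of Proposition \ref{p:tilting opt}(iv)--(vi) plus Proposition \ref{p:gira_e_rigira}(i), and your task (II) reproduces Proposition \ref{p:gira_e_rigira}(ii). So the structure is the same.

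There is, however, a genuine gap in how you justify the key input $|\hat\pi_L - \pi_0| \le C\bmo^{1/2}$. You say this follows from ``the standard tilt-excess comparison for stationary varifolds applied to $\V(T)$ on $\bB_L$, combined with the bound $\bE^{no}(T,\bB_L)\le C_e \bmo\ell(L)^{2-2\delta_2}$.'' That direct comparison does not work: to compare $\hat\pi_L$ (optimal on $\bB_L$) with $\pi_0$, one would need to bound $\bE^{no}(T,\bB_L,\pi_0)$, and restricting the global excess $\bmo$ at scale $6\sqrt{m}$ to the ball $\bB_L$ of radius $64 r_L$ gives only $\bE^{no}(T,\bB_L,\pi_0)\le C\,\ell(L)^{-m}\,\bmo$, which diverges as $\ell(L)\downarrow 0$. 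What actually holds is a telescoping estimate: one controls $|\hat\pi_H - \hat\pi_J|$ for adjacent scales $H\subset J$ with $\ell(J)=2\ell(H)$ by $C\bmo^{1/2}\ell(J)^{1-\delta_2}$ (using that both $\bE^{no}(T,\bB_H)$ and $\bE^{no}(T,\bB_J)$ decay like $\ell^{2-2\delta_2}$ since neither $H$ nor $J$ is in $\sW_e$), and then sums the geometric series over the dyadic chain from the root cube down to $L$. This is exactly what Proposition \ref{p:tilting opt} does (following \cite[Proposition 4.1]{DLS_Center}); your conclusion is correct, but the step as written would fail, and since both halves of your argument hinge on this bound you should replace the one-shot comparison with the chain argument. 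A similar chain of inclusions is also what you need to conclude that $\spt(T)\cap\bC_{36r_L}(p_L,\pi_L)\subset \bB_L$ for \emph{every} $L$, not just those near the root scale.
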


For the sake of simplicity, in the future we will sometimes regard $g_L$ as a map $g_L \colon B_{4r_L}(x_L,\pi_0) \to \pi_0^\perp$ rather than as a map $g_L \colon B_{4r_L}(p_L,\pi_0) \to \pi_0^\perp$. In particular, we will sometimes consider $g_L(x)$ with $x \in B_{4r_L}(x_L,\pi_0)$ even though the correct writing is the more cumbersome $g_L((x,y_L))$.

\begin{definition}[Interpolating functions]\label{d:glued}
The maps $h_L$ and $g_L$ in Lemma \ref{l:tecnico2} will be called, respectively, the
{\em tilted $L$-interpolating function} and the {\em $L$-interpolating function}.
For each $j$ let $\sP^j := \sS^j \cup \bigcup_{i=N_0}^j \sW^i$ and
for $L\in \sP^j$ define $\vartheta_L (y):= \vartheta (\frac{y-x_L}{\ell (L)})$. Set
\begin{equation}
\hat\varphi_j := \frac{\sum_{L\in \sP^j} \vartheta_L\, g_L}{\sum_{L\in \sP^j} \vartheta_L} \qquad \mbox{on $]-4,4[^m$},
\end{equation}
let $\bar{\varphi}_j (y)$ be the first $\bar{n}$ components of $\hat{\varphi}_j (y)$ and define
$\varphi_j (y) := \big(\bar{\varphi}_j (y), \Psi (y, \bar{\varphi}_j (y))\big)$, where $\Psi$ is the map of Lemma \ref{l:tecnico3}.
$\varphi_j$ will be called the {\em glued interpolation} at the step $j$.
\end{definition}

\begin{theorem}[Existence of the center manifold]\label{t:cm}
Assume that the hypotheses of the Lemma \ref{l:tecnico2} hold and
let $\kappa := \min \{\eps_0/2, \beta_2/4\}$. For any choice of the other parameters,
if $\eps_2$ is sufficiently small, then
\begin{itemize}
\item[(i)] $\|D\varphi_j\|_{C^{2, \kappa}} \leq C \bmo^{\sfrac{1}{2}}$ and $\|\varphi_j\|_{C^0}
\leq C \bmo^{\sfrac{1}{2m}}$, with $C = C(\beta_2, \delta_2, M_0, N_0, C_e, C_h)$.
\item[(ii)] if $L\in \sW^i$ and $H$ is a cube concentric to $L$ with $\ell (H)=\frac{9}{8} \ell (L)$, then $\varphi_j = \varphi_k$ on $H$ for any $j,k\geq i+2$. 
\item[(iii)] $\varphi_j$ converges in $C^3$ to a map $\phii$ and $\cM:= \gr (\phii|_{]-4,4[^m}
)$ is a $C^{3,\kappa}$ submanifold of $\Sigma$.
\end{itemize}
\end{theorem}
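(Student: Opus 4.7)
\medskip

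\textbf{Proof plan for Theorem \ref{t:cm}.} The strategy follows the blueprint of \cite{DLS_Center}, adapted to our $\modp$ setting by systematically replacing the cylindrical excess with the nonoriented excess and by invoking Theorem \ref{thm:strong-alm-unoriented} in place of the corresponding strong approximation result for classical area minimizing currents. The heart of the matter lies in obtaining, on each cube $L \in \sP^j$, sharp $C^{3,\kappa}$ estimates on the $L$-interpolating function $g_L$ with constants depending only on the parameters, and then showing that on pairs of neighboring cubes $L,J$ with $\ell(L)/2 \le \ell(J)\le 2\ell(L)$ the functions $g_L$ and $g_J$ are very close in $C^{2,\kappa}$ — so close that the partition-of-unity gluing producing $\varphi_j$ does not degrade these estimates.

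First, I would establish the basic geometry of the approximations. Fixing $L\in \sP^j$, the stopping conditions (EX) and (HT) fail for the father of $L$ (which lies in $\sS$), so Proposition \ref{p:whitney} provides $\bE^{no}(T,\B_L)\le C\bmo\,\ell(L)^{2-2\delta_2}$ and a corresponding height bound. The tilting estimate between $\pi_L$ and $\pi_0$ (which follows by comparing optimal planes over $\B_L$ and $\bB_{6\sqrt{m}}$ via the standard argument based on the definition of nonoriented excess) gives $|\pi_L-\pi_0|\le C\bmo^{1/2}\,\ell(L)^{1-\delta_2}$, and the same argument yields $|\pi_L-\pi_J|\le C\bmo^{1/2}\ell(L)^{1-\delta_2}$ for neighboring cubes $L,J$. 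Theorem \ref{thm:strong-alm-unoriented} applied in $\bC_{32 r_L}(p_L,\pi_L)$ then produces the $\pi_L$-approximation $f_L$ with Lipschitz and Dirichlet estimates scaling like $\bmo^{1/2}\ell(L)^{1-\delta_2}$ and like $\bmo\,\ell(L)^{m+2-2\delta_2}$ respectively. Convolution with $\varrho_{\ell(L)}$ and standard elliptic estimates (the average $\etab\circ f_L$ is close to harmonic by Theorem \ref{thm:final_harm_approx}) yield $C^{3,\kappa}$ bounds on the smoothed averages $\hat h_L$ of the form $\|D^k\hat h_L\|_0\le C\bmo^{1/2}\ell(L)^{1-\delta_2-k+1}$ (with small losses we can control through the choice of $\kappa$); projecting onto $T_{p_L}\Sigma$ to obtain $\bar h_L$ and then changing coordinates via Lemma \ref{l:tecnico2} propagates these bounds to $g_L$.

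The key comparison estimate is: for neighboring $L,J\in\sP^j$,
\[
\sum_{k=0}^{2}\ell(L)^{k}\,\|D^k(g_L-g_J)\|_{C^0(B_{4r_L}\cap B_{4r_J})}\;+\;\ell(L)^{2+\kappa}\,[D^2(g_L-g_J)]_{0,\kappa}\;\le\;C\bmo^{1/2}\ell(L)^{3+\beta_2/2}.
\]
This is proved by writing both $g_L$ and $g_J$ as graphs over $\pi_L$ (using the tilting control), then comparing the $\pi_L$-approximations on a common cylinder via the closeness to a single harmonic function (Theorem \ref{thm:final_harm_approx}) and using the $L^2$-to-$C^{2,\kappa}$ elliptic estimate for harmonic (multi-valued) averages after convolution at scale $\ell(L)$. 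This super-linear power $3+\beta_2/2$ (rather than the naive $2-2\delta_2$) is what ultimately gives both the uniform $C^{2,\kappa}$ bound in (i) and the convergence in (iii).

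Granted these two families of estimates, conclusion (i) follows from a direct Leibniz computation on $\hat\varphi_j = (\sum\vartheta_L g_L)/(\sum\vartheta_L)$: on any point $y\in ]-4,4[^m$ only a bounded number of cubes contribute (by property (w3) of the Whitney decomposition), and each $\vartheta_L$ has derivatives scaling like $\ell(L)^{-k}$, exactly matching the scaling of $g_L$. Conclusion (ii) is purely combinatorial: if $L\in\sW^i$ and $H$ is the $9/8$-dilate of $L$, then for $j\ge i+2$ the cubes in $\sP^j$ whose supports $\vartheta_{L'}$ meet $H$ are precisely the descendants of a fixed finite family of ancestors/neighbors of $L$ all lying in $\sW^{\le i+1}$, and by the stopping rule these cubes are the same for all $j\ge i+2$; hence the partition of unity and each $g_{L'}$ stabilize. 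Conclusion (iii) then follows from (i), (ii), and Arzelà--Ascoli applied to $\{\varphi_j\}$ in $C^{3}$; the limit $\phii$ inherits the $C^{3,\kappa}$ bound from (i), and since by construction each $g_L$ takes values in $\Sigma$, the graph of $\phii$ lies in $\Sigma$.

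The main obstacle I anticipate is the comparison estimate between $g_L$ and $g_J$ for neighbors: the change of coordinates needed to rewrite both as graphs over a common plane produces nonlinear remainders, and one must carefully track how the tilt $|\pi_L-\pi_J|\le C\bmo^{1/2}\ell(L)^{1-\delta_2}$ interacts with the $\bmo^{1/2}\ell(L)^{1-\delta_2}$ sizes of the functions themselves, so that no cancellation is lost. The choice $\beta_2 = 4\delta_2 \ll \gamma_1$ in Assumption \ref{parametri} is precisely calibrated to absorb all these losses and still yield the power $\ell(L)^{3+\beta_2/2}$ on the right-hand side.
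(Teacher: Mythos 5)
Your approach is the same as the paper's: the paper isolates the construction estimates in Proposition \ref{p:stime_chiave}, proves those by adapting \cite[Section 5]{DLS_Center} (the first‑variation identity for the minimizing current controls the PDE satisfied by the smoothed averages, then elliptic theory applies), and concludes Theorem \ref{t:cm} by the partition‑of‑unity gluing and stabilization argument of \cite[Section 4.4]{DLS_Center}. You reproduce this structure (single‑cube bounds on $g_L$, neighbor‑comparison estimate, gluing, combinatorics of the Whitney decomposition for stabilization, Arzel\`a–Ascoli). That is the correct strategy.

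There is, however, a quantitative error that would undermine the gluing step as written. You claim
$\|D^k\hat h_L\|_0 \le C\bmo^{1/2}\ell(L)^{2-\delta_2-k}$,
which \emph{blows up} as $\ell(L)\to 0$ for $k\ge 2$. Compare with Proposition \ref{p:stime_chiave}(i), which gives $\|Dg_H\|_{C^{2,\kappa}(B_{4r_H})}\le C\bmo^{1/2}$, a bound \emph{independent} of $\ell(H)$. This is not a cosmetic difference: the glued $\hat\varphi_j$ is locally a convex combination (weighted by the $\vartheta_L$) of the $g_L$'s, and a convex combination of functions with third derivatives of size $\ell(L)^{-1-\delta_2}$ cannot have third derivative bounded by $\bmo^{1/2}$. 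Your bound is what naive convolution of the Lipschitz estimate produces, but the $\ell$‑uniform bound comes from a different route: one derives a quantitative elliptic PDE for the smoothed average $\hat h_L$ from the first variation $\delta T = 0$ (the $\modp$ analogue of \cite[Proposition 5.2]{DLS_Center}, discussed in Section \ref{s:cm_construction}) which yields a small right‑hand side; one then telescopes the parent–child comparison $\|g_L - g_J\|_{C^3}\lesssim\bmo^{1/2}\ell(L)^\kappa$ down from the coarsest scale $\sS^{N_0}$, where $\ell(L)$ is bounded below. Without this telescoping you cannot reach the $\ell$‑uniform $C^{3,\kappa}$ control in (i).

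Two further (smaller) points. Your comparison estimate stops at $D^2$ plus the H\"older seminorm of $D^2$; the Leibniz expansion of $D^3\hat\varphi_j$ after subtracting a base term $g_{L_0}$ requires $\|D^3(g_L - g_J)\|$ as well, i.e.\ $i=3$ in Proposition \ref{p:stime_chiave}(ii). And the exponent should be $3+\kappa$ with $\kappa=\min\{\eps_0/2,\beta_2/4\}$, not $3+\beta_2/2$; you have both overstated the gain by a factor of two and dropped the constraint coming from the $C^{3,\eps_0}$ regularity of $\Sigma$.
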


\begin{definition}[Whitney regions]\label{d:cm}
The manifold $\cM$ in Theorem \ref{t:cm} is called
{\em a center manifold of $T$ relative to $\pi_0$}, and 
$(\bGam, \sW)$ the {\em Whitney decomposition associated to $\cM$}. 
Setting $\Phii(y) := (y,\phii(y))$, we call
$\Phii (\bGam)$ the {\em contact set}.
Moreover, to each $L\in \sW$ we associate a {\em Whitney region} $\cL$ on $\cM$ as follows:
\begin{itemize}
\item[(WR)] $\cL := \Phii (H\cap [-\frac{7}{2},\frac{7}{2}]^m)$, where $H$ is the cube concentric to $L$ with $\ell (H) = \frac{17}{16} \ell (L)$.
\end{itemize}
\end{definition}

We will present a proof of Theorem \ref{t:cm} in Section \ref{s:cm_construction}

\subsection{The $\mathcal{M}$-normal approximation and related estimates}

In what follows we assume that the conclusions of Theorem \ref{t:cm} apply and denote by $\cM$ the corresponding
center manifold. For any Borel set $\cV\subset \cM$ we will denote 
by $|\cV|$ its $\cH^m$-measure and will write $\int_\cV f$ for the integral of $f$
with respect to $\cH^m \mres \cV$. 
$\cB_r (q)$ denotes the geodesic open balls in $\cM$.

\begin{ipotesi}\label{intorno_proiezione}
We fix the following notation and assumptions.
\begin{itemize}
\item[(U)] $\bU :=\big\{x\in \R^{m+n} : \exists !\, y = \p (x) \in \cM \mbox{ with $|x- y| <1$ and
$(x-y)\perp \cM$}\big\}$.
\item[(P)] $\p : \bU \to \cM$ is the map defined by (U).
\item[(R)] For any choice of the other parameters, we assume $\eps_2$ to be so small that
$\p$ extends to $C^{2, \kappa}(\bar\bU)$ and
$\p^{-1} (y) = y + \overline{B_1 (0, (T_y \cM)^\perp)}$ for every $y\in \cM$.
\item[(L)] We denote by $\partial_l \bU := \p^{-1} (\partial \cM)$ 
the {\em lateral boundary} of $\bU$.
\end{itemize}
\end{ipotesi}

The following is then a corollary of Theorem \ref{t:cm} and the construction algorithm; see Section \ref{s:normal_approx} for the proof.

\begin{corollary}\label{c:cover}
Under the hypotheses of Theorem \ref{t:cm} and of Assumption \ref{intorno_proiezione}
we have:
\begin{itemize}
\item[(i)] $\spt^p (\partial (T\res \bU)) \subset \partial_l \bU$, 
$\spt (T\res [-\frac{7}{2}, \frac{7}{2}]^m \times \R^n) \subset \bU$, 
and $\p_\sharp (T\res \bU) = Q \a{\cM} \; \modp$;
\item[(ii)] $\spt (\langle T, \p, \Phii (q)\rangle) \subset 
\big\{y\, : |\Phii (q)-y|\leq C \bmo^{\sfrac{1}{2m}} 
\ell (L)^{1+\beta_2}\big\}$ for every $q\in L\in \sW$, where\\
$C= C(\beta_2, \delta_2, M_0, N_0,  C_e, C_h)$;
\item[(iii)]  $\langle T, \p, q\rangle = Q \a{q}$ for every $q\in \Phii (\bGam)$.
\end{itemize}
\end{corollary}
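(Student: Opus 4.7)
\emph{Plan of proof.} The three statements all follow from the height and excess bounds of Proposition \ref{p:whitney} combined with the $C^{2,\kappa}$ regularity of the glued interpolation from Theorem \ref{t:cm}. The heart of the matter is that the Whitney decomposition encodes, scale by scale, how close the current $T$ is to being a $Q$-fold cover of $\mathcal{M}$.

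First I would prove (ii). Fix $L \in \sW$ and $q \in L$. The point $\Phii(q)$ lies on $\mathcal{M}$ and, by the $C^{2,\kappa}$ control of $\varphi$ together with Lemma \ref{l:tecnico2}, the tangent plane $T_{\Phii(q)} \mathcal{M}$ differs from the reference plane $\pi_L$ by $O(\bmo^{\sfrac12}\ell(L))$. Hence the fiber $\Phii(q) + (T_{\Phii(q)} \mathcal{M})^\perp \cap \bB_L$ is almost parallel to $\pi_L^\perp$. The slice $\langle T, \p, \Phii(q)\rangle$ is supported in this fiber intersected with $\spt(T)$, and the latter is controlled in $\pi_L^\perp$-direction by the height bound $\bh(T, \bB_L) \leq C_h\bmo^{\sfrac{1}{2m}}\ell(L)^{1+\beta_2}$ from \eqref{e:ex+ht_whitney}, combined with the fact that $\Phii(q)$ itself lies within the same error bound from $\spt(T)$ (it is the smoothed average of a $\pi_L$-approximation). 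This yields (ii).

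Next I turn to (iii). For $q \in \bGam$, by the refining procedure no cube of $\sS\cup\sW$ containing $q$ has ever been stopped, so for every descendant cube $L \ni q$ both $\bE^{no}(T, \bB_L) \leq C_e \bmo\, \ell(L)^{2-2\delta_2}$ and $\bh(T, \bB_L) \leq C_h \bmo^{\sfrac{1}{2m}}\ell(L)^{1+\beta_2}$ hold. Sending $\ell(L) \to 0$, (ii) (applied to ancestors of arbitrarily small scale) forces $\spt\langle T, \p, \Phii(q)\rangle \subset \{\Phii(q)\}$. Thus the slice is a $0$-current supported at one point, which by the slicing formula of Lemma \ref{modp_slicing_formula} and the fact that $T$ is a representative $\modp$ must equal $k\a{\Phii(q)}$ for some $k \in \mathbb{Z}\cap[-\frac{p}{2},\frac{p}{2}]$ with $|k|=\Theta_T(\Phii(q))$. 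The decay of $\bE^{no}$ at all scales implies that every tangent cone to $T$ at $\Phii(q)$ is a flat plane (necessarily $T_{\Phii(q)}\mathcal{M}$) and that $\Theta_T(\Phii(q))=Q$; the \emph{sign} is fixed to be $+Q$ because the optimal planes $\pi_L$ converge to $T_{\Phii(q)}\mathcal{M}$ in the oriented sense (this is exactly the content of condition \eqref{e:optimal_pi_2} and Remark \ref{r:why_optimal_2}, which guarantees $|\pi_L - \pi_0| = |\pi_L-\pi_0|_{no}$ throughout the construction). This gives $\langle T, \p, \Phii(q)\rangle = Q\a{\Phii(q)}$.

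Finally I would prove (i). The inclusion $\spt(T\res [-\tfrac{7}{2},\tfrac{7}{2}]^m \times \R^n) \subset \bU$ is obtained by covering $[-\tfrac{7}{2},\tfrac{7}{2}]^m$ with the Whitney decomposition: over any $L \in \sW$ that meets $[-\tfrac{7}{2},\tfrac{7}{2}]^m$, (ii) combined with the bound $\ell(L) \leq 2^{1-N_0}$ and the smallness of $\bmo$ shows that $\spt(T)$ sits inside the tubular neighbourhood of $\mathcal{M}$ of width strictly less than $1$ (the injectivity radius of $\p$); over $\bGam$ one invokes (iii). With the support inclusion at hand, the identity $\partial T \res \bB_{6\sqrt{m}} = 0 \modp$ implies $\spt^p(\partial(T\res\bU)) \subset \partial \bU$, and the same height bound rules out the ``top'' component $\{|x-\p(x)|=1\}$, leaving only $\partial_l \bU$. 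For the projection identity, the pushforward $\p_\sharp(T\res\bU)$ is an integer rectifiable current on $\mathcal{M}$ whose boundary $\modp$ vanishes in the interior of $\mathcal{M}$; the Constancy Lemma \ref{l:constancy}, applied in a local coordinate chart, then forces $\p_\sharp(T\res\bU) = k\,\a{\mathcal{M}}\modp$ for some integer $k\in[-\frac{p}{2},\frac{p}{2}]$, and (iii) evaluated at any point of the nonempty contact set (which is nonempty because $\bGam \ni 0$ by \eqref{e:prima_parte}) pins down $k=Q$.

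The main technical obstacle is the sign/orientation issue in (iii) and consequently in the projection formula: since $Q < \frac{p}{2}$ in general, the classes $Q\a{\mathcal{M}}$ and $-Q\a{\mathcal{M}}$ are distinct $\modp$, so one really has to exhibit convergence of the optimal planes $\pi_L$ \emph{as oriented planes} (not only modulo a sign) to the tangent planes of $\mathcal{M}$. This is precisely the reason why the extra selection rule \eqref{e:optimal_pi_2} was built into Definition \ref{d:optimal_planes}, and its use here is the nontrivial point in an otherwise standard adaptation of the De Lellis--Spadaro argument.
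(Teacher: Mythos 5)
Your overall structure is the same as the paper's: follow \cite[Section 6.1]{DLS_Center} and adapt the arguments to the $\modp$ setting, with (ii) proved via the height bounds from the refining procedure, (iii) via excess decay at every scale on $\Phii(\bGam)$, and (i) via the Constancy Lemma $\modp$. Parts (i) and (ii) of your outline are fine.

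The problem is the sign argument in (iii). You claim that the multiplicity $k$ in $\langle T,\p,\Phii(q)\rangle=k\a{\Phii(q)}$ must equal $+Q$ \emph{because} the optimal planes $\pi_L$ converge to $T_{\Phii(q)}\cM$ as \emph{oriented} planes, citing \eqref{e:optimal_pi_2} and Remark~\ref{r:why_optimal_2}. This does not follow. Condition \eqref{e:optimal_pi_2} only controls the reference planes $\pi_L$: it ensures $|\pi_L-\pi_0|=|\pi_L-\pi_0|_{no}$, which is needed in Proposition~\ref{p:tilting opt} for the tilting estimates. It does \emph{not} control the relation between the orienting vector field $\vec T$ of the current and $\vec\pi_L$; by design, the nonoriented excess $\bE^{no}(T,\bB_L,\pi_L)$ is small whether $\vec T\approx+\vec\pi_L$ or $\vec T\approx-\vec\pi_L$ $\|T\|$-a.e., and the choice of $\pi_L$ is therefore blind to this sign. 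So "$\pi_L\to T_{\Phii(q)}\cM$ oriented" is true but irrelevant to the orientation of $T$.

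What actually fixes $k=Q$ is arithmetic: from Lemma~\ref{l:tecnico1} we have $(\p_{\pi_0})_\sharp T\res\bC=Q\a{B}\,\modp$ with $Q>0$, so every tangent cone at $\Phii(q)$ is in the class $\left[Q\a{\pi}\right]\modp$ (not $[-Q\a{\pi}]$), where $\pi$ is the tangent plane to $\cM$, close to $\pi_0$ in the oriented sense. Since $T$ is a representative $\modp$, the slice multiplicity satisfies $|k|\le\frac{p}{2}$, and $k\equiv Q\,\modp$ with $|k|\le\frac{p}{2}$ forces $k=Q$ when $Q<\frac{p}{2}$, while for $Q=\frac{p}{2}$ the two candidate values $\pm Q$ are identified $\modp$ anyway. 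This is precisely the "adjustment for claim (iii)" the paper makes: one argues via Corollary~\ref{c:tangent_cones} that $Q\a{\pi}$ is the \emph{unique} tangent $\modp$ at $q$, and then invokes Proposition~\ref{p:compactness} to identify the density. Your worry about the sign is the right one to have, but the mechanism you propose cannot resolve it; you need the $\modp$ congruence from the pushforward, not a statement about the reference planes.
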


The next main goal is to couple the center manifold of Theorem \ref{t:cm} with a good approximating map defined on it.

\begin{definition}[$\cM$-normal approximation]\label{d:app}
An {\em $\cM$-normal approximation} of $T$ is given by a pair $(\cK, F)$ with the following properties. $\cK\subset \cM$ is closed and contains $\Phii \big(\bGam\cap [-\frac{7}{2}, \frac{7}{2}]^m\big)$. Moreover:
\begin{itemize}
\item[(a)] If $Q =  \frac{p}{2}$, $F$ is a Lipschitz map which takes values in $\mathscr{A}_Q(\R^{m+n})$ and satisfies the requirements of \cite[Assumption 11.1]{DLHMS_linear}. 
\item[(b)] If $Q <  \frac{p}{2}$, $F$ is a Lipschitz map which takes values in $\Iq (\R^{m+n})$ and has the special form 
$F (x) = \sum_i \a{x+N_i (x)}$.
\end{itemize}
In both cases we require that
\begin{itemize}
\item[(A1)] $\spt (\bT_F) \subset \Sigma$;
\item[(A2)] $\bT_F \res \p^{-1} (\cK) = T \res \p^{-1} (\cK)\; \modp$,
\end{itemize}
where $\bT_F$ is the integer rectifiable current induced by $F$; see \cite[Definition 11.2]{DLHMS_linear}. The map $N$ (for the case $Q = \frac{p}{2}$ see \cite[Assumption 11.1]{DLHMS_linear}) is {\em the normal part} of $F$.
\end{definition}

In the definition above it is not required that the map $F$ approximates efficiently the current
outside the set $\Phii \big(\bGam\cap [-\frac{7}{2}, \frac{7}{2}]^m\big)$. However, all the maps constructed
will approximate $T$ with a high degree of accuracy
in each Whitney region: such estimates are detailed
in the next theorem, the proof of which will be tackled in Section \ref{s:normal_approx}.

\begin{theorem}[Local estimates for the $\cM$-normal approximation]\label{t:approx}
Let $\gamma_2 := \frac{\gamma}{4}$, with $\gamma$ the constant
of Theorem \ref{thm:almgren_strong_approx}.
Under the hypotheses of Theorem \ref{t:cm} and Assumption~\ref{intorno_proiezione},
if $\eps_2$ is suitably small (depending upon all other parameters), then
there is an $\cM$-normal approximation $(\cK, F)$ such that
the following estimates hold on every Whitney region $\cL$ associated to
a cube $L\in \sW$, with constants $C = C(\beta_2, \delta_2, M_0, N_0, C_e, C_h)$:
\begin{gather}
\Lip (N|
_\cL) \leq C \bmo^{\gamma_2} \ell (L)^{\gamma_2} \quad\mbox{and}\quad  \|N|
_\cL\|_{C^0}\leq C \bmo^{\sfrac{1}{2m}} \ell (L)^{1+\beta_2},\label{e:Lip_regional}\\
|\cL\setminus \cK| + \|\bT_F - T\|_p (\p^{-1} (\cL)) \leq C \bmo^{1+\gamma_2} \ell (L)^{m+2+\gamma_2},\label{e:err_regional}\\
\int_{\cL} |DN|^2 \leq C \bmo \,\ell (L)^{m+2-2\delta_2}\, .\label{e:Dir_regional}
\end{gather}
Moreover, for any $a>0$ and any Borel $\cV\subset \cL$, we have (for $C=C(\beta_2, \delta_2, M_0, N_0, C_e, C_h)$)
\begin{equation}\label{e:av_region}
\int_\cV |\etab\circ N| \leq 
{ C \bmo \left(\ell (L)^{m+3+\sfrac{\beta_2}{3}} + a\,\ell (L)^{2+\sfrac{\gamma_2}{2}}|\cV|\right)}  + \frac{C}{a} 
\int_\cV \cG_{\square} \big(N, Q \a{\etab\circ N}\big)^{2+\gamma_2}\, ,
\end{equation} 
where $\square =s$ in case $p=2Q$, and it is empty otherwise.
\end{theorem}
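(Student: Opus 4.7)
The plan is to construct the normal approximation $F$ by locally pulling back the $\pi_L$-approximations produced by Theorem \ref{thm:strong-alm-unoriented} onto the center manifold via the projection $\p$, and then patching these local graphs together using a partition of unity subordinate to a refined Whitney covering of a neighborhood of $\cL$. Concretely, for each $L \in \sW$, one first invokes Lemma \ref{l:tecnico2} (which in turn relies on \eqref{e:ex+ht_whitney}) to guarantee that $T \res \bC_{32 r_L}(p_L, \pi_L)$ falls under the hypotheses of Theorem \ref{thm:strong-alm-unoriented}, producing a $\pi_L$-approximation $f_L$ defined on $B_{8r_L}(p_L, \pi_L)$ with $\Lip(f_L) \leq C(\bmo\, \ell(L)^{2-2\delta_2})^\gamma$ and good $C^0$ bounds coming from \eqref{e:ex+ht_whitney}. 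Converting $f_L$ into a multi-valued section $N_L$ of the normal bundle of $\cM$ over a neighborhood of $\cL$ is then done using the $C^{3,\kappa}$ regularity of $\Phii$ (Theorem \ref{t:cm}(iii)) and the fact that, by construction of $\cM$, the tilted interpolating function $h_L$ is a smoothed average of $f_L$, hence $f_L$ and $\cM$ are $C^{2,\kappa}$-close on the Whitney region.

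The local estimates would then be verified in order as follows. The Lipschitz and $C^0$ bounds \eqref{e:Lip_regional} follow from the corresponding estimates of Theorem \ref{thm:strong-alm-unoriented} in the chosen cylinder, combined with the tilt control $|\pi_L - T_{p_L}\cM| \leq C \bmo^{1/2}\ell(L)^{1-\delta_2}$ inherited from the algorithm (and, crucially, the factor $\gamma_2 = \gamma/4$ absorbs all losses produced by the change of coordinates between $\pi_L$, $T_{p_L}\Sigma$, and $\cM$). The mass/error bound \eqref{e:err_regional} is a direct consequence of \eqref{e:volume_estimate_bis}, since on $\cL$ the graph of $N$ coincides modulo $p$ with $T$ outside a set of measure controlled by $C(\bE^{no} + r^2\bA^2)^{1+\gamma}\ell(L)^m$, and \eqref{e:ex+ht_whitney} converts this into $\bmo^{1+\gamma_2}\ell(L)^{m+2+\gamma_2}$. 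The Dirichlet estimate \eqref{e:Dir_regional} comes from the Taylor expansion \eqref{e:Taylor_expansion_bis} applied in $\bC_{8r_L}(p_L,\pi_L)$, together with the excess bound of \eqref{e:ex+ht_whitney}.

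The core of the argument, and in my view the chief obstacle, is the average estimate \eqref{e:av_region}, because only this step uses in an essential way the fact that $\cM$ was obtained through the smoothing of the $\pi_L$-averages and that $T$ is area minimizing $\modp$. The argument I would run is the standard Almgren-type one: fix $L$ and $\cV \subset \cL$, write $\etab \circ N = (\etab \circ N - \bar h_L \circ \p) + (\bar h_L \circ \p - \text{projection of }\cM)$, exploit that $\bar h_L$ is a convolution $(\etab \circ f_L)*\varrho_{\ell(L)}$ of the average of the $\pi_L$-approximation, and use the first variation of $T$ in directions normal to $\cM$ to bound $|\int \etab \circ N|$ by a boundary term (of order $\ell(L)^{m+3+\beta_2/3}$, controlled by the height bound and the $C^{3,\kappa}$ regularity of $\cM$), plus a higher-order remainder that is absorbed by the $L^{2+\gamma_2}$ norm of $\cG_\square(N, Q\a{\etab\circ N})$ via Young's inequality (giving the $a$ and $1/a$ terms). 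In the case $Q = p/2$, the usage of special multi-valued functions enters here in a nontrivial way, because the first variation identity in normal directions that one must integrate must be applied to the special graph, and the cancellations of oriented multiplicities encoded in $\cG_s$ rather than $\cG$ are precisely what allow the term $\cG_s(N, Q\a{\etab\circ N})^{2+\gamma_2}$ to appear on the right-hand side. This delicate step is where the interplay between the mod $p$ minimality, the linear theory of \cite{DLHMS_linear}, and the center manifold construction is most intricate, and it is the main point requiring care.
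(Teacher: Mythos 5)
Your proposal follows the same broad route as the paper's, which is to import, almost verbatim, the arguments of \cite[Sections 6.2 and 6.3]{DLS_Center} once the $\pi_L$-approximations and interpolating functions have been set up via Theorem \ref{thm:strong-alm-unoriented} and Lemma \ref{l:tecnico2}. There is one substantive difference and one imprecision worth flagging. For \eqref{e:Dir_regional} the paper does \emph{not} argue via \eqref{e:Taylor_expansion_bis}: it instead bounds $\int_\cL |DN|^2$ by the nonoriented second-order deviation $\int |\vec{\bT}_F - \vec{\cM}\circ\p|_{no}^2\,d\|\bT_F\|$ via the Taylor expansion of the mass of a special graph over $\cM$ (\cite[Prop.~13.3]{DLHMS_linear}), and then replaces \cite[Eq.~(6.11)]{DLS_Center} with its $|\cdot|_{no}$ analog \eqref{e:eccesso_storto}. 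That replacement is the one genuinely new ingredient in the mod-$p$ setting: since $\bT_F$ only agrees with $T$ modulo $p$, their orienting $m$-vectors may differ by a sign where the density is $\frac{p}{2}$, so the comparison must be in $|\cdot|_{no}$. Your alternative route through $\int|Du|^2$ plus reparametrization onto the normal bundle is plausible, but it hides the same orientation issue inside the reparametrization theorem and is not what the paper actually does. As for \eqref{e:av_region}, describing the argument as a ``first variation of $T$ in directions normal to $\cM$'' is a little misleading: the mechanism in \cite[Section 6.3]{DLS_Center} is rather that outer variations in the cylinder (directions orthogonal to $\pi_L$, processed through \cite[Prop.~5.2]{DLS_Center} and, here, the stationarity of Lemma \ref{l:varifold}) yield an approximate elliptic equation for $\etab\circ f_L$, which quantifies the smallness of $\etab\circ f_L - (\etab\circ f_L)*\varrho_{\ell(L)}$ and produces the $\cG_\square^{2+\gamma_2}$ term after Young's inequality. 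You are correct that the $\cG_s$-versus-$\cG$ distinction at $Q=\frac{p}{2}$ is exactly what lets this estimate survive the passage to special multi-valued maps.
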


From \eqref{e:Lip_regional} - \eqref{e:Dir_regional} it is not difficult to infer analogous ``global versions'' of the estimates.

\begin{corollary}[Global estimates]\label{c:globali} Let $\cM'$ be
the domain $\Phii \big([-\frac{7}{2}, \frac{7}{2}]^m\big)$ and $N$ the map of Theorem \ref{t:approx}. Then,  (again with $C = C(\beta_2, \delta_2, M_0, N_0, C_e, C_h)$)
\begin{gather}
\Lip (N|_{\cM'}) \leq C \bmo^{\gamma_2} \quad\mbox{and}\quad \|N|_{\cM'}\|_{C^0}
\leq C \bmo^{\sfrac{1}{2m}},\label{e:global_Lip}\\ 
|\cM'\setminus \cK| + \|\bT_F - T\|_p (\p^{-1} (\cM')) \leq C \bmo^{1+\gamma_2},\label{e:global_masserr}\\
\int_{\cM'} |DN|^2 \leq C \bmo\, .\label{e:global_Dir}
\end{gather}
\end{corollary}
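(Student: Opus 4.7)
My plan is to obtain all three bounds by additively collecting the local estimates of Theorem~\ref{t:approx} over the Whitney decomposition $\sW$ of $[-4,4]^m$. The combinatorial bookkeeping relies on two elementary facts: the packing estimate $\sum_{L \in \sW} \ell(L)^m \leq C$, which follows since the cubes of $\sW$ have pairwise disjoint interiors and are contained in $[-4,4]^m$; and the bounded overlap / comparable-size property of the enlarged Whitney regions $\{\cL\}_{L\in\sW}$, which comes from property (w3) of the Whitney decomposition together with the mild enlargement factor $17/16$ in Definition~\ref{d:cm}(WR). Combined with $\ell(L) \leq 2^{1-N_0} \leq 1$, packing gives in particular
\[
\sum_{L \in \sW} \ell(L)^{m+2-2\delta_2} \leq C \quad \text{and} \quad \sum_{L \in \sW} \ell(L)^{m+2+\gamma_2} \leq C.
\]

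The Dirichlet inequality \eqref{e:global_Dir} will then follow by subadditivity from \eqref{e:Dir_regional}, using that $N \equiv 0$ on the contact portion of $\cM'$. For the mass estimate \eqref{e:global_masserr} I will observe that $\Phii(\bGam \cap [-\tfrac{7}{2},\tfrac{7}{2}]^m) \subset \cK$ by Definition~\ref{d:app}, and that (A2) forces $\bT_F = T$ $\modp$ on $\p^{-1}(\cK)$; hence both $\cM' \setminus \cK$ and the ``bad set'' for $\|\bT_F - T\|_p$ above $\cM'$ are contained in $\bigcup_{L \in \sW} \cL$ and $\bigcup_{L \in \sW} \p^{-1}(\cL)$, respectively, so subadditivity combined with \eqref{e:err_regional} closes the bound.

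For the $C^0$ part of \eqref{e:global_Lip}, \eqref{e:Lip_regional} and $\ell(L) \leq 1$ will yield $\|N|_\cL\|_{C^0} \leq C \bmo^{\sfrac{1}{2m}}$ on each enlarged Whitney region, while $N \equiv 0$ on $\Phii(\bGam \cap [-\tfrac{7}{2},\tfrac{7}{2}]^m)$ follows from Corollary~\ref{c:cover}(iii) together with (A2) and the definition of the normal part of $F$. For the global Lipschitz bound I will chain: on each $\cL$, \eqref{e:Lip_regional} provides Lipschitz constant at most $C\bmo^{\gamma_2} \ell(L)^{\gamma_2} \leq C\bmo^{\gamma_2}$; neighbouring enlarged regions overlap and have comparable sizes by (w3), so any pair $x,y \in \cM'$ can be joined by a quasi-geodesic in $\cM'$ partitioned into finitely many arcs, each lying in a single enlarged Whitney region, with total length $\leq C\, d(x,y)$. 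Summing the local Lipschitz bounds along the chain will give $|N(x) - N(y)| \leq C\bmo^{\gamma_2} d(x,y)$. Arcs whose endpoint lies on the contact set $\Phii(\bGam)$ will be treated by combining the vanishing of $N$ there with \eqref{e:separazione} (which, together with the enlargement factor, gives $d(x,\Phii(\bGam)) \geq c\,\ell(L)$ for $x \in \cL$) and the refined estimate $\|N|_\cL\|_{C^0} \leq C\bmo^{\sfrac{1}{2m}} \ell(L)^{1+\beta_2}$; the gain $\ell(L)^{\beta_2}$ together with $\gamma_2 = \gamma/4 < \sfrac{1}{2m}$ (the latter coming from the construction of $\gamma$ in the proof of Theorem~\ref{thm:almgren_strong_approx}, where $\beta < \sfrac{1}{2m}$ and $\gamma \leq \beta$) will allow us to absorb $\bmo^{\sfrac{1}{2m}}$ into $\bmo^{\gamma_2}$ for small $\bmo$.

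The only mildly delicate point in this plan is the chaining argument for the Lipschitz bound, especially for pairs $x,y$ with one endpoint on (or near) the contact set; all other inequalities are direct additive consequences of Theorem~\ref{t:approx} combined with (w3) and the packing identity.
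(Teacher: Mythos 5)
Your proposal is correct and it is precisely the ``not difficult to infer'' argument that the paper explicitly leaves to the reader: it sums the local estimates of Theorem~\ref{t:approx} over the Whitney decomposition via the packing inequality $\sum_{L\in\sW}\ell(L)^m\le C$, handles the contact set using $N\equiv 0$ on $\Phii(\bGam)$, and chains the local Lipschitz bounds. The paper gives no proof at all for the corollary, so you have filled in exactly what the authors elided.

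One minor imprecision worth flagging in the chaining step: the claim that any pair $x,y\in\cM'$ can be joined by a quasi-geodesic ``partitioned into finitely many arcs, each lying in a single enlarged Whitney region'' is not literally true when the connecting path approaches $\Phii(\bGam)$, since the Whitney cubes refine without bound near the contact set and the path may cross infinitely many of them. The fix is standard and does not alter the estimate: either split at the first and last intersection of the path with $\Phii(\bGam)$ (which is closed) and observe that $N$ vanishes there, then chain on the two remaining open subarcs combined with a limiting argument using the continuity of $N$; or, equivalently, note that the pointwise Lipschitz constant of $N\circ\Phii$ is $\le C\bmo^{\gamma_2}$ on all of $[-\tfrac72,\tfrac72]^m\setminus\bGam$ (using $\ell(L)^{\gamma_2}\le 1$ uniformly in $L$) and that $N$ extends continuously by $0$ to $\Phii(\bGam)$, so the telescoping bound holds in the limit. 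A second small point is that the separation estimate \eqref{e:separazione} controls $\dist(\cdot,\bGam)$ for points of the \emph{original} cube $L$, not of the enlarged region $H$; you should therefore invoke it with the Whitney cube $L(x)$ actually containing $\p_{\pi_0}\Phii^{-1}(x)$, which simultaneously gives $\dist(\Phii^{-1}(x),\bGam)\ge 2\ell(L(x))$ and $|N(x)|\le C\bmo^{\sfrac1{2m}}\ell(L(x))^{1+\beta_2}$ (since $L(x)\subset H(x)$, so $x\in\cL(x)$), from which the needed ratio bound and the absorption $\bmo^{\sfrac1{2m}}\le\bmo^{\gamma_2}$ follow exactly as you indicate. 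With these small adjustments the argument is complete.
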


\subsection{Separation and domains of influence of large excess cubes}

We now analyze more in detail the consequences of the various stopping conditions for the cubes in $\sW$. 
We first deal with $L\in \sW_h$.

\begin{proposition}[Separation]\label{p:separ}
There is a constant $C^\sharp (M_0) > 0$ with the following property.
Assume the hypotheses of Theorem \ref{t:approx} and in addition
$C_h^{2m} \geq C^\sharp C_e$. 
If $\eps_2$ is sufficiently small, then the following conclusions hold for every $L\in \sW_h$:
\begin{itemize}
\item[(S1)] $\Theta (T, q) \leq Q - \frac{1}{2}$ for every $q\in \B_{16 r_L} (p_L)$;
\item[(S2)] $L\cap H= \emptyset$ for every $H\in \sW_n$
with $\ell (H) \leq \frac{1}{2} \ell (L)$;
\item[(S3)] $\cG_{\square} \big(N (x), Q \a{\etab \circ N (x)}\big) \geq \frac{1}{4} C_h \bmo^{\sfrac{1}{2m}}
\ell (L)^{1+\beta_2}$  for every $x\in \Phii (B_{2 \sqrt{m} \ell (L)} (x_L, \pi_0))$, where $\square =s$ if $p=2Q$ or $\square = \;$ otherwise.
\end{itemize}
\end{proposition}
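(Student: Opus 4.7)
My plan is to follow the blueprint of \cite[Proposition 3.1]{DLS_Center}, with the modifications required for the $\modp$ setting --- principally working with the nonoriented excess and, when $p=2Q$, handling the $\Iqspec$-valued approximation. The heart of the argument is (S1); from it, (S2) will follow by a combinatorial argument on the dyadic structure, and (S3) via the estimates of Theorem~\ref{t:approx} relating $N$ to the sheet structure.

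First I would fix $L\in\sW_h$ and exploit the key dichotomy it carries: by the stopping criterion and Proposition~\ref{p:whitney} applied to its parent $L'\in\sS$,
\[
\bh(T,\B_L) \geq C_h \bmo^{\sfrac{1}{2m}}\ell(L)^{1+\beta_2} \quad\text{and}\quad \bE^{no}(T,\B_L) \leq C\,\bmo\,\ell(L)^{2-2\delta_2}\,,
\]
with analogous bounds on $\B_{L'}$. Lemma~\ref{l:tecnico2} and Theorem~\ref{thm:strong-alm-unoriented} then produce a Lipschitz $\pi_L$-approximation $u$ of $T$ in $\bC_{32 r_L}(p_L,\pi_L)$ with $\Lip(u)\leq C(\bmo\,\ell(L)^{2-2\delta_2})^{\gamma/2}$, agreeing with $T\modp$ on a set of near-full measure. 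The height lower bound produces two points in $\spt(T)\cap\B_L$ whose $\pi_L^\perp$-projections differ by at least $C_h\bmo^{\sfrac{1}{2m}}\ell(L)^{1+\beta_2}$; the small Lipschitz constant, combined with the hierarchy $C_h^{2m}\geq C^\sharp C_e$, would propagate this into a uniform statement: two sheets of $u$ remain separated by at least $\tfrac12 C_h\bmo^{\sfrac{1}{2m}}\ell(L)^{1+\beta_2}$ throughout a ball containing $\p_{\pi_L}(\B_{16 r_L}(p_L))$.

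For (S1), I would argue by contradiction, assuming $q_0\in\B_{16 r_L}(p_L)$ satisfies $\Theta(T,q_0)\geq Q-\tfrac12$. The monotonicity formula (Corollary~\ref{c:monotonicity}) together with Remark~\ref{r:sigma_A} yields the lower bound $\|T\|(\B_\rho(q_0)) \geq (Q-\tfrac12)\omega_m\rho^m - C\bmo^{\sfrac12}\rho^{m+1}$ for all $\rho\leq c\,r_L$. The plan is then to pick $\rho$ a small fraction of the sheet separation: on such a $\B_\rho(q_0)$ only one of the two separated sheets of $u$ can come within distance $\rho$ of $q_0$, so the mass contribution from the graph of $u$ is bounded above by $(Q-1)\omega_m\rho^m$ plus curvature corrections, while the contribution from the bad set of $u$ is controlled by \eqref{e:volume_estimate}. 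For $\eps_2$ small, the gap $\tfrac12 \omega_m\rho^m$ between the $Q-\tfrac12$ and $Q-1$ sides dominates all error terms, contradicting monotonicity. When $p=2Q$ I would handle the branches $u^\pm$ of $u\in\Iqspec$ separately: the height bound still forces the existence of two spatially separated values in $\spt(T)$ (even after accounting for orientation reversals), and the same dichotomy applies.

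Finally, for (S2) the argument is combinatorial: if $H\in\sW_n$ with $\ell(H)\leq\ell(L)/2$ and $H\cap L\neq\emptyset$, the dyadic structure of $\sC$ forces either $H\subset L$ (impossible, as descendants of $L\in\sW$ are excluded from $\sW\cup\sS$) or $\ell(H)=\ell(L)/2$ exactly, with $H$ sharing a boundary face with $L$; in the latter case $p_H\in\B_{16 r_L}(p_L)$, and (S1) obstructs the refining condition that would have labeled $H$ as a neighbor of a previously stopped cube, exactly as in \cite[Proposition 3.1]{DLS_Center}. For (S3), Theorem~\ref{t:approx} says that the normal field $N$ on $\cL$ encodes the same multivalued structure as the tilted approximation $u$ modulo negligible errors; since $\cM$ is $C^1$-close to $\pi_L$ with error $\lesssim \bmo^{\sfrac12}$, the sheet separation of $u$ transfers to $T\cM^\perp$ preserving at least half of it, and because $\cG_\square(N,Q\a{\etab\circ N})$ measures the $\ell^2$-spread of $N$ around its average, the lower bound $\tfrac14 C_h\bmo^{\sfrac{1}{2m}}\ell(L)^{1+\beta_2}$ follows after absorbing error terms for $\eps_2$ small. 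The hard part will be the quantitative execution of (S1): matching the Lipschitz constant, the sheet separation, and the monotonicity mass estimate requires the delicate calibration of parameters in Assumption~\ref{i:parametri}, and in the $\Iqspec$ regime one must carefully track orientation signs when arguing that only one sheet contributes to $\|T\|(\B_\rho(q_0))$.
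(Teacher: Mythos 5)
Your proposal is correct in spirit, but for (S1) you take a genuinely different route from the paper. The paper's proof is a one-line transcription of \cite[Section 7.1, proof of Proposition 3.1]{DLS_Center}, with the single $\modp$-specific substitution of Theorem~\ref{t:height_bound} for the classical sheeting theorem \cite[Theorem~A.1]{DLS_Center}. That theorem delivers (S1) essentially for free: under the height-stopping condition for $L\in\sW_h$, $\spt(T)$ in a suitable cylinder around $p_L$ splits into at least two disjoint bands $\bS_i$, with multiplicities $|Q_i|$ summing to $Q$ (hence each $\leq Q-1$), and conclusion~(iii) of Theorem~\ref{t:height_bound} is precisely the ready-made density bound $\Theta(T,q) < \max_i |Q_i| + \tfrac12 \leq Q - \tfrac12$. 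You do not invoke Theorem~\ref{t:height_bound}; instead you re-derive that density bound inline, running the Lipschitz approximation (Theorem~\ref{thm:strong-alm-unoriented}) and the monotonicity formula from scratch. This is in effect a re-proof of a special case of the sheeting theorem rather than a quotation of it. Both routes work, but yours is longer and calls for a parameter calibration --- the sheet separation $\sim C_h\bmo^{\sfrac{1}{2m}}\ell(L)^{1+\beta_2}$ must dominate the Lipschitz fluctuations of $u$, the bad-set mass from \eqref{e:volume_estimate}, and the monotonicity error of order $\bmo^{\sfrac12}\rho^{m+1}$; this is exactly the calculus that the hierarchy $C_h^{2m}\geq C^\sharp C_e$ is designed to close --- which you acknowledge but leave unexecuted. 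The paper absorbs that calibration into the black boxes Theorem~\ref{t:height_bound} and Proposition~\ref{p:whitney}, so that the remaining argument is a clean repetition of the integral-current case. The one $\modp$-specific point that actually changes the classical argument --- that the strip multiplicities $Q_i$ all share a sign and sum in absolute value to $Q$, so two strips force each $|Q_i|\leq Q-1$; crucially, this covers $Q=p/2$ because in that case they are either all positive or all negative --- you address correctly via the $\Iqspec$ branches. Your treatment of (S2) and (S3) is sketchy but correctly deferred to the DLS\_Center blueprint, which is exactly what the paper does.
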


A simple corollary of the previous proposition is the following.

\begin{corollary}\label{c:domains}
Given any $H\in \sW_n$ there is a chain $L =L_0, L_1, \ldots, L_j = H$ such that:
\begin{itemize}
\item[(a)] $L_0\in \sW_e$ and $L_i\in \sW_n$ for all $i>0$; 
\item[(b)] $L_i\cap L_{i-1}\neq\emptyset$ and $\ell (L_i) = \frac{1}{2} \ell (L_{i-1})$ for all $i>0$.
\end{itemize}
In particular,  $H\subset B_{3\sqrt{m}\ell (L)} (x_L, \pi_0)$.
\end{corollary}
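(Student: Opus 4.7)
My plan is to climb up the refinement tree from $H$ one level at a time, producing an intersecting ancestor at each step via the definition of $\sW_n$ and then ruling out that this ancestor can lie in $\sW_h$ via the separation property (S2) of Proposition~\ref{p:separ}. More precisely, if $H\in \sW_n^{k}$, then by the stopping condition (NN) of Definition~\ref{d:refining_procedure} I get a cube $M_1\in \sW^{k-1}$ with $H\cap M_1\neq \emptyset$ and $\ell(H) = \frac{1}{2}\ell(M_1)$. The first key observation is that $M_1$ cannot lie in $\sW_h$: if it did, applying (S2) with $M_1$ in the role of $L$ and our $H$ in the role of its $H$ (note that $\ell(H)=\frac{1}{2}\ell(M_1)$ satisfies the hypothesis) would force $M_1\cap H = \emptyset$, contradicting the choice of $M_1$. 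Hence $M_1\in \sW_e\cup \sW_n$. If $M_1\in \sW_e$ I stop; otherwise $M_1\in \sW_n$ and I iterate, producing $M_2\in \sW^{k-2}$ intersecting $M_1$, and so on with the same dichotomy.

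The iteration must terminate in finitely many steps, since by Proposition~\ref{p:whitney} we have $\sW^i = \emptyset$ for $i\le N_0+6$, so after at most $k-N_0-6$ iterations the construction must land on an $\sW_e$ cube $M_K$. The resulting sequence $H = M_0, M_1, \ldots, M_K$, with $M_K \in \sW_e$ and every intermediate $M_i\in \sW_n$, yields the desired chain by reversing the indices: set $L_i := M_{K-i}$ for $i=0,\ldots,K$ and $j:=K$. Conditions (a) and (b) of the statement are then immediate from the construction.

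For the final inclusion I will use that for any two intersecting closed cubes $A,B\subset \pi_0$ one has $|x_A - x_B|$ bounded by the sum of their half-diagonals. Since $\ell(L_i) = 2^{-i}\ell(L_0)$, this gives
\[
|x_{L_i} - x_{L_{i-1}}|\le \sqrt{m}\bigl(\ell(L_{i-1}) + \ell(L_i)\bigr) = 3\sqrt{m}\,2^{-i}\,\ell(L_0).
\]
Telescoping and adding the half-diagonal of $H$ yields, for any $y\in H$,
\[
|y - x_{L_0}|\le \sqrt{m}\,2^{-j}\,\ell(L_0) + \sum_{i=1}^{j} 3\sqrt{m}\,2^{-i}\,\ell(L_0) = \sqrt{m}\,\ell(L_0)\bigl(3-2\cdot 2^{-j}\bigr) < 3\sqrt{m}\,\ell(L_0),
\]
which is exactly the claimed inclusion $H\subset B_{3\sqrt{m}\,\ell(L_0)}(x_{L_0},\pi_0)$. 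The only substantive step in all of this is the exclusion of $\sW_h$ ancestors via (S2); the rest is combinatorial bookkeeping and elementary cube geometry, so no genuine obstacle is expected.
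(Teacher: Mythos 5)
Your proof is correct and follows essentially the approach in the reference the paper points to (\cite[Section 7.1]{DLS_Center}): climb the refinement tree via (NN), exclude $\sW_h$ ancestors via the separation property (S2) of Proposition~\ref{p:separ}, invoke $\sW^j=\emptyset$ for $j\le N_0+6$ to guarantee termination, and finish with the elementary telescoping estimate on cube centers. The arithmetic in the final inclusion is also right.
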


We use this last corollary to partition $\sW_n$.

\begin{definition}[Domains of influence]\label{d:domains}
We first fix an ordering of the cubes in $\sW_e$ as $\{J_i\}_{i\in \mathbb N}$ so that their sidelengths do not increase. Then $H\in \sW_n$
belongs to $\sW_n (J_0)$ (the domain of influence of $J_0$) if there is a chain as in Corollary \ref{c:domains} with $L_0 = J_0$.
Inductively, $\sW_n (J_r)$ is the set of cubes $H\in \sW_n \setminus \cup_{i<r} \sW_n (J_i)$ for which there is
a chain as in Corollary \ref{c:domains} with $L_0 = J_r$.
\end{definition}

\subsection{Splitting before tilting}

The following proposition contains a ``typical'' splitting-before-tilting phenomenon: the key assumption of the 
theorem (i.e. $L\in \sW_e$) is that the excess does not decay at some given scale (``tilting'') and the main conclusion \eqref{e:split_2} implies a certain amount of separation between the sheets of the current (``splitting''); see Section \ref{s:splitting} for the proof.

\begin{proposition}(Splitting I)\label{p:splitting}
There are functions $C_1 (\delta_2), C_2 (M_0, \delta_2)$ such that, if $M_0 \geq C_1 ( \delta_2)$, $C_e \geq C_2 (M_0, \delta_2)$, if
the hypotheses of Theorem~\ref{t:approx} hold and if $\eps_2$ is chosen sufficiently small,
then the following holds. If $L\in \sW_e$, $q\in \pi_0$ with $\dist (L, q) \leq 4\sqrt{m} \,\ell (L)$ and $\Omega = \Phii (B_{\ell (L)/4} (q, \pi_0))$, then (with $C, C_3 = C(\beta_2, \delta_2, M_0, N_0, C_e, C_h)$):
\begin{align}
&C_e \bmo \ell(L)^{m+2-2\delta_2} \leq \ell (L)^m \bE^{no} (T, \B_L) \leq C \int_\Omega |DN|^2\, ,\label{e:split_1}\\
&\int_{\cL} |DN|^2 \leq C \ell (L)^m \bE^{no} (T, \B_L) \leq C_3 \ell (L)^{-2} \int_\Omega |N|^2\, . \label{e:split_2}
\end{align}
\end{proposition}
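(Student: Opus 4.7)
The first inequality in \eqref{e:split_1} is immediate: by the stopping condition (EX) in Definition \ref{d:refining_procedure}, membership $L\in \sW_e$ forces
\[
\bE^{no}(T,\B_L) > C_e\,\bmo\,\ell(L)^{2-2\delta_2},
\]
and multiplying by $\ell(L)^m$ gives the claim. For the other three inequalities the plan is to mimic \cite[Proposition 3.4]{DLS_Center}, with the necessary adjustments dictated by the $\modp$ setting and by the fact that, when $Q=\frac{p}{2}$, we must work with the $\Iqspec$-valued theory from \cite{DLHMS_linear}.

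Fix the father $\hat L\in\sS$ of $L$, whose side length is $2\ell(L)$. By \eqref{e:ex+ht_ancestors} the Whitney bounds on $\bE^{no}$ and $\bh$ hold in $\B_{\hat L}$, so once $M_0$ and $C_e$ are large enough (and $\eps_2$ small enough) Lemma \ref{l:tecnico2} applies and gives a $\pi_{\hat L}$-approximation $f$ of $T$ in the cylinder $\bC_{8r_{\hat L}}(p_{\hat L},\pi_{\hat L})$, together with the associated tilted interpolation $h_{\hat L}$ from which the center manifold is built. Since $\B_L\subset \bC_{r_{\hat L}/4}(p_{\hat L},\pi_{\hat L})$ up to a set of $\|T\|$-measure $O(\bmo^{1+\gamma}\ell(L)^{m+2+\gamma})$, the Taylor expansion of the mass \eqref{e:Taylor_expansion_bis} yields
\begin{equation}\label{e:excess_to_Df_plan}
\ell(L)^m\,\bE^{no}(T,\B_L)\;\leq\; C\int_{B_{r_{\hat L}}(x_{\hat L},\pi_{\hat L})}\!\!|Df|^2 + C\,\bmo^{1+\gamma}\ell(L)^{m+2+\gamma}.
\end{equation}
The rightmost error is absorbed by the left-hand side because $L\in \sW_e$ provides the reverse lower bound $C_e\bmo\ell(L)^{2-2\delta_2}$ on $\bE^{no}(T,\B_L)$ and $\gamma>2\delta_2$ by \eqref{e:delta+beta}. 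Transferring \eqref{e:excess_to_Df_plan} from $f$ on the tilted disk to $N$ on $\Omega$ is then a routine computation: since $|\pi_{\hat L}-T_{p_L}\cM|\leq C\bmo^{\sfrac12}\ell(L)^{1-\delta_2}$ (a standard tilting estimate from the excess bounds) and $\dist(L,q)\leq 4\sqrt m\,\ell(L)$, the graphs of $f$ and $N$ coincide on a set whose complement in $\Omega$ has the desired smallness, while the Jacobians of the change of variables between the tilted and non-tilted cylinders are $1+O(\bmo^{\sfrac12})$. This gives the second inequality of \eqref{e:split_1}. The same two ingredients prove the first inequality of \eqref{e:split_2}: the Taylor expansion \cite[Equation (13.5)]{DLHMS_linear} applied on the Whitney region $\cL$ (where by construction $N$ is $C\bmo^{\gamma_2}$-Lipschitz) gives $\int_\cL|DN|^2\leq C\,\|T\|(\p^{-1}(\cL)) - 2Q\,|\cL| + \text{errors}$, and the right-hand side is bounded by $C\ell(L)^m\bE^{no}(T,\B_L)$ via \eqref{e:ex+ht_whitney} and the area formula on $\cM$.

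The delicate point is the reverse Poincar\'e, i.e.\ the second inequality of \eqref{e:split_2}. The strategy is the usual one: compare $N$ with the competitor obtained by multiplying $N$ by a cutoff. Pick $\eta\in C_c^\infty(B_{\ell(L)/4}(q,\pi_0))$ with $\eta\equiv 1$ on $B_{\ell(L)/8}(q,\pi_0)$ and $|\nabla\eta|\leq C\ell(L)^{-1}$, and consider the map $\tilde N:=(1-\eta)\odot N$ (in the sense of \cite[Section 5]{DLHMS_linear}), extended by $N$ outside $\Omega$. By the Lipschitz approximation lemma \cite[Lemma 5.5]{DLHMS_linear} (when $Q=\frac{p}{2}$) or its classical counterpart \cite[Theorem 1.7]{DLS_Qvfr} (when $Q<\frac{p}{2}$), $\tilde N$ can be realized as a Lipschitz (special) $Q$-valued map with $\|D\tilde N\|_{L^2(\Omega)}^2\leq \int_{\Omega}|DN|^2\chi_{\{\eta<1\}} + C\ell(L)^{-2}\int_\Omega|N|^2$. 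The associated graph $\bT_{\tilde F}$ is a valid $\modp$ competitor for $T$ on $\p^{-1}(\Omega)$: indeed $\tilde F=F$ on $\partial\Omega$, so the slices agree and an isoperimetric filling with mass $o(\bmo\ell(L)^{m+2})$ (constructed exactly as in Step 5 of the proof of Theorem \ref{t:harm_1}) yields an admissible competitor. Minimality $\modp$ of $T$ combined with \eqref{e:Taylor_expansion_bis} then gives
\begin{equation}\label{e:caccio_plan}
\int_{\Phii(B_{\ell(L)/8}(q,\pi_0))}|DN|^2 \;\leq\; \frac{C}{\ell(L)^2}\int_\Omega |N|^2 + C\bmo^{1+\gamma_2}\ell(L)^{m+2+\gamma_2}.
\end{equation}
A standard hole-filling iteration on concentric disks $B_{\rho}(q,\pi_0)$ with $\rho\in[\ell(L)/8,\ell(L)/4]$ upgrades the disk of integration on the left-hand side from $B_{\ell(L)/8}$ to $B_{\ell(L)/4}$, and the error term is again absorbed thanks to the lower bound coming from $L\in\sW_e$.

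The main obstacle in this plan is the construction of the admissible competitor in the $\modp$ setting and the verification of the Lipschitz bound on $\tilde N$. Constructing $\tilde N$ as a special $Q$-valued map requires care because $\Iqspec$ is not convex and multiplication by a scalar must be interpreted via the decomposition $N=(N^+,N^-,\bfeta\circ N)$; the estimate $\|D\tilde N\|_{L^2}^2\leq \int|DN|^2\chi_{\{\eta<1\}}+C\ell(L)^{-2}\int|N|^2$ has to be proved separately for the three components, using that the projection $\bfeta\circ N$ and the maps $N^\pm\ominus \bfeta\circ N$ are genuinely $\mathbb R^n$- (resp.\ $\mathcal A_Q(\mathbb R^n)$-) valued. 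Once these technical verifications are in place, the rest is the familiar Caccioppoli-type argument.
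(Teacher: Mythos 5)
The first inequality of \eqref{e:split_1} is indeed trivial from the stopping condition, as you say. But the rest of your plan has a genuine gap, and it occurs precisely at the point you dismiss as ``a routine computation.''

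After obtaining $\ell(L)^m\,\bE^{no}(T,\B_L)\leq C\int_{B_{r_{\hat L}}(x_{\hat L},\pi_{\hat L})}|Df|^2 + \text{errors}$ via the Taylor expansion, you try to pass from the Dirichlet energy of $f$ over $B_{r_{\hat L}}$ to the Dirichlet energy of $N$ over $\Omega$ using a tilt estimate and a change of variables. This cannot work because the radii are wildly mismatched: $r_{\hat L}=2M_0\sqrt{m}\,\ell(L)$ while $\Omega$ has radius $\ell(L)/4$, a factor $\sim M_0$ smaller. A change of variables and a bound on $|\pi_{\hat L}-T_{p_L}\cM|$ only show that $\int_\Omega|DN|^2$ is comparable to $\int|Df|^2$ over the \emph{corresponding} small tilted sub-ball, not over the full ball $B_{r_{\hat L}}$. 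The inequality $\ell(L)^m\bE^{no}(T,\B_L)\leq C\int_\Omega|DN|^2$ --- bounding an excess measured at scale $64r_L\sim M_0\ell(L)$ by the energy of $N$ on a ball of radius $\ell(L)/4$ --- is the heart of the ``splitting before tilting'' phenomenon and is not a soft statement. In the paper it is obtained by compactness: normalize and pass to a $\Dir$-minimizing limit $w$ via Theorem \ref{thm:final_harm_approx}, use that $L\in\sW_e$ forces the non-decay condition \eqref{e.no decay}, apply Proposition \ref{p:harmonic split} to conclude that $w$ ``splits'' (the centered part $\bar w$ carries a definite fraction of the total energy), and then invoke the Unique Continuation Lemma \ref{l:UC} to localize this to an arbitrary sub-ball of radius $\geq \eta r$. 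Without unique continuation there is simply no way to force the small disk $\Omega$ to ``see'' the energy distributed over the much larger ball.

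The same objection kills your Caccioppoli approach to the reverse Poincar\'e in \eqref{e:split_2}. A competitor argument with a cutoff gives, as you write, $\int_{\Phii(B_{\ell(L)/8}(q,\pi_0))}|DN|^2\leq C\ell(L)^{-2}\int_\Omega|N|^2 + \text{errors}$; this is a Caccioppoli inequality with strictly nested balls, and ``hole-filling'' cannot upgrade the inner ball $B_{\ell(L)/8}$ to $B_{\ell(L)/4}$ --- hole-filling improves a constant or produces decay as one \emph{shrinks} the domain, not the other way around; a same-ball inequality $\int_{\Omega}|DN|^2\leq C\ell(L)^{-2}\int_\Omega|N|^2$ is false for generic Lipschitz maps. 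What is missing is, again, the splitting and unique continuation for $\Dir$-minimizers, which is what makes the inequality true \emph{in the limit} for the harmonic approximation $w$, and which is then transferred back to $N$ by the reparametrization theorem (\cite[Theorem 15.1]{DLHMS_linear}). Your technical worries about $\Iqspec$-valued cutoffs are real but secondary; even if you resolve them perfectly, the Caccioppoli inequality you obtain would not imply the statement.
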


\subsection{Persistence of multiplicity $Q$ points}

We next state two important properties triggered by the existence of $q\in \spt (T)$ with $\Theta (T,q)=Q$,
both related to the splitting before tilting. Their proofs will be discussed in Section \ref{s:persistence}.

\begin{proposition}(Splitting II)\label{p:splitting_II}
Let the hypotheses of Theorem~\ref{t:cm} hold and assume $\eps_2$ is sufficiently small. For any
$\alpha, \bar{\alpha}, \hat\alpha >0$, there is $\eps_3 = \eps_3 (\alpha, \bar{\alpha}, \hat\alpha, \beta_2, \delta_2, M_0, N_0, C_e, C_h) >0$ as follows. 

When $Q<\frac{p}{2}$, if for some $s\leq 1$
\begin{equation}\label{e:supL}
\sup \big\{\ell (L): L\in \sW, L\cap B_{3s} (0, \pi_0) \neq \emptyset\big\} \leq s\, ,
\end{equation}
\begin{equation}\label{e:many_Q_points}
\cH^{m-2+\alpha}_\infty \big(\{\Theta (T, \cdot) = Q\}\cap \B_{s}\big) \geq \bar{\alpha} s^{m-2+\alpha},
\end{equation}
and $\min \big\{s, \bmo\big\}\leq\eps_3$, then,
\[
\sup \big\{ \ell (L): L\in \sW_e \mbox{ and } L\cap B_{19 s/16} (0, \pi_0)\neq \emptyset\big\} 
\leq \hat{\alpha} s\, .
\]

When $Q=\frac{p}{2}$, the same conclusion can be reached if \eqref{e:many_Q_points} is replaced by
\begin{equation}\label{e:many_Q_points_mannaggia}
\cH^{m-1+\alpha}_\infty \big(\{\Theta (T, \cdot) = Q\}\cap \B_{s}\big) \geq \bar{\alpha} s^{m-1+\alpha}\, .
\end{equation}
\end{proposition}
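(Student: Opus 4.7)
The plan is by contradiction and compactness, modeled on the blow-up arguments of Almgren's partial regularity theory and, more directly, on the strategy of \cite{DLS_Blowup}. Suppose no such $\eps_3$ exists for fixed $\alpha, \bar\alpha, \hat\alpha > 0$. Then I can extract a sequence $\{(T_k, \Sigma_k, s_k)\}_{k=1}^\infty$ satisfying all the hypotheses of Theorem \ref{t:cm} together with \eqref{e:supL} and the density lower bound \eqref{e:many_Q_points} (resp.\ \eqref{e:many_Q_points_mannaggia}), with $\min\{s_k, \bmo^{(k)}\} \to 0$, and with cubes $L_k \in \sW_e^{(k)}$ intersecting $B_{19 s_k/16}(0,\pi_0)$ of sidelength $\ell(L_k) > \hat\alpha s_k$. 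Since the nonoriented excess is scale-invariant, and rescaling by $s_k^{-1} \geq 1$ only decreases the curvature contribution $\mathbf{c}(\Sigma_k)$ to $\bmo^{(k)}$, after replacing $T_k$ by $(\iota_{0,s_k})_\sharp T_k$ I may assume $s_k \equiv 1$, $\bmo^{(k)} \to 0$ and $\ell(L_k) \geq \hat\alpha$.

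Next I would apply Proposition \ref{p:splitting} to the cube $L_k$ to obtain the lower bound
\[
\int_{\cL_k} |DN_k|^2 \geq C^{-1} C_e\,\bmo^{(k)}\,\hat\alpha^{m+2-2\delta_2},
\]
where $N_k$ is the $\cM_k$-normal approximation of Theorem \ref{t:approx}, while Corollary \ref{c:globali} provides the matching upper bound $\int_{\cM_k'} |DN_k|^2 \leq C \bmo^{(k)}$; by \eqref{e:Lip_regional} and \eqref{e:small ex} the $L^\infty$ oscillation of $N_k$ vanishes in the limit. Setting $\hat N_k := (\bmo^{(k)})^{-1/2} N_k$ and recalling that, by Lemma \ref{l:tecnico3} and Theorem \ref{t:cm}(i), the center manifolds $\cM_k$ converge in $C^{2}$ to an $m$-plane $\pi_\infty$, I would mimic the compactness scheme of Theorem \ref{t:harm_1} (including its ``$b=0$'' alternative to accommodate translating sheets when $Q = p/2$) to extract a nontrivial $\Dir$-minimizing limit $N_\infty$ with values in $\A_Q(\R^{n})$ if $Q < p/2$, or in $\mathscr{A}_Q(\R^{n})$ if $Q = p/2$. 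Non-triviality is forced by the Splitting~I lower bound.

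The decisive ingredient is the \emph{persistence of $Q$-points}: every $q \in \{\Theta(T_k, \cdot) = Q\} \cap \B_1$ projects onto a point $x = \p_{\cM_k}(q)$ at which $N_k$ collapses, i.e.\ $N_k(x) = Q\a{0}$ (resp.\ $(Q\a{0},\pm 1)$ in the special setting). I would prove this by arguing that the Whitney cube $H$ of $\pi_0$ containing the $\pi_0$-projection of $q$ must lie in $\bGam_k$: indeed, Proposition \ref{p:separ}(S1) rules out $H \in \sW_h^{(k)}$; and if $H \in \sW_e^{(k)} \cup \sW_n^{(k)}$, then Proposition \ref{p:splitting} together with the chain structure of Corollary \ref{c:domains}, the height bound \eqref{e:pre_height}, the slice identity of Corollary \ref{c:cover}(ii), and the monotonicity formula (Corollary \ref{c:monotonicity}) would yield a quantitative separation of the sheets of $T_k$ near $q$ incompatible with $\Theta(T_k, q) = Q$. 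Once persistence is established, the set $\cK_k^\Theta := \p_{\cM_k}(\{\Theta(T_k, \cdot) = Q\} \cap \B_1)$ has $\cH^{m-2+\alpha}_\infty(\cK_k^\Theta) \geq c\, \bar\alpha$ (resp.\ $\cH^{m-1+\alpha}_\infty \geq c\,\bar\alpha$) uniformly in $k$. Upper semicontinuity of $\cH^s_\infty$ under Hausdorff convergence of compact sets then transfers the lower bound to the collapse set $\{N_\infty = Q \a{\etab\circ N_\infty}\}$ of $N_\infty$. This contradicts \cite{DLS_Qvfr}, according to which the branch set of a non-trivial $\Dir$-minimizing $Q$-valued function has Hausdorff dimension at most $m-2$, in the case $Q < p/2$; and \cite[Theorem 10.2]{DLHMS_linear}, according to which the singular set of a non-trivial $\Dir$-minimizing special $Q$-valued function has Hausdorff dimension at most $m-1$, in the case $Q = p/2$.

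The main obstacle is the persistence step. The stopping conditions (EX), (HT) and (NN) only provide integral information on $N_k$, whereas persistence is a pointwise statement at $Q$-density points. Combining the $L^\infty$ estimate \eqref{e:Lip_regional}, the separation (S3) of Proposition \ref{p:separ}, and the quantitative splitting \eqref{e:split_1} with the slice analysis of Corollary \ref{c:cover} and the monotonicity formula is the delicate step. The case $Q = p/2$ is further complicated by the possibility of sign changes within each domain of influence $\sW_n(J_r)$, which forces the use of the more flexible ``$b=0$'' limit of Theorem \ref{t:harm_1}; one must verify that this extra flexibility does not destroy the collapse structure inherited from the $Q$-density points, which is automatic because translating individual sheets does not affect whether the limit takes a value of the form $(Q\a{z},\pm 1)$.
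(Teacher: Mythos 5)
Your overall skeleton---contradiction and compactness, the Splitting I lower bound to force a nontrivial $\Dir$-minimizing blow-up, persistence of $Q$-points, and a final contradiction with the linear theory of \cite{DLS_Qvfr} (for $Q<\sfrac p2$) or \cite[Theorem 10.2]{DLHMS_linear} (for $Q=\sfrac p2$)---matches the paper's strategy, which adapts \cite[Proofs of Propositions 3.5 \& 3.6]{DLS_Center}. But the persistence step, which you yourself flag as ``the decisive ingredient'' and the ``delicate step,'' is both misstated and not actually provable by the argument you sketch. You claim that every $q$ with $\Theta(T_k,q)=Q$ projects to a point $x=\p_{\cM_k}(q)$ where $N_k$ \emph{collapses pointwise}, i.e.\ $N_k(x)=Q\a{0}$, and you try to deduce this by showing that the $\pi_0$-projection of $q$ must lie in $\bGam_k$, ruling out $\sW_h$ via Proposition \ref{p:separ}(S1) and $\sW_e\cup\sW_n$ via Proposition \ref{p:splitting} combined with Corollary \ref{c:domains}, the height bound, slicing, and monotonicity. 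This is false: density-$Q$ points routinely occur \emph{inside} the regions covered by $\sW_e$ and $\sW_n$ cubes (think of branch points of genuine multi-sheeted area minimizers). Conclusion (S1) indeed excludes $Q$-points near $\sW_h$ cubes, but Splitting I gives no quantitative sheet separation that would be incompatible with $\Theta(T_k,q)=Q$ in $\sW_e\cup\sW_n$. Were your claim correct, it would force every $Q$-density point into the contact set, which would trivialize much of the center manifold machinery. The actual statement the paper needs and proves (Proposition \ref{p:persistence}, via Theorem \ref{t:persistence}, the $\modp$ analogue of \cite[Theorem 2.7]{DLS_Lp}) is an \emph{$L^2$-averaged} estimate: near a $Q$-point $\p(q)$, the averaged deviation $\mint_{\cB_{\bar s\ell(L)}(\p(q))}\cG_\square(N,Q\a{\etab\circ N})^2$ is small compared to $\ell(L)^{2-m}\int_{\cB_{\ell(L)}(\p(q))}|DN|^2$; this is proved by a separate compactness argument combining the strong Lipschitz approximation with the monotonicity formula (as in \cite[Section 9]{DLS_Lp} or via the Hardt--Simon inequality), not deduced from separation.

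A secondary issue: your remark that the ``$b=0$'' translating-sheets alternative ``does not affect whether the limit takes a value of the form $(Q\a{z},\pm 1)$'' is not automatic. Whether the special-$Q$-valued limit collapses at a point depends on the mutual positions of all the sheets, which are modified by translation; one must propagate the correct $L^2$ persistence estimate to the limit rather than invoke a pointwise identity. There is also a minor imprecision in the rescaling step: replacing $T_k$ by $(\iota_{0,s_k})_\sharp T_k$ does not carry the already-constructed center manifold and Whitney decomposition along in a direct way, and in the regime $\bmo^{(k)}\to 0$ with $s_k$ bounded away from $0$ no rescaling is needed; the argument should be organized around the fixed center manifolds $\cM_k$, as in \cite{DLS_Center}.
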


\begin{proposition}(Persistence of $Q$-points)\label{p:persistence}
Assume the hypotheses of Proposition \ref{p:splitting} hold.
For every $\eta_2>0$ there are $\bar{s}, \bar{\ell} > 0$, depending upon $\eta_2, \beta_2, \delta_2, M_0, N_0, C_e$ and $C_h$, such that,
if $\eps_2$ is sufficiently small, then the following property holds. 
If $L\in \sW_e$, $\ell (L)\leq \bar\ell$, $\Theta (T, q) = Q$ and
$\dist (\p_{\pi_0} (\p (q)), L) \leq 4 \sqrt{m} \,\ell (L)$, then
\begin{equation}\label{e:persistence}
\mint_{\cB_{\bar{s} \ell (L)} (\p (q))} \cG_{\square} \big(N, Q \a{\etab\circ N}\big)^2 \leq \frac{\eta_2}{\ell(L)^{m-2}}
\int_{\cB_{\ell (L)} (\p (q))} |DN|^2\, ,
\end{equation}
where $\square =s$ if $p=2Q$ or $\square = \;$ otherwise.
\end{proposition}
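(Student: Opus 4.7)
Assume the claim fails for some $\eta_2>0$. Then there are sequences $\bar s_k, \bar\ell_k\downarrow 0$ and, for each $k$, a center manifold $\cM_k$ with associated $\cM_k$-normal approximation $(\cK_k,F_k)$ with normal part $N_k$, a cube $L_k\in\sW_e^{(k)}$ of side $\ell_k\leq\bar\ell_k$, and a point $q_k\in\spt(T_k)$ with $\Theta(T_k,q_k)=Q$ and $\dist(\p_{\pi_0}(\p_k(q_k)),L_k)\leq 4\sqrt m\,\ell_k$, but
\[
\mint_{\cB_{\bar s_k\ell_k}(\p_k(q_k))}\cG_\square(N_k,Q\a{\etab\circ N_k})^2\;>\;\frac{\eta_2}{\ell_k^{m-2}}\int_{\cB_{\ell_k}(\p_k(q_k))}|DN_k|^2\,.
\]
Because $L_k\in\sW_e^{(k)}$, Proposition~\ref{p:splitting} gives $E_k:=\bE^{no}(T_k,\B_{L_k})\sim\bmo_k\,\ell_k^{2-2\delta_2}$ and the two-sided comparison
\[
C^{-1}\,\ell_k^m E_k\;\leq\;\int_{\cL_k}|DN_k|^2\;\leq\;C\,\ell_k^m E_k\,.
\]

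\textbf{Blow-up.} After applying an isometry sending $\p_k(q_k)$ to the origin and (by the estimates \eqref{e:Lip_regional}--\eqref{e:Dir_regional}) flattening the center manifolds to $\R^m$ with errors controlled by $\bmo_k^{1/2}\to 0$, rescale by $\ell_k$ and normalize by setting
\[
\tilde N_k(y):=E_k^{-1/2}\,N_k(\ell_k y)\,.
\]
From Corollary~\ref{c:globali}, the splitting estimate, and the local estimates on $N_k$, the maps $\tilde N_k$ have equibounded Dirichlet energy on $B_2$ and equibounded $L^\infty$ norm (by the height bound \eqref{e:ex+ht_whitney}). An adaptation of the harmonic approximation argument (Theorem~\ref{thm:final_harm_approx}) carried out for the normal approximation on the center manifold—combined with the compactness statement for $\Dir$-minimizers in $\cA_Q$ (respectively in $\mathscr{A}_Q$, see \cite{DLHMS_linear}) and Proposition~\ref{p:compactness}—yields, along a subsequence, $\tilde N_k\to u_\infty$ strongly in $L^2(B_1)$, where $u_\infty$ is $\Dir$-minimizing into $\cA_Q(\R^n)$ when $Q<p/2$, or into $\mathscr{A}_Q(\R^n)$ when $Q=p/2$.

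\textbf{Density $Q$ persists in the limit.} The hypothesis $\Theta(T_k,q_k)=Q$ together with the monotonicity of the mass ratio and the fact that $q_k-\p_k(q_k)$ is at distance $\leq C\bmo_k^{1/2m}\ell_k^{1+\beta_2}$ from $\p_k(q_k)$ (from the height bound in Corollary~\ref{c:cover}(ii)) ensures that, after rescaling, all the mass of $T_k$ concentrates at the origin in the limit. This forces $u_\infty(0)=Q\a{y_0}$ for some $y_0\in\R^n$; indeed if $u_\infty(0)=\sum_i\a{y_i}$ had two distinct values, one could produce, via a Fubini argument on small balls, a lower bound on $\bE^{no}$ of the rescaled currents around the origin incompatible with the existence of the density-$Q$ point (this is the same kind of constancy/separation argument used for Proposition~\ref{p:splitting_II} and for Lemma~\ref{l:symmetric}).

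\textbf{Closing the contradiction.} The $\Dir$-minimizer $u_\infty$ is Hölder continuous, so
\[
\mint_{B_{\bar s}(0)}\cG_\square(u_\infty,Q\a{\etab\circ u_\infty})^2\;\longrightarrow\;\cG_\square(u_\infty(0),Q\a{y_0})^2=0\qquad\text{as }\bar s\downarrow 0,
\]
whereas the lower splitting bound passes to the limit as $\int_{B_1}|Du_\infty|^2\geq c>0$. In particular, for $\bar s$ small enough (depending only on $\eta_2$ and the uniform modulus of continuity of $\Dir$-minimizers with equibounded energy, which exists by the compactness of the class), the ratio
\[
\frac{\mint_{B_{\bar s}(0)}\cG_\square(u_\infty,Q\a{\etab\circ u_\infty})^2}{\int_{B_1}|Du_\infty|^2}\;<\;\frac{\eta_2}{2}\,.
\]
The strong $L^2$ convergence of $\tilde N_k$ and of $\etab\circ\tilde N_k$ (the latter upgraded via \eqref{e:av_region}), together with the lower bound $\int_{B_1}|D\tilde N_k|^2\geq c$, transfer this estimate back to $\tilde N_k$ for large $k$, contradicting our standing assumption with $\bar s=\bar s_k\leq\bar s$.

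\textbf{Main obstacle.} The delicate point is justifying, rigorously and uniformly in $k$, the strong convergence $\tilde N_k\to u_\infty$ together with the identification $u_\infty(0)=Q\a{y_0}$. The first requires an appropriate version of the harmonic approximation theorem tailored to normal approximations on the center manifold (rather than to the Lipschitz graphical approximation of Theorem~\ref{thm:final_harm_approx}); the second uses the Corollary~\ref{c:cover}(iii)-type control for points of density $Q$ lying over $\bGam$ combined with the BV-slicing arguments. Once these are in place, everything else is routine compactness and continuity of $\Dir$-minimizers.
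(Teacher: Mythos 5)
Your proposal sketches a blow-up argument, but it leaves unproven precisely the step that constitutes the mathematical content of the proposition: the identification $u_\infty(0)=Q\a{y_0}$. You assert that "if $u_\infty(0)=\sum_i\a{y_i}$ had two distinct values, one could produce, via a Fubini argument on small balls, a lower bound on $\bE^{no}$ of the rescaled currents around the origin incompatible with the existence of the density-$Q$ point," but no such argument is given, and it is not at all clear that a naive Fubini/slicing argument yields it. Note that density $Q$ plus the monotonicity formula gives an \emph{upper bound} on the mass ratio at the point $q$ for small radii; turning this into information on the sheets of the approximating map requires a careful propagation across scales (in the original Almgren/DLS setting this is done via iterated excess decay together with H\"older estimates for Dir-minimizers and the strong Lipschitz approximation, or alternatively via a Hardt–Simon type inequality). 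Your own closing paragraph acknowledges exactly this obstacle, which means the heart of the claim remains undemonstrated. Moreover, "strong convergence $\tilde N_k \to u_\infty$" to a $\Dir$-minimizer is not available off the shelf for normal approximations on (varying) center manifolds; you invoke "an adaptation of the harmonic approximation argument (Theorem~\ref{thm:final_harm_approx})" tailored to $N_k$, but such an adaptation is itself a substantial piece of work that your write-up does not supply.

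The paper does not carry out this blow-up directly. Instead, it first proves an intermediate Theorem \ref{t:persistence}, which is a persistence statement for the \emph{graphical} Lipschitz approximation $f$ of Theorem \ref{thm:strong-alm-unoriented} rather than for the $\cM$-normal approximation $N$; this is the analog of \cite[Theorem 2.7]{DLS_Lp}, and its proof is taken from \cite[Section 9]{DLS_Lp}, which makes essential use of the monotonicity formula (Lemma \ref{l:monot}), the H\"older regularity of $\Dir$-minimizers (classical or special), and the strong approximation estimates. Proposition \ref{p:persistence} is then deduced from Theorem \ref{t:persistence} following \cite[Proof of Proposition 3.6]{DLS_Center}, which handles precisely the transfer from the graphical approximation $f$ to the normal approximation $N$ on the tilted cube $L$. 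In short: the route is graphical approximation $\Rightarrow$ persistence for $f$ $\Rightarrow$ persistence for $N$, not a direct blow-up of $N$. Your proposal is correct in spirit about what the outcome should look like but does not actually prove the persistence of the $Q$-value in the limit, and bypassing the graphical intermediary would require you to reprove, for $N$ on the center manifold, much of the analysis that Theorem \ref{t:persistence} already packages in the flat setting.
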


\subsection{Comparison between center manifolds}

We list here a final key consequence of the splitting before tilting phenomenon. $\iota_{0,r}$ denotes the map $z\mapsto \frac{z}{r}$.

\begin{proposition}[Comparing center manifolds]\label{p:compara}
There is a geometric constant $C_0$ and a function $\bar{c}_s (\beta_2, \delta_2, M_0, N_0, C_e, C_h) >0$ with
the following property. Assume the hypotheses of Proposition \ref{p:splitting}, $N_0 \geq C_0$, $c_s := \frac{1}{64\sqrt{m}}$
and $\eps_2$ is sufficiently small. If for some $r\in ]0,1[$:
\begin{itemize}
\item[(a)] $\ell (L) \leq  c_s \rho$ for every $\rho> r$ and every
$L\in \sW$ with $L\cap B_\rho (0, \pi_0)\neq \emptyset$;
\item[(b)] $\bE^{no} (T, \B_{6\sqrt{m} \rho}) < \eps_2$ for every $\rho>r$;
\item[(c)] there is $L\in \sW$ such that $\ell (L) \geq c_s r $ and $L\cap \bar B_r (0, \pi_0)\neq\emptyset$;
\end{itemize}
then
\begin{itemize}
\item[(i)] the current $T':= (\iota_{0,r})_\sharp T \res \B_{6\sqrt{m}}$ and the submanifold 
$\Sigma':= \iota_{0,r} (\Sigma)\cap \bB_{7\sqrt{m}}$ satisfy the assumptions
of Theorem \ref{t:approx} for
some plane $\pi$ in place of $\pi_0$;
\item[(ii)] for the center manifold $\cM'$ of $T'$ relative to $\pi$ and the 
$\cM'$-normal approximation $N'$ as in Theorem \ref{t:approx}, we have
\begin{equation}\label{e:restart}
\int_{\cM'\cap \bB_2} |N'|^2 \geq \bar{c}_s \max
\big\{\bE^{no} (T', \bB_{6\sqrt{m}}), \mathbf{c} (\Sigma')^2\big\}\, . 
\end{equation}
\end{itemize}
\end{proposition}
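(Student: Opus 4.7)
The proof naturally splits into the verification (i) and the substantive estimate (ii).

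For (i), the homothety $\iota_{0,r}$ preserves density and area-minimality $\modp$, so $T'$ is an area-minimizing representative $\modp$ with $\Theta(T',0)=Q$ and $(\partial T')\res\bB_{6\sqrt{m}}=0\,\modp$; it rescales curvature by $r\leq 1$, giving $\mathbf{c}(\Sigma')^2\leq \bmo$. A direct computation shows the scale invariance of the nonoriented excess, $\bE^{no}(\iota_{0,r\sharp}T,\bB_s(x))=\bE^{no}(T,\bB_{rs}(rx))$, and letting $\rho\downarrow r$ in hypothesis (b) therefore yields $\bE^{no}(T',\bB_{6\sqrt{m}})\leq \eps_2$. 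Choosing $\pi$ to be an optimal plane for $T'$ in $\bB_{6\sqrt{m}}$ and applying Lemma \ref{l:tecnico3} after a rotation, Assumption \ref{ipotesi} holds for the triple $(T',\Sigma',\pi)$, so Theorem \ref{t:approx} applies and produces both $\cM'$ and the $\cM'$-normal approximation $N'$.

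For (ii), the strategy is to exhibit a Whitney cube $L'$ of the decomposition associated with $\cM'$ of side $\ell(L')\geq c'>0$ meeting $\bar B_1$, with $c'$ depending only on $\beta_2,\delta_2,M_0,N_0,C_e,C_h$. Given such a cube, a three-way case analysis closes the argument. If $L'\in\sW_e$, Proposition \ref{p:splitting} applied to $T'$, $\cM'$, $N'$ yields a region $\Omega'\subset \cM'\cap\bB_2$ with
\[
\int_{\Omega'}|N'|^2\;\geq\; \frac{\ell(L')^2}{C_3}\int_{\cL'}|DN'|^2\;\geq\; \frac{C_e}{C\,C_3}\,\bmo'\,\ell(L')^{m+4-2\delta_2}\;\geq\; \bar{c}_s\,\bmo',
\]
using $\ell(L')\geq c'$. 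If $L'\in\sW_h$, Proposition \ref{p:separ}(S3) gives $\cG_{\square}(N'(x),Q\a{\etab\circ N'(x)})^2\geq \tfrac{1}{16}C_h^2(\bmo')^{1/m}\ell(L')^{2+2\beta_2}$ on a subregion of $\cM'$ of area $\sim \ell(L')^m$; integrating and using $|N'|^2\geq \cG(N',Q\a{\etab\circ N'})^2$ together with $\bmo'\leq 1 \Rightarrow (\bmo')^{1/m}\geq\bmo'$, the desired bound follows. If $L'\in\sW_n$, Corollary \ref{c:domains} traces $L'$ back to an ancestor $L_0\in\sW_e$ with $\ell(L_0)\geq 2\,\ell(L')$, reducing to case (A).

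The heart of the proof is therefore producing the large cube $L'$. I would start from the cube $L$ provided by hypothesis (c): it has $\ell(L)\sim c_s r$, meets $\bar B_r$, and stopped because one of (EX), (HT), or (NN) triggered. Under $\iota_{0,r}$, the ball $\bB_L$ becomes a ball of radius $\geq M_0$ centered at distance $O(1)$ from the origin in the ambient space of $T'$. Scale-invariance of $\bE^{no}$ and the linear scaling of $\bh$ translate the stopping condition for $L$ into a corresponding quantitative statement for $T'$ at a scale of order $c_s$ near the origin. Combined with hypothesis (a) — which, after rescaling, bounds from above the size of Whitney cubes of $T'$ at scales larger than $1$ — this forces at least one of the stopping conditions for $T'$'s Whitney algorithm to activate at a scale bounded below by a positive constant in a region meeting $\bar B_1$, whence the existence of $L'$.

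The main obstacle is the careful bookkeeping required in this transfer: the two Whitney decompositions are built from different reference planes ($\pi_0$ vs.\ $\pi$), with $|\pi-\pi_0|\leq C\sqrt{\bmo'}$, so rotational corrections must be tracked throughout; moreover, $\bmo$ and $\bmo'$ are not directly comparable (in general $\bmo'$ may be much smaller than $\bmo$), forcing one to distinguish the cases where $\bmo'$ is dominated by $\mathbf{c}(\Sigma')^2$ versus by $\bE^{no}(T',\bB_{6\sqrt{m}})$ in order to verify that the stopping threshold is actually met by the translated quantities for $T'$.
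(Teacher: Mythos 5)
Your part (i) is fine and matches what the paper (by reference to \cite[Section 9]{DLS_Center}) does. Part (ii) takes a genuinely different route, and the route has a real gap.

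The central move in your sketch is to \emph{exhibit a cube $L'$ of size $\gtrsim 1$ in the new Whitney decomposition $\sW'$ for $T'$}, and then run the splitting/separation/neighbour trichotomy on $L'$ using $T'$, $\cM'$, $N'$. The paper does not do this, and you have not actually proved that such an $L'$ exists. The stopping condition that $L$ triggered in $\sW$ is stated in terms of $\bmo$ and the old reference plane $\pi_0$; the stopping condition for $\sW'$ is stated in terms of $\bmo'$ and the new reference plane $\pi$. There is no clean inequality between $\bmo$ and $\bmo'$ (condition (b) gives $\bmo' \leq \eps_2^2$, not $\bmo' \leq C\,\bmo$), and the two grid systems live on different planes with only a $O(\sqrt{\bmo'})$ tilt bound between them. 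Your closing sentence ("this forces at least one of the stopping conditions for $T'$'s Whitney algorithm to activate'') is an assertion, not an argument: one can perfectly well imagine a scenario in which the rescaled quantity that caused $L$ to stop, when re-measured against the new plane $\pi$ and against the new normalisation $\bmo'$, no longer exceeds the new threshold $C_e\,\bmo'\,\ell^{2-2\delta_2}$ (or $C_h\,(\bmo')^{1/2m}\,\ell^{1+\beta_2}$). You flag this yourself as "the main obstacle'' but leave it unresolved, which means the heart of the argument is missing.

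The paper's proof, following \cite[Proposition 3.7]{DLS_Center}, proceeds the other way around and avoids this problem entirely. One works with the \emph{old} cube $L$ from hypothesis (c) and applies Proposition \ref{p:separ}, Proposition \ref{p:splitting}, or Proposition \ref{p:persistence} (depending on the case $L\in\sW_h$, $\sW_e$, $\sW_n$) to the \emph{old} approximation $N$ on the \emph{old} center manifold $\cM$, obtaining a lower bound for $\int |N|^2$ on a region of size $\sim r$ near $L$. One then shows that the two center manifolds $\cM$ (rescaled) and $\cM'$ are close over $\bB_2$, and uses the reparametrization Theorem \cite[Theorem~15.1]{DLHMS_linear} (the $\mathscr{A}_Q$ analogue of \cite[Theorem~5.1]{DLS_Currents}) to transfer the $L^2$ lower bound from $N$ to $N'$ up to errors estimable by the center manifold estimates. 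The comparison between $\bmo$ and $\bmo'$ then enters only at the final step of the bookkeeping, via $\mathbf{c}(\Sigma')^2 \leq r^2 \bmo$ and a bound on $\bE^{no}(T',\bB_{6\sqrt m})$ using that all ancestors $J\in\sS$ of $L$ satisfy $\bE^{no}(T,\B_J) \leq C_e\,\bmo\,\ell(J)^{2-2\delta_2}$. Without this reparametrization-based transfer, or a complete alternative argument producing $L'$, your proof does not close.
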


\section{Height bound and first technical lemmas}\label{ss:height}

We can now discuss the proofs of the main results outlined in the previous section. We begin with a $\modp$ version of the sheeting lemma appearing in \cite[Theorem A.1]{DLS_Center}. 

\begin{theorem}\label{t:height_bound} Let $p$, $Q$, $m$, $\bar{n}$ and $n$ be positive integers, with $Q \leq \frac{p}{2}$. Then there are
$\eps (Q,m,p,\bar{n}, n)>0$, $\omega (Q, m, p, \bar{n}, n)>0$, and $C_0 (Q, m, \bar{n}, n)$ with the following property. For $r>0$ and $\bC = \bC_{r} (x_0) = \bC_{r}(x_0, \pi_0)$ assume:
\begin{itemize}
\item[(h1)] $\Sigma$ and $T$ are as in Assumption \ref{ass:main};
\item[(h2)] $\partial T\res \bC = 0 \; \modp$, $(\p_{\pi_0})_\sharp T
\res \bC = Q\a{B_r (\p_{\pi_0} (x_0), \pi_0)} \; \modp$,
and ${E:= \bE (T, \bC) < \eps}$.
\end{itemize}
Then there are $k\in \N\setminus\{0\}$, points $\{y_1, \ldots, y_k\}\subset \R^{n}$ and integers $Q_1, \ldots, Q_k$ such that:
\begin{itemize}
\item[(i)] having set $\sigma:= C_0 (E + \bA^2)^{\frac{1}{2m}}$ and $\rho:= r (1- 2(E+\bA^2)^\omega)$, the open sets 
\[
\bS_i := \R^m \times (y_i +\, ]-r \sigma, r \sigma[^n)
\]
are pairwise disjoint and 
\[
\spt (T)\cap \bC_\rho (x_0) \subset \bigcup_i \bS_i\, ;
\]
\item[(ii)] $(\p_{\pi_0})_\sharp [T \res (\bC_\rho (x_0) \cap \bS_i)] =Q_i  \a{B_\rho (\p_{\pi_0} (x_0), \pi_0)}$ $\modp$ $\forall i\in \{1, \ldots , k\}$, with $Q_i \in \mathbb Z$. When $Q<\frac{p}{2}$ all $Q_i$ must be positive, whereas for $Q=\frac{p}{2}$ either they are all positive or they are all negative; in any case, $\sum_i \abs{Q_i} = Q$;
\item[(iii)] for every $q\in \spt (T)\cap \bC_\rho (x_0)$ we have $\Theta (T, q) < \max_i  |Q_i| + \frac{1}{2}$.
\end{itemize}
If we keep the same assumptions with $E$ replaced by $E^{no} := \bE^{no} (T, \bC)$, the conclusions hold if we set $\rho := r (1-\eta - 2(E+\bA^2)^\omega)$, where $\eta>0$ is any fixed constant (in turn $\varepsilon$ will depend also on $\eta$).  
\end{theorem}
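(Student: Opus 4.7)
After scaling we may assume $x_0=0$ and $r=1$. The plan is to follow the scheme of \cite[Theorem A.1]{DLS_Center}, with two modifications: replacing classical $Q$-valued approximations by the special $\mathscr{A}_Q$-valued approximations of Theorem \ref{thm:almgren_strong_approx} when $Q=p/2$, and replacing the classical constancy lemma by its mod $p$ counterpart (Lemma \ref{l:constancy}). The non-oriented variant reduces to the oriented one via Theorem \ref{thm:strong-alm-unoriented} applied in the slightly smaller cylinder $\bC_{1-\eta}$, which converts the hypothesis on $E^{no}$ into a comparable bound on $\bE(T,\bC_{1-\eta})$.

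\textbf{Step 1: preliminary height bound.} The first task is to establish $\bh(T,\bC_{\rho_1}) \le C_0(E+\bA^2)^{\sfrac{1}{2m}}$ on a slightly smaller cylinder. I would apply Theorem \ref{thm:almgren_strong_approx} to obtain the Lipschitz approximation $u$ and the ``good set'' $K\subset B_{\rho_0}$ with $|B_{\rho_0}\setminus K|\le C(E+\bA^2)^{1+\gamma}$. On the graph of $u$ the height is directly controlled by ${\rm osc}(u)\le C(E+\bA^2)^{\sfrac{1}{2m}}$ (via Morrey and the Dirichlet bound $\int|Du|^2\le CE$). For a point $q\in\spt(T)$ lying over $B_{\rho_0}\setminus K$, I invoke the stationarity of $\V(T)$ (Lemma \ref{l:varifold}) and the monotonicity formula (Corollary \ref{c:monotonicity}) to deduce that the ball $\bB_s(q)$ must contain mass of order $s^m$; combined with the smallness of $\|T\|$ outside $K\times\R^n$ this forces $s\le C(E+\bA^2)^{\sfrac{1}{2m}}$, whence $q$ lies within that distance of the graph. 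This is the classical sheeting argument, and it applies verbatim in the mod $p$ setting since only the varifold structure is used.

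\textbf{Step 2: sheet identification and multiplicities.} With the height bound in force, the values of $u$ cluster, at each point of the domain, into at most $Q$ points lying in a set of diameter $\le C\sigma$ in the vertical direction. Choose the minimal partition of $\R^n$ into open strips $\bS_1,\dots,\bS_k$ of the form $\R^m\times(y_i+\,]-\sigma,\sigma[^n)$ such that the strips are pairwise at distance $\ge 2\sigma$ and $\spt(T)\cap\bC_{\rho_1}\subset\bigsqcup_i\bS_i$; this is possible by the height bound and a combinatorial clustering of the sheets of $u$. The slicing formula in Lemma \ref{modp_slicing_formula}, applied to the function $f(x)=\dist(x,\bS_i)$ at a regular value inside the gap, shows that $T\res(\bC_{\rho_1}\cap \bS_i)$ has vanishing boundary mod $p$ in $\bC_{\rho_1}$. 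The constancy lemma mod $p$ (Lemma \ref{l:constancy}) then yields $(\p_{\pi_0})_\sharp(T\res(\bC_{\rho_1}\cap\bS_i))=Q_i\a{B_{\rho_1}(0,\pi_0)}\pmod p$ for integers $Q_i\in[-p/2,p/2]$.

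\textbf{Step 3: algebraic constraints and density bound.} Summing the sheet-wise identities and comparing with the hypothesis $(\p_{\pi_0})_\sharp T=Q\a{B_1}\pmod p$ gives $\sum_i Q_i\equiv Q\pmod p$. Since $T$ is representative mod $p$, the multiplicity function $\theta$ of $T$ satisfies $|\theta|\le p/2$, so the mass lower bound $\|T\|(\bC_{\rho_1})\ge \omega_m\rho_1^m\sum_i|Q_i|$ combined with the excess upper bound $\|T\|(\bC_{\rho_1})\le(Q+E)\omega_m\rho_1^m$ forces $\sum_i|Q_i|=Q$. If $Q<p/2$ the congruence $\sum Q_i\equiv Q\pmod p$ combined with $\sum|Q_i|=Q<p/2$ immediately yields $Q_i>0$ for all $i$. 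If $Q=p/2$, writing $A=\sum_{Q_i>0}Q_i$ and $B=\sum_{Q_i<0}|Q_i|$, we have $A+B=p/2$ and $A-B\equiv p/2\pmod p$, so $A-B\in\{-p/2,p/2\}$, forcing either $B=0$ or $A=0$. The density bound (iii) follows from the monotonicity formula and the upper semicontinuity of density, since the local structure around any $q\in\spt(T)\cap\bS_i$ shows that small balls centered at $q$ only see mass from the $i$-th sheet, yielding $\Theta(T,q)\le|Q_i|<\max_i|Q_i|+\tfrac12$ modulo a control of the error coming from the other sheets at scales larger than $\sigma$.

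\textbf{Main obstacle.} The technical heart lies in \emph{Step 1}: the classical interior height estimate for area minimizing currents rests on the varifold monotonicity together with the Lipschitz approximation, both of which we have in our setting. The real novelty, rather than a genuine obstacle, is the algebraic book-keeping in Step 3 when $Q=p/2$: it is precisely the possibility of having \emph{all sheets oriented negatively} that makes orientations unstable under convergence and forces the introduction of special multi-valued maps $\mathscr{A}_Q(\R^n)$ in the linear theory.
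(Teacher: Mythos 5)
Your overall skeleton is sound: reduce the nonoriented case to the oriented one via Theorem \ref{thm:strong-alm-unoriented}, run a Lipschitz approximation, use the monotonicity formula to push $\spt(T)$ into vertical strips, and then identify the sheet multiplicities with the constancy lemma mod $p$. The reduction for the $E^{no}$ variant, the recognition that (iii) only needs stationarity plus a mass bound, and the algebraic bookkeeping in Step~3 all match the paper's actual argument. Your Step~3, however, is incomplete on two points: you never establish that each $Q_i\neq 0$, and the inference $\sum_i|Q_i|=Q$ requires the lower bound as well (it does follow from the congruence $\sum Q_i\equiv Q\pmod p$ together with $\sum|Q_i|\leq Q\leq p/2$, but you should write it out). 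The paper sidesteps this by examining the slices $\langle T,\p,x\rangle\res\bS_i$ for $x$ in the good set $\bar K$: each has mass $|Q_i|$ with a single sign, and these sum to $Q$ because $\M(\langle T,\p,x\rangle)=Q$ there.

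The serious gap is in Step~1. You claim ${\rm osc}(u)\leq C(E+\bA^2)^{1/(2m)}$ ``via Morrey and the Dirichlet bound.'' For $m\geq 3$ this is simply false: $W^{1,2}$ does not embed into $C^0$ in those dimensions, and without it your proposed monotonicity argument has nothing to compare the errant point $q$ against (the height of the graph itself is uncontrolled). The paper's fix for $m\geq 3$ is a Poincar\'e--Chebyshev argument: from \eqref{e:Jonas_trick} one finds a fixed $T_0\in\Iqspec$ with $\|\cG_s(u,T_0)\|_{L^{2^*}}\lesssim(E+\bA^2)^{1/2}$, and Chebyshev then produces a set $K^*$ of nearly full measure on which $u$ is pointwise within $\bar C(E+\bA^2)^{1/m}$ of $T_0$ (note the exponent: $1/m$ for $m\geq 3$, not $1/(2m)$; the $1/(2m)$ is only the coarser exterior bound that survives the final strip argument). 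On the near-full-measure set $\bar K:=K\cap K^*$ the current coincides $\modp$ with a graph that clusters around $\spt(T_0)$, and the monotonicity formula then places the entirety of $\spt(T)\cap\bC_{\rho}$ in a slightly wider neighborhood. For $m=2$ Morrey alone is still insufficient (the $W^{1,2}$ case is borderline); the paper invokes the higher integrability of the excess density, Theorem \ref{thm:gradient Lp estimate}, to upgrade $Du$ to $L^{2+2q}$, after which Morrey gives an honest $C^{0,\alpha}$ bound. You should replace the Morrey heuristic by this Poincar\'e--Chebyshev mechanism (and treat $m=2$ separately).
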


\begin{remark}
The proof that we are going to present is substantially different from the one in \cite[Theorem A.1]{DLS_Center}, and it could be easily adapted to the case of area minimizing integral currents as well. The statement above is sufficient for our purposes; nonetheless, the proof is actually going to give us more. In particular, in dimension $m \geq 3$ the result holds with a better estimate on the bandwidth of the various $\bS_i$, namely with $\sigma = C_0\, (E+\bA^2)^{\frac1m}$ in place of $\sigma = C_0\, (E+\bA^2)^{\frac{1}{2m}}$. In dimension $m=2$, the proof below also produces the height bound with the optimal estimate featuring $\sigma = {\rm O}(E^{\sfrac12})$, but only in the cylinder $C_{\frac{r}{2}}(x_0)$.
\end{remark}

\begin{proof} 
In the rest of the proof we denote by $\p$ the orthogonal projection onto $\pi_0 = \mathbb R^m \times \{0\}$. The last part of the statement, where $E$ is replaced with $E^{no}$ follows from Theorem \ref{thm:strong-alm-unoriented}. Moreover, we assume $x_0=0$ and $r=1$ after appropriate translation and rescaling. We also observe, as in the proof of \cite[Theorem A.1]{DLS_Center} that (iii) is a corollary of the interior monotonicity formula
(the only ingredients of the argument in there are the stationarity of the varifold induced by $T_i := T \res (\bC_{\rho} \cap \bS_i)$ and the inequality $\mass (T_i) \leq \omega_m \, \rho^m (\abs{Q_i} + E)$).

We therefore focus on (i) and (ii) and since the case $Q<\frac{p}{2}$ is entirely analogous, for the sake of simplicity we assume $Q=\frac{p}{2}$. We first prove (i). We start by considering an approximation as in Proposition \ref{p:raggio_quasi_uguale}. We thus find an exponent $\omega>0$ (which depends only on $Q,m$ and $n$), a Lipschitz map $u: B_{ 1- (E+\bA^2)^\omega} \to \Iqspec$ and a $K\subset B_{1-(E +\bA^2)^\omega}$ with the following properties:
\begin{itemize}
\item[(i)] $\Lip (u) \leq C\, (E+\bA^2)^\omega$;
\item[(ii)] $\bG_u \res K\times \mathbb R^n = T\res K\times \mathbb R^n \;  \modp$;
\item[(iii)] $\|T\| ((B_{1-(E +\bA^2)^\omega}\setminus K)\times \mathbb R^n)\leq C (E+\bA^2)^{1+\omega}$.
\end{itemize}
We consider first the case $m >2$. Recall the Poincar\'e inequality and find a point $T_0 \in \Iqspec$ such that
\begin{equation}\label{e:Jonas_trick}
\left(\int_{B_{1- (E+\bA^2)^\omega}} \cG_s (T_0, u (x))^{2^*}\, dx\right)^{1/2^*} \leq C \|Du\|_{L^2 (B_{1-(E+\bA^2)^\omega})} \leq C (E+\bA^2)^{\frac{1}{2}}\, ,
\end{equation}
where $2^*= \frac{2m}{m-2}$. 
Define next the set $K^*:= \{x\in B_{1-(E+\bA^2)^\omega}: \cG_s (u(x), T_0)\leq \bar{C} (E+\bA^2)^{\frac{1}{m}}\}$, where $\bar C$ is a constant which will be later chosen sufficiently large. Using \eqref{e:Jonas_trick} and Chebyshev's inequality, we easily conclude
\begin{equation}\label{e:Jonas_trick2}
|B_{1-(E+\bA^2)^\omega}\setminus K^*|\, \bar C^{\frac{2m}{m-2}}\, (E+\bA^2)^{\frac{2}{m-2}} \leq C\, (E +\bA^2)^{\frac{m}{m-2}}\, .
\end{equation}
In particular, for any fixed $\bar\eta$, if $\bar C$ is chosen large enough, we reach the estimate
\begin{equation}\label{e:Jonas_trick3}
|B_{1-(E+\bA^2)^\omega}\setminus K^*| \leq \bar \eta (E+\bA^2)\, .
\end{equation}
Consider now the set $\bar{K} := K\cap K^*$ and observe that, by choosing $\varepsilon$ sufficiently small, we reach
\begin{equation}\label{e:Jonas_trick4}
\|T\| ((B_{1-(E+\bA^2)^\omega}\setminus \bar K)\times \mathbb R^n) \leq  2\bar \eta (E+\bA^2)\, .
\end{equation}
To fix ideas assume now that $T_0 = (\sum_{j=1}^J k_j \a{p_j}, 1)$, where the $p_j$'s are pairwise distinct and all $k_j$ are positive. Let $\spt (T_0) = \{p_1, \ldots p_J\}$. From (ii) and the definition of $\bar K$, it follows easily that $\dist (\spt (T_0), \p^\perp (\spt (\langle T, \p, x)) \leq \bar C (E+\bA^2)^{\frac{1}{m}}$ for $x \in \bar K$. Define thus the sets
$\bU :=  \bigcup_j \{(x,y): |y- p_j|\leq \bar C  (E+\bA^2)^{\frac{1}{m}}\}$ and $\bU' := \bigcup_j  \{(x,y): |y- p_j|\leq 
(\bar C+1)  (E+\bA^2)^{\frac{1}{m}}\}$, then 
\begin{equation}\label{e:Jonas_trick5}
\|T\| (\bC_{1- (E+\bA^2)^\omega} \setminus \bU)
\leq \|T\| ((B_{1-(E+\bA^2)^\omega}\setminus \bar K)\times \mathbb R^n) 
\leq  2\bar \eta (E+\bA^2)\, .
\end{equation}
If $q\in \bC_{1- 2(E+\bA^2)^\omega}\setminus \bU'$, then
$\bB_{(E+\bA^2)^\frac{1}{m}} (q) \subset \bC_{1- (E+\bA^2)^\omega}  \setminus \bU$ (we are imposing here $\omega \leq \frac{1}{m}$), and by the monotonicity formula $\|T\| (\bB_{(E+\bA^2)^\frac{1}{m}} (q)) \geq c_0 (E+\bA^2)$, where $c_0$ is a geometric constant. This is however incompatible with \eqref{e:Jonas_trick5} as soon as $2\bar\eta$ is chosen smaller than $c_0$, thus showing that $\spt(T) \cap \bC_{1-2\,(E + \bA^2)^{\omega}} \subset \bU'$. We can now subdivide $\bU'$ in a finite number of disjoint stripes $\bS_i$ of width $\tilde{C} (E+\bA^2)^{\frac{1}{m}}$, where $\tilde{C}$ is larger than $\bar C$ by a factor which depends only on $Q$. This shows therefore the claim (i) of the theorem when $m>2$.

The case $m=2$ is slightly more subtle. Observe first that $|Du|^2\leq \min\{{\mathbf{m}}_c \mathbf{e},1\}$ and hence we can use the same argument as in the proof of Theorem \ref{thm:gradient Lp estimate} to achieve
\begin{equation} \label{e:Jonas_trick6}
\int_K |Du|^{2 (1+q)} \leq C E^{1+q-\omega}\, .
\end{equation}
The subtlety is in losing at most $(E+\bA^2)^\omega$ in the radius of the ball; as usual, the price to pay is a slightly worse estimate, cf. \eqref{e:Jonas_trick6} with \eqref{eq:gradient Lp estimate}. Since $|B_{1-(E+\bA^2)^\omega}\setminus K|\leq E^{1+\omega}$, if we choose $q$ small enough we easily reach the estimate 
\[
\|Du\|_{L^{2+2q} (B_{1-(E+\bA^2)^\omega})}\leq C E^{\frac{1}{4}}\, .
\]
In particular, if we set in this case $K^*:= \{x\in B_{1-(E+\bA^2)^\omega}: \cG_s (u(x), T_0)\leq \bar{C} (E+\bA^2)^{\frac{1}{4}}\}$ then from Morrey's embedding follows that $K^* = B_{1-(E+\bA^2)^\omega}$, provided $\bar C$ is chosen large enough. \eqref{e:Jonas_trick3} is thus trivially true and the rest of the argument remains unchanged. 

We now come to claim (ii). By the constancy theorem, it is easy to see that 
\[
\p_\sharp (T\res \bC_{1- 2(E+\bA^2)^\omega}\cap \bS_i) = Q_i \a{B_{1-2(E+\bA^2)^\omega}} \qquad \modp\, ,
\]
for some integer $Q_i \in \{-(Q-1), \ldots , -1, 0, 1, \ldots , Q\}$. However, recall that for $x\in \bar{K}$: 
\begin{itemize}
\item the support $S$ of the current $Z_i (x) := \langle T, \p, x\rangle \res \bC_{1- 2(E+\bA^2)^\omega}\cap \bS_i$ consists of at most $Q$ points;
\item either all points in $S$ have positive integer multiplicity, or they all have negative integer multiplicity;
\item $\mass (Z_i (x)) \leq Q$.
\end{itemize}
We thus conclude that each $Q_i$ is nonzero and that $|Q_i|= \mass (Z_i (x))$. Now, since $\mass (\langle T, \p, x\rangle) = Q$, we must have $\sum |Q_i| = Q$. On the other hand
\[
\sum_i \p_\sharp (T\res \bC_{1- 2(E+\bA^2)^\omega}\cap \bS_i)
= p_\sharp (T\res \bC_{1-2 (E+\bA^2)}) = Q\a{B_{1-2(E+\bA^2)^\omega}}\qquad \modp\, .
\]
Hence $\sum_i Q_i = Q\; \modp$. Hence we conclude that either all $Q_i$'s are positive or they are all negative. 
\end{proof}

Before coming to the proofs of the Lemmas \ref{l:tecnico3} and \eqref{l:tecnico2}, we wish to make the following elementary remark, which will be used throughout the rest of the paper: 

\begin{proposition}\label{p:or_nonor}
There are dimensional constants $\varepsilon (m, n)>0$ and $C(m,  n)>0$ with the following property.
Consider an oriented $m$-dimensional plane $\pi\subset \mathbb R^{m+n}$ and an oriented 
$(m+d)$-dimensional plane $\Pi\subset \mathbb R^{m+n}$, where $d\in \{0, \ldots , n\}$. Let $\pi'\subset \Pi$ be an oriented $m$-dimensional  plane  for which $|\pi - \pi'| = \min_{\tau\subset \Pi} |\pi - \tau|$, and assume$|\pi - \pi'| < \varepsilon$. Then 
\[
|\pi - \p_{\Pi} (\pi)|_{no} = |\pi - \p_{\Pi} (\pi)| \leq C |\pi - \pi'|\, .
\]
In particular: 
\begin{itemize}
\item[(Eq)] if $\alpha$ and $\beta$ are $m$-dimensional oriented planes of $\mathbb R^{m+n}$ for which $|\alpha - \beta|$ is smaller than a positive geometric constant, then $|\alpha - \beta|_{no} = |\alpha - \beta|$.
\end{itemize}
\end{proposition}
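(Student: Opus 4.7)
My plan is to reduce the whole statement to an explicit computation which hinges on one fundamental observation: although $\Lambda_m(\Pi)$ contains many non-simple $m$-vectors when $d := \dim\Pi - m > 0$, the projection $\p_\Pi(\vec\pi)$ is always simple. Indeed, write $\vec\pi = v_1 \wedge \cdots \wedge v_m$ with $\{v_i\}$ an orthonormal basis of $\pi$, and decompose each $v_i = v_i^\Pi + v_i^\perp$ with $v_i^\Pi \in \Pi$ and $v_i^\perp \in \Pi^\perp$. Expanding the wedge yields $2^m$ summands: any term containing at least one factor $v_i^\perp$ lies in $\Lambda_k(\Pi^\perp)\wedge \Lambda_{m-k}(\Pi)$ for some $k\geq 1$, which is orthogonal to $\Lambda_m(\Pi)$. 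Hence $\p_\Pi(\vec\pi) = v_1^\Pi \wedge \cdots \wedge v_m^\Pi$, which is a simple $m$-vector.

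Having this simplicity, $\vec\pi'$ can be identified explicitly. For any oriented $m$-plane $\tau \subset \Pi$ with unit $m$-vector $\vec\tau$ one has $\langle \vec\pi, \vec\tau\rangle = \langle \p_\Pi(\vec\pi), \vec\tau\rangle$; since $\p_\Pi(\vec\pi)$ is itself simple, the Cauchy--Schwarz bound $\langle \p_\Pi(\vec\pi), \vec\tau\rangle \leq |\p_\Pi(\vec\pi)|$ is attained (and only attained) by $\vec\tau = \p_\Pi(\vec\pi)/|\p_\Pi(\vec\pi)|$. The hypothesis $|\vec\pi - \vec\pi'|<\varepsilon$ ensures $|\p_\Pi(\vec\pi)| > 0$, so $\vec\pi' = \p_\Pi(\vec\pi)/|\p_\Pi(\vec\pi)|$. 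Setting $a := 1-|\p_\Pi(\vec\pi)| \in [0,1]$ I then compute
\[
|\vec\pi - \vec\pi'|^2 = 2(1-|\p_\Pi(\vec\pi)|) = 2a, \qquad |\vec\pi - \p_\Pi(\vec\pi)|^2 = 1 - |\p_\Pi(\vec\pi)|^2 = a(2-a),
\]
whence $|\vec\pi - \p_\Pi(\vec\pi)| \leq |\vec\pi - \vec\pi'|$ with constant $C = 1$. Moreover the equality $|\vec\pi - \p_\Pi(\vec\pi)|_{no} = |\vec\pi - \p_\Pi(\vec\pi)|$ is automatic, since
\[
|\vec\pi + \p_\Pi(\vec\pi)|^2 = 1 + 2\langle \vec\pi, \p_\Pi(\vec\pi)\rangle + |\p_\Pi(\vec\pi)|^2 = 1 + 3|\p_\Pi(\vec\pi)|^2 \geq 1 - |\p_\Pi(\vec\pi)|^2.
\]

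To obtain assertion (Eq) I specialize to $\Pi = \alpha$ (an $m$-plane, so $d = 0$) and $\pi = \beta$: then $\Lambda_m(\Pi)$ is the one-dimensional span of $\vec\alpha$, so $\p_\Pi(\vec\beta) = \langle \vec\alpha, \vec\beta\rangle\,\vec\alpha$, the only unit simple $m$-vectors in $\Lambda_m(\Pi)$ are $\pm\vec\alpha$, and the smallness of $|\vec\alpha - \vec\beta|$ selects the $+$ sign. Finally, $|\vec\alpha + \vec\beta|^2 - |\vec\alpha - \vec\beta|^2 = 4\langle\vec\alpha, \vec\beta\rangle > 0$ as soon as $|\vec\alpha - \vec\beta|^2 < 2$, which gives $|\alpha - \beta|_{no} = |\alpha - \beta|$. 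The only real content is the simplicity observation in the opening paragraph; after that, the argument is bookkeeping with inner products.
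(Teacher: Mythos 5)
Your proof is correct, and the central insight \textemdash{} that the orthogonal projection of a simple $m$-vector $\vec\pi$ onto $\Lambda_m(\Pi)$ is again simple, namely $v_1^\Pi\wedge\cdots\wedge v_m^\Pi$ \textemdash{} is indeed the crux. The paper explicitly leaves the proof of Proposition~\ref{p:or_nonor} to the reader, so there is no in-text argument to compare against; your write-up is exactly the ``simple geometric observation'' the authors had in mind, spelled out in full.

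Two small remarks. First, there is an interpretive ambiguity you should be aware of: you read $|\pi - \p_\Pi(\pi)|$ as $|\vec\pi - \p_\Pi(\vec\pi)|$ with $\p_\Pi(\vec\pi)$ unnormalized, whereas the paper's usage in Lemma~\ref{l:tecnico3} (where they conclude that ``$\p_{T_q\Sigma}(\pi_0)$ is an $m$-dimensional plane with $|\p_{T_q\Sigma}(\pi_0)-\pi_0| \leq C_0\bmo^{1/2}$'') suggests $\p_\Pi(\pi)$ is meant as the oriented $m$-plane $\p_\Pi(\pi)=\{\p_\Pi(v):v\in\pi\}$, with distance measured between unit orienting $m$-vectors. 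Under that reading your Cauchy--Schwarz argument shows $\p_\Pi(\pi)=\pi'$ as oriented planes, the inequality becomes trivial with $C=1$, and the only content is the equality $|\pi-\pi'|_{no}=|\pi-\pi'|$, which your computation $|\vec\pi+\vec\pi'|^2 - |\vec\pi-\vec\pi'|^2 = 4\langle\vec\pi,\vec\pi'\rangle = 4|\p_\Pi(\vec\pi)| > 0$ handles. Both readings come out of your argument, so nothing is lost, but it is worth being explicit about which one you mean. Second, your phrasing ``the hypothesis $|\vec\pi-\vec\pi'|<\varepsilon$ ensures $|\p_\Pi(\vec\pi)|>0$'' is fine but worth unpacking: $\pi'$ always exists by compactness, and if $\p_\Pi(\vec\pi)=0$ then every $\tau\subset\Pi$ minimizes with distance $\sqrt2$, so any $\varepsilon<\sqrt2$ excludes this and guarantees a unique, well-defined $\pi'=\p_\Pi(\vec\pi)/|\p_\Pi(\vec\pi)|$ as well as the nondegeneracy of $\p_\Pi(\pi)$ as an $m$-plane.
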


The proposition is a simple geometric observation, and its proof is left to the reader. 

\begin{proof}[Proof of Lemma \ref{l:tecnico3}]
The argument given in \cite[Section 4]{DLS_Center} of \cite[Lemma 1.5]{DLS_Center} for the existence of the global extension of $\Sigma$ and the minimality of $T^0$ in the extended manifold works in our case as well, with straightforward modifications. 

We now come to the proof of \eqref{e:small_tilt}, which again follows that given in \cite[Section 4]{DLS_Center} of \cite[Lemma 1.5]{DLS_Center}, but needs some extra care. First of all, by Assumption \ref{ipotesi} and Remark \ref{r:sigma_A}, $\bA \leq C_0 \bmo^{\sfrac{1}{2}} \leq C_0$. Then,
by the monotonicity formula, $\|T^0\| (\bB_1) \geq c_0>0$ and so there is $q\in \spt (T^0)\cap \bB_1$ such that
\[
|\vec{T^0} (q) - \pi_0|_{no}^2  \leq C_0 \frac{\bE^{no} (T^0, \bB_1, \pi_0)}{\|T^0\| (\bB_1)} \leq C_0 \bmo\, .
\]
Now, both $\vec{T^0} (q)$ and $-\vec{T^0} (q)$ orient a plane contained in $T_q \Sigma$. We can thus apply Proposition \ref{p:or_nonor} provided $\bmo$ is sufficiently small. From it we conclude that $\p_{T_q \Sigma} (\pi_0)$ is an $m$-dimensional plane with
$|\p_{T_q \Sigma} (\pi_0) - \pi_0| \leq C_0 \bmo^{\sfrac{1}{2}}$. From this inequality we then conclude following literally the final arguments of  \cite[Proof of Lemma 1.5]{DLS_Center}.
\end{proof}

\begin{proof}[Proof of Lemma \ref{l:tecnico1}]
We follow the proof of \cite[Lemma 1.6]{DLS_Center} given in \cite[Section 4]{DLS_Center}. First of all we notice that, once \eqref{e:geo semplice 0} and \eqref{e:geo semplice 1} are established, \eqref{e:pre_height} follows from Theorem \ref{t:height_bound}, since we clearly have that $\bE^{no} (T, \bC_{11\sqrt{m}/2}, \pi_0) \leq C \bE^{no} (T^0, \bB_{6\sqrt{m}}, \pi_0)$.  Moreover, recall that there is a set of full measure $A\subset B_{5\sqrt{m}}$ such that $\langle T, \p_{\pi_0}, x\rangle$ is an integer rectifiable current for every $x\in A$. For any such $x$ we have thus $\langle T, \p_{\pi_0}, x\rangle = \sum_i^{J(x)} k_i (x)\a{p_i}$ where $p_1, \ldots, p_{J (x)}$ is a finite collection of points and each $k_i (x)$ is an integer. In particular we must have $\sum_i k_i (x) = Q\;\modp$ and since $1\leq Q \leq \frac{p}{2}$, at least one $k_i (x)$ must be nonzero, which means in turn that $\spt (\langle T, \p_{\pi_0}, x\rangle) \neq \emptyset$. Hence we conclude that $\spt (T) \cap \p_{\pi_0}
 ^{-1} (x) \neq \emptyset$ for every $x\in A$, and by the density of $A$ we conclude that  $\spt (T) \cap \p_{\pi_0}^{-1} (x) \neq \emptyset$ for every $x\in B_{5\sqrt{m}}$. 

We next come to \eqref{e:geo semplice 0} and \eqref{e:geo semplice 1}. As in the proof of \cite[Lemma 1.6]{DLS_Center}, we argue by contradiction and assume that one among \eqref{e:geo semplice 0} and \eqref{e:geo semplice 1} fails for a sequence $T^0_k$ of currents which satisfy Assumption \ref{ipotesi} with $\eps_2 = \varepsilon_2 (k)\downarrow 0$. The compactness property given by Proposition \ref{p:compactness} ensures the existence of a subsequence, not relabeled, converging to a current $T^0_\infty$ in the $\mathscr{F}_K^p$ norm for every compact $K\subset\bB_{6\sqrt{m}}$: in fact Proposition \ref{p:compactness} ensures also that $T_\infty^0$ is area minimizing $\modp$ in a suitable $(m+\bar n)$-dimensional plane (the limit of the Riemannian manifolds $\Sigma_k$) and that the varifolds induced by $T^0_k$ converge to the varifold induced by $T^0_\infty$. In particular, 
$\partial T_\infty^0 =0$ $\modp$ in $\bB_{6\sqrt{m}}$ and the tangent plane to $T_\infty^0$ is parallel to $\pi_0$ $\|T_\infty^0\|$-almost everywhere. 

Observe that by upper semicontinuity of the density, \eqref{e:basic} implies that $0$ is a point of density $Q$ for $T_\infty^0$. On the other hand \eqref{e:basic2} implies that $\|T_\infty^0\| (B_{6\sqrt{m}}) \leq Q \omega_m (6\sqrt{m})^m$. Hence, by the monotonicity formula, $T_\infty^0$ must be a cone. Observe that if $q\in \spt (T_\infty^0)$ is a point where the approximate tangent space $\pi_q$ exists, since $T_\infty^0$ is a cone, we must have that $q\in \pi_q$. Thus $q\in \pi_0$ for $\|T_\infty^0\|$-a.e. $q$, which in turn implies that $\spt (T_\infty^0) \subset \pi_0$. In conclusion $T_\infty^0 = Q \llbracket B_{6\sqrt{m}}\rrbracket \;  \modp$, and moreover the varifold convergence holds in the whole $\mathbb R^{m+n}$.  

Again by the monotonicity formula, $\spt (T^0_k)$ is converging locally in the sense of Hausdorff to $\spt (T_\infty^0)$. In particular if we set $T_k := T^0_k\res \bB_{23\sqrt{m}/4}$, for $k$ large $T_k$ will have no boundary $\modp$ in $\bC_{11\sqrt{m}/2}$. Hence it must be \eqref{e:geo semplice 1} which fails for an infinite number of $k$'s. On the other hand we certainly have $(\p_{\pi_0})_\sharp T_k \res \bC_{11\sqrt{m}/2} = Q_k \llbracket B_{11\sqrt{m}/2}\rrbracket$ $\modp$. Notice that by the varifold convergence we have $\|T^0_k\| (\bC_{11\sqrt{m}/2}\setminus \bB_{23\sqrt{m}/4}) \to 0$ as $k\to \infty$. In particular the limit of the currents $(\p_{\pi_0})_\sharp T_k \res \bC_{11\sqrt{m}/2}$ is the same as the limit of the currents $(\p_{\pi_0})_\sharp T^0_k \res \bC_{11\sqrt{m}/2}$ and thus it must be $Q_k = Q$ $\modp$ for $k$ large enough. 
\end{proof}

\section{Tilting of planes and proof of Proposition \ref{p:whitney}} \label{s:tilting_whitney}

Following \cite{DLS_Center}, the first important technical step in the proof of the existence of the center manifold is to gain a control on the tilting of the optimal planes as the cubes get refined. The following proposition corresponds to \cite[Proposition 4.1]{DLS_Center}. 

\begin{proposition}[Tilting of optimal planes]\label{p:tilting opt}
Assume that the hypotheses of Assumptions \ref{ipotesi} and \ref{parametri} hold, that
$C_e \geq C^\star$ and $C_h \geq C^\star C_e$, where $C^\star (M_0, N_0)$ is the constant of the previous section. 
If $\eps_2$ is sufficiently small, then 
\begin{itemize}
\item[(i)] $\bB_H\subset\bB_L \subset \bB_{5\sqrt{m}}$ for all $H, L\in \sW\cup \sS$ with $H\subset L$.
\end{itemize}
Moreover, if $H, L \in \sW\cup\sS$ and either $H\subset L$ or $H\cap L \neq \emptyset$ and $\frac{\ell (L)}{2} \leq \ell (H) \leq \ell (L)$, then the following holds, for $\bar{C} = \bar{C} (\beta_2, \delta_2, M_0, N_0, C_e)$ and $C = C(\beta_2, \delta_2, M_0, N_0, C_e, C_h)$:
\begin{itemize}
\item[(ii)]$|\hat\pi_H - \pi_H| \leq \bar{C} \bmo^{\sfrac{1}{2}} \ell (H)^{1-\delta_2}$;
\item[(iii)] $|\pi_H-\pi_L| \leq \bar{C} \bmo^{\sfrac{1}{2}} \ell (L)^{1-\delta_2}$;
\item[(iv)] $|\pi_H-\pi_0| \leq  \bar C \bmo^{\sfrac{1}{2}}$;
\item[(v)] $\bh (T, \bC_{36 r_H} (p_H, \pi_0)) \leq C \bmo^{\sfrac{1}{2m}} \ell (H)$ and $\spt (T) \cap \bC_{36 r_H} (p_H, \pi_0) \subset \bB_H$; 
\item[(vi)]  For $\pi= \pi_H, \hat{\pi}_H$, $\bh (T, \bC_{36r_L} (p_L, \pi))\leq C \bmo^{\sfrac{1}{2m}} \ell (L)^{1+\beta_2}$
and $\spt (T) \cap \bC_{36 r_L} (p_L, \pi)\subset \bB_L$.
\end{itemize}
In particular, the conclusions of Proposition~\ref{p:whitney} hold.
\end{proposition}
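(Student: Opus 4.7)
The plan is to argue by induction on the level $j$ of the cubes, mimicking the strategy of \cite[Proposition 4.1]{DLS_Center} but taking care to work consistently with the \emph{nonoriented} excess. The overall scheme is to prove simultaneously that (i)--(vi) hold for every $L \in \sW \cup \sS$ of level $\leq j$, and that the refining algorithm never stops at small levels (so that \eqref{e:prima_parte} holds).

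\textbf{Base case.} At level $j = N_0$, condition \eqref{e:N0} forces $\ell(L) = 2^{1-N_0}$ to be so small that $r_L = M_0 \sqrt{m}\,\ell(L)$ satisfies $64\, r_L \ll 1$; hence $\bB_L \subset \bB_{5\sqrt{m}}$ and (i) holds trivially at this scale. Since $\bE^{no}(T, \bB_L) \le C(M_0, N_0)\,\bE^{no}(T^0, \bB_{6\sqrt{m}}) \le C \bmo$ and $\pi_0$ is optimal in $\bB_{6\sqrt{m}}$ by \eqref{e:pi0_ottimale}, the tilting lemma below gives $|\hat\pi_L - \pi_0|_{no} \le C\bmo^{1/2}$; combined with Remark~\ref{r:sigma_A} this yields (ii)--(iv) at the base level, while (v)--(vi) follow from Lemma~\ref{l:tecnico1} and Theorem~\ref{t:height_bound} applied in the appropriate cylinders.

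\textbf{Inductive step: the tilting lemma.} The key analytic tool is an elementary comparison estimate for optimal planes of the nonoriented excess. Fix $q\in \spt(T)$, $\rho>0$ with $\|T\|(\bB_\rho(q))\ge c_0\rho^m$. For any $m$-planes $\alpha, \beta$ one has, at $\|T\|$-a.e. $y$, the pointwise inequality
\[
|\alpha - \beta|_{no}^{\,2} \le 2\bigl(|\vec T(y) - \alpha|_{no}^{\,2} + |\vec T(y) - \beta|_{no}^{\,2}\bigr).
\]
Integrating over $\bB_\rho(q)$ and dividing by $\|T\|(\bB_\rho(q))$ yields
\[
|\alpha - \beta|_{no}^{\,2} \;\le\; C\bigl(\bE^{no}(T,\bB_\rho(q),\alpha) + \bE^{no}(T,\bB_\rho(q),\beta)\bigr).
\]
Now consider $L\in \sW\cup \sS$ of level $j$. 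Its parent $\hat L$ lies in $\sS^{j-1}$ (by the refining algorithm, since ancestors of a stopped or surviving cube belong to $\sS$), hence by (EX)
$\bE^{no}(T, \bB_{\hat L}) \le C_e \bmo\, \ell(\hat L)^{2-2\delta_2}$. Inductively $\bB_L \subset \bB_{\hat L}$, so by monotonicity of mass
$\bE^{no}(T, \bB_L, \hat\pi_{\hat L}) \le C(M_0)\, C_e \bmo\,\ell(L)^{2-2\delta_2}$,
and by optimality $\bE^{no}(T, \bB_L, \hat\pi_L) \le C(M_0)\, C_e \bmo\,\ell(L)^{2-2\delta_2}$. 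Applying the tilting lemma to $\hat\pi_L, \hat\pi_{\hat L}$ gives $|\hat\pi_L - \hat\pi_{\hat L}|_{no} \le \bar C\bmo^{1/2}\ell(L)^{1-\delta_2}$; Proposition~\ref{p:or_nonor} then upgrades this to the oriented distance, using the inductive smallness of $|\hat\pi_{\hat L} - \pi_0|$.

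\textbf{Properties (ii)--(iv).} For (ii), since at $\|T\|$-a.e.\ $y$ we have $\vec T(y) \in \Lambda_m T_{\p_\Sigma(y)}\Sigma$ and $|T_{\p_\Sigma(y)}\Sigma - T_{p_L}\Sigma| \le C\bA\, r_L \le C\bmo^{1/2}\,\ell(L)$, the orthogonal complement of $T_{p_L}\Sigma$ sees an excess contribution of order $\bA^2 r_L^2$; minimizing over planes $\pi \subset T_{p_L}\Sigma$ yields $|\hat\pi_L - \pi_L| \le \bar C\bmo^{1/2}\ell(L)^{1-\delta_2}$ via the tilting lemma, provided $\delta_2$ is sufficiently small. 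Property (iii) follows by the tilting lemma applied to $\pi_L$ and $\pi_H$ directly in $\bB_L$, using $\bB_H \subset \bB_L$ and the already established bounds (ii) for both planes. Property (iv) is obtained by telescoping (iii) along the chain of ancestors, summing the geometric series $\sum_{k\geq 0} 2^{-k(1-\delta_2)}$, which converges.

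\textbf{Properties (v), (vi), (i), and conclusion.} The height bound at the parent scale follows from the stopping condition (HT) (since the parent is in $\sS$), and combining this with Theorem~\ref{t:height_bound} applied in $\bC_{36\,r_L}(p_L, \pi)$ for $\pi \in \{\pi_L, \hat\pi_L, \pi_0\}$ (valid once $|\pi - \pi_0|$ is small by (iv)) yields (v) and (vi); the inclusion $\spt(T) \cap \bC_{36\,r_L}(p_L, \pi) \subset \bB_L$ is then a direct consequence of $r_L$ being chosen with a large geometric factor $M_0$. Property (i) for $L$ itself then follows by comparing $\bB_L$ with the cylinder $\bC_{C\,r_L}(p_L, \pi_{\hat L})$ via (vi) applied to $\hat L$. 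Finally, \eqref{e:prima_parte} of Proposition~\ref{p:whitney} follows because for $j \le N_0 + 6$ one has $\ell(L) \ge 2^{-N_0 - 6}$, so the thresholds in (EX) and (HT) cannot be triggered once $C_e$ and $C_h$ are chosen large relative to geometric constants depending only on $M_0, N_0$; the estimates \eqref{e:ex+ht_ancestors}--\eqref{e:ex+ht_whitney} are immediate from the stopping conditions applied to the parent cube together with monotonicity.

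\textbf{Main obstacle.} The principal difficulty is bookkeeping the constants in the induction: each of (ii)--(iv) must be proved with an \emph{absorbing} constant $\bar C$ that does not deteriorate under iteration, even as the chain of ancestors grows arbitrarily long, while (vi) needs the constant $C$ that may depend on $C_h$. This requires a careful hierarchy in which $C_e \geq C^{\star}(M_0, N_0)$ and $C_h \geq C^{\star} C_e$ are chosen so that the geometric series arising from chaining (iii) across ancestors is dominated, and so that the application of the tilting lemma at each scale loses only a controlled factor. In particular, the coefficient in the tilting lemma coming from the geometric factor $C(M_0)$ (accounting for $\bB_L \subset \bB_{\hat L}$) must be absorbed by the gap $2^{-(1-\delta_2)}$ between consecutive scales, which dictates the restriction $\delta_2 < \frac{1}{4}\gamma_1$ in \eqref{e:delta+beta} and the requirement that $M_0$ be sufficiently large in Assumption~\ref{i:parametri}.
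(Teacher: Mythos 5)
Your overall strategy --- establish the estimates with the nonoriented distance $|\cdot|_{no}$ by induction on the level, using a pointwise triangle inequality for the nonoriented distance between planes together with the excess stopping condition, then pass to oriented distances --- is consistent with the paper's proof, which delegates the nonoriented step to a citation of \cite{DLS_Center}: the paper points out that the three ingredients of the \cite{DLS_Center} argument (the monotonicity formula, the triangle inequality for the distance between planes, and the inequality $\bh(T,E,\alpha)\leq \bh(T,E,\beta)+C\,\mathrm{diam}(E)\,|\alpha-\beta|$) carry over verbatim when $|\cdot|$ is replaced by $|\cdot|_{no}$. Your proposal spells out in more detail what the paper cites, and your bookkeeping remarks about the constant hierarchy are correct.

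However, the one step you do not handle cleanly is exactly where the paper departs from \cite{DLS_Center}: the upgrade from the nonoriented estimates to the oriented ones. Your invocation of Proposition~\ref{p:or_nonor} ``using the inductive smallness of $|\hat\pi_{\hat L} - \pi_0|$'' does not close the loop: the implication in Proposition~\ref{p:or_nonor}(Eq) goes from the smallness of $|\alpha-\beta|$ to the equality $|\alpha-\beta|_{no}=|\alpha-\beta|$, not the reverse, so one cannot deduce that $|\hat\pi_L - \hat\pi_{\hat L}|$ is small from the smallness of $|\hat\pi_L - \hat\pi_{\hat L}|_{no}$ alone. The missing input is the constraint \eqref{e:optimal_pi_2} in Definition~\ref{d:optimal_planes}: requiring $\hat\pi_L$ to minimize the \emph{oriented} distance $|\pi-\pi_0|$ among all planes optimizing the nonoriented excess forces $|\hat\pi_L - \pi_0| = |\hat\pi_L - \pi_0|_{no}$ (this is precisely Remark~\ref{r:why_optimal_2}), and likewise $\pi_L$ in Definition~\ref{d:ref_plane} minimizes $|\hat\pi_L-\pi|$ over \emph{both} orientations of planes in $T_{p_L}\Sigma$, so $|\hat\pi_L - \pi_L| = |\hat\pi_L - \pi_L|_{no}$. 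These two facts, combined with Proposition~\ref{p:or_nonor}(Eq), are what allow the passage from the nonoriented bounds (ii)${}_{no}$, (iii)${}_{no}$, (iv)${}_{no}$ to (ii), (iii), (iv); neither appears explicitly in your proposal (you cite Remark~\ref{r:sigma_A}, which is about the bound on the second fundamental form of $\Sigma$, rather than Remark~\ref{r:why_optimal_2}). This is not a cosmetic omission: the paper singles it out in the remark following Proposition~\ref{p:tilting opt} as the technical mechanism that keeps the oriented tilt estimates valid in the $\modp$ setting, where reference planes must be chosen via the nonoriented excess.
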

\begin{proof}
First of all we observe that, if we replace (ii), (iii) and (iv) with
\begin{itemize}
\item[(ii)no] $|\hat\pi_H - \pi_H|_{no} \leq \bar{C} \bmo^{\sfrac{1}{2}} \ell (H)^{1-\delta_2}$,
\item[(iii)no] $|\pi_H-\pi_L|_{no} \leq \bar{C} \bmo^{\sfrac{1}{2}} \ell (L)^{1-\delta_2}$, and
\item[(iv)no] $|\pi_H-\pi_0|_{no} \leq  \bar C \bmo^{\sfrac{1}{2}}$,
\end{itemize} 
then the arguments given in the \cite[Proof of Proposition 4.1]{DLS_Center} can be followed literally with minor adjustments. Indeed those arguments depend only on:
\begin{itemize}
\item the monotonicity formula;
\item the triangle inequality $|\alpha -\gamma| \leq |\alpha - \beta| + |\beta - \gamma|$;
\item the elementary geometric observation that, for every subset $E$ and every pair of $m$-planes $\alpha$ and $\beta$, we have the inequality
\[
\bh (T, E, \alpha) \leq \bh (T, E, \beta) + C {\rm diam}\, (E) |\alpha -\beta|\, ,
\]
where $C$ is a geometric constant.
\end{itemize}
However, it can be easily verified that all such properties remain true if we replace $|\cdot|$ with $|\cdot|_{no}$. 

We next come to (ii), (iii) and (iv). First observe that both $ \pi_H$ and the (oriented) $m$-plane with the same support and opposite orientation belong to $T_{p_H} \Sigma$. For this reason, the definition of $\pi_H$ implies that $\abs{\pi_H - \hat \pi_H}_{no} = \abs{\pi_H - \hat \pi_H}$, thus allowing us to infer (ii) from (ii)no.


Next, recall that we have $|\hat\pi_H - \pi_0| = |\hat\pi_H - \pi_0|_{no}$, cf. Remark \ref{r:why_optimal_2}. Hence (iv)no implies (iv). Now, combining (iv) for two planes $H$ and $L$ as in statement (iii) of the proposition, we conclude that $|\pi_H - \pi_L|\leq |\pi_H-\pi_0| + |\pi_L - \pi_0| \leq C \bmo^{\sfrac{1}{2}}$. Hence, again assuming that $\varepsilon_2$ is sufficiently small, we can apply Proposition \ref{p:or_nonor}, in particular conclusion (Eq): $|\pi_H - \pi_L| = |\pi_H - \pi_L|_{no}$. Thus (iii) is a consequence of (iii)no. 
\end{proof}

\begin{remark} 
Notice that, even though our arguments use the nonoriented excess as control parameter, the estimates of Proposition \ref{p:tilting opt} on the tilt of optimal planes which are needed for the construction of the center manifold involve a measure of the classical distance between \emph{oriented} planes. As seen in the proof, such estimates continue to be valid in our setting thanks to our choice of optimal planes made in \eqref{e:optimal_pi_2} and to the observation made in Proposition \ref{p:or_nonor}.
\end{remark}

Arguing as in \cite[Section 4.3]{DLS_Center} we get the following existence theorem with very minor modifications
(the only adjustment that needs to be taken into consideration is that the identities \cite[(4.9)]{DLS_Center}, \cite[(4.10)]{DLS_Center} and the subsequent analogous ones must be replaced with the same equalities $\modp$):

\begin{proposition}[Existence of interpolating functions]\label{p:gira_e_rigira}
Assume the conclusions of the Proposition \ref{p:tilting opt} apply.
The following facts are true provided $\eps_2$ is sufficiently small. 
Let $H, L\in \sW\cup \sS$ be such that either $H\subset L$ or $H\cap L \neq \emptyset$ and
$\frac{\ell (L)}{2} \leq \ell (H) \leq \ell (L)$. Then,
\begin{itemize}
\item[(i)] for $\pi= \pi_H, \hat{\pi}_H$, $(\p_{\pi})_\sharp T\res \bC_{32r_L} (p_L, \pi) = Q \a{B_{32r_L} (p_L, \pi))} \; \modp$ and $T$ satisfies the assumptions of \ref{thm:strong-alm-unoriented} in the cylinder $\bC_{32 r_L} (p_L, \pi)$;
\item[(ii)] Let $f_{HL}$ be the $\pi_H$-approximation of $T$ in $\bC_{8 r_L} (p_L, \pi_H)$ and $h_{HL} := (\etab\circ f_{HL})*\varrho_{\ell (L)}$ be its smoothed average. Set $\varkappa_H := \pi_H^\perp \cap T_{p_H} \Sigma$ and consider the maps 
\begin{equation*}
\begin{array}{lll}
x\quad \mapsto\quad \bar{h} (x)  &:=  \p_{T_{p_H}\Sigma} (h)&\in \varkappa_H\\ 
x\quad \mapsto\quad h_{HL} (x) &:= (\bar{h} (x), \Psi_{p_H} (x, \bar{h} (x)))&\in \varkappa_H \times (T_{p_H} (\Sigma))^\perp\, .
\end{array}
\end{equation*} 
Then there is a smooth
$g_{HL} :  B_{4r_L} (p_L, \pi_0)\to \pi_0^\perp$ s.t. $\bG_{g_{HL}} = \bG_{h_{HL}}\res \bC_{4r_L} (p_L, \pi_0)$.
\end{itemize}
\end{proposition}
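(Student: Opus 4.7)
The plan is to follow the arguments of \cite[Section 4.3]{DLS_Center} essentially verbatim, adapting each step to the $\modp$ setting by replacing exact identities on currents with their $\modp$ congruences. The tilting estimates of Proposition \ref{p:tilting opt}, together with the height bound in (vi) of that proposition, provide the geometric input needed.

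First I would prove (i). Fix $\pi\in\{\pi_H,\hat\pi_H\}$. By (iv) of Proposition \ref{p:tilting opt} and Proposition \ref{p:or_nonor}, we have $|\pi-\pi_0|=|\pi-\pi_0|_{no}\le C\bmo^{\sfrac12}$, so $\pi$ is nearly parallel to $\pi_0$. Combined with (vi) of Proposition \ref{p:tilting opt}, namely $\bh(T,\bC_{36r_L}(p_L,\pi))\le C\bmo^{\sfrac{1}{2m}}\ell(L)^{1+\beta_2}$ and $\spt(T)\cap\bC_{36r_L}(p_L,\pi)\subset\bB_L$, we obtain that $\spt(T)\cap\bC_{32r_L}(p_L,\pi)$ is contained in a very thin neighborhood of $p_L+\pi$ inside $\bB_L\subset\bB_{5\sqrt m}$, and in particular does not meet the lateral boundary $\partial B_{32r_L}(p_L,\pi)\times\pi^\perp$. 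Since $\partial T=0\ \modp$ in $\bB_{5\sqrt m}$ by Lemma \ref{l:tecnico1}, slicing and the containment of supports give that $\partial\big(T\res\bC_{32r_L}(p_L,\pi)\big)=0\ \modp$ in the open cylinder. Then $\partial(\p_\pi)_\sharp\big(T\res\bC_{32r_L}(p_L,\pi)\big)=0\ \modp$ in $B_{32r_L}(p_L,\pi)$, and by the Constancy Lemma \ref{l:constancy} there is an integer $Q'\in\Z\cap[-\sfrac p2,\sfrac p2]$ such that $(\p_\pi)_\sharp T\res\bC_{32r_L}(p_L,\pi)=Q'\a{B_{32r_L}(p_L,\pi)}\ \modp$. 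To pin down $Q'=Q$, I would connect the tilted projection to the standard one: using a linear homotopy between $\p_\pi$ and $\p_{\pi_0}$ together with the homotopy formula modulo $p$ (as in the proof of Lemma \ref{l:splitting}), and exploiting once more that $T$ has no boundary $\modp$ and the supports stay inside $\bB_L$, one sees that the $\modp$ class of the pushforward is invariant along the homotopy. Lemma \ref{l:tecnico1} then yields $Q'=Q$. Once (i)'s identity is established, the nonoriented excess bound
\[
\bE^{no}(T,\bC_{32r_L}(p_L,\pi))\le C\bmo\,\ell(L)^{2-2\delta_2}
\]
follows from (ii) and the first estimate in \eqref{e:ex+ht_ancestors}--\eqref{e:ex+ht_whitney} by comparison of excesses on tilted planes (together with $|\pi-\pi_L|_{no}\le\bar C\bmo^{\sfrac12}\ell(L)^{1-\delta_2}$), so the assumptions of Theorem \ref{thm:strong-alm-unoriented} are verified for $\eps_2$ small.

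For (ii), I would apply Theorem \ref{thm:strong-alm-unoriented} in $\bC_{32r_L}(p_L,\pi_H)$ to produce the $\pi_H$-approximation $f_{HL}$ on $B_{8r_L}(p_L,\pi_H)$, mollify its average to get $h_{HL}=(\etab\circ f_{HL})*\varrho_{\ell(L)}$, and then project onto $T_{p_H}\Sigma$ and close up in $\Sigma$ via $\Psi_{p_H}$ to obtain $h_{HL}(x)=(\bar h(x),\Psi_{p_H}(x,\bar h(x)))$, a smooth $T_{p_H}\Sigma^\perp$-valued correction built on top of a $\varkappa_H$-valued map. The graph of $h_{HL}$ is a smooth submanifold of $\Sigma$, lying in a $C\bmo^{\sfrac{1}{2m}}\ell(L)^{1+\beta_2}$-neighborhood of $p_H+\pi_H$ thanks to the oscillation estimate for the strong approximation, and tilted by at most $C\bmo^{\sfrac12}$ with respect to $\pi_0$ by (iv) of Proposition \ref{p:tilting opt} and standard smoothing estimates. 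For $\eps_2$ small, the tangent planes of $\gr(h_{HL})$ therefore stay so close to $\pi_0$ that the projection $\p_{\pi_0}$ restricted to $\gr(h_{HL})\cap\bC_{4r_L}(p_L,\pi_0)$ is a smooth diffeomorphism onto $B_{4r_L}(p_L,\pi_0)$ by the inverse function theorem. Inverting this diffeomorphism defines the desired $g_{HL}\colon B_{4r_L}(p_L,\pi_0)\to\pi_0^\perp$ with $\bG_{g_{HL}}=\bG_{h_{HL}}\res\bC_{4r_L}(p_L,\pi_0)$.

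The main obstacle is the identification $Q'=Q$ in the $\modp$ constancy step of (i). In the integral setting of \cite{DLS_Center} this is immediate from the exact projection identity in Lemma \ref{l:tecnico1}; here one only has congruence $\modp$, and must ensure that passing to tilted planes $\pi_H$, $\hat\pi_H$ does not alter the $\modp$ class. Handling this requires combining the containment of supports inside $\bB_L\subset\bB_{5\sqrt m}$ (from Proposition \ref{p:tilting opt}(v)--(vi)) with a homotopy-formula argument modulo $p$, in the spirit of the proof of Lemma \ref{l:splitting}, so that the homotopy between $\p_\pi$ and $\p_{\pi_0}$ produces no spurious boundary contribution $\modp$.
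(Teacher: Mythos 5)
Your proposal follows the same overall route as the paper: the paper explicitly defers this proposition to \cite[Section~4.3]{DLS_Center}, with the remark that the exact current identities there must be read $\modp$, and you reproduce that scheme, using Proposition~\ref{p:tilting opt}, Lemma~\ref{l:tecnico1}, the constancy lemma~\ref{l:constancy} and Theorem~\ref{thm:strong-alm-unoriented} for the ingredients. Part~(ii) of your argument, and the constancy-lemma step producing $(\p_\pi)_\sharp T\res\bC_{32r_L}(p_L,\pi)=Q'\a{B_{32r_L}(p_L,\pi)}\ \modp$, are fine.

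There is, however, a genuine gap in the step identifying $Q'=Q$. You invoke the homotopy formula $\modp$ for the linear homotopy between $\p_{\pi_0}$ and $\p_\pi$ and conclude that ``the $\modp$ class of the pushforward is invariant along the homotopy.'' This cannot be right as stated, because $(\p_{\pi_0})_\sharp T$ and $(\p_\pi)_\sharp T$ are supported on the distinct planes $\pi_0$ and $\pi$ and are therefore never congruent $\modp$ (unless both vanish). What the homotopy formula actually gives, since $\partial T=0\ \modp$ in the relevant region, is
$(\p_\pi)_\sharp T-(\p_{\pi_0})_\sharp T=\partial\,h_\sharp\big(\llbracket(0,1)\rrbracket\times T\big)\ \modp$,
i.e.\ the difference is a \emph{boundary} $\modp$ of an $(m+1)$-current, which is not the same as vanishing $\modp$. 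In Lemma~\ref{l:splitting} this term vanishes only because the deformation vector $v$ lies in the spine and is therefore tangent to $T$ at a.e.\ point; here the infinitesimal deformation of $\p_{\pi_0}$ towards $\p_\pi$ is by no means tangent to $T$, so $h_\sharp(\llbracket(0,1)\rrbracket\times T)\neq 0$. To repair the step you would need an additional argument: e.g.\ push both sides forward again by $\p_{\pi_0}$, which annihilates the $(m+1)$-dimensional term by dimension, and then identify $Q'$ through the linear isomorphism $\p_{\pi_0}|_\pi$ and the identity from Lemma~\ref{l:tecnico1}, taking care of the sign/orientation (which is controlled because $|\pi-\pi_0|=|\pi-\pi_0|_{no}$ is small by Proposition~\ref{p:tilting opt}(iv) and Proposition~\ref{p:or_nonor}); alternatively one can compare the $0$-dimensional slices of $T$ over generic points of $\pi$ and of $\pi_0$, using the height bound in Proposition~\ref{p:tilting opt}(vi) to see that they carry the same signed multiplicity $\modp$.
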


\begin{definition}\label{d:mappe_h_HL}
$h_{HL}$ and $g_{HL}$ will be called, respectively, {\em tilted $(H,L)$-interpolating function} and {\em $(H,L)$-interpolating function}.
\end{definition}

Observe that the tilted $(L,L)$-interpolating function and the $(L,L)$-interpolating function correspond to the tilted $L$-interpolating function and to the $L$-interpolating function 
of Definition~\ref{d:glued}. Obviously, Lemma \ref{l:tecnico2} is just a particular case of Proposition \ref{p:gira_e_rigira}. As in Definition \ref{d:glued}, we will set $h_L := h_{LL}$ and $g_L := g_{LL}$.

\section{The key construction estimates} \label{s:cm_construction}

Having at disposal the Existence Proposition \ref{p:gira_e_rigira} we can now come to the main estimates on the building blocks of the center manifold, which in fact correspond precisely to \cite[Proposition 4.4]{DLS_Center} and are
thus restated here only for the reader's convenience. 

\begin{proposition}[Construction estimates]\label{p:stime_chiave}
 Assume the conclusions of Propositions \ref{p:tilting opt} and \ref{p:gira_e_rigira} apply 
and set $\kappa = \min \{\beta_2/4, \eps_0/2\}$. Then,
the following holds for any pair of cubes $H, L\in \sP^j$ (cf. Definition \ref{d:glued}), where
$C = C (\beta_2, \delta_2, M_0, N_0, C_e, C_h)$: 
\begin{itemize}
\item[(i)] $\|g_H\|_{C^0 (B)}\leq C\, \bmo^{\sfrac{1}{2m}}$ and
$\|Dg_H\|_{C^{2, \kappa} (B)} \leq C \bmo^{\sfrac{1}{2}}$, for $B = B_{4r_H} (x_H, \pi_0)$;
\item[(ii)] if $H\cap L\neq \emptyset$,
then $\|g_H-g_L\|_{C^i (B_{r_L} (x_L,\pi_0))} \leq C \bmo^{\sfrac{1}{2}} \ell (H)^{3+\kappa-i}$ 
for every $i\in \{0, \ldots, 3\}$;
\item[(iii)] $|D^3 g_H (x_H) - D^3 g_L (x_L)| \leq C \bmo^{\sfrac{1}{2}} |x_H-x_L|^\kappa$;
\item[(iv)] $\|g_H-y_H\|_{C^0} \leq C \bmo^{\sfrac{1}{2m}} \ell (H)$ and 
$|\pi_H - T_{(x, g_H (x))} \bG_{g_H}| \leq C \bmo^{\sfrac{1}{2}} \ell (H)^{1-\delta_2}$
$\forall x\in H$;
\item[(v)] if $L'$ is the cube concentric to $L\in \sW^j$ with $\ell (L')=\frac{9}{8} \ell (L)$, 
then
\[
\|\varphi_i - g_L\|_{L^1 (L')} \leq C\, \bmo\, \ell (L)^{m+3+\beta_2/3} \quad \text{for all }\; i\geq j\,.
\]
\end{itemize}
\end{proposition}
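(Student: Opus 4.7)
The strategy is to follow the blueprint of \cite[Proof of Proposition 4.4]{DLS_Center}, upgrading each ingredient to the $\modp$ setting. All the necessary machinery has been assembled: the tilting estimates of Proposition~\ref{p:tilting opt}, the graphical representations of Proposition~\ref{p:gira_e_rigira}, the strong approximation with nonoriented excess (Theorem~\ref{thm:strong-alm-unoriented}) together with the harmonic approximation (Theorem~\ref{thm:final_harm_approx}), and the regularity theory for $\Dir$-minimizing special $Q$-valued maps of \cite{DLHMS_linear}. The first step is to record, for every $H\in \sP^j$ and every $L$ as in Proposition~\ref{p:gira_e_rigira}, the basic estimates on the $\pi_H$-approximation $f_{HL}$ defined on $B_{8r_L}(p_L,\pi_H)$: combining \eqref{e:ex+ht_ancestors}--\eqref{e:ex+ht_whitney} with Theorems~\ref{thm:almgren_strong_approx} and~\ref{thm:strong-alm-unoriented} yields $\Lip(f_{HL})\le C\bmo^{\gamma}\ell(L)^{\gamma}$, the $L^\infty$ bound $\mathrm{osc}(f_{HL})\le C\bmo^{\sfrac{1}{2m}}\ell(L)^{1+\beta_2}$, together with the Dirichlet energy estimate $\Dir(f_{HL},B_{8r_L})\le C\bmo\,\ell(L)^{m+2-2\delta_2}$ and the closeness to a $\Dir$-minimizer (in either $\Iqs$ or $\Iqspec$) quantified by~\eqref{e:final_harm_approx}.

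The next step is to translate these estimates into estimates on the smoothed average $h_{HL}=(\etab\circ f_{HL})\ast \varrho_{\ell(L)}$. Since convolution with $\varrho_{\ell(L)}$ commutes with differentiation and turns $L^2$ control into pointwise $C^k$ control at the scale $\ell(L)$, one obtains, exactly as in \cite[Section 5]{DLS_Center}, the bounds $\|h_{HL}\|_{C^0}+\ell(L)\|Dh_{HL}\|_{C^0}+\dots +\ell(L)^3\|D^3 h_{HL}\|_{C^0}\le C\bmo^{\sfrac{1}{2m}}\ell(L)$, and higher regularity through the Schauder/interior estimates for the approximately harmonic map $\etab\circ f_{HL}$ (the harmonic approximation gives $\Delta(\etab\circ f_{HL})=O(\bmo^{\sfrac12}\ell(L)^{1+\gamma_2})$ in a suitable weak sense). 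Conclusion (i) then follows after passing from $h_L=h_{LL}$ to $g_L$ via the graph-over-$\pi_0$ change of coordinates of Proposition~\ref{p:gira_e_rigira}, using that $\|\Psi\|_{C^{3,\eps_0}}\le C\bmo^{\sfrac12}$ (Lemma~\ref{l:tecnico3}) and that the tilt $|\pi_L-\pi_0|\le C\bmo^{\sfrac12}$ from Proposition~\ref{p:tilting opt}(iv).

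For (ii) the key observation, which is where the $\modp$ setting requires the most care, is that whenever $H\cap L\ne\emptyset$ with comparable side lengths, the two $\pi_H$-approximations $f_{HH}$ and $f_{HL}$ on $B_{8r_L}(p_L,\pi_H)$ represent the \emph{same} current $T$ (up to a set of small excess-measure), so by Theorem~\ref{thm:strong-alm-unoriented} one gets
\[
\int |f_{HH}-f_{HL}|^2 \le C\bmo^{1+\gamma_2}\ell(L)^{m+2+\gamma_2}\,.
\]
Here one uses the $\modp$ Lipschitz comparison on the intersection of the two good sets, together with the volume bound \eqref{e:volume_estimate} for the bad set; the proof is identical to \cite[Lemma 5.6]{DLS_Center} once one invokes Proposition~\ref{SORBILLO_GRATIS_X2} to ensure that the symmetric difference of the two graphs is controlled by the classical (not $\modp$) flat norm. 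Averaging and convolving transforms this $L^2$ estimate into $C^i$ estimates on $h_{HH}-h_{HL}$ at scale $\ell(L)$ by the standard interpolation lemma \cite[Lemma C.2]{DLS_Center}, and a change of coordinates converts it to the required estimate on $g_H-g_L$. Claims (iii) and (iv) are then consequences: (iv) follows from (i) and the construction of $g_H$ by integrating the tangent plane information at $p_H$; (iii) follows by telescoping through the chain of ancestors/neighbours in $\sP^j$ connecting $L$ and $H$, combining (ii) with a finite subdivision argument.

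Finally for (v), let $L\in \sW^j$ and $L'$ its concentric enlargement. By construction of $\varphi_i$ via the partition of unity $\{\vartheta_H\}$, on $L'$ only cubes $H\in\sP^i$ with $H\cap L'\ne\emptyset$ and $\ell(H)\in[\tfrac12\ell(L),2\ell(L)]$ contribute (by the Whitney property (w3) and the stopping rules). Writing $\varphi_i-g_L=\sum_H \vartheta_H(g_H-g_L)/\sum_H \vartheta_H$ and using (ii) on each such summand in $L^1$ gives the bound $C\bmo\,\ell(L)^{m+3+\beta_2/3}$, uniformly in $i\ge j$; the bound on $\varphi_i-g_L$ rather than $\hat\varphi_i-g_L$ uses once more the Lipschitz estimate on $\Psi$ and (iv). The main potential obstacle is the $L^2$ comparison at the heart of (ii): it is here that the fact we work with currents $\modp$ (which may carry opposite orientations on different sheets) could in principle produce sign-changes not present in the classical setting, but Proposition~\ref{SORBILLO_GRATIS_X2} together with Remark~\ref{r:why_optimal_2} guarantees that, in the small nonoriented excess regime, the relevant distances in $\Iqs$ or $\Iqspec$ coincide with the flat distance $\modp$, so the argument of \cite{DLS_Center} goes through unchanged.
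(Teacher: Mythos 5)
Your overall blueprint---port \cite[Section 5]{DLS_Center} line by line to the $\modp$ setting---is indeed the approach the paper takes, but several of the key steps you describe will not close, and the most important adaptation needed in the $\modp$ setting is missing.

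The crucial technical input behind every one of the conclusions (i)--(v) is the approximate harmonicity of $\etab\circ f_{HL}$ in the \emph{weak} (distributional) form $\bigl|\int D(\etab\circ f_{HL})\cdot D\varsigma\bigr|\le C\bmo\,\ell(L)^{m+1+\sfrac{\beta_2}{2}}\|D\varsigma\|_0$ plus error terms, which is exactly \cite[Eq.~(5.1)]{DLS_Center}. You attribute this to the harmonic approximation (Theorem~\ref{thm:final_harm_approx}), but this is wrong: the harmonic approximation only gives $L^2$ closeness of $f$ to a $\Dir$-minimizer, and $L^2$ closeness of $\etab\circ f$ to a harmonic function is far weaker than a quantitative weak PDE for $\etab\circ f$. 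The actual source is a first-variation computation on the graph $\bG_{f_{HL}}$ exploiting the stationarity of $\V(T)$ (Lemma~\ref{l:varifold}), and it is precisely here that the $\modp$ case requires a genuinely new ingredient: one must split the first variation as $\delta\bG_f=\delta(\bG_{f^+}\res B_+)+\delta(\bG_{f^-}\res B_-)+Q\,\delta(\bG_{\etab\circ f}\res B_0)$ using the canonical decomposition of a special $Q$-valued graph. You do not mention this at all, so your argument would not produce the key elliptic estimate in the $Q=\sfrac{p}{2}$ case. Similarly, the reparametrization/comparison lemma \cite[Lemma~16.1]{DLHMS_linear}, which replaces the corresponding tool of \cite{DLS_Currents} and is needed for (iii)--(v), never appears in your proposal.

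Relatedly, your step for (ii) does not close. From $\int|f_{HH}-f_{HL}|^2\le C\bmo^{1+\gamma}\ell^{m+2+\gamma}$, convolution with $\varrho_\ell$ only gives $\|D^i(h_{HH}-h_{HL})\|_{C^0}\lesssim\bmo^{\sfrac12+\sfrac\gamma2}\ell^{1+\sfrac\gamma2-i}$, which is strictly weaker than the required $\bmo^{\sfrac12}\ell^{3+\kappa-i}$ already for $i=0$. The claimed gain of two extra powers of $\ell$ is exactly what the interior Schauder estimate for the difference of the two approximately harmonic functions $\etab\circ f_{HH}$ and $\etab\circ f_{HL}$ provides, and cannot be obtained from convolution alone. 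Finally, your appeal to Proposition~\ref{SORBILLO_GRATIS_X2} in this comparison is unnecessary and somewhat misplaced: on the intersection of the two good sets the approximations $f_{HH}$ and $f_{HL}$ are both $\Iq$- (or $\Iqspec$-) valued Lipschitz selections of the same current $\modp$ in the same coordinates, and being Lipschitz with small constant they must actually coincide there; the fine flat-norm comparison of Proposition~\ref{SORBILLO_GRATIS_X2} plays its role earlier, in the BV estimate for slices feeding into the Lipschitz approximation of Part~\ref{part:Lp}, not here.
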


The proof of Theorem \ref{t:cm} assuming the validity of Proposition \ref{p:stime_chiave} is given in \cite[Section 4.4, Proof of Theorem 1.17]{DLS_Center}. As for the proof of Proposition \ref{p:stime_chiave}, we discuss briefly why the arguments given in \cite[Section 5]{DLS_Center} apply in our case as well. First of all, the key tool in the proof, namely \cite[Proposition 5.2]{DLS_Center}, is valid under our assumptions for the following reason. The proof given in \cite[Section 5.1]{DLS_Center} is based on the following facts:
\begin{itemize}
\item The first variation of $T$ vanishes, and this allows to estimate the first variation of $\bG_f = \bG_{f_{HL}}$ as in
\cite[Eq. (5.4)]{DLS_Center};
\item The estimates claimed in \cite[Eqs. (5.5)--(5.9)]{DLS_Center} are valid because of Theorem 
\ref{thm:strong-alm-unoriented} and the Taylor expansion of \cite[Corollary 13.2]{DLHMS_linear}. 
\item Using the decomposition $\delta \bG_f = \delta (\bG_{f^+} \res B_+) + \delta (\bG_{f^-}\res B_-) +
Q \delta (\bG_{\etab\circ f}\res B_0)$ we can show the validity of \cite[Eq. (5.11)]{DLS_Center}.  
\end{itemize}
The three ingredients above are then used to show the first estimate of \cite[Proposition 5.2]{DLS_Center}, namely \cite[Eq. (5.1)]{DLS_Center}. The derivation of the remaining part of \cite[Proposition 5.2]{DLS_Center} is then a pure PDE argument based only on \cite[Eq. (5.1)]{DLS_Center}. 

In \cite[Section 5.2]{DLS_Center} the \cite[Proposition 4.4]{DLS_Center} is used to derive \cite[Lemma 5.3]{DLS_Center}, which in fact includes the conclusions (i) and (ii) of Proposition \ref{p:stime_chiave}. This derivation does not depend anymore on the underlying current and thus the proof given in \cite[Section 5.2]{DLS_Center} works literally in our case as well. The remaining part of Proposition \ref{p:stime_chiave} is derived from \cite[Lemma 5.5]{DLS_Center}. The latter is based solely on the estimates on the Lipschitz approximation (which are provided by Theorem \ref{thm:strong-alm-unoriented}) and on \cite[Lemma 5.5]{DLS_Center}, whose role is taken, in our setting, by \cite[Lemma 16.1]{DLHMS_linear}. 

\section{Existence and estimates on the $\mathcal{M}$-normal approximation} \label{s:normal_approx}

Corollary \ref{c:cover} can be proved following the argument of \cite[Section 6.1]{DLS_Center}. The only adjustement needed is in the argument for claim (iii). Following the one of \cite[Section 6.1]{DLS_Center} we conclude that at every $q\in \mathbf{\Phi} (\mathbf{\Gamma})$, if we denote by $\pi$ the oriented tangent plane to $\mathcal{M}$ at $q$, then the current $Q\a{\pi}$ is the unique tangent $\modp$ of $T$ at $q$, in the sense of Corollary \ref{c:tangent_cones}. We then can use Proposition \ref{p:compactness} to conclude that $\Theta (T,q) = Q$. 

For Theorem \ref{t:approx} we can repeat the arguments of \cite[Section 6.2]{DLS_Center} in order to prove the existence of the $\mathcal{M}$-normal approximation  and the validity of \eqref{e:Lip_regional} and \eqref{e:err_regional}. As for \eqref{e:av_region} we can repeat the arguments of \cite[Section 6.3]{DLS_Center}, whereas in order to get \eqref{e:Dir_regional} we make the following adjustments to the first part of \cite[Section 6.3]{DLS_Center}. The paragraphs leading to \cite[Eq. (6.11)]{DLS_Center} are obviously valid in our setting. However \cite[Eq. (6.11)]{DLS_Center} must be replaced with the following analogous estimate
\begin{align}
\int_{\p^{-1} (\cL)} |\vec{\bT}_F (x) -& \vec{\cM} (\p (x))|_{no}^2 d\|\bT_F\| (x)\nonumber\\ 
\leq\; &\int_{\p^{-1} (\cL)} |\vec{T} (x) - \vec{\cM} (\p (x))|_{no}^2 d\|T\| (x) + C \bmo^{1+\gamma_2}
\ell (L)^{m+2+\gamma_2}\nonumber\\
\leq\; &\int_{\p^{-1} (\cL)} |\vec{T} (x) - \vec{\pi}_L|_{no}^2 d \|T\| (x) + C \bmo \ell (L)^{m+2-2\delta_2}\label{e:eccesso_storto}
\end{align}
From this one we proceed as in the rest of \cite[Section 6.3]{DLS_Center} using the Taylor expansion of 
\cite[Proposition 13.3]{DLHMS_linear} in place of \cite[Proposition 3.4]{DLS_Currents}. 

\section{Separation and splitting before tilting} \label{s:splitting}

The arguments for Proposition \ref{p:separ} and Corollary \ref{c:domains} can be taken from \cite[Section 7.1]{DLS_Center}, modulo using Theorem \ref{t:height_bound} in place of \cite[Theorem A.1]{DLS_Center}. 

We next come to the proof of Proposition \ref{p:splitting}. A first important ingredient is the Unique continuation property of \cite[Lemma 7.1]{DLS_Center}, which we will now prove it is valid for $\Iqspec$ minimizers as well. 

\begin{lemma}[Unique continuation for $Dir$-minimizers]\label{l:UC}
For every $\eta \in (0,1)$ and $c>0$, there exists $\gamma>0$ with the following property.
If $w: \R^m\supset B_{2\,r} \to \Iqspec$ is Dir-minimizing,
${\rm Dir}\, (w, B_r)\geq c$ and  ${\rm Dir}\, (w, B_{2r}) =1$,
then
\[
{\rm Dir}\, (w, B_s (q)) \geq \gamma \quad \text{for every 
$B_s(q)\subset B_{2r}$ with $s \geq \eta\,r$}.
\]
\end{lemma}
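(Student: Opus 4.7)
The plan is to run the contradiction-and-compactness argument used for the classical $\Iqs$-valued version in \cite[Lemma 7.1]{DLS_Center}, after pulling the three required ingredients for $\Iqspec$-valued maps from the linear theory of \cite{DLHMS_linear}: (a) compactness of $\Dir$-minimizers, (b) strong $W^{1,2}_{\mathrm{loc}}$ convergence of minimizing sequences (so that the Dirichlet energy passes to the limit on subdomains), and (c) an Almgren-type frequency monotonicity yielding unique continuation.

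First I would suppose the conclusion fails. Then for some $\eta>0$ and $c>0$ there is a sequence of $\Dir$-minimizers $w_k\colon B_{2r_k}\to \Iqspec$ with $\Dir(w_k,B_{2r_k})=1$, $\Dir(w_k,B_{r_k})\geq c$, and balls $B_{s_k}(q_k)\subset B_{2r_k}$, $s_k\geq \eta r_k$, such that $\Dir(w_k,B_{s_k}(q_k))\to 0$. Rescaling we may assume $r_k=1$, and extracting a subsequence we may assume $s_k\to s_\infty\geq \eta$ and $q_k\to q_\infty\in\overline{B_2}$. By the compactness theorem for $\Dir$-minimizing special $Q$-valued maps, a further subsequence converges locally uniformly and weakly in $W^{1,2}$ to a $\Dir$-minimizer $w_\infty\colon B_2\to\Iqspec$; by the strong convergence of the energy for minimizing sequences, the Dirichlet energy passes to the limit on open subsets whose boundary carries no mass. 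Hence $\Dir(w_\infty,B_1)\geq c$, whereas $\Dir(w_\infty,B_{s_\infty}(q_\infty))=0$, so that $w_\infty$ is constant on the open ball $B_{s_\infty}(q_\infty)$.

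Finally I would appeal to unique continuation for $\Iqspec$-valued $\Dir$-minimizers. Writing $w_\infty$ through the canonical decomposition $(w_\infty^+\ominus\bfeta\circ w_\infty,\,w_\infty^-\ominus\bfeta\circ w_\infty,\,\bfeta\circ w_\infty)$, the fact that $w_\infty$ is constant on $B_{s_\infty}(q_\infty)$ forces each component to be constant there; the frequency function of \cite{DLHMS_linear} is then identically zero at $q_\infty$ and, by its monotonicity, at every interior point, which forces $w_\infty$ to be globally constant on $B_2$, contradicting $\Dir(w_\infty,B_1)\geq c>0$.

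The main obstacle is making sure that ingredients (a)--(c) are indeed available in the form needed for $\Iqspec$-valued maps; each is a direct counterpart of an $\Iqs$-valued result and is established in \cite{DLHMS_linear}, so that no genuinely new estimate is required and only the bookkeeping of the sign/sheet-changing component of $\Iqspec$ has to be handled.
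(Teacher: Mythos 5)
Your plan to run a compactness/contradiction argument follows the framework of the paper, and indeed the paper says explicitly that the first reduction step from \cite[Section 7.2]{DLS_Center} carries over verbatim: one reduces the lemma to the unique continuation property (UC) -- if $w$ is a locally $\Dir$-minimizing $\Iqspec$-valued map on a connected domain and $\int_J|Dw|^2=0$ on some nontrivial open $J$, then $w$ is constant. The problem is that you then treat (UC) itself as an available ingredient, and the justification you give for it does not hold up.

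You write that because $w_\infty$ is constant on a small ball, ``the frequency function of \cite{DLHMS_linear} is then identically zero at $q_\infty$ and, by its monotonicity, at every interior point.'' This is where the argument breaks. The Almgren frequency $I_{x,w}(r)$ is a quantity at a single base point $x$, monotone in the radius $r$; it is not even defined if the map is constant near $x$ (both the Dirichlet energy and the boundary $L^2$ height vanish), and its monotonicity in $r$ gives no mechanism for propagating constancy to other base points. Frequency monotonicity by itself never yields unique continuation -- it controls the vanishing order at a point but says nothing about whether a map vanishing on an open set must vanish everywhere. So this step is vacuous.

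Worse, the paper is careful to flag precisely why the $\Iqs$-valued proof of (UC) from \cite[Section 7.1]{DLS_Center} cannot be copied: that argument rests on the fact that the singular set of a classical $\Iqs$-valued $\Dir$-minimizer cannot disconnect the domain, and this is genuinely \emph{false} for $\Iqspec$-valued minimizers (the ``sign-flip'' locus, where the value is a single point of multiplicity $Q$, can have codimension one and separate the domain -- this is exactly the new feature that motivates $\Iqspec$). Your remark that ``only the bookkeeping of the sign/sheet-changing component of $\Iqspec$ has to be handled'' misses the fact that this component is precisely what defeats the old proof. What the paper actually does in the $P=Q\a{0}$ case is a substantially new argument: first reduce to $\bfeta\circ w\equiv 0$ via classical unique continuation for harmonic functions, then perform an iterated dimension-reduction blow-up (along points of intermediate Lebesgue density of $\{w=Q\a{0}\}$, using homogeneity of tangent maps and Federer's structure theorem for sets of finite perimeter) down to $m=1$, where the structure of homogeneous one-dimensional $\Iqspec$-valued $\Dir$-minimizers forces a contradiction. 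None of this appears in your sketch, and it cannot be replaced by a pointwise frequency consideration.
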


\begin{proof} We follow partially the argument of \cite[Section 7.2]{DLS_Center} for \cite[Lemma 7.1]{DLS_Center}. In particular, the second part of the argument, which reduces the statement to the following claim, can be applied with no alterations:
\begin{itemize}
\item[(UC)] if $\Omega$ is a connected open set and $w\in W^{1,2} (\Omega, \Iqspec)$ 
is Dir-minimizing in any every bounded
$\Omega'\subset\subset \Omega$, then either $w$ is 
constant or $\int_J |Dw|^2 >0$ for every nontrivial open $J\subset \Omega$. 
\end{itemize}
However, the proof given in \cite[Section 7.1]{DLS_Center} of (UC) when $w\in W^{1,2} (\Omega, \Iqs)$ cannot be repeated in our case, since it uses heavily the fact that the singular sets of $\Iqs$-valued Dir-minimizers
cannot disconnect the domain, a property which is not enjoyed by $\Iqspec$-valued Dir-minimizers. We thus have to modify the proof somewhat, although the tools used are essentially the same. 

Assume by contradiction that there are a connected open set $\Omega\subset \mathbb R^m$, a map $w\in W^{1,2}_{loc} (\Omega, \Iqspec)$ and a nontrivial open subset $J\subset \Omega$ such that
\begin{itemize}
\item[(a)] $w$ is Dir-minimizing on every open $\Omega'\subset\subset \Omega$;
\item[(b)] $w$ is not constant, and thus $\int_{\Omega'} |Dw|^2 >0$ for some $\Omega'\subset\subset \Omega$;
\item[(c)] $\int_J |Dw|^2 =0$.
\end{itemize}
Observe first that, from the classical unique continuation of harmonic functions, either $\etab\circ w$ is constant, or it has positive Dirichlet energy on any nontrivial open subset of $\Omega$. Since however the Dirichlet energy of $\etab \circ w$ is controlled from above by that of $w$, (c) excludes the second posssibility. Thus $\eta\circ w$ is constant and hence, without loss of generality, we can assume $\etab\circ w \equiv 0$.

Next assume, without loss of generality, that $J$ is connected. Clearly, $w$ is constantly equal to some $P\in \Iqspec$ on $J$. Since, without loss of generality, we could ``flip the signs of the Dirac masses'' which constitute the values of $u$, we can always assume that $P = (\sum_i \a{P_i}, 1)$. We then distinguish two cases.

\medskip

{\em First Case.} The diameter of $\spt (P)$ is positive, namely $|P_i-P_j|>0$ for some $i\neq j$. In this case consider the interior $U$ of the set $\{w = P\}$. We want to argue that $U=\Omega$, which contradicts (b). Since $\Omega$ is open and connected, it suffices to show that $\partial U \cap \Omega = \emptyset$. In order to show this, consider a point $x\in \partial U$. If $x\in \Omega$, using the continuity of the map $w$, we know that in a sufficiently small ball $B_\rho (x)$ there is an $\Iqs$-valued map $z$ such that $w (y) = (z(y), 1)$ for all $y\in B_\rho (x)$. As such, $z$ must be a Dir-minimizer to which we can apply \cite[Section 7.2]{DLS_Center}: since $\int_{J'} |Dz|^2 =0$ for some nontrivial open $J'\subset B_\rho (x)$, we must have that $z$ is constant on $B_\rho (x)$. But then we would have $B_\rho (x) \subset U$, thus contradicting the assumption that $x \in \partial U$. 
 
\medskip

{\em Second Case.} The remaining possibility is that $P = Q \a{\etab\circ w (x)}= Q\a{0}$ (which equals both  $(Q\a{0}, 1)$ and $(Q\a{0}, -1)$, since the latter points are identified in $\Iqspec$). Define therefore 
\[
K := \{w = Q\a{0}\}\, ,
\]
and (since $K\supset J$) observe that $|K|>0$. Consider now the set $\tilde{K}$ of points $x \in \R^m$ such that
\begin{equation}\label{e:a_sequence}
0 < \lim_{k \to \infty} \frac{|K\cap B_{r_k} (x)|}{\omega_m r_k^m} <1 \qquad \mbox{for some $r_k\downarrow 0^+$}\,,
\end{equation}
and notice that $\tilde K \subset K$ since $w$ is continuous. The set $\tilde{K}$ is necessarily nonempty. If it were empty, we could first apply the classical characterization of Federer of sets of finite perimeter, cf. \cite[Theorem 4.5.11]{Federer69}, to infer that $K$ is a set of finite perimeter, and subsequently we could then apply the classical structure theorem of De Giorgi to conclude that, since the reduced boundary of $K$ would be empty, $D \mathbf{1}_K =0$. The latter would imply that $\mathbf{1}_K$ is constant on the connected set $\Omega$, namely that $\Omega\setminus K$ has zero Lebesgue measure, which in turn would contradict (b). 

Fix a point $x\in \tilde{K}$. Clearly it must be $\int_{B_\rho (x)} |Dw|^2 >0$ for every $\rho>0$, otherwise $w$ would be constant in a neighborhood of $x$ and thus $x$ would be an interior point of $K$. Denoting $I_{x,w}(\cdot)$ the \emph{frequency function} of $w$ at $x$ as in \cite[Definition 9.1]{DLHMS_linear}, from \cite[Theorem 9.2]{DLHMS_linear} we must then have
\[
\infty> I_0 := \lim_{r\downarrow 0} I_{x,w} (r) >0\, .
\]
Define then the maps $y \mapsto w_r(y)$, whose positive and negative parts are given by
\[
w^{\pm}_r(y) := \sum_{i} \a{r^{-I_0}\, w_{i}^{\pm}(r\,y + x)}\,,
\]
 and observe that a subsequence of $\{w_{r_k}\}_{k\in \mathbb N}$, not relabeled, is converging to a nontrivial $w_0\in W^{1,2}_{loc} (\mathbb R^m, \Iqspec)$ which minimizes the Dirichlet energy on every $\Omega'\subset\subset \mathbb R^m$ and is $I_0$-homogeneous. 

Next define the sets $K_{r_k} := r_k^{-1} (K - x)$, where the maps $w_{r_k}$ vanish identically, and observe that, by \eqref{e:a_sequence}, 
$\liminf_k |K_{r_k} \cap \overline{B}_1|>0$. Since the sets $K_{r_k} \cap \overline{B}_1$ are compact we can, without loss of generality, assume that they convergence in the sense of Hausdorff to some set $K_0$. The limiting map $w_0$ vanishes on such set because the $w_{r_k}$ are converging locally uniformly to $w_0$. On the other hand it is elementary to see that the Lebesgue measure is upper semicontinuous under Hausdorff convergence and we thus conclude $|K_0|>0$.

We can now repeat the procedure above on some point $y\neq 0$ where the Lebesgue density of $K_0$ does not exist or it is neither zero nor one. We find thus a corresponding tangent function $w_1$ that has all the properties of $w_0$, namely
\begin{itemize} 
\item it is nontrivial, 
\item it vanishes identically on a set of positive measure,
\item it is $I_1$-homogeneous for some positive constant $I_1$,
\item and it minimizes the Dirichlet energy on any bounded open set.
\end{itemize} 
In addition $w_1$ is invariant under translations along the direction $\frac{y}{|y|}$. Assuming, after rotations, that such vector is $e_m = (0, 0, \ldots, 0, 1)$, the function $w_1$ depends therefore only on the variables $x_1, \ldots , x_{m-1}$ and can thus be treated as a function defined over $\mathbb R^{m-1}$. Iterating $m-2$ more times such procedure we achieve finally a function $w_{m-1}: \mathbb R \to \Iqspec$ with the following properties:
\begin{itemize}
\item[(A)] $w_{m-1}$ is identically $Q\a{0}$ on some set of positive measure;
\item[(B)] $\int_1^{-1} |Dw_{m-1}|^2 >0$;
\item[(C)] $w_{m-1}$ is Dir-minimizing on $]a, b[$ for every $0<a<b<\infty$;
\item[(D)] $w_{m-1}$ is $\alpha$-homogeneous for some positive $\alpha>0$;
\item[(E)] $\etab\circ w_{m-1} \equiv 0$. 
\end{itemize}
Because of (A) and (D), $w_{m-1}$ must be identically equal to $Q\a{0}$ on at least one of two half-lines $]-\infty, 0]$ and $[0, \infty[$. Without loss of generality we can assume this happens on the $]-\infty, 0[$. Let now 
$w_{m-1} (1) = (\sum_i \a{c_i}, \epsilon)$, where $\epsilon \in \{-1,1\}$. By (D) we then have
\[
w_{m-1} (x) = \left(\sum_i \llbracket c_i x^\alpha\rrbracket, \epsilon\right) \qquad \forall x\geq 0\, .
\] 
Observe that, because of (B), at least one of the $c_i$'s is nonzero. Therefore
$\epsilon$ cannot be equal to $1$, otherwise $w_{m-1}$ would give an $\Iqs$-valued Dir-minimizer on the real line with a singularity, which is not possible. However, since $(Q\a{0}, 1) = (Q\a{0}, -1)$, if $\varepsilon$ equals $-1$ we reach precisely the same contradiction. This completes the proof. 
\end{proof}

We keep following the strategy of \cite[Section 7.2]{DLS_Center} towards a proof of Proposition \ref{p:splitting}. First of all, we introduce some useful notation. 

\begin{definition}\label{d:centered}
Let $w: E\to \Iqspec$, let $E_+, E_-$ and $E_0$ be the canonical decomposition of $E$ induced by $w$ and let $w^+, w^-$ and $\etab\circ w$ the corresponding maps, as in \cite[Definition 2.7]{DLHMS_linear}. For any $f: E \to\R^n$ we denote by $w\oplus f$ (resp. $w\ominus f$) the $\Iqspec$-valued map which 
\begin{itemize}
\item on $E_+$ coincides with $(w^+\oplus f,1)$ (resp. $(w^+\ominus f,1)$), 
\item on $E_-$ coincides with $(w^-\oplus f,-1)$ (resp. $(w^-\ominus f, -1)$), 
\item and on $E_0$ coincides with $Q\a{\etab\circ w+f}$ (resp. $Q\a{\etab\circ w-f}$.  
\end{itemize}
Moreover we use the shorthand notation $\bar{w}$ for $w\ominus \etab\circ w$. 
\end{definition}

We next show that if the energy of an $\Iqspec$-valued Dir-minimizer $w$ does not decay appropriately, then the map must ``split'', in other words $\bar{w}$ cannot be too small compared to $\etab\circ w$. 
As in \cite[Section 7.2]{DLS_Center}, we fix $\lambda>0$ such that 
\begin{equation}\label{e:lambda}
(1+\lambda)^{(m+2)}< 2^{\delta_2}\, ,
\end{equation}
and we claim the following analog of \cite[Proposition 7.2]{DLS_Center}.

\begin{proposition}[Decay estimate for ${\rm Dir}$-minimizers]\label{p:harmonic split}
For every $\eta>0$, there is $\gamma >0 $ with the following property.
Let $w: \R^m \supset B_{2r} \to \Iqspec$ be Dir-minimizing 
in every $\Omega'\subset\subset B_{2r}$ such that
\begin{equation}\label{e.no decay}
\int_{B_{(1+\lambda) r}} \cG_s \big(Dw, Q \a{D (\etab \circ w) (0)}\big)^2 \geq 2^{\delta_2-m-2}\, {\rm Dir}\, (w, B_{2r})\, .
\end{equation}
Then, if we let $\bar{w}$ be as in Definition \ref{d:centered}, the following holds:
\begin{equation}\label{e.harm split 2}
\gamma \, {\rm Dir}\, (w, B_{(1+\lambda) r}) \leq {\rm Dir}\, (\bar{w}, B_{(1+\lambda) r}) 
\leq \frac{1}{\gamma \, r^2} \int_{B_{s}(q)} |\bar{w}|^2 \quad\forall \;B_s(q) \subset B_{2\,r} \;\text{with }\, s\geq \eta \,r\, .
\end{equation}
\end{proposition}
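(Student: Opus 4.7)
The argument is by contradiction and compactness, following closely the scheme of \cite[Proposition 7.2]{DLS_Center} with the modifications needed to handle $\Iqspec$-valued maps. By scaling we reduce to $r=1$, and we prove each of the two inequalities in \eqref{e.harm split 2} separately. In both cases we assume there is a sequence $w_k\colon B_2\to \Iqspec$ of Dir-minimizers (on every $\Omega'\subset\subset B_2$) satisfying the hypothesis \eqref{e.no decay}, normalized so that $\Dir(w_k,B_2)=1$, and for which the corresponding inequality fails with $\gamma_k = 1/k$. By the compactness theorem for $\Iqspec$-valued Dir-minimizers (a consequence of the higher integrability Theorem \ref{t:higher integrability of the gradient} and the standard variational argument) we may pass, up to subsequences, to a strong $W^{1,2}_{loc}(B_2)$ limit $w_\infty$ which is itself Dir-minimizing. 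Since $\etab\circ w_k$ is harmonic and converges strongly in $W^{1,2}_{loc}$, it converges in $C^1_{loc}$ to the harmonic map $h:=\etab\circ w_\infty$, so in particular $D(\etab\circ w_k)(0)\to Dh(0)$.

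\textbf{Case A (lower bound).} Here $\Dir(\bar w_k,B_{1+\lambda})\to 0$, whence $\bar w_\infty\equiv 0$ on $B_{1+\lambda}$ and $w_\infty=Q\a{h}$ on $B_{1+\lambda}$. Passing \eqref{e.no decay} to the limit and using $w_\infty=Q\a{h}$ on $B_{1+\lambda}$ gives
\[
Q\int_{B_{1+\lambda}} |Dh-Dh(0)|^2 \;\geq\; 2^{\delta_2-m-2}\Dir(w_\infty,B_2)\;\geq\; 2^{\delta_2-m-2}\, Q\,\Dir(h,B_2),
\]
where the second inequality uses the orthogonal decomposition $\Dir(w_\infty)=\Dir(\bar w_\infty)+Q\Dir(h)$. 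On the other hand the function $\psi(x):=h(x)-h(0)-Dh(0)\cdot x$ is harmonic on $B_2$ and vanishes to order $\ge 2$ at the origin, so by the standard decomposition into spherical harmonics one has the sharp gradient-decay estimate
\[
\int_{B_{1+\lambda}}|D\psi|^2\;\leq\;\Bigl(\tfrac{1+\lambda}{2}\Bigr)^{m+2}\int_{B_2}|D\psi|^2\;\leq\;\Bigl(\tfrac{1+\lambda}{2}\Bigr)^{m+2}\Dir(h,B_2),
\]
the last step coming from the mean value property $\fint_{B_2}Dh=Dh(0)$. Since $|D\psi|^2=|Dh-Dh(0)|^2$ and \eqref{e:lambda} gives $(1+\lambda)^{m+2}<2^{\delta_2}$, we obtain the strict inequality $\int_{B_{1+\lambda}}|Dh-Dh(0)|^2 < 2^{\delta_2-m-2}\Dir(h,B_2)$, contradicting the previous display. (The degenerate subcase $\Dir(h,B_2)=0$ is ruled out separately: it forces $D(\etab\circ w_k)(0)\to 0$ and $\Dir(w_k,B_{1+\lambda})\to 0$, which contradicts \eqref{e.no decay} since the integrand there is controlled by $|Dw_k|^2$ and $|D(\etab\circ w_k)(0)|^2$.)

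\textbf{Case B (upper bound).} Up to a subsequence $q_k\to q$ and $s_k\to s\ge \eta$ with $B_s(q)\subset B_2$. If $\Dir(\bar w_k,B_{1+\lambda})\to 0$ we are back in Case A. Otherwise the failing inequality $\Dir(\bar w_k,B_{1+\lambda})>k\int_{B_{s_k}(q_k)}|\bar w_k|^2$ combined with the uniform bound $\Dir(\bar w_k,B_{1+\lambda})\le 1$ gives $\int_{B_{s_k}(q_k)}|\bar w_k|^2\to 0$. Strong $L^2_{loc}$ convergence then yields $\bar w_\infty\equiv 0$ on $B_s(q)$, i.e.\ $w_\infty=Q\a{h}$ on the open set $B_s(q)$, while $\Dir(\bar w_\infty,B_{1+\lambda})>0$. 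The plan is to promote the vanishing on $B_s(q)$ to vanishing on all of $B_{1+\lambda}$ (thereby reducing to Case A) via a unique-continuation argument: both $w_\infty$ and $Q\a{h}$ are Dir-minimizers on $B_2$ agreeing on the open set $B_s(q)$, and the property (UC) established in the proof of Lemma \ref{l:UC} prevents their difference from being nontrivial on any open subset of the connected domain $B_2$.

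\textbf{Main obstacle.} The delicate step is the unique continuation used in Case B. The map $\bar w_\infty$ is not itself Dir-minimizing, so Lemma \ref{l:UC} cannot be applied to it directly. Instead one must exploit that $w_\infty$ is a Dir-minimizer which, on the open set $B_s(q)$, reduces to the classical harmonic $Q\a{h}$; this puts Almgren's frequency function for $w_\infty$ (as developed in \cite[Section 9]{DLHMS_linear}) in agreement with the classical frequency of the analytic map $h$ at every point of $B_s(q)$, and one propagates the identity $w_\infty=Q\a{h}$ to the whole of $B_2$ by combining this with (UC) exactly as in the $\Iqs$-argument of \cite[Section 7.2]{DLS_Center}, replacing the classical $\Iqs$ unique continuation there with its $\Iqspec$ analogue Lemma \ref{l:UC}.
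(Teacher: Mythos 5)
Your overall strategy (reduce to $r=1$, argue by contradiction and compactness, bound the lower inequality via the harmonic decay estimate using the identity $\Dir(w)=\Dir(\bar w)+Q\Dir(\etab\circ w)$, and handle the upper inequality via unique continuation) is the same scheme the paper follows from \cite[Proposition 7.2]{DLS_Center} together with Lemma~\ref{l:medie}. The lower bound in the paper comes out directly and algebraically from Lemma~\ref{l:medie} and the harmonic decay estimate, without compactness, but your compactness version of Case~A is also sound once one fixes the minor imprecision that vanishing Dirichlet energy of $\bar w_\infty$ on $B_{1+\lambda}$ only gives that $\bar w_\infty$ is constant there, not that it equals $Q\a{0}$ (your subsequent computation uses only $D\bar w_\infty=0$, so this is harmless).

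The gap is in the step you single out as the ``main obstacle.'' You assert that $\bar w_\infty$ is not Dir-minimizing and therefore that Lemma~\ref{l:UC} cannot be applied to it, and you propose an unverified frequency-function workaround in its place. This premise is wrong. If $w$ is Dir-minimizing in $\Omega'\subset\subset\Omega$, then $\etab\circ w$ is harmonic there, and any $v\in W^{1,2}(\Omega',\Iqspec)$ with $v=\bar w$ on $\partial\Omega'$ gives the competitor $v\oplus\etab\circ w$ for $w$; since $\etab\circ v$ vanishes on $\partial\Omega'$ while $\etab\circ w$ is harmonic, the cross term $\int D(\etab\circ v):D(\etab\circ w)$ vanishes, and the decomposition~\eqref{e:algebra} yields $\Dir(v\oplus\etab\circ w)=\Dir(v)+Q\Dir(\etab\circ w)$. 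Comparing with $\Dir(w)=\Dir(\bar w)+Q\Dir(\etab\circ w)$ shows $\Dir(\bar w)\le\Dir(v)$, i.e.\ $\bar w$ is Dir-minimizing whenever $w$ is. Consequently in Case~B you may apply the property (UC) from the proof of Lemma~\ref{l:UC} directly to $\bar w_\infty$: it vanishes on the nontrivial open set $B_s(q)$, hence is constant on $B_2$, hence is $Q\a{0}$, which contradicts $\Dir(\bar w_\infty,B_{1+\lambda})\ge 2^{\delta_2-m-2}-\bigl(\tfrac{1+\lambda}{2}\bigr)^{m+2}>0$. No detour through the frequency function is needed; as written, that detour is not a proof but a sketch of an idea, and your proposal as it stands is therefore incomplete precisely at its crucial step.
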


The proof of \cite[Proposition 7.2]{DLS_Center} can be literally followed for our case using the Unique continuation Lemma \ref{l:UC} in combination with the next simple algebraic computation (which is the counterpart of \cite[Lemma 7.3]{DLS_Center}). 

\begin{lemma}\label{l:medie}
Let $B\subset \R^m$ be a ball centered at $0$, $w\in W^{1,2} (B , \Iqspec)$ ${\rm Dir}$-minimizing and $\bar{w}$ as in Definition \ref{d:centered}
We then have
\begin{align}
Q \int_B |D (\etab\circ w) - D (\etab \circ w) (0)|^2
&=\int_B \cG_s (Dw, Q \a{ D (\etab \circ w) (0)})^2 - {\rm Dir}\, (\bar w, B)\, .\label{e:mean+centered}
\end{align}
\end{lemma}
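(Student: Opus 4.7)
The identity is essentially a pointwise algebraic computation integrated over $B$, so Dir-minimality plays only the supporting role of ensuring that $\etab\circ w$ is harmonic (hence smooth) and that the pointwise value $A := D(\etab\circ w)(0)$ is well-defined as a constant $(n\times m)$-matrix. My plan is to prove the stronger pointwise statement
\[
\cG_s\big(Dw(x),\, Q\a{A}\big)^2 \;=\; |D\bar w(x)|^2 \;+\; Q\,|D(\etab\circ w)(x)-A|^2 \qquad\text{for a.e.\ }x\in B,
\]
which, once integrated and combined with $\etab\circ \bar w\equiv 0$ and the definition of $\Dir$ for special multi-valued maps, gives \eqref{e:mean+centered}.

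The pointwise identity rests on the following elementary fact for classical $Q$-tuples: if $v_1,\dots,v_Q\in\R^{n\times m}$ with average $\bar v=Q^{-1}\sum_i v_i$ and $A\in\R^{n\times m}$ is arbitrary, then expanding $|v_i-A|^2=|v_i-\bar v|^2+2(v_i-\bar v):(\bar v-A)+|\bar v-A|^2$ and using $\sum_i(v_i-\bar v)=0$ yields
\[
\sum_{i=1}^Q|v_i-A|^2 \;=\; \sum_{i=1}^Q|v_i-\bar v|^2 \;+\; Q\,|\bar v-A|^2.
\]

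I would then apply this on each piece of the canonical decomposition $B=B_+\sqcup B_-\sqcup B_0$ associated to $w$ as in \cite[Definition 2.7]{DLHMS_linear}. On $B_\pm$, $w=(w^\pm,\pm1)$ with $w^\pm=\sum_i\a{w_i^\pm}$ and $\etab\circ w^\pm=\etab\circ w$; hence at a.e.\ $x\in B_\pm$ the pointwise identity above with $v_i=Dw_i^\pm(x)$, $\bar v=D(\etab\circ w)(x)$, and the given $A$ reads
\[
\cG\big(Dw^\pm(x),Q\a{A}\big)^2 = |D\bar w^\pm(x)|^2 + Q\,|D(\etab\circ w)(x)-A|^2,
\]
and because on $B_\pm$ one has $\cG_s(Dw(x),Q\a{A})=\cG(Dw^\pm(x),Q\a{A})$, this is the claimed pointwise equality. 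On $B_0$ we have $w=Q\a{\etab\circ w}$, so $Dw=Q\a{D(\etab\circ w)}$ and $\bar w=Q\a{0}$; both sides reduce to $Q|D(\etab\circ w)-A|^2$, and the identity holds trivially.

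Integrating over $B$, the right-hand side yields $Q\int_B|D(\etab\circ w)-A|^2$ plus $\int_{B_+}|D\bar w^+|^2+\int_{B_-}|D\bar w^-|^2$, and since $\etab\circ\bar w\equiv 0$ the latter two terms equal $\Dir(\bar w^+\ominus\etab\circ\bar w)+\Dir(\bar w^-\ominus\etab\circ\bar w)+Q\,\Dir(\etab\circ\bar w)=\Dir(\bar w,B)$ by the very definition of the Dirichlet energy on $\Iqspec$-valued maps given in Section \ref{sec:guide}. Rearranging gives \eqref{e:mean+centered}. The only subtle point I anticipate is the bookkeeping across the three pieces of the canonical decomposition---specifically, the identification $\etab\circ w^\pm=\etab\circ w$ on all of $B$ (which makes $\bar w^\pm=w^\pm\ominus\etab\circ w$ a genuinely $Q$-valued map with average zero on $B_\pm$ and identically $Q\a{0}$ on $B\setminus B_\pm$) and the resulting coherence between the decomposition of $w$ and that of $\bar w$; once this is in place, the algebra is straightforward.
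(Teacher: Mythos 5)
Your proof is correct and is a genuine streamlining of the paper's argument. The paper derives \eqref{e:mean+centered} from two separate integral identities, each invoking the mean value property of the harmonic function $u := \etab\circ w$, followed by the splitting $\int_B|Dw|^2 = \int_B|D\bar w|^2 + Q\int_B|Du|^2$. You instead prove the \emph{pointwise} identity
\[
\cG_s\big(Dw(x), Q\a{A}\big)^2 = |D\bar w(x)|^2 + Q\,|Du(x) - A|^2 \qquad \text{for a.e.\ } x\in B,
\]
valid for an arbitrary constant matrix $A$, by applying the elementary variance decomposition $\sum_i|v_i - A|^2 = \sum_i|v_i - \bar v|^2 + Q|\bar v - A|^2$ on each piece $B_+,B_-,B_0$ of the canonical decomposition; integrating gives \eqref{e:mean+centered} upon taking $A = Du(0)$. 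This exposes the fact that harmonicity of $u$ is inessential to the identity: in the paper's computation the two cross terms $-2Q\,Du(0):\int_B Du$ produced by the mean value property cancel in the final subtraction, so $\Dir$-minimality serves only to make the pointwise evaluation $Du(0)$ meaningful, exactly as you observe. The small bookkeeping matters you flag (coherence of the canonical decompositions of $w$ and $\bar w$; the identification $\etab\circ w^\pm = \etab\circ w$ on all of $B$) work out precisely as you describe.
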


The detail of the necessary modifications to the argument in \cite[Proof of Proposition 7.2]{DLS_Center} towards proving Proposition \ref{p:harmonic split} are left to the reader; we will instead show how to prove the lemma above. 

\begin{proof} Let $u := \etab \circ w$ and observe that it is harmonic. Thus, using the mean value property of harmonic functions and a straightforward computation we get
\begin{equation}\label{e:media1}
Q \int_B |Du - Du (0)|^2 = Q \int_B |Du|^2 - Q |B| |Du (0)|^2\, .
\end{equation}
On the other hand, using again the mean value property of harmonic functions, it is easy to see that
\[
\int_B \cG_s (Dw, Q \a{Du (0)})^2 = \sum_{\epsilon=+,-} \int_{B^\epsilon} \cG (Dw^\epsilon, Q \a{Du (0)})^2 
+ Q \int_{B_0} |Du - Du (0)|^2 
\]
and
\[
\int_{B^\epsilon} \cG (Dw^\epsilon, Q \a{Du (0)})^2 = \int_{B^\epsilon} (|Dw^\epsilon|^2 - 2 Q Du : Du (0) + Q |Du (0)|^2)\, .  
\]
In particular, we get
\[
\int_B \cG_s (Dw, Q \a{Du (0)})^2 =\int_B |Dw|^2 + Q |B| |Du (0)|^2 - 2Q Du (0) : \int_B Du
\]
and again by the mean value property we conclude
\begin{equation}\label{e:media2}
\int_B \cG_s (Dw, Q \a{Du (0)})^2 = \int_B |Dw|^2  - Q |B| |Du (0)|^2\, .
\end{equation}
Combining \eqref{e:media1} and \eqref{e:media2} we thus get
\begin{align}
&\int_B \cG_s (Dw, Q \a{ D (\etab \circ w) (0)})^2 - Q \int_B |D (\etab\circ w) - D (\etab \circ w) (0)|^2\nonumber\\
=\; &\int_B \cG_s (Dw, Q \a{Du (0)})^2 - Q \int_B |Du - Du (0)|^2= \int_B |Dw|^2 - Q \int_B |Du|^2\nonumber\\
=\; & \int_B |Dw|^2 - Q \int_B |D(\etab\circ w)|^2 \, .\label{e:media3}
\end{align}
Next, a simple algebraic computations shows 
\begin{align}
 \int_B |Dw|^2 &=  \sum_{\epsilon=+,-} \int_{B^\epsilon} |Dw^\epsilon|^2 + Q \int_{B_0} |D (\etab\circ w)|^2\nonumber\\
 &= \sum_{\epsilon=+,-} \left(\int_{B^\epsilon} |D \bar w^\epsilon|^2 + Q |D(\etab\circ w^\epsilon)|^2\right)
 + Q \int_{B_0} |D (\etab\circ w)|^2\nonumber\\
 &= \int_B |D\bar w|^2 + Q \int_B |D(\etab\circ w)|^2\label{e:algebra}
\end{align}
Clearly, \eqref{e:media3} and \eqref{e:algebra} give \eqref{e:mean+centered} and conclude the proof. 
\end{proof}

\begin{proof}[Proof of Proposition \ref{p:splitting}]
Having at hand the analogs of the tools used in \cite[Section 7.3]{DLS_Center}, we can following the argument given there for \cite[Proposition 3.4]{DLS_Center}. In the first step of the proof (namely \cite[Step 1, p. 548]{DLS_Center}) we use \cite[Corollary 13.2]{DLHMS_linear}
in place of \cite[Corollary 3.3]{DLS_Currents}, we use Theorem \ref{thm:strong-alm-unoriented} in place of \cite[Theorem 2.4]{DLS_Lp} and we replace $\bE$ with $\bE^{no}$ in the various formulas. We also replace $\mathcal{G}$ with $\mathcal{G}_s$ in case $p=2Q$. We then follow \cite[Step 2, p. 550]{DLS_Center}, where we use Lemma \ref{l:UC} and Proposition \ref{p:harmonic split} in place of \cite[Lemma 7.1 \& Proposition 7.2]{DLS_Center} in case $p=2Q$. In the final \cite[Step 3, p. 551]{DLS_Center} we use the reparametrization Theorem \cite[Theorem 15.1]{DLHMS_linear} in place of the corresponding \cite[Theorem 5.1]{DLS_Currents} and measure the distance between $m$-planes using $|\cdot|_{no}$ in place of $|\cdot|$.
\end{proof}

\section{Persistence of multiplicity $Q$ points} \label{s:persistence}

The proofs of Proposition \ref{p:splitting_II} and Proposition \ref{p:persistence} can be easily adapted to our case from \cite[Proofs of Proposition 3.5 \& Proposition 3.6]{DLS_Center} once we prove the following analog of \cite[Theorem 2.7]{DLS_Lp}:

\begin{theorem}[Persistence of $Q$-points]\label{t:persistence}
For every $\hat{\delta}, C^\star>0$, there is $\bar{s}\in ]0, \frac{1}{2}[$ such that, for every $s<\bar{s}$, there exists $\hat{\eps} (s,C^*,\hat{\delta})>0$ with the following property. If $T$ is as in Theorem \ref{thm:strong-alm-unoriented}, $E^{no} := \bE^{no} (T,\bC_{4\,r} (x)) < \hat{\eps}$,
$r^2 \bA^2 \leq C^\star E^{no}$ and $\Theta (T, (p,q)) = Q$ at some $(p,q)\in \bC_{r/2} (x)$, then the approximation
$f$ of Theorem \ref{thm:almgren_strong_approx} satisfies
\begin{equation}\label{e:persistence2}
\int_{B_{sr} (p)} \cG_\square (f, Q \a{\etab\circ f})^2 \leq \hat{\delta} s^m r^{2+m} E^{no}\, ,
\end{equation}
where $\square =s$ if $p=2Q$ or $\square = \;$ otherwise. 
\end{theorem}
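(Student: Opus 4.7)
The plan is to argue by contradiction via compactness-and-blow-up, along the lines of \cite[Theorem 2.7]{DLS_Lp}, but with the necessary adaptations for $\Iqspec$-valued approximations in the case $Q = p/2$. Suppose the statement fails. Then I can fix $\hat\delta, C^{\star} > 0$ and produce sequences $s_n \downarrow 0$ and, for each $n$, a current $T_n$ satisfying the assumptions of Theorem \ref{thm:strong-alm-unoriented} in $\bC_{4 r_n}(x_n)$ with $E_n^{no} := \bE^{no}(T_n, \bC_{4 r_n}(x_n)) \downarrow 0$, $r_n^2 \bA_n^2 \leq C^{\star} E_n^{no}$, a density-$Q$ point $(p_n, q_n) \in \bC_{r_n/2}(x_n)$, yet with approximation $f_n$ from Theorem \ref{thm:almgren_strong_approx} violating
\[
\int_{B_{s_n r_n}(p_n)} \cG_\square\bigl(f_n, Q\a{\etab \circ f_n}\bigr)^{2} \;>\; \hat{\delta}\, s_n^{m}\, r_n^{m+2}\, E_n^{no}.
\]
After translating to $x_n = 0$ and rescaling to $r_n = 1$, a subsequence gives $p_n \to p_\infty \in \overline{B}_{1/2}$.

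Next, I would perform the blow-up. Define $u_n := (E_n^{no})^{-1/2} \bigl( f_n \ominus \etab\circ f_n(0) \bigr)$, so that $\Dir(u_n, B_1) \leq C$ uniformly by Theorem \ref{thm:strong-alm-unoriented} and $\bA_n^2 \leq C^{\star} E_n^{no}$, and the bad inequality reads
\[
\int_{B_{s_n}(p_n)} \cG_\square\bigl(u_n, Q\a{\etab \circ u_n}\bigr)^{2} \;>\; \hat\delta\, s_n^{m}.
\]
To identify a limit, I would couple $f_n$ with a Dir-minimizer $h_n$ ($\Iqs$-valued when $Q < p/2$, $\Iqspec$-valued when $Q = p/2$) via Theorem \ref{thm:final_harm_approx}, so that the rescaled difference tends to zero in $W^{1,2}(B_1)$. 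The compactness theory for Dir-minimizers (classical in \cite{DLS_Qvfr}, special in \cite{DLHMS_linear}) then delivers, along a further subsequence, a weak $W^{1,2}_{\mathrm{loc}}(B_1)$ and strong $L^2_{\mathrm{loc}}(B_1)$ limit $v_\infty$ which is itself Dir-minimizing.

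The heart of the argument is to show that $p_\infty$ is a collapse point for $v_\infty$, i.e., $v_\infty(p_\infty) = Q\a{z_\infty}$ for some $z_\infty \in \R^n$ (with either sign in the $\Iqspec$ case). The density-$Q$ assumption $\Theta(T_n, (p_n, q_n)) = Q$, combined with monotonicity, confines the support of $T_n$ near $(p_n, q_n)$ inside a very narrow vertical stripe: quantitatively, the height bound of Theorem \ref{t:height_bound} applied at well-chosen scales centered at $p_n$, together with the slicing identity relating $f_n(p_n)$ to the vertical fiber of $T_n$ over $p_n$, shows that in the rescaled picture all sheet values of $u_n$ at $p_n$ cluster around a single Euclidean point as $n \to \infty$. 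Passing to the limit yields $v_\infty(p_\infty) = Q\a{z_\infty}$. Now the regularity theory for Dir-minimizers at collapse points --- classical for $\Iqs$-valued maps \cite{DLS_Qvfr}, and encoded in the structure theorems of \cite{DLHMS_linear} for $\Iqspec$-valued ones --- gives $v_\infty = Q\a{\tilde v_\infty}$ in a neighborhood of $p_\infty$ with $\tilde v_\infty$ harmonic; hence $\cG_\square(v_\infty, Q\a{\etab \circ v_\infty}) \equiv 0$ there. Combining strong $L^2$ convergence with uniform H\"older estimates for Dir-minimizers upgrades the convergence of $u_n$ to $v_\infty$ to the uniform sense on a fixed neighborhood of $p_\infty$, and therefore
\[
s_n^{-m} \int_{B_{s_n}(p_n)} \cG_\square\bigl(u_n, Q\a{\etab \circ u_n}\bigr)^{2} \;\longrightarrow\; 0,
\]
contradicting the bad inequality and completing the argument.

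The principal obstacle is the rigorous identification of $p_\infty$ as a collapse point of $v_\infty$, especially in the $\Iqspec$ setting. The identification $(Q\a{z}, +1) \sim (Q\a{z}, -1)$ means that the sign datum of $u_n$ need not converge in any natural topology, and one must verify that the collapse persists independently of signs. This will be achieved by applying the quantitative height bound of Theorem \ref{t:height_bound} at the sharp exponent $1/(2m)$ around the density-$Q$ point $(p_n, q_n)$ --- which confines \emph{all} sheets into a stripe of vanishing relative width after blow-up, regardless of orientation --- and invoking the strong approximation Theorem \ref{thm:almgren_strong_approx} to transfer this confinement from $T_n$ to $f_n$ at the point $p_n$.
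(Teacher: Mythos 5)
Your overall strategy---blow-up and compactness against the linear theory---matches the one the paper points to (it follows \cite[Section~9]{DLS_Lp}, updated with $\Iqspec$-valued maps, Theorem~\ref{thm:strong-alm-unoriented}, and Lemma~\ref{l:monot}). But there are two genuine gaps in the middle of your argument.

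\emph{Step 4 is false as stated.} A collapse point $v_\infty(p_\infty)=Q\a{z_\infty}$ does \emph{not} imply that $v_\infty=Q\a{\tilde v_\infty}$ with $\tilde v_\infty$ harmonic in a neighborhood: collapse points can be genuine branch points. For instance $v_\infty = \a{x_1^{1/2}} + \a{-x_1^{1/2}}$ is $\Dir$-minimizing, has $v_\infty(0)=2\a{0}$, yet is nowhere a multiple of a harmonic function near $0$. What you actually need---and all that the monotonicity of the frequency / the H\"older theory of \cite{DLS_Qvfr} and \cite[Theorem~8.1]{DLHMS_linear} gives---is the estimate $\cG_{\square}\big(v_\infty(x),Q\a{\etab\circ v_\infty(x)}\big)\le C|x-p_\infty|^{\alpha}$ with a \emph{uniform} $\alpha$ and $C$ (uniform because $\Dir(v_\infty,B_1)\le C_0$). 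This yields $\int_{B_s(p_\infty)}\cG_\square^2 \le C s^{m+2\alpha}$, which beats $\hat\delta s^m$ exactly when $s<\bar s(\hat\delta)$. That is precisely why the threshold $\bar s$ appears in the statement; your version, had it been correct, would have made $\bar s=1/2$ universally valid.

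\emph{Step 3 (identification of $p_\infty$ as a collapse point) is not established by the route you describe.} The slicing identity $\langle T_n,\p,p_n\rangle=\sum_i\a{f_n(p_n)_i}$ holds only when $p_n$ lies in the good set $K_n$ of the Lipschitz approximation, and there is no reason a priori why the projection of the density-$Q$ point lands there. Moreover, even if you knew $\cG_\square(u_n(p_n),Q\a{\etab\circ u_n(p_n)})\to 0$, you cannot pass this to the limit: $u_n=E_n^{-1/2}f_n$ has $\Lip(u_n)\lesssim E_n^{\gamma-1/2}\to\infty$, so $u_n\to v_\infty$ only in $L^2$, and there is no uniform modulus of continuity permitting pointwise evaluation in the limit. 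Your remark that ``strong $L^2$ convergence with uniform H\"older estimates for $\Dir$-minimizers upgrades the convergence of $u_n$ to $v_\infty$ to the uniform sense'' is therefore unjustified---the H\"older bounds apply to the coupling Dir-minimizers $h_n$, not to $u_n$. The actual argument in \cite[Section~9]{DLS_Lp} avoids point evaluation entirely: it compares the sharp mass lower bound from the varifold monotonicity formula (our Lemma~\ref{l:monot}) at all small scales around $(p_n,q_n)$ against the mass distribution of the graph $\bG_{f_n}$ over an $L^2$-large portion of a small ball $B_\tau(p_n)$; separation of the sheets of $v_\infty$ near $p_\infty$ would then, through the $L^2$ approximation, force most of the mass of $T_n$ in a ball of radius comparable to the sheet separation times $E_n^{1/2}$ to belong to a single sheet, contradicting the density-$Q$ lower bound. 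This is subtler than ``height bound plus slicing at the point.''

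A smaller but related point: your choice $s_n\downarrow 0$ is a valid negation but forces you into exactly the uniform-convergence-at-vanishing-scales pitfall above. The cleaner (and standard) negation fixes $s<\bar s$ and sends $E_n\to 0$; then $L^2$ convergence at the fixed scale $s$ is all you need to pass to the limit.
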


In order to show Theorem \ref{t:persistence} we can follow literally \cite[Section 9]{DLS_Lp}. Indeed the proof in \cite[Section 9]{DLS_Lp} relies on the H\"older estimates for Dir minimizers (which are valid in the $\Iqspec$ case by \cite[Theorem 8.1]{DLHMS_linear}), the estimates on the Lipschitz approximation (given by Theorem \ref{thm:strong-alm-unoriented} and the classical monotonicity formula in the slightly improved version of \cite[Lemma A.1]{DLS_Lp}. Although the latter is stated for stationary integral currents in a Riemannian manifold, it is easy to see that the proof is in fact valid for stationary varifolds and as such can be applied to $\modp$ area-minimizing currents. We formulate the precise theorem here for the reader's convenience.

\begin{lemma}\label{l:monot}
There is a constant $C$ depending only on $m$, $n$ and $\bar{n}$ with the following property. If $\Sigma\subset \R^{m+n}$ is a $C^2$ $(m+\bar{n})$-dimensional submanifold with $\|A_\Sigma\|_\infty \leq \bA$, $U$ is an open set in $\mathbb R^{m+n}$ and $V$ an $m$-dimensional integral varifold supported in $\Sigma$ which is stationary in $\Sigma\cap U$, then for every $\xi\in \Sigma\cap U$ the function $\rho\mapsto \exp (C \bA^2 \rho^2) \rho^{-m} \|V\| (\bB_\rho (\xi))$ is monotone on the interval $]0, \bar{\rho}[$, where $\bar{\rho} := \min \{\dist (x, \partial U), (C \bA)^{-1}\}$.
\end{lemma}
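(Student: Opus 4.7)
The plan is to run the classical first-variation monotonicity argument, but to exploit carefully the fact that $V$ is stationary \emph{inside} $\Sigma$ (not just inside $\R^{m+n}$) in order to upgrade the naive bound $|\vec H_V| \leq C\bA$ to a quadratic estimate which is what produces the factor $\exp(C\bA^2 \rho^2)$ rather than $\exp(C\bA \rho)$. The argument splits naturally into three steps.

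First, I would identify the generalized mean curvature of $V$, viewed as an $m$-varifold in the ambient space $\R^{m+n}$. Since $V$ is stationary in $\Sigma \cap U$, the only contribution to $\delta V$ in $\R^{m+n}$ comes from the normal-to-$\Sigma$ component of a test vector field. A standard computation (analogous to the one in Lemma \ref{l:varifold}) shows that $V$ has a generalized mean curvature vector $\vec H_V$ in $\R^{m+n}$ which at $\|V\|$-a.e.\ $x$ lies in $(T_x\Sigma)^\perp$ and satisfies $|\vec H_V(x)| \leq C\bA$, with $C=C(m,n,\bar n)$.

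Second, I would apply the standard first-variation identity with the radial vector field $X(x) = \phi(|x-\xi|/\rho)(x-\xi)$ (and take $\phi$ approximating the indicator of $[0,1]$) to obtain, for a.e.\ $\rho \in ]0,\bar\rho[$, the identity
\begin{equation*}
\frac{d}{d\rho}\!\left(\frac{\|V\|(\bB_\rho(\xi))}{\omega_m \rho^m}\right) = \frac{1}{\omega_m \rho^m}\!\int_{\partial \bB_\rho(\xi)} \frac{|P^\perp(x-\xi)|^2}{|x-\xi|^2}\, d\|V\| \;-\; \frac{1}{\omega_m \rho^{m+1}}\!\int_{\bB_\rho(\xi)} (x-\xi)\cdot \vec H_V\, d\|V\|\,,
\end{equation*}
where $P^\perp$ is the projection onto the orthogonal complement of the tangent plane to $V$ at $x$. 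The first term is non-negative and can be discarded. For the remainder term, the key observation is that since $\vec H_V(x) \in (T_x\Sigma)^\perp$ at $\|V\|$-a.e.\ point, one has $(x-\xi) \cdot \vec H_V(x) = P_{(T_x\Sigma)^\perp}(x-\xi)\cdot \vec H_V(x)$. Because $\Sigma$ is $C^2$ with $\|A_\Sigma\|_\infty \leq \bA$ and $x,\xi \in \Sigma$, a standard Taylor expansion of $\Sigma$ as a graph over $T_\xi\Sigma$ yields $|P_{(T_x\Sigma)^\perp}(x-\xi)| \leq C\bA |x-\xi|^2$ whenever $|x-\xi|\leq (C\bA)^{-1}$, which is ensured by the restriction $\rho < \bar\rho$. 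Combining this with $|\vec H_V|\leq C\bA$ and $|x-\xi|\leq \rho$ gives
\begin{equation*}
\left|\int_{\bB_\rho(\xi)} (x-\xi)\cdot \vec H_V\, d\|V\|\right| \leq C\bA^2 \rho^2\, \|V\|(\bB_\rho(\xi))\, .
\end{equation*}

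Third, plugging this bound into the identity above produces the differential inequality
\begin{equation*}
\frac{d}{d\rho}\!\left(\frac{\|V\|(\bB_\rho(\xi))}{\omega_m \rho^m}\right) \geq -\,C\bA^2 \rho\,\cdot\frac{\|V\|(\bB_\rho(\xi))}{\omega_m \rho^m}\, ,
\end{equation*}
which, by integration of the one-dimensional ODE $(\log y)' \geq - C\bA^2 \rho$, is equivalent to the monotonicity of the map $\rho \mapsto \exp(C\bA^2 \rho^2/2)\, \rho^{-m}\,\|V\|(\bB_\rho(\xi))$ on $]0,\bar\rho[$; absorbing the factor $1/2$ into $C$ gives the statement. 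The only subtle point is the quadratic estimate on $|P_{(T_x\Sigma)^\perp}(x-\xi)|$, which is the reason one can replace the linear exponential $e^{C\bA\rho}$ (valid for any varifold with bounded generalized mean curvature) by the sharper quadratic one; everything else is a routine adaptation of the standard monotonicity computation as presented, for example, in \cite{Simon83}.
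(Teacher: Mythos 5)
Your proposal is correct and follows essentially the same approach as the paper's own proof, which simply invokes \cite[Lemma A.1]{DLS_Lp} and observes that its argument applies verbatim to stationary integral varifolds supported in $\Sigma$. The three key ingredients you isolate — that $\vec{H}_V(x)\in (T_x\Sigma)^\perp$ with $|\vec H_V|\leq C\bA$, the quadratic estimate $|P_{(T_x\Sigma)^\perp}(x-\xi)|\leq C\bA|x-\xi|^2$ coming from the $C^2$ bound on $\Sigma$, and the resulting differential inequality integrating to the exponential factor $\exp(C\bA^2\rho^2)$ — are exactly the ideas in the referenced lemma.
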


\begin{remark}
The proof of Theorem \ref{t:persistence} can also be given following the alternative argument of Spolaor in \cite{Spolaor}, which uses the Hardt-Simon inequality and the classical version by Allard of Moser's iteration for subharmonic functions on varifolds.  While Spolaor's argument is more flexible and indeed works for integral currents which are not minimizing but sufficiently close to minimizing ones in a suitably quantified way, we prefer to 
adhere to the strategy of \cite{DLS_Lp} because it is more homogeneous to our notation and terminology.
\end{remark}

\section{Proof of Proposition \ref{p:compara}} \label{s:compara}

The proof follows the one of \cite[Proposition 3.7]{DLS_Center} given in \cite[Section 9]{DLS_Center} with minor modifications. The necessary tools used there, namely the splitting before tilting Propositions, the height bound and 
the reparametrization theorem are all available from the previous sections.

\newpage

\part{Blow-up and final argument} \label{part:Blowup}

\section{Intervals of flattening} 

Our argument for Theorem \ref{t:main2} is by contradiction, and we start therefore fixing a current $T$, a submanifold $\Sigma$, an open set $\Omega$, an integer $2\leq Q \leq \frac{p}{2}$, positive reals $\alpha$ and $\eta$ and a sequence $r_k\downarrow 0$ of radii as in Proposition \ref{p:contradiction_sequence}. In this section we proceed as in \cite[Section 2]{DLS_Blowup} and define appropriate intervals of flattening $]s_j, t_j]$, which are intervals over which we will construct appropriate center manifolds.
These intervals, which will be ordered so that $t_{j+1}\leq s_j$ will satisfy several properties, among which we anticipate the following fundamental one: aside from finitely many exceptions, each radius $r_k$ belongs to one of the intervals. In particular, if they are finitely many, then $0$ is the left endpoint of the last one, whereas if they are infinitely many, then $t_j\downarrow 0$. The definition of these intervals is taken literally from \cite[Section 2.1]{DLS_Blowup}, the only difference being that we take advantage of Theorem \ref{t:cm} in place of \cite[Theorem 1.17]{DLS_Center}. However we repeat the details for the reader's convenience. 
 
Without loss of generality we assume that $\bB_{6\sqrt{m}} (0) \subset \Omega$, and we fix a small parameter $\varepsilon_3\in ]0, \varepsilon_2[$, where $\varepsilon_2$ is the constant appearing in \eqref{e:small ex} of Assumption \ref{ipotesi}. Then, we take advantage of Proposition \ref{p:contradiction_sequence} and of a simple rescaling argument to assume further that:
\begin{align} \label{e:density_and_boundary}
T_{0}\Sigma = \R^{m+\bar n} \times \{0\}\,,& \quad \Theta(T,0) = Q\,, \quad \partial T \res \bB_{6\sqrt{m}}(0) = 0\; \modp\,,\\ \label{e:mass control}
\|T\|(\bB_{6\sqrt{m} \rho}(0)) &\leq \left( Q \, (6\sqrt{m})^{m} + \eps_3^2 \right) \, \rho^m \qquad \mbox{for all $\rho \leq 1$}\,,\\ \label{e:appiattimento}
{\bf c}(\Sigma \cap \bB_{7\sqrt{m}}(0)) &\leq \eps_3\,.
\end{align}

We next define
\begin{equation}
\mathcal{R} := \left\{r\in ]0,1] : \bE^{no} (T, \bB_{6\sqrt{m} r} (0)) \leq \varepsilon_3^2\right\}\, ,
\end{equation}
Observe that
$\{0\}\cup\mathcal{R}$ is a closed set and that, since $\bE^{no} (T, \bB_{6\sqrt{m} r_k}) \to 0$ as $k\uparrow \infty$, $r_k\in \mathcal{R}$ for $k$ large enough. 

The intervals of flattening will form a covering of $\mathcal{R}$. We first define
$t_0$ as the maximum of $\mathcal{R}$. We then define inductively $s_0, \ldots, t_j, s_j$ in the following way.

Let us first assume that we have defined $t_j$ and we wish to define $s_j$ (in particular this part is applied also with $j=0$ to define $s_0$). We first consider the rescaled current $T_j := ((\iota_{0,t_{j}})_\sharp T)\res \bB_{6\sqrt{m}}$, $\Sigma_j := \iota_{0, t_j} (\Sigma) \cap \bB_{7\sqrt{m}}$; moreover, consider
for each $j$ an orthonormal system of coordinates so that, if we denote by $\pi_0$ the $m$-plane $\mathbb R^m\times \{0\}$, then $\bE^{no} (T_j, \bB_{6\sqrt{m}}, \pi_0) = \bE^{no} (T_j,\B_{6\sqrt{m}})$ (alternatively we can keep the system of coordinates fixed
and rotate the currents $T_j$).

\begin{definition}\label{d:define_center_manifolds}
We let $\cM_j$ be the corresponding center manifold constructed
in Theorem \ref{t:cm} applied to $T_j$ and $\Sigma_j $ with respect to the $m$-plane $\pi_0$.
The manifold $\cM_j$ is then the graph of a map $\phii_j: \pi_0 \supset [-4,4]^m \to \pi_0^\perp$,
and we set $\Phii_j (x) := (x, \phii_j (x)) \in \pi_0\times \pi_0^\perp$. We then let $\sW^{(j)}$ be the Whitney decomposition of $[-4,4]^m \subset \pi_0$ as in Definition \ref{e:whitney}, applied to $T_j$. We denote by $\p_j$ the orthogonal projection on the center manifold $\cM_j$, which, given the $C^{3, \kappa}$ estimate on $\phii_j$, is well defined in a ``slab'' $\bU_j$ of thickness $1$ as defined in point (U) of Assumption \ref{intorno_proiezione}.
\end{definition}

Next we distinguish two cases:
\begin{itemize}
\item[(Go)] For every $L\in \sW^{(j)}$, 
\begin{equation}\label{e:go}
\ell (L) < c_s \dist (0, L)\, ,
\end{equation} 
where $c_s := \frac{1}{64\, \sqrt{m}}$, see Proposition \ref{p:compara}. In this case we set $s_j =0$. Observe that in this case the origin is included in the set $\bGam_j$ defined in \eqref{e:bGamma}.
\item[(Stop)] Assuming that (Go) fails, we fix an $L$ with maximal diameter among those cubes of $\sW^{(j)}$ which violate the inequality 
\eqref{e:go}. We then set 
\begin{equation}\label{e:def_s_j}
s_j := t_j \frac{\ell (L)}{c_s}\, .
\end{equation}
\end{itemize}
Observe that, in both cases, for every $\rho > \bar{r} := s_j/t_j$ we have
\begin{equation}\label{e:cubi_non_rompono}
\ell (L) < c_s \rho \qquad \mbox{for all } L\in \sW^{(j)} \mbox{ with $L\cap B_\rho (0, \pi_0) \neq \emptyset$.}
\end{equation}

We next come to the definition of $t_{j+1}$ once we know $s_j$. If $s_j =0$, then we stop the procedure and we end up with finitely many intervals of flattening. Otherwise we let $t_{j+1}$ be the maximum of $\mathcal{R}\cap ]0, s_j]$. Note that, since the vanishing sequence $\{r_k\}$ belongs to $\mathcal{R}$ except for finitely many elements, clearly the latter set is nonempty and thus $t_{j+1}$ is a positive number. Observe also that, by \eqref{e:prima_parte} of Proposition \ref{p:whitney} and using that $2^{-N_0} < c_s$ by \eqref{e:N0}, we have $\ell (L) \leq 2^{-6-N_0} \leq \frac{c_s}{64}$. Thus, $\frac{s_j}{t_j} < 2^{-5}$. This ensures that, in case (Go) never holds (i.e. the intervals of flattening are infinitely many), $t_j\downarrow 0$. 

\begin{definition}\label{d:interval_of_flattening}
We denote by $\mathcal{F}$ the (finite or countable) family of intervals of flattening as defined above. 
\end{definition}

The following proposition is the analog of \cite[Proposition 2.2]{DLS_Blowup} and, since the proof is a minor modification of the one given in \cite[Section 2.2]{DLS_Blowup} we omit it. Using the notation of Definition \ref{d:refining_procedure} we introduce the subfamilies $\sW^{(j)}_e, \sW^{(j)}_h$ and $\sW^{(j)}_n$. Recall also that, given two sets $A$ and $B$, we have defined their
{\em separation} as the number ${\rm sep} (A,B) := \inf \{|x-y|:x\in A, y\in B\}$.

\begin{proposition}\label{p:flattening}
Assuming $\eps_3$ sufficiently small, then the following holds:
\begin{itemize}
\item[(i)] $s_j < \frac{t_j}{2^5}$ and the family $\mathcal{F}$ is either countable
and $t_j\downarrow 0$, or finite and $I_j = ]0, t_j]$ for the largest $j$;
\item[(ii)] the union of the intervals of $\cF$ cover $\cR$,
and for $k$ large enough the radii $r_k$ in Proposition \ref{p:contradiction_sequence} belong to $\cR$;
\item[(iii)] if $r \in ]\frac{s_j}{t_j},3[$ and $J\in \mathscr{W}^{(j)}_n$ intersects
$B:= \p_{\pi_0} (\cB_r (q_j))$, with $q_j := \Phii_j(0)$,
then $J$ is in the
domain of influence $\mathscr{W}_n^{(j)} (H)$ (see Definition \ref{d:domains})
of a cube $H\in \mathscr{W}^{(j)}_e$ with
\[
\ell (H)\leq 3 \, c_s\, r \quad \text{and}\quad 
\max\left\{{\rm sep}\, (H, B),  {\rm sep}\, (H, J)\right\}
\leq 3\sqrt{m}\, \ell (H) \leq \frac{3 r}{16};
\]
\item[(iv)] $\bE^{no} (T_j, \B_r )\leq C_0 \eps_3^2 \, r^{2-2\delta_2}$ for
every $r\in]\frac{s_j}{t_j},3[$.
\item[(v)] $\sup \{ \dist (x,\cM_j): x\in \spt(T_j) \cap \p^{-1}_j(\cB_r(q_j))\} \leq C_0\, (\bmo^j)^\frac{1}{2m} r^{1+\beta_2}$ for
every $r\in]\frac{s_j}{t_j},3[$, where
$\bmo^j := \max\{\mathbf{c}(\Sigma_j)^2 , \bE^{no} (T_j, \bB_{6\sqrt{m}})\}$. 
\end{itemize}
\end{proposition}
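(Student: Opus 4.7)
Since the statement closely mirrors \cite[Proposition 2.2]{DLS_Blowup}, the plan is to follow the same strategy, with the only substantive adjustment being the systematic use of the nonoriented excess $\bE^{no}$ (as dictated by Assumption \ref{ipotesi} and by our Theorem \ref{t:cm}) in place of the oriented one, and the occasional replacement of the classical Dirichlet theory by its $\Iqspec$ counterpart when $Q = p/2$. First I would establish (i). The construction of $\sW^{(j)}$ forces any $L$ defined by the stopping rule to satisfy $\ell(L) \leq 2^{-N_0-6}$ (by \eqref{e:prima_parte} in Proposition \ref{p:whitney}) and thus, using $c_s = 1/(64\sqrt{m})$ together with the hierarchical bound $\sqrt{m}\, M_0\, 2^{7-N_0}\leq 1$ of Assumption \ref{parametri}, yield $s_j/t_j = \ell(L)/c_s \leq 2^{-N_0-6}/c_s \leq 2^{-5}$, with room to spare. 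This in particular shows that if the procedure never terminates we have $t_{j+1} \leq s_j < t_j/32$, so $t_j$ decreases geometrically to $0$; conversely, termination happens only when (Go) occurs, in which case $s_j=0$ and the corresponding interval is $]0,t_j]$.

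For (ii), that $r_k \in \mathcal{R}$ for $k$ large follows directly from the contradiction sequence in Proposition \ref{p:contradiction_sequence}: the varifold convergence $\V(T_{0,r_k}) \to Q \cH^m \res \pi_0 \otimes \delta_{\pi_0}$ implies $\bE^{no}(T,\bB_{6\sqrt{m}\,r_k}) \to 0$. Covering follows by the very definition of $t_{j+1}$ as the maximum of $\cR \cap ]0,s_j]$: any $r\in \cR$ with $r \leq s_j$ satisfies $r \leq t_{j+1}$, while if $r\in]s_j,t_j]$ it is already in the $j$-th flattening interval; iterating this observation covers every element of $\cR$.

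For (iii), I would invoke the stopping rules defining $\sW^{(j)} = \sW^{(j)}_e \cup \sW^{(j)}_h \cup \sW^{(j)}_n$. A cube $J \in \sW^{(j)}_n$ belongs by definition to $\sW^{(j)}_n(H)$ for some $H \in \sW^{(j)}_e$ via a chain of cubes of geometrically decreasing sidelength (Corollary \ref{c:domains}). If $J$ meets $B = \p_{\pi_0}(\cB_r(q_j))$ for $r > s_j/t_j$, the bound \eqref{e:cubi_non_rompono} forces $\ell(J) \leq c_s\, r$, hence, summing the chain, $\ell(H) \leq 3c_s r$ and the separation estimates follow from the Whitney property (w3) combined with the containment of the chain inside a ball of radius $3\sqrt{m}\,\ell(H)$ around $H$. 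Here the constant $3$ versus $c_s$ appears by the standard geometric count in \cite[Section 2.2]{DLS_Blowup}.

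Finally, (iv) and (v) are the quantitative consequences of the absence of large stopped cubes in the relevant scales: for any ball $\bB_r$ with $s_j/t_j < r < 3$, thanks to \eqref{e:cubi_non_rompono} one can locate an ancestor cube $L \in \sS^{(j)}$ (or a small $L\in \sW^{(j)}$) with $\ell(L) \sim r$ and $\bB_r \subset \B_L$, and then transfer the estimates \eqref{e:ex+ht_ancestors}--\eqref{e:ex+ht_whitney} of Proposition \ref{p:whitney}, together with the bound $\bmo^j \leq \eps_3^2$ (which holds because $t_j\in\cR$). Specifically, (iv) comes from the excess control in those formulas, while (v) comes from the height control combined with the fact, established in Corollary \ref{c:cover}, that $\spt(T_j)\cap \p_j^{-1}(\cL) \subset \{\dist(\cdot,\cM_j) \leq C \bmo^{1/2m} \ell(L)^{1+\beta_2}\}$ on each Whitney region. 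The \emph{main technical obstacle} is the last transfer: one must check that when $r$ lies strictly between two dyadic scales of the Whitney decomposition, or near the boundary of $[-4,4]^m$, a suitable chain of ancestor cubes can be assembled so that the corresponding balls $\B_L$ comfortably contain $\bB_r$ with $\ell(L)$ within a geometric factor of $r$; this requires a brief combinatorial argument identical in spirit to the one in \cite[Section 2.2]{DLS_Blowup}, and it is precisely where the hierarchical choice $2^{-N_0}\leq c_s/M_0$ in Assumption \ref{parametri} is used.
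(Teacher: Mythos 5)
Your proposal is correct and follows essentially the same strategy that the paper invokes: the text explicitly omits the proof of Proposition \ref{p:flattening}, declaring it ``a minor modification of the one given in \cite[Section 2.2]{DLS_Blowup},'' and your sketch correctly reconstructs that argument with the systematic substitution of $\bE^{no}$ for the oriented excess and the corresponding $\Iqspec$-valued objects when $Q=p/2$. In particular, your treatment of (i) reproduces the observation already recorded in the paragraph preceding the proposition (that \eqref{e:prima_parte} plus \eqref{e:N0} force $s_j/t_j<2^{-5}$), your covering argument for (ii) is the correct reading of the inductive definition of $t_{j+1}$, your use of Corollary \ref{c:domains} for (iii) is exactly the intended mechanism, and your derivation of (iv)--(v) from $\bmo^j\leq\eps_3^2$, \eqref{e:cubi_non_rompono}, the Whitney estimates \eqref{e:ex+ht_ancestors}--\eqref{e:ex+ht_whitney}, and Corollary \ref{c:cover} is the standard transfer argument of \cite{DLS_Blowup}; the combinatorial step of locating an ancestor cube $L\in\sS^{(j)}$ with $\ell(L)\sim r$ and $\bB_r\subset\B_L$, which you flag as the technical obstacle, is indeed where the hierarchy $2^{-N_0}<c_s/M_0$ from \eqref{e:N0} enters, exactly as in the cited source.
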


\section{Frequency functions and its variations}

As in \cite[Section 3]{DLS_Blowup} we introduce the following Lipschitz (piecewise linear) weight 
\begin{equation*}
\phi (r) :=
\begin{cases}
1 & \text{for }\, r\in [0,\textstyle{\frac{1}{2}}],\\
2-2r & \text{for }\, r\in \,\, ]\textstyle{\frac{1}{2}},1],\\
0 & \text{for }\, r\in \,\, ]1,+\infty[.
\end{cases}
\end{equation*}
For every interval of flattening $I_j = ]s_j, t_j] \in \cF$, we
let $N_j$ be the normal approximation of $T_j$
on the center manifold $\cM_j$ of Thereom \ref{t:approx}. As in \cite[Section 3]{DLS_Blowup} we introduce
the corresponding frequency functions and state the main analytical estimate, which allows us to
exclude infinite order of contact of the normal approximations with the center manifolds $\cM_j$. 

\begin{definition}[Frequency functions]\label{d:frequency}
For every $r\in ]0,3]$ we define: 
\[
\bD_j (r) := \int_{\cM^j} \phi\left(\frac{d_j(q)}{r}
\right)\,|D N_j|^2(q)\, dq\quad\mbox{and}\quad
\bH_j (r) := - \int_{\cM^j} \phi'\left(\frac{d_j (q)}{r}\right)\,\frac{|N_j|^2(q)}{d(q)}\, dq\, ,
\]
where $d_j (q)$ is the geodesic distance on $\cM_j$ between $q$ and $\Phii_j (0)$, and $dq$ is short for $d\Ha^m(q)$.
If $\bH_j (r) > 0$, we define the {\em frequency function}
$\bI_j (r) := \frac{r\,\bD_j (r)}{\bH_j (r)}$. 
\end{definition}

\begin{theorem}[Main frequency estimate]\label{t:frequency}
If $\eps_3$ is sufficiently small, then
there exists a geometric constant $C_0$ such that, for every
$[a,b]\subset [\frac{s_j}{t_j}, 3]$ with $\bH_j \vert_{[a,b]} >0$, we have
\begin{equation}\label{e:frequency}
\bI_j (a) \leq C_0 (1 + \bI_j (b)).
\end{equation}
\end{theorem}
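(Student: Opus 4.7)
The plan is to adapt Almgren's frequency monotonicity argument, as developed in \cite{DLS_Blowup} for the classical setting, to our modulo $p$ framework. The overall strategy is to derive, on every subinterval where $\bH_j > 0$, a differential inequality of the form
\[
(\log \bI_j)'(r) \geq - \frac{\Psi(r)}{r}
\]
where $\Psi$ is a controllable error factor, and then integrate over $[a,b]$ to obtain \eqref{e:frequency}. The modifications required compared to \cite{DLS_Blowup} are of two types: first, the mass Taylor expansions and variational identities must be taken from \cite{DLHMS_linear} rather than \cite{DLS_Currents} when $Q = \sfrac{p}{2}$, because the approximation $N_j$ is then special $Q$-valued; second, the smallness of the excess must be bootstrapped via the nonoriented excess estimates of Theorem \ref{thm:strong-alm-unoriented} and the splitting-before-tilting estimate of Proposition \ref{p:splitting}.

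First I would introduce the two auxiliary quantities standard in this context: the mixed outer-normal variation
\[
\bE_j(r) := -\int_{\cM_j} \phi'\!\left(\frac{d_j(q)}{r}\right)\sum_i \frac{\langle N_{j,i}(q),\, DN_{j,i}(q)\cdot \nabla d_j(q)\rangle}{d_j(q)}\, dq\,,
\]
and the boundary-type term $\bG_j(r)$ (involving $|DN_j\cdot \nabla d_j|^2$). Following Almgren, the three master identities to establish are $\bD_j = \bE_j + \mathrm{Err}_1$, $\bH_j' = \frac{m-1}{r}\bH_j + \frac{2}{r}\bE_j + \mathrm{Err}_2$, and $\bD_j' = \frac{m-2}{r}\bD_j + \frac{2}{r^2}\bG_j + \mathrm{Err}_3$. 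Each is derived by testing the stationarity of $\bT_F$ against an appropriate ambient vector field (normal, mixed, and horizontal, respectively), projecting it onto $\Sigma_j$, and accounting for both the curvature of $\Sigma_j$ and the discrepancy $\bT_F - T_j$ outside the contact set. Together with the Cauchy--Schwarz bound $\bE_j^2 \leq \bD_j \bH_j$, these identities yield $(\log \bI_j)'(r) \geq 2r^{-1}(\bI_j - \bE_j^2 / (\bH_j \bD_j)) + \mathrm{Err}/\bI_j$, which controls the logarithmic derivative of $\bI_j$ from below.

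The heart of the work is estimating the error terms. Here I would exploit the Whitney-region estimates of Theorem \ref{t:approx} together with their global counterparts in Corollary \ref{c:globali}. Schematically, each $\mathrm{Err}_k$ decomposes into four contributions: (a) the deficit $\|\bT_F - T_j\|_p(\p_j^{-1}(\cL))$ controlled by \eqref{e:err_regional}, which is super-linear in $\bmo^j \ell(L)^{m+2}$ and therefore absorbable against $\bD_j$; (b) the mean curvature contribution of $\Sigma_j$, of order $\mathbf{c}(\Sigma_j)$, which by \eqref{e:appiattimento} is absorbable; (c) the second fundamental form contribution of $\cM_j$, which by the $C^{2,\kappa}$ estimate of Theorem \ref{t:cm}(i) is $O(\bmo^{\sfrac{1}{2}})$; and (d) the contribution coming from Taylor expansion of the mass of $\bG_{N_j}$, which for special $Q$-valued graphs is handled by \cite[Corollary 13.2]{DLHMS_linear}. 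The scale-by-scale bound $\int_\cL |N_j|^2 \geq c\, \ell(L)^2 \int_\cL |DN_j|^2$ for large-excess cubes $L \in \sW^{(j)}_e$ coming from \eqref{e:split_2}, together with the analogous bound on cubes in $\sW_h^{(j)}$ furnished by Proposition \ref{p:separ}(S3), is what ultimately ensures that the error contributions are of the form $\Psi(r)(1 + \bI_j(r))$ with $\int_0^3 \Psi(r)\, r^{-1}\, dr \leq C_0$.

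The main obstacle I anticipate is twofold. The first difficulty is the rigorous verification of the variational identities when $Q = \sfrac{p}{2}$: the map $N_j$ takes values in $\Iqspec$, so $\bG_{N_j}$ splits into positive and negative sheets over the decomposition $\cM_j = \cM_j^+ \sqcup \cM_j^- \sqcup \cM_j^0$ of \cite[Definition 2.7]{DLHMS_linear}, and one must check that the first variation of $\bT_F$ with respect to vector fields that are smooth on $\cM_j$ does not produce spurious contributions on the interfaces $\partial \cM_j^{\pm} \cap \cM_j^0$; this is where the identification $(Q\a{0},1) = (Q\a{0},-1)$ in $\Iqspec$ is essential, and where the machinery of \cite[Sections 11--13]{DLHMS_linear} must be invoked. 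The second difficulty is controlling the error contribution coming from cubes $L$ with $\dist(L,0) \lesssim \ell(L)$, which for $r$ close to $s_j/t_j$ are plentiful by construction of the stopping time; this is where the restriction $a \geq s_j/t_j$ is used and where one absorbs these cubes by choosing the interval $[a,b]$ to lie above the stopping radius, so that Proposition \ref{p:flattening}(iii)--(v) furnishes the uniform excess decay needed to close the estimate.
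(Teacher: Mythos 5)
Your proposal follows the paper's strategy exactly: derive the first-variation identities of Proposition~\ref{p:variation} by testing the stationarity of $T_j$ against the outer and inner vector fields $X_o$, $X_i$, port the error estimates from \cite[Section 4]{DLS_Blowup} using Theorem~\ref{t:approx} and the special $Q$-valued Taylor expansions of \cite[Corollary 13.2, Theorems 14.2--14.3]{DLHMS_linear}, and then integrate the resulting differential inequality for $\log\bI_j$ as in \cite[Section 3]{DLS_Blowup}. Two slips are worth flagging. First, the relevant Cauchy--Schwarz inequality is $\bE_j^2 \le \bG_j\,\bH_j$ (all three integrands carry the boundary weight $\phi'$), not $\bE_j^2 \le \bD_j\,\bH_j$; after substituting $\bD_j \approx r^{-1}\bE_j$ the core monotonicity reads, up to error terms,
\[
(\log\bI_j)'(r) = \frac{2}{r^2\,\bD_j\,\bH_j}\bigl(\bG_j\,\bH_j - \bE_j^2\bigr) \;\ge\; 0\,,
\]
and positivity is precisely this Cauchy--Schwarz. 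Second, the splitting-before-tilting estimate \eqref{e:split_2} and Proposition~\ref{p:separ}(S3) are not ingredients of this theorem; they enter only downstream, in the boundedness of the frequency (Theorem~\ref{t:boundedness}, via Proposition~\ref{p:compara}) and in the persistence of $Q$-points. What actually closes the error estimates here is the decay bound \eqref{e:rough}, obtained by summing \eqref{e:Dir_regional} over the Whitney regions meeting $\cB_r$, together with the auxiliary inequality $\bSigma(r)+r\,\bSigma'(r)\le C\,r^2\,\bD_j(r)$ of \eqref{e:Sigma1}, which is what absorbs the $\eps_3^2\,\bSigma(r)$ term appearing in \eqref{e:out}.
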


To simplify the notation, in this section we drop the index $j$ and 
omit the measure $\cH^m$ in the integrals over regions of
$\cM$.
The proof exploits four identities collected in Proposition \ref{p:variation}, which is the analog of \cite[Proposition 3.5]{DLS_Blowup}
and whose proof will be discussed in the next sections.
Following \cite[Section 3]{DLS_Blowup} we introduce further auxiliary functions in order to express derivatives and estimates on the 
functions $\bD$, $\bH$ and $\bI$. We also remind the reader that in principle we must distinguish two situations: 
\begin{itemize}
\item If $Q< \frac{p}{2}$, then the normal approximations are $\Iq (\mathbb R^{m+n})$-valued maps and thus all the quantities considered here coincide literally with the ones defined in \cite[Section 3]{DLS_Blowup};
\item If $Q=\frac{p}{2}$, then the normal approximations take values in $\mathscr{A}_Q (\R^{m+n})$; in this case we use the notational conventions of \cite[Subsection 7.1]{DLHMS_linear} and thus, although at the formal level the definitions of the various objects are identical, the notation is underlying the fact that all integrals involved in the computations must be split into three domains to be reduced to integrals of expressions involving the $\mathcal{A}_Q (\R^{m+n})$-valued maps $N^+, N^-$ and $Q \a{\etab\circ N}$. 
\end{itemize}

\begin{definition}\label{d:funzioni_ausiliarie}
We let $\de_{\hat r}$ denote the derivative with respect to arclength along geodesics starting at $\Phii(0)$. We set
\begin{align}
&\qquad\qquad\qquad\bE (r) := - \int_\cM \phi'\left(\textstyle{\frac{d(q)}{r}}\right)\,\sum_{i=1}^Q \langle
N_i(q), \de_{\hat r} N_i (q)\rangle\, dq\,  ,\\
&\bG (r) := - \int_{\cM} \phi'\left(\textstyle{\frac{d(q)}{r}}\right)\,d(q) \left|\de_{\hat r} N (q)\right|^2\, dq
\quad\mbox{and}\quad
\bSigma (r) :=\int_\cM \phi\left(\textstyle{\frac{d(q)}{r}}\right)\, |N|^2(q)\, dq\, .
\end{align}
\end{definition}

As in \cite[Section 3]{DLS_Blowup} we observe that the estimate
\begin{align}\label{e:rough}
\bD(r) \leq { \int_{\cB_r (\Phii (0))} |DN|^2 \leq }\; C_0\, \bmo\, r^{m+2-2\delta_2} \quad \mbox{for every}\quad r\in\left]\textstyle{\frac{s}{t}},3\right[.
\end{align}
is a consequence of the inequality \eqref{e:Dir_regional} in Theorem \ref{t:approx}. Consider indeed that \eqref{e:cubi_non_rompono} bounds the side of each Whitney region $\mathcal{L}$ intersecting $\cB_r (\Phii (0))$ and that, on the contact region $\mathcal{K}$ the map $N$ vanishes identically: it suffices therefore to sum the estimates \eqref{e:Dir_regional} over the aforementioned Whitney regions $\mathcal{L}$. 

We are now ready to state the key four identities, cf. \cite[Proposition 3.5]{DLS_Blowup}:

\begin{proposition}[First variation estimates]\label{p:variation}
For every $\gamma_3$ sufficiently small there is a constant $C = C (\gamma_3)>0$ such that, if
$\eps_3$ is sufficiently small, $[a,b]\subset [\frac{s}{t}, 3]$ and $\bI \geq 1$ on $[a,b]$, then the following
inequalities hold for a.e. $r\in [a,b]$:
\begin{gather}
\left|\bH' (r) - \textstyle{\frac{m-1}{r}}\, \bH (r) - \textstyle{\frac{2}{r}}\,\bE(r)\right|\leq  C \bH (r), \label{e:H'bis}\\
\left|\bD (r)  - r^{-1} \bE (r)\right| \leq C \bD (r)^{1+\gamma_3} + C \eps_3^2 \,\bSigma (r),\label{e:out}\\
\left| \bD'(r) - \textstyle{\frac{m-2}{r}}\, \bD(r) - \textstyle{\frac{2}{r^2}}\,\bG (r)\right|\leq
C \bD (r) + C \bD (r)^{\gamma_3} \bD' (r) + C r^{-1}\bD(r)^{1+\gamma_3},\label{e:in}\\
\bSigma (r) +r\,\bSigma'(r) \leq C  \, r^2\, \bD (r)\, \leq C r^{2+m} \eps_3^{2}.\label{e:Sigma1}
\end{gather}
\end{proposition}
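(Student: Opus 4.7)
The plan is to follow the strategy of the analogous \cite[Proposition 3.5]{DLS_Blowup}, where the four identities are obtained by a combination of (i) differential-geometric manipulations on the $C^{3,\kappa}$ center manifold $\cM$ for \eqref{e:H'bis} and \eqref{e:Sigma1}, (ii) outer variations of the area functional for \eqref{e:out}, and (iii) inner variations of the area functional for \eqref{e:in}. Two adaptations of that strategy are needed in the $\modp$ setting: in the special case $Q=\frac{p}{2}$, the normal approximation $N$ takes values in $\mathscr{A}_Q(\R^{m+n})$, so each integral must be split along the canonical tripartition $\Omega_+\cup\Omega_-\cup\Omega_0$ of the base into three classical $\mathcal{A}_Q$-valued integrals, which are then handled with the Taylor expansion of the mass for special multi-valued graphs, \cite[Corollary 13.2, Proposition 13.3]{DLHMS_linear}; and stationarity is used for the varifold $\V(T)$ in the ambient manifold $\Sigma$, which is available by Lemma \ref{l:varifold}.

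The estimate \eqref{e:H'bis} is purely geometric: differentiating $\bH(r)$ in $r$ via the coarea formula on $\cM$ and integrating by parts on the geodesic spheres $\{d=s\}$, the $\frac{m-1}{r}\bH(r)$ term arises from the tangential divergence on the spheres (which in $\cM$ equals the Euclidean one up to an error controlled by $\|D^2\phii\|_0\leq C\bmo^{1/2}$ by Theorem \ref{t:cm}), while the factor $2\bE(r)/r$ comes from the radial derivative $\partial_{\hat r}|N|^2=2\langle N,\partial_{\hat r}N\rangle$. The error term $C\bH(r)$ absorbs the curvature corrections and the difference between the geodesic and Euclidean normals. Estimate \eqref{e:Sigma1} is obtained in two steps: the $C^0$-bound on $N$ from \eqref{e:global_Lip} together with the fact that $N$ vanishes on $\mathcal{K}\ni\Phii(0)$ gives a Hardy/Poincar\'e-type inequality $\bSigma(r)\le Cr^2\bD(r)$, and differentiating $\bSigma$ in $r$ and combining with \eqref{e:rough} yields the full bound.

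The identity \eqref{e:out} follows by testing the stationarity of $\V(T)$ in $\Sigma$ with the outer variation $X(x)=\phi(d(\p(x))/r)\bigl(x-\p(x)\bigr)$, which on $\spt(T)\cap\p^{-1}(\cL)$ is close to $\phi(d(\p(x))/r)\sum_i (N_i\circ\p)(x)$ up to higher-order corrections in $N$. The mass Taylor expansion in each of the three regions $\Omega_\pm,\Omega_0$ produces the leading term $\bD(r)-r^{-1}\bE(r)$; the three sources of error are (a) the quartic remainder in the expansion, controlled by $\mathrm{Lip}(N)^2\cdot\bD(r)\leq C\bmo^{\gamma_2}\bD(r)\leq C\bD(r)^{1+\gamma_3}$ via \eqref{e:global_Lip} and \eqref{e:rough}; (b) the discrepancy $\|\bT_F-T\|_p$ between the graph current and $T$, controlled superlinearly by \eqref{e:err_regional} summed on Whitney regions intersecting $\cB_r$ and absorbed into $C\bD(r)^{1+\gamma_3}$; (c) the mean-curvature term $\int X\cdot\vec H_T\,d\|T\|$, bounded by $C\bA\int|X|\,d\|T\|\leq C\eps_3\,\bSigma(r)^{1/2}\bD(r)^{1/2}$ and then, using \eqref{e:Sigma1}, by $C\eps_3^2\bSigma(r)+C\bD(r)^{1+\gamma_3}$. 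The analogous identity \eqref{e:in} comes from the inner variation $X(x)=\phi(d(\p(x))/r)\,Y(\p(x))$ where $Y$ is the radial geodesic field on $\cM$ emanating from $\Phii(0)$: the intrinsic computation gives the $\frac{m-2}{r}\bD(r)+\frac{2}{r^2}\bG(r)$ structure (with the factor $m-2$ coming from the dimension of geodesic spheres plus a radial contribution), and the errors are estimated by the same three mechanisms.

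The main technical obstacle is a careful bookkeeping of the error terms in \eqref{e:out} and \eqref{e:in} in the special case $Q=\frac{p}{2}$: the current $\bT_F$ is assembled from three pieces with opposite orientations on $\Omega_+$ and $\Omega_-$, and the classical outer/inner variation computations of \cite{DLS_Blowup} must be matched piece-by-piece, taking into account that the coupling between $\bT_F$ and $T\;\modp$ (rather than as integral currents) forces the control of the error $\|\bT_F-T\|_p$ in place of $\mass(\bT_F-T)$. This is precisely what \eqref{e:err_regional} provides, and it is at this point that the assumption $\bI\geq 1$ on $[a,b]$ is used to convert superlinear estimates in $\bD(r)$ into absorbable errors in the final frequency inequality of Theorem \ref{t:frequency}. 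The condition $\gamma_3$ small enters to ensure that $\bD(r)^{\gamma_3}$ is dominated by the uniform smallness of $\bmo$ coming from Proposition \ref{p:flattening}(iv).
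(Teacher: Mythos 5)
Your overall route is the same as the paper's: the estimates \eqref{e:H'bis} and \eqref{e:Sigma1} are indeed imported unchanged from \cite[Section 3]{DLS_Blowup} because they only use the geometry of $\cM$ and the bound \eqref{e:rough}, and the estimates \eqref{e:out} and \eqref{e:in} come from testing the stationarity of $\V(T)$ with the outer and inner vector fields $X_o$, $X_i$, expanding $\delta\bT_F$ via the Taylor expansion of the mass of an $\mathscr{A}_Q$-graph (\cite[Theorems 14.2, 14.3]{DLHMS_linear}), and controlling five families of error terms. Your identification of the three error mechanisms (Taylor remainder, mass discrepancy $\|\bT_F-T\|_p$ via \eqref{e:err_regional}, and the curvature term) is sound, as is the observation that the $\modp$ structure is what obliges one to measure the discrepancy in the $p$-mass.

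What your proposal leaves unexamined is the \emph{one} point where the adaptation to $Q=\tfrac{p}{2}$ is not purely notational, namely the term ${\rm Err}_5^i = \left|\int \dv_{\vec T} X_i^\perp\, d\|T\|\right|$. In the classical argument this term is estimated by introducing a $C^{2,\eps_0}$ normal frame $\nu_1,\dots,\nu_l$ of $T\Sigma^\perp$ and the scalar functions $h^j_q(\vec\lambda)$; the error is then bounded in terms of $|\vec\bT_F(q)-\vec\cM(\p(q))|$. On the region $\im_-(F)=\im(F)\cap\p^{-1}(\cM_-)$, however, the orientation of $\bT_F$ is \emph{reversed} relative to that of $\cM$, so the oriented quantity $|\vec\bT_F-\vec\cM\circ\p|$ is of order $1$ there and the naive bound fails. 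The paper resolves this by redefining $h^j_q := h^j_q(-\vec\bT_F(q))$ on $\im_-(F)$, using the algebraic fact $-\vec\bT_F(q)=\vec\bT_{F^-}(q)$; then the computation of \cite{DLS_Blowup} applies verbatim on each of the three regions, and the resulting commutator is controlled by the \emph{nonoriented} distance $|\vec\bT_F(q)-\vec\cM(\p(q))|_{no}\leq C|DN(\p(q))|$. Your phrase ``matched piece-by-piece, taking into account the opposite orientations'' gestures at this, but without isolating ${\rm Err}_5^i$ and the nonoriented replacement you cannot actually close \eqref{e:in}: this is the step where a naive piece-by-piece transcription of \cite{DLS_Blowup} would produce an error of size ${\rm O}(1)\cdot|N|$ instead of ${\rm O}(|DN|\cdot|N|+|N|^2)$.
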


Theorem \ref{t:frequency} follows from the latter four estimates and from \eqref{e:rough} through the computations given in \cite[Section 3]{DLS_Blowup}. The proofs of the estimates \eqref{e:H'bis} and \eqref{e:Sigma1} given in \cite[Section 3]{DLS_Blowup} are valid in our case as well, since they do not exploit the connection between the approximation and the currents, but they are in fact valid for any map $N$ satisfying $\bI\geq 1$. We therefore focus on \eqref{e:out} and \eqref{e:in} which are instead obtained from first variation arguments applied to the area minimizing current $T_j$. In our case the current is area minimizing $\modp$, however a close inspection of the proofs in \cite{DLS_Blowup} shows that the computations in there can be transferred to our case because the varifold induced by $T_j$ is stationary (and the required estimates relating the varifold induced by the graph of $N_j$ in the normal bundle of $\cM_j$ and the current $T_j$ have been proved in the previous 
 section). 

In the rest of the section we omit the subscript $j$ from $T_j, \Sigma_j, \cM_j$ and $N_j$. 

\subsection{First variations} We recall the vector field used in \cite{DLS_Blowup}. We will consider:
\begin{itemize}
\item the {\em outer variations}, where $X (q)= X_o (q) := \phi \left(\frac{d(\p(q))}{r}\right) \, (q - \p(q) )$.
\item the {\em inner variations}, where $X (q) = X_i (q):= Y(\p(q))$ with
\[
Y(q) := \frac{d(q)}{r}\,\phi\left(\frac{d(q)}{r}\right)\, \frac{\de}{\de \hat r} \quad
\forall \; q \in \cM\, .
\]
\end{itemize}
Note that $X_i$ is the infinitesimal generator of
a one parameter family of bilipschitz homeomorphisms $\Phi_\eps$ defined as
$\Phi_\eps (q):= \Psi_\eps (\p (q)) + q - \p (q)$, where 
$\Psi_\eps$ is the one-parameter family of bilipschitz homeomorphisms of $\cM$ generated by $Y$. 

Consider now the map $F(q) := \sum_i \a{q+ N_i (q)}$ and the current $\bT_F$
associated to its image: in particular we are using the conventions of \cite{DLS_Currents} in the case $Q<\frac{p}{2}$ (i.e. when $N$ takes values in
$\mathcal{A}_Q (\R^{m+n})$) and the conventions introduced in \cite[Definition 11.2]{DLHMS_linear} in the case $Q=\frac{p}{2}$ (i.e. when $N$ takes values in $\mathscr{A}_Q (\R^{m+n})$). As in \cite[Section 3.3]{DLS_Blowup} we observe that, although the vector fields $X = X_o$ and $X= X_i$ are not compactly supported, it is easy to see that
$\delta T(X) = \delta T (X^T) + \delta T (X^\perp) = \delta T (X^\perp)$,
where $X=X^T+ X^\perp$ is the decomposition of $X$ in the tangent and normal components
to $T\Sigma$.

Then, we have 
\begin{align}\label{e:Err4-5}
&|\delta \bT_F (X)| \leq |\delta \bT_F (X) - \delta T (X)| + |\delta T(X^\perp)|\nonumber\\
\leq& \underbrace{\int_{\spt (T)\setminus \im (F)}  \left|\dv_{\vec T} X\right|\, d\|T\|
+ \int_{\im (F)\setminus \spt (T)} \left|\dv_{\vec \bT_F} X\right|\, d\|\bT_F\|}_{{\rm Err}_4}
+ \underbrace{\left|\int \dv_{\vec T} X^\perp\, d\|T\| \right|}_{{\rm Err}_5}.
\end{align}
In order to simplify the notation we set $\varphi_r (x) := \phi \left(\frac{d(x)}{r}\right)$. 
Next, we apply \cite[Theorem 4.2]{DLS_Currents} in the case $Q<\frac{p}{2}$ (this corresponds exactly to what done in \cite[Section 3.3]{DLS_Blowup} and \cite[Theorem 14.2]{DLHMS_linear} when $Q=\frac{p}{2}$ to conclude
\begin{align}\label{e:ov graph}
\delta \bT_F (X_o) =& \int_\cM\Big( \varphi_r\,|D N|^2 + \sum_{i=1}^Q N_i\otimes \nabla \varphi_r : D N_i \Big) + \sum_{j=1}^3 \textup{Err}^o_j,
\end{align}
where the errors ${\rm Err}^o_j$ correspond to the terms ${\rm Err_j}$ of \cite[Theorem 4.2]{DLS_Currents} in case $Q<\frac{p}{2}$ and to the analogous terms in \cite[Theorem 14.2]{DLHMS_linear} when $Q=\frac{p}{2}$. This implies
\begin{gather}
{\rm Err}_1^o = - Q \int_\cM \varphi_r \langle H_\cM, \etab\circ N\rangle,\label{e:outer_resto_1}\\
|{\rm Err}_2^o| \leq C_0 \int_\cM |\varphi_r| |A|^2|N|^2,\label{e:outer_resto_2_bis}\\
|{\rm Err}_3^o| \leq C_0 \int_\cM \big(|N| |A| + |DN|^2 \big) \big( |\varphi_r| |DN|^2  + |D\varphi_r| |DN| |N| \big)\label{e:outer_resto_3_bis}\,,
\end{gather}
where $H_\cM$ is the mean curvature vector of $\cM$. In particular we conclude
\begin{equation}\label{e:ov_con_errori}
\left| \bD (r) - r^{-1} \bE (r)\right| \leq \sum_{j=1}^5 \left|\textup{Err}^o_j\right|\, ,
\end{equation}
where ${\rm Err}_4^o$ and ${\rm Err}_5^o$ denote the terms
${\rm Err}_4$ and ${\rm Err}_5$ of \eqref{e:Err4-5} when $X=X_o$.

\medskip

We follow the same arguments with $X=X_i$, applying this time \cite[Theorem 4.3]{DLS_Currents} for $Q<\frac{p}{2}$ and \cite[Theorem 14.3]{DLHMS_linear} for $Q=\frac{p}{2}$. In particular using the formulas \cite[(3.29)\&(3.30)]{DLS_Blowup} for ${\rm div}_\cM Y$ and $D_{\cM} Y$ we conclude
 \begin{equation}\label{e:iv_con_errori}
\left| \bD' (r) - (m-2)r^{-1} \bD (r) - 2 r^{-2} \bG (r)\right|
\leq C_0 \bD (r) + \sum_{j=1}^5 \left|{\rm Err}^i_j\right|\, ,
\end{equation}
where
\begin{gather}
{\rm Err}_1^i = - Q \int_{ \cM}\big( \langle H_\cM, \etab \circ N\rangle\, {\rm div}_{\cM} Y + \langle D_Y H_\cM, \etab\circ N\rangle\big)\, ,\label{e:inner_resto_1_bis}\\
|{\rm Err}_2^i| \leq C_0 \int_\cM |A|^2 \left(|DY| |N|^2  +|Y| |N|\, |DN|\right), \label{e:inner_resto_2_bis}\\
|{\rm Err}_3^i|\leq C_0 \int_\cM \Big( |Y| |A| |DN|^2 \big(|N| + |DN|\big) + |DY| \big(|A|\,|N|^2 |DN| + |DN|^4\big)\Big)\label{e:inner_resto_3_bis}\, ,
\end{gather}
and where ${\rm Err}_4^i$ and ${\rm Err}_5^i$ denote the terms
${\rm Err}_4$ and ${\rm Err}_5$ of \eqref{e:Err4-5} when $X=X_i$.

\subsection{Error estimates} We next proceed as in \cite[Section 4]{DLS_Blowup}. First of all, since the structure and estimates on the size of the cubes of the Whitney decomposition are exactly the same, we can define the regions of \cite[Section 4.1]{DLS_Blowup} and deduce the same conclusions of \cite[Lemma 4.4]{DLS_Blowup}. Next,  since our estimates in Theorem \ref{t:approx} have the same structure of \cite[Theorem 2.4]{DLS_Center}, we conclude the validity of all the estimates in \cite[Section 4.2]{DLS_Blowup}.  In turn we can repeat all the arguments in \cite[Section 4.3]{DLS_Blowup} to conclude the same estimates for the terms of type ${\rm Err}_1^o, {\rm Err}_1^i, {\rm Err}_2^o, {\rm Err}_2^i, {\rm Err}_3^o, {\rm Err}_3^i, {\rm Err}_4^o, {\rm Err}_4^i, {\rm Err}_5^o$. Some more care is needed to handle the term ${\rm Err}_5^i$. First of all we split the latter error into the terms $I_1$ and $I_2$ of \cite[Page 596]{DLS_Blowup}. The term $I_1$ is estimated in the same way. Fo
 r the term $I_2$ we can use the same argument when $Q<\frac{p}{2}$ and hence $F$ is $\mathcal{A}_Q$-valued. However, we need a small modification in the case $Q=\frac{p}{2}$, when $F$ is $\mathscr{A}_Q$-valued. 

As in \cite[Page 597]{DLS_Blowup} we start by introducing an orthonormal frame $\nu_1, \ldots, \nu_l$
for $T_q\Sigma^\perp$ of class $C^{2,\eps_0}$ (cf.~\cite[Appendix A]{DLS_Currents}) and set 
\[
h^j_q (\vec \lambda) := - \sum_{k=1}^m \langle D_{v_k}\nu_j (q),  v_k \rangle
\]
whenever $v_1\wedge\ldots\wedge v_m = \vec \lambda$ is an $m$-vector of $T_q \Sigma$, with 
$v_1, \ldots, v_m$ orthonormal. 

Next, we recall the canonical decomposition of $\cM$ into $\cM_+$, $\cM_-$ and $\cM_0$ induced by $F$ (see Section \ref{sec:guide}) and correspondingly, we decompose the image of $F$ into
\begin{align}
\im_0 (F) &:= \im (F) \cap \p^{-1} (\cM_0)\\
\im_+ (F) &:= \im (F) \cap \p^{-1} (\cM_+)\\
\im_- (F) &:= \im (F) \cap \p^{-1} (\cM_-)\, .
\end{align}
If $q\in \im (F)$, as in \cite[Page 597]{DLS_Blowup} we set
\[
h^j_{\p(q)} := h^j_{\p(q)} (\vec \cM (\p (q)))
\quad \text{and}\quad
h_{\p(q)} = \sum_{j=1}^l h^j_{\p(q)}\, \nu_j(\p(q)).
\]
If $q\in \im_0 (F)\cup \im_+ (F)$, as in \cite[Page 597]{DLS_Blowup} we set
\[
h^j_q := h^j_q (\vec\bT_F (q)) \quad \text{and}\quad
h_q = \sum_{j=1}^l h^j_q\,  \nu_j(q)\, .
\]
We proceed however differently for $q\in \im_- (F)$: in this case we set
\[
h^j_q := h^j_q (- \vec\bT_F (q))\quad \text{and}\quad
h_q = \sum_{j=1}^l h^j_q\,  \nu_j(q)\, .
\]
Observe that, since for $q\in \im_- (F)$ we have $- \vec\bT_F (q) = \vec\bT_{F^-} (q)$, in practice we can follow precisely the same computations of
\cite[Page 597]{DLS_Blowup} in each of the regions $\im_0 (F), \im_+ (F)$ and $\im_- (F)$, to conclude
\begin{align}\label{e:pezzo lineare}
& \langle X_i(q), h_q \rangle  = {} \langle X_i(q), (h_q - h_{\p(q)}) \rangle = \sum_j \langle X_i(\p(q)), D\nu_j(\p(q))\cdot \mathbf{ex}^{-1}_{\p (q)} (q)\rangle h^j_{\p(q)}\notag\\
& \qquad\qquad\qquad\qquad\qquad\qquad\qquad\qquad + \sum_j  \langle \nu_j (q), X_i (q)\rangle \big(h^j_q - h^j_{\p (q)}\big)+
O\left(|q-\p(q)|^2\right)\notag\\
={}&\sum_j \langle X_i(\p(q)), D\nu_j(\p(q))\cdot \mathbf{ex}^{-1}_{\p (q)} (q)\rangle h^j_{\p(q)}\nonumber\\
&\qquad\qquad+ O\left(|\vec \bT_F (q) - \vec \cM (\p (q))|_{no} |q-\p(q)|+ |q-\p(q)|^2\right),
\end{align}
Observe that the only difference with \cite[(4.17)]{DLS_Blowup} is that $|\vec \bT_F (q) - \vec \cM (\p (q))|_{no}$ replaces 
$|\vec \bT_F (q) - \vec \cM (\p (q))|$ in the last line of the above estimate. Next, for $q\in \spt (\bT_F)$, we can bound $|q - \p (q)| \leq |N (q)|$ and $|\vec \bT_F (q) - \vec \cM (\p (q))|_{no} \leq C |DN (\p (q))|$. We therefore conclude the estimate
\begin{equation*}
\langle X_i(q), h_q \rangle = \sum_j \langle X_i(\p(q)), D\nu_j(\p(q))\cdot \mathbf{ex}^{-1}_{\p (q)} (q)\rangle h^j_{\p(q)} + O \big(|N|^2 (\p (q)) +
|DN|^2 (\p (q))\big)\, .
\end{equation*}
Combining the latter inequality with \cite[Theorem 13.1]{DLHMS_linear} we can bound
\begin{align*}
I_2^i & = \left|\int \langle X_i, h_q \rangle d\|\bT_F\| \right|
= \left| \sum_{i=1}^Q\int_\cM \langle Y, h_{F_i} \rangle \bJ F_i \right|
\allowdisplaybreaks\\
&\stackrel{\eqref{e:pezzo lineare}}{\leq} \left|\int_\cM \sum_{j=1}^l \sum_{i=1}^Q \langle Y (x), D\nu_j  (x) \cdot \mathbf{ex}^{-1}_{x} (F_i (x))\rangle h_{x}^j d\mathcal{H}^m (x)\right| + C\,\int_\cM \varphi_r (|N|^2 + |DN|^2)
\end{align*}
We can now proceed as in \cite[Page 598]{DLS_Blowup} to conclude the same estimate for $I_2$.

\section{Boundedness of the frequency function and reverse Sobolev}

We next show the counterpart of \cite[Theorem 5.1]{DLS_Blowup}. 

\begin{theorem}[Boundedness of the frequency functions]\label{t:boundedness}
Let $T$ be as in Proposition \ref{p:contradiction_sequence}. 
If the intervals of flattening are $j_0 < \infty$, then there is $\rho>0$ such that
\begin{equation}\label{e:finita1}
\bH_{j_0} >0 \mbox{ on $]0, \rho[$} \quad \mbox{and} \quad \limsup_{r\to 0} \bI_{j_0} (r)< \infty\, .
\end{equation}
If the intervals of flattening are infinitely many, then there is a number $j_0\in \mathbb N$ { and a geometric constant $j_1\in \mathbb N$} such that
\begin{equation}\label{e:finita2}
\bH_j>0 \mbox{ on $]\frac{s_j}{t_j}, { 2^{-j_1} 3}[$ for all $j\geq j_0$}\, , \qquad 
\sup_{j\geq j_0} \sup_{r\in ]\frac{s_j}{t_j}, { 2^{-j_1} 3}[} \bI_j (r) <\infty\, ,
\end{equation}
{
\begin{equation}\label{e:finita3}
\sup \left\{\min \left\{\bI_j (r), \frac{r^2 \int_{\cB_r} |DN_j|^2}{\int_{\cB_r} |N_j|^2}\right\}\colon j \geq j_0 \mbox{ and } \max \left\{\frac{s_j}{t_j},\frac{3}{2^{j_1}}\right\} \leq r < 3\right\} < \infty\, 
\end{equation}
(in the latter inequality we understand $\bI_j (r) = \infty$ when $\bH_j (r)=0$)}. 
\end{theorem}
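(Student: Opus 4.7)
The plan is to combine the propagation estimate of Theorem~\ref{t:frequency} with a carefully chosen ``reference'' radius at which $\bI_j$ can be directly controlled. Specifically, Theorem~\ref{t:frequency} gives $\bI_j(a)\le C_0(1+\bI_j(b))$ whenever $[a,b]\subset]s_j/t_j,3]$ and $\bH_j>0$ throughout $[a,b]$, so that once a uniform upper bound at some $b_j$ is established and the nonvanishing of $\bH_j$ on $]s_j/t_j,b_j[$ is verified, both \eqref{e:finita1} and \eqref{e:finita2} follow. The role of \eqref{e:finita3} is to provide the initial bound at the reference radius, while the ``no vanishing'' of $\bH_j$ is where the singular nature of the origin enters.

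To establish \eqref{e:finita3}, I would fix a geometric $j_1$ large enough that, for $r\in[3/2^{j_1},3]$, both $\cM_j$ and the normal approximation $N_j$ are well controlled in $\cB_r(\Phii_j(0))$ by Corollary~\ref{c:globali} and Proposition~\ref{p:flattening}(v); in particular the rough bound \eqref{e:rough} gives $\bD_j(r)\le C_0\bmo^j$. Denoting $A_r:=\int_{\cB_r(\Phii_j(0))}|N_j|^2$, I would then argue by dichotomy: either $r^{-1}A_r\le C\bH_j(r)$, in which case $\bI_j(r)$ is dominated (up to geometric factors) by the full-ball Rayleigh quotient $r^2\bD_j(r)/A_r$; or $r^{-1}A_r\ge \bH_j(r)$, in which case the Rayleigh quotient itself $\le C r\bD_j(r)/A_r$ is controlled by the upper bound on $\bD_j$ combined with a lower bound $A_r\ge c_0(\bmo^j)^{1+\gamma}$. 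The lower bound on $A_r$ must hold because otherwise $N_j$ would be quantitatively negligible on $\cB_r(\Phii_j(0))$ and, via \eqref{e:err_regional} and Corollary~\ref{c:cover}, would force $T_j = Q\a{\cM_j}\;\modp$ on the preimage of $\cB_r$, contradicting $0\in\sing_Q(T)$.

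With \eqref{e:finita3} at hand, I would select $b_j\in[3/2^{j_1},3/2^{j_1-1}]$ at which the minimum in \eqref{e:finita3} is realized by $\bI_j(b_j)$ itself; such a radius exists because a regime in which the Rayleigh quotient dominates on a full sub-interval forces, via monotonicity of $A_r$ in $r$, the existence of a nearby radius where $\bI_j(r)=r\bD_j(r)/\bH_j(r)$ is itself bounded. Theorem~\ref{t:frequency} then yields $\bI_j(r)\le C$ on every $[a,b_j]\subset ]s_j/t_j,b_j[$ where $\bH_j>0$. To finish \eqref{e:finita2}, it remains to rule out that $\bH_j$ vanishes at some interior $r_0\in]s_j/t_j,b_j[$: such a vanishing would force $N_j\equiv 0$ on the annular shell $\cB_{r_0}(\Phii_j(0))\setminus\cB_{r_0/2}(\Phii_j(0))$; Proposition~\ref{p:splitting} would then exclude any cube of $\sW^{(j)}_e$ meeting that shell, and the domain-of-influence structure (Definition~\ref{d:domains} together with Proposition~\ref{p:separ}) would exclude cubes of $\sW^{(j)}_h\cup\sW^{(j)}_n$ as well; the resulting identity $T_j=Q\a{\cM_j}\;\modp$ on a neighborhood of the origin would contradict $0\in\sing_Q(T)$. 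For the finite case \eqref{e:finita1}, the same mechanism, now applied down to $r\downarrow 0$ (where $s_{j_0}=0$ and the Whitney refinement reaches arbitrarily small scales), yields the strict positivity of $\bH_{j_0}$ on some $]0,\rho[$ together with the finiteness of $\limsup_{r\to 0}\bI_{j_0}(r)$.

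The principal obstacle is the rigorous implementation of the ``$\bH_j(r_0)=0\Rightarrow T_j=Q\a{\cM_j}\;\modp$ near the origin'' step, that is, the conversion of a pointwise vanishing of the numerator of the frequency into a mass-theoretic statement that then propagates from an annular shell down to a neighborhood of $0$. The essential inputs are the mass error bound \eqref{e:err_regional}, the stopping conditions of the Whitney refinement (Definition~\ref{d:refining_procedure}), the splitting estimate of Proposition~\ref{p:splitting}, and the fact that all constants in the center-manifold construction and the estimates on the normal approximation are geometric; the latter ensures the uniformity in $j$ required by \eqref{e:finita2}-\eqref{e:finita3}, once $j_0$ is chosen so large that for all $j\ge j_0$ both $\bmo^j$ and the rescaled ambient curvature are small enough for the quantitative estimates (in particular Propositions~\ref{p:whitney}, \ref{p:separ}, \ref{p:splitting}) to apply with their stated constants.
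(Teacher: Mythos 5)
Your overall strategy shares the skeleton of the paper's argument (propagate the almost-monotonicity inequality of Theorem~\ref{t:frequency} inward from a reference radius; derive the reference bound \eqref{e:finita3} separately; rule out vanishing of $\bH_j$), but the core of the argument --- establishing the reference bound and ruling out vanishing --- diverges from the paper in a way that leaves genuine gaps.

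The paper's proof (following the scheme of \cite{DLS_Blowup}) does not argue ``radius by radius within a fixed $j$'' as you propose. Instead it exploits the way the intervals of flattening are constructed, splitting the indices $j$ into class~(A), where $t_j=s_{j-1}$, and class~(B), where $t_j<s_{j-1}$. For class~(A) the uniform lower bound on $\int_{\cB_r}|N_j|^2$ at the reference scale comes from Proposition~\ref{p:compara}, a nontrivial splitting-before-tilting statement which quantitatively compares the normal approximation over $\cM_j$ with the behavior of $T$ near the outer radius $t_j=s_{j-1}$ of the previous center manifold. For class~(B), the argument is a compactness/contradiction one encapsulated in Lemma~\ref{l:flatness}: if along a subsequence $\|N_j\|_{L^2(\cB_{r_j}\setminus\cB_{r_j/2})}\to 0$, the rescaled currents converge to an area-minimizing cone $\modp$ which must be a smooth manifold near the relevant annulus, hence flat by Allard, contradicting that the excess stays bounded away from zero for class~(B) indices. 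These two mechanisms are exactly what your proposal is missing.

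The concrete gaps are as follows. First, your lower bound $A_r\ge c_0(\bmo^j)^{1+\gamma}$ is asserted on the grounds that otherwise $N_j$ would be ``quantitatively negligible'' and force $T_j=Q\a{\cM_j}\;\modp$ on $\p^{-1}(\cB_r)$. This inference does not go through: the mass error between $\bT_{F_j}$ and $T_j$ in \eqref{e:err_regional} is controlled by $(\bmo^j)^{1+\gamma_2}\ell(L)^{m+2+\gamma_2}$, a quantity that involves the excess and is independent of how small $|N_j|$ is. Even $N_j\equiv 0$ on a ball does not imply $T_j=Q\a{\cM_j}$ there unless that ball lies in the contact set, and smallness of $\int|N_j|^2$ alone is not the stopping criterion for the Whitney refinement. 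Establishing the needed lower bound is precisely the content of Proposition~\ref{p:compara}, which you do not invoke. Second, your argument that $\bH_j(r_0)=0$ propagates from the annulus $\cB_{r_0}\setminus\cB_{r_0/2}$ to a contradiction at the origin is incomplete: vanishing of $N_j$ on the annulus rules out cubes of $\sW^{(j)}_e\cup\sW^{(j)}_h$ whose domains of influence lie in the annulus (via Propositions~\ref{p:splitting} and~\ref{p:separ}), but it does not control what happens inside $\cB_{r_0/2}$, and there is no Whitney-type geometric principle that promotes ``annulus is in the contact set'' to ``inner disk is in the contact set.'' The paper avoids this entirely by working with a compactness argument at fixed positive annulus scale (Lemma~\ref{l:flatness}), not by trying to argue pointwise in $j$.
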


\begin{proof}
In the first case we can appeal to the same argument as in \cite[Page 599]{DLS_Blowup}. In the second case we also proceed as in \cite[Page 599]{DLS_Blowup} and partition the extrema $t_j$ of the intervals of flattening into two subsets: the class (A) formed by those $t_j$ such that $t_j = s_{j-1}$ and the complementary class (B). As in \cite[Page 599]{DLS_Blowup} we can assume that $j$ is large enough. In the first case we proceed as in \cite[Page 599]{DLS_Blowup} where we substitute \cite[Proposition 3.7]{DLS_Center} with Proposition \ref{p:compara}. In case (B) by construction there is $\eta_j \in ]0,1[$ such that
$\bE^{no} ((\iota_{0, t_j})_\sharp T, \B_{6\sqrt{m}(1+\eta_j)}) > \eps_3^2$. Up to extraction of a subsequence, we can assume that $T_j = (\iota_{0, t_j})_\sharp T$ converges to a cone $S$: the convergence is strong enough to conclude that the excess
of the cone is the limit of the excesses of the sequence. Moreover (since $S$ is a cone), the excess $\bE^{no} (S, \bB_r)$ is independent of $r$. We then conclude 
\[
\eps_3^2 \leq  \liminf_{j\to\infty, j\in (B)} \bE^{no} (T_j, \B_3)\, .
\]
We then argue as in \cite[Page 601]{DLS_Blowup} using Lemma \ref{l:flatness} below in place of \cite[Lemma 5.2]{DLS_Blowup}.
\end{proof}

\begin{lemma}\label{l:flatness}
Assume the intervals of flattening are infinitely many and
$r_j \in ]\frac{s_j}{t_j},3[$ is a subsequence
(not relabeled) with $\lim_j \|N_j\|_{L^2(\cB_{r_j} \setminus \cB_{r_j/2})} = 0$. If $\eps_3$ is sufficiently small,
then, $\bE^{no} (T_{j},\B_{r_j}) \to 0$. 
\end{lemma}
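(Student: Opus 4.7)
The plan is a compactness and unique continuation argument by contradiction. Suppose along a subsequence (not relabeled) $\bE^{no}(T_j,\B_{r_j})\geq\eta_0>0$. Proposition~\ref{p:flattening}(iv) gives $\bE^{no}(T_j,\B_r)\leq C_0\,\eps_3^2\, r^{2-2\delta_2}$ for $r\in\,]s_j/t_j,3[$, so $r_j\to 0$ would give the conclusion directly; we may therefore assume, after extracting a further subsequence, $r_j\to r_*\in\,]0,3]$. The goal is then to produce, in the limit $j\to\infty$, a current $T_\infty$ which is simultaneously non-flat in $\B_{r_*}$ (by the contradiction assumption) and flat in $\B_{r_*}$ (by the smallness hypothesis on $N_j$), yielding the sought contradiction.

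Since the intervals of flattening are infinite, $t_j\downarrow 0$, and hence $\mathbf{c}(\Sigma_j)=t_j\,\mathbf{c}(\Sigma)\to 0$: the ambient submanifolds $\Sigma_j$ flatten out in $C^2_{\mathrm{loc}}$ to the $(m+\bar n)$-plane $T_0\Sigma$. By Proposition~\ref{p:compactness} together with the uniform mass bound \eqref{e:mass control}, the currents $T_j$ converge (up to a subsequence) both in the flat $\modp$ sense on compact subsets of $\B_{6\sqrt m}$ and as varifolds to a limit $T_\infty$ which is area minimizing $\modp$ in the flat plane $T_0\Sigma$. By Arzel\`a-Ascoli applied to the graphs $\cM_j=\gr(\phii_j)$, uniformly bounded in $C^{3,\kappa}$ by $C\eps_3$, we may also assume $\cM_j\to\cM_\infty=\gr(\phii_\infty)$ in $C^3_{\mathrm{loc}}$. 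The varifold convergence yields
\[
\bE^{no}(T_\infty,\B_{r_*})\geq\limsup_{j\to\infty}\bE^{no}(T_j,\B_{r_j})\geq\eta_0>0\,,
\]
so $T_\infty$ is not flat in $\B_{r_*}$. The maps $N_j$ are uniformly Lipschitz by Corollary~\ref{c:globali} and take values in a proper metric space; a standard Arzel\`a-Ascoli argument therefore produces a Lipschitz limit $N_\infty$ on $\cM_\infty$, and the $L^2$ hypothesis forces $N_\infty\equiv 0$ on the annular region $\cB_{r_*}(\Phii_\infty(0))\setminus\cB_{r_*/2}(\Phii_\infty(0))$.

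One now transfers this information from $N_\infty$ back to $T_\infty$. Combining the identity $\bT_{F_j}\res\p_j^{-1}(\cK_j)=T_j\res\p_j^{-1}(\cK_j)\;\modp$, the uniform convergence $N_j\to 0$ on a neighborhood of the annulus, the global error estimate \eqref{e:global_masserr}, and the continuity of mass under varifold convergence, we deduce that $T_\infty$ coincides $\modp$ with $Q\a{\cM_\infty}$ on $\p_\infty^{-1}\bigl(\cB_{r_*}(\Phii_\infty(0))\setminus\cB_{r_*/2}(\Phii_\infty(0))\bigr)$. Since $T_\infty$ is area minimizing $\modp$ in a flat ambient space, this identifies $\cM_\infty$ as a classical smooth minimal submanifold in the annular region; real-analyticity of solutions to the minimal surface system combined with the $C^{3,\kappa}$-closeness of $\phii_\infty$ to the plane $\pi_0$ then force $\cM_\infty\cap\B_{r_*}$ to be a plane. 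A unique continuation argument exploiting the Constancy Lemma~\ref{l:constancy} and the fact that $T_\infty$ agrees with the flat cone $Q\a{\cM_\infty\cap\B_{r_*}}$ on an open set finally yields $T_\infty\res\B_{r_*}=Q\a{\cM_\infty\cap\B_{r_*}}\;\modp$, contradicting $\bE^{no}(T_\infty,\B_{r_*})\geq\eta_0$.

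The main obstacle is the last step, namely promoting the equality between $T_\infty$ and $Q\a{\cM_\infty}$ from the annular region to the whole ball $\B_{r_*}$. This requires the smoothness and minimality of the limit center manifold together with a unique continuation property for area minimizing currents $\modp$; in the present setting it can be deduced from the classical uniqueness theorems for minimal graphs with small gradient, together with the $\modp$ analog of Lemma~\ref{l:UC} (or, more elementarily, by combining monotonicity $\modp$ with the Constancy Lemma~\ref{l:constancy} applied to the slices of $T_\infty$ by planes parallel to the identified plane $\cM_\infty\cap\B_{r_*}$).
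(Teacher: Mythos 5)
Your compactness set-up and the identification of $T_\infty$ with $Q\a{\cM_\infty}$ $\modp$ on an annulus mirror the paper's approach, but there is a genuine gap at the step where you conclude that $\cM_\infty\cap\B_{r_*}$ is a plane: the assertion that real-analyticity of the minimal surface system combined with $C^{3,\kappa}$-closeness of $\phii_\infty$ to $\pi_0$ forces a minimal graph to be flat is simply false. Any non-planar solution of the minimal surface system, suitably rescaled and restricted to a small ball, gives a counterexample; real analyticity furnishes unique continuation, not rigidity. You have correctly flagged the final propagation step as ``the main obstacle,'' but none of the remedies you list supplies the missing rigidity either: small-gradient minimal graphs are not unique, and Lemma~\ref{l:UC} concerns $\Dir$-minimizing multi-valued functions, not currents.

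The crucial piece of structure your argument never invokes --- and which makes the paper's proof work --- is that the limit of $(\iota_{0,t_j})_\sharp T$ with $t_j\downarrow 0$ is a \emph{cone}, being a tangent cone to $T$ at the origin (a point of density $Q$). Since we are in the regime of infinitely many intervals of flattening, $t_j\downarrow 0$, and the monotonicity formula renders any subsequential limit of the rescalings conical. The paper uses this decisively: once $S$ agrees $\modp$ with $Q_0\a{\cM}$ on the annulus $\B_t\setminus\B_s$, conicality yields $S\res\B_t = Q_0\,\a{0}\cone\a{\cM\cap\partial\B_t}$; the cone on the right is a stationary varifold with small excess (thanks to the $C^{3,\eps_0}$ smallness of $\cM$), so Allard's theorem shows it is a smooth cone --- hence a plane --- contradicting the lower bound on the excess of $S$. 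Without invoking the conical structure, the flatness conclusion and hence the contradiction are not available.
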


\begin{proof} The argument is a modification of that of \cite[Lemma 5.2]{DLS_Blowup}, which we include for the reader's convenience.
First of all note that, by Proposition \ref{p:flattening}, $\bE^{no} (T_j, \bB_{r_j}) \to 0$ if $r_j\to 0$. Hence, passing to a subsequence, we can assume the existence of a $c>0$ such that
\begin{equation}\label{e:nontrivial}
r_j \geq c \quad \text{and} \quad \bE^{no} (T_{j}, \B_{6\sqrt{m}})\geq c.
\end{equation}
After the extraction of a further subsequence, we can assume the existence of $r$ such that
\begin{equation}\label{e:(A1)_senza_scale}
\int_{\cB_{r} \setminus \cB_{\frac{3 r}{4}}} |N_{j}|^2 \to 0,
\end{equation}
and the existence of a $\modp$ area-minimizing cone $S$ such that $(\iota_{0, t_j})_\sharp T\to S$. Recall that $S$ is a representative $\modp$. By \eqref{e:nontrivial}, the cone $S$ cannot be an integer multiple of an $m$-dimensional plane. 

We argue as in \cite[Pages 601-602]{DLS_Blowup} and conclude that, if $\cM$ is the limit of a subsequence (not relabeled) of the $\cM_j$, then 
there are two radii $0<s<t$ such that $\spt (S) \cap \bB_t (0) \setminus \bB_s (0) \subset \cM$. In particular, by the Constancy Theorem $\modp$ we conclude that $S\res  \bB_t (0) \setminus \bB_s (0) = Q_0 \a{\cM\cap \bB_t (0) \setminus \bB_s (0)} \; \modp$ for an integer $Q_0$ with $\abs{Q_0} \leq \frac{p}{2}$. Since $S$ is a cone and a representative $\modp$ we can in fact infer that $S \res \bB_t (0) = Q_0 \a{0} \cone \a{\cM \cap \partial \bB_t (0)} \; \modp$ (in fact it can be easily inferred from the argument in \cite[Pages 601-602]{DLS_Blowup} that $Q_0 =Q$, although this is not needed in our argument). Since $\a{0}\cone \a{\cM \cap \partial \bB_t (0)}$ induces a stationary varifold and $\cM$ is the graph of a function with small $C^{3,\varepsilon_0}$ norm, we can applied Allard's Theorem to conclude that in fact $\a{0}\cone \a{\cM \cap \partial \bB_t (0)}$ is smooth. This implies that the latter is in fact $\a{\pi\cap \bB_t (0)}$ for some $m$-dimensional plane $\pi$, contradicting the fact that $S$ is not a flat cone. 
\end{proof}

Finally, Theorem \ref{t:boundedness} can be used as in \cite[Section 5]{DLS_Blowup} to show \cite[Corollary 5.3]{DLS_Blowup}, which we restate in our context for the reader's convenience. 

\begin{corollary}[Reverse Sobolev]\label{c:rev_Sob}
Let $T$ be as in Proposition \ref{p:contradiction_sequence}.
Then, there exists a constant $C>0$ which {\em depends on $T$ but not on $j$} such that, for every $j$
and for every $r \in ]\frac{s_j}{t_j}, 1 ]$, there is $\sigma\in ]\frac{3}{2}r, 3r]$
such that
\begin{equation}\label{e:rev_Sob}
\int_{\cB_{\sigma}(\Phii_j(0))} |D{N}_j|^2 \leq \frac{C}{r^2} \int_{\cB_{\sigma} (\Phii_j(0))} |{N}_j|^2\, .  
\end{equation}
\end{corollary}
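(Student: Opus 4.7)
My plan is to mimic the argument of \cite[Corollary 5.3]{DLS_Blowup}, deriving the reverse Sobolev inequality from the frequency boundedness of Theorem~\ref{t:boundedness}. First I would introduce the unweighted quantities $D_j(\sigma) := \int_{\cB_\sigma(\Phii_j(0))}|DN_j|^2$ and $H_j(\sigma) := \int_{\cB_\sigma(\Phii_j(0))}|N_j|^2$, observing that the explicit piecewise-linear form of $\phi$ gives, by integration by parts, the layer-cake identities
\[
\bD_j(\sigma) \;=\; \frac{2}{\sigma}\int_{\sigma/2}^{\sigma} D_j(t)\,dt,\qquad \bH_j(\sigma) \;=\; 2\int_{\sigma/2}^{\sigma}\frac{H_j'(t)}{t}\,dt,
\]
from which one reads off $\bD_j(\sigma) \geq D_j(\sigma/2)$ and $(2/\sigma)\bigl(H_j(\sigma)-H_j(\sigma/2)\bigr) \leq \bH_j(\sigma) \leq (4/\sigma)\bigl(H_j(\sigma)-H_j(\sigma/2)\bigr)$.

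I will then split into two cases according to which branch of the $\min$ in \eqref{e:finita3} is responsible for boundedness. If some $\sigma \in (3r/2, 3r]$ satisfies $\sigma^2 D_j(\sigma)/H_j(\sigma) \leq \Lambda$, then the desired inequality is immediate from $\sigma \geq 3r/2$. Otherwise Theorem~\ref{t:boundedness} forces $\bI_j(\sigma) \leq \Lambda$ for every $\sigma \in (3r/2, 3r]$, with $\Lambda$ depending only on $T$. Applying the frequency inequality $\sigma \bD_j(\sigma) \leq \Lambda \bH_j(\sigma)$ at $\sigma = 3r$ through the layer-cake identities yields
\[
\int_{3r/2}^{3r} D_j(t)\,dt \;\leq\; \frac{2\Lambda}{3r}\bigl(H_j(3r)-H_j(3r/2)\bigr).
\]

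The main obstacle is now to replace the increment $H_j(3r)-H_j(3r/2)$ by a multiple of $H_j(\sigma)$ for some $\sigma \in (3r/2, 3r]$, and for this I plan to prove a doubling estimate $H_j(3r) \leq K\, H_j(3r/2)$. Starting from the first variation identity \eqref{e:H'bis}, replacing $\bE$ using \eqref{e:out}, and using the frequency bound to dominate $\bD_j$ by $\Lambda \bH_j/\sigma$ (with the correction terms subleading by \eqref{e:rough} and \eqref{e:Sigma1}), one arrives at the differential inequality $(\log \bH_j)'(\sigma) \leq (m-1+2\Lambda)/\sigma + C$, which integrated on $[3r/2, 3r]$ yields $\bH_j(3r) \leq C_{\bH}\bH_j(3r/2)$. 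Applying the two-sided comparison between $\bH_j(\sigma)$ and $(H_j(\sigma)-H_j(\sigma/2))/\sigma$ at both scales $3r$ and $3r/2$ --- the latter providing the lower bound $H_j(3r/2) \geq H_j(3r/2)-H_j(3r/4) \geq (3r/8)\bH_j(3r/2)$ that turns $\bH_j(3r/2)$ into a multiple of $H_j(3r/2)/r$ --- then transfers this $\bH$-doubling to the desired doubling of $H_j$.

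Plugging this into the previous inequality and using monotonicity of $H_j$ gives $\int_{3r/2}^{3r} D_j(t)\,dt \leq (C/r)\, H_j(\sigma)$ for every $\sigma \in (3r/2, 3r]$. Since $D_j$ is monotone, a standard Chebyshev/pigeonhole argument on this interval shows that the set of $\sigma$ on which $D_j(\sigma)$ stays below four times the average has positive measure, and at any such $\sigma \in (3r/2, 3r]$ one obtains $D_j(\sigma) \leq (C'/r^2)\, H_j(\sigma)$, concluding the proof. The hardest point is the doubling step: the error terms in \eqref{e:out} and \eqref{e:H'bis} require careful bookkeeping, but \eqref{e:rough} and \eqref{e:Sigma1} ensure they are all subleading with respect to $\bH_j/\sigma$, so the argument ultimately mirrors the one in \cite{DLS_Blowup}.
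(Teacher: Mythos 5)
Your overall strategy follows exactly the route the paper points to, namely the argument in \cite[Section 5]{DLS_Blowup}: exploit the frequency boundedness of Theorem~\ref{t:boundedness}, convert $\bD_j,\bH_j$ to the unweighted quantities via the layer--cake identities (which you state and use correctly), establish doubling of $H_j$, and conclude with a pigeon-hole on $(3r/2,3r]$. The case split driven by the $\min$ in \eqref{e:finita3} --- ``either the unweighted frequency is already $\leq\Lambda$ at some scale, or $\bI_j\leq\Lambda$ on the whole interval'' --- is the right way to merge \eqref{e:finita2} and \eqref{e:finita3}, and the passage from $\bH_j$-doubling to $H_j$-doubling via the two-sided comparison at the two scales is correct.

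There is, however, a genuine gap in your doubling step. You derive the differential inequality $(\log\bH_j)'\leq(m-1+2\Lambda)/\sigma+C$ from \eqref{e:H'bis} and \eqref{e:out}, but both of these are conclusions of Proposition~\ref{p:variation}, whose hypotheses explicitly require $\bI_j\geq 1$ \emph{throughout} the interval $[a,b]$ of application. Your dichotomy only gives you the one-sided bound $\bI_j(\sigma)\leq\Lambda$: nothing you have used rules out $\bI_j(\sigma)<1$ somewhere in $(3r/2,3r]$, and at such scales the estimates \eqref{e:H'bis}--\eqref{e:in} are simply not available to you as stated. Moreover, trying to salvage the situation directly from $\bI_j(\sigma)<1$ runs into a scale mismatch: it yields $D_j(\sigma/2)\leq 4\sigma^{-2}H_j(\sigma)$, which lands at $\sigma/2\notin(3r/2,3r]$ and still requires a doubling bound to be transported to the correct scale. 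So the case $\bI_j<1$ must be dealt with separately --- either by a standalone argument producing the doubling there, or by a separate (elementary) derivation of \eqref{e:rev_Sob} in that regime --- and your proposal does not supply this. This is precisely the point where the argument of \cite{DLS_Blowup} needs to be reproduced carefully rather than just cited, and it is the only nontrivial thing missing from an otherwise faithful reconstruction.
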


\section{Final contradiction argument}

In this section we complete the proof of Theorem \ref{t:main} showing that, by Proposition \ref{p:contradiction_sequence}, under the assumption that the theorem is false, we get a contradiction. In particular fix $T, \Sigma, \Omega$ and $r_k$ as in Proposition \ref{p:contradiction_sequence}. We have already remarked that for each $k$ there is an interval of flattening $I_{j(k)} = ]s_{j(k)}, t_{j(k)}]$ containing $r_k$. We proceed as in \cite[Section 6]{DLS_Blowup} and introduce the following new objects:
\begin{itemize}
\item We first apply Corollary \ref{c:rev_Sob} to $r= \frac{r_k}{t_{j(k)}}$ and set $\bar s_k := t_{j(k)} \sigma_k$, so that $\frac{\bar s_k}{t_{j(k)}} \in \big]\frac32 \frac{r_k}{t_{j(k)}}, 3\frac{r_k}{t_{j(k)}}[$.
\item We set $\bar r_k := \frac{2\bar s_k}{3 t_{j(k)}}$.
\item We rescale our geometric objects, namely
\begin{itemize}
\item[(U1)] The currents $\bar T_k$, the manifolds $\bar \Sigma_k$ and the center manifolds $\bar\cM_k$ are given respectively by
\begin{align}
\bar T_k &= (\iota_{0,\bar r_k})_\sharp T_{j(k)} = ((\iota_{0,\bar r_k t_{j(k)}})_\sharp T) \res \bB_{6\sqrt{m}/\bar{r}_k}\\
\bar \Sigma_k &= \iota_{0, \bar r_k} (\Sigma_{j(k)}) = \iota_{0, \bar r_k t_{j(k)}} (\Sigma)\\
\bar \cM_k &= \iota_{0, \bar r_k} (\cM_{j(k)})\, .
\end{align}
\item[(U2)] In order to define the rescaled maps $\bar N_k$ on $\bar \cM_k$ we need to distinguish two cases. When $Q<\frac{p}{2}$, the map $\bar N_k$ takes values in $\mathcal{A}_Q (\R^{m+n})$ and is defined by
\[
\bar N_k (q) = \sum_{i=1}^Q \a{r^{-1} (N_{j(k)})_i (r q)}\, .
\]
In the case $Q=\frac{p}{2}$, the map $\bar N_k$ takes values in $\mathscr{A}_Q (\R^{m+n})$ and is defined analogously. The reader might either use the decomposition of $\cM_{j(k)}$ into $(\cM_{j(k)})_+, (\cM_{j(k)})_-$ and $(\cM_{j(k)})_0$ or, using the original notation in \cite[Definition 2.2]{DLHMS_linear},
\[
\bar N_k (q) = \left(\sum_{i=1}^Q \a{r^{-1} (N_{j(k)})_i (r q)}, \epsilon (rq)\right),
\]
where
\[
N_{j(k)} (\tilde{q}) = \left(\sum_{i=1}^Q \a{(N_{j(k)})_i (\tilde{q})}, \epsilon (\tilde{q})\right)
\]
and $\eps(\cdot) \in \{-1,1\}$.
\end{itemize}
\end{itemize}
Without loss of generality we can assume that $T_{0} \Sigma =\R^{m+\bar{n}}\times \{0\}$, thus
the ambient manifolds $\bar{\Sigma}_k$ converge to 
$\R^{m+\bar{n}}\times \{0\}$ locally in $C^{3,\eps_0}$. Observe in addition that $\frac12 < \frac{r_k}{\bar r_k t_{j(k)}} < 1$ and hence it follows from 
Proposition~\ref{p:contradiction_sequence}(ii) that
\[
\bE^{no} (\bar T_k, \bB_{\frac12}) \leq C\bE^{no}(T, \bB_{r_k}) \to 0.
\]
Indeed Proposition~\ref{p:contradiction_sequence}(ii) implies that $\bar T_k$ converge to $Q \a{\pi_0}$ both in the sense of varifolds and in the sense of currents $\modp$. Finally, we recall that, by Proposition~\ref{p:contradiction_sequence}(iii)\&(iii)s,
\begin{align}
\cH^{m-2+\alpha}_\infty (\rD_Q (\bar{T}_k) \cap \bB_1)
&\geq C_0 r_k^{- (m-2+\alpha)}\cH^{m-2+\alpha}_\infty (\rD_Q ({T}) \cap \bB_{r_k})\geq \eta>0\, \qquad \mbox{when $Q<\frac{p}{2}$}\label{e:sing grande}\\
\cH^{m-1+\alpha}_\infty (\rD_Q (\bar{T}_k) \cap \bB_1)
&\geq C_0 r_k^{- (m-1+\alpha)}\cH^{m-1+\alpha}_\infty (\rD_Q ({T}) \cap \bB_{r_k})\geq \eta>0\, \qquad \mbox{when $Q=\frac{p}{2}$}\label{e:sing grande_2}
\end{align}
where $\alpha$ is a positive number and $C_0$ a geometric constant. 

As in \cite[Section 6]{DLS_Blowup} we claim the counterpart of \cite[Lemma 6.1]{DLS_Blowup}, namely Lemma \ref{l:vanishing}, which implies
that $\bar\cM_k$ converge locally to
the flat $m$-plane $\pi_0$. We also introduce the exponential maps $\mathbf{ex}_k: B_3\subset\R^m\simeq T_{\bar{q}_k} \bar\cM_k\to \bar\cM_k$ at $\bar{q}_k = \Phii_{j(k)} (0)/\bar{r}_k$ ({ here and in what follows we assume, w.l.o.g.,
to have applied a suitable rotation to each $\bar{T}_k$ so that the tangent plane $T_{\bar{q}_k} \bar{\mathcal{M}}_k$ coincides with
$\mathbb R^m\times \{0\}$}). We are finally ready to define the blow-up maps $N^b_k : B_{3}\subset\R^m \to \Iq (\R^{m+n})$, when $Q<\frac{p}{2}$ and $N^b_k : B_{3}\subset\R^m \to \mathscr{A}_Q (\R^{m+n})$, when $Q=\frac{p}{2}$:
\begin{equation}\label{e:sospirata_successione}
N^b_k (x) := \bh_k^{-1} \bar{N}_k (\mathbf{ex}_k (x))\, ,
\end{equation}
where $\bh_k:=\|\bar N_k\|_{L^2(\cB_{\frac32})}$. 

\begin{lemma}[Vanishing lemma]\label{l:vanishing}
Let $\bar{T}_k, \bar{r}_k, \bar\cM_k$ and $\bar\Sigma_k$ be as above. We then have:
\begin{itemize}
\item[(i)] $\min \{\bmo^{j(k)}, \bar r_k\}\to 0$; 
\item[(ii)] the rescaled center manifolds $\bar{\cM}_k$ converge
(up to subsequences) to
$\pi_0 = \R^m\times \{0\}$ in $C^{3,\kappa/2} (\bB_4)$ and
the maps $\mathbf{ex}_k$ converge in $C^{2, \kappa/2}$ to the identity map
${\rm id}: B_3 \to B_3$;
\item[(iii)] there exists a constant $C>0$, depending only on $T$, such that, for every $k$,
\begin{equation}\label{e:rev_Sob2}
\frac{1}{\bh_k^2} \int_{\cB_{\frac32}} |D\bar N_k|^2 \leq C \int_{B_{\frac32}} |D{N}^b_k|^2 \leq C. 
\end{equation}
\end{itemize}
\end{lemma}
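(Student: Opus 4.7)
The three items will be tackled in order, following closely the scheme of \cite[Lemma 6.1]{DLS_Blowup} and combining the center-manifold estimates of Theorem \ref{t:cm}, the reverse Sobolev inequality of Corollary \ref{c:rev_Sob}, and the varifold convergence $\V(T_{0,r_k})\to V=Q\Ha^m\res\pi_0\otimes\delta_{\pi_0}$ granted by Proposition~\ref{p:contradiction_sequence}.

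For (i) I would argue by contradiction. Assume, along a non-relabelled subsequence, that $\min\{\bmo^{j(k)},\bar r_k\}\ge\delta>0$. Since $\bar r_k\in(r_k/t_{j(k)},2r_k/t_{j(k)})$ and $r_k\le t_{j(k)}$, combining $\bar r_k\ge\delta$ with $r_k\to 0$ yields $r_k\le t_{j(k)}\le 2\delta^{-1}r_k\to 0$; this immediately excludes the case of finitely many intervals of flattening (for which $t_{j(k)}=t_{j_0}$ for large $k$) and forces $j(k)\to\infty$ in the opposite case. The scaling properties of the graph function $\Psi$ under $\iota_{0,t_{j(k)}}$ give $\mathbf{c}(\Sigma_{j(k)})\le C t_{j(k)}\mathbf{c}(\Sigma)\to 0$, while from $r_k\le t_{j(k)}$ and the definition of $\bE^{no}$ one gets
\[
\bE^{no}(T_{j(k)},\bB_{6\sqrt m}) = \bE^{no}(T,\bB_{6\sqrt m t_{j(k)}}) \le \left(\tfrac{2}{\delta}\right)^m \bE^{no}(T,\bB_{12\sqrt m r_k/\delta}) = \left(\tfrac{2}{\delta}\right)^m \bE^{no}(T_{0,r_k},\bB_{12\sqrt m/\delta})\,.
\]
Thanks to Proposition~\ref{p:contradiction_sequence}(ii) and the continuity of the nonoriented excess on a generic ball under varifold convergence to a multiplicity-$Q$ plane, the right-hand side tends to $0$. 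Hence $\bmo^{j(k)}\to 0$, contradicting $\bmo^{j(k)}\ge\delta$.

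For (ii), I use the estimates $\|D\phii_{j(k)}\|_{C^{2,\kappa}}\le C(\bmo^{j(k)})^{1/2}$ of Theorem~\ref{t:cm}(i). After the translation bringing $\bar q_k$ to the origin and the rotation already incorporated in the definition of $\bar T_k$ in (U1) (making $T_{\bar q_k}\bar\cM_k=\R^m\times\{0\}$), $\bar\cM_k$ is the graph of a function $\phii^b_k$ with $\phii^b_k(0)=0$ and $D\phii^b_k(0)=0$, related to $\phii_{j(k)}$ by $D^\ell\phii^b_k(x)=\bar r_k^{\ell-1}D^\ell\phii_{j(k)}(\bar r_k x)$ for $\ell=1,2,3$. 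When $\bmo^{j(k)}\to 0$ the estimate on $D\phii_{j(k)}$ together with $\bar r_k\le 2$ force $\|D^\ell\phii^b_k\|_{C^{0,\kappa/2}}\to 0$; when instead $\bar r_k\to 0$ the same holds, this time because of the positive powers $\bar r_k^{\ell-1}$ for $\ell=2,3$ and because $|D\phii^b_k(x)|=|D\phii^b_k(x)-D\phii^b_k(0)|\le|x|\|D^2\phii^b_k\|_0\to 0$; integrating once more from the base point also gives $\|\phii^b_k\|_{C^0}\to 0$. By (i) and a subsequence argument these two regimes cover the full sequence, and Arzel\`a--Ascoli in $C^{3,\kappa/2}$ yields $\bar\cM_k\to\pi_0$ in $C^{3,\kappa/2}(\bB_4)$. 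Convergence of $\mathbf{ex}_k$ to the identity in $C^{2,\kappa/2}$ is then the standard asymptotic expansion of the exponential map on a manifold whose metric tensor approaches $\delta_{ij}$ in $C^{2,\kappa/2}$.

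Claim (iii) is essentially a change of variables. Set $r:=r_k/t_{j(k)}$ and apply Corollary~\ref{c:rev_Sob} to $T_{j(k)}$: one obtains a radius $\sigma_k\in(3r/2,3r]$ such that the reverse Sobolev inequality holds for $N_{j(k)}$ on $\cB_{\sigma_k}(\Phii_{j(k)}(0))$. Since $\bar s_k=\sigma_k t_{j(k)}$ and $\bar r_k=2\sigma_k/3$, the rescaling identities
\[
\int_{\cB_{3/2}(\bar q_k)}|D\bar N_k|^2=\bar r_k^{-m}\int_{\cB_{\sigma_k}(\Phii_{j(k)}(0))}|DN_{j(k)}|^2,\quad \int_{\cB_{3/2}(\bar q_k)}|\bar N_k|^2=\bar r_k^{-m-2}\int_{\cB_{\sigma_k}(\Phii_{j(k)}(0))}|N_{j(k)}|^2
\]
combined with $\bar r_k\le 2r$ give $\bh_k^{-2}\int|D\bar N_k|^2\le C\bar r_k^2/r^2\le 4C$, i.e.\ the first inequality in \eqref{e:rev_Sob2}; the second then follows from (ii), which provides a uniform bound on the Jacobian of the change of variables between $\bar\cM_k$ and $B_{3/2}\subset\R^m$. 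The hardest step of the plan is (i): the nonoriented excess does not enjoy the monotonicity of its oriented counterpart, and $t_{j(k)}\in\cR$ only produces the uniform bound $\le\eps_3^2$; the argument above circumvents this by using the comparability $t_{j(k)}\sim r_k$ forced by the assumption $\bar r_k\ge\delta$, which allows the excess at scale $t_{j(k)}$ to be dominated by an excess at a comparable larger scale where Proposition~\ref{p:contradiction_sequence}(ii) provides the desired decay.
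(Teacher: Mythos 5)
Your proposal is correct, and for parts (i) and (iii) it tracks the same route as the paper, which simply refers to \cite[Lemma 6.1]{DLS_Blowup} for (i)–(ii) and notes (iii) as an immediate rescaling of Corollary~\ref{c:rev_Sob}; your explicit verification of the scaling identities in (iii), including $\bar r_k/r\in(1,2]$, is exactly the computation the paper has in mind. The only place you deviate in substance is (ii): the paper points out that the center-manifold convergence can be obtained ``considerably shorter'' by combining Proposition~\ref{p:flattening}(v) with the current convergence $\bar T_k\to Q\a{\pi_0}$. Concretely, rescaling $\sup\{\dist(x,\cM_j)\colon x\in\spt(T_j)\cap\p_j^{-1}(\cB_r(q_j))\}\le C_0(\bmo^j)^{\sfrac{1}{2m}}r^{1+\beta_2}$ gives $\dist(\cdot,\bar\cM_k)\le C_0(\bmo^{j(k)})^{\sfrac{1}{2m}}\bar r_k^{\beta_2}r^{1+\beta_2}$ on $\spt(\bar T_k)$, which vanishes by (i); together with $\spt(\bar T_k)\to\pi_0$ this pins $\bar\cM_k$ to $\pi_0$ in $C^0$, and the uniform $C^{3,\kappa}$ bound upgrades this to $C^{3,\kappa/2}$ by interpolation, with no need to distinguish regimes. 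Your argument instead rescales $\|D\phii_{j(k)}\|_{C^{2,\kappa}}\le C(\bmo^{j(k)})^{\sfrac12}$ directly and splits the sequence according to whether $\bmo^{j(k)}\to 0$ or $\bar r_k\to 0$; it is a bit longer but self-contained modulo Theorem~\ref{t:cm}(i). One minor slip: the rotation normalising $T_{\bar q_k}\bar\cM_k=\R^m\times\{0\}$ is not part of (U1); the paper only introduces it in the sentence preceding the lemma (``here and in what follows we assume, w.l.o.g.\ ...''), so it is in force when the lemma is stated, but the attribution is off. This is worth flagging because in your $\bar r_k\to 0$, $\bmo^{j(k)}\not\to 0$ regime this rotation does \emph{not} converge to the identity (only $\|R_k-\mathrm{Id}\|\lesssim(\bmo^{j(k)})^{\sfrac12}$ is guaranteed), so the argument genuinely relies on the observation — which you use implicitly and should make explicit — that the vanishing of $\|D^2\phii^b_k\|_{C^0}$, $\|D^3\phii^b_k\|_{C^0}$ and $[D^3\phii^b_k]_{C^{0,\kappa}}$ persists under composition with a uniformly bounded rotation, while $D(\text{graph function})(0)=0$ is \emph{imposed} by the choice of $R_k$ rather than inherited from $\phii_{j(k)}$.
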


\begin{proof} The argument for (i) can be taken from \cite[Proof of Lemma 6.1]{DLS_Blowup}. As for part (ii) the argument given in \cite[Section 6]{DLS_Blowup} for the convergence of the center manifolds can be shortened considerably observing that it is a direct consequence of Proposition \ref{p:flattening}(v) and the convergence of the currents $\bar{T}_k$. The $C^{2, \kappa/2}$ convergence of the exponential maps follow then immediately from \cite[Proposition A.4]{DLS_Blowup}. 
Finally, (iii) is an obvious consequence of Corollary~\ref{c:rev_Sob}.
\end{proof}

Having defined the blow-up maps, the final contradiction comes from the following statements.

\begin{theorem}[Final blow-up]\label{t:sospirato_blowup}
Up to subsequences, the maps $N^b_k$ converge strongly in $L^2(B_\frac32)$ to:
\begin{itemize} 
\item a function $N^b_\infty: B_{\frac32} \to \Iq (\{0\}\times \R^{\bar{n}}\times \{0\})$ when $Q <\frac{p}{2}$;
\item a function $N^b_\infty: B_{\frac32} \to \mathscr{A}_Q (\{0\}\times \R^{\bar{n}}\times \{0\})$ when $Q =\frac{p}{2}$.
\end{itemize}
Such limit is $\Dir$-minimizing in $B_t$ for every $t\in ]\frac{5}{4}, \frac{3}{2}[$ and satisfies
$\|N^b_\infty\|_{L^2(B_\frac32)}=1$ and $\etab\circ N^b_\infty \equiv 0$.
\end{theorem}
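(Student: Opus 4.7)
The proof follows the classical Almgren blow-up scheme as implemented in \cite[Section 6]{DLS_Blowup}, with the necessary adjustments to accommodate $\mathscr{A}_Q$-valued maps when $Q=\frac{p}{2}$, relying on the linear theory developed in \cite{DLHMS_linear}. First, from the reverse Sobolev bound \eqref{e:rev_Sob2} of Lemma \ref{l:vanishing}(iii) we have $\int_{B_{3/2}} |DN^b_k|^2 \leq C$ uniformly in $k$, and by construction $\|N^b_k\|_{L^2(B_{3/2})}=1$. Combining this with the $L^\infty$ bound coming from Corollary \ref{c:globali}(i), rescaled and transferred to the domain $B_{3/2}\subset\R^m$ via the exponential maps $\mathbf{ex}_k$ (which converge to the identity by Lemma \ref{l:vanishing}(ii)), we obtain weak $W^{1,2}$ precompactness of $\{N^b_k\}$. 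Rellich's theorem (in the $\mathcal{A}_Q$ or $\mathscr{A}_Q$-valued version from \cite[Proposition 2.11]{DLS_Qvfr} or \cite[Proposition 7.2]{DLHMS_linear}) then yields a subsequence converging strongly in $L^2(B_{3/2})$ to some limit $N^b_\infty$ with $\|N^b_\infty\|_{L^2(B_{3/2})}=1$.

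Next I identify the target space and the vanishing of the average. Since each $(N_k)_i(q)\perp T_q\bar{\mathcal{M}}_k$ and lies (up to the normal part of $\Sigma$) in $T_q\bar\Sigma_k$, the $C^{2,\kappa/2}$ convergence $\bar{\mathcal{M}}_k\to\pi_0$ and $\bar\Sigma_k\to T_0\Sigma=\R^{m+\bar n}\times\{0\}$ forces the values of $N^b_\infty$ to lie in $\pi_0^\perp\cap T_0\Sigma=\{0\}\times\R^{\bar n}\times\{0\}$; the normal-to-$\Sigma$ components of $N_k$ are controlled by $\|\Psi\|_{C^2}|N_k|^2$ via Assumption \ref{ipotesi}, hence vanish after division by $\bh_k$ in view of the $L^\infty$ bound $\|N_k\|_{C^0}\leq C\bmo^{1/(2m)}$. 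The vanishing $\bfeta\circ N^b_\infty\equiv 0$ is obtained by applying the average estimate \eqref{e:av_region} Whitney region by Whitney region and summing: choosing $a=\bh_k^{1-\gamma_2/4}$ and using the Lipschitz bound together with $\int|N^b_k|^2=1$ yields $\bh_k^{-1}\|\bfeta\circ\bar N_k\|_{L^1}=o(1)$, which passes to the limit.

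The crucial step is $\Dir$-minimality of $N^b_\infty$ in $B_t$ for $t\in]\tfrac54,\tfrac32[$. I would argue by contradiction: assume a competitor $L\in W^{1,2}(B_t,\mathscr{A}_Q)$ (or $\mathcal{A}_Q$) with $L=N^b_\infty$ on $\partial B_t$ satisfies $\Dir(L,B_t)\leq\Dir(N^b_\infty,B_t)-2\eta_0$ for some $\eta_0>0$. Using Fubini to select a good radius $s\in]t,3/2[$, the Luckhaus-type interpolation of \cite[Lemma 5.4]{DLHMS_linear}, and then rescaling by $\bh_k$, produce competitor maps $\hat L_k$ on $\bar{\mathcal{M}}_k$ that coincide with $\bar N_k$ outside a small annulus, with $\Dir(\hat L_k,\cB_s)\leq\Dir(\bar N_k,\cB_s)-\eta_0\bh_k^2$ for large $k$. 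The graphs $\bT_{\hat F_k}$ (with $\hat F_k(q)=q+\hat L_k(q)$) furnish competitor currents for $\bar T_k$ after patching with $\bar T_k$ in the remaining annular cylinder via an isoperimetric filling $\modp$ as in the proof of Theorem \ref{t:harm_1}; the Taylor expansion \cite[Corollary 13.2, Proposition 13.3]{DLHMS_linear} converts the Dirichlet gap into a mass gap of order $\bh_k^2$, while the error terms (from curvature of $\bar\Sigma_k$, from tilting of $\bar{\mathcal{M}}_k$, and from the ``bad set'' $\bar{\mathcal{M}}_k\setminus\mathcal{K}_k$) are $o(\bh_k^2)$ by Theorem \ref{t:approx} and the boundedness of the frequency given by Theorem \ref{t:boundedness}. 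This contradicts the minimality of $\bar T_k$ $\modp$.

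The main obstacle is this last step: ensuring that all the error terms produced in the competitor construction are strictly $o(\bh_k^2)$. The boundedness of the frequency function $\bI_{j(k)}$ from Theorem \ref{t:boundedness} is what guarantees that $\bh_k$ does not decay faster than any power of $\bar r_k$, so that errors like $\bmo^{j(k)}\bar r_k^{2+\gamma_2}$ from \eqref{e:err_regional} and \eqref{e:Dir_regional}, when compared to $\bh_k^2$ using the reverse Sobolev inequality \eqref{e:rev_Sob2}, become negligible. This requires delicate bookkeeping identical in spirit to \cite[Section 6]{DLS_Blowup}, and the only genuinely new ingredient is that in the regime $Q=\frac{p}{2}$ one must work in the space $\mathscr{A}_Q$, where both the interpolation lemma and the Taylor expansion of the graphical mass have been made available in \cite{DLHMS_linear}.
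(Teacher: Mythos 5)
Your proposal is correct and follows essentially the same route as the paper: compactness from the reverse Sobolev estimate and $L^2$-normalization, identification of the target via $C^{2,\kappa/2}$ convergence of $\bar{\cM}_k$ and $\bar\Sigma_k$, vanishing of the average via the $\modp$ version of the average estimate, and $\Dir$-minimality by contradiction through Luckhaus interpolation, the Taylor expansion of the graphical mass from \cite{DLHMS_linear}, and an isoperimetric filling $\modp$. The paper defers most of the bookkeeping to \cite[Sections 7.1--7.4]{DLS_Blowup}, simply citing the rescaled estimates \eqref{e:Lip_riscalato}--\eqref{e:controllo_media}, whereas you unpack those a bit more; there is no substantive difference in strategy or in the identification of where the frequency bound and the reverse Sobolev inequality enter to make the error terms $o(\bh_k^2)$.
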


\begin{theorem}[Large singular set]\label{t:large}
Let $N^b_\infty$ be the map of Theorem \ref{t:sospirato_blowup} and define
\[
\Upsilon := \left\{x\in \bar B_{1} : N^b_\infty(x) = Q\a{0}\right\}\, .
\]
Then 
\begin{align}
&\cH^{m-2+\alpha}_\infty (\Upsilon) \geq \frac{\eta}{2} \qquad \mbox{if $Q<\frac{p}{2}$},\\
&\cH^{m-1+\alpha}_\infty (\Upsilon) \geq \frac{\eta}{2} \qquad \mbox{if $Q=\frac{p}{2}$},
\end{align} 
where $\alpha$ and $\eta$ are the positive constants in \eqref{e:sing grande}, resp. \eqref{e:sing grande_2}.
\end{theorem}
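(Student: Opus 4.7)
The plan is to recognize $\Upsilon$ as essentially the Hausdorff limit of the density-$Q$ sets $K_k := \{\Theta(\bar T_k, \cdot) = Q\} \cap \bar\bB_1 \subset \R^{m+n}$, and to convert the density condition $\Theta(\bar T_k,\cdot) = Q$ into the pointwise vanishing $N^b_\infty(\cdot) = Q\a{0}$ in the blow-up limit by means of the persistence of $Q$-points (Proposition \ref{p:persistence}). First, I would note that, by the height bound of Proposition \ref{p:flattening}(v) together with the $C^{3,\kappa/2}$-convergence $\bar\cM_k \to \pi_0$ (Lemma \ref{l:vanishing}(ii)), the compact sets $K_k \subset \spt(\bar T_k)$ are eventually contained in a thin neighborhood of $\pi_0 \cap \bar\bB_1$. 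By Blaschke's selection principle, up to a (not relabeled) subsequence, $K_k \to K_\infty$ in Hausdorff distance for some compact set $K_\infty \subset \pi_0 \cap \bar\bB_1 \cong \bar B_1$. By upper semi-continuity of $\cH^s_\infty$ under Hausdorff convergence of compact sets (with $s := m-2+\alpha$ if $Q < p/2$, or $s := m-1+\alpha$ if $Q = p/2$), the hypothesis \eqref{e:sing grande} (resp.~\eqref{e:sing grande_2}) yields $\cH^s_\infty(K_\infty) \geq \eta$.

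The crucial step is then to verify the inclusion $K_\infty \subset \Upsilon$. To this end, I would fix $x_\infty \in K_\infty$, pick $q_k \in K_k$ with $q_k \to x_\infty$, set $q_k^* := \bar r_k q_k \in \{\Theta(T_{j(k)},\cdot)=Q\}$, and define $x_k := \mathbf{ex}_k^{-1}(\p_k(q_k)) \in B_3 \subset \R^m$ (so that $x_k \to x_\infty$). For each fixed $\eta_2 > 0$ and for $k$ sufficiently large (noting $\bar r_k \to 0$ from Lemma \ref{l:vanishing}(i)), I would rely on the Whitney structure to locate a cube $L_k \in \sW^{(j(k))}_e$ with $\ell(L_k) \leq \bar\ell(\eta_2)$ and $\dist(\p_{\pi_0}(\p_{j(k)}(q_k^*)), L_k) \leq 4\sqrt{m}\ell(L_k)$: indeed, the separation estimate (S1) of Proposition \ref{p:separ} prevents $\p_{\pi_0}(\p_{j(k)}(q_k^*))$ from sitting inside any cube of $\sW^{(j(k))}_h$ (as the density-$Q$ condition would contradict $\Theta \leq Q - 1/2$), and Proposition \ref{p:flattening}(iii) supplies a nearby excess cube whenever $\p_{\pi_0}(\p_{j(k)}(q_k^*))$ lies in a cube of $\sW^{(j(k))}_n$. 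Proposition \ref{p:persistence} then gives
\[
\fint_{\cB_{\bar s(\eta_2) \ell(L_k)}(\p_{j(k)}(q_k^*))} \cG_\square\big(N_{j(k)}, Q\a{\etab\circ N_{j(k)}}\big)^2 \leq \frac{\eta_2}{\ell(L_k)^{m-2}} \int_{\cB_{\ell(L_k)}(\p_{j(k)}(q_k^*))} |DN_{j(k)}|^2,
\]
with $\square = s$ when $p = 2Q$ and empty otherwise. A direct rescaling by $\bar r_k$ (using $N_{j(k)}(\cdot) = \bar r_k\,\bar N_k(\cdot/\bar r_k)$) together with the normalization by $\bh_k$ transforms this into
\[
\fint_{B_{\bar s(\eta_2) \rho_k}(x_k)} \cG_\square\big(N^b_k, Q\a{\etab\circ N^b_k}\big)^2 \leq \frac{C \eta_2}{\rho_k^{m-2}} \int_{B_{\rho_k}(x_k)} |DN^b_k|^2 + o(1),
\]
where $\rho_k := \ell(L_k)/\bar r_k$, and the $o(1)$ absorbs the $C^{2,\kappa/2}$-error of $\mathbf{ex}_k$ with respect to the identity.

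The Dirichlet factor on the right-hand side is bounded uniformly in $k$ by Lemma \ref{l:vanishing}(iii). Passing to the limit $k \to \infty$ via the strong $L^2$-convergence $N^b_k \to N^b_\infty$ granted by Theorem \ref{t:sospirato_blowup}, using the identity $\etab\circ N^b_\infty \equiv 0$, and finally sending $\eta_2 \to 0$ while invoking the H\"older continuity of $\Dir$-minimizing special $Q$-valued maps (see \cite[Theorem 8.1]{DLHMS_linear}) to compensate for the possible shrinking of the radius $\bar s(\eta_2)\rho_\infty$, I would obtain $N^b_\infty(x_\infty) = Q\a{0}$. Hence $x_\infty \in \Upsilon$, and finally $\cH^s_\infty(\Upsilon) \geq \cH^s_\infty(K_\infty) \geq \eta \geq \eta/2$. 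The main obstacle I expect lies in the selection of the cube $L_k$ at a scale $\rho_k$ keeping the right-hand side of the rescaled persistence estimate under control, especially in the delicate case when $\p_{\pi_0}(\p_{j(k)}(q_k^*)) \in \bGam^{(j(k))}$ does not belong to any Whitney cube; here the argument will need Proposition \ref{p:splitting_II} (which bounds from above the size of $\sW_e$ cubes in regions rich in density-$Q$ points), combined with the structural fact that $\bar r_k$ was chosen at the scale provided by the Reverse Sobolev inequality (Corollary \ref{c:rev_Sob}), which keeps the $L^2$ normalization $\bh_k$ comparable to $\bar r_k$ times the Dirichlet norm and guarantees non-degeneracy of the blow-up.
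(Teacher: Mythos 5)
Your proposal takes a genuinely different route from the paper's proof, which argues by \emph{contradiction}: assuming $\cH^s_\infty(\Upsilon)<\eta/2$, the paper covers $\Upsilon$ by small balls, defines $\Lambda_k$ as the set of density-$Q$ points of $\bar{T}_k$ in $\bB_1$ at distance $>4\bar\sigma$ from $\Upsilon$, gets from H\"older continuity and strong $L^2$-convergence of $N^b_k$ a uniform lower bound $\vartheta\bh_k^2$ for the $L^2$-average of $\cG_\square(\bar N_k, Q\a{\etab\circ\bar N_k})$ near each $q\in\Lambda_k$, combines this with persistence to produce local reverse--Poincar\'e-type bounds, packages them through a Besicovitch covering of $\Lambda_k$, and derives $\eta/2 \leq C\sigma^\alpha \vartheta^{-1}\bh_k^{-2}\int_{\cB_{3/2}}|D\bar N_k|^2$; the latter contradicts the reverse Sobolev inequality after sending $\sigma\to 0$. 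Your direct approach (Hausdorff limit $K_\infty$ of the density-$Q$ sets, plus the inclusion $K_\infty\subset\Upsilon$) is in spirit more elementary and would even give the stronger bound $\cH^s_\infty(\Upsilon)\geq\eta$.

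However, the inclusion $K_\infty\subset\Upsilon$ does not follow from the tools you invoke, and I believe this gap is essential. After rescaling and normalizing, Proposition \ref{p:persistence} reads
\begin{equation*}
\mint_{B_{\bar s(\eta_2)\rho_k}(x_k)} \cG_\square\big(N^b_k, Q\a{\etab\circ N^b_k}\big)^2 \;\leq\; \frac{\eta_2}{\rho_k^{\,m-2}}\int_{B_{\rho_k}(x_k)}|DN^b_k|^2 \;\leq\; \frac{C\,\eta_2}{\rho_k^{\,m-2}}\,,
\end{equation*}
where $\rho_k=\ell(L_k)/\bar r_k$. There is no lower bound on $\rho_k$: the Whitney decomposition only constrains $\ell(L_k)$ from above (via \eqref{e:cubi_non_rompono} and Proposition \ref{p:splitting_II}, both of which give upper, not lower, bounds on the cube size), and near the contact set $\bGam^{(j(k))}$ the relevant scales collapse. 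When $m\ge 3$ and $\rho_k\to 0$, the right-hand side blows up and the persistence estimate becomes vacuous; simultaneously the averaging radius $\bar s(\eta_2)\rho_k$ on the left shrinks, so that strong $L^2$-convergence of $N^b_k$ to $N^b_\infty$ carries no information at the single point $x_\infty$. Invoking H\"older continuity of $N^b_\infty$ to ``compensate for the shrinking radius'' does not close the loop either, since that would require an $L^2$-smallness estimate for $N^b_\infty$ (or a \emph{uniform} $C^0$ estimate for $N^b_k$) on a ball of radius bounded away from zero, which is precisely what is missing. In short: the conversion from the $L^2$-average estimate at a degenerating scale to the pointwise identity $N^b_\infty(x_\infty)=Q\a{0}$ is unjustified. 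The paper's $\sigma\to 0$ contradiction argument, with the Besicovitch cover of $\Lambda_k$ and the factor $\sigma^{\alpha}$ (resp. $\sigma^{1+\alpha}$) absorbed into the Hausdorff pre-measure bound, is designed exactly to avoid having to control a single point: it only needs a family of balls with the right \emph{summability} of radii, not any single ball of definite size.
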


The two theorems would contradict \cite[Theorem 0.11]{DLS_Qvfr} in case $Q<\frac{p}{2}$ since, arguing as in \cite[Section 6]{DLS_Blowup} we easily conclude that $\Upsilon$ is a subset of the singularities of $N^b_\infty$. In the case $Q=\frac{p}{2}$ we infer instead from \cite[Proposition 10.3]{DLHMS_linear} that $N_b^\infty = Q \a{\bfeta \circ N_b^\infty}$ on the whole $B_{3/2}$, which in turn would imply $N_b^\infty = Q \a{0}$. This however contradicts 
$\|N_b^\infty\|_{L^2 (B_{3/2})} =1$. 

\subsection{Proof of Theorem \ref{t:sospirato_blowup}} Without loss of generality we may assume that $\bar{q}_k := \bar r_k^{-1} \Phii_{j(k)} (0)$ coincide all with the origin. We then define a new map $\bar{F}_k$ on the geodesic ball $\cB_{3/2}\subset \bar \cM_k$ distinguishing, as usual, the two cases $Q<\frac{p}{2}$ and $Q=\frac{p}{2}$. In the first case we follow the definition of \cite[Section 7.1]{DLS_Blowup}, namely we set
\[
\bar{F}_k (x) := \sum_i \a{x+ (\bar{N}_k)_i (x)}\, .
\] 
In the case $Q=\frac{p}{2}$ the map $\bar F_k$ takes values in $\mathscr{A}_Q (\R^{m+n})$ and it is induced by $\bar N_k$ in the sense explained at point (N) of \cite[Assumption 11.1]{DLHMS_linear}. 
The argument given in \cite[Section 7.1]{DLS_Blowup} works in our case as well and implies the following estimates (where $\gamma$ is some positive exponent independent of $k$)
\begin{gather}
\Lip (\bar{N}_k|_{\cB_{\sfrac{3}{2}}}) \leq C \bh_k^\gamma\quad\text{and}\quad \|\bar{N}_k\|_{C^0 (\cB_{\sfrac32})} \leq C (\bmo^{j(k)} \bar{r}_k)^\gamma,\label{e:Lip_riscalato}\\
\mass^{p}((\bT_{\bar{F}_k} - \bar{T}_k) \res (\p_k^{-1} (\cB_\frac32)) \leq C \bh_k^{2+2\gamma},\label{e:errori_massa_1000}\\
\int_{\cB_\frac32} |\etab\circ \bar{N}_k| \leq C \bh_k^2\label{e:controllo_media}\, 
\end{gather}
From these estimates we conclude the strong $L^2$ converge of (a subsequence of) $N_k^b$ to a map $N_\infty^b$ on $B_{3/2}$ taking values, respectively, on $\Iq (\{0\}\times \R^{\bar{n}}\times \{0\})$ (when $Q<\frac{p}{2}$) and $\mathscr{A}_Q (\{0\}\times \R^{\bar{n}}\times \{0\})$ (when $Q=\frac{p}{2}$). Moreover it is obvious that $\|N_\infty^b\|_{L^2 (B_{3/2})} =1$ and that $\etab\circ N_\infty^b \equiv 0$. Therefore we are only left with proving that $N_\infty^b$ is $\Dir$-minimizing. 

Proceeding as in the \cite[Section 7]{DLS_Blowup} we assume, without loss of generality, that the Dirichlet energy of $N_\infty^b$ is nontrivial (otherwise there is nothing to prove). Thus we can assume that
that there exists $c_0>0$ such that
\begin{equation}\label{e:reverse_control}
c_0 \bh_k^2 \leq \int_{\cB_\frac32} |D\bar{N}_k|^2\, .
\end{equation}
We proceed as in \cite[Section 7.2 \& Section 7.3]{DLS_Blowup}: if there is a radius $t \in \left]\frac54,\frac32\right[$ and a 
function $f$ on $B_\frac32$ (taking values in $\Iq(\R^{\bar n})$ when $Q<\frac{p}{2}$, or in $\mathscr{A}_Q (\R^{\bar n})$ when $Q=\frac{p}{2}$) such that
\[
f\vert_{B_\frac32\setminus B_t} = N^b_\infty\vert_{B_\frac32 \setminus B_t} \quad\text{and}\quad
\Dir (f, B_t) \leq \Dir (N^b_\infty, B_t) -2\,\delta,
\]
for some $\delta>0$, we then produce competitors $\tilde{N}_k$ for the maps $\bar N_k$ satisfying
\begin{gather*}
\tilde{N}_k \equiv \bar{N}_k \quad \mbox{in $\cB_\frac32\setminus\cB_t$},\quad
\Lip (\tilde{N}_k) \leq C \bh_k^\gamma, \quad |\tilde{N}_k| \leq C (\bmo^k \bar{r}_k)^\gamma,\\
\int_{\cB_\frac32} |\etab\circ \tilde{N}_k| \leq C \bh_k^2
\quad\text{and}\quad
\int_{\cB_\frac32} |D\tilde{N}_k|^2 \leq \int_{\cB_\frac32} |D\bar{N}_k|^2 - \delta \bh_k^2.
\end{gather*}
Indeed the construction of the maps in \cite[Section 7.2 \& Section 7.3]{DLS_Blowup} uses the left composition of $\mathcal{A}_Q$-valued maps with classical maps in the sense of \cite[Section 1.3.1]{DLS_Qvfr}, which in the $\mathscr{A}_Q$-valued case is substituted by the left composition as defined in 
\cite[Subsection 7.3]{DLHMS_linear}. 

Consider next the map $\tilde{F}_k$ given by $\tilde{F}_k (x) = \sum_i \a{x+(\tilde{N}_k)_i (x)}$ in the case $Q<\frac{p}{2}$ and by the corresponding
$\left(\sum_i  \a{x+(\tilde{N}_k)_i (x)}, \varepsilon (x)\right)$ in the case $Q=\frac{p}{2}$. The current $\bT_{\tilde{F}_k}$ coincides with 
$\bT_{\bar{F}_k}$ on $\p_k^{-1}( \cB_\frac32\setminus \cB_t)$. Define the function $\varphi_k(q) = \dist_{\bar{\cM}_k} (0, \p_k (q))$
and consider for each $s\in \left]t,\frac32\right[$ the slices 
$\langle \bT_{\tilde{F}_k} - \bar{T}_k, \varphi_k, s\rangle$.
By \eqref{e:errori_massa_1000} we have
\[
\int_t^\frac32 \mass^{p} (\langle \bT_{\tilde{F}_k} - \bar{T}_k, \varphi_k, s\rangle) \leq C \bh_k^{2+\gamma}\, .
\]
Thus we can find for each $k$ a radius $\sigma_k\in \left]t, \frac32\right[$ on which 
$\mass^{p} (\langle \bT_{\tilde{F}_k} - \bar{T}_k, \varphi_k, \sigma_k\rangle) \leq C \bh_k^{2+\gamma}$. Recall from Lemma \ref{modp_slicing_formula}(i), 
$\partial  \langle \bT_{\tilde{F}_k} - \bar{T}_k, \varphi_k, \sigma_k\rangle = 0$ $\modp$. By the isoperimetric
inequality $\modp$ (see \cite[$(4.2.10)^\nu$]{Federer69}) there is an integer rectifiable current $S_k$, which can be assumed to be representative $\modp$, such that
\[
\partial S_k = 
\langle \bT_{\tilde{F}_k} - \bar{T}_k, \varphi_k, \sigma_k\rangle \;\;\modp\, ,
\quad \mass (S_k) = \mass^p (S_k) \leq C \bh_k^{(2+\gamma)m/(m-1)}
\quad\text{and}\quad\spt (S_k) \subset \bar{\Sigma}_k.
\]
Our competitor current is, then, given by
\[
Z_k := \bar{T}_k\res (\p_k^{-1} (\bar\cM_k\setminus \cB_{\sigma_k})) + S_k + \bT_{\tilde{F}_k}\res
(\p_k^{-1} (\cB_{\sigma_k})).
\]
The computations given in \cite[Section 7.4]{DLS_Blowup} would then imply that the $p$-mass of $Z_k$ is strictly smaller than the mass of $\bar{T}_k$ for $k$ large enough, even though $\bar{T}_k - Z_k$ is a cycle $\modp$ supported in the ambient manifold $\bar\Sigma_k$, which is a contradiction to $\bar{T}_k$ being a mass minimizing current $\modp$ in $\bar\Sigma_k$. \qedhere

\subsection{Proof of Theorem \ref{t:large}} We argue by contradiction and assume that:
\begin{align}
&\cH^{m-2+\alpha}_\infty (\Upsilon) < \frac{\eta}{2} \qquad \mbox{if $Q<\frac{p}{2}$}\\
&\cH^{m-1+\alpha}_\infty (\Upsilon) < \frac{\eta}{2} \qquad \mbox{if $Q=\frac{p}{2}$.}
\end{align} 
Since $\Upsilon$ is compact, we cover $\Upsilon$ with finitely many balls $\{\bB_{\sigma_i} (x_i)\}$ in such a way that
\begin{align}
&\sum_i \omega_{m-2+\alpha} (4\sigma_i)^{m-2+\alpha} \leq 
\frac{\eta}{2}\qquad\mbox{if $Q<\frac{p}{2}$},\\
& \sum_i \omega_{m-1+\alpha} (4\sigma_i)^{m-1+\alpha} \leq 
\frac{\eta}{2}\qquad\mbox{if $Q=\frac{p}{2}$}
\end{align}
 Choose a $\bar{\sigma}>0$ so that
the $5\bar\sigma$-neighborhood of $\Upsilon$ is covered by $\{\bB_{\sigma_i} (x_i)\}$.
Denote by $\Lambda_{k}$ the set of multiplicity $Q$ points of $\bar T_k$ far away from the singular set
$\Upsilon$:
\[
\Lambda_{k} := \{ q \in {\rm D}_Q(\bar T_k) \cap \bB_1 : \dist(q, \Upsilon)>4 \bar{\sigma} \}.
\]
Clearly, 
\begin{align}
&\cH^{m-2+\alpha}_\infty (\Lambda_k) \geq \frac{\eta}{2}\qquad\mbox{when $Q<\frac{p}{2}$},\label{e:Lambda_Rocco}\\
&\cH^{m-1+\alpha}_\infty (\Lambda_k) \geq \frac{\eta}{2}\qquad\mbox{when $Q=\frac{p}{2}$}.\label{e:Lambda_Rocco_2}
\end{align} 
As in \cite[Section 6.2]{DLS_Blowup} we denote by $\mathbf{V}$ the neighborhood of $\Upsilon$ of size $2\bar{\sigma}$.
Agruing as in \cite[Section 6.2, Step 1]{DLS_Blowup} we conclude the existence of a positive constant $\vartheta$ such that, for every fixed parameter $\sigma < \bar{\sigma}$, there is a $k_0 (\sigma)$ 
such that the following estimate holds for every $k\geq k_0 (\sigma)$. In the case $Q<\frac{p}{2}$ we have
\begin{equation}\label{e:dall'alto}
\mint_{\cB_{2\sigma} (x)} \cG (\bar{N}_k, Q \a{\etab\circ \bar{N}_k})^2 \geq \vartheta \bh_k^2
\qquad \mbox{$\forall\;x\in \Xi_k:= \p_{\bar\cM_k} (\Lambda_k)$,}
\end{equation}
whereas in the case $Q=\frac{p}{2}$ we have
\begin{equation}\label{e:dall'alto_stronzo}
\mint_{\cB_{2\sigma} (x)} \cG_s (\bar{N}_k, Q \a{\etab\circ \bar{N}_k})^2 \geq \vartheta \bh_k^2
\qquad \mbox{$\forall\;x\in \Xi_k:= \p_{\bar\cM_k} (\Lambda_k)$.}
\end{equation}
Indeed the argument in \cite[Section 6.2]{DLS_Blowup} uses only the H\"older continuity of the $\Dir$-minimizing map $N_\infty^b$ (which
is a consequence of \cite[Theorem 2.9]{DLS_Qvfr} for $Q<\frac{p}{2}$ and a consequence of \cite[Theorem 8.1]{DLHMS_linear} when $Q=\frac{p}{2}$) and the strong convergence proved in Theorem \ref{t:sospirato_blowup}. 

Next, following \cite[Section 6.2, Step 2]{DLS_Blowup}, for every $q\in\Lambda_k$ we define $\bar{z}_k (q) = \p_{\pi_k} (q)$ (where $\pi_k$ is the reference plane for the center manifold
related to $T_{j(k)}$) and 
\[
\bar{x}_k (q) := (\bar{z}_k (q),  \bar{r}_k^{-1} \phii_{j(k)} (\bar{r}_k \bar z_k (q)))\, .
\] 
Observe that $\bar{x}_k (q)\in \bar\cM_k$. We next claim the existence of 
a suitably chosen geometric constant $1>c_0>0$ (in particular, independent of $\sigma$) such that, when $k$ is large enough,
for each $q\in \Lambda_k$ there is a radius $\varrho_q \leq 2\sigma$ with the following properties:
\begin{gather}
\frac{c_0\, \vartheta}{\sigma^{\alpha}}  \bh_k^2\leq \frac1{\varrho_q^{m-2+\alpha}}
\int_{\cB_{\varrho_q} (\bar{x}_k (q))} |D\bar{N}_k|^2,
\label{e:dal_basso_finale}\\
\cB_{\varrho_q} (\bar{x}_k (q)) \subset \bB_{4\varrho_q} (q)\label{e:inside}\, .
\end{gather}
The argument given in \cite[Section 6.2, Step 2]{DLS_Blowup} can be routinously modified in our case. In particular we define the points $q_k :=\bar{r}_k  q$, $z_k := \bar{r}_k\bar{z}_k (q)$ and
$x_k = \bar{r}_k \bar{x}_k (q) =(z_k, \phii_{j(k)} (z_k))$ and discuss the three different possibilities depending on whether $z_k$ belongs to a cube $L\in \sW^{j(k)}$ or to the contact set $\Gamma_{j(k)}$.

The first case, $z_k \in L\in \sW_h^{j(k)}$ can be excluded with the same argument given in \cite[Section 6.2, Step 2]{DLS_Blowup}, where we replace \cite[Proposition 3.1]{DLS_Center} with Proposition \ref{p:separ}, because $q_k$ is a multiplicity $Q$ point for the current $T_{j(k)}$.

Following the argument in \cite[Section 6.2, Step 2]{DLS_Blowup}, when $z_k \in \sW_n^{j(k)}\cup \sW_e^{j(k)}$ we find a $t (q)\leq \sigma$ with the property that
\begin{equation}\label{e:sotto_dal_cm_2}
\mint_{\cB_{\bar{s} t (q)} (\bar{x}_k (q))} \cG_\square (\bar{N}_k, Q \a{\etab\circ \bar{N}_k})^2 \leq 
\frac{\vartheta}{4 \omega_m t (q)^{m-2}} \int_{\cB_{t (q)} (\bar{x}_k (q))} |D\bar{N}_k|^2\, 
\end{equation}
(where $\square = s$ for $Q=\frac{p}{2}$ and $\square = \phantom{s}$ for $Q<\frac{p}{2}$)
and 
\begin{equation}\label{e:vicinanza_2}
|q- \bar{x}_k (q)| < \bar s\, t (q).
\end{equation}
In the argument \cite[Section 6.2]{DLS_Blowup} we take care of substituing 
\cite[Proposition 3.5]{DLS_Center}, \cite[Lemma 6.1]{DLS_Blowup} and \cite[Proposition 3.6]{DLS_Blowup} respectively with
Proposition \ref{p:splitting_II}, Lemma \ref{l:vanishing} and Proposition \ref{p:persistence}.

In the case $z_k\in \Gamma_{j (k)}$ we find a $t (q)<\sigma$ such that
\begin{equation}\label{e:ancora_un_caso}
 \mint_{\cB_{\bar{s} t (q)} (\bar{x}_k (q))} \cG_\square (\bar{N}_k, Q \a{\etab\circ \bar{N}_k})^2
\leq \frac{\vartheta}{4} \bh_k^2\, , 
\end{equation}
whereas we observe that \eqref{e:vicinanza_2} holds trivially because the left hand side vanishes.

By \eqref{e:vicinanza_2}, for any $\varrho_q\in ]s\bar{t} (q), 2\sigma]$ the inclusion \eqref{e:inside} holds.
The argument is then closed by showing that \eqref{e:dal_basso_finale} must hold for at least one $\varrho_q \in ]\bar{s} t(q), 2\sigma]$.
The rest of the argument in \cite[Section 6.2, Step 2]{DLS_Blowup} uses the Poincar\'e inequality in the $\mathcal{A}_Q$-valued setting to show that, under the assumption that \eqref{e:dal_basso_finale} fails for every $\varrho \in ]\bar{s} t(q), 2\sigma]$, \eqref{e:ancora_un_caso} and \eqref{e:sotto_dal_cm_2} would be incompatible with \eqref{e:dall'alto}. This argument then settles the proof of the existence of $\varrho_q$ satsifying
\eqref{e:dal_basso_finale}-\eqref{e:inside} when $Q<\frac{p}{2}$. Since the analogous Poincar\'e inequality can be easily seen to hold in the 
$\mathscr{A}_Q$-valued case, we easily conclude that the same argument applies when $Q=\frac{p}{2}$ exploiting the case $\square=s$ for 
\eqref{e:sotto_dal_cm_2} and \eqref{e:ancora_un_caso} against \eqref{e:dall'alto_stronzo}.

From \eqref{e:dal_basso_finale}-\eqref{e:inside} we can use the covering argument of \cite[Step 3]{DLS_Blowup} to conclude that the inequality
\eqref{e:Lambda_Rocco} and \eqref{e:Lambda_Rocco_2} would force a large Dirichlet energy of $\bar{N}_k$ on $\mathcal{B}_{3/2}$, in particular
\begin{align}
&\frac{\eta}{2} \leq \frac{C_0}{c_0}\frac{\sigma^{\alpha}}{\vartheta\bh_k^2} \int_{\cB_{\frac32}} |D\bar{N}_k|^2\,\qquad\mbox{for $Q<\frac{p}{2}$}\,,\\
&\frac{\eta}{2} \leq \frac{C_0}{c_0}\frac{\sigma^{1+\alpha}}{\vartheta\bh_k^2} \int_{\cB_{\frac32}} |D\bar{N}_k|^2\, \qquad\mbox{for $Q=\frac{p}{2}$}\,,
\end{align}
where $C_0, c_0$ and $\vartheta$ are fixed (namely independent of $\sigma$). Therefore, $\sigma$ can be chosen very small, with
the inequality being satisfied only for $k\geq k (\sigma)$. However, the arbitrariness of $\sigma$ and \eqref{e:rev_Sob2} would be incompatible with $\eta > 0$, thus leading to the required contradiction.

\newpage

\part{Rectifiability of the singular set and structure theorem} \label{part:Rect}

\section{Rectifiability of the singular set: proof of Theorem \ref{t:main10}}

We start by introducing the term ``area minimizing cones $\modp$'' for area minimizing currents $\modp$ without boundary $\modp$ which have a representative $T_0$ which is a cone in the sense of Corollary \ref{c:tangent_cones}(iii). Such cone will be called flat if it is supported
in some $m$-dimensional plane $\pi\subset \mathbb R^{m+n}$. We recall that, by Corollary \ref{c:tangent_cones}, any flat area minimizing cone $\modp$ is congruent $\modp$ to $Q \a{\pi}$, where $\pi$ is an $m$-dimensional plane and $Q$ is an integer with $0\leq Q \leq \frac{p}{2}$. For odd $p$ we then conclude that $|Q|\leq \frac{p-1}{2}$.

\smallskip

Recall the definition of $k$-symmetric cones given in Definition \ref{def:symmetric cones and stratification}. Following \cite{NV}, we introduce next the following terminology, which introduces a suitable notion of local \emph{almost} symmetry for a given integral varifold $V$. 

\begin{definition}
An $m$-dimensional integral varifold $V$ is $(k, \varepsilon)$-symmetric in the ball $\bB_\rho (x)$ if there is a $k$-symmetric cone 
$C$ such that the varifold distance between $C\res \bB_1 (0)$ and $((\iota_{x,\rho})_\sharp V)\res \bB_1 (0)$ is smaller than $\varepsilon$. 

Next, given a varifold $V$ with bounded mean curvature in an open set $U$, for every $\sigma>0$ and $\varepsilon >0$ we introduce the set
\[
\mathcal{S}^{k, \sigma}_\varepsilon (V) :=\{x\in \spt (V)\cap U : \mbox{ $V$ is not $(k+1, \varepsilon)$-symmetric in $\bB_r (x)$ for $r\in ]0, \sigma]$}\}
\]
\end{definition}

The following is then a direct corollary of Lemma \ref{l:symmetric}.

\begin{corollary}\label{c:non-m-simmetrici}
Assume that $T$ is as in Theorem \ref{t:main}, and consider the varifold $\V (T)$ induced by $T$. If $p$ is odd, then for every compact $K$ with $K\cap \spt^p (\partial T) = \emptyset$ there are constants $\varepsilon =\varepsilon (m,n,p, K)>0$ and $\sigma = \sigma (m,n,p, K) >0$ such that
\[
\sing (T) \cap K \subset \bigcup_{Q=2}^{\frac{p-1}{2}} \sing_Q (T) \cup \mathcal{S}^{m-1,\sigma}_\varepsilon (\V (T)) \cup \mathcal{S}^{m-2} (\V (T))\, . 
\]
\end{corollary}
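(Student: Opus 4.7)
The idea is to distinguish cases according to the tangent-cone structure at $x \in \sing(T) \cap K$. If $x \in \mathcal{S}^{m-2}(\V(T))$ we are done; otherwise, by definition of the stratum, some tangent cone to $\V(T)$ at $x$ is $(m-1)$-symmetric, which by Corollary \ref{c:tangent_cones} is of the form $\V(T_0)$ for an area minimizing cone $T_0$ $\modp$ without boundary $\modp$, with $\Theta(T,x) = \Theta(T_0,0)$. The analysis then splits according to whether $\V(T_0)$ is also $m$-symmetric (i.e.\ $T_0$ is flat) or not.

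If $T_0$ is flat, the Constancy Lemma \ref{l:constancy} gives $T_0 = Q\a{\pi} \, \modp$ with $Q \in \mathbb{Z} \cap [-p/2, p/2]$; oddness of $p$ then forces $|Q| \leq (p-1)/2$. Since $x \in \spt(T)$, monotonicity yields $|Q| \geq 1$; since $x$ is singular, Allard's interior regularity theorem (applied to the stationary varifold $\V(T)$, which here has a density-one plane as tangent) rules out $|Q|=1$. Hence $Q = |Q| \in \{2,\ldots,(p-1)/2\}$ and $x \in \sing_Q(T)$. If $T_0$ is not flat, Lemma \ref{l:symmetric} yields $\Theta(T,x) = \Theta(T_0,0) \geq p/2$, while the representative $\modp$ bound gives $\Theta(T,x) \leq p/2$; therefore $\Theta(T,x) = p/2$.

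It then remains to show that points in this last case must lie in $\mathcal{S}^{m-1,\sigma}_\varepsilon(\V(T))$ for suitable $\varepsilon, \sigma > 0$ depending only on $K$, $m$, $n$, and $p$. The natural strategy is a contradiction-and-compactness argument: if no such $\varepsilon, \sigma$ existed, we could find a sequence $x_k \in \sing(T) \cap K$ with $\Theta(T, x_k) = p/2$ together with scales $r_k \leq 1/k$ at which $\V(T)$ is $(m, 1/k)$-symmetric in $\bB_{r_k}(x_k)$. Since $K \cap \spt^p(\partial T) = \emptyset$ and $r_k \to 0$, for every fixed $R$ and all sufficiently large $k$ the rescaled currents $T_k := (\iota_{x_k, r_k})_\sharp T$ satisfy the hypotheses of Proposition \ref{p:compactness} in $\bB_R(0)$, and a diagonal extraction yields a limit $T_\infty$ which is area minimizing $\modp$ and without boundary $\modp$ in every ball, with $\V(T_\infty)$ the varifold limit of $\V(T_k)$.

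The hard step, and where the oddness of $p$ enters decisively, is identifying $T_\infty$. The $(m, 1/k)$-symmetry forces $\V(T_k)\res \bB_1$ to be $1/k$-close in varifold distance to a flat cone $\theta_k\, \mathcal{H}^m \res \pi_k \otimes \delta_{\pi_k}$; passing to a further subsequence one obtains $\V(T_\infty)\res \bB_1 = \theta\, \mathcal{H}^m \res \pi \otimes \delta_\pi$ with $\theta = \lim \theta_k$. Monotonicity applied at $x_k$, combined with $\Theta(T,x_k) = p/2$ and $r_k \to 0$, forces $\theta \geq p/2$, while $T_\infty$ being representative $\modp$ yields $\theta \leq p/2$; hence $\theta = p/2$. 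But $T_\infty$ is integer rectifiable, so $\V(T_\infty)$ must have integer multiplicity $\mathcal{H}^m$-a.e.\ on its support, which is impossible when $\theta = p/2$ and $p$ is odd. This contradiction completes the proof.
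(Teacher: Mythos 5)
Your proposal is correct and follows essentially the same route as the paper: both proofs split according to the symmetry type of a tangent cone, use Lemma \ref{l:symmetric} in the non-flat $(m-1)$-symmetric case to get $\Theta(T,x)\geq p/2$, invoke the compactness of Proposition \ref{p:compactness} to control the nearby flat model, and extract the contradiction from the fact that the flat model's (integer) multiplicity must be $\leq (p-1)/2$ for a representative $\modp$ while the vertex density is $\geq p/2$. The only real difference is book-keeping: the paper produces $\sigma$ directly from the interior monotonicity formula (giving $\|\V(T)\|(\bB_r(q))\geq (p/2 -1/4)\omega_m r^m$ for $r\leq\sigma$), and produces $\varepsilon$ by a quantitative use of compactness (showing $(m,\varepsilon)$-symmetry would force $\|\V(T)\|(\bB_r(q))\leq (p/2 -3/8)\omega_m r^m$), whereas you run a single sequential contradiction-and-compactness argument extracting a limit cone $T_\infty$. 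Both are valid; the paper's version is slightly more economical, yours is perhaps a more standard template.

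One minor imprecision worth fixing: you assert $\Theta(T,x)=p/2$ in the non-flat case, claiming the upper bound from ``the representative $\modp$ bound.'' That bound controls the \emph{multiplicity} of the underlying rectifiable set, not the measure-theoretic \emph{density} at a vertex, which can exceed $p/2$ at a highly singular point (already for $m=1$ the density of a $\modp$-minimizing cone at its vertex equals $\tfrac12\sum|Q_i|$, and I see no a priori reason this is $\leq p/2$ in general). Lemma \ref{l:symmetric} only gives $\Theta(T,x)\geq p/2$. Fortunately this is all your argument uses: upper semicontinuity of density under varifold convergence yields $\theta\geq \limsup_k\Theta(T,x_k)\geq p/2$, while integrality and the representativity $\modp$ of the limit give $\theta\leq (p-1)/2$, which already contradict. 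So replace ``$\Theta(T,x)=p/2$'' by ``$\Theta(T,x)\geq p/2$'' throughout and the argument is clean.
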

\begin{proof}
Consider a point 
\[
q\in (\sing (T) \cap K)\setminus \left(\bigcup_{Q=2}^{\frac{p-1}{2}} \sing_Q (T)\cup  \mathcal{S}^{m-2} (\V (T))\right)\, .
\]
We then know that at least one tangent cone in $q$ is $(m-1)$-symmetric but not flat. Therefore we know from Lemma \ref{l:symmetric} that $\Theta (T, q) \geq \frac{p}{2}$. We also know that $\V (T)$ is a varifold with bounded mean curvature (the $L^\infty$ bound depending only on the second fundamental form of $\Sigma$) and that there is a $\sigma_0 (K)>0$ such that
$\dist (q, \spt^p (\partial T)) \geq \sigma_0$. In particular, by the monotonicity formula, there is a $\sigma (K, \Sigma)>0$ such that
\begin{equation}\label{e:fame}
\|\V (T)\| (\bB_r (q)) \geq \left(\frac{p}{2} - \frac{1}{4}\right)\omega_m r^m \qquad \forall r\in ]0, \sigma]\, .
\end{equation}
On the other hand, if $\V (T)$ were $(m, \varepsilon)$-symmetric in $\bB_r (q)$, then there would be a positive integer $Q$ and an oriented $m$-dimensional plane $\llbracket\pi\rrbracket$ such that the varifold distance between $((\iota_{q,r})_\sharp \V (T))\res \bB_1 (0)$ and $Q \V (\llbracket \pi \rrbracket) \res \bB_1 (0)$ is smaller than $\varepsilon$. By the compactness Proposition \ref{p:compactness} (observing that $r^{-m}\mass (T\res \bB_r (x))$ can be bounded uniformly for $x\in K$), when $\varepsilon$ is sufficiently small, $Q\a{\pi}$ must be a representative of an area minimizing current $\modp$ and as such we must have $Q\leq \frac{p-1}{2}$. In particular, if $\varepsilon$ is sufficiently small, we would conclude
\[
\|\V (T)\| (\bB_r (q)) \leq \left(\frac{p}{2} - \frac{3}{8}\right)\omega_m r^m\, .
\] 
This is however not possible because of \eqref{e:fame} and hence we deduce that $q\in \mathcal{S}^{m-1,\sigma}_\varepsilon (\V (T))$.
\end{proof}

\begin{proof}[Proof of Theorem \ref{t:main10}]
Observe that, by Almgren's stratification theorem, $\mathcal{S}^{m-2} (\V (T))$ has Hausdorff dimension at most $m-2$. Similarly, 
\[
\bigcup_{Q=2}^{\frac{p-1}{2}} \sing_Q (T) 
\]
has Hausdorff dimension at most $m-2$ by Theorem \ref{t:main3}. Since by \cite[Theorem 1.4]{NV}, $\mathcal{S}^{m-1,\sigma}_\varepsilon (\V (T))\cap K$ has finite $\mathcal{H}^{m-1}$ measure and it is $(m-1)$-rectifiable, the claim follows from Corollary \ref{c:non-m-simmetrici}. 
\end{proof}

\section{Structure theorem: proof of Corollary \ref{c:main11}}

In this section we prove Corollary \ref{c:main11}. First of all observe that each connected component $\Lambda_i$ is necessarily a regular submanifold because, by definition, it is contained in the set of regular interior points of $T$. Clearly $\Lambda_i$ is locally orientable, and it is simple to show that, since $p$ is odd, there is in fact a smooth global orientation. Clearly $T \res \Lambda_i = Q_i \a{\Lambda_i}\;\modp$ for some integer multiplicity $Q_i \in [-\frac{p}{2}, \frac{p}{2}]$ by the constancy lemma $\modp$. On the other hand we can reverse the orientation to assume that
$Q_i \in [1, \frac{p}{2}]$. Point (b) is then obvious because $T\res U = \sum_i T_i \res U \; \modp$ and in fact 
\begin{equation}\label{e:decomposition}
\|T\|\res U = \sum_i \|T_i\| \res U\, .
\end{equation} 
Now consider $U$ as in part (a) of the statement and observe that, by the monotonicity formula, there are constants $M (U)$ and $\rho (U)>0$, such that
\[
\|T\| (\bB_r (x)) \leq M r^m \qquad \forall x\in U \;\mbox{and}\; \forall r\in ]0, \rho (U)]\, .
\]
Fix a $T_i$ and note that, by \eqref{e:decomposition},
\begin{equation}\label{e:upper_bound_1000}
\|T_i\| (\bB_r (x)) \leq M r^m\, .
\end{equation}
Observe that
\[
\spt ((\partial T_i)\res U) \subset \sing (T) \cap \overline{U} =: K\, ,
\]
and that, by Theorem \ref{t:main10}, the compact set $K$ satisfies the bound
\begin{equation}\label{e:bound_Hausdorff}
\mathcal{H}^{m-1} (K)<\infty\, .
\end{equation}
We next claim that, by \eqref{e:upper_bound_1000} and \eqref{e:bound_Hausdorff},
\[
\mass ((\partial T_i)\res U)<\infty\, .
\]
First of all fix $\sigma = \frac{1}{k} < \frac{\rho (U)}{2}$ and choose a finite cover of $K$ with balls $\{B^k_j\}_j$ with radii $r_j^k$ satisfying $2\,r_j^k\leq \sigma = \frac{1}{k}$ such that
\[
\sum_j \omega_{m-1} (r_j^k)^{m-1} \leq 2 \mathcal{H}^{m-1}_\sigma (K) \leq 2 \mathcal{H}^{m-1} (K)\, .
\]
For each ball $B_j^k$ we choose a smooth cutoff function $\varphi^k_j$ which vanishes identically on $B^k_j$ and it is identically equal to $1$ on the complement of the concentric ball $2 B_j^k$ with twice the radius. We choose $\varphi^k_j$ so that $0\leq \varphi^k_j \leq 1$ and $\|d\varphi^k_j\|_0 \leq C (r^k_j)^{-1}$, where $C$ is a geometric constant. 
We then define 
\[
\varphi^k := \prod_j \varphi^k_j\, . 
\]
Recall that
\[
\mass ((\partial T_i) \res U) = \sup \{\partial T_i (\omega) : \|\omega\|_c \leq 1\,,\, \omega\in \mathcal{D}^k (U)\}\, .
\]
We therefore fix a smooth $(m-1)$-form $\omega$ with compact support in $U$ and we are interested in bounding $\partial T_i (\omega) =
T_i (d\omega)$. Observe that $\varphi^k \uparrow 1$ $\|T_i\|$-a.e. on $U$. Hence we can write
\[
T_i (d\omega) = \lim_{k\to\infty} T_i (\varphi^k d \omega)\, .
\]
On the other hand, since $\varphi^k \omega$ is supported in an open set $V\subset\subset U\setminus K$ we conclude
\[
T_i (d (\varphi^k \omega)) = \partial T_i (\varphi^k \omega) = 0\, .
\]
Hence we can estimate
\begin{align}
|T_i (\varphi^k d \omega)| &= |T_i (d\varphi^k\wedge \omega)| \leq \sum_j \left| T_i \left(\prod_{\ell\neq j} \varphi^k_\ell d\varphi^k_j \wedge \omega\right)\right|\nonumber\\
&\leq C \sum_j \|\omega\|_c\, \|d\varphi^k_j\|_0\, \|T_i\| (2 B_j^k) 
\stackrel{\eqref{e:upper_bound_1000}}{\leq} C M \|\omega\|_c \sum_j (r^k_j)^{-1} (2r^k_j)^{m}\nonumber\\
&\leq C M \|\omega\|_c \mathcal{H}^{m-1} (K)\, .
\end{align}
Letting $k\uparrow \infty$ we thus conclude 
\[
|T_i (d\omega)|\leq C M \|\omega\|_c \mathcal{H}^{m-1} (K)\, .
\]
This shows that $(\partial T_i)\res U$ has finite mass. Point (a) follows therefore from the Federer-Fleming boundary rectifiability theorem. 

In order to show (c), consider the set $K'$ of points $q\in K$ where
\begin{itemize}
\item $K$ has an approximate tangent plane $T_q K$;
\item $q$ is a Lebesgue point for all $\Theta_i$'s with $\Theta_i (q) \in \mathbb Z$. 
\end{itemize}
By a standard blow-up argument, it follows that, for every fixed $q\in K'$, any limit $S$ of the currents $(\iota_{q,r})_\sharp (T_i)$ as $r \downarrow 0$ is an area-minimizing current on $\mathbb R^{m+n}$ with boundary either $-\Theta_i (q) \a{T_q K}$ or $+\Theta_i (q) \a{T_q K}$. By the boundary monotonicity formula, 
\[
\|S\| (\bB_1 (0)) \geq \frac{|\Theta_i (q)|}{2} \omega_m\, .
\]
We therefore conclude that
\[
\liminf_{r\downarrow 0} \frac{\|T_i\| (\bB_r (q))}{r^m} \geq \omega_m \frac{|\Theta_i (q)|}{2}\, .
\]
Fix any natural number $N$. We then conclude from \eqref{e:upper_bound_1000} that
\[
M \geq \lim_{r\downarrow 0} \frac{\|T\| (\bB_r (q))}{r^m} \geq \sum_{i=1}^N \liminf_{r\downarrow 0} \frac{\|T_i\| (\bB_r (q))}{r^m} 
\geq \sum_{i=1}^N \omega_m \frac{|\Theta_i (q)|}{2}\, .
\]
In particular we conclude that
\[
\sum_{i=1}^\infty |\Theta_i (q)| \leq \frac{2M}{\omega_m} \qquad \forall q\in K'\, .
\]
This shows that
\[
\sum_i \mass ((\partial T_i)\res U) \leq  \frac{2M}{\omega_m} \mathcal{H}^{m-1} (K) < \infty\, .
\]
This completes the proof of (c) and of the structure theorem. 

\appendix

\section{Proof of Proposition \ref{SORBILLO_GRATIS_X2}} \label{appendix}

In order to reach a proof of Proposition \ref{SORBILLO_GRATIS_X2}, we will need some preliminary results. First, for a given $S \in \Rc_{1}(\R^{n})$, we say that $S$ has the property $(NC)$ (\textit{no cycles}) if there exists no $0 \neq R \in \Rc_{1}(\R^{n})$ such that $\partial R = 0$ and
\[
\M(S) = \M(R) + \M(S - R).
\]
We recall that $\In_m (\R^{m+n})$ denotes the space of $m$-dimensional integral currents in $\R^{m+n}$.

Given $S \in \In_{1}(\R^{n})$ satisfying the property $(NC)$, we call a \emph{good decomposition} of $S$ a writing
\[
S = \sum_{j=1}^{N} \theta_{j} S_{j},
\] 
where $\theta_{j} \in \N$, each $S_{j}$ is the integral current given by $S_{j} = \llbracket \gamma_{j} \rrbracket$ for $\gamma_{j}$ a simple Lipschitz curve of finite length, $S_{j} \neq S_{k}$ if $j \neq k$ and moreover
\begin{equation} \label{origano}
\M(S) = \sum_{j} \theta_{j} \M(S_{j}), \hspace{0.5cm} \M(\partial S) =  \sum_{j} \theta_{j} \M(\partial S_{j}).
\end{equation}
The existence of a good decomposition for a current $S \in \In_{1}(\R^{n})$ satisfying the property $(NC)$ is a direct consequence of \cite[4.2.25]{Federer69}. We say that a good decomposition $S = \sum_{j=1}^{N} \theta_{j} S_{j}$ has the property $(NTC)$ (\textit{no topological cycles}) if there exists no function $f \colon \{1, \dots, N \} \to \{-1, 0, 1 \}$, $f \not\equiv 0$, such that
\begin{equation} \label{doppia_mozz}
\partial \left( \sum_{j=1}^{N} f(j) S_{j} \right) = 0.
\end{equation}

\begin{lemma} \label{pummarola}
For any $S \in \In_{1}(\R^{n})$ with the property $(NC)$ there exists $S' \in \In_{1}(\R^{n})$ with the property $(NC)$ and a good decomposition of $S'$ that satisfies $\partial S' = \partial S$, $\M(S') \leq \M(S)$, and that has the property $(NTC)$. 
\end{lemma}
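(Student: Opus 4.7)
The strategy is to reduce first to a mass minimizer with the same boundary as $S$, and then to iteratively eliminate topological cycles in its good decomposition while preserving all the required properties.

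First I would minimize $\M(T)$ over all $T \in \In_1(\R^n)$ with $\partial T = \partial S$; a minimizer $S^*$ exists by the compactness theorem for integral currents, applied after a Federer-Fleming projection onto a large cube containing $\spt(S)$. By construction $\partial S^* = \partial S$ and $\M(S^*) \leq \M(S)$, and property $(NC)$ is automatic: any nontrivial mass-additive splitting $S^* = R + (S^* - R)$ with $\partial R = 0$ would yield a strictly lower mass competitor $S^* - R$, contradicting minimality.

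Next, fix a good decomposition $S^* = \sum_{j=1}^N \theta_j S_j$ provided by \cite[4.2.25]{Federer69}. If it satisfies $(NTC)$, take $S' := S^*$. Otherwise, pick a nontrivial $f \colon \{1, \ldots, N\} \to \{-1, 0, 1\}$ such that $R := \sum_j f(j) S_j$ is a cycle; after possibly swapping $f$ with $-f$, the value $+1$ is attained, and we set $a := \min\{\theta_j : f(j) = 1\} \in \N$, $S^{**} := S^* - aR = \sum_j (\theta_j - af(j)) S_j$. Every coefficient $\theta_j - af(j)$ is a nonnegative integer, and at least one of them vanishes. The main step, and the essential technical obstacle, is to show that after discarding the vanishing terms this is still a good decomposition. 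For mass, the triangle inequality gives $\M(S^{**}) \leq \sum_j (\theta_j - af(j))\M(S_j) = \M(S^*) - a\sum_j f(j)\M(S_j)$, while minimality of $S^*$ forces $\M(S^{**}) \geq \M(S^*)$, whence $\sum_j f(j)\M(S_j) \leq 0$; the reverse inequality is produced either by the symmetric construction $S^* + bR$ with $b := \min\{\theta_j : f(j) = -1\}$, when $f$ attains $-1$, or is automatic when $f \geq 0$. Hence $\sum_j f(j)\M(S_j) = 0$, $\M(S^{**}) = \M(S^*)$, and the triangle inequality is saturated. The identical argument, with the role of minimality of $S^*$ replaced by the identity $\M(\partial S^{**}) = \M(\partial S^*) = \sum_j \theta_j \M(\partial S_j)$ coming from the goodness of the original decomposition, forces $\sum_j f(j)\M(\partial S_j) = 0$ and yields boundary additivity.

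Thus $S^{**}$ carries a good decomposition with strictly fewer curves than $S^*$, its boundary equals $\partial S$, its mass is $\M(S^*) \leq \M(S)$, and since it is again a mass minimizer it still enjoys property $(NC)$. Iterating the construction terminates in at most $N$ steps and produces the desired $S'$ together with a good decomposition satisfying $(NTC)$.
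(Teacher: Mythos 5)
Your proof is correct, but it follows a genuinely different route from the paper's. The paper argues by a minimal-counterexample device on the \emph{number of pieces}: among all admissible pairs (current $S'$ with $(NC)$, $\partial S'=\partial S$, $\M(S')\le\M(S)$, and a good decomposition that \emph{fails} $(NTC)$) it fixes one with the minimal number $N$ of summands, compares the two masses $M_\pm$ of the positive and negative parts of the offending cycle $R=\sum f(j)S_j'$, subtracts or adds the appropriate multiple $\theta'_{j_\pm}R$ to drop at least one piece without increasing mass, and concludes by the minimality of $N$. You instead pass first to a genuine \emph{mass minimizer} $S^*$ among all integral $1$-currents with $\partial\,\cdot=\partial S$ and then iterate the cycle-removal on $S^*$ directly.

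The payoff of your variant is that mass-minimality does all the bookkeeping for you: it trivially gives $(NC)$ at every stage, and it is exactly what forces the two triangle inequalities to be saturated, which is how you certify that the coefficients of the reduced writing still realize both equalities in \eqref{origano} and hence produce a \emph{good} decomposition of $S^{**}$. This is the step the paper leaves implicit. The price is that you must invoke existence of a mass minimizer (a compactness argument that the paper avoids) and you prove slightly more than needed, namely that the final $S'$ can be taken to be an absolute mass minimizer for its boundary. One small remark: the case $f\ge 0$ that you dismiss as ``automatic'' is in fact vacuous once $S^*$ has $(NC)$, since then $R=\sum_{f(j)=1}S_j$ would be a nontrivial cycle with $\M(S^*)=\M(R)+\M(S^*-R)$ by the goodness of the decomposition — indeed your own chain of inequalities forces $\M(S_j)=0$ for every $j$ with $f(j)=1$, which is absurd. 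This does not affect the validity of the argument, but it is worth pointing out that this branch never occurs.
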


\begin{proof}
Let $S \in \In_{1}(\R^{n})$, and assume without loss of generality that $S \neq 0$. Among all currents $S' \in \In_{1}(\R^{n})$ with the property $(NC)$ and such that $\partial S' = \partial S$ and $\M(S') \leq \M(S)$, and among all possible good decompositions of $S'$ not satisfying the property $(NTC)$ fix a current $S'$ and a decomposition 
\[
S' = \sum_{j=1}^{N} \theta_{j}' S'_{j}
\] 
such that the quantity $N$ is minimal. Observe that necessarily $N \geq 1$.

Let $f \colon \{1, \dots, N\} \to \{ -1, 0, 1 \}$ be a function such that \eqref{doppia_mozz} holds. Define: 
\[
j_{-} \in {\rm argmin}\lbrace \theta_{j}' \, \colon \, f(j) = -1 \rbrace
\]
and 
\[
j_{+} \in {\rm argmin}\lbrace \theta_{j}' \, \colon \, f(j) = +1 \rbrace.
\]
Observe that since $S'$ has the property $(NC)$, the sets $\lbrace \theta_{j}' \, \colon \, f(j) = -1 \rbrace$ and $\lbrace \theta_{j}' \, \colon \, f(j) = +1 \rbrace$ are non-empty.

Now, consider the quantities
\[
M_{-} := \sum_{j \, \colon \, f(j) = -1} \M(S_{j}')
\]
and
\[
M_{+} := \sum_{j \, \colon \, f(j) = +1} \M(S_{j}').
\]

Clearly, if $M_{+} \geq M_{-}$ then the current 
\[
S'_{+} := S' - \theta_{j_{+}} \sum_{j} f(j) S'_{j}
\]
satisfies $\M(S'_{+}) \leq \M(S') \leq \M(S)$. If instead $M_{+} \leq M_{-}$ then the current
\[
S'_{-} := S' + \theta_{j_{-}} \sum_{j} f(j) S'_{j}
\]
satisfies $\M(S'_{-}) \leq \M(S') \leq \M(S)$. In any of the two cases, $\partial S'_{\pm} = \partial S' = \partial S$, and the obvious resulting decomposition of $S'_{\pm}$ has at most $N-1$ indexes. Hence, by minimality, the one of the two which does not increase the mass necessarily has the property $(NTC)$. This concludes the proof.
\end{proof}

\begin{lemma} \label{basilico}
Let $S \in \In_{1}(\R^{n})$ and $0 \neq Z \in \Rc_{0}(\R^{n})$ be such that:
\begin{itemize}
\item[$(H1)$] $A - B = \partial S + pZ$;
\item[$(H2)$] $S$ has the property $(NC)$ and there exists a good decomposition
\[
S = \sum_{j=1}^{N} \theta_{j} S_{j}
\]
with the property $(NTC)$. 
\end{itemize}
Then, there exists $j_{0} \in \{1, \dots, N \}$ such that $\partial S_{j_0} = \llbracket x \rrbracket - \llbracket y \rrbracket$ with $x, y \in \spt(Z)$ and $\theta_{j_0} \geq \frac{p}{2}$.
\end{lemma}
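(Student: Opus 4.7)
The plan is to convert the hypotheses (NC)/good decomposition and (NTC) into combinatorial structure on the multigraph $G$ whose vertices are the endpoints of the curves $\gamma_j$ and whose (weighted) edges are the $S_j$'s. First I would observe that the equality of masses in \eqref{origano} forces ``no cancellation'' at the boundary: at every vertex $v$ the curves $S_j$ incident to $v$ all contribute to $(\partial S)(v)$ with the same sign, so $G$ is bipartite into ``sources'' and ``sinks''. Property $(NTC)$ rules out topological cycles, hence $G$ is a forest. Combined with $(H1)$, this gives two local bounds on the vertex degree $\deg(v) := \sum_{e\ni v}\theta_e$: if $v \notin \spt(Z)$ then $\deg(v) = |(A-B)(v)| \leq Q$, while if $v \in \spt(Z)$ then $\deg(v) \geq p\,|Z(v)| - Q \geq p - Q$.

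Next I would pick $z_0 \in \spt(Z)$ (which must be a vertex of $G$, since $(\partial S)(z_0) = (A-B)(z_0) - pZ(z_0) \neq 0$) and work inside the tree $T \subset G$ containing $z_0$. Summing $(H1)$ over $T$ and using that $\sum_{v\in T}(\partial S)(v) = 0$ gives $\sum_T(A-B) = p\sum_T Z$; the bound $|\sum_T(A-B)| \leq Q \leq p/2 < p$ then forces both sides to vanish, so $\spt(Z)\cap T$ contains a point other than $z_0$. Root $T$ at $z_0$, and for every $v \neq z_0$ denote by $T_v$ the subtree below $v$ and by $\theta_v$ the weight of the edge $e_v$ joining $v$ to its parent. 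Restricting $(H1)$ to $T_v$ and observing that $e_v$ is the only edge crossing $\partial T_v$ yields the key cut identity
\[
\pm\,\theta_v \;=\; \sum_{T_v}(A-B)\;-\;p\sum_{T_v}Z,
\]
so $\sum_{T_v}Z \neq 0$ already implies $\theta_v \geq p - Q \geq p/2$.

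Finally I would select $v^*$ to be a \emph{deepest} vertex of $T\setminus\{z_0\}$ for which $\sum_{T_{v^*}}Z \neq 0$; such a $v^*$ exists because $\sum_T Z = 0$ while $Z(z_0) \neq 0$. By minimality every child $w$ of $v^*$ satisfies $\sum_{T_w}Z = 0$, hence $\sum_{T_{v^*}}Z = Z(v^*) \neq 0$: in particular $v^* \in \spt(Z)$ and $\theta_{v^*} \geq p - Q \geq p/2$. The remaining and most delicate point is to show that the parent $u$ of $v^*$ also lies in $\spt(Z)$. If $u = z_0$ this is immediate; otherwise $u$ is incident to at least two distinct edges of $T$, namely $e_{v^*}$ and its own parent edge, both of positive integer weight, so $\deg(u) \geq \theta_{v^*} + 1 \geq p - Q + 1$. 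Were $u$ outside $\spt(Z)$, the source/sink observation from the first paragraph would instead force $\deg(u) \leq Q$, so that $2Q \geq p+1$, contradicting $Q \leq p/2$. Hence $u \in \spt(Z)$, and $S_{j_0} := e_{v^*}$ fulfils the conclusion. The hypothesis $Q \leq p/2$ is exploited twice — once to force $\sum_T Z = 0$, and once in the final arithmetic clash at $u$ — and the crux of the argument is the extremal choice of $v^*$: it is the minimality of $T_{v^*}$ which simultaneously places $v^*$ in $\spt(Z)$ and leaves no room for its parent to be anywhere else.
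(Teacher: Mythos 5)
Your proof is correct, and it follows a genuinely different and arguably cleaner route than the paper's. The paper proceeds by contradiction with a chain-chasing argument: starting at a point of $\spt(Z)$ it repeatedly finds an edge between two $Z$-points of weight $<p/2$, and observes that (NTC) forbids the chain of distinct points from ever closing up, so the chain must be infinite --- impossible. You instead prove the statement directly and globally: you encode the good decomposition as a weighted bipartite multigraph (bipartite by the no-cancellation property \eqref{origano}, a forest by (NTC)), and use the cut identity $\pm\theta_v = \sum_{T_v}(A-B) - p\sum_{T_v}Z$ obtained by summing $(H1)$ over any subtree $T_v$ to turn the search for a heavy edge into an extremal problem (a deepest subtree with nonzero $Z$-flux). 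The minimality of $T_{v^*}$ does double duty, placing $v^*\in\spt(Z)$, while a short degree count at the parent $u$ handles the second endpoint. A concrete advantage of your route is that it is insensitive to overlaps between $\spt(Z)$ and $\spt(A)\cup\spt(B)$: the paper's intermediate claim that the multiplicity of $\partial S$ at a $Z$-point is at least $p$ only yields $\geq p-Q$ when such overlap occurs, whereas your cut identity and pointwise degree bounds handle the general case without extra bookkeeping. The one point worth spelling out in a polished write-up is that (NTC) is \emph{equivalent} to the associated multigraph having no cycles, parallel edges, or loops; this equivalence is what gives the rooted-tree structure and also justifies treating $e_{v^*}$ and the parent edge of $u$ as distinct in the final degree count.
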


\begin{proof}
Let $S$ and $Z$ be as above. Firstly, we claim that the set of indexes $j \in \{ 1, \dots, N\}$ such that $\partial S_{j} = \llbracket x \rrbracket - \llbracket y \rrbracket$ with $x, y \in \spt(Z)$ is non-empty. We write
\[
Z = \sum_{\ell=1}^{M} \llbracket N_{\ell} \rrbracket - \sum_{\ell=1}^{M} \llbracket P_{\ell} \rrbracket,
\]
where the $N_{\ell}$'s (resp. the $P_{\ell}$'s) are not necessarily distinct, so that
\[
\partial S = \sum_{i=1}^{Q} \llbracket A_{i} \rrbracket + p \sum_{\ell=1}^{M} \llbracket P_{\ell} \rrbracket - \left( \sum_{i=1}^{Q} \llbracket B_{i} \rrbracket + p \sum_{\ell=1}^{M} \llbracket N_{\ell} \rrbracket \right).
\]
Consider any of the points $P_{\ell}$. By \eqref{origano}, the multiplicity of $\partial S$ in $P_{\ell}$ is at least $p$, and furthermore, since $Q \leq \frac{p}{2}$, there exist $j \in \{1, \dots, N\}$ and $\ell' \in \{1, \dots, M\}$ such that $\partial S_{j} = \llbracket P_{\ell} \rrbracket - \llbracket N_{\ell'} \rrbracket$, which proves our claim. 

Next, assume by contradiction that for every $j$ such that $\partial S_{j}$ is supported on $\spt(Z)$ one has $\theta_{j} < \frac{p}{2}$. Fix, for instance, the point $P_{1}$. Arguing as above, after possibly reordering the indexes (both in the family $\{S_{j}\}$ and $\{N_{\ell}\}$), we conclude that there exist $N_{1}$ and $S_{1}$ such that $\partial S_{1} = \llbracket P_{1} \rrbracket - \llbracket N_{1} \rrbracket$. Moreover, by hypothesis, $\theta_{1} < \frac{p}{2}$. This ensures that we can find $P_{2}$ and $S_{2}$ such that $\partial S_{2} = \llbracket P_{2} \rrbracket - \llbracket N_{1} \rrbracket$, and again $\theta_{2} < \frac{p}{2}$. The procedure can be iterated as long as the new points $P_{\ell + 1}$ (resp. $N_{\ell+1}$) are distinct from the previous ones. Since the decomposition of $S$ has the property $(NTC)$ by hypothesis $(H2)$, this would imply that the procedure can be iterated indefinitely, which gives the desired contradiction.   
\end{proof}

\begin{proof}[Proof of Proposition \ref{SORBILLO_GRATIS_X2}]

Let us first consider case (a), with $\sigma = 1$.

It suffices to prove that
\begin{equation} \label{1}
\Fl(A - B) \leq \Fl^{p}(A - B),
\end{equation}
because the other inequality is obvious. 

Suppose by contradiction that
\begin{equation} \label{2}
\Fl^{p}(A - B) < \Fl(A - B),
\end{equation}
and let $S \in \In_{1}(\R^{n})$ and $0 \neq Z \in \Rc_{0}(\R^{n})$ be such that
\[
A - B = \partial S + pZ \hspace{0.2cm} \mbox{ and } \hspace{0.2cm} \M(S) < \Fl(A - B).
\]

We claim that there exist currents $S^{1} \in \In_{1}(\R^{n})$ and $Z^{1} \in \Rc_{0}(\R^{n})$ such that
\begin{equation} \label{claim}
A - B = \partial S^{1} + pZ^{1}, \quad \M(S^{1}) < \Fl(A - B) \quad \mbox{ and } \quad \M(Z^{1}) = \M(Z) - 2.
\end{equation}
The conclusion trivially follows from the claim.

We proceed with the proof of \eqref{claim}. First observe that if $S$ has a cycle $R$ then the current $S' := S - R$ satisfies $A - B = \partial S' + pZ$ and $\M(S') = \M(S) - \M(R) < \Fl(A - B)$. Therefore, we can assume without loss of generality that $S$ has the property $(NC)$. Next, applying Lemma \ref{pummarola} we can also assume that $S$ has a good decomposition
\[
S = \sum_{j=1}^{N} \theta_{j} S_{j}
\]
which satisfies the property $(NTC)$. Now, by Lemma \ref{basilico} there exists $j_{0} \in \{1, \dots, N \}$ such that $\partial S_{j_0} = \llbracket x \rrbracket - \llbracket y \rrbracket$ with $x, y \in \spt(Z)$ and $\theta_{j_0} \geq \frac{p}{2}$. Let $S^{1} := S - p S_{j_0}$. We have
\[
\partial S^{1} = \partial S - p \llbracket x \rrbracket + p \llbracket y \rrbracket,
\]
and thus
\[
A - B = \partial S^{1} + pZ^{1},
\]
where $Z^{1} := Z + \llbracket x \rrbracket - \llbracket y \rrbracket$. The conclusion $\M(Z^{1}) = \M(Z) - 2$ simply follows from \eqref{origano}. Finally, we get
\[
\M(S^{1}) \leq \sum_{j \neq j_{0}} \theta_{j} \M(S_{j}) + |\theta_{j_0} - p| \M(S_{j_0}) 
\leq \sum_{j=1}^{N} \theta_{j} \M(S_j) \overset{\eqref{origano}}{=} \M(S) < \Fl(A - B),
\]
where the second inequality follows from $\theta_{j_0} \geq \frac{p}{2}$.

\smallskip

Let us now consider instead case (b), when $\sigma = -1$ and $Q = \frac{p}{2}$. We know from \eqref{basic inequality special} that
\[
\Fl^p(A+B) \leq \Fl(A+B)\,,
\]
where $\Fl(A+B)$ is defined by \eqref{reduced flat norm definition}. Assume by contradiction that $\Fl^p(A + B) < \Fl(A+B)$. That is, there exist $S \in \In_{1}(\R^n)$ and $Z \in \Rc_{0}(\R^n)$ such that
\begin{equation} \label{contraddizione}
A + B = \partial S + p Z\,, \quad \mbox{and} \quad \M(S) < \Fl(A+B).
\end{equation}

Observe that it cannot be $Z = 0$. Also, by Lemma \ref{pummarola} there is no loss of generality in assuming that $S$ admits a good decomposition
\[
S = \sum_{j=1}^{N} \theta_{j} S_{j}
\]
having the property $(NTC)$. Now, if $\M(Z) = 1$ then there exists $z \in \R^n$ such that $Z = \llbracket z \rrbracket$. In that case, if we set $R := z \cone (A+B)$ then we have
\[
\partial R = A + B - p \llbracket z \rrbracket = \partial S\,,
\]
and
\[
\begin{split}
\Fl(A + B) \leq \M(R) &= \Fl(A-Q \llbracket z \rrbracket) + \Fl(B - Q \llbracket z \rrbracket) \\
&= \sum_{i=1}^{Q} \left( |A_{i} - z| + |B_{i} - z| \right) \leq \M(S)\,,
\end{split}
\]
thus contradicting \eqref{contraddizione}. 

On the other hand, if $\M(Z) \geq 2$ (and thus in fact $\M(Z) \geq 3$) then there exists $j_0 \in \{1,\dots,N\}$ such that $\partial S_{j_0} = \llbracket x \rrbracket - \rrbracket y \rrbracket$ with $x,y \in \spt(Z)$ and $\theta_{j_0} \geq \frac{p}{2}$. Hence, setting $S^{1} := S - p S_{j_0}$ we have
\[
A + B = \partial S^1 + p Z^1\,,
\]
with $Z^1 := Z + \llbracket x \rrbracket - \llbracket y \rrbracket$, $\M(Z^1) = \M(Z) - 2$ and $\M(S^1) \leq \M(S)$. In order to complete the proof, it suffices to iterate this argument producing currents $S^{k}, Z^{k}$ until $\M(Z^k) = 1$. 
\end{proof}

\bibliographystyle{plain}
\bibliography{Biblio}

\end{document}